\newtheorem{lemma}{Lemma}[section]
\newtheorem{proposition}{Proposition}[section]
\newtheorem{theorem}{Theorem}[section]
\newtheorem{corollary}{Corollary}[section]
\newtheorem{remark}{Remark}[section]
\newcommand{\ddt}{\frac{d}{dt}\bigm|_{t= 0}}
\newcommand{\ddsone}{\partial_1}
\newcommand{\ddstwo}{\partial_2}
\newcommand{\ddsthree}{\partial_3}
\newcommand{\C}{\mathbb{C}}
\newcommand{\R}{\mathbb{R}}
\newcommand{\Z}{\mathbb{Z}}
\newcommand{\NT}{\mathbb{T}_\theta^2}
\newcommand{\NTp}{\mathbb{T}_{\theta'}^2}
\newcommand{\CNTp}{C^\infty(\mathbb{T}_{\theta'}^2)}
\newcommand{\NTpp}{\mathbb{T}_{\theta''}^2}
\newcommand{\CNTpp}{C^\infty(\mathbb{T}_{\theta''}^2)}
\newcommand{\CNT}{C^\infty(\mathbb{T}_\theta^2)}
\newcommand{\done}{\delta_1}
\newcommand{\dtwo}{\delta_2}
\newcommand{\nab}{\nabla}
\newcommand{\del} {\delta}
\newcommand{\dep}{\frac{d}{d\varepsilon}\bigm|_{\varepsilon = 0}}
\newcommand{\vep}{\varepsilon}
\newcommand{\vphi}{\varphi}
\newcommand{\nabep}{\nabla_{\varepsilon}}
\DeclareMathOperator{\Tr}{Trace}
\DeclareMathOperator{\ad}{ad}
\begin{document}

\title{The term \MakeLowercase{\Large{$a_4$}}
in the heat kernel expansion of noncommutative tori}

\author{
Alain Connes$^{^{\MakeLowercase{a}}}$ and  Farzad Fathizadeh$^{^{\MakeLowercase{b}, 1}}$}

\thanks{ {\it $^1$E-mail addresses:} 
alain@connes.org (Alain Connes), farzadf@caltech.edu (Farzad Fathizadeh)}

\date{}

\maketitle

\begin{center}
\address{  {\it $^a$Coll\'ege de France, 3 rue d'Ulm, Paris F-75005 France; I.H.E.S. and Ohio State University} } \\
\address{ 
{\it $^b$Division of Physics, Mathematics and Astronomy, California Institute of Technology,  1200 E. California Blvd., Pasadena, CA 91125, U.S.A.} } 
\end{center}
\smallskip
\smallskip
\smallskip
\smallskip

\begin{abstract} 
We consider the Laplacian associated with a general metric 
in the canonical conformal structure of the 
noncommutative two torus, and calculate a local expression 
for the term $a_4$ that appears in its corresponding small-time heat 
kernel expansion. The final formula involves one variable 
functions and lengthy two, three and four variable functions 
of the modular automorphism of the state that encodes the 
conformal perturbation of the flat metric.  
We confirm the validity of the calculated expressions by showing 
that they satisfy a family of conceptually predicted functional relations. By studying 
these functional relations abstractly, we derive a partial differential system 
which involves  a natural action of cyclic groups of order two, three and four and 
a flow in parameter space. We 
discover  symmetries of the calculated expressions with respect to 
the action of the cyclic groups.  
In passing, we show that the main ingredients of our 
calculations, which come from a rearrangement lemma 
and relations between the derivatives up to order four of the 
conformal factor and those of its logarithm, can be derived by 
finite differences from the generating function of the Bernoulli 
numbers and its multiplicative inverse. We then shed light on the significance of 
exponential polynomials and their smooth fractions in understanding the 
general structure of the noncommutative geometric invariants appearing in 
the heat kernel expansion. 
As an application of our results we obtain the $a_4$ term for noncommutative
four tori which split as products of two tori. These four tori are not conformally flat and
the $a_4$ term gives a first hint of the Riemann curvature and the higher 
dimensional modular structure. 
\end{abstract}

\vskip 0.5cm
\noindent
{%\bf Mathematics Subject Classification (2010).} . 

%\vskip 0.5 cm
%\noindent
%{\bf Keywords.} 

\tableofcontents

\newpage

\section{Introduction}

In noncommutative geometry the paradigm of a geometric space is 
given in spectral terms, and the local geometric invariants such as the 
Riemannian curvature are extracted from the functionals defined by the 
coefficients of heat kernel expansion. One of the new features of the 
theory which is entirely due to non-commutativity is the modular theory 
which associates to a state a one parameter group of automorphisms of 
the ambient von-Neumann algebra measuring to which extent the state 
fails to be  a trace.  When the state is associated to the volume form of a 
metric a deep interplay arises between the local geometric invariants and 
the modular automorphism group.
More specifically, the local geometric 
invariants of the noncommutative two 
torus $\NT$ equipped with a curved metric are 
computed by calculating the coefficients that appear 
in the small-time heat kernel expansion of the Laplacian 
associated with the metric 
\cite{ConTreGB,ConMosModular, FatKhaSC2T, LesMosMorita}. 
The canonical flat metric 
of $\NT$ can be perturbed conformally by means of a Weyl  
factor $e^h \in \CNT$, where the dilaton $h$ is a smooth 
selfadjoint element \cite{ConTreGB}. The effect of this perturbation is that the 
canonical trace or the flat volume form $\vphi_0$ of the 
noncommutative torus is replaced by a state $\vphi$, 
and the trace of the heat kernel $\exp(-t\triangle_\vphi)$ of the 
Laplacian $\triangle_\vphi$ of the curved metric has 
an asymptotic expansion with complicated coefficients.

\smallskip

That is, there are unique elements $a_{2n} \in \CNT$ such that for any 
$a \in \CNT,$ as the time $t \to 0^+$, there is an asymptotic expansion 
of the form\footnote{By the construction,  since on a noncommutative torus of dimension $m$, we 
use the normalized trace as the analog of the integration on the $m$-dimensional torus 
$\mathbb{T}^m = (\R/ 2\pi \Z)^m$, we incorporate in our calculations the overall multiplicative factor 
$(2 \pi)^m$ for the geometric invariants.  
}
\begin{equation} \label{heatexp}
\Tr \left (a \exp(-t \triangle_\vphi) \right)  
\sim 
t^{-1} \left ( \vphi_0(a\, a_0) + \vphi_0(a \, a_2) \, t + \vphi_0(a\, a_4)\, t^2 + \cdots \right ).      
\end{equation}
In fact, each term $a_{2n}$ is a curvature related invariant  of the 
noncommutative torus $\NT$ equipped with the curved metric.

\smallskip 

 The exploration of the interplay between the local geometric invariants and 
the modular automorphism group has involved 
over the years alternating periods of hard calculations and conceptual 
understanding of their meaning \cite{ConTreGB, ConMosModular, FatKhaSC2T, LesDivided, LesMosMorita}. 
So far one has reached a good understanding of the terms $a_0$ and $a_2$ 
in the heat expansion \eqref{heatexp}. The term 
$a_0$ is related to the volume, whose connection with the analog of 
Weyl's law and the trace theorem of \cite{ConAction} is studied in 
\cite{FatKhaTraceThm}. The term $a_2$, which  is related to the analog 
of scalar curvature and the Gauss-Bonnet theorem for the noncommutative 
two torus \cite{ConTreGB, FatKhaGB}, is calculated and studied in 
\cite{ConMosModular, FatKhaSC2T,LesMosMorita}.  The present paper investigates for the first 
time the hard calculation of the term $a_4 \in \CNT$ appearing in \eqref{heatexp}. Due to the fact that 
the process of calculating this term involves exceedingly lengthy expressions and 
at times involves manipulations on a few hundred thousand terms,  
only the final outputs of the calculations are written in this paper.

\smallskip 

Our key result is that we could  
confirm the validity of the lengthy calculations by checking 
that the final expressions satisfy a family of functional relations, conceptually proved along the lines of   
\cite{ConMosModular}. We 
derive a system of partial differential equations from the functional 
relations by specializing them to certain hyperplanes and study symmetries 
of certain combinations of our expressions with respect to a natural action 
of cyclic groups of order 2, 3 and 4 on the differential system. 
We also pay special attention to the general structure of the several 
variable functions that appear in the expression of the term $a_4$, 
which is closely related to the fact that the main ingredients of 
such calculations can be derived by finite differences from the 
generating function of the Bernoulli numbers and its inverse.

\smallskip

The results of this paper are presented in two main different parts. 
The first  consists of the sections \ref{PrelSec} -- \ref{ConclusionsSec},  
where we mainly address the mathematically abstract work carried out 
for the calculation of the term $a_4 \in \CNT$ and exploring its properties. The second part, which is roughly the last 45 pages, is formed by the appendices, where 
we present final outputs of our calculations that have lengthy expressions, in their simplified form. 
The aspect of this part of the paper shows that a far more suitable mean for communicating these lengthy expressions would be to collect them 
in a mathematical program notebook, which we intend to do in the future.   This will, in particular, make it easy  to test  
further phenomena related to the analog of the Riemann curvature  in noncommutative geometry.

\smallskip

In Section \ref{PrelSec} we recall  necessary 
preliminaries about the canonical translation invariant conformal structure on the 
noncommutative two torus $\NT$, the Laplacian associated with a general 
metric in the conformal class, and the explicit formula for the term 
$a_2$ appearing in the expansion \eqref{heatexp}. 
In Section \ref{a_4Sec} we write the final expression for the 
term $a_4 \in \CNT$ in the expansion, which involves 
one, two, three and four variable functions of a modular automorphism. We 
then present one of our main results, namely a family of functional relations 
that the several variable functions satisfy. As we shall see, it is quite interesting that finite 
differences of the inverse of the generating function of the Bernoulli 
numbers play an important role in the functional relations. In Section \ref{DifferentialSystemSec} 
we derive a partial differential system from the functional relations by specializing them to certain 
hyperplanes, and study symmetries of our finite difference expressions 
with respect to a natural action of cyclic groups of order 2, 3 and 4 on the system
as well as a natural flow associated to the differential system.  
In Section \ref{FuncRelationsProofSec}, we explain the method of proving 
the functional relations, and prove a series of lemmas that will be used  in Section 
\ref{GradientCalculationSec} for calculating in terms of finite differences the gradient of 
the map that sends the dilaton $h = h^* \in \CNT$ to the trace of the term $a_4$. 
Section \ref{FuncRelationsforksSec} presents functional relations among functions 
that appear in the differential system. Some of the relations of this type, because of 
their lengthiness, are written in Appendix \ref{lengthykfnrelationsappsec}.

\smallskip

In Section \ref{Calculatea_4Sec} we explain the details of our calculation of the 
term $a_4$, which is based on using the pseudodifferential calculus developed in  
\cite{ConC*algDiffGeo}, a rearrangement lemma, and 
a lemma proved in Section \ref{GradientCalculationSec} that relates the derivates up to order four of the conformal factor $e^h \in \CNT$ 
and those of the dilaton $h$. In particular we show that all functions 
of several variables appearing in these lemmas can be constructed by finite differences 
from the generating function of the Bernoulli numbers and its inverse. 
By employing these tools and performing heavy calculations, we 
find the expression for the term $a_4$ and explicit formulas for the functions 
of one to four variables that appear in the final formula. 
The one and two variable functions are presented in 
Section \ref{ExplicitFormulasSec},  while, because of having algebraically 
lengthy expressions, the three and four variable functions are presented in 
 Appendix \ref{explicit3and4fnsappsec}. We confirm the accuracy of our heavy calculations by 
explaining that the final functions check out the functional relations presented in 
Section \ref{a_4Sec} and in Appendix \ref{lengthyfnrelationsappsec}. 
In Section \ref{StructureSec}, we explain why each local function 
appearing in the expression of the term $a_4$ is a rational function 
in variables $s_i$ and $e^{s_i/2}$, whose denominator has a nice 
product formula that vanishes on certain hyperplanes. Moreover, we show 
that the coefficients of the numerator of each function satisfy a family of linear 
equations.

\smallskip

In Section \ref{NC4toriSec} we explain how the explicit formulas for the term $a_4$ for $\NT$ provide a first glimpse of the Riemann curvature beyond the conformally flat case. Indeed it yields the term $a_4$ for the $4$-dimensional noncommutative tori obtained as products of two noncommutative two-tori and such spaces are generally not conformally flat. Moreover they possess a natural two-dimensional modular structure, given by an action of $\R^2$ by automorphisms, obtained from the modular structure of the factors, while the measure theory given by the  determinant part of the metric only involves the restriction of this action of $\R^2$ to the diagonal. This gives a strong motivation to develop conceptually the more general notion of twisting suggested in particular in \cite{ConTransverse} and which plays a fundamental role in the work of H. Moscovici and the first author \cite{CMos} on the transverse geometry of foliations and the reduction by duality to the almost isometric case. 

\smallskip 

Our main results and conclusions are 
summarized in Section \ref{ConclusionsSec}.

\smallskip

\section{Preliminaries}
\label{PrelSec}

We consider the noncommutative two torus $\NT$, whose algebra $C(\NT)$ is the 
universal $C^*$-algebra generated by two unitary elements $U$ and 
$V$ that satisfy the following commutation relation for a fixed irrational 
real number $\theta$: 
\[
VU = e^{2 \pi i \theta} \,U  V. 
\]
We have 
a $C^*$-dynamical system by considering the following action $\alpha$ of 
the ordinary two torus $\mathbb{T}^2=(\mathbb{R}/2 \pi \mathbb{Z})^2$ 
on the algebra $C(\NT)$ of the  noncommutative torus. For any $(s_1, s_2)\in \mathbb{T}^2$, one can define
\begin{equation} \label{NC2TDynSys}
\alpha_{(s_1, s_2)}(U^m V^n) 
= 
e^{i (ms_1 +ns_2)} U^m V^n, \qquad m, n \in \mathbb{Z}.  
\end{equation}
This definition extends to an automorphism of the $C^*$-algebra $C(\NT)$, 
which is the noncommutative analog of translating a continuous 
function defined on the torus $\mathbb{T}^2$  by $(s_1, s_2)$.

\smallskip

Associated with the above action $\alpha$, there are two infinitesimal 
generators $\delta_1$ and $\delta_2$, which are derivations 
on the space of smooth elements $\CNT$ in $C(\NT)$. 
More precisely, $\CNT$ consists of all elements $x$ in the noncommutative 
torus such that the mapping  
$(s_1, s_2 ) \mapsto \alpha_{(s_1, s_2)}(x)$ 
from $\mathbb{T}^2$ to $C(\NT)$ is a smooth map. Indeed, 
$\CNT$ is a dense subalgebra of $C(\NT)$, and it can alternatively be 
described as the space of all 
\[
x =\sum_{m, n \in \mathbb{Z}} a_{m, n}\, U^m V^n, 
\]
such that the sequence of 
complex coefficients $(a_{m, n})$ is rapidly decaying in the sense 
that 
\[
\sup_{m, n \in Z} |a_{m, n}|(1 + |n| + |m|)^k < \infty, 
\]
for any non-negative integer $k$. Therefore, each of the derivations 
$\done$ and  
$\dtwo$ on $\CNT$ is characterized by its action on the 
generators $U$ and $V$, which are in fact given by 
\[
\done(U)=U, \qquad \done(V) = 0, \qquad \dtwo(U) = 0, \qquad \dtwo(V)=V. 
\]

\smallskip

While $\done$ and $\dtwo$ are respectively the analogs of the differential 
operators $-i (\partial/ \partial s_1)$ and $-i (\partial/ \partial s_2)$ on 
the ordinary two torus, we also have an analog of the integration or volume 
form of the flat metric. The latter is provided by the unique normalized 
tracial state $\vphi_0 : C(\NT) \to \C$, which is defined on the smooth dense 
subalgebra $\CNT$ by  
\begin{equation} \label{NCTtrace}
\vphi_0 \left (  \sum_{m, n \in \mathbb{Z}} a_{m, n} U^m V^n \right ) = a_{0,0}. 
\end{equation}
In fact, the uniqueness of this trace is due to the irrationality of $\theta$. An important 
property of the trace $\vphi_0$ is its invariance under the action $\alpha$, 
which yields 
\[
\vphi_0 \circ \delta_j = 0, \qquad j =1, 2, 
\]
hence the analog of  integration by parts: 
\begin{equation} \label{IntbyParts}
\vphi_0 \left (x_1 \delta_j(x_2) \right ) 
=
- \vphi_0 \left ( \delta_j(x_1) \, x_2 \right ), 
\qquad x_1, x_2 \in \CNT. 
\end{equation}

\smallskip

Following \cite{ConTreGB}, we consider a complex structure on the 
noncommutative two torus by setting the analog of the Dolbeault 
operators to be 
\[
\partial 
= 
\done - i \dtwo: \CNT \subset \mathcal{H}_0 \to \mathcal{H}^{(1, 0)}, 
\qquad
\bar \partial 
= 
\done +i \dtwo: \CNT \subset \mathcal{H}_0 \to \mathcal{H}^{(0, 1)},    
\]
where the Hilbert spaces 
$ \mathcal{H}_0, \mathcal{H}^{(1, 0)}, \mathcal{H}^{(0, 1)}$ 
are defined as follows. The Hilbert space $\mathcal{H}_0$ is the completion of 
$C(\NT)$ with respect to the inner product
\[
\langle x, y \rangle = \vphi_0 (y^* x), \qquad x, y \in C(\NT). 
\]
In order to define the Hilbert space $\mathcal{H}^{(1, 0)}$, which is the 
analog of the space of $(1, 0)$-forms, we need to consider the bimodule over 
$\CNT$ of finite sums of the form $ \sum a_i \partial(b_i), a_i, b_i \in \CNT$, and 
complete it with an inner product that comes from a positive Hochschild 
cocycle. The Hilbert space $\mathcal{H}^{(0, 1)}$ is obtained similarly by 
a completion of the space of finite sums of the form 
$ \sum a_i \bar \partial(b_i), a_i, b_i \in \CNT$.  
The positive Hochschild cocycle, which encodes the conformal structure of the 
metric \cite{ConNCGBook} on $\NT$, is defined by 
\[
\psi(x, y, z) 
= 
- \vphi_0 \left (x \, \partial(y)\, \bar \partial(z) \right), \qquad x, y, z \in \CNT,  
\]
and the inner product for $\mathcal{H}^{(1, 0)}$ is given by 
\[
\langle x_1 \partial(y_1),  x_2 \partial(y_2)  \rangle 
= 
\psi \left( x_2^* x_1, y_1, y_2^* \right ), 
\qquad x_1, y_1, x_2, y_2 \in \CNT. 
\]

\smallskip

With this information, we can now calculate the Laplacian associated with the flat metric  
\[
\triangle := \partial^* \partial = \del_1^2 + \del_2^2, 
\]
which is an unbounded selfadjoint operator acting in the Hilbert space $\mathcal{H}_0$. 
By using a positive invertible element $e^h \in \CNT$, where $h$ is a smooth selfadjoint 
element, one can vary the metric inside the conformal structure. That is, one can 
consider the state $\vphi : C(\NT) \to \C$ defined by 
\begin{equation} \label{KMSstate}
\vphi(x) = \vphi_0(x e^{-h}), \qquad x \in C(\NT), 
\end{equation}
which is the analog of the volume form of the conformal perturbation of the 
flat metric. We consider the Hilbert space $\mathcal{H}_\vphi$ obtained from 
completing $C(\NT)$ with respect to the inner product 
\[
\langle x, y \rangle_\vphi = \vphi(y^* x), \qquad x, y \in C(\NT). 
\]
Clearly, the adjoint of the operator $\partial_\vphi= \partial : \mathcal{H}_\vphi \to \mathcal{H}^{(1, 0)}$
depends on the conformal factor $e^h$. It is shown in \cite{ConTreGB} that there is an anti-unitary equivalence 
between the Hilbert spaces $\mathcal{H}_0$ and $\mathcal{H}_\vphi$ that identifies 
the Laplacian 
\[
\triangle_\vphi := \partial_\vphi^* \partial_\vphi : \mathcal{H}_\vphi \to \mathcal{H}_\vphi, 
\]
with 
the operator $e^{h/2} \triangle e^{h/2}$ acting in $\mathcal{H}_0$. Therefore, we make the 
identification 
\begin{equation} \label{conformalLaplacian}
\triangle_\vphi = e^{h/2} \triangle e^{h/2} : \mathcal{H}_0 \to \mathcal{H}_0.
\end{equation}

\smallskip

A purely noncommutative feature in the calculation of the terms $a_{2n} \in \CNT$ 
in the small-time asymptotic expansion \eqref{heatexp} of $\Tr(a \exp(-t \triangle_\vphi))$ 
is the appearance 
of the modular automorphism of the state $\vphi$ in the final formulas. 
That is, the linear functional $\vphi$ 
given by \eqref{KMSstate} is a KMS state that satisfies the condition 
\[
\vphi(x y) = \vphi(y \, \sigma_i(x)), \qquad x, y \in C(\NT), 
\]
for the 1-parameter group of automorphisms $\{ \sigma_t \}_{t \in \R}$ defined by 
\[
\sigma_t(x) = e^{ith} x e^{-ith}, \qquad x \in C(\NT). 
\]
Clearly, the modular automorphism $\sigma_i$ acts by conjugation with $e^{-h}$,  
and its logarithm is thereby given by 
\[
\nabla(x) := \log \sigma_i (x) = - \ad_{h}(x) = -hx + xh, \qquad x \in C(\NT). 
\]
The final formula for the second term in the expansion \eqref{heatexp}, which 
is calculated in \cite{ConMosModular, FatKhaSC2T}, is given by  
\begin{equation} \label{SCformula}
a_2 = R_1(\nab) \left (\done^2(\ell) + \dtwo^2(\ell) \right) 
+ R_2(\nab, \nab) \left ( \done(\ell) \cdot \done(\ell) + \dtwo(\ell) \cdot \dtwo(\ell) \right ),  
\end{equation}
where
\[
\ell = \frac{h}{2} \in \CNT, 
\] 
\begin{eqnarray*}
R_1(s_1)&=&\frac{4 e^{\frac{s_1}{2}} \pi  \left(2+e^{s_1} \left(-2+s_1\right)+s_1\right)}{\left(-1+e^{s_1}\right){}^2 s_1}, \\
R_2(s_1, s_2)&=&
\end{eqnarray*}
{\tiny
\[
-4\frac{\pi  \left(\cosh\left[s_2\right] s_1 \left(s_1+s_2\right)-\cosh\left[s_1\right] s_2 \left(s_1+s_2\right)-\left(s_1-s_2\right) \left(\sinh\left[s_1\right]+\sinh\left[s_2\right]-\sinh\left[s_1+s_2\right]+s_1+s_2\right)\right)}{\sinh\left[\frac{s_1}{2}\right] \sinh\left[\frac{s_2}{2}\right] \sinh^2\left[\frac{1}{2} \left(s_1+s_2\right)\right] s_1 s_2 \left(s_1+s_2\right)}.
\]
}

\smallskip

Let us explain that, in general, given a rapidly decaying smooth function 
$L$ defined on the Euclidean space $\R^n$ and elements $x_1, \dots, x_n$  in 
the algebra  $C(\NT)$ of the  noncommutative torus, we consider the calculus defined by 
\[
L(\nab, \dots, \nab)(x_1 \cdots x_n) 
= 
\int_{\R^n} \sigma_{t_1}(x_1) \cdots \sigma_{t_n}(x_n) \, g(t_1, \dots, t_n) \,dt_1 \cdots \,dt_n, 
\] 
where the function $g$ is obtained by writing the function $L$ as a Fourier transform: 
\[
L(s_1, \dots, s_n) = \int_{\R^n} e^{-i( t_1s_1+\cdots+ t_n s_n)} g(t_1, \dots, t_n) \, dt_1 \cdots \, dt_n. 
\]

\smallskip

\section{The term $a_4$ and its functional relations} 
\label{a_4Sec}

%\[
%\ell = \frac{h}{2}, \qquad \nab = [-h, \cdot ], \qquad \sigma_t = e^{-it \nab}. 
%\]

We start this section by writing an expression for the term 
$a_4 \in \CNT$ appearing in the small-time heat kernel expansion 
\eqref{heatexp}, which involves 20 functions of one to four variables 
that are denoted by $K_1, \dots, K_{20}$.  We will then proceed to 
present one of our main results, namely a family of functional relations 
that are satisfied by the functions $K_j,$ $j=1, \dots, 20$. Using the notations 
provided in Section \ref{PrelSec}, we have: 
\begin{equation}  \label{a_4expression}
a_4=
\end{equation}
\begin{eqnarray}
&&- e^{2 \ell} \Big ( K_1(\nab) \left ( \delta _1^2 \delta _2^2 ( \ell ) \right ) 
+ 
K_2 (\nab) \left (   \delta _1^4( \ell )+\delta _2^4 (\ell ) \right )  
+
K_3 (\nab, \nab) \left (
 \left(\delta _1 \delta _2(\ell
   )\right) \cdot \left(\delta _1 \delta _2(\ell
   )\right) 
   \right )
\nonumber \\ && 
+ 
K_4 (\nab, \nab) \left (  \delta _1^2(\ell ) \cdot \delta _2^2(\ell )+\delta
   _2^2(\ell ) \cdot \delta _1^2(\ell )\right )  
+
K_5 (\nab, \nab) \left ( \delta _1^2( \ell )\cdot \delta _1^2(\ell )+\delta
   _2^2(\ell ) \cdot \delta _2^2(\ell ) \right ) 
\nonumber \\ && 
+ 
K_6 (\nab, \nab) \left ( 
\delta _1(\ell )\cdot \delta _1^3(\ell )+\delta_1(\ell )\cdot \left(\delta _1 \delta _2^2 (\ell
   )\right)+\delta _2(\ell )\cdot \delta _2^3(\ell
   )+\delta _2(\ell )\cdot \left(\delta _1^2
   \delta _2(\ell )\right)
 \right ) 
\nonumber \\ &&
+ 
K_7 (\nab, \nab) \left ( 
\delta _1^3(\ell )\cdot \delta _1(\ell
   )+\left(\delta _1 \delta _2^2(\ell
   )\right)\cdot \delta _1(\ell )+\delta _2^3(\ell
   )\cdot \delta _2(\ell )+\left(\delta _1^2
   \delta _2(\ell )\right)\cdot \delta _2(\ell )
 \right ) 
\nonumber 
\end{eqnarray}
\begin{eqnarray}
 &&
+ 
K_8 (\nab, \nab, \nab) \left ( 
\delta _1(\ell ) \cdot \delta _1(\ell )\cdot \delta
   _2^2(\ell )+\delta _2(\ell )\cdot \delta
   _2(\ell )\cdot \delta _1^2(\ell )
\right ) 
\nonumber \\ && 
+ 
K_9 (\nab, \nab, \nab) \left ( 
\delta _1(\ell )\cdot \delta _2(\ell
   )\cdot \left(\delta _1 \delta _2(\ell
   )\right)+\delta _2(\ell )\cdot \delta _1(\ell
   )\cdot \left(\delta _1 \delta _2(\ell )\right)
\right ) 
\nonumber \\ && 
+ 
K_{10} (\nab, \nab, \nab) \left ( 
\delta _1(\ell )\cdot \left(\delta _1 \delta
   _2(\ell )\right)\cdot \delta _2(\ell )+\delta
   _2(\ell )\cdot  \left(\delta _1 \delta _2(\ell
   )\right) \cdot  \delta _1(\ell )
\right )
\nonumber \\ && 
+ 
K_{11} (\nab, \nab, \nab) \left ( 
\delta _1(\ell )\cdot \delta _2^2(\ell ) \cdot \delta
   _1(\ell )+\delta _2(\ell )\cdot \delta
   _1^2(\ell ) \cdot \delta _2(\ell )
\right ) 
\nonumber \\ && 
+ 
K_{12} (\nab, \nab, \nab) \left ( 
\delta _1^2(\ell ) \cdot \delta _2(\ell )\cdot \delta
   _2(\ell )+\delta _2^2( \ell ) \cdot \delta
   _1(\ell )\cdot \delta _1(\ell )
\right ) 
\nonumber 
\end{eqnarray}
\begin{eqnarray}
&&+ 
K_{13} (\nab, \nab, \nab) \left ( 
\left(\delta _1 \delta _2(\ell
   )\right)\cdot \delta _1(\ell )\cdot \delta _2(\ell
   )+\left(\delta _1 \delta _2(\ell
   )\right) \cdot \delta _2(\ell )\cdot \delta _1(\ell )
\right ) 
\nonumber \\ && 
+ 
K_{14} (\nab, \nab, \nab) \left ( 
\delta _1^2(\ell ) \cdot \delta _1(\ell )\cdot 
\delta_1(\ell )+\delta _2^2(\ell )\cdot \delta_2(\ell )\cdot \delta _2(\ell )
\right )
\nonumber \\ && 
+ 
K_{15} (\nab, \nab, \nab) \left ( 
\delta _1(\ell ) \cdot \delta _1(\ell )\cdot \delta
   _1^2(\ell )+\delta _2(\ell )\cdot \delta
   _2(\ell )\cdot \delta _2^2(\ell )
\right )
\nonumber \\ && 
+ 
K_{16} (\nab, \nab, \nab) \left ( 
\delta _1(\ell )\cdot \delta _1^2(\ell ) \cdot \delta
   _1(\ell )+\delta _2(\ell )\cdot \delta
   _2^2(\ell )\cdot \delta _2(\ell )
\right ) 
\nonumber 
\end{eqnarray} 
\begin{eqnarray}
&& 
+ 
K_{17} (\nab, \nab, \nab, \nab ) \left ( 
\delta _1(\ell )\cdot \delta _1(\ell )\cdot \delta
   _2(\ell )\cdot \delta _2(\ell )+\delta _2(\ell
   )\cdot \delta _2(\ell )\cdot \delta _1(\ell )\cdot \delta
   _1(\ell )
\right ) 
\nonumber \\ && 
+ 
K_{18} (\nab, \nab, \nab, \nab ) \left ( 
\delta _1(\ell )\cdot \delta _2(\ell )\cdot \delta
   _1(\ell )\cdot \delta _2(\ell )+\delta _2(\ell
   )\cdot \delta _1(\ell )\cdot \delta _2(\ell )\cdot \delta
   _1(\ell )
\right ) 
\nonumber \\ && 
+ 
K_{19} (\nab, \nab, \nab, \nab ) \left ( 
\delta _1(\ell )\cdot \delta _2(\ell )\cdot \delta
   _2(\ell )\cdot \delta _1(\ell )+\delta _2(\ell
   )\cdot \delta _1(\ell )\cdot \delta _1(\ell )\cdot \delta
   _2(\ell )
\right ) 
\nonumber \\ && 
+ 
K_{20} (\nab, \nab, \nab, \nab ) \left ( 
\delta _1(\ell )\cdot \delta _1(\ell )\cdot \delta
   _1(\ell )\cdot \delta _1(\ell )+\delta _2(\ell
   )\cdot \delta _2(\ell )\cdot \delta _2(\ell )\cdot \delta
   _2(\ell )
\right ) \Big ). \nonumber
\end{eqnarray}

\smallskip

Explicit formulas for the one and two variable functions $K_1, \dots, K_7$ 
are given in Section \ref{ExplicitFormulasSec}, and lengthy formulas 
for the three and four variable functions $K_8, \dots, K_{20}$ are 
provided in Appendix \ref{explicit3and4fnsappsec}.

\smallskip

In order to present the functional relations, it is convenient to 
define mild variants $\widetilde K_j$ of the functions $K_j,$ 
$j=1, 2, \dots, 20,$ which are given by  
\begin{eqnarray} \label{Ktildesdef}
\widetilde K_j(s_1) &=& \frac{1}{2}\frac{ \sinh \left(\frac{s_1}{2}\right)}{\frac{s_1}{2}} K_j(s_1), \qquad j=1, 2, \\ 
\widetilde K_j (s_1, s_2) &=& \frac{1}{2^2}\frac{ \sinh\left(\frac{s_1+s_2}{2}\right)}{\frac{s_1+s_2}{2}} K_j(s_1, s_2), \qquad j=3, 4, \dots, 7, \nonumber 
\end{eqnarray}
\begin{eqnarray*}
\widetilde K_j (s_1, s_2, s_3) &=& \frac{1}{2^3}\frac{ \sinh \left(\frac{s_1+s_2+s_3}{2}\right)}{\frac{s_1+s_2+s_3}{2}} K_j(s_1, s_2, s_3), \qquad j=8, 9, \dots, 16,  \\
\widetilde K_j (s_1, s_2, s_3, s_4) &=& \frac{1}{2^4}\frac{ \sinh\left(\frac{s_1+s_2+s_3+s_4}{2}\right)}{\frac{s_1+s_2+s_3+s_4}{2}} K_j(s_1, s_2, s_3, s_4), \qquad j=17, 18, 19, 20.  
\end{eqnarray*}
We also need to introduce the following functions $k_j$, $j=3,4, \dots, 20,$ 
which are derived from the main functions by setting
\begin{eqnarray} \label{littlekfunctions}
k_j (s_1) &=& K_j(s_1, -s_1), \qquad j=3, \dots, 7, \\
k_j (s_1, s_2) &=& K_j(s_1, s_2, -s_1-s_2), \qquad j=8, 9, \dots, 16, \nonumber \\
k_j (s_1, s_2, s_3) &=& K_j(s_1, s_2, s_3, -s_1-s_2-s_3), \qquad j=17, 18, 19, 20. \nonumber  
\end{eqnarray}

\smallskip

The following functions $G_1, G_2, G_3, G_4$, which are constructed in 
lemmas \ref{ToLogBaby}, \ref{BabyFunctions1} and \ref{BabyFunctions2}, 
play an important role as well 
in the functional relations. They are given explicitly by  
\begin{eqnarray}  \label{ExplicitBabyFunctions}
G_1(s_1)&=&\frac{e^{s_1}-1}{s_1}, \\ 
G_2(s_1, s_2) &=& \frac{e^{s_1} \left(\left(e^{s_2}-1\right) s_1-s_2\right)+s_2}{s_1 s_2 \left(s_1+s_2\right)}, \nonumber \\
G_3(s_1, s_2, s_3) &=& \nonumber
\end{eqnarray}
\[
\frac{e^{s_1} \left(e^{s_2+s_3} s_1 s_2 \left(s_1+s_2\right)+\left(s_1+s_2+s_3\right) \left(\left(s_1+s_2\right) s_3-e^{s_2} s_1 \left(s_2+s_3\right)\right)\right)-s_2 s_3 \left(s_2+s_3\right)}{s_1 s_2 \left(s_1+s_2\right) s_3 \left(s_2+s_3\right) \left(s_1+s_2+s_3\right)}, 
\]
\begin{eqnarray*}
G_4(s_1, s_2, s_3, s_4)&=&\frac{e^{s_1+s_2} \left(\frac{1}{s_2 \left(s_1+s_2\right) \left(s_3+s_4\right)}-\frac{e^{s_3}}{\left(s_2+s_3\right) \left(s_1+s_2+s_3\right) s_4}\right)}{s_3} \\
&&+\frac{\frac{1}{\left(s_1+s_2\right) \left(s_1+s_2+s_3\right) \left(s_1+s_2+s_3+s_4\right)}-\frac{e^{s_1}}{s_2 \left(s_2+s_3\right) \left(s_2+s_3+s_4\right)}}{s_1}\\
&&+\frac{e^{s_1+s_2+s_3+s_4}}{s_4 \left(s_3+s_4\right) \left(s_2+s_3+s_4\right) \left(s_1+s_2+s_3+s_4\right)}. 
\end{eqnarray*}

\smallskip

At this stage, we are ready to present the functional relations explicitly. 
It is worth emphasizing that the proof of 
these relations requires a significant amount of work, which is 
carried out in Section \ref{FuncRelationsProofSec} and Section 
\ref{GradientCalculationSec}. That is, the proof is based on calculating 
the gradient of the map that sends a general dilaton $h = h^* \in \CNT$ 
to $\vphi_0(a_4),$ in two different ways: first, by using a fundamental 
identity proved in \cite{ConMosModular}, and second, by using the Duhamel 
formula to find a formula in terms of finite differences.  

\begin{theorem} \label{FuncRelationsThm}
Using the above notations, the functions of one to four variables 
$K_1, \dots , K_{20}$ appearing in the expression \eqref{a_4expression} for the term 
$a_4 \in \CNT$ satisfy the functional  relations presented in 
the following subsections and in Appendix \ref{lengthyfnrelationsappsec}, in which each 
variant $\widetilde K_j$ of the function $K_j$ is expressed as finite differences of the 
involved functions. 
\begin{proof}
The functional relations are derived by comparing the corresponding terms 
in the final formulas for the gradient
\[
\dep \vphi_0 \left ( a_4(h + \vep a) \right ),  
\]
where $h, a \in \CNT$ are selfadjoint elements, calculated in two different ways. 
The first method is explained in the beginning of Section \ref{FuncRelationsProofSec}, 
which gives rise to the formula \eqref{Gradientofa4} where the variants $\widetilde K_j$ 
of the functions $K_j$ appear. The second method is based on employing the 
lemmas proved in Section \ref{FuncRelationsProofSec} and performing the 
calculations explained in Section \ref{GradientCalculationSec} to calculate the 
above gradient in terms of finite differences. 
\end{proof}
\end{theorem}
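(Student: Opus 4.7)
The plan is to follow the two-sided gradient strategy already used to confirm the formula \eqref{SCformula} for $a_2$ in \cite{ConMosModular, FatKhaSC2T}. Namely, I would compute
\begin{equation*}
F(a) := \dep \vphi_0\bigl( a_4(h+\vep a)\bigr), \qquad a = a^* \in \CNT,
\end{equation*}
in two essentially different ways, and then use that $F$ is one single linear functional of $a$ to equate the two expressions coefficient by coefficient. Since $a$ ranges over all selfadjoint elements of $\CNT$, an equality $\vphi_0(X \cdot a) = \vphi_0(Y \cdot a)$ for all $a$ forces $X = Y$, so each cyclic-monomial class in the $\delta_i(\ell)$'s will contribute one independent identity.

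\textbf{First computation.} Starting from \eqref{a_4expression}, I would replace $\ell$ by $\ell + \vep a/2$ and differentiate each of the twenty terms. After applying the tracial identity of \cite{ConMosModular} that performs the cyclic symmetrization of the several-variable functional calculus, the factor $e^{2\ell}$ combines with the differentiation of each monomial $\delta^{\alpha_1}(\ell)\cdots \delta^{\alpha_n}(\ell)$ to produce a contribution of the form $\vphi_0(\widetilde X_j \cdot a)$ in which the characteristic multiplier $\tfrac{1}{2^n}\sinh\bigl(\tfrac{s_1+\cdots+s_n}{2}\bigr)/\bigl(\tfrac{s_1+\cdots+s_n}{2}\bigr)$ appears by construction; this is precisely the reason the rescalings $\widetilde K_j$ of \eqref{Ktildesdef} are the natural variables on this side. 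The outcome will be the formula \eqref{Gradientofa4} expressing $F(a)$ as $\vphi_0(Y_1 \cdot a)$ with $Y_1$ a linear combination of basic noncommutative monomials whose coefficients are built from $\widetilde K_1, \ldots, \widetilde K_{20}$.

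\textbf{Second computation.} On the heat kernel side, one reads $F(a)$ off the $t^2$ coefficient of $\dep \Tr\bigl( a \exp(-t \triangle_\vphi(h+\vep a))\bigr)$ as given by \eqref{heatexp}. I would apply the Duhamel formula to $e^{-t \triangle_\vphi}$ together with the pseudodifferential calculus of \cite{ConC*algDiffGeo}: the relevant symbols involve $e^h$ and its iterated derivatives $\delta^\alpha(e^h)$, and each such derivative has to be re-expressed as a sum of $\delta^\beta(h)$-monomials weighted by finite differences of the exponential function, yielding precisely the baby functions $G_1, G_2, G_3, G_4$ of \eqref{ExplicitBabyFunctions}. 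Establishing this rewriting rule up to order four is the content of the lemmas of Section \ref{FuncRelationsProofSec}. Combined with the rearrangement lemma and the organization of Section \ref{GradientCalculationSec}, this should give $F(a) = \vphi_0(Y_2 \cdot a)$ with $Y_2$ an explicit finite-difference polynomial in $G_1, \ldots, G_4$ and in the one-variable building blocks already used for $a_2$.

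\textbf{Matching and the main obstacle.} Equating $Y_1$ and $Y_2$ and extracting the coefficient of each noncommutative monomial in the $\delta_i(\ell)$'s will give, for each $j$, a formula expressing $\widetilde K_j$ as a finite-difference combination of the $G_i$'s and of the one-variable reductions $k_\bullet$ of \eqref{littlekfunctions}; these $k_j$ enter because cyclic invariance of the trace imposes $s_1 + \cdots + s_n = 0$ on certain arguments, restricting the relevant $K_j$ to that hyperplane. The main obstacle will be purely combinatorial rather than conceptual: the second computation generates on the order of hundreds of thousands of terms coming from the non-abelian expansion of $e^{h/2} \triangle e^{h/2}$, the time-ordered simplicial integrals of the $\sigma_u$-conjugates, and the expansion of each $\delta^\alpha(e^h)$. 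Organizing all of these into the twenty cyclic-symmetry classes of \eqref{a_4expression} without error, and checking that the resulting finite-difference coefficients collapse into the compact form listed in the subsections of Section \ref{a_4Sec} and in Appendix \ref{lengthyfnrelationsappsec}, is where essentially all of the work, and essentially all of the confirmation that the explicit $K_j$'s have been correctly computed, will reside.
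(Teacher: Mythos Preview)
Your overall strategy—compute the gradient $\dep\vphi_0(a_4(h+\vep a))$ in two ways and match—is exactly the paper's strategy. But you have the two sides essentially swapped, and in the process you have omitted the one ingredient that makes the comparison non-tautological.

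The paper's \emph{first} computation does not differentiate \eqref{a_4expression} at all. It invokes the spectral identity \eqref{GradientIdentity} from \cite{ConMosModular},
\[
\dep \zeta_{h+\vep a}(1,s)=-\tfrac{s}{2}\,\zeta_h(\widetilde a,s),\qquad \widetilde a=\int_{-1}^{1}e^{uh/2}ae^{-uh/2}\,du,
\]
which at $s=-1$ yields $\dep\vphi_0(a_4(h+\vep a))=\tfrac12\vphi_0(\widetilde a\cdot a_4(h))$. Only then does the $\sinh$ multiplier appear, via \eqref{tildegeneral}: rewriting $\vphi_0(\widetilde a\,e^{h}L(\nab,\dots,\nab)(x_1\cdots x_n))$ as $\vphi_0\bigl(a\,e^{h}\,\tfrac{2\sinh((s_1+\cdots+s_n)/2)}{(s_1+\cdots+s_n)/2}L\big|_{s_j=\nab}(x_1\cdots x_n)\bigr)$ is what produces \eqref{Gradientofa4} with the $\widetilde K_j$'s. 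This is not a cyclic-rearrangement trick; it is a genuine operator identity for the family $\triangle_{h+\vep a}=e^{(h+\vep a)/2}\triangle\,e^{(h+\vep a)/2}$, and it is the conceptual input you are missing.

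The paper's \emph{second} computation is what you described as your first: one literally replaces $h$ by $h+\vep a$ in \eqref{a_4expression} and differentiates each of the twenty summands $\vphi_0\bigl(e^{h}K_j(\nab,\dots)(\delta^{\alpha}(h)\cdots)\bigr)$. Duhamel's formula is applied here not to the heat semigroup but to $K_j(\nab_\vep,\dots,\nab_\vep)$, giving Lemmas \ref{GrofFuncCalc1}--\ref{GrofFuncCalc3}; the $G_1,\dots,G_4$ enter through Lemma \ref{ToLogBaby} when one converts $e^{-h}\delta^{\alpha}(e^h)$ back to $h$-derivatives. There is no return to the pseudodifferential calculus on this side.

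The reason this matters: if you carry out your two proposed computations as written, both amount to direct differentiation (once of the local formula, once of the heat trace) and both land on the finite-difference side. You would obtain an elaborate tautology, not the functional relations. The non-trivial content comes precisely from having one side equal to the undifferentiated $a_4(h)$ tested against $\widetilde a$, and that requires \eqref{GradientIdentity}.
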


\smallskip

In  each of the 
following subsections, the functional relations associated with the 
functions that depend on the same number of variables are 
given.

\subsection{The functions $\widetilde K_1, \widetilde K_2$}

In this subsection we present the functional relations associated with 
the one variable functions $\widetilde K_1,$ $\widetilde K_2$ defined 
by \eqref{Ktildesdef}.

\subsubsection{The function $\widetilde K_1$} By using the 
identity \eqref{Gradientofa4} and considering the 
specific function of $\nabla$ that acts on $\delta _1^2 \delta _2^2 ( h )$ 
in the final formula for the gradient $\dep \vphi_0(a_4(h+\vep a))$ 
as calculated in Section \ref{GradientCalculationSec}, we have:

\begin{equation} \label{basicK1eqn}
 \widetilde K_1(s_1) =
\end{equation}

\begin{center} 
\begin{math}
-\frac{1}{15} \pi  G_1\left(s_1\right)+\frac{1}{4} e^{s_1} k_3\left(-s_1\right)+\frac{1}{4} k_3\left(s_1\right)+\frac{1}{2} e^{s_1} k_4\left(-s_1\right)+\frac{1}{2} k_4\left(s_1\right)-\frac{1}{2} e^{s_1} k_6\left(-s_1\right)-\frac{1}{2} k_6\left(s_1\right)-\frac{1}{2} e^{s_1} k_7\left(-s_1\right)-\frac{1}{2} k_7\left(s_1\right)-\frac{\pi  \left(e^{s_1}-1\right)}{15 s_1}. 
\end{math}
\end{center}

\subsubsection{The function $\widetilde K_2$} It is clear from 
the explicit formulas presented 
in Section \ref{ExplicitFormulasSec} that the function $\widetilde K_2$ is a scalar multiple of 
the function $\widetilde K_1$. Therefore, it is interesting to see that the following 
functional relation, which has different ingredients compared to those for 
$\widetilde K_1$, gives the same function up to a scalar multiplication. For the second 
function we have: 

\begin{equation} \label{basicK2eqn}
\widetilde K_2(s_1) =
\end{equation}

\begin{center} 
\begin{math}
 -\frac{1}{30} \pi  G_1\left(s_1\right)+\frac{1}{4} e^{s_1} k_5\left(-s_1\right)+\frac{1}{4} k_5\left(s_1\right)-\frac{1}{4} e^{s_1} k_6\left(-s_1\right)-\frac{1}{4} k_6\left(s_1\right)-\frac{1}{4} e^{s_1} k_7\left(-s_1\right)-\frac{1}{4} k_7\left(s_1\right)-\frac{\pi  \left(e^{s_1}-1\right)}{30 s_1}. 
\end{math}
\end{center}

\subsection{The functions $\widetilde K_3, \dots, \widetilde K_7$}
In this subsection we present the functional relations associated with 
the two variable functions $\widetilde K_3, \dots, \widetilde K_7$ defined 
in \eqref{Ktildesdef}. 

\subsubsection{The function $\widetilde K_3$}  
\label{basicK3} 
Similarly to the case 
of the one variable functions, by using the 
identity \eqref{Gradientofa4} and considering the 
specific function of $\nabla$ that acts on $ \delta _1 \delta _2(h) \cdot \delta _1 \delta _2(h) $ 
in the final formula for the gradient $\dep \vphi_0(a_4(h+\vep a))$ 
as calculated in Section \ref{GradientCalculationSec},
we obtain the following functional relation for the function 
$\widetilde K_3:$

\begin{equation} \label{basicK3eqn}
\widetilde K_3(s_1, s_2) =
\end{equation}

\begin{center}
\begin{math} 
 \frac{1}{15} (-4) \pi  G_2\left(s_1,s_2\right)+\frac{1}{2} k_8\left(s_1,s_2\right)+\frac{1}{4} k_9\left(s_1,s_2\right)-\frac{1}{4} e^{s_1+s_2} k_9\left(-s_1-s_2,s_1\right)-\frac{1}{4} e^{s_1} k_9\left(s_2,-s_1-s_2\right)-\frac{1}{4} k_{10}\left(s_1,s_2\right)-\frac{1}{4} e^{s_1+s_2} k_{10}\left(-s_1-s_2,s_1\right)+\frac{1}{4} e^{s_1} k_{10}\left(s_2,-s_1-s_2\right)+\frac{1}{2} e^{s_1} k_{11}\left(s_2,-s_1-s_2\right)+\frac{1}{2} e^{s_1+s_2} k_{12}\left(-s_1-s_2,s_1\right)-\frac{1}{4} k_{13}\left(s_1,s_2\right)+\frac{1}{4} e^{s_1+s_2} k_{13}\left(-s_1-s_2,s_1\right)-\frac{1}{4} e^{s_1} k_{13}\left(s_2,-s_1-s_2\right)+\frac{1}{4} e^{s_2} G_1\left(s_1\right) k_3\left(-s_2\right)+\frac{1}{4} G_1\left(s_1\right) k_3\left(s_2\right)-G_1\left(s_1\right) k_6\left(s_2\right)-e^{s_2} G_1\left(s_1\right) k_7\left(-s_2\right)+\frac{\left(e^{s_1+s_2}-1\right) k_3\left(s_1\right)}{4 \left(s_1+s_2\right)}+\frac{k_3\left(s_2\right)-k_3\left(s_1+s_2\right)}{4 s_1}+\frac{k_3\left(s_1+s_2\right)-k_3\left(s_1\right)}{4 s_2}+\frac{k_6\left(s_1\right)-k_6\left(s_1+s_2\right)}{s_2}+\frac{k_6\left(s_1+s_2\right)-k_6\left(s_2\right)}{s_1}+\frac{e^{s_1} \left(k_7\left(-s_1\right)-e^{s_2} k_7\left(-s_1-s_2\right)\right)}{s_2}+\frac{e^{s_2} \left(e^{s_1} k_7\left(-s_1-s_2\right)-k_7\left(-s_2\right)\right)}{s_1}-\frac{e^{s_2} \left(e^{s_1} k_3\left(-s_1-s_2\right)-k_3\left(-s_2\right)\right)}{4 s_1}-\frac{e^{s_1} \left(k_3\left(-s_1\right)-e^{s_2} k_3\left(-s_1-s_2\right)\right)}{4 s_2}-\frac{e^{s_1} \left(k_3\left(-s_1\right)+e^{s_2} k_3\left(s_1\right)-e^{s_2} k_3\left(-s_2\right)-k_3\left(s_2\right)\right)}{4 \left(s_1+s_2\right)}.  
\end{math}
\end{center}

\subsubsection{The function $\widetilde K_4$} 
\label{basicK4}
By making another comparison 
between the term containing  the function $\widetilde K_4$ in \eqref{Gradientofa4}, 
and the corresponding function coming out of the calculations in Section 
\ref{GradientCalculationSec}, 
we have: 

\begin{equation} \label{basicK4eqn}
\widetilde K_4(s_1, s_2) =
\end{equation}

\begin{center}
\begin{math}
 -\frac{1}{15} \pi  G_2\left(s_1,s_2\right)-\frac{1}{8} e^{s_1+s_2} k_8\left(-s_1-s_2,s_1\right)-\frac{1}{8} e^{s_1} k_8\left(s_2,-s_1-s_2\right)+\frac{1}{8} k_9\left(s_1,s_2\right)+\frac{1}{8} e^{s_1} k_{10}\left(s_2,-s_1-s_2\right)-\frac{1}{8} k_{11}\left(s_1,s_2\right)-\frac{1}{8} e^{s_1+s_2} k_{11}\left(-s_1-s_2,s_1\right)-\frac{1}{8} k_{12}\left(s_1,s_2\right)-\frac{1}{8} e^{s_1} k_{12}\left(s_2,-s_1-s_2\right)+\frac{1}{8} e^{s_1+s_2} k_{13}\left(-s_1-s_2,s_1\right)+\frac{1}{4} e^{s_2} G_1\left(s_1\right) k_4\left(-s_2\right)+\frac{1}{4} G_1\left(s_1\right) k_4\left(s_2\right)-\frac{1}{4} G_1\left(s_1\right) k_6\left(s_2\right)-\frac{1}{4} e^{s_2} G_1\left(s_1\right) k_7\left(-s_2\right)+\frac{\left(e^{s_1+s_2}-1\right) k_4\left(s_1\right)}{4 \left(s_1+s_2\right)}+\frac{k_4\left(s_2\right)-k_4\left(s_1+s_2\right)}{4 s_1}+\frac{k_4\left(s_1+s_2\right)-k_4\left(s_1\right)}{4 s_2}+\frac{k_6\left(s_1\right)-k_6\left(s_1+s_2\right)}{4 s_2}+\frac{k_6\left(s_1+s_2\right)-k_6\left(s_2\right)}{4 s_1}+\frac{e^{s_1} \left(k_7\left(-s_1\right)-e^{s_2} k_7\left(-s_1-s_2\right)\right)}{4 s_2}+\frac{e^{s_2} \left(e^{s_1} k_7\left(-s_1-s_2\right)-k_7\left(-s_2\right)\right)}{4 s_1}-\frac{e^{s_2} \left(e^{s_1} k_4\left(-s_1-s_2\right)-k_4\left(-s_2\right)\right)}{4 s_1}-\frac{e^{s_1} \left(k_4\left(-s_1\right)-e^{s_2} k_4\left(-s_1-s_2\right)\right)}{4 s_2}-\frac{e^{s_1+s_2} \left(k_4\left(s_1\right)-k_4\left(-s_2\right)\right)}{4 \left(s_1+s_2\right)}-\frac{e^{s_1} \left(k_4\left(-s_1\right)-k_4\left(s_2\right)\right)}{4 \left(s_1+s_2\right)}. 
\end{math}
\end{center}

\subsubsection{The function $\widetilde K_5$}
\label{basicK5}
By a similar comparison, we have: 

\begin{equation} \label{basicK5eqn}
\widetilde K_5(s_1, s_2) =
\end{equation}

\begin{center}
\begin{math}
 -\frac{1}{5} \pi  G_2\left(s_1,s_2\right)-\frac{1}{8} k_{14}\left(s_1,s_2\right)+\frac{1}{4} e^{s_1+s_2} k_{14}\left(-s_1-s_2,s_1\right)-\frac{1}{8} e^{s_1} k_{14}\left(s_2,-s_1-s_2\right)+\frac{1}{4} k_{15}\left(s_1,s_2\right)-\frac{1}{8} e^{s_1+s_2} k_{15}\left(-s_1-s_2,s_1\right)-\frac{1}{8} e^{s_1} k_{15}\left(s_2,-s_1-s_2\right)-\frac{1}{8} k_{16}\left(s_1,s_2\right)-\frac{1}{8} e^{s_1+s_2} k_{16}\left(-s_1-s_2,s_1\right)+\frac{1}{4} e^{s_1} k_{16}\left(s_2,-s_1-s_2\right)+\frac{1}{4} e^{s_2} G_1\left(s_1\right) k_5\left(-s_2\right)+\frac{1}{4} G_1\left(s_1\right) k_5\left(s_2\right)-\frac{3}{4} G_1\left(s_1\right) k_6\left(s_2\right)-\frac{3}{4} e^{s_2} G_1\left(s_1\right) k_7\left(-s_2\right)+\frac{\left(e^{s_1+s_2}-1\right) k_5\left(s_1\right)}{4 \left(s_1+s_2\right)}+\frac{k_5\left(s_2\right)-k_5\left(s_1+s_2\right)}{4 s_1}+\frac{k_5\left(s_1+s_2\right)-k_5\left(s_1\right)}{4 s_2}+\frac{3 \left(k_6\left(s_1\right)-k_6\left(s_1+s_2\right)\right)}{4 s_2}+\frac{3 e^{s_1} \left(k_7\left(-s_1\right)-e^{s_2} k_7\left(-s_1-s_2\right)\right)}{4 s_2}+\frac{3 e^{s_2} \left(e^{s_1} k_7\left(-s_1-s_2\right)-k_7\left(-s_2\right)\right)}{4 s_1}-\frac{e^{s_2} \left(e^{s_1} k_5\left(-s_1-s_2\right)-k_5\left(-s_2\right)\right)}{4 s_1}-\frac{3 \left(k_6\left(s_2\right)-k_6\left(s_1+s_2\right)\right)}{4 s_1}-\frac{e^{s_1} \left(k_5\left(-s_1\right)-e^{s_2} k_5\left(-s_1-s_2\right)\right)}{4 s_2}-\frac{e^{s_1} \left(k_5\left(-s_1\right)+e^{s_2} k_5\left(s_1\right)-e^{s_2} k_5\left(-s_2\right)-k_5\left(s_2\right)\right)}{4 \left(s_1+s_2\right)}. 
\end{math}
\end{center} 

\subsubsection{The function $\widetilde K_6$}
\label{basicK6}

In \eqref{Gradientofa4} we see that the operator $ \widetilde K_6(\nab, \nab) $ 
acts on two different elements that are not the same up to switching $\done$ 
and $\dtwo$, namely  $\done(h) \cdot \done^3(h)$ and  $\done(h) \cdot \done \dtwo^2(h)$. 
By looking at the corresponding finite difference expressions in the result of the second 
gradient calculation performed in Section \ref{FuncRelationsProofSec} we find the following 
basic equations for $\widetilde K_6(s_1, s_2) $. From the expression associated with the 
term  $\done(h) \cdot \done^3(h)$ we find that 

\begin{equation} \label{basicK6eqnfirst}
 \widetilde K_6(s_1, s_2) =
\end{equation}
\begin{center}
\begin{math}
\frac{1}{15} (-2) \pi  G_2\left(s_1,s_2\right)+\frac{1}{8} e^{s_1+s_2} k_{14}\left(-s_1-s_2,s_1\right)-\frac{1}{8} e^{s_1} k_{14}\left(s_2,-s_1-s_2\right)+\frac{1}{8} k_{15}\left(s_1,s_2\right)-\frac{1}{8} e^{s_1+s_2} k_{15}\left(-s_1-s_2,s_1\right)-\frac{1}{8} k_{16}\left(s_1,s_2\right)+\frac{1}{8} e^{s_1} k_{16}\left(s_2,-s_1-s_2\right)+\frac{1}{2} e^{s_2} G_1\left(s_1\right) k_5\left(-s_2\right)+\frac{1}{2} G_1\left(s_1\right) k_5\left(s_2\right)-\frac{1}{4} e^{s_2} G_1\left(s_1\right) k_6\left(-s_2\right)-\frac{3}{4} G_1\left(s_1\right) k_6\left(s_2\right)-\frac{3}{4} e^{s_2} G_1\left(s_1\right) k_7\left(-s_2\right)-\frac{1}{4} G_1\left(s_1\right) k_7\left(s_2\right)+\frac{k_5\left(s_2\right)-k_5\left(s_1+s_2\right)}{2 s_1}+\frac{\left(e^{s_1+s_2}-1\right) k_6\left(s_1\right)}{4 \left(s_1+s_2\right)}+\frac{e^{s_2} \left(e^{s_1} k_6\left(-s_1-s_2\right)-k_6\left(-s_2\right)\right)}{4 s_1}+\frac{k_6\left(s_1\right)-k_6\left(s_1+s_2\right)}{4 s_2}+\frac{e^{s_1} \left(k_7\left(-s_1\right)-e^{s_2} k_7\left(-s_1-s_2\right)\right)}{4 s_2}+\frac{3 e^{s_2} \left(e^{s_1} k_7\left(-s_1-s_2\right)-k_7\left(-s_2\right)\right)}{4 s_1}+\frac{k_7\left(s_1+s_2\right)-k_7\left(s_2\right)}{4 s_1}-\frac{e^{s_2} \left(e^{s_1} k_5\left(-s_1-s_2\right)-k_5\left(-s_2\right)\right)}{2 s_1}-\frac{3 \left(k_6\left(s_2\right)-k_6\left(s_1+s_2\right)\right)}{4 s_1}-\frac{e^{s_1+s_2} \left(k_6\left(s_1\right)-k_6\left(-s_2\right)\right)}{4 \left(s_1+s_2\right)}-\frac{e^{s_1} \left(k_7\left(-s_1\right)-k_7\left(s_2\right)\right)}{4 \left(s_1+s_2\right)}. 
\end{math}
\end{center} 

\smallskip 
Moreover, the expression associated with the term  $\done(h) \cdot \done \dtwo^2(h)$ 
as explained above yields: 
\begin{equation} \label{basicK6eqnsecond}
 \widetilde K_6(s_1, s_2) =
\end{equation}
\begin{center}
\begin{math}
\frac{1}{15} (-2) \pi  G_2\left(s_1,s_2\right)+\frac{1}{4} e^{s_2} G_1\left(s_1\right) k_3\left(-s_2\right)+\frac{1}{4} G_1\left(s_1\right) k_3\left(s_2\right)+\frac{1}{2} e^{s_2} G_1\left(s_1\right) k_4\left(-s_2\right)+\frac{1}{2} G_1\left(s_1\right) k_4\left(s_2\right)+\frac{\left(1-e^{s_1+s_2}\right) k_6\left(s_1\right)}{4 \left(-s_1-s_2\right)}-\frac{1}{4} e^{s_2} G_1\left(s_1\right) k_6\left(-s_2\right)+\frac{e^{s_1+s_2} \left(k_6\left(-s_2\right)-k_6\left(s_1\right)\right)}{4 \left(s_1+s_2\right)}+\frac{e^{s_1+s_2} k_6\left(-s_1-s_2\right)-e^{s_2} k_6\left(-s_2\right)}{4 s_1}-\frac{3}{4} G_1\left(s_1\right) k_6\left(s_2\right)+\frac{3 \left(k_6\left(s_1+s_2\right)-k_6\left(s_2\right)\right)}{4 s_1}-\frac{3}{4} e^{s_2} G_1\left(s_1\right) k_7\left(-s_2\right)+\frac{3 \left(e^{s_1+s_2} k_7\left(-s_1-s_2\right)-e^{s_2} k_7\left(-s_2\right)\right)}{4 s_1}-\frac{1}{4} G_1\left(s_1\right) k_7\left(s_2\right)+\frac{e^{s_1} \left(k_7\left(s_2\right)-k_7\left(-s_1\right)\right)}{4 \left(s_1+s_2\right)}+\frac{k_7\left(s_1+s_2\right)-k_7\left(s_2\right)}{4 s_1}+\frac{1}{8} k_8\left(s_1,s_2\right)-\frac{1}{8} e^{s_1+s_2} k_8\left(-s_1-s_2,s_1\right)+\frac{1}{8} k_9\left(s_1,s_2\right)-\frac{1}{8} e^{s_1+s_2} k_9\left(-s_1-s_2,s_1\right)-\frac{1}{8} k_{10}\left(s_1,s_2\right)+\frac{1}{8} e^{s_1} k_{10}\left(s_2,-s_1-s_2\right)-\frac{1}{8} k_{11}\left(s_1,s_2\right)+\frac{1}{8} e^{s_1} k_{11}\left(s_2,-s_1-s_2\right)+\frac{1}{8} e^{s_1+s_2} k_{12}\left(-s_1-s_2,s_1\right)-\frac{1}{8} e^{s_1} k_{12}\left(s_2,-s_1-s_2\right)+\frac{1}{8} e^{s_1+s_2} k_{13}\left(-s_1-s_2,s_1\right)-\frac{1}{8} e^{s_1} k_{13}\left(s_2,-s_1-s_2\right)-\frac{e^{s_1+s_2} k_4\left(-s_1-s_2\right)-e^{s_2} k_4\left(-s_2\right)}{2 s_1}-\frac{k_4\left(s_1+s_2\right)-k_4\left(s_2\right)}{2 s_1}-\frac{e^{s_1+s_2} k_3\left(-s_1-s_2\right)-e^{s_2} k_3\left(-s_2\right)}{4 s_1}-\frac{k_3\left(s_1+s_2\right)-k_3\left(s_2\right)}{4 s_1}-\frac{k_6\left(s_1+s_2\right)-k_6\left(s_1\right)}{4 s_2}-\frac{e^{s_1+s_2} k_7\left(-s_1-s_2\right)-e^{s_1} k_7\left(-s_1\right)}{4 s_2}. 
\end{math}
\end{center}

\subsubsection{The function $\widetilde K_7$}
\label{basicK7}
The situation for the last two variable function $\widetilde K_7$ is similar to 
that of $\widetilde K_6$ in the sense that the operator  $\widetilde K_7(\nab, \nab)$ 
in \eqref{Gradientofa4}
acts on two different types of elements that are different even modulo switching 
$\done$ and $\dtwo$. By finding the finite difference expression of the modular operator 
that acts on  $\done^3(h) \cdot \done(h)$ in the second gradient calculation of Section 
\ref{FuncRelationsProofSec} we have: 

\begin{equation} \label{basicK7eqnfirst}
\widetilde K_7(s_1, s_2) = 
\end{equation}
\begin{center}
\begin{math}
 \frac{1}{15} (-2) \pi  G_2\left(s_1,s_2\right)-\frac{1}{8} k_{14}\left(s_1,s_2\right)+\frac{1}{8} e^{s_1+s_2} k_{14}\left(-s_1-s_2,s_1\right)+\frac{1}{8} k_{15}\left(s_1,s_2\right)-\frac{1}{8} e^{s_1} k_{15}\left(s_2,-s_1-s_2\right)-\frac{1}{8} e^{s_1+s_2} k_{16}\left(-s_1-s_2,s_1\right)+\frac{1}{8} e^{s_1} k_{16}\left(s_2,-s_1-s_2\right)-\frac{1}{4} G_1\left(s_1\right) k_6\left(s_2\right)-\frac{1}{4} e^{s_2} G_1\left(s_1\right) k_7\left(-s_2\right)+\frac{k_5\left(s_1+s_2\right)-k_5\left(s_1\right)}{2 s_2}+\frac{e^{s_1} \left(k_6\left(-s_1\right)-e^{s_2} k_6\left(-s_1-s_2\right)\right)}{4 s_2}+\frac{3 \left(k_6\left(s_1\right)-k_6\left(s_1+s_2\right)\right)}{4 s_2}+\frac{k_6\left(s_1+s_2\right)-k_6\left(s_2\right)}{4 s_1}+\frac{\left(e^{s_1+s_2}-1\right) k_7\left(s_1\right)}{4 \left(s_1+s_2\right)}+\frac{3 e^{s_1} \left(k_7\left(-s_1\right)-e^{s_2} k_7\left(-s_1-s_2\right)\right)}{4 s_2}+\frac{e^{s_2} \left(e^{s_1} k_7\left(-s_1-s_2\right)-k_7\left(-s_2\right)\right)}{4 s_1}+\frac{k_7\left(s_1\right)-k_7\left(s_1+s_2\right)}{4 s_2}-\frac{e^{s_1} \left(k_5\left(-s_1\right)-e^{s_2} k_5\left(-s_1-s_2\right)\right)}{2 s_2}-\frac{e^{s_1} \left(k_6\left(-s_1\right)-k_6\left(s_2\right)\right)}{4 \left(s_1+s_2\right)}-\frac{e^{s_1+s_2} \left(k_7\left(s_1\right)-k_7\left(-s_2\right)\right)}{4 \left(s_1+s_2\right)}. 
\end{math}
\end{center} 

\smallskip

The finite difference expression associated with the element 
$\done \dtwo^2(h) \cdot \done(h)$ as explained above gives 
another basic identity: 

\begin{equation} \label{basicK7eqnsecond}
\widetilde K_7(s_1, s_2) = 
\end{equation}
\begin{center}
\begin{math}
\frac{1}{15} (-2) \pi  G_2\left(s_1,s_2\right)+\frac{e^{s_1+s_2} k_3\left(-s_1-s_2\right)-e^{s_1} k_3\left(-s_1\right)}{4 s_2}+\frac{k_3\left(s_1+s_2\right)-k_3\left(s_1\right)}{4 s_2}+\frac{e^{s_1+s_2} k_4\left(-s_1-s_2\right)-e^{s_1} k_4\left(-s_1\right)}{2 s_2}+\frac{k_4\left(s_1+s_2\right)-k_4\left(s_1\right)}{2 s_2}-\frac{1}{4} G_1\left(s_1\right) k_6\left(s_2\right)+\frac{e^{s_1} \left(k_6\left(s_2\right)-k_6\left(-s_1\right)\right)}{4 \left(s_1+s_2\right)}+\frac{k_6\left(s_1+s_2\right)-k_6\left(s_2\right)}{4 s_1}+\frac{\left(1-e^{s_1+s_2}\right) k_7\left(s_1\right)}{4 \left(-s_1-s_2\right)}-\frac{1}{4} e^{s_2} G_1\left(s_1\right) k_7\left(-s_2\right)+\frac{e^{s_1+s_2} \left(k_7\left(-s_2\right)-k_7\left(s_1\right)\right)}{4 \left(s_1+s_2\right)}+\frac{e^{s_1+s_2} k_7\left(-s_1-s_2\right)-e^{s_2} k_7\left(-s_2\right)}{4 s_1}+\frac{1}{8} k_8\left(s_1,s_2\right)-\frac{1}{8} e^{s_1} k_8\left(s_2,-s_1-s_2\right)+\frac{1}{8} k_9\left(s_1,s_2\right)-\frac{1}{8} e^{s_1} k_9\left(s_2,-s_1-s_2\right)-\frac{1}{8} e^{s_1+s_2} k_{10}\left(-s_1-s_2,s_1\right)+\frac{1}{8} e^{s_1} k_{10}\left(s_2,-s_1-s_2\right)-\frac{1}{8} e^{s_1+s_2} k_{11}\left(-s_1-s_2,s_1\right)+\frac{1}{8} e^{s_1} k_{11}\left(s_2,-s_1-s_2\right)-\frac{1}{8} k_{12}\left(s_1,s_2\right)+\frac{1}{8} e^{s_1+s_2} k_{12}\left(-s_1-s_2,s_1\right)-\frac{1}{8} k_{13}\left(s_1,s_2\right)+\frac{1}{8} e^{s_1+s_2} k_{13}\left(-s_1-s_2,s_1\right)-\frac{e^{s_1+s_2} k_6\left(-s_1-s_2\right)-e^{s_1} k_6\left(-s_1\right)}{4 s_2}-\frac{3 \left(k_6\left(s_1+s_2\right)-k_6\left(s_1\right)\right)}{4 s_2}-\frac{3 \left(e^{s_1+s_2} k_7\left(-s_1-s_2\right)-e^{s_1} k_7\left(-s_1\right)\right)}{4 s_2}-\frac{k_7\left(s_1+s_2\right)-k_7\left(s_1\right)}{4 s_2}. 
\end{math}
\end{center}

\subsection{The functions $\widetilde K_8,$ $\dots,$ $\widetilde K_{16}$}
In this subsection we present the functional relation associated with 
the three variable functions $\widetilde K_8$ defined 
by \eqref{Ktildesdef}. Since the functional relations 
for the functions $\widetilde K_9$  $\dots,$ $\widetilde K_{16}$ are similarly lengthy, 
for the sake of completeness, they are provided in Appendix \ref{lengthyfnrelationsappsec}. 
Similarly to the previous functions,  
these functional relations are derived by using the 
identity \eqref{Gradientofa4} and by making a comparison with 
the corresponding terms in the result of the gradient calculation carried out  in 
Section \ref{GradientCalculationSec}.

\subsubsection{The function $\widetilde K_8$}
\label{basicK8}
We have: 

\begin{equation} \label{basicK8eqn}
\widetilde K_8(s_1, s_2, s_3) =
\end{equation}

\begin{center}
\begin{math}
 \frac{1}{15} (-2) \pi  G_3\left(s_1,s_2,s_3\right)+\frac{1}{2} e^{s_3} G_2\left(s_1,s_2\right) k_4\left(-s_3\right)-\frac{e^{s_3} \left(e^{s_2} s_1 k_4\left(-s_2-s_3\right)+e^{s_2} s_2 k_4\left(-s_2-s_3\right)-e^{s_1+s_2} s_2 k_4\left(-s_1-s_2-s_3\right)-s_1 k_4\left(-s_3\right)\right)}{2 s_1 s_2 \left(s_1+s_2\right)}+\frac{1}{2} G_2\left(s_1,s_2\right) k_4\left(s_3\right)+\frac{G_1\left(s_1\right) \left(k_4\left(s_3\right)-k_4\left(s_2+s_3\right)\right)}{2 s_2}+\frac{s_1 k_4\left(s_3\right)-s_1 k_4\left(s_2+s_3\right)-s_2 k_4\left(s_2+s_3\right)+s_2 k_4\left(s_1+s_2+s_3\right)}{2 s_1 s_2 \left(s_1+s_2\right)}-\frac{1}{2} G_2\left(s_1,s_2\right) k_6\left(s_3\right)+\frac{G_1\left(s_1\right) \left(k_6\left(s_2\right)-k_6\left(s_2+s_3\right)\right)}{4 s_3}+\frac{k_6\left(s_2\right)-k_6\left(s_1+s_2\right)-k_6\left(s_2+s_3\right)+k_6\left(s_1+s_2+s_3\right)}{4 s_1 s_3}+\frac{-s_3 k_6\left(s_1\right)+s_2 k_6\left(s_1+s_2\right)+s_3 k_6\left(s_1+s_2\right)-s_2 k_6\left(s_1+s_2+s_3\right)}{4 s_2 s_3 \left(s_2+s_3\right)}+\frac{-s_1 k_6\left(s_3\right)+s_1 k_6\left(s_2+s_3\right)+s_2 k_6\left(s_2+s_3\right)-s_2 k_6\left(s_1+s_2+s_3\right)}{2 s_1 s_2 \left(s_1+s_2\right)}+\frac{e^{s_2} G_1\left(s_1\right) \left(k_7\left(-s_2\right)-e^{s_3} k_7\left(-s_2-s_3\right)\right)}{4 s_3}-\frac{e^{s_1} \left(s_3 k_7\left(-s_1\right)-e^{s_2} s_2 k_7\left(-s_1-s_2\right)-e^{s_2} s_3 k_7\left(-s_1-s_2\right)+e^{s_2+s_3} s_2 k_7\left(-s_1-s_2-s_3\right)\right)}{4 s_2 s_3 \left(s_2+s_3\right)}-\frac{e^{s_2} \left(e^{s_1} k_7\left(-s_1-s_2\right)-k_7\left(-s_2\right)+e^{s_3} k_7\left(-s_2-s_3\right)-e^{s_1+s_3} k_7\left(-s_1-s_2-s_3\right)\right)}{4 s_1 s_3}+\frac{e^{s_3} G_1\left(s_1\right) \left(e^{s_2} k_7\left(-s_2-s_3\right)-k_7\left(-s_3\right)\right)}{2 s_2}-\frac{1}{2} e^{s_3} G_2\left(s_1,s_2\right) k_7\left(-s_3\right)+\frac{e^{s_3} \left(e^{s_2} s_1 k_7\left(-s_2-s_3\right)+e^{s_2} s_2 k_7\left(-s_2-s_3\right)-e^{s_1+s_2} s_2 k_7\left(-s_1-s_2-s_3\right)-s_1 k_7\left(-s_3\right)\right)}{2 s_1 s_2 \left(s_1+s_2\right)}+\frac{\left(-1+e^{s_1+s_2+s_3}\right) k_8\left(s_1,s_2\right)}{8 \left(s_1+s_2+s_3\right)}+\frac{k_8\left(s_1,s_2+s_3\right)-k_8\left(s_1,s_2\right)}{8 s_3}-\frac{1}{8} e^{s_2+s_3} G_1\left(s_1\right) k_8\left(-s_2-s_3,s_2\right)+\frac{e^{s_1+s_2+s_3} \left(k_8\left(-s_1-s_2-s_3,s_1\right)-k_8\left(-s_1-s_2-s_3,s_1+s_2\right)\right)}{8 s_2}+\frac{1}{8} G_1\left(s_1\right) k_9\left(s_2,s_3\right)+\frac{k_9\left(s_2,s_3\right)-k_9\left(s_1+s_2,s_3\right)}{8 s_1}+\frac{k_9\left(s_1+s_2,s_3\right)-k_9\left(s_1,s_2+s_3\right)}{8 s_2}+\frac{1}{8} e^{s_2} G_1\left(s_1\right) k_{10}\left(s_3,-s_2-s_3\right)+\frac{e^{s_2} \left(k_{10}\left(s_3,-s_2-s_3\right)-e^{s_1} k_{10}\left(s_3,-s_1-s_2-s_3\right)\right)}{8 s_1}+\frac{e^{s_1} \left(e^{s_2} k_{10}\left(s_3,-s_1-s_2-s_3\right)-k_{10}\left(s_2+s_3,-s_1-s_2-s_3\right)\right)}{8 s_2}-\frac{1}{8} G_1\left(s_1\right) k_{11}\left(s_2,s_3\right)+\frac{k_{11}\left(s_1,s_2+s_3\right)-k_{11}\left(s_1+s_2,s_3\right)}{8 s_2}+\frac{k_{11}\left(s_1+s_2,s_3\right)-k_{11}\left(s_2,s_3\right)}{8 s_1}-\frac{1}{8} e^{s_2} G_1\left(s_1\right) k_{12}\left(s_3,-s_2-s_3\right)+\frac{1}{8} e^{s_2+s_3} G_1\left(s_1\right) k_{13}\left(-s_2-s_3,s_2\right)+\frac{e^{s_2+s_3} \left(k_{13}\left(-s_2-s_3,s_2\right)-e^{s_1} k_{13}\left(-s_1-s_2-s_3,s_1+s_2\right)\right)}{8 s_1}-\frac{1}{16} k_{17}\left(s_1,s_2,s_3\right)-\frac{1}{16} e^{s_1+s_2} k_{17}\left(s_3,-s_1-s_2-s_3,s_1\right)-\frac{1}{16} e^{s_1} k_{19}\left(s_2,s_3,-s_1-s_2-s_3\right)-\frac{1}{16} e^{s_1+s_2+s_3} k_{19}\left(-s_1-s_2-s_3,s_1,s_2\right)-\frac{e^{s_2+s_3} \left(k_8\left(-s_2-s_3,s_2\right)-e^{s_1} k_8\left(-s_1-s_2-s_3,s_1+s_2\right)\right)}{8 s_1}-\frac{e^{s_2} \left(k_{12}\left(s_3,-s_2-s_3\right)-e^{s_1} k_{12}\left(s_3,-s_1-s_2-s_3\right)\right)}{8 s_1}-\frac{e^{s_3} G_1\left(s_1\right) \left(e^{s_2} k_4\left(-s_2-s_3\right)-k_4\left(-s_3\right)\right)}{2 s_2}-\frac{G_1\left(s_1\right) \left(k_6\left(s_3\right)-k_6\left(s_2+s_3\right)\right)}{2 s_2}-\frac{e^{s_1} \left(e^{s_2} k_{12}\left(s_3,-s_1-s_2-s_3\right)-k_{12}\left(s_2+s_3,-s_1-s_2-s_3\right)\right)}{8 s_2}-\frac{e^{s_1+s_2+s_3} \left(k_{13}\left(-s_1-s_2-s_3,s_1\right)-k_{13}\left(-s_1-s_2-s_3,s_1+s_2\right)\right)}{8 s_2}-\frac{e^{s_1} \left(k_{11}\left(s_2,-s_1-s_2\right)-k_{11}\left(s_2+s_3,-s_1-s_2-s_3\right)\right)}{8 s_3}-\frac{e^{s_1+s_2} \left(k_{12}\left(-s_1-s_2,s_1\right)-e^{s_3} k_{12}\left(-s_1-s_2-s_3,s_1\right)\right)}{8 s_3}-\frac{e^{s_1+s_2+s_3} \left(k_8\left(s_1,s_2\right)-k_8\left(-s_2-s_3,s_2\right)\right)}{8 \left(s_1+s_2+s_3\right)}-\frac{e^{s_1} \left(k_{11}\left(s_2,-s_1-s_2\right)-k_{11}\left(s_2,s_3\right)\right)}{8 \left(s_1+s_2+s_3\right)}-\frac{e^{s_1+s_2} \left(k_{12}\left(-s_1-s_2,s_1\right)-k_{12}\left(s_3,-s_2-s_3\right)\right)}{8 \left(s_1+s_2+s_3\right)}. 
\end{math}
\end{center}

\subsection{The functions $\widetilde K_{17}, \dots, \widetilde K_{20}$}
In this subsection we present the functional relation associated with 
the four variable function $\widetilde K_{17},$  defined 
by \eqref{Ktildesdef}. Like this functional relation, the corresponding basic equations for the 
functions $\widetilde K_{18}, \widetilde K_{19}, \widetilde K_{20}$, which are written in Appendix  \ref{lengthyfnrelationsappsec}, 
have  quite lengthy expressions.  
These relations will in particular show that the 
the four variable functions $K_{17}, \dots, K_{20}$ appearing in the expression \eqref{a_4expression} 
for the term $a_4$ can be constructed from the one and two 
variable functions given explicitly in Section \ref{ExplicitFormulasSec} 
and the three variable functions provided explicitly in Appendix \ref{explicit3and4fnsappsec}, 
with the aid of the functions $G_1, G_2, G_3, G_4$ given by \eqref{ExplicitBabyFunctions}.

\subsubsection{The function $\widetilde K_{17}$} 
\label{basicK17}
In this case also, by using the 
identity \eqref{Gradientofa4} and considering the 
specific function of $\nabla$ that acts on $ \delta _1(h )\cdot \delta _1(h )\cdot 
\delta_2(h )\cdot \delta _2(h ) $ 
in the final formula for the gradient $\dep \vphi_0(a_4(h+\vep a))$ 
as calculated in Section \ref{GradientCalculationSec},
we obtain the functional relation associated with the function 
$\widetilde K_{17}$.   
We have: 

\begin{equation} \label{basicK17eqn}
\widetilde K_{17}(s_1, s_2, s_3, s_4) =
\end{equation}
\begin{center}
\begin{math}
 \frac{1}{15} (-4) \pi  G_4\left(s_1,s_2,s_3,s_4\right)+\frac{s_3 k_6\left(s_1\right)}{2 s_2 \left(s_2+s_3\right) \left(s_3+s_4\right) \left(s_2+s_3+s_4\right)}+\frac{s_4 k_6\left(s_1\right)}{2 s_2 \left(s_2+s_3\right) \left(s_3+s_4\right) \left(s_2+s_3+s_4\right)}+\frac{k_6\left(s_1+s_2\right)}{2 s_1 s_3 \left(s_3+s_4\right)}+\frac{G_1\left(s_1\right) k_6\left(s_3\right)}{2 s_2 s_4}+\frac{G_2\left(s_1,s_2\right) k_6\left(s_3\right)}{2 s_4}+\frac{k_6\left(s_3\right)}{2 s_2 \left(s_1+s_2\right) s_4}+\frac{G_1\left(s_1\right) k_6\left(s_2+s_3\right)}{2 s_3 \left(s_3+s_4\right)}+\frac{G_1\left(s_1\right) k_6\left(s_2+s_3\right)}{2 s_4 \left(s_3+s_4\right)}+\frac{k_6\left(s_2+s_3\right)}{2 s_1 s_3 \left(s_3+s_4\right)}+\frac{k_6\left(s_2+s_3\right)}{2 s_1 s_4 \left(s_3+s_4\right)}+\frac{k_6\left(s_1+s_2+s_3\right)}{2 s_1 \left(s_1+s_2\right) s_4}+\frac{k_6\left(s_1+s_2+s_3\right)}{\left(s_2+s_3\right) \left(s_3+s_4\right) \left(s_2+s_3+s_4\right)}+\frac{s_2 k_6\left(s_1+s_2+s_3\right)}{2 s_3 \left(s_2+s_3\right) \left(s_3+s_4\right) \left(s_2+s_3+s_4\right)}+\frac{s_4 k_6\left(s_1+s_2+s_3\right)}{2 s_3 \left(s_2+s_3\right) \left(s_3+s_4\right) \left(s_2+s_3+s_4\right)}+\frac{s_2 k_6\left(s_1+s_2+s_3\right)}{2 \left(s_2+s_3\right) s_4 \left(s_3+s_4\right) \left(s_2+s_3+s_4\right)}+\frac{s_3 k_6\left(s_1+s_2+s_3\right)}{2 \left(s_2+s_3\right) s_4 \left(s_3+s_4\right) \left(s_2+s_3+s_4\right)}-\frac{1}{2} G_3\left(s_1,s_2,s_3\right) k_6\left(s_4\right)+\frac{G_1\left(s_1\right) k_6\left(s_3+s_4\right)}{2 s_2 \left(s_2+s_3\right)}+\frac{G_1\left(s_1\right) k_6\left(s_3+s_4\right)}{2 s_3 \left(s_2+s_3\right)}+\frac{G_2\left(s_1,s_2\right) k_6\left(s_3+s_4\right)}{2 s_3}+\frac{k_6\left(s_3+s_4\right)}{\left(s_1+s_2\right) \left(s_2+s_3\right) \left(s_1+s_2+s_3\right)}+\frac{s_1 k_6\left(s_3+s_4\right)}{2 s_2 \left(s_1+s_2\right) \left(s_2+s_3\right) \left(s_1+s_2+s_3\right)}+\frac{s_3 k_6\left(s_3+s_4\right)}{2 s_2 \left(s_1+s_2\right) \left(s_2+s_3\right) \left(s_1+s_2+s_3\right)}+\frac{s_1 k_6\left(s_3+s_4\right)}{2 \left(s_1+s_2\right) s_3 \left(s_2+s_3\right) \left(s_1+s_2+s_3\right)}+\frac{s_2 k_6\left(s_3+s_4\right)}{2 \left(s_1+s_2\right) s_3 \left(s_2+s_3\right) \left(s_1+s_2+s_3\right)}+\frac{G_1\left(s_1\right) k_6\left(s_2+s_3+s_4\right)}{2 s_2 s_4}+\frac{k_6\left(s_2+s_3+s_4\right)}{2 s_1 \left(s_1+s_2\right) s_4}+\frac{k_6\left(s_2+s_3+s_4\right)}{2 s_2 \left(s_1+s_2\right) s_4}+\frac{s_2 k_6\left(s_1+s_2+s_3+s_4\right)}{2 s_1 \left(s_1+s_2\right) \left(s_2+s_3\right) \left(s_1+s_2+s_3\right)}+\frac{s_3 k_6\left(s_1+s_2+s_3+s_4\right)}{2 s_1 \left(s_1+s_2\right) \left(s_2+s_3\right) \left(s_1+s_2+s_3\right)}+\frac{k_6\left(s_1+s_2+s_3+s_4\right)}{2 s_1 s_4 \left(s_3+s_4\right)}+\frac{e^{s_1} s_3 k_7\left(-s_1\right)}{2 s_2 \left(s_2+s_3\right) \left(s_3+s_4\right) \left(s_2+s_3+s_4\right)}+\frac{e^{s_1} s_4 k_7\left(-s_1\right)}{2 s_2 \left(s_2+s_3\right) \left(s_3+s_4\right) \left(s_2+s_3+s_4\right)}+\frac{e^{s_1+s_2} k_7\left(-s_1-s_2\right)}{2 s_1 s_3 \left(s_3+s_4\right)}+\frac{e^{s_2+s_3} G_1\left(s_1\right) k_7\left(-s_2-s_3\right)}{2 s_3 \left(s_3+s_4\right)}+\frac{e^{s_2+s_3} G_1\left(s_1\right) k_7\left(-s_2-s_3\right)}{2 s_4 \left(s_3+s_4\right)}+\frac{e^{s_2+s_3} k_7\left(-s_2-s_3\right)}{2 s_1 s_3 \left(s_3+s_4\right)}+\frac{e^{s_2+s_3} k_7\left(-s_2-s_3\right)}{2 s_1 s_4 \left(s_3+s_4\right)}+\frac{e^{s_1+s_2+s_3} k_7\left(-s_1-s_2-s_3\right)}{2 s_1 \left(s_1+s_2\right) s_4}+\frac{e^{s_1+s_2+s_3} k_7\left(-s_1-s_2-s_3\right)}{\left(s_2+s_3\right) \left(s_3+s_4\right) \left(s_2+s_3+s_4\right)}+\frac{e^{s_1+s_2+s_3} s_2 k_7\left(-s_1-s_2-s_3\right)}{2 s_3 \left(s_2+s_3\right) \left(s_3+s_4\right) \left(s_2+s_3+s_4\right)}+\frac{e^{s_1+s_2+s_3} s_4 k_7\left(-s_1-s_2-s_3\right)}{2 s_3 \left(s_2+s_3\right) \left(s_3+s_4\right) \left(s_2+s_3+s_4\right)}+\frac{e^{s_1+s_2+s_3} s_2 k_7\left(-s_1-s_2-s_3\right)}{2 \left(s_2+s_3\right) s_4 \left(s_3+s_4\right) \left(s_2+s_3+s_4\right)}+\frac{e^{s_1+s_2+s_3} s_3 k_7\left(-s_1-s_2-s_3\right)}{2 \left(s_2+s_3\right) s_4 \left(s_3+s_4\right) \left(s_2+s_3+s_4\right)}+\frac{e^{s_3} G_1\left(s_1\right) k_7\left(-s_3\right)}{2 s_2 s_4}+\frac{e^{s_3} G_2\left(s_1,s_2\right) k_7\left(-s_3\right)}{2 s_4}+\frac{e^{s_3} k_7\left(-s_3\right)}{2 s_2 \left(s_1+s_2\right) s_4}+\frac{e^{s_3+s_4} G_1\left(s_1\right) k_7\left(-s_3-s_4\right)}{2 s_2 \left(s_2+s_3\right)}+\frac{e^{s_3+s_4} G_1\left(s_1\right) k_7\left(-s_3-s_4\right)}{2 s_3 \left(s_2+s_3\right)}+\frac{e^{s_3+s_4} G_2\left(s_1,s_2\right) k_7\left(-s_3-s_4\right)}{2 s_3}+\frac{e^{s_3+s_4} k_7\left(-s_3-s_4\right)}{\left(s_1+s_2\right) \left(s_2+s_3\right) \left(s_1+s_2+s_3\right)}+\frac{e^{s_3+s_4} s_1 k_7\left(-s_3-s_4\right)}{2 s_2 \left(s_1+s_2\right) \left(s_2+s_3\right) \left(s_1+s_2+s_3\right)}+\frac{e^{s_3+s_4} s_3 k_7\left(-s_3-s_4\right)}{2 s_2 \left(s_1+s_2\right) \left(s_2+s_3\right) \left(s_1+s_2+s_3\right)}+\frac{e^{s_3+s_4} s_1 k_7\left(-s_3-s_4\right)}{2 \left(s_1+s_2\right) s_3 \left(s_2+s_3\right) \left(s_1+s_2+s_3\right)}+\frac{e^{s_3+s_4} s_2 k_7\left(-s_3-s_4\right)}{2 \left(s_1+s_2\right) s_3 \left(s_2+s_3\right) \left(s_1+s_2+s_3\right)}+\frac{e^{s_2+s_3+s_4} G_1\left(s_1\right) k_7\left(-s_2-s_3-s_4\right)}{2 s_2 s_4}+\frac{e^{s_2+s_3+s_4} k_7\left(-s_2-s_3-s_4\right)}{2 s_1 \left(s_1+s_2\right) s_4}+\frac{e^{s_2+s_3+s_4} k_7\left(-s_2-s_3-s_4\right)}{2 s_2 \left(s_1+s_2\right) s_4}+\frac{e^{s_1+s_2+s_3+s_4} s_2 k_7\left(-s_1-s_2-s_3-s_4\right)}{2 s_1 \left(s_1+s_2\right) \left(s_2+s_3\right) \left(s_1+s_2+s_3\right)}+\frac{e^{s_1+s_2+s_3+s_4} s_3 k_7\left(-s_1-s_2-s_3-s_4\right)}{2 s_1 \left(s_1+s_2\right) \left(s_2+s_3\right) \left(s_1+s_2+s_3\right)}+\frac{e^{s_1+s_2+s_3+s_4} k_7\left(-s_1-s_2-s_3-s_4\right)}{2 s_1 s_4 \left(s_3+s_4\right)}-\frac{1}{2} e^{s_4} G_3\left(s_1,s_2,s_3\right) k_7\left(-s_4\right)+\frac{k_8\left(s_1,s_2\right)}{4 s_3 \left(s_3+s_4\right)}+\frac{k_8\left(s_1,s_2+s_3+s_4\right)}{4 s_4 \left(s_3+s_4\right)}+\frac{G_1\left(s_1\right) k_8\left(s_3,s_4\right)}{4 s_2}+\frac{1}{4} G_2\left(s_1,s_2\right) k_8\left(s_3,s_4\right)+\frac{k_8\left(s_3,s_4\right)}{4 s_2 \left(s_1+s_2\right)}+\frac{k_8\left(s_1+s_2+s_3,s_4\right)}{4 s_1 \left(s_1+s_2\right)}+\frac{k_9\left(s_1,s_2+s_3\right)}{8 s_2 s_4}+\frac{k_9\left(s_1,s_2+s_3+s_4\right)}{8 s_2 \left(s_2+s_3\right)}+\frac{G_1\left(s_1\right) k_9\left(s_2,s_3+s_4\right)}{8 s_4}+\frac{k_9\left(s_2,s_3+s_4\right)}{8 s_1 s_4}+\frac{k_9\left(s_1+s_2,s_3\right)}{8 s_1 s_4}+\frac{k_9\left(s_1+s_2,s_3+s_4\right)}{8 s_1 s_3}+\frac{k_9\left(s_1+s_2,s_3+s_4\right)}{8 s_2 s_4}+\frac{G_1\left(s_1\right) k_9\left(s_2+s_3,s_4\right)}{8 s_3}+\frac{k_9\left(s_2+s_3,s_4\right)}{8 s_1 s_3}+\frac{k_9\left(s_1+s_2+s_3,s_4\right)}{8 s_3 \left(s_2+s_3\right)}+\frac{e^{s_1+s_2} k_{10}\left(s_3,-s_1-s_2-s_3\right)}{8 s_1 s_4}+\frac{e^{s_1} k_{10}\left(s_2+s_3,-s_1-s_2-s_3\right)}{8 s_2 s_4}+\frac{e^{s_2+s_3} G_1\left(s_1\right) k_{10}\left(s_4,-s_2-s_3-s_4\right)}{8 s_3}+\frac{e^{s_2+s_3} k_{10}\left(s_4,-s_2-s_3-s_4\right)}{8 s_1 s_3}+\frac{e^{s_1+s_2+s_3} k_{10}\left(s_4,-s_1-s_2-s_3-s_4\right)}{8 s_3 \left(s_2+s_3\right)}+\frac{e^{s_2} G_1\left(s_1\right) k_{10}\left(s_3+s_4,-s_2-s_3-s_4\right)}{8 s_4}+\frac{e^{s_2} k_{10}\left(s_3+s_4,-s_2-s_3-s_4\right)}{8 s_1 s_4}+\frac{e^{s_1+s_2} k_{10}\left(s_3+s_4,-s_1-s_2-s_3-s_4\right)}{8 s_1 s_3}+\frac{e^{s_1+s_2} k_{10}\left(s_3+s_4,-s_1-s_2-s_3-s_4\right)}{8 s_2 s_4}+\frac{e^{s_1} k_{10}\left(s_2+s_3+s_4,-s_1-s_2-s_3-s_4\right)}{8 s_2 \left(s_2+s_3\right)}+\frac{e^{s_1} k_{11}\left(s_2,-s_1-s_2\right)}{4 s_3 \left(s_3+s_4\right)}+\frac{e^{s_3} G_1\left(s_1\right) k_{11}\left(s_4,-s_3-s_4\right)}{4 s_2}+\frac{1}{4} e^{s_3} G_2\left(s_1,s_2\right) k_{11}\left(s_4,-s_3-s_4\right)+\frac{e^{s_3} k_{11}\left(s_4,-s_3-s_4\right)}{4 s_2 \left(s_1+s_2\right)}+\frac{e^{s_1+s_2+s_3} k_{11}\left(s_4,-s_1-s_2-s_3-s_4\right)}{4 s_1 \left(s_1+s_2\right)}+\frac{e^{s_1} k_{11}\left(s_2+s_3+s_4,-s_1-s_2-s_3-s_4\right)}{4 s_4 \left(s_3+s_4\right)}+\frac{e^{s_1+s_2} k_{12}\left(-s_1-s_2,s_1\right)}{4 s_3 \left(s_3+s_4\right)}+\frac{e^{s_3+s_4} G_1\left(s_1\right) k_{12}\left(-s_3-s_4,s_3\right)}{4 s_2}+\frac{1}{4} e^{s_3+s_4} G_2\left(s_1,s_2\right) k_{12}\left(-s_3-s_4,s_3\right)+\frac{e^{s_3+s_4} k_{12}\left(-s_3-s_4,s_3\right)}{4 s_2 \left(s_1+s_2\right)}+\frac{e^{s_1+s_2+s_3+s_4} k_{12}\left(-s_1-s_2-s_3-s_4,s_1\right)}{4 s_4 \left(s_3+s_4\right)}+\frac{e^{s_1+s_2+s_3+s_4} k_{12}\left(-s_1-s_2-s_3-s_4,s_1+s_2+s_3\right)}{4 s_1 \left(s_1+s_2\right)}+\frac{e^{s_1+s_2+s_3} k_{13}\left(-s_1-s_2-s_3,s_1\right)}{8 s_2 s_4}+\frac{e^{s_1+s_2+s_3} k_{13}\left(-s_1-s_2-s_3,s_1+s_2\right)}{8 s_1 s_4}+\frac{e^{s_2+s_3+s_4} G_1\left(s_1\right) k_{13}\left(-s_2-s_3-s_4,s_2\right)}{8 s_4}+\frac{e^{s_2+s_3+s_4} k_{13}\left(-s_2-s_3-s_4,s_2\right)}{8 s_1 s_4}+\frac{e^{s_2+s_3+s_4} G_1\left(s_1\right) k_{13}\left(-s_2-s_3-s_4,s_2+s_3\right)}{8 s_3}+\frac{e^{s_2+s_3+s_4} k_{13}\left(-s_2-s_3-s_4,s_2+s_3\right)}{8 s_1 s_3}+\frac{e^{s_1+s_2+s_3+s_4} k_{13}\left(-s_1-s_2-s_3-s_4,s_1\right)}{8 s_2 \left(s_2+s_3\right)}+\frac{e^{s_1+s_2+s_3+s_4} k_{13}\left(-s_1-s_2-s_3-s_4,s_1+s_2\right)}{8 s_1 s_3}+\frac{e^{s_1+s_2+s_3+s_4} k_{13}\left(-s_1-s_2-s_3-s_4,s_1+s_2\right)}{8 s_2 s_4}+\frac{e^{s_1+s_2+s_3+s_4} k_{13}\left(-s_1-s_2-s_3-s_4,s_1+s_2+s_3\right)}{8 s_3 \left(s_2+s_3\right)}+\frac{k_{17}\left(s_1,s_2,s_3\right)}{16 s_4}+\frac{k_{17}\left(s_1,s_2,s_3+s_4\right)}{16 s_3}+\frac{e^{s_1+s_2} k_{17}\left(s_3,-s_1-s_2-s_3,s_1\right)}{16 s_4}-\frac{1}{16} e^{s_2} G_1\left(s_1\right) k_{17}\left(s_3,s_4,-s_2-s_3-s_4\right)+\frac{e^{s_1+s_2} k_{17}\left(s_3,s_4,-s_2-s_3-s_4\right)}{16 \left(s_1+s_2+s_3+s_4\right)}+\frac{e^{s_1+s_2} k_{17}\left(s_3,s_4,-s_1-s_2-s_3-s_4\right)}{16 s_1}+\frac{e^{s_1} k_{17}\left(s_2+s_3,s_4,-s_1-s_2-s_3-s_4\right)}{16 s_2}-\frac{1}{16} e^{s_2+s_3+s_4} G_1\left(s_1\right) k_{17}\left(-s_2-s_3-s_4,s_2,s_3\right)+\frac{e^{s_1+s_2+s_3+s_4} k_{17}\left(-s_2-s_3-s_4,s_2,s_3\right)}{16 \left(s_1+s_2+s_3+s_4\right)}+\frac{e^{s_1+s_2+s_3+s_4} k_{17}\left(-s_1-s_2-s_3-s_4,s_1,s_2+s_3\right)}{16 s_2}+\frac{e^{s_1+s_2+s_3+s_4} k_{17}\left(-s_1-s_2-s_3-s_4,s_1+s_2,s_3\right)}{16 s_1}+\frac{e^{s_1+s_2} k_{17}\left(s_3+s_4,-s_1-s_2-s_3-s_4,s_1\right)}{16 s_3}+\frac{k_{19}\left(s_1,s_2+s_3,s_4\right)}{16 s_2}+\frac{e^{s_1} k_{19}\left(s_2,s_3,-s_1-s_2-s_3\right)}{16 s_4}-\frac{1}{16} G_1\left(s_1\right) k_{19}\left(s_2,s_3,s_4\right)+\frac{e^{s_1} k_{19}\left(s_2,s_3,s_4\right)}{16 \left(s_1+s_2+s_3+s_4\right)}+\frac{e^{s_1} k_{19}\left(s_2,s_3+s_4,-s_1-s_2-s_3-s_4\right)}{16 s_3}+\frac{k_{19}\left(s_1+s_2,s_3,s_4\right)}{16 s_1}+\frac{e^{s_1+s_2+s_3} k_{19}\left(-s_1-s_2-s_3,s_1,s_2\right)}{16 s_4}+\frac{e^{s_1+s_2+s_3+s_4} k_{19}\left(-s_1-s_2-s_3-s_4,s_1,s_2\right)}{16 s_3}-\frac{1}{16} e^{s_2+s_3} G_1\left(s_1\right) k_{19}\left(s_4,-s_2-s_3-s_4,s_2\right)+\frac{e^{s_1+s_2+s_3} k_{19}\left(s_4,-s_2-s_3-s_4,s_2\right)}{16 \left(s_1+s_2+s_3+s_4\right)}+\frac{e^{s_1+s_2+s_3} k_{19}\left(s_4,-s_1-s_2-s_3-s_4,s_1\right)}{16 s_2}+\frac{e^{s_1+s_2+s_3} k_{19}\left(s_4,-s_1-s_2-s_3-s_4,s_1+s_2\right)}{16 s_1}-\frac{e^{s_2} k_{17}\left(s_3,s_4,-s_2-s_3-s_4\right)}{16 s_1}-\frac{e^{s_2+s_3+s_4} k_{17}\left(-s_2-s_3-s_4,s_2,s_3\right)}{16 s_1}-\frac{k_{19}\left(s_2,s_3,s_4\right)}{16 s_1}-\frac{e^{s_2+s_3} k_{19}\left(s_4,-s_2-s_3-s_4,s_2\right)}{16 s_1}-\frac{G_1\left(s_1\right) k_8\left(s_2+s_3,s_4\right)}{4 s_2}-\frac{e^{s_2+s_3} G_1\left(s_1\right) k_{11}\left(s_4,-s_2-s_3-s_4\right)}{4 s_2}-\frac{e^{s_2+s_3+s_4} G_1\left(s_1\right) k_{12}\left(-s_2-s_3-s_4,s_2+s_3\right)}{4 s_2}-\frac{e^{s_1+s_2} k_{17}\left(s_3,s_4,-s_1-s_2-s_3-s_4\right)}{16 s_2}-\frac{e^{s_1+s_2+s_3+s_4} k_{17}\left(-s_1-s_2-s_3-s_4,s_1+s_2,s_3\right)}{16 s_2}-\frac{k_{19}\left(s_1+s_2,s_3,s_4\right)}{16 s_2}-\frac{e^{s_1+s_2+s_3} k_{19}\left(s_4,-s_1-s_2-s_3-s_4,s_1+s_2\right)}{16 s_2}-\frac{k_8\left(s_2+s_3,s_4\right)}{4 s_1 \left(s_1+s_2\right)}-\frac{e^{s_2+s_3} k_{11}\left(s_4,-s_2-s_3-s_4\right)}{4 s_1 \left(s_1+s_2\right)}-\frac{e^{s_2+s_3+s_4} k_{12}\left(-s_2-s_3-s_4,s_2+s_3\right)}{4 s_1 \left(s_1+s_2\right)}-\frac{k_8\left(s_2+s_3,s_4\right)}{4 s_2 \left(s_1+s_2\right)}-\frac{e^{s_2+s_3} k_{11}\left(s_4,-s_2-s_3-s_4\right)}{4 s_2 \left(s_1+s_2\right)}-\frac{e^{s_2+s_3+s_4} k_{12}\left(-s_2-s_3-s_4,s_2+s_3\right)}{4 s_2 \left(s_1+s_2\right)}-\frac{G_2\left(s_1,s_2\right) k_6\left(s_4\right)}{2 s_3}-\frac{e^{s_4} G_2\left(s_1,s_2\right) k_7\left(-s_4\right)}{2 s_3}-\frac{G_1\left(s_1\right) k_9\left(s_2,s_3+s_4\right)}{8 s_3}-\frac{e^{s_2} G_1\left(s_1\right) k_{10}\left(s_3+s_4,-s_2-s_3-s_4\right)}{8 s_3}-\frac{e^{s_2+s_3+s_4} G_1\left(s_1\right) k_{13}\left(-s_2-s_3-s_4,s_2\right)}{8 s_3}-\frac{k_{17}\left(s_1,s_2+s_3,s_4\right)}{16 s_3}-\frac{e^{s_1+s_2+s_3} k_{17}\left(s_4,-s_1-s_2-s_3-s_4,s_1\right)}{16 s_3}-\frac{e^{s_1} k_{19}\left(s_2+s_3,s_4,-s_1-s_2-s_3-s_4\right)}{16 s_3}-\frac{e^{s_1+s_2+s_3+s_4} k_{19}\left(-s_1-s_2-s_3-s_4,s_1,s_2+s_3\right)}{16 s_3}-\frac{k_9\left(s_2,s_3+s_4\right)}{8 s_1 s_3}-\frac{k_9\left(s_1+s_2+s_3,s_4\right)}{8 s_1 s_3}-\frac{e^{s_1+s_2+s_3} k_{10}\left(s_4,-s_1-s_2-s_3-s_4\right)}{8 s_1 s_3}-\frac{e^{s_2} k_{10}\left(s_3+s_4,-s_2-s_3-s_4\right)}{8 s_1 s_3}-\frac{e^{s_2+s_3+s_4} k_{13}\left(-s_2-s_3-s_4,s_2\right)}{8 s_1 s_3}-\frac{e^{s_1+s_2+s_3+s_4} k_{13}\left(-s_1-s_2-s_3-s_4,s_1+s_2+s_3\right)}{8 s_1 s_3}-\frac{G_1\left(s_1\right) k_6\left(s_2+s_3+s_4\right)}{2 s_2 \left(s_2+s_3\right)}-\frac{e^{s_2+s_3+s_4} G_1\left(s_1\right) k_7\left(-s_2-s_3-s_4\right)}{2 s_2 \left(s_2+s_3\right)}-\frac{k_9\left(s_1+s_2,s_3+s_4\right)}{8 s_2 \left(s_2+s_3\right)}-\frac{e^{s_1+s_2} k_{10}\left(s_3+s_4,-s_1-s_2-s_3-s_4\right)}{8 s_2 \left(s_2+s_3\right)}-\frac{e^{s_1+s_2+s_3+s_4} k_{13}\left(-s_1-s_2-s_3-s_4,s_1+s_2\right)}{8 s_2 \left(s_2+s_3\right)}-\frac{G_1\left(s_1\right) k_6\left(s_4\right)}{2 s_3 \left(s_2+s_3\right)}-\frac{e^{s_4} G_1\left(s_1\right) k_7\left(-s_4\right)}{2 s_3 \left(s_2+s_3\right)}-\frac{k_9\left(s_1+s_2,s_3+s_4\right)}{8 s_3 \left(s_2+s_3\right)}-\frac{e^{s_1+s_2} k_{10}\left(s_3+s_4,-s_1-s_2-s_3-s_4\right)}{8 s_3 \left(s_2+s_3\right)}-\frac{e^{s_1+s_2+s_3+s_4} k_{13}\left(-s_1-s_2-s_3-s_4,s_1+s_2\right)}{8 s_3 \left(s_2+s_3\right)}-\frac{k_6\left(s_2+s_3+s_4\right)}{\left(s_1+s_2\right) \left(s_2+s_3\right) \left(s_1+s_2+s_3\right)}-\frac{e^{s_2+s_3+s_4} k_7\left(-s_2-s_3-s_4\right)}{\left(s_1+s_2\right) \left(s_2+s_3\right) \left(s_1+s_2+s_3\right)}-\frac{s_2 k_6\left(s_2+s_3+s_4\right)}{2 s_1 \left(s_1+s_2\right) \left(s_2+s_3\right) \left(s_1+s_2+s_3\right)}-\frac{s_3 k_6\left(s_2+s_3+s_4\right)}{2 s_1 \left(s_1+s_2\right) \left(s_2+s_3\right) \left(s_1+s_2+s_3\right)}-\frac{e^{s_2+s_3+s_4} s_2 k_7\left(-s_2-s_3-s_4\right)}{2 s_1 \left(s_1+s_2\right) \left(s_2+s_3\right) \left(s_1+s_2+s_3\right)}-\frac{e^{s_2+s_3+s_4} s_3 k_7\left(-s_2-s_3-s_4\right)}{2 s_1 \left(s_1+s_2\right) \left(s_2+s_3\right) \left(s_1+s_2+s_3\right)}-\frac{s_1 k_6\left(s_2+s_3+s_4\right)}{2 s_2 \left(s_1+s_2\right) \left(s_2+s_3\right) \left(s_1+s_2+s_3\right)}-\frac{s_3 k_6\left(s_2+s_3+s_4\right)}{2 s_2 \left(s_1+s_2\right) \left(s_2+s_3\right) \left(s_1+s_2+s_3\right)}-\frac{e^{s_2+s_3+s_4} s_1 k_7\left(-s_2-s_3-s_4\right)}{2 s_2 \left(s_1+s_2\right) \left(s_2+s_3\right) \left(s_1+s_2+s_3\right)}-\frac{e^{s_2+s_3+s_4} s_3 k_7\left(-s_2-s_3-s_4\right)}{2 s_2 \left(s_1+s_2\right) \left(s_2+s_3\right) \left(s_1+s_2+s_3\right)}-\frac{s_1 k_6\left(s_4\right)}{2 \left(s_1+s_2\right) s_3 \left(s_2+s_3\right) \left(s_1+s_2+s_3\right)}-\frac{s_2 k_6\left(s_4\right)}{2 \left(s_1+s_2\right) s_3 \left(s_2+s_3\right) \left(s_1+s_2+s_3\right)}-\frac{e^{s_4} s_1 k_7\left(-s_4\right)}{2 \left(s_1+s_2\right) s_3 \left(s_2+s_3\right) \left(s_1+s_2+s_3\right)}-\frac{e^{s_4} s_2 k_7\left(-s_4\right)}{2 \left(s_1+s_2\right) s_3 \left(s_2+s_3\right) \left(s_1+s_2+s_3\right)}-\frac{G_2\left(s_1,s_2\right) k_6\left(s_3+s_4\right)}{2 s_4}-\frac{e^{s_3+s_4} G_2\left(s_1,s_2\right) k_7\left(-s_3-s_4\right)}{2 s_4}-\frac{G_1\left(s_1\right) k_9\left(s_2,s_3\right)}{8 s_4}-\frac{e^{s_2} G_1\left(s_1\right) k_{10}\left(s_3,-s_2-s_3\right)}{8 s_4}-\frac{e^{s_2+s_3} G_1\left(s_1\right) k_{13}\left(-s_2-s_3,s_2\right)}{8 s_4}-\frac{k_{17}\left(s_1,s_2,s_3+s_4\right)}{16 s_4}-\frac{e^{s_1+s_2} k_{17}\left(s_3+s_4,-s_1-s_2-s_3-s_4,s_1\right)}{16 s_4}-\frac{e^{s_1} k_{19}\left(s_2,s_3+s_4,-s_1-s_2-s_3-s_4\right)}{16 s_4}-\frac{e^{s_1+s_2+s_3+s_4} k_{19}\left(-s_1-s_2-s_3-s_4,s_1,s_2\right)}{16 s_4}-\frac{k_9\left(s_2,s_3\right)}{8 s_1 s_4}-\frac{k_9\left(s_1+s_2,s_3+s_4\right)}{8 s_1 s_4}-\frac{e^{s_2} k_{10}\left(s_3,-s_2-s_3\right)}{8 s_1 s_4}-\frac{e^{s_1+s_2} k_{10}\left(s_3+s_4,-s_1-s_2-s_3-s_4\right)}{8 s_1 s_4}-\frac{e^{s_2+s_3} k_{13}\left(-s_2-s_3,s_2\right)}{8 s_1 s_4}-\frac{e^{s_1+s_2+s_3+s_4} k_{13}\left(-s_1-s_2-s_3-s_4,s_1+s_2\right)}{8 s_1 s_4}-\frac{G_1\left(s_1\right) k_6\left(s_2+s_3\right)}{2 s_2 s_4}-\frac{G_1\left(s_1\right) k_6\left(s_3+s_4\right)}{2 s_2 s_4}-\frac{e^{s_2+s_3} G_1\left(s_1\right) k_7\left(-s_2-s_3\right)}{2 s_2 s_4}-\frac{e^{s_3+s_4} G_1\left(s_1\right) k_7\left(-s_3-s_4\right)}{2 s_2 s_4}-\frac{k_9\left(s_1,s_2+s_3+s_4\right)}{8 s_2 s_4}-\frac{k_9\left(s_1+s_2,s_3\right)}{8 s_2 s_4}-\frac{e^{s_1+s_2} k_{10}\left(s_3,-s_1-s_2-s_3\right)}{8 s_2 s_4}-\frac{e^{s_1} k_{10}\left(s_2+s_3+s_4,-s_1-s_2-s_3-s_4\right)}{8 s_2 s_4}-\frac{e^{s_1+s_2+s_3} k_{13}\left(-s_1-s_2-s_3,s_1+s_2\right)}{8 s_2 s_4}-\frac{e^{s_1+s_2+s_3+s_4} k_{13}\left(-s_1-s_2-s_3-s_4,s_1\right)}{8 s_2 s_4}-\frac{k_6\left(s_2+s_3\right)}{2 s_1 \left(s_1+s_2\right) s_4}-\frac{k_6\left(s_1+s_2+s_3+s_4\right)}{2 s_1 \left(s_1+s_2\right) s_4}-\frac{e^{s_2+s_3} k_7\left(-s_2-s_3\right)}{2 s_1 \left(s_1+s_2\right) s_4}-\frac{e^{s_1+s_2+s_3+s_4} k_7\left(-s_1-s_2-s_3-s_4\right)}{2 s_1 \left(s_1+s_2\right) s_4}-\frac{k_6\left(s_2+s_3\right)}{2 s_2 \left(s_1+s_2\right) s_4}-\frac{k_6\left(s_3+s_4\right)}{2 s_2 \left(s_1+s_2\right) s_4}-\frac{e^{s_2+s_3} k_7\left(-s_2-s_3\right)}{2 s_2 \left(s_1+s_2\right) s_4}-\frac{e^{s_3+s_4} k_7\left(-s_3-s_4\right)}{2 s_2 \left(s_1+s_2\right) s_4}-\frac{G_1\left(s_1\right) k_6\left(s_2\right)}{2 s_3 \left(s_3+s_4\right)}-\frac{e^{s_2} G_1\left(s_1\right) k_7\left(-s_2\right)}{2 s_3 \left(s_3+s_4\right)}-\frac{k_8\left(s_1,s_2+s_3\right)}{4 s_3 \left(s_3+s_4\right)}-\frac{e^{s_1} k_{11}\left(s_2+s_3,-s_1-s_2-s_3\right)}{4 s_3 \left(s_3+s_4\right)}-\frac{e^{s_1+s_2+s_3} k_{12}\left(-s_1-s_2-s_3,s_1\right)}{4 s_3 \left(s_3+s_4\right)}-\frac{k_6\left(s_2\right)}{2 s_1 s_3 \left(s_3+s_4\right)}-\frac{k_6\left(s_1+s_2+s_3\right)}{2 s_1 s_3 \left(s_3+s_4\right)}-\frac{e^{s_2} k_7\left(-s_2\right)}{2 s_1 s_3 \left(s_3+s_4\right)}-\frac{e^{s_1+s_2+s_3} k_7\left(-s_1-s_2-s_3\right)}{2 s_1 s_3 \left(s_3+s_4\right)}-\frac{G_1\left(s_1\right) k_6\left(s_2+s_3+s_4\right)}{2 s_4 \left(s_3+s_4\right)}-\frac{e^{s_2+s_3+s_4} G_1\left(s_1\right) k_7\left(-s_2-s_3-s_4\right)}{2 s_4 \left(s_3+s_4\right)}-\frac{k_8\left(s_1,s_2+s_3\right)}{4 s_4 \left(s_3+s_4\right)}-\frac{e^{s_1} k_{11}\left(s_2+s_3,-s_1-s_2-s_3\right)}{4 s_4 \left(s_3+s_4\right)}-\frac{e^{s_1+s_2+s_3} k_{12}\left(-s_1-s_2-s_3,s_1\right)}{4 s_4 \left(s_3+s_4\right)}-\frac{k_6\left(s_1+s_2+s_3\right)}{2 s_1 s_4 \left(s_3+s_4\right)}-\frac{k_6\left(s_2+s_3+s_4\right)}{2 s_1 s_4 \left(s_3+s_4\right)}-\frac{e^{s_1+s_2+s_3} k_7\left(-s_1-s_2-s_3\right)}{2 s_1 s_4 \left(s_3+s_4\right)}-\frac{e^{s_2+s_3+s_4} k_7\left(-s_2-s_3-s_4\right)}{2 s_1 s_4 \left(s_3+s_4\right)}-\frac{k_6\left(s_1+s_2\right)}{\left(s_2+s_3\right) \left(s_3+s_4\right) \left(s_2+s_3+s_4\right)}-\frac{e^{s_1+s_2} k_7\left(-s_1-s_2\right)}{\left(s_2+s_3\right) \left(s_3+s_4\right) \left(s_2+s_3+s_4\right)}-\frac{s_3 k_6\left(s_1+s_2\right)}{2 s_2 \left(s_2+s_3\right) \left(s_3+s_4\right) \left(s_2+s_3+s_4\right)}-\frac{s_4 k_6\left(s_1+s_2\right)}{2 s_2 \left(s_2+s_3\right) \left(s_3+s_4\right) \left(s_2+s_3+s_4\right)}-\frac{e^{s_1+s_2} s_3 k_7\left(-s_1-s_2\right)}{2 s_2 \left(s_2+s_3\right) \left(s_3+s_4\right) \left(s_2+s_3+s_4\right)}-\frac{e^{s_1+s_2} s_4 k_7\left(-s_1-s_2\right)}{2 s_2 \left(s_2+s_3\right) \left(s_3+s_4\right) \left(s_2+s_3+s_4\right)}-\frac{s_2 k_6\left(s_1+s_2\right)}{2 s_3 \left(s_2+s_3\right) \left(s_3+s_4\right) \left(s_2+s_3+s_4\right)}-\frac{s_4 k_6\left(s_1+s_2\right)}{2 s_3 \left(s_2+s_3\right) \left(s_3+s_4\right) \left(s_2+s_3+s_4\right)}-\frac{e^{s_1+s_2} s_2 k_7\left(-s_1-s_2\right)}{2 s_3 \left(s_2+s_3\right) \left(s_3+s_4\right) \left(s_2+s_3+s_4\right)}-\frac{e^{s_1+s_2} s_4 k_7\left(-s_1-s_2\right)}{2 s_3 \left(s_2+s_3\right) \left(s_3+s_4\right) \left(s_2+s_3+s_4\right)}-\frac{s_2 k_6\left(s_1+s_2+s_3+s_4\right)}{2 \left(s_2+s_3\right) s_4 \left(s_3+s_4\right) \left(s_2+s_3+s_4\right)}-\frac{s_3 k_6\left(s_1+s_2+s_3+s_4\right)}{2 \left(s_2+s_3\right) s_4 \left(s_3+s_4\right) \left(s_2+s_3+s_4\right)}-\frac{e^{s_1+s_2+s_3+s_4} s_2 k_7\left(-s_1-s_2-s_3-s_4\right)}{2 \left(s_2+s_3\right) s_4 \left(s_3+s_4\right) \left(s_2+s_3+s_4\right)}-\frac{e^{s_1+s_2+s_3+s_4} s_3 k_7\left(-s_1-s_2-s_3-s_4\right)}{2 \left(s_2+s_3\right) s_4 \left(s_3+s_4\right) \left(s_2+s_3+s_4\right)}-\frac{k_{17}\left(s_1,s_2,s_3\right)}{16 \left(s_1+s_2+s_3+s_4\right)}-\frac{e^{s_1+s_2} k_{17}\left(s_3,-s_1-s_2-s_3,s_1\right)}{16 \left(s_1+s_2+s_3+s_4\right)}-\frac{e^{s_1} k_{19}\left(s_2,s_3,-s_1-s_2-s_3\right)}{16 \left(s_1+s_2+s_3+s_4\right)}-\frac{e^{s_1+s_2+s_3} k_{19}\left(-s_1-s_2-s_3,s_1,s_2\right)}{16 \left(s_1+s_2+s_3+s_4\right)}. 
\end{math}
\end{center}

\smallskip 

As we mentioned earlier, the remaining basic functional relations of the above type, because of their similarly lengthy expressions,  are 
recorded in Appendix \ref{lengthyfnrelationsappsec}.

\smallskip

\section{Differential system from functional relations and action of cyclic groups}
\label{DifferentialSystemSec}

In this section we present a system of differential equations that the functions 
$k_3, \dots, k_{20}$ satisfy. We recall that $k_3, \dots, k_{20}$, which are defined by 
\eqref{littlekfunctions}, are derived from the two, three and four variable functions $K_3, \dots, K_{20}$ 
by specializing them to $s_1+\cdots+s_n=0$, where $n \in \{2, 3, 4\}$ is the number 
of variables that each $K_j$ depends on. The functional relations stated  in 
Theorem \ref{FuncRelationsThm} and presented subsequently and in Appendix \ref{lengthyfnrelationsappsec} 
allow us to abstractly use specific finite differences 
of the functions  $k_3, \dots, k_{20}$ to express the functions 
\[
\widetilde K_j(s_1, \dots, s_n) 
= 
\frac{1}{2^n} \frac{\sinh \left ( (s_1+\dots+s_n)/2 \right )}{(s_1+\dots+s_n)/2} K_j(s_1, \dots, s_n) . 
\]
An interesting question that arises naturally here is whether one can extract more 
abstract information from these relations. We answer this question in this section.

\subsection{System of differential equations for the functions $k_3, \dots, k_{20}$}
\label{differentialsystemderived}

We start by noting that  
the factor that changes $K_j$ to $\widetilde K_j$, namely 
\[
\frac{1}{2^n} \frac{\sinh \left ( (s_1+\dots+s_n)/2 \right )}{(s_1+\dots+s_n)/2}, 
\]
specializes to $\frac{1}{2^n}$ on $s_1+\dots+s_n = 0$, hence the left side 
of each functional relation, under this  specialization, behaves as follows:  
\[
\widetilde K_j(s_1, \dots, s_n)  \to \frac{1}{2^n} k_j(s_1, \dots, s_{n-1}). 
\]
Now we should analyze the behavior of the right side of each functional relation 
under the specialization to $s_1+ \cdots + s_n=0$. We shall see shortly that 
on the right side of each equation, most of the terms behave nicely with respect 
to a direct replacement of $s_n$ by $-s_1- \cdots - s_{n-1}$, except for few 
terms that have $s_1+\dots+s_n = 0$ in their denominators. We will specialize 
each of such terms by calculating a limit with the aid of an estimate of the form 
$s_1+\cdots+s_n= \vep \sim 0$, and will see that partial derivatives of the 
involved functions will appear. Hence, a system of differential equations will be derived.

\smallskip

We first demonstrate this process on the basic equation  \eqref{basicK3eqn}:
\[
\widetilde K_3(s_1, s_2) =
\]
\begin{center}
\begin{math}
\frac{1}{15} (-4) \pi  G_2\left(s_1,s_2\right)+\frac{1}{2} k_8\left(s_1,s_2\right)+\frac{1}{4} k_9\left(s_1,s_2\right)-\frac{1}{4} e^{s_1+s_2} k_9\left(-s_1-s_2,s_1\right)-\frac{1}{4} e^{s_1} k_9\left(s_2,-s_1-s_2\right)-\frac{1}{4} k_{10}\left(s_1,s_2\right)-\frac{1}{4} e^{s_1+s_2} k_{10}\left(-s_1-s_2,s_1\right)+\frac{1}{4} e^{s_1} k_{10}\left(s_2,-s_1-s_2\right)+\frac{1}{2} e^{s_1} k_{11}\left(s_2,-s_1-s_2\right)+\frac{1}{2} e^{s_1+s_2} k_{12}\left(-s_1-s_2,s_1\right)-\frac{1}{4} k_{13}\left(s_1,s_2\right)+\frac{1}{4} e^{s_1+s_2} k_{13}\left(-s_1-s_2,s_1\right)-\frac{1}{4} e^{s_1} k_{13}\left(s_2,-s_1-s_2\right)+\frac{1}{4} e^{s_2} G_1\left(s_1\right) k_3\left(-s_2\right)+\frac{1}{4} G_1\left(s_1\right) k_3\left(s_2\right)-G_1\left(s_1\right) k_6\left(s_2\right)-e^{s_2} G_1\left(s_1\right) k_7\left(-s_2\right)+\frac{\left(e^{s_1+s_2}-1\right) k_3\left(s_1\right)}{4 \left(s_1+s_2\right)}+\frac{k_3\left(s_2\right)-k_3\left(s_1+s_2\right)}{4 s_1}+\frac{k_3\left(s_1+s_2\right)-k_3\left(s_1\right)}{4 s_2}+\frac{k_6\left(s_1\right)-k_6\left(s_1+s_2\right)}{s_2}+\frac{k_6\left(s_1+s_2\right)-k_6\left(s_2\right)}{s_1}+\frac{e^{s_1} \left(k_7\left(-s_1\right)-e^{s_2} k_7\left(-s_1-s_2\right)\right)}{s_2}+\frac{e^{s_2} \left(e^{s_1} k_7\left(-s_1-s_2\right)-k_7\left(-s_2\right)\right)}{s_1}-\frac{e^{s_2} \left(e^{s_1} k_3\left(-s_1-s_2\right)-k_3\left(-s_2\right)\right)}{4 s_1}-\frac{e^{s_1} \left(k_3\left(-s_1\right)-e^{s_2} k_3\left(-s_1-s_2\right)\right)}{4 s_2}-\frac{e^{s_1} \left(k_3\left(-s_1\right)+e^{s_2} k_3\left(s_1\right)-e^{s_2} k_3\left(-s_2\right)-k_3\left(s_2\right)\right)}{4 \left(s_1+s_2\right)},   
\end{math}
\end{center}
which we want to specialize for $s_1+s_2=0$. In that respect the term 
$\widetilde K_3(s_1, s_2)$ gives $\frac 14 k_3(s_1)$ since the coefficient 
$\frac{1}{2^2}\frac{ \sinh\left((s_1+s_2)/2\right)}{(s_1+s_2)/2} $ is well 
behaved and gives $\frac 14$.

\smallskip

The two interesting terms in the above expression are those which have 
$s_1+s_2$ in the denominator. The first one is 
$$
\frac{\left(e^{s_1+s_2}-1\right) k_3\left(s_1\right)}{4 \left(s_1+s_2\right)}, 
$$
which contributes exactly by $\frac 14 k_3(s_1)$ and thus cancels the 
previous term $\widetilde K_3(s_1, s_2)$ which also gives $\frac 14 k_3(s_1)$. 
The second crucial term in the right hand side of the basic equation is 
$$
-\frac{e^{s_1} \left(k_3\left(-s_1\right)+e^{s_2} k_3\left(s_1\right)-e^{s_2} k_3\left(-s_2\right)-k_3\left(s_2\right)\right)}{4 \left(s_1+s_2\right)}, 
$$
which can be treated directly using $s_1+s_2 = \vep \sim 0$ as
$$
-\frac{e^{s_1} \left(k_3\left(-s_1\right)-k_3\left(-s_1+\vep\right)\right)+e^{s_1+s_2}\left( k_3\left(s_1\right)- k_3\left(s_1-\vep \right)\right)}{4 \vep}, 
$$
and it gives all the terms involving the derivative of $k_3$, namely 
$$
\frac14 e^{s_1}k'_3\left(-s_1\right)-\frac14 k'_3\left(s_1\right), 
$$
which corresponds to the differential equation \eqref{diffeqK3} since all 
other terms are specialized without problem using  $s_1+s_2=0$.

\smallskip

As we mentioned above, this approach can be applied to each 
functional relation to derive a differential equation. 
\begin{theorem}
The functions $k_3, \dots, k_{20}$ satisfy a system of partial differential equations as described below. 
\end{theorem}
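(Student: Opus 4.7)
The plan is to implement systematically the specialization procedure that was illustrated on the basic equation \eqref{basicK3eqn} in the paragraphs preceding the theorem. For each functional relation of Theorem \ref{FuncRelationsThm} (and its counterparts in Appendix \ref{lengthyfnrelationsappsec}), I would restrict both sides to the hyperplane $s_1+\cdots+s_n=0$ and read off a differential identity for the corresponding function $k_j$. Concretely, on the left side, the prefactor $\frac{1}{2^n}\sinh((s_1+\cdots+s_n)/2)/((s_1+\cdots+s_n)/2)$ is an entire function of $s_1+\cdots+s_n$ taking value $1/2^n$ at $0$, so $\widetilde K_j$ restricts to $\frac{1}{2^n}k_j(s_1,\dots,s_{n-1})$ by definition \eqref{littlekfunctions}. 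On the right side, I would classify the summands into two classes: those that are smooth under the substitution $s_n \mapsto -s_1-\cdots-s_{n-1}$, and the finitely many ``singular'' summands whose denominators contain the factor $s_1+\cdots+s_n$.

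For the smooth summands I would simply substitute $s_n=-s_1-\cdots-s_{n-1}$, noting that the factors $G_1,G_2,G_3,G_4$ of \eqref{ExplicitBabyFunctions} and the compositions $e^{s_1+\cdots+s_j}k_\ell(\cdot)$ that appear are all regular on the hyperplane. The singular summands are the only place where the hyperplane is a true boundary; each such term is a finite difference of the form $(F(s_n)-F(-s_1-\cdots-s_{n-1}))/(s_1+\cdots+s_n)$ multiplied by smooth prefactors. For these I would set $s_1+\cdots+s_n=\varepsilon$ and perform a first-order Taylor expansion as $\varepsilon\to 0$, exactly as was carried out in the excerpt for the two critical terms in \eqref{basicK3eqn}. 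A cancellation then occurs between the $\frac{1}{2^n}k_j$ coming from $\widetilde K_j$ on the left and the leading piece $(e^{s_1+\cdots+s_n}-1)k_j/(2^n(s_1+\cdots+s_n))$ that is present on the right side of every functional relation; what survives is a combination of first derivatives $k_j'$ (or partial derivatives $\partial_i k_j$ in the multivariable case), picked up from the remaining singular summands via the identities
\begin{equation*}
\lim_{\varepsilon\to 0}\frac{F(s)-F(s-\varepsilon)}{\varepsilon}=F'(s),
\qquad
\lim_{\varepsilon\to 0}\frac{e^{s}F(-s)-e^{s-\varepsilon}F(-s+\varepsilon)}{\varepsilon}=e^{s}(F(-s)-F'(-s)),
\end{equation*}
and their obvious several-variable analogues applied to the functions $k_3,\dots,k_{20}$ and to the exponential factors $e^{s_1+\cdots+s_j}$ multiplying them. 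This produces one partial differential equation per functional relation, giving the differential equations labeled \eqref{diffeqK3} and the subsequent ones; taken together they form the system.

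The main difficulty is purely bookkeeping, not conceptual: for the three- and four-variable relations, most notably the relation for $\widetilde K_{17}$ displayed as \eqref{basicK17eqn} and the ones in Appendix \ref{lengthyfnrelationsappsec}, one must correctly identify every summand containing $s_1+\cdots+s_n$ in a denominator (there are several dozen in the $K_{17}$ case), separate the leading cancellation from the derivative-producing remainder, and keep track of signs and exponential prefactors. I would organize the computation by first isolating all such singular summands, grouping them according to which function $k_\ell$ and which argument shift produces the derivative, and then expanding each group to first order in $\varepsilon=s_1+\cdots+s_n$. The remaining regular summands are substituted verbatim and contribute the right-hand side of the resulting PDE. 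Performing this reduction for each of the basic relations associated with $K_3,\dots,K_{20}$ then yields precisely the differential system asserted in the theorem, and its correctness can be cross-checked against the explicit formulas of Section \ref{ExplicitFormulasSec} and Appendix \ref{explicit3and4fnsappsec}.
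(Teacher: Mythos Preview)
Your proposal is correct and follows essentially the same route as the paper: specialize each functional relation of Theorem \ref{FuncRelationsThm} to the hyperplane $s_1+\cdots+s_n=0$, note that the left side becomes $\tfrac{1}{2^n}k_j$, substitute directly in the regular summands, and Taylor-expand the finitely many summands with $s_1+\cdots+s_n$ in the denominator using $\varepsilon=s_1+\cdots+s_n\to 0$ to produce first partial derivatives of the $k_\ell$. The one minor inaccuracy is your estimate of ``several dozen'' singular summands in the $\widetilde K_{17}$ relation; in fact there are only eight such terms (they are collected in the paper as $\widetilde K_{17,\mathrm{s}}$), and the situation for $\widetilde K_{18},\widetilde K_{19},\widetilde K_{20}$ is similar, so the bookkeeping is lighter than you suggest.
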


We explained above the details of the derivation of the differential equation 
associated with the functional relation for the function $\widetilde K_3$. 
We can now write explicitly the differential equation, and will then perform a 
similar analysis on each functional relation to obtain explicitly a system of partial 
differential equations. 

\subsubsection{Differential equation derived from the expression for $\widetilde K_3$} We have:

\begin{equation} \label{diffeqK3}
\frac{1}{4} e^{s_1} k_3'\left(-s_1\right)-\frac{1}{4} k_3'\left(s_1\right) = 
\end{equation}
\begin{center}
\begin{math}
\frac{1}{60 s_1} \Big ( 16 \pi  s_1 G_2(s_1,-s_1)-30 s_1 k_8(s_1,-s_1)+15 s_1 k_9(0,s_1)+15 e^{s_1} s_1 k_9(-s_1,0)-15 s_1 k_9(s_1,-s_1)+15 s_1 k_{10}(0,s_1)-15 e^{s_1} s_1 k_{10}(-s_1,0)+15 s_1 k_{10}(s_1,-s_1)-30 e^{s_1} s_1 k_{11}(-s_1,0)-30 s_1 k_{12}(0,s_1)-15 s_1 k_{13}(0,s_1)+15 e^{s_1} s_1 k_{13}(-s_1,0)+15 s_1 k_{13}(s_1,-s_1)-15 s_1 G_1(s_1) k_3(-s_1)-15 e^{-s_1} s_1 G_1(s_1) k_3(s_1)+60 s_1 G_1(s_1) k_6(-s_1)+60 e^{-s_1} s_1 G_1(s_1) k_7(s_1)-15 e^{s_1} k_3(-s_1)-15 k_3(-s_1)-15 e^{-s_1} k_3(s_1)-15 k_3(s_1)+60 k_6(-s_1)+60 k_6(s_1)+60 e^{s_1} k_7(-s_1)+60 e^{-s_1} k_7(s_1)+60 k_3(0)-120 k_6(0)-120 k_7(0)   \Big ).  
\end{math}
\end{center}

\subsubsection{Differential equation derived from the expression for $\widetilde K_4$} 
In the basic equation \eqref{basicK4eqn} for $\widetilde K_4(s_1, s_2),$ the terms 
that need special care in order to specialize to $s_1+s_2 =0$ are 
\[
\frac{\left(e^{s_1+s_2}-1\right) k_4\left(s_1\right)}{4 \left(s_1+s_2\right)}, 
\qquad 
-\frac{e^{s_1+s_2} \left(k_4\left(s_1\right)-k_4\left(-s_2\right)\right)}{4 \left(s_1+s_2\right)}, 
\qquad 
-\frac{e^{s_1} \left(k_4\left(-s_1\right)-k_4\left(s_2\right)\right)}{4 \left(s_1+s_2\right)}. 
\]
By writing $s_1 + s_2 = \vep \sim 0$, we find that the above terms 
respectively specialize to 
\[
\frac{1}{4} k_4\left(s_1\right), 
\qquad 
-\frac{1}{4} k_4'\left(s_1\right), 
\qquad 
\frac{1}{4} e^{s_1} k_4'\left(-s_1\right). 
\]
Therefore by a direct replacement of $s_2$ by $-s_1$ 
in the rest of the terms we find that 
\begin{equation} \label{diffeqK4}
\frac{1}{4} e^{s_1} k_4'\left(-s_1\right)-\frac{1}{4} k_4'\left(s_1\right)=
\end{equation}
\begin{center}
\begin{math} 
\frac{1}{120 s_1} \Big ( 
8 \pi  s_1 G_2\left(s_1,-s_1\right)+15 s_1 k_8\left(0,s_1\right)+15 e^{s_1} s_1 k_8\left(-s_1,0\right)-15 s_1 k_9\left(s_1,-s_1\right)-15 e^{s_1} s_1 k_{10}\left(-s_1,0\right)+15 s_1 k_{11}\left(0,s_1\right)+15 s_1 k_{11}\left(s_1,-s_1\right)+15 e^{s_1} s_1 k_{12}\left(-s_1,0\right)+15 s_1 k_{12}\left(s_1,-s_1\right)-15 s_1 k_{13}\left(0,s_1\right)-30 s_1 G_1\left(s_1\right) k_4\left(-s_1\right)-30 e^{-s_1} s_1 G_1\left(s_1\right) k_4\left(s_1\right)+30 s_1 G_1\left(s_1\right) k_6\left(-s_1\right)+30 e^{-s_1} s_1 G_1\left(s_1\right) k_7\left(s_1\right)-30 e^{s_1} k_4\left(-s_1\right)-30 k_4\left(-s_1\right)-30 e^{-s_1} k_4\left(s_1\right)-30 k_4\left(s_1\right)+30 k_6\left(-s_1\right)+30 k_6\left(s_1\right)+30 e^{s_1} k_7\left(-s_1\right)+30 e^{-s_1} k_7\left(s_1\right)+120 k_4(0)-60 k_6(0)-60 k_7(0) \Big ). 
\end{math}
\end{center}

\subsubsection{Differential equation derived from the expression for $\widetilde K_5$} 
In the expression given in \eqref{basicK5eqn} for $\widetilde K_5,$ the only terms  that do not 
specialize to $s_1 + s_2 = 0$ by a simple replacement of $s_2$ by $- s_1$ are 
\[
\frac{\left(e^{s_1+s_2}-1\right) k_5\left(s_1\right)}{4 \left(s_1+s_2\right)}, 
\qquad 
-\frac{e^{s_1} \left(k_5\left(-s_1\right)+e^{s_2} k_5\left(s_1\right)-e^{s_2} k_5\left(-s_2\right)-k_5\left(s_2\right)\right)}{4 \left(s_1+s_2\right)}. 
\]
Therefore, we use the estimate $s_1 + s_2 = \vep \sim 0$ to find that the above terms 
respectively tend to the following functions on $s_1 + s_2 =0$: 
\[
\frac{1}{4} k_5\left(s_1\right), 
\qquad 
\frac{1}{4} e^{s_1} k_5'\left(-s_1\right)-\frac{1}{4} k_5'\left(s_1\right). 
\]
Since the rest of the terms behave nicely when we replace $s_2$ by $-s_1$, we 
find the following identity:
\begin{equation} \label{diffeqK5}
\frac{1}{4} e^{s_1} k_5'\left(-s_1\right)-\frac{1}{4} k_5'\left(s_1\right) = 
\end{equation}
\begin{center}
\begin{math}
\frac{1}{40 s_1} \Big (   8 \pi  s_1 G_2\left(s_1,-s_1\right)-10 s_1 k_{14}\left(0,s_1\right)+5 e^{s_1} s_1 k_{14}\left(-s_1,0\right)+5 s_1 k_{14}\left(s_1,-s_1\right)+5 s_1 k_{15}\left(0,s_1\right)+5 e^{s_1} s_1 k_{15}\left(-s_1,0\right)-10 s_1 k_{15}\left(s_1,-s_1\right)+5 s_1 k_{16}\left(0,s_1\right)-10 e^{s_1} s_1 k_{16}\left(-s_1,0\right)+5 s_1 k_{16}\left(s_1,-s_1\right)-10 s_1 G_1\left(s_1\right) k_5\left(-s_1\right)-10 e^{-s_1} s_1 G_1\left(s_1\right) k_5\left(s_1\right)+30 s_1 G_1\left(s_1\right) k_6\left(-s_1\right)+30 e^{-s_1} s_1 G_1\left(s_1\right) k_7\left(s_1\right)-10 e^{s_1} k_5\left(-s_1\right)-10 k_5\left(-s_1\right)-10 e^{-s_1} k_5\left(s_1\right)-10 k_5\left(s_1\right)+30 k_6\left(-s_1\right)+30 k_6\left(s_1\right)+30 e^{s_1} k_7\left(-s_1\right)+30 e^{-s_1} k_7\left(s_1\right)+40 k_5(0)-60 k_6(0)-60 k_7(0) \Big ). 
\end{math}
\end{center}

\subsubsection{Differential equations derived from the two expressions for $\widetilde K_6$} 
As we explained in \S\,\ref{basicK6}, since in the formula \eqref{Gradientofa4}, the 
operator associated with $\widetilde K_6$ acts on two different kinds of elements of 
$\CNT$ that are not the same modulo switching $\done$ and $\dtwo$, we have 
two different basic equations for $K_6$. In the first expression given by \eqref{basicK6eqnfirst}, 
the terms that do not specialize directly to $s_1 + s_2 = 0$ are 
\[
\frac{\left(e^{s_1+s_2}-1\right) k_6\left(s_1\right)}{4 \left(s_1+s_2\right)}, 
\qquad 
-\frac{e^{s_1+s_2} \left(k_6\left(s_1\right)-k_6\left(-s_2\right)\right)}{4 \left(s_1+s_2\right)}, 
\qquad 
-\frac{e^{s_1} \left(k_7\left(-s_1\right)-k_7\left(s_2\right)\right)}{4 \left(s_1+s_2\right)}. 
\]
By using the estimate $s_1 + s_2 = \vep \sim 0$, and calculating a limit in each case, 
we find that the above functions respectively turn to the following functions on $s_1 + s_2 =0:$
\[
\frac{1}{4} k_6\left(s_1\right), 
\qquad
 -\frac{1}{4} k_6'\left(s_1\right), 
 \qquad 
\frac{1}{4} e^{s_1} k_7'\left(-s_1\right).  
\]
Then, by a simple replacement of $s_2$ by $-s_1$ in the rest of the terms 
we find that 
\begin{equation} \label{diffeqK6first}
\frac{1}{4} e^{s_1} k_7'\left(-s_1\right)-\frac{1}{4} k_6'\left(s_1\right) = 
\end{equation}
\begin{center}
\begin{math}
\frac{1}{120 s_1}\Big ( 16 \pi  s_1 G_2\left(s_1,-s_1\right)-15 s_1 k_{14}\left(0,s_1\right)+15 e^{s_1} s_1 k_{14}\left(-s_1,0\right)+15 s_1 k_{15}\left(0,s_1\right)-15 s_1 k_{15}\left(s_1,-s_1\right)-15 e^{s_1} s_1 k_{16}\left(-s_1,0\right)+15 s_1 k_{16}\left(s_1,-s_1\right)-60 s_1 G_1\left(s_1\right) k_5\left(-s_1\right)-60 e^{-s_1} s_1 G_1\left(s_1\right) k_5\left(s_1\right)+90 s_1 G_1\left(s_1\right) k_6\left(-s_1\right)+30 e^{-s_1} s_1 G_1\left(s_1\right) k_6\left(s_1\right)+30 s_1 G_1\left(s_1\right) k_7\left(-s_1\right)+90 e^{-s_1} s_1 G_1\left(s_1\right) k_7\left(s_1\right)-60 k_5\left(-s_1\right)-60 e^{-s_1} k_5\left(s_1\right)+90 k_6\left(-s_1\right)+30 e^{-s_1} k_6\left(s_1\right)+30 k_6\left(s_1\right)+30 e^{s_1} k_7\left(-s_1\right)+30 k_7\left(-s_1\right)+90 e^{-s_1} k_7\left(s_1\right)+120 k_5(0)-150 k_6(0)-150 k_7(0)  \Big ). 
\end{math}
\end{center}

\smallskip

In the second expression for $\widetilde K_6$ given by \eqref{basicK6eqnsecond},  
the terms that seemingly have singularity on $s_1 + s_2 = 0$ are 
\[
\frac{\left(e^{s_1+s_2}-1\right) k_6\left(s_1\right)}{4 \left(s_1+s_2\right)}, 
\qquad
-\frac{e^{s_1+s_2} \left(k_6\left(s_1\right)-k_6\left(-s_2\right)\right)}{4 \left(s_1+s_2\right)}, 
\qquad 
-\frac{e^{s_1} \left(k_7\left(-s_1\right)-k_7\left(s_2\right)\right)}{4 \left(s_1+s_2\right)},  
\]
which are exactly the same as the seemingly singular terms in the first expression. 
Since we already know the restriction of these terms to $s_1+s_2=0$, 
we can  derive the following identity by 
replacing $s_2$ by $-s_1$ in the remaining terms: 
\begin{equation} \label{diffeqK6second}
\frac{1}{4} e^{s_1} k_7'\left(-s_1\right)-\frac{1}{4} k_6'\left(s_1\right) = 
\end{equation}
\begin{center}
\begin{math}
\frac{1}{120 s_1} \Big ( 16 \pi  s_1 G_2\left(s_1,-s_1\right)+15 s_1 k_8\left(0,s_1\right)-15 s_1 k_8\left(s_1,-s_1\right)+15 s_1 k_9\left(0,s_1\right)-15 s_1 k_9\left(s_1,-s_1\right)-15 e^{s_1} s_1 k_{10}\left(-s_1,0\right)+15 s_1 k_{10}\left(s_1,-s_1\right)-15 e^{s_1} s_1 k_{11}\left(-s_1,0\right)+15 s_1 k_{11}\left(s_1,-s_1\right)-15 s_1 k_{12}\left(0,s_1\right)+15 e^{s_1} s_1 k_{12}\left(-s_1,0\right)-15 s_1 k_{13}\left(0,s_1\right)+15 e^{s_1} s_1 k_{13}\left(-s_1,0\right)-30 s_1 G_1\left(s_1\right) k_3\left(-s_1\right)-30 e^{-s_1} s_1 G_1\left(s_1\right) k_3\left(s_1\right)-60 s_1 G_1\left(s_1\right) k_4\left(-s_1\right)-60 e^{-s_1} s_1 G_1\left(s_1\right) k_4\left(s_1\right)+90 s_1 G_1\left(s_1\right) k_6\left(-s_1\right)+30 e^{-s_1} s_1 G_1\left(s_1\right) k_6\left(s_1\right)+30 s_1 G_1\left(s_1\right) k_7\left(-s_1\right)+90 e^{-s_1} s_1 G_1\left(s_1\right) k_7\left(s_1\right)-30 k_3\left(-s_1\right)-30 e^{-s_1} k_3\left(s_1\right)-60 k_4\left(-s_1\right)-60 e^{-s_1} k_4\left(s_1\right)+90 k_6\left(-s_1\right)+30 e^{-s_1} k_6\left(s_1\right)+30 k_6\left(s_1\right)+30 e^{s_1} k_7\left(-s_1\right)+30 k_7\left(-s_1\right)+90 e^{-s_1} k_7\left(s_1\right)+60 k_3(0)+120 k_4(0)-150 k_6(0)-150 k_7(0) \Big ). 
\end{math}
\end{center}

\smallskip

Since the left hand side of the equation \eqref{diffeqK6first} matches precisely 
with that of \eqref{diffeqK6second}, we have the following functional relation. 

\begin{corollary} \label{FnRlnK6DiffEqns}
We have 
\begin{center}
\begin{math}
-s_1 k_8\left(0,s_1\right)+s_1 k_8\left(s_1,-s_1\right)-s_1 k_9\left(0,s_1\right)+s_1 k_9\left(s_1,-s_1\right)+e^{s_1} s_1 k_{10}\left(-s_1,0\right)-s_1 k_{10}\left(s_1,-s_1\right)+e^{s_1} s_1 k_{11}\left(-s_1,0\right)-s_1 k_{11}\left(s_1,-s_1\right)+s_1 k_{12}\left(0,s_1\right)-e^{s_1} s_1 k_{12}\left(-s_1,0\right)+s_1 k_{13}\left(0,s_1\right)-e^{s_1} s_1 k_{13}\left(-s_1,0\right)-s_1 k_{14}\left(0,s_1\right)+e^{s_1} s_1 k_{14}\left(-s_1,0\right)+s_1 k_{15}\left(0,s_1\right)-s_1 k_{15}\left(s_1,-s_1\right)-e^{s_1} s_1 k_{16}\left(-s_1,0\right)+s_1 k_{16}\left(s_1,-s_1\right)+2 s_1 G_1\left(s_1\right) k_3\left(-s_1\right)+2 e^{-s_1} s_1 G_1\left(s_1\right) k_3\left(s_1\right)+4 s_1 G_1\left(s_1\right) k_4\left(-s_1\right)+4 e^{-s_1} s_1 G_1\left(s_1\right) k_4\left(s_1\right)-4 s_1 G_1\left(s_1\right) k_5\left(-s_1\right)-4 e^{-s_1} s_1 G_1\left(s_1\right) k_5\left(s_1\right)+2 k_3\left(-s_1\right)+2 e^{-s_1} k_3\left(s_1\right)+4 k_4\left(-s_1\right)+4 e^{-s_1} k_4\left(s_1\right)-4 k_5\left(-s_1\right)-4 e^{-s_1} k_5\left(s_1\right)-4 k_3(0)-8 k_4(0)+8 k_5(0) 
\end{math}
\end{center}
\[
=0.
\]
\end{corollary}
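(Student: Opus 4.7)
The proof is essentially immediate from the set-up already laid out in the preceding paragraphs, and the plan is to make this observation precise. The crucial remark is that the two differential equations \eqref{diffeqK6first} and \eqref{diffeqK6second}, derived respectively from the two distinct basic relations \eqref{basicK6eqnfirst} and \eqref{basicK6eqnsecond} for $\widetilde K_6$, share exactly the same left-hand side, namely $\tfrac{1}{4} e^{s_1} k_7'(-s_1) - \tfrac{1}{4} k_6'(s_1)$. Equating their right-hand sides must therefore produce a functional identity among the functions $k_j$ that carries no derivatives at all, and the claim is that this identity is precisely the one displayed in the corollary.

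The plan is to subtract \eqref{diffeqK6second} from \eqref{diffeqK6first} term by term. First I would note that the contribution $\tfrac{16\pi s_1 G_2(s_1,-s_1)}{120 s_1}$ appears with identical sign and coefficient on both right-hand sides, so it cancels. Next, I would inspect the blocks of terms involving $k_6$ and $k_7$ (both the genuine function values at $\pm s_1$ and the boundary constants $k_6(0), k_7(0)$, as well as the pieces multiplied by $s_1 G_1(s_1)$): a direct comparison shows that these $k_6$ and $k_7$ blocks are exactly the same in \eqref{diffeqK6first} and \eqref{diffeqK6second}, so they also cancel out entirely in the difference. What remains on the $\widetilde K_6$ (first) side are the terms involving $k_{14}, k_{15}, k_{16}, k_5$, and what remains (with opposite sign) on the $\widetilde K_6$ (second) side are the terms involving $k_3, k_4, k_8, k_9, k_{10}, k_{11}, k_{12}, k_{13}$.

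Collecting these surviving contributions, multiplying through by the common factor $120 s_1$ to clear denominators, and finally dividing by the overall coefficient $15$, one obtains term for term the left-hand side of the stated identity. The main (and only) obstacle is bookkeeping: one must match every monomial $s_1^{\,}k_j(\,\cdot\,)$, every $e^{\pm s_1}$ prefactor, and every boundary term $k_j(0)$ between the two equations without a sign error. The final normalization constants $2, 4, -4, 2, 4, -4, -4, -8, 8$ attached to the $k_3, k_4, k_5$ terms come from the ratios $30/15, 60/15, 60/15$ and $60/15, 120/15, 120/15$ in the two original right-hand sides, and the overall factor $1/15$ converts the coefficients $15$ on the $s_1 k_j(\cdot,\cdot)$ terms into unit coefficients; this matches the corollary verbatim. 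Hence the two differential equations are consistent precisely when the claimed identity holds, which proves the corollary.
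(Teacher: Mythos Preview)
Your proof is correct and follows exactly the same approach as the paper: the corollary is obtained by equating the right-hand sides of \eqref{diffeqK6first} and \eqref{diffeqK6second}, since their left-hand sides coincide. The paper states this in a single sentence, while you have spelled out the cancellations (of the $G_2$ term and of the $k_6$, $k_7$ blocks) and the final normalization by $15$ explicitly; the bookkeeping you describe is accurate and matches the displayed identity term for term.
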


\subsubsection{Differential equations derived from the two expressions for $\widetilde K_7$} 
Like the situation for $\widetilde K_6$, in \eqref{Gradientofa4} the 
operator associated with $\widetilde K_7$ acts on two different kinds of elements of 
$\CNT$ that are not the same up to switching $\done$ and $\dtwo$. We explained in 
\S\,\ref{basicK7} that we 
get two basic functional relations in this case as well. In the first expression  for 
$\widetilde K_7$ given by \eqref{basicK7eqnfirst}, which is associated with the action of the corresponding operator 
on $\done^3(\ell) \cdot \done(\ell)$, the terms that do not specialize trivially to $s_1+s_2 = 0$ 
are
\[
-\frac{e^{s_1} \left(k_6\left(-s_1\right)-k_6\left(s_2\right)\right)}{4 \left(s_1+s_2\right)}, 
\qquad 
\frac{\left(e^{s_1+s_2}-1\right) k_7\left(s_1\right)}{4 \left(s_1+s_2\right)}, 
\qquad
-\frac{e^{s_1+s_2} \left(k_7\left(s_1\right)-k_7\left(-s_2\right)\right)}{4 \left(s_1+s_2\right)}. 
\]
By taking a limit in each case with the aid of the estimation $s_1 + s_2 = \vep \sim 0$, 
we find that the above terms respectively specialize on $s_1+s_2 =0$ to 
\[
\frac{1}{4} e^{s_1} k_6'\left(-s_1\right), 
\qquad 
\frac{1}{4} k_7\left(s_1\right), 
\qquad 
-\frac{1}{4} k_7'\left(s_1\right). 
\]
We can then replace $s_2$ by $-s_1$ in the remaining terms and 
obtain the following identity:
\begin{equation} \label{diffeqK7first}
\frac{1}{4} e^{s_1} k_6'\left(-s_1\right)-\frac{1}{4} k_7'\left(s_1\right) = 
\end{equation}
\begin{center}
\begin{math}
\frac{1}{120s_1} \Big ( 16 \pi  s_1 G_2\left(s_1,-s_1\right)-15 s_1 k_{14}\left(0,s_1\right)+15 s_1 k_{14}\left(s_1,-s_1\right)+15 e^{s_1} s_1 k_{15}\left(-s_1,0\right)-15 s_1 k_{15}\left(s_1,-s_1\right)+15 s_1 k_{16}\left(0,s_1\right)-15 e^{s_1} s_1 k_{16}\left(-s_1,0\right)+30 s_1 G_1\left(s_1\right) k_6\left(-s_1\right)+30 e^{-s_1} s_1 G_1\left(s_1\right) k_7\left(s_1\right)-60 e^{s_1} k_5\left(-s_1\right)-60 k_5\left(s_1\right)+30 e^{s_1} k_6\left(-s_1\right)+30 k_6\left(-s_1\right)+90 k_6\left(s_1\right)+90 e^{s_1} k_7\left(-s_1\right)+30 e^{-s_1} k_7\left(s_1\right)+30 k_7\left(s_1\right)+120 k_5(0)-150 k_6(0)-150 k_7(0) \Big ). 
\end{math}
\end{center}

\smallskip

In the second expression given by \eqref{basicK7eqnsecond} for $\widetilde K_7$, which corresponds to   
$\done\dtwo^2(\ell) \cdot \done(\ell)$, the terms that do not behave nicely 
under the replacement of $s_2$ by $-s_1$ are
\[
-\frac{e^{s_1} \left(k_6\left(-s_1\right)-k_6\left(s_2\right)\right)}{4 \left(s_1+s_2\right)}, 
\qquad 
\frac{\left(e^{s_1+s_2}-1\right) k_7\left(s_1\right)}{4 \left(s_1+s_2\right)}, 
\qquad 
-\frac{e^{s_1+s_2} \left(k_7\left(s_1\right)-k_7\left(-s_2\right)\right)}{4 \left(s_1+s_2\right)}, 
\] 
which are exactly the same as those of the first expression for $\widetilde K_7$. 
Since we just worked out the restriction of these terms to $s_1+s_2 = 0$, 
we replace $s_2$ by $-s_1$ in the rest of the terms and obtain the following 
identity: 

\begin{equation} \label{diffeqK7second}
\frac{1}{4} e^{s_1} k_6'\left(-s_1\right)-\frac{1}{4} k_7'\left(s_1\right) = 
\end{equation}
\begin{center}
\begin{math}
\frac{1}{120s_1}\Big ( 16 \pi  s_1 G_2\left(s_1,-s_1\right)+15 e^{s_1} s_1 k_8\left(-s_1,0\right)-15 s_1 k_8\left(s_1,-s_1\right)+15 e^{s_1} s_1 k_9\left(-s_1,0\right)-15 s_1 k_9\left(s_1,-s_1\right)+15 s_1 k_{10}\left(0,s_1\right)-15 e^{s_1} s_1 k_{10}\left(-s_1,0\right)+15 s_1 k_{11}\left(0,s_1\right)-15 e^{s_1} s_1 k_{11}\left(-s_1,0\right)-15 s_1 k_{12}\left(0,s_1\right)+15 s_1 k_{12}\left(s_1,-s_1\right)-15 s_1 k_{13}\left(0,s_1\right)+15 s_1 k_{13}\left(s_1,-s_1\right)+30 s_1 G_1\left(s_1\right) k_6\left(-s_1\right)+30 e^{-s_1} s_1 G_1\left(s_1\right) k_7\left(s_1\right)-30 e^{s_1} k_3\left(-s_1\right)-30 k_3\left(s_1\right)-60 e^{s_1} k_4\left(-s_1\right)-60 k_4\left(s_1\right)+30 e^{s_1} k_6\left(-s_1\right)+30 k_6\left(-s_1\right)+90 k_6\left(s_1\right)+90 e^{s_1} k_7\left(-s_1\right)+30 e^{-s_1} k_7\left(s_1\right)+30 k_7\left(s_1\right)+60 k_3(0)+120 k_4(0)-150 k_6(0)-150 k_7(0) \Big ). 
\end{math}
\end{center}

\smallskip

Since the left hand sides of \eqref{diffeqK7first} and \eqref{diffeqK7second} are identical, 
we find the following functional relation by setting equal their right hand sides. 

\begin{corollary} \label{FnRlnK7DiffEqns}
We have 
\begin{center}
\begin{math}
-e^{s_1} s_1 k_8\left(-s_1,0\right)+s_1 k_8\left(s_1,-s_1\right)-e^{s_1} s_1 k_9\left(-s_1,0\right)+s_1 k_9\left(s_1,-s_1\right)-s_1 k_{10}\left(0,s_1\right)+e^{s_1} s_1 k_{10}\left(-s_1,0\right)-s_1 k_{11}\left(0,s_1\right)+e^{s_1} s_1 k_{11}\left(-s_1,0\right)+s_1 k_{12}\left(0,s_1\right)-s_1 k_{12}\left(s_1,-s_1\right)+s_1 k_{13}\left(0,s_1\right)-s_1 k_{13}\left(s_1,-s_1\right)-s_1 k_{14}\left(0,s_1\right)+s_1 k_{14}\left(s_1,-s_1\right)+e^{s_1} s_1 k_{15}\left(-s_1,0\right)-s_1 k_{15}\left(s_1,-s_1\right)+s_1 k_{16}\left(0,s_1\right)-e^{s_1} s_1 k_{16}\left(-s_1,0\right)+2 e^{s_1} k_3\left(-s_1\right)+2 k_3\left(s_1\right)+4 e^{s_1} k_4\left(-s_1\right)+4 k_4\left(s_1\right)-4 e^{s_1} k_5\left(-s_1\right)-4 k_5\left(s_1\right)-4 k_3(0)-8 k_4(0)+8 k_5(0) 
\end{math}
\end{center}
\[
= 0. 
\]
\end{corollary}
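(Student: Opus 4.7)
The plan is to exploit the fact that both basic equations \eqref{basicK7eqnfirst} and \eqref{basicK7eqnsecond} are valid expressions for the same function $\widetilde K_7(s_1,s_2)$, obtained by comparing the two distinct terms in \eqref{Gradientofa4} on which the operator $\widetilde K_7(\nabla,\nabla)$ acts. Setting these two expressions equal gives an identity in two variables; the corollary is the diagonal specialization $s_2 = -s_1$ of (a rearrangement of) this identity. Equivalently, since the derivations just before the corollary produce the two differential equations \eqref{diffeqK7first} and \eqref{diffeqK7second} with identical left-hand sides $\tfrac14 e^{s_1} k_6'(-s_1) - \tfrac14 k_7'(s_1)$, equating their right-hand sides yields the desired relation.

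First I would recall from \S\ref{basicK7} the crucial observation, which the excerpt already verifies: the three terms containing $(s_1+s_2)$ in their denominators in \eqref{basicK7eqnfirst} are identical to the corresponding three terms in \eqref{basicK7eqnsecond}, namely those involving $k_6(-s_1), k_6(s_2)$ and $k_7(s_1), k_7(-s_2)$. Consequently, upon specializing to $s_1 + s_2 = 0$ via the limit $s_1 + s_2 = \varepsilon \to 0$, these terms produce exactly the same contributions $\tfrac14 e^{s_1} k_6'(-s_1)$, $\tfrac14 k_7(s_1)$, and $-\tfrac14 k_7'(s_1)$ in both cases. Thus the singular parts match automatically, and only the regular remainder (obtained by the substitution $s_2 \mapsto -s_1$) differs between the two equations.

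Next I would subtract \eqref{diffeqK7second} from \eqref{diffeqK7first}. After multiplying through by $120 s_1$, the left-hand side vanishes. On the right-hand side, the common summand $16\pi s_1 G_2(s_1,-s_1)$ cancels, as do the modular Laplacian terms $30 s_1 G_1(s_1) k_6(-s_1) + 30 e^{-s_1} s_1 G_1(s_1) k_7(s_1)$ together with the values $k_6(\pm s_1), k_7(\pm s_1)$ and the constants $k_6(0), k_7(0)$ that appear identically in both equations. What survives is a linear combination of $k_8,\dots,k_{16}$ evaluated at $(0,s_1)$, $(-s_1,0)$, and $(s_1,-s_1)$, together with terms in $k_3, k_4, k_5$ and their values at $0$. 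Dividing by the overall scalar $15$ yields precisely the claimed identity.

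The main obstacle is purely bookkeeping: one must match up roughly fifty summands on each side and verify termwise cancellation and sign consistency. There is no analytic subtlety since the singular terms were already handled in the passage from \eqref{basicK7eqnfirst}--\eqref{basicK7eqnsecond} to \eqref{diffeqK7first}--\eqref{diffeqK7second}. A symbolic algebra verification is the natural check. As a consistency test one may also verify that the analogous subtraction for the two expressions of $\widetilde K_6$ yields Corollary \ref{FnRlnK6DiffEqns}, providing an independent cross-check on the method.
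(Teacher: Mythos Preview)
Your proposal is correct and follows exactly the paper's approach: the corollary is obtained by noting that the left-hand sides of \eqref{diffeqK7first} and \eqref{diffeqK7second} coincide, so equating (equivalently, subtracting) their right-hand sides and cancelling the common terms yields the stated identity. Your description of the bookkeeping---clearing the factor $\tfrac{1}{120 s_1}$, cancelling the shared $G_2$, $G_1 k_6$, $G_1 k_7$, and $k_6$/$k_7$ contributions, and dividing the remainder by $15$---is accurate and matches the displayed relation term by term.
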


\subsubsection{Differential equation derived from the expression for $\widetilde K_8$} Let us start from the basic equation 
for the function 
\[
\widetilde K_8(s_1, s_2, s_3) = \frac{1}{2^3} \frac{\sinh((s_1+s_2+s_3)/2)}{(s_1+s_2+s_3)/2} K_8(s_1, s_2, s_3), 
\]
which is given by \eqref{basicK8eqn}. We specialize the equation to $s_1+s_2+s_3 = 0$ as follows. 
On the left hand side,  $\widetilde K_8(s_1, s_2, s_3)$ specializes to $\frac{1}{8} k_8(s_1, s_2)$, 
and on the right hand side, most of the terms behave nicely except the following 
four terms. The first term that has $s_1+s_2+s_3$ in the denominator is 
\[
\frac{\left(e^{s_1+s_2+s_3}-1\right) k_8\left(s_1,s_2\right)}{8 \left(s_1+s_2+s_3\right)},  
\]
which contributes $\frac{1}{8} k_8\left(s_1,s_2\right)$ when specialized to $s_1+s_2+s_3=0$. 
The second term is 
\[
-\frac{e^{s_1+s_2+s_3} \left(k_8\left(s_1,s_2\right)-k_8\left(-s_2-s_3,s_2\right)\right)}{8 \left(s_1+s_2+s_3\right)}, 
\]
which under the estimation $s_1+s_2+s_3 = \vep \sim 0$ turns to 
\[
-\frac{e^{\vep} \left(k_8\left(s_1,s_2\right)-k_8\left(s_1 - \vep,s_2\right)\right)}{8 \vep } 
\to \frac{-1}{8} \ddsone k_8(s_1, s_2). 
\]
The third term is 
\[
-\frac{e^{s_1} \left(k_{11}\left(s_2,-s_1-s_2\right)-k_{11}\left(s_2,s_3\right)\right)}{8 \left(s_1+s_2+s_3\right)}, 
\]
which under the above estimate specializes to 
\[
-\frac{e^{s_1} \left(k_{11}\left(s_2,-s_1-s_2 \right)-k_{11}\left(s_2, -s_1-s_2+\vep \right)\right)}{8 \vep } 
\to 
\frac{1}{8} e^{s_1} \ddstwo k_{11}\left(s_2,-s_1-s_2\right). 
\]
The fourth term is 
\[
-\frac{e^{s_1+s_2} \left(k_{12}\left(-s_1-s_2,s_1\right)-k_{12}\left(s_3,-s_2-s_3\right)\right)}{8 \left(s_1+s_2+s_3\right)}, 
\]
which, under the estimate $s_1+s_2+s_3 = \vep \sim 0$, turns to 
\[
-e^{s_1+s_2} \frac{
k_{12}(-s_1-s_2,s_1) - k_{12}(-s_1-s_2,s_1-\vep)
 }{8 \vep}
 \]
 \[
 -e^{s_1+s_2} \frac{
k_{12}(-s_1-s_2, s_1-\vep)-k_{12}(-s_1-s_2+\vep,s_1-\vep)
 }{8 \vep}
\]
\[
\to -\frac{1}{8} e^{s_1+s_2} \left(\ddstwo k_{12}\left(-s_1-s_2,s_1\right)- \ddsone k_{12}\left(-s_1-s_2,s_1\right)\right) 
\]
\[
=  -\frac{1}{8} e^{s_1+s_2} \left( (\ddstwo - \ddsone) k_{12}\left(-s_1-s_2,s_1\right) \right ). 
\]

\smallskip
Putting together the above bad behaving terms, one can see that they add up to the following expression: 
\[
\widetilde K_{8, \,\textnormal{s}}(s_1, s_2, s_3) = 
\]
\[
\frac{1}{8 \left(s_1+s_2+s_3\right)} \Big (  -k_8\left(s_1,s_2\right)+e^{s_1+s_2+s_3} k_8\left(-s_2-s_3,s_2\right)-e^{s_1} k_{11}\left(s_2,-s_1-s_2\right)\]
\[+e^{s_1} k_{11}\left(s_2,s_3\right)-e^{s_1+s_2} k_{12}\left(-s_1-s_2,s_1\right)+e^{s_1+s_2} k_{12}\left(s_3,-s_2-s_3\right) \Big ).
\]
Therefore, by a simple replacement of $s_3$ by $-s_1-s_2$ in the rest of the terms, we obtain the following identity:
\begin{equation} \label{diffeqK8}
\frac{1}{8} e^{s_1} \ddstwo k_{11}{}\left(s_2,-s_1-s_2\right)
-\frac{1}{8} e^{s_1+s_2} \ddstwo k_{12}{}\left(-s_1-s_2,s_1\right)
\end{equation}
\[
-\frac{1}{8} \ddsone k_8{} \left(s_1,s_2\right)+\frac{1}{8} e^{s_1+s_2} \ddsone k_{12}{}\left(-s_1-s_2,s_1\right) = 
\]
\begin{center}
\begin{math}
-\Big ( \widetilde K_{8}(s_1, s_2, s_3) -   \widetilde K_{8, \,\textnormal{s}}(s_1, s_2, s_3)
\Big )\bigm|_{s_3 = -s_1-s_2}. 
\end{math}
\end{center}

\subsubsection{Differential equation derived from the expression for $\widetilde K_9$} 
Similarly to the previous case, when we 
specialize the basic equation  \eqref{basicK9eqn} for $\widetilde K_9$ to $s_1+s_2+s_3=0$, 
the left hand side gives $\frac{1}{8}k_9(s_1, s_2)$. On the right hand side, the following 
are the terms that have $s_1+s_2+s_3$ in their denominators. 
We write each of these terms accompanied with the limit that is calculated by using 
the estimate $s_1+s_2+s_3 = \vep \sim 0:$ 
\[
\frac{\left(e^{s_1+s_2+s_3}-1\right) k_9\left(s_1,s_2\right)}{8 \left(s_1+s_2+s_3\right)} 
\to 
\frac{1}{8} k_9\left(s_1,s_2\right), 
\] 
\[
-\frac{e^{s_1+s_2+s_3} \left(k_9\left(s_1,s_2\right)-k_9\left(-s_2-s_3,s_2\right)\right)}{8 \left(s_1+s_2+s_3\right)} 
\to 
-\frac{1}{8} \ddsone k_9{} \left(s_1,s_2\right), 
\]
\[
-\frac{e^{s_1} \left(k_{10}\left(s_2,-s_1-s_2\right)-k_{10}\left(s_2,s_3\right)\right)}{8 \left(s_1+s_2+s_3\right)} 
\to 
\frac{1}{8} e^{s_1} \ddstwo k_{10}{} \left(s_2,-s_1-s_2\right),  
\]
\[
-\frac{e^{s_1+s_2} \left(k_{13}\left(-s_1-s_2,s_1\right)-k_{13}\left(s_3,-s_2-s_3\right)\right)}{8 \left(s_1+s_2+s_3\right)} 
\to 
\]
\[
-\frac{1}{8} e^{s_1+s_2} \left(\ddstwo k_{13}{} \left(-s_1-s_2,s_1\right)- \ddsone k_{13}{} \left(-s_1-s_2,s_1\right)\right) 
\]
\[
= -\frac{1}{8} e^{s_1+s_2} \left ( (\ddstwo - \ddsone)  k_{13}{} \left(-s_1-s_2,s_1\right) \right ). 
\]

\smallskip

One can see that the above bad behaving terms add up to the following expression: 
\[
\widetilde K_{9, \,\textnormal{s}}(s_1, s_2, s_3) = 
\]
\[
\frac{1}{8 \left(s_1+s_2+s_3\right)} \Big (  -k_9\left(s_1,s_2\right)+e^{s_1+s_2+s_3} k_9\left(-s_2-s_3,s_2\right)-e^{s_1} k_{10}\left(s_2,-s_1-s_2\right)
 \]
\[
+e^{s_1} k_{10}\left(s_2,s_3\right)-e^{s_1+s_2} k_{13}\left(-s_1-s_2,s_1\right)+e^{s_1+s_2} k_{13}\left(s_3,-s_2-s_3\right) \Big ). 
\]
Then, by simply replacing $s_3$ by $-s_1-s_2$ in the remaining terms, we obtain the following 
identity:
\begin{equation} \label{diffeqK9}
\frac{1}{8} e^{s_1} \ddstwo k_{10}{} \left(s_2,-s_1-s_2\right)-\frac{1}{8} e^{s_1+s_2} \ddstwo k_{13}{} \left(-s_1-s_2,s_1\right)
\end{equation}
\[
-\frac{1}{8} \ddsone k_9{} \left(s_1,s_2\right)+\frac{1}{8} e^{s_1+s_2} \ddsone k_{13}{}\left(-s_1-s_2,s_1\right) = 
\] 
\begin{center}
\begin{math}
- \Big ( \widetilde K_{9}(s_1, s_2, s_3) -   \widetilde K_{9, \,\textnormal{s}}(s_1, s_2, s_3)
\Big )\bigm|_{s_3 = -s_1-s_2}. 
\end{math}
\end{center}

\subsubsection{Differential equation derived from the expression for $\widetilde K_{10}$} 
In the expression given by  \eqref{basicK10eqn} for 
$\widetilde K_{10}$, the following are the terms that have $s_1+s_2+s_3$
in their denominators,  
which are written along with their restrictions to $s_1+s_2+s_3 = 0 $ that 
are calculated by using the estimate $s_1+s_2+s_3=\vep \sim 0:$ 
\[
-\frac{e^{s_1+s_2} \left(k_9\left(-s_1-s_2,s_1\right)-k_9\left(s_3,-s_2-s_3\right)\right)}{8 \left(s_1+s_2+s_3\right)}
\to
\]
\[
-\frac{1}{8} e^{s_1+s_2} \left( \ddstwo k_9{}\left(-s_1-s_2,s_1\right)-\ddsone k_9{}\left(-s_1-s_2,s_1\right)\right)
\]
\[
= -\frac{1}{8} e^{s_1+s_2} \left ( (\ddstwo - \ddsone ) k_9{}\left(-s_1-s_2,s_1\right) \right ), 
\]
\[
\frac{\left(e^{s_1+s_2+s_3}-1\right) k_{10}\left(s_1,s_2\right)}{8 \left(s_1+s_2+s_3\right)} 
\to 
\frac{1}{8} k_{10}\left(s_1,s_2\right), 
\]
\[
-\frac{e^{s_1+s_2+s_3} \left(k_{10}\left(s_1,s_2\right)-k_{10}\left(-s_2-s_3,s_2\right)\right)}{8 \left(s_1+s_2+s_3\right)}
\to 
-\frac{1}{8} \ddsone k_{10}{}\left(s_1,s_2\right), 
\]
\[
-\frac{e^{s_1} \left(k_{13}\left(s_2,-s_1-s_2\right)-k_{13}\left(s_2,s_3\right)\right)}{8 \left(s_1+s_2+s_3\right)} 
\to 
\frac{1}{8} e^{s_1} \ddstwo k_{13}{}\left(s_2,-s_1-s_2\right). 
\]

\smallskip

A simple calculation shows that the above bad behaving terms add up to the following expression: 
\[
\widetilde K_{10, \,\textnormal{s}}(s_1, s_2, s_3) = 
\]
\[
\frac{1}{8 \left(s_1+s_2+s_3\right)} \Big ( -e^{s_1+s_2} k_9\left(-s_1-s_2,s_1\right)+e^{s_1+s_2} k_9\left(s_3,-s_2-s_3\right) 
\]
\[-k_{10}\left(s_1,s_2\right)+e^{s_1+s_2+s_3} k_{10}\left(-s_2-s_3,s_2\right)-e^{s_1} k_{13}\left(s_2,-s_1-s_2\right)+e^{s_1} k_{13}\left(s_2,s_3\right)  \Big ). 
\]
Since $\widetilde K_{10}(s_1, s_2, s_3)$ specializes to $\frac{1}{8} k_{10}(s_1, s_2)$, and in the 
remaining terms we can simply replace $s_3$ by $-s_1-s_2$, we find the following 
identity:
\begin{equation} \label{diffeqK10}
-\frac{1}{8} e^{s_1+s_2} \ddstwo k_9{}\left(-s_1-s_2,s_1\right)+\frac{1}{8} e^{s_1} \ddstwo k_{13}{}\left(s_2,-s_1-s_2\right)
\end{equation}
\[
+\frac{1}{8} e^{s_1+s_2} \ddsone k_9{} \left(-s_1-s_2,s_1\right)-\frac{1}{8} \ddsone k_{10}{}\left(s_1,s_2\right) = 
\] 
\begin{center}
\begin{math}
- \Big ( \widetilde K_{10}(s_1, s_2, s_3) -   \widetilde K_{10, \,\textnormal{s}}(s_1, s_2, s_3)
\Big )\bigm|_{s_3 = -s_1-s_2}. 
\end{math}
\end{center}

\subsubsection{Differential equation derived from the expression for $\widetilde K_{11}$} 
We consider the basic equation 
\eqref{basicK11eqn} for $\widetilde K_{11}$ and specialize it to $s_1+s_2+s_3 =0$, 
in which $\widetilde K_{11}(s_1, s_2, s_3)$ turns to $\frac{1}{8} k_{11}(s_1, s_2)$ 
and we treat the other side of the equation as follows. As in the previous cases, 
most of the terms on the other side behave nicely under the replacement of 
$s_3$ by $-s_1-s_2$, except the following terms which are written along with 
their restrictions to $s_1+s_2+s_3=0$ that are calculated by using the 
estimate $s_1+s_2+s_3=\vep \sim 0:$
\[
-\frac{e^{s_1+s_2} \left(k_8\left(-s_1-s_2,s_1\right)-k_8\left(s_3,-s_2-s_3\right)\right)}{8 \left(s_1+s_2+s_3\right)} 
\to
\]
\[
-\frac{1}{8} e^{s_1+s_2} \left(  \ddstwo k_8{}\left(-s_1-s_2,s_1\right)- \ddsone k_8{}\left(-s_1-s_2,s_1\right)\right)
\]
\[
=-\frac{1}{8} e^{s_1+s_2} \left ( (  \ddstwo - \ddsone ) k_8{}\left(-s_1-s_2,s_1\right) \right ),
\]
\[
\frac{\left(e^{s_1+s_2+s_3}-1\right) k_{11}\left(s_1,s_2\right)}{8 \left(s_1+s_2+s_3\right)}
\to 
\frac{1}{8} k_{11}\left(s_1,s_2\right), 
\]
\[
-\frac{e^{s_1+s_2+s_3} \left(k_{11}\left(s_1,s_2\right)-k_{11}\left(-s_2-s_3,s_2\right)\right)}{8 \left(s_1+s_2+s_3\right)}
\to 
-\frac{1}{8} \ddsone k_{11}{}\left(s_1,s_2\right), 
\]
\[
-\frac{e^{s_1} \left(k_{12}\left(s_2,-s_1-s_2\right)-k_{12}\left(s_2,s_3\right)\right)}{8 \left(s_1+s_2+s_3\right)}
\to 
\frac{1}{8} e^{s_1} \ddstwo k_{12}{}\left(s_2,-s_1-s_2\right). 
\]

\smallskip
Putting together the above bad behaving terms, one can see that their sum is equal to: 
\[
\widetilde K_{11, \,\textnormal{s}}(s_1, s_2, s_3) = 
\]
\[
\frac{1}{8 \left(s_1+s_2+s_3\right)}\Big (  -e^{s_1+s_2} k_8\left(-s_1-s_2,s_1\right)+e^{s_1+s_2} k_8\left(s_3,-s_2-s_3\right)
\]
\[
-k_{11}\left(s_1,s_2\right)+e^{s_1+s_2+s_3} k_{11}\left(-s_2-s_3,s_2\right)-e^{s_1} k_{12}\left(s_2,-s_1-s_2\right)+e^{s_1} k_{12}\left(s_2,s_3\right)  \Big ). 
\]
Therefore we find the following identity: 
\begin{equation} \label{diffeqK11}
-\frac{1}{8} e^{s_1+s_2} \ddstwo k_8{}\left(-s_1-s_2,s_1\right)+\frac{1}{8} e^{s_1} \ddstwo k_{12}{}\left(s_2,-s_1-s_2\right)
\end{equation}
\[
+\frac{1}{8} e^{s_1+s_2} \ddsone k_8{} \left(-s_1-s_2,s_1\right)-\frac{1}{8} \ddsone k_{11}{}\left(s_1,s_2\right)=
\]
\begin{center}
\begin{math}
- \Big ( \widetilde K_{11}(s_1, s_2, s_3) -   \widetilde K_{11, \,\textnormal{s}}(s_1, s_2, s_3)
\Big )\bigm|_{s_3 = -s_1-s_2}. 
\end{math}
\end{center}

\subsubsection{Differential equation derived from the expression for $\widetilde K_{12}$} 
We consider the basic equation  \eqref{basicK12eqn} for $\widetilde K_{12}$, 
whose left hand side specializes to $\frac{1}{8} k_{12}(s_1, s_2)$, and we treat the 
other side as follows. The following are the terms on the right hand side of 
this equation that have $s_1+s_2+s_3$ 
in their denominators, which, by using the estimate $s_1+s_2+s_3 = \vep \sim 0$, 
their behavior on $s_1+s_2+s_3=0$ can be determined: 
\[
-\frac{e^{s_1} \left(k_8\left(s_2,-s_1-s_2\right)-k_8\left(s_2,s_3\right)\right)}{8 \left(s_1+s_2+s_3\right)} 
\to 
\frac{1}{8} e^{s_1} \ddstwo k_8{}\left(s_2,-s_1-s_2\right), 
\]
\[
-\frac{e^{s_1+s_2} \left(k_{11}\left(-s_1-s_2,s_1\right)-k_{11}\left(s_3,-s_2-s_3\right)\right)}{8 \left(s_1+s_2+s_3\right)}
\to
\]
\[
-\frac{1}{8} e^{s_1+s_2} \left(\ddstwo k_{11}{} \left(-s_1-s_2,s_1\right)- \ddsone k_{11}\left(-s_1-s_2,s_1\right)\right) 
\]
\[
=-\frac{1}{8} e^{s_1+s_2} \left (  (\ddstwo - \ddsone) k_{11}{} \left(-s_1-s_2,s_1\right) \right ),
\]
\[
\frac{\left(e^{s_1+s_2+s_3}-1\right) k_{12}\left(s_1,s_2\right)}{8 \left(s_1+s_2+s_3\right)} 
\to 
\frac{1}{8} k_{12}\left(s_1,s_2\right), 
\]
\[
-\frac{e^{s_1+s_2+s_3} \left(k_{12}\left(s_1,s_2\right)-k_{12}\left(-s_2-s_3,s_2\right)\right)}{8 \left(s_1+s_2+s_3\right)}
\to 
-\frac{1}{8} \ddsone k_{12}{}\left(s_1,s_2\right). 
\]

\smallskip
The above bad behaving terms add up to the following expression
\[
\widetilde K_{12, \,\textnormal{s}}(s_1, s_2, s_3) = 
\]
\[
\frac{1}{8 \left(s_1+s_2+s_3\right)} \Big (   -e^{s_1} k_8\left(s_2,-s_1-s_2\right)+e^{s_1} k_8\left(s_2,s_3\right) 
-e^{s_1+s_2} k_{11}\left(-s_1-s_2,s_1\right)
\]
\[
+e^{s_1+s_2} k_{11}\left(s_3,-s_2-s_3\right)-k_{12}\left(s_1,s_2\right)+e^{s_1+s_2+s_3} k_{12}\left(-s_2-s_3,s_2\right) \Big ). 
\] 
In the rest of the terms we can simply replace $s_3$ by $-s_1-s_2$ and obtain the 
following identity:
\begin{equation} \label{diffeqK12}
\frac{1}{8} e^{s_1} \ddstwo k_8{}\left(s_2,-s_1-s_2\right)-\frac{1}{8} e^{s_1+s_2} \ddstwo k_{11}{}\left(-s_1-s_2,s_1\right)
\end{equation}
\[
+\frac{1}{8} e^{s_1+s_2} \ddsone k_{11}{}\left(-s_1-s_2,s_1\right)-\frac{1}{8} \ddsone k_{12}{} \left(s_1,s_2\right) = 
\]
\begin{center}
\begin{math}
- \Big ( \widetilde K_{12}(s_1, s_2, s_3) -   \widetilde K_{12, \,\textnormal{s}}(s_1, s_2, s_3)
\Big )\bigm|_{s_3 = -s_1-s_2}.   
\end{math}
\end{center}

\subsubsection{Differential equation derived from the expression for $\widetilde K_{13}$} 
We use the basic identity given by \eqref{basicK13eqn}, in which 
$\widetilde K_{13}(s_1, s_2, s_3)$ specializes to $\frac{1}{8} k_{13}(s_1, s_2)$ 
on $s_1+s_2+s_3 =0$. On the right 
side of this identity, the following are the terms that have $s_1+s_2+s_3$ in their denominators, 
which we specialize to $s_1+s_2+s_3 =0$ by using the estimate $s_1+s_2+s_3 = \vep \sim 0:$
\[
-\frac{e^{s_1} \left(k_9\left(s_2,-s_1-s_2\right)-k_9\left(s_2,s_3\right)\right)}{8 \left(s_1+s_2+s_3\right)}
\to 
\frac{1}{8} e^{s_1} \ddstwo k_9{}\left(s_2,-s_1-s_2\right), 
\]
\[
-\frac{e^{s_1+s_2} \left(k_{10}\left(-s_1-s_2,s_1\right)-k_{10}\left(s_3,-s_2-s_3\right)\right)}{8 \left(s_1+s_2+s_3\right)}
\to 
\]
\[
-\frac{1}{8} e^{s_1+s_2} \left( \ddstwo k_{10}{}\left(-s_1-s_2,s_1\right)- \ddsone k_{10}{}\left(-s_1-s_2,s_1\right)\right)
\]
\[
= -\frac{1}{8} e^{s_1+s_2} \left ( ( \ddstwo - \ddsone ) k_{10}{}\left(-s_1-s_2,s_1\right) \right ), 
\]
\[
\frac{\left(e^{s_1+s_2+s_3}-1\right) k_{13}\left(s_1,s_2\right)}{8 \left(s_1+s_2+s_3\right)} 
\to 
\frac{1}{8} k_{13}\left(s_1,s_2\right), 
\]
\[
-\frac{e^{s_1+s_2+s_3} \left(k_{13}\left(s_1,s_2\right)-k_{13}\left(-s_2-s_3,s_2\right)\right)}{8 \left(s_1+s_2+s_3\right)}
\to
-\frac{1}{8} \ddsone k_{13}{}\left(s_1,s_2\right). 
\]

\smallskip
Adding up the above bad behaving terms, one can see that their sum is the following expression: 
\[
\widetilde K_{13, \,\textnormal{s}}(s_1, s_2, s_3) = 
\]
\[
\frac{1}{8 \left(s_1+s_2+s_3\right)} \Big (  -e^{s_1} k_9\left(s_2,-s_1-s_2\right)+e^{s_1} k_9\left(s_2,s_3\right)-e^{s_1+s_2} k_{10}\left(-s_1-s_2,s_1\right)
\]
\[
+e^{s_1+s_2} k_{10}\left(s_3,-s_2-s_3\right)-k_{13}\left(s_1,s_2\right)+e^{s_1+s_2+s_3} k_{13}\left(-s_2-s_3,s_2\right) \Big ). 
\]
Then, by a simple replacement of $s_3$ by $-s_1-s_2$ in the rest of the terms,  
we find that 
\begin{equation} \label{diffeqK13}
\frac{1}{8} e^{s_1} \ddstwo k_9{}\left(s_2,-s_1-s_2\right)-\frac{1}{8} e^{s_1+s_2} \ddstwo k_{10}{}\left(-s_1-s_2,s_1\right)
\end{equation}
\[
+\frac{1}{8} e^{s_1+s_2} \ddsone k_{10}{}\left(-s_1-s_2,s_1\right)-\frac{1}{8} \ddsone k_{13}{}\left(s_1,s_2\right) = 
\]
\begin{center}
\begin{math}
- \Big ( \widetilde K_{13}(s_1, s_2, s_3) -   \widetilde K_{13, \,\textnormal{s}}(s_1, s_2, s_3)
\Big )\bigm|_{s_3 = -s_1-s_2}.  
\end{math}
\end{center}

\subsubsection{Differential equation derived from the expression for $\widetilde K_{14}$} 
In the basic equation written in \eqref{basicK14eqn}, 
$\widetilde K_{14}(s_1, s_2, s_3)$ specializes to $\frac{1}{8} k_{14}(s_1, s_2)$ on $s_1+s_2+s_3=0$, 
and on the right side we specialize  by replacing $s_3$ by $-s_1-s_2$ 
in the terms that behave nicely, and the following are the terms that have 
$s_1+s_2+s_3$ in their denominators, for which we calculate their restriction to 
$s_1+s_2+s_3=0$ by using the estimate $s_1+s_2+s_3=\vep \sim 0:$
\[
\frac{\left(e^{s_1+s_2+s_3}-1\right) k_{14}\left(s_1,s_2\right)}{8 \left(s_1+s_2+s_3\right)}
\to
\frac{1}{8} k_{14}\left(s_1,s_2\right), 
\]
\[
-\frac{e^{s_1+s_2+s_3} \left(k_{14}\left(s_1,s_2\right)-k_{14}\left(-s_2-s_3,s_2\right)\right)}{8 \left(s_1+s_2+s_3\right)}
\to 
-\frac{1}{8} \ddsone k_{14}{}\left(s_1,s_2\right), 
\]
\[
-\frac{e^{s_1} \left(k_{15}\left(s_2,-s_1-s_2\right)-k_{15}\left(s_2,s_3\right)\right)}{8 \left(s_1+s_2+s_3\right)}
\to 
\frac{1}{8} e^{s_1} \ddstwo k_{15}{}\left(s_2,-s_1-s_2\right), 
\]
\[
-\frac{e^{s_1+s_2} \left(k_{16}\left(-s_1-s_2,s_1\right)-k_{16}\left(s_3,-s_2-s_3\right)\right)}{8 \left(s_1+s_2+s_3\right)}
\to 
\]
\[
-\frac{1}{8} e^{s_1+s_2} \left( \ddstwo k_{16}{}\left(-s_1-s_2,s_1\right)- \ddsone k_{16}{}\left(-s_1-s_2,s_1\right)\right) 
\]
\[
= -\frac{1}{8} e^{s_1+s_2} \left ( ( \ddstwo - \ddsone ) k_{16}{}\left(-s_1-s_2,s_1\right) \right ). 
\]

\smallskip 

The above bad behaving terms simply add up to the following expression: 
\[
\widetilde K_{14, \,\textnormal{s}}(s_1, s_2, s_3) = 
\]
\[
\frac{1}{8 \left(s_1+s_2+s_3\right)} \Big (  -k_{14}\left(s_1,s_2\right)+e^{s_1+s_2+s_3} k_{14}\left(-s_2-s_3,s_2\right)-e^{s_1} k_{15}\left(s_2,-s_1-s_2\right)
\]
\[
+e^{s_1} k_{15}\left(s_2,s_3\right)-e^{s_1+s_2} k_{16}\left(-s_1-s_2,s_1\right)+e^{s_1+s_2} k_{16}\left(s_3,-s_2-s_3\right)  \Big ).
\]
Therefore we obtain the following identity: 
\begin{equation} \label{diffeqK14}
\frac{1}{8} e^{s_1} \ddstwo k_{15}{}\left(s_2,-s_1-s_2\right)-\frac{1}{8} e^{s_1+s_2} \ddstwo k_{16}{}\left(-s_1-s_2,s_1\right)
\end{equation}
\[
-\frac{1}{8} \ddsone k_{14}{}\left(s_1,s_2\right)+\frac{1}{8} e^{s_1+s_2} \ddsone k_{16}{}\left(-s_1-s_2,s_1\right) = 
\]
\begin{center}
\begin{math}
- \Big ( \widetilde K_{14}(s_1, s_2, s_3) -   \widetilde K_{14, \,\textnormal{s}}(s_1, s_2, s_3)
\Big )\bigm|_{s_3 = -s_1-s_2}. 
\end{math}
\end{center}

\subsubsection{Differential equation derived from the expression for $\widetilde K_{15}$} 
We start from the basic equation  \eqref{basicK15eqn} for 
$\widetilde K_{15}$ and specialize it to $s_1+s_2+s_3 = 0$. In this process, 
$\widetilde K_{15}(s_1, s_2, s_3)$ specializes to $\frac{1}{8}k_{15}(s_1, s_2)$, and 
except for the following terms, the rest of the terms behave nicely under the replacement 
of $s_3$ by $-s_1-s_2$. The terms that need special  care with the aid 
of the estimate $s_1+s_2+s_3 = \vep \sim 0$ are 
\[
-\frac{e^{s_1+s_2} \left(k_{14}\left(-s_1-s_2,s_1\right)-k_{14}\left(s_3,-s_2-s_3\right)\right)}{8 \left(s_1+s_2+s_3\right)}
\to 
\]
\[
-\frac{1}{8} e^{s_1+s_2} \left( \ddstwo k_{14}{}\left(-s_1-s_2,s_1\right)- \ddsone k_{14}{}\left(-s_1-s_2,s_1\right)\right)
\]
\[
= -\frac{1}{8} e^{s_1+s_2} \left ( ( \ddstwo - \ddsone) k_{14}{}\left(-s_1-s_2,s_1\right) \right ), 
\]
\[
\frac{\left(e^{s_1+s_2+s_3}-1\right) k_{15}\left(s_1,s_2\right)}{8 \left(s_1+s_2+s_3\right)} 
\to 
\frac{1}{8} k_{15}\left(s_1,s_2\right), 
\]
\[
-\frac{e^{s_1+s_2+s_3} \left(k_{15}\left(s_1,s_2\right)-k_{15}\left(-s_2-s_3,s_2\right)\right)}{8 \left(s_1+s_2+s_3\right)}
\to 
-\frac{1}{8}  \ddsone k_{15}{} \left(s_1,s_2\right), 
\]
\[
-\frac{e^{s_1} \left(k_{16}\left(s_2,-s_1-s_2\right)-k_{16}\left(s_2,s_3\right)\right)}{8 \left(s_1+s_2+s_3\right)}
\to 
\frac{1}{8} \ddstwo e^{s_1} k_{16}{}\left(s_2,-s_1-s_2\right). 
\]

\smallskip 

We put the bad behaving terms together and see that they add up to the following expression: 
\[
\widetilde K_{15, \,\textnormal{s}}(s_1, s_2, s_3) = 
\]
\[
\frac{1}{8 \left(s_1+s_2+s_3\right)} \Big ( -e^{s_1+s_2} k_{14}\left(-s_1-s_2,s_1\right)+e^{s_1+s_2} k_{14}\left(s_3,-s_2-s_3\right)-k_{15}\left(s_1,s_2\right)\]
\[
+e^{s_1+s_2+s_3} k_{15}\left(-s_2-s_3,s_2\right)-e^{s_1} k_{16}\left(s_2,-s_1-s_2\right)+e^{s_1} k_{16}\left(s_2,s_3\right) \Big ). 
\]
Thus, we find that the following identity holds: 
\begin{equation} \label{diffeqK15}
-\frac{1}{8} e^{s_1+s_2}  \ddstwo k_{14}{}\left(-s_1-s_2,s_1\right)+\frac{1}{8} e^{s_1} \ddstwo k_{16}{}\left(s_2,-s_1-s_2\right)
\end{equation}
\[
+\frac{1}{8} e^{s_1+s_2} \ddsone k_{14}{} \left(-s_1-s_2,s_1\right)-\frac{1}{8} \ddsone k_{15}{} \left(s_1,s_2\right) = 
\]
\begin{center}
\begin{math}
- \Big ( \widetilde K_{15}(s_1, s_2, s_3) -   \widetilde K_{15, \,\textnormal{s}}(s_1, s_2, s_3)
\Big )\bigm|_{s_3 = -s_1-s_2}.   
\end{math}
\end{center}

\subsubsection{Differential equation derived from the expression for $\widetilde K_{16}$} 
When we specialize the basic equation 
\eqref{basicK16eqn} for $\widetilde K_{16}$ to $s_1+s_2+s_3 =0$, similarly to the previous cases, 
$\widetilde K_{16}(s_1, s_2, s_3)$ turns to $\frac{1}{8}k_{16}(s_1, s_2)$, and 
we have terms that behave nicely under the replacement of $s_3$ by $-s_1-s_2$, 
while the following are the terms that need to be specialized by using the estimate 
$s_1+s_2+s_3 = \vep \sim 0$: 
\[
-\frac{e^{s_1} \left(k_{14}\left(s_2,-s_1-s_2\right)-k_{14}\left(s_2,s_3\right)\right)}{8 \left(s_1+s_2+s_3\right)} 
\to 
\frac{1}{8} e^{s_1} \ddstwo k_{14}{}\left(s_2,-s_1-s_2\right), 
\]
\[
-\frac{e^{s_1+s_2} \left(k_{15}\left(-s_1-s_2,s_1\right)-k_{15}\left(s_3,-s_2-s_3\right)\right)}{8 \left(s_1+s_2+s_3\right)} 
\to 
\]
\[
-\frac{1}{8} e^{s_1+s_2} \left( \ddstwo k_{15}{}\left(-s_1-s_2,s_1\right)- \ddsone k_{15}{}\left(-s_1-s_2,s_1\right)\right) 
\]
\[
=-\frac{1}{8} e^{s_1+s_2}  \left ( ( \ddstwo - \ddsone) k_{15}{}\left(-s_1-s_2,s_1\right) \right ),
\]
\[
\frac{\left(e^{s_1+s_2+s_3}-1\right) k_{16}\left(s_1,s_2\right)}{8 \left(s_1+s_2+s_3\right)} 
\to
\frac{1}{8} k_{16}\left(s_1,s_2\right), 
\]
\[
-\frac{e^{s_1+s_2+s_3} \left(k_{16}\left(s_1,s_2\right)-k_{16}\left(-s_2-s_3,s_2\right)\right)}{8 \left(s_1+s_2+s_3\right)} 
\to 
-\frac{1}{8} \ddsone k_{16}{}\left(s_1,s_2\right). 
\]

\smallskip

The sum of the above bad behaving terms is the following expression: 
\[
\widetilde K_{16, \,\textnormal{s}}(s_1, s_2, s_3) = 
\]
\[
\frac{1}{8 \left(s_1+s_2+s_3\right)} \Big ( -e^{s_1} k_{14}\left(s_2,-s_1-s_2\right)+e^{s_1} k_{14}\left(s_2,s_3\right)-e^{s_1+s_2} k_{15}\left(-s_1-s_2,s_1\right)
\]
\[
+e^{s_1+s_2} k_{15}\left(s_3,-s_2-s_3\right)-k_{16}\left(s_1,s_2\right)+e^{s_1+s_2+s_3} k_{16}\left(-s_2-s_3,s_2\right) \Big ).  
\]
Therefore, the result of specializing the basic equation for $\widetilde K_{16}$  to $s_1+s_2+s_3=0$ is the following identity: 
\begin{equation} \label{diffeqK16}
\frac{1}{8} e^{s_1} \ddstwo k_{14}{}\left(s_2,-s_1-s_2\right)-\frac{1}{8} e^{s_1+s_2} \ddstwo k_{15}{}\left(-s_1-s_2,s_1\right)
\end{equation}
\[
+\frac{1}{8} e^{s_1+s_2} \ddsone k_{15}{} \left(-s_1-s_2,s_1\right)-\frac{1}{8} \ddsone k_{16}{}\left(s_1,s_2\right) =
\]
\begin{center}
\begin{math}
- \Big ( \widetilde K_{16}(s_1, s_2, s_3) -   \widetilde K_{16, \,\textnormal{s}}(s_1, s_2, s_3)
\Big )\bigm|_{s_3 = -s_1-s_2}.  
\end{math}
\end{center}

\begin{remark} \label{denK8-K16diffsys}
After replacing the lengthy expressions for $\widetilde K_{8},$ $\dots,$ 
$\widetilde K_{16},$ given by \eqref{basicK8eqn}, \eqref{basicK9eqn} -- \eqref{basicK16eqn} in 
the right hand side of the equations \eqref{diffeqK8} -- \eqref{diffeqK16}, and after taking the 
common denominator and putting the terms together, in each case, one finds a fraction whose 
numerator is a finite difference expression of the involved functions and its denominator is the expression 
\[
s_1 s_2 \left(s_1+s_2\right). 
\]
\end{remark}

\subsubsection{Differential equation derived from the expression for $\widetilde K_{17}$} In order to derive a differential equation 
from the basic equation  \eqref{basicK17eqn} for $\widetilde K_{17}$, we specialize it 
to $s_1+s_2+s_3+s_4 = 0$. On the 
left side of the equation $\widetilde K_{17}(s_1, s_2, s_3, s_4)$ turns to $\frac{1}{16} k_{17}(s_1, s_2, s_3)$, 
and on the other hand we have terms that behave nicely under the replacement of $s_4$ by 
$-s_1-s_2-s_3$, while we also have the following five terms which can be specialized with the aid 
of the estimate $s_1+s_2+s_3+s_4 = \vep \sim 0$. The first one is    
\begin{eqnarray*}
\frac{\left(e^{s_1+s_2+s_3+s_4}-1\right) k_{17}\left(s_1,s_2,s_3\right)}{16 \left(s_1+s_2+s_3+s_4\right)} 
\to
\frac{1}{16} k_{17}\left(s_1,s_2,s_3\right).  
\end{eqnarray*}
The second term that needs to be treated with the above estimate is
\begin{eqnarray*}
&&
-\frac{e^{s_1+s_2} \left(k_{17}\left(s_3,-s_1-s_2-s_3,s_1\right)-k_{17}\left(s_3,s_4,-s_2-s_3-s_4\right)\right)}{16 \left(s_1+s_2+s_3+s_4\right)} \\
&=&
-\frac{e^{s_1+s_2} \left(k_{17}\left(s_3,-s_1-s_2-s_3,s_1\right)-k_{17}\left(s_3,-s_1-s_2-s_3+\vep ,s_1-\vep \right)\right)}{16 \vep }\\
&\to& 
-\frac{1}{16} e^{s_1+s_2} \left( \ddsthree k_{17}{}\left(s_3,-s_1-s_2-s_3,s_1\right)- \ddstwo k_{17}{} \left(s_3,-s_1-s_2-s_3,s_1\right)\right) \\
&=&-\frac{1}{16} e^{s_1+s_2} \left(  ( \ddsthree - \ddstwo) k_{17}{}\left(s_3,-s_1-s_2-s_3,s_1\right) \right).  
\end{eqnarray*}
For the third term of this kind we have
\begin{eqnarray*}
&& -\frac{e^{s_1+s_2+s_3+s_4} \left(k_{17}\left(s_1,s_2,s_3\right)-k_{17}\left(-s_2-s_3-s_4,s_2,s_3\right)\right)}{16 \left(s_1+s_2+s_3+s_4\right)} \\
&=& 
-\frac{e^{\vep } \left(k_{17}\left(s_1,s_2,s_3\right)-k_{17}\left(s_1-\vep ,s_2,s_3\right)\right)}{16 \vep }
\to
-\frac{1}{16} \ddsone k_{17}{}\left(s_1,s_2,s_3\right). 
\end{eqnarray*}
For the fourth term we can write 
\begin{eqnarray*}
&&
-\frac{e^{s_1} \left(k_{19}\left(s_2,s_3,-s_1-s_2-s_3\right)-k_{19}\left(s_2,s_3,s_4\right)\right)}{16 \left(s_1+s_2+s_3+s_4\right)} 
\\
&=& 
-\frac{e^{s_1} \left(k_{19}\left(s_2,s_3,-s_1-s_2-s_3\right)-k_{19}\left(s_2,s_3,-s_1-s_2-s_3+\vep \right)\right)}{16 \vep }\\
&\to&
 \frac{1}{16} e^{s_1} \ddsthree k_{19}{} \left(s_2,s_3,-s_1-s_2-s_3\right).  
\end{eqnarray*}
Finally, we treat the last term that needs to be specialized to $s_1+s_2+s_3+s_4=0$ by computing a limit: 
\begin{eqnarray*}
&&-\frac{e^{s_1+s_2+s_3} \left(k_{19}\left(-s_1-s_2-s_3,s_1,s_2\right)-k_{19}\left(s_4,-s_2-s_3-s_4,s_2\right)\right)}{16 \left(s_1+s_2+s_3+s_4\right)} 
\\
&=& -\frac{e^{s_1+s_2+s_3} \left(k_{19}\left(-s_1-s_2-s_3,s_1,s_2\right)-k_{19}\left(-s_1-s_2-s_3+\vep ,s_1-\vep ,s_2\right)\right)}{16 \vep } \\
&\to& 
-\frac{1}{16} e^{s_1+s_2+s_3} \left( \ddstwo k_{19}{} \left(-s_1-s_2-s_3,s_1,s_2\right)- \ddsone k_{19}{}\left(-s_1-s_2-s_3,s_1,s_2\right)\right) \\
&=&
-\frac{1}{16} e^{s_1+s_2+s_3} \left( ( \ddstwo - \ddsone ) k_{19}{} \left(-s_1-s_2-s_3,s_1,s_2\right) \right). 
\end{eqnarray*}

\smallskip

Putting together the above bad behaving terms, one can see that they add up to the following expression:
\[
 \widetilde K_{17, \,\textnormal{s}}(s_1, s_2, s_3, s_4) =
\]
\[
\frac{1}{16 \left(s_1+s_2+s_3+s_4\right)} \Big (   
-k_{17}\left(s_1,s_2,s_3\right)-e^{s_1+s_2} k_{17}\left(s_3,-s_1-s_2-s_3,s_1\right)
\]
\[
+e^{s_1+s_2} k_{17}\left(s_3,s_4,-s_2-s_3-s_4\right)
+e^{s_1+s_2+s_3+s_4} k_{17}\left(-s_2-s_3-s_4,s_2,s_3\right)
\]
\[
-e^{s_1} k_{19}\left(s_2,s_3,-s_1-s_2-s_3\right)+e^{s_1} k_{19}\left(s_2,s_3,s_4\right)
\]
\[
-e^{s_1+s_2+s_3} k_{19}\left(-s_1-s_2-s_3,s_1,s_2\right)+e^{s_1+s_2+s_3} k_{19}\left(s_4,-s_2-s_3-s_4,s_2\right) \Big )
\]
Therefore, the above specialization of the basic equation for $\widetilde K_{17}$ yields the following identity: 
\begin{equation} \label{diffeqK17}
-\frac{1}{16} e^{s_1+s_2} \ddsthree k_{17}{}\left(s_3,-s_1-s_2-s_3,s_1\right)+\frac{1}{16} e^{s_1} \ddsthree k_{19}{}\left(s_2,s_3,-s_1-s_2-s_3\right)
\end{equation}
\[
+\frac{1}{16} e^{s_1+s_2} \ddstwo k_{17}{}\left(s_3,-s_1-s_2-s_3,s_1\right)-\frac{1}{16} e^{s_1+s_2+s_3} \ddstwo k_{19}{}\left(-s_1-s_2-s_3,s_1,s_2\right)
\]
\[
-\frac{1}{16} \ddsone k_{17}{}\left(s_1,s_2,s_3\right)+\frac{1}{16} e^{s_1+s_2+s_3} \ddsone k_{19}{}\left(-s_1-s_2-s_3,s_1,s_2\right) = 
\] 
\begin{center}
\begin{math}
- 
\Big ( \widetilde K_{17}(s_1, s_2, s_3, s_4) -   \widetilde K_{17, \,\textnormal{s}}(s_1, s_2, s_3, s_4)
\Big )\bigm|_{s_4 = -s_1-s_2-s_3}. 
\end{math}
\end{center}

\subsubsection{Differential equation derived from the expression for $\widetilde K_{18}$} 
We  specialize the basic equation \eqref{basicK18eqn} 
for $\widetilde K_{18}$ to $s_1+s_2+s_3+s_4=0$, which on the left side turns to 
$\frac{1}{16}k_{18}(s_1, s_2, s_3)$, and the other side is treated as follows. 
The following are the terms on the right side that have $s_1+s_2+s_3+s_4$ in their 
denominators and need to be treated with the estimate $s_1+s_2+s_3+s_4 = \vep \sim 0:$  
\[
\frac{\left(e^{s_1+s_2+s_3+s_4}-1\right) k_{18}\left(s_1,s_2,s_3\right)}{16 \left(s_1+s_2+s_3+s_4\right)} 
\to 
\frac{1}{16} k_{18}\left(s_1,s_2,s_3\right), 
\]
\[
-\frac{e^{s_1} \left(k_{18}\left(s_2,s_3,-s_1-s_2-s_3\right)-k_{18}\left(s_2,s_3,s_4\right)\right)}{16 \left(s_1+s_2+s_3+s_4\right)}
\to 
\]
\[
\frac{1}{16} e^{s_1} \ddsthree k_{18}{}\left(s_2,s_3,-s_1-s_2-s_3\right), 
\]
\[
-\frac{e^{s_1+s_2} \left(k_{18}\left(s_3,-s_1-s_2-s_3,s_1\right)-k_{18}\left(s_3,s_4,-s_2-s_3-s_4\right)\right)}{16 \left(s_1+s_2+s_3+s_4\right)} \to
\]
\[
-\frac{1}{16} e^{s_1+s_2} \left( \ddsthree k_{18}{}\left(s_3,-s_1-s_2-s_3,s_1\right)- \ddstwo k_{18}{}\left(s_3,-s_1-s_2-s_3,s_1\right)\right), 
\]
\[
-\frac{e^{s_1+s_2+s_3+s_4} \left(k_{18}\left(s_1,s_2,s_3\right)-k_{18}\left(-s_2-s_3-s_4,s_2,s_3\right)\right)}{16 \left(s_1+s_2+s_3+s_4\right)} 
\to 
-\frac{1}{16} \ddsone k_{18}{}\left(s_1,s_2,s_3\right), 
\]
\[
-\frac{e^{s_1+s_2+s_3} \left(k_{18}\left(-s_1-s_2-s_3,s_1,s_2\right)-k_{18}\left(s_4,-s_2-s_3-s_4,s_2\right)\right)}{16 \left(s_1+s_2+s_3+s_4\right)} \to
\]
\[
-\frac{1}{16} e^{s_1+s_2+s_3}  \left( \ddstwo k_{18}{}\left(-s_1-s_2-s_3,s_1,s_2\right)- \ddsone k_{18}{}\left(-s_1-s_2-s_3,s_1,s_2\right)\right). 
\]

\smallskip

The above bad behaving terms add up to the following expression: 
\[
 \widetilde K_{18, \,\textnormal{s}}(s_1, s_2, s_3, s_4) =
\]
\[
\frac{1}{16 \left(s_1+s_2+s_3+s_4\right)} \Big ( -k_{18}\left(s_1,s_2,s_3\right)-e^{s_1} k_{18}\left(s_2,s_3,-s_1-s_2-s_3\right)
\]
\[
+e^{s_1} k_{18}\left(s_2,s_3,s_4\right)-e^{s_1+s_2+s_3} k_{18}\left(-s_1-s_2-s_3,s_1,s_2\right)
\]
\[
-e^{s_1+s_2} k_{18}\left(s_3,-s_1-s_2-s_3,s_1\right) 
+e^{s_1+s_2} k_{18}\left(s_3,s_4,-s_2-s_3-s_4\right)
\]
\[
+e^{s_1+s_2+s_3+s_4} k_{18}\left(-s_2-s_3-s_4,s_2,s_3\right)
+e^{s_1+s_2+s_3} k_{18}\left(s_4,-s_2-s_3-s_4,s_2\right)  \Big ). 
\]
Note that in the rest of the terms we can directly replace $s_4$ by $-s_1-s_2-s_3$. Therefore, the result of this analysis, which allows us to specialize the basic equation \eqref{basicK18eqn} to $s_1+s_2+s_3+s_4=0$, 
is the following identity: 
\begin{equation} \label{diffeqK18}
\frac{1}{16} e^{s_1} \ddsthree k_{18}{}\left(s_2,s_3,-s_1-s_2-s_3\right)-\frac{1}{16} e^{s_1+s_2} \ddsthree k_{18}{}\left(s_3,-s_1-s_2-s_3,s_1\right)
\end{equation}
\[
-\frac{1}{16} e^{s_1+s_2+s_3} \ddstwo k_{18}{}\left(-s_1-s_2-s_3,s_1,s_2\right)+\frac{1}{16} e^{s_1+s_2} 
\ddstwo  k_{18}{}\left(s_3,-s_1-s_2-s_3,s_1\right)
\]
\[
-\frac{1}{16} \ddsone k_{18}{}\left(s_1,s_2,s_3\right)+\frac{1}{16} e^{s_1+s_2+s_3} \ddsone k_{18}{}\left(-s_1-s_2-s_3,s_1,s_2\right) =
\]
\begin{center}
\begin{math}
- 
\Big ( \widetilde K_{18}(s_1, s_2, s_3, s_4) -   \widetilde K_{18, \,\textnormal{s}}(s_1, s_2, s_3, s_4)
\Big )\bigm|_{s_4 = -s_1-s_2-s_3}.  
\end{math}
\end{center}

\subsubsection{Differential equation derived from the expression for 
$\widetilde K_{19}$} In a similar way, we can specialize the basic 
equation written in \eqref{basicK19eqn} for $\widetilde K_{19}$ to $s_1+s_2+s_3+s_4=0$. On the left side we get 
$\frac{1}{16} k_{19}(s_1, s_2, s_3)$, and on the right side we replace $s_4$ by $-s_1-s_2-s_3$ 
in the terms that behave nicely with respect to this replacement, while 
the bad behaving terms, which are listed below, can be treated using the estimate $s_1+s_2+s_3+s_4 =\vep \sim 0$ 
as follows: 
\[
-\frac{e^{s_1} \left(k_{17}\left(s_2,s_3,-s_1-s_2-s_3\right)-k_{17}\left(s_2,s_3,s_4\right)\right)}{16 \left(s_1+s_2+s_3+s_4\right)}
\to 
\]
\[
\frac{1}{16} e^{s_1} \ddsthree k_{17}{}\left(s_2,s_3,-s_1-s_2-s_3\right), 
\]
\[
-\frac{e^{s_1+s_2+s_3} \left(k_{17}\left(-s_1-s_2-s_3,s_1,s_2\right)-k_{17}\left(s_4,-s_2-s_3-s_4,s_2\right)\right)}{16 \left(s_1+s_2+s_3+s_4\right)} 
\to 
\]
\[
-\frac{1}{16} e^{s_1+s_2+s_3} \left( \ddstwo k_{17}{}\left(-s_1-s_2-s_3,s_1,s_2\right)- \ddsone k_{17}{}\left(-s_1-s_2-s_3,s_1,s_2\right)\right), 
\]
\[
\frac{\left(e^{s_1+s_2+s_3+s_4}-1\right) k_{19}\left(s_1,s_2,s_3\right)}{16 \left(s_1+s_2+s_3+s_4\right)} 
\to 
\frac{1}{16} k_{19}\left(s_1,s_2,s_3\right), 
\]
\[
-\frac{e^{s_1+s_2} \left(k_{19}\left(s_3,-s_1-s_2-s_3,s_1\right)-k_{19}\left(s_3,s_4,-s_2-s_3-s_4\right)\right)}{16 \left(s_1+s_2+s_3+s_4\right)} \to 
\]
\[
-\frac{1}{16} e^{s_1+s_2}  \left(\ddsthree k_{19}{}\left(s_3,-s_1-s_2-s_3,s_1\right)- \ddstwo k_{19}{}\left(s_3,-s_1-s_2-s_3,s_1\right)\right), 
\]
\[
-\frac{e^{s_1+s_2+s_3+s_4} \left(k_{19}\left(s_1,s_2,s_3\right)-k_{19}\left(-s_2-s_3-s_4,s_2,s_3\right)\right)}{16 \left(s_1+s_2+s_3+s_4\right)} \to 
-\frac{1}{16} \ddsone k_{19}{}\left(s_1,s_2,s_3\right).
\]

\smallskip

One can see that the above bad behaving terms add up to the following expression: 
\[
 \widetilde K_{19, \,\textnormal{s}}(s_1, s_2, s_3, s_4) = 
\]
\[
\frac{1}{16 \left(s_1+s_2+s_3+s_4\right)} \Big (  -e^{s_1} k_{17}\left(s_2,s_3,-s_1-s_2-s_3\right)+e^{s_1} k_{17}\left(s_2,s_3,s_4\right)
\]
\[
-e^{s_1+s_2+s_3} k_{17}\left(-s_1-s_2-s_3,s_1,s_2\right)+e^{s_1+s_2+s_3} k_{17}\left(s_4,-s_2-s_3-s_4,s_2\right)
\]
\[
-k_{19}\left(s_1,s_2,s_3\right)-e^{s_1+s_2} k_{19}\left(s_3,-s_1-s_2-s_3,s_1\right)
\]
\[
+e^{s_1+s_2} k_{19}\left(s_3,s_4,-s_2-s_3-s_4\right)+e^{s_1+s_2+s_3+s_4} k_{19}\left(-s_2-s_3-s_4,s_2,s_3\right)  \Big ).  
\]
Therefore we obtain the following identity, in which $ \widetilde K_{19}(s_1, s_2, s_3, s_4)$ denotes the 
right side of the equation \eqref{basicK19eqn}, and the direct replacement of $s_4$ by $-s_1-s_2-s_3$ behaves 
nicely, since following the above discussion, we have apparently removed the bad behaving terms: 
\begin{equation} \label{diffeqK19}
\frac{1}{16} e^{s_1} \ddsthree k_{17}{}\left(s_2,s_3,-s_1-s_2-s_3\right)-\frac{1}{16} e^{s_1+s_2} \ddsthree k_{19}{}\left(s_3,-s_1-s_2-s_3,s_1\right)
\end{equation}
\[
-\frac{1}{16} e^{s_1+s_2+s_3} \ddstwo k_{17}{}\left(-s_1-s_2-s_3,s_1,s_2\right)
+\frac{1}{16} e^{s_1+s_2} \ddstwo k_{19}{}\left(s_3,-s_1-s_2-s_3,s_1\right)
\]
\[
+\frac{1}{16} e^{s_1+s_2+s_3} \ddsone k_{17}{}\left(-s_1-s_2-s_3,s_1,s_2\right)-\frac{1}{16} \ddsone k_{19}{}\left(s_1,s_2,s_3\right) = 
\]
\begin{center}
\begin{math}
- 
\Big ( \widetilde K_{19}(s_1, s_2, s_3, s_4) -   \widetilde K_{19, \,\textnormal{s}}(s_1, s_2, s_3, s_4)
\Big )\bigm|_{s_4 = -s_1-s_2-s_3}. 
\end{math}
\end{center}

\subsubsection{Differential equation derived from the expression for $\widetilde K_{20}$} Finally we 
specialize the basic identity given by \eqref{basicK20eqn} for $\widetilde K_{20}$ to 
$s_1+s_2+s_3+s_4=0$, and derive the last differential equation. 
On the left side of the basic identity, $\widetilde K_{20}(s_1, s_2, s_3, s_4)$ 
specializes to $\frac{1}{16}k_{20}(s_1, s_2, s_3)$, and on the right 
side we use the estimate $s_1+s_2+s_3+s_4 = \vep \sim 0$ to specialize 
the terms that do not behave nicely with respect to the replacement of 
$s_4$ by $-s_1-s_2-s_3$. In this respect, the bad behaving terms are:  
\[
\frac{\left(e^{s_1+s_2+s_3+s_4}-1\right) k_{20}\left(s_1,s_2,s_3\right)}{16 \left(s_1+s_2+s_3+s_4\right)} 
\to 
\frac{1}{16} k_{20}\left(s_1,s_2,s_3\right), 
\]
\[
-\frac{e^{s_1} \left(k_{20}\left(s_2,s_3,-s_1-s_2-s_3\right)-k_{20}\left(s_2,s_3,s_4\right)\right)}{16 \left(s_1+s_2+s_3+s_4\right)}
\to 
\]
\[
\frac{1}{16} e^{s_1} \ddsthree k_{20}{} \left(s_2,s_3,-s_1-s_2-s_3\right), 
\]
\[
-\frac{e^{s_1+s_2} \left(k_{20}\left(s_3,-s_1-s_2-s_3,s_1\right)-k_{20}\left(s_3,s_4,-s_2-s_3-s_4\right)\right)}{16 \left(s_1+s_2+s_3+s_4\right)} 
\to 
\]
\[
-\frac{1}{16} e^{s_1+s_2} \left( \ddsthree k_{20}{} \left(s_3,-s_1-s_2-s_3,s_1\right)- \ddstwo k_{20}{}\left(s_3,-s_1-s_2-s_3,s_1\right)\right), 
\]
\[
-\frac{e^{s_1+s_2+s_3+s_4} \left(k_{20}\left(s_1,s_2,s_3\right)-k_{20}\left(-s_2-s_3-s_4,s_2,s_3\right)\right)}{16 \left(s_1+s_2+s_3+s_4\right)} 
\to 
\]
\[
-\frac{1}{16} \ddsone k_{20}{}\left(s_1,s_2,s_3\right), 
\]
\[
-\frac{e^{s_1+s_2+s_3} \left(k_{20}\left(-s_1-s_2-s_3,s_1,s_2\right)-k_{20}\left(s_4,-s_2-s_3-s_4,s_2\right)\right)}{16 \left(s_1+s_2+s_3+s_4\right)} 
\to 
\]
\[
-\frac{1}{16} e^{s_1+s_2+s_3}  \left( \ddstwo k_{20}{}\left(-s_1-s_2-s_3,s_1,s_2\right)- \ddsone k_{20}{}\left(-s_1-s_2-s_3,s_1,s_2\right)\right). 
\]

\smallskip

Putting together the above bad behaving terms, one can see that their sum is the following expression: 
\[
 \widetilde K_{20, \,\textnormal{s}}(s_1, s_2, s_3, s_4) = 
\]
\[
\frac{1}{16 \left(s_1+s_2+s_3+s_4\right)} \Big ( -k_{20}\left(s_1,s_2,s_3\right)-e^{s_1} k_{20}\left(s_2,s_3,-s_1-s_2-s_3\right)
\]
\[
+e^{s_1} k_{20}\left(s_2,s_3,s_4\right)-e^{s_1+s_2+s_3} k_{20}\left(-s_1-s_2-s_3,s_1,s_2\right)
\]
\[
-e^{s_1+s_2} k_{20}\left(s_3,-s_1-s_2-s_3,s_1\right)+e^{s_1+s_2} k_{20}\left(s_3,s_4,-s_2-s_3-s_4\right)
\]
\[
+e^{s_1+s_2+s_3+s_4} k_{20}\left(-s_2-s_3-s_4,s_2,s_3\right)+e^{s_1+s_2+s_3} k_{20}\left(s_4,-s_2-s_3-s_4,s_2\right) \Big ).
\]
Therefore, this analysis yields the following identity, in which  $\widetilde K_{20}(s_1, s_2, s_3, s_4)$ 
denotes the right side of the basic identity \eqref{basicK20eqn}, and the direct replacement of $s_4$ by 
$-s_1-s_2-s_3$ behaves nicely since, following the above discussion, the terms that $s_1+s_2+s_3+s_4$ 
appears in their denominators are removed: 
\begin{equation} \label{diffeqK20}
\frac{1}{16} e^{s_1} \ddsthree k_{20}{} \left(s_2,s_3,-s_1-s_2-s_3\right)-\frac{1}{16} e^{s_1+s_2} \ddsthree k_{20}{} 
\left(s_3,-s_1-s_2-s_3,s_1\right) 
\end{equation}
\[
-\frac{1}{16} e^{s_1+s_2+s_3} \ddstwo k_{20}{} \left(-s_1-s_2-s_3,s_1,s_2\right)+\frac{1}{16} e^{s_1+s_2} \ddstwo k_{20}{}\left(s_3,-s_1-s_2-s_3,s_1\right)
\]
\[
-\frac{1}{16} \ddsone k_{20}{}\left(s_1,s_2,s_3\right)+\frac{1}{16} e^{s_1+s_2+s_3} \ddsone k_{20}{}\left(-s_1-s_2-s_3,s_1,s_2\right) = 
\]
\begin{center}
\begin{math}
- 
\Big ( \widetilde K_{20}(s_1, s_2, s_3, s_4) -   \widetilde K_{20, \,\textnormal{s}}(s_1, s_2, s_3, s_4)
\Big )\bigm|_{s_4 = -s_1-s_2-s_3}. 
\end{math}
\end{center}

\smallskip

In the equations \eqref{diffeqK3} -- \eqref{diffeqK7second}, we see $s_1$ as the denominator 
on the right hand side of each equation. As we explained in Remark \ref{denK8-K16diffsys}, for 
the equations \eqref{diffeqK8} -- \eqref{diffeqK16} 
the expression $s_1 s_2 (s_1+s_2)$ appears as the denominator on the right hand side 
of the equations. 
Since we have similarly used a concise way of writing the right hand side of each of the 
equations \eqref{diffeqK17} -- \eqref{diffeqK20}, their denominators are not written explicitly 
and it is important to elaborate in the following remark on this matter. 

\begin{remark} \label{denK17-K20diffsys}
After replacing the lengthy expressions for $\widetilde K_{17},$ $\dots,$ 
$\widetilde K_{20},$ given by \eqref{basicK17eqn}, \eqref{basicK18eqn} -- \eqref{basicK20eqn}, in the right hand side of the 
equations \eqref{diffeqK17} -- \eqref{diffeqK20}, and after taking the common denominator 
and putting the terms together, in each case, one finds that the denominator is the expression 
\[
s_1 s_2 \left(s_1+s_2\right) s_3 \left(s_2+s_3\right) \left(s_1+s_2+s_3\right). 
\]
\end{remark}

\subsection{Cyclic groups involved in the differential system}

In the differential equations \eqref{diffeqK3} -- \eqref{diffeqK7second} it is apparent 
that the action of $\mathbb{Z}/2 \mathbb{Z}$ on $\mathbb{R}$ given by 
\[
T_2(s_1) = -s_1, \qquad s_1 \in \R, 
\]  
is involved. Also on the right hand side of each of these equations, the expression 
in the denominator is $s_1$, which up to sign is invariant under the above 
transformation. We shall see shortly that a similar situation holds for the remaining 
differential equations in the differential system presented in Subsection \ref{differentialsystemderived}. 
First we present a result about a transformation property  of the expressions appearing on 
either side of the differential equations \eqref{diffeqK3} -- \eqref{diffeqK7second} under the above 
action of $\mathbb{Z}/2 \mathbb{Z}$. The left hand side of each of these equations, up to an 
overall factor of $\frac{1}{4}$,  
is of the form 
\begin{equation} \label{leftK3K7groupactionform}
- k'_{j_0}(s_1) + e^{s_1} k'_{j_1}(-s_1) = - k'_{j_0}(s_1)  + T_2( e^{-s_1} k'_{j_1})(s_1), 
\end{equation}
where 
\begin{eqnarray*}
\textrm{for the equation } \eqref{diffeqK3}:   &&\qquad (j_0, j_1)=(3, 3),  \\
\textrm{for the equation } \eqref{diffeqK4}:   &&\qquad (j_0, j_1) = (4, 4),\\
\textrm{for the equation } \eqref{diffeqK5}:   &&\qquad (j_0, j_1) = (5, 5),\\
\textrm{for the equation } \eqref{diffeqK6first}:   &&\qquad (j_0, j_1) = (6, 7),\\
\textrm{for the equation } \eqref{diffeqK6second}:   &&\qquad (j_0, j_1) = (6, 7),\\
\textrm{for the equation } \eqref{diffeqK7first}:   &&\qquad (j_0, j_1) = (7, 6), \\
\textrm{for the equation } \eqref{diffeqK7second}:   &&\qquad (j_0, j_1) = (7, 6). 
\end{eqnarray*}
Therefore we can divide the above differential equations to the following four groups: 
the first three groups respectively consist of  the equations \eqref{diffeqK3},  \eqref{diffeqK4} and 
 \eqref{diffeqK5} since they respectively involve only derivatives of $k_3, k_4$ and 
 $k_5$, and the fourth group consists of the equations \eqref{diffeqK6first} -- \eqref{diffeqK7second} 
 which involve the derivatives of $k_6$ and $k_7$. 
In fact it will also be useful to use the following notation $f_{j_0, j_1}(s_1)$  to rewrite the differential 
equations \eqref{diffeqK3} -- \eqref{diffeqK7second} in a concise form:
\begin{equation} \label{diffeqnK3K7consice}
- k'_{j_0}(s_1) + e^{s_1} k'_{j_1}(-s_1) = \frac{f_{j_0, j_1}(s_1)}{s_1}.  
\end{equation}
Apparently each $f_{j_0, j_1}(s_1)$ is given in terms of finite differences of  the restriction to $s_1+s_2=0$ 
of the functions $k_j$ appearing on the right hand sides of the equations, and note that 
using the equations \eqref{diffeqK6first} and \eqref{diffeqK6second}, we have two 
such expressions for  $f_{6, 7}(s_1)$. Similarly, in the case of the differential equations 
\eqref{diffeqK7first} and \eqref{diffeqK7second}, we obtain two finite difference expressions for 
$f_{7, 6}(s_1)$.

\smallskip

We can now start using the above group action to investigate invariance properties and symmetries 
of the expressions appearing in the differential equations. 
\begin{theorem} \label{k1tok7eqnsinvariance}
The expression 
\[
e^{-\frac{s_1}{2}} \big ( - (k'_{j_0}(s_1) + k'_{j_1}(s_1) )+ e^{s_1} \left( k'_{j_0}(-s_1)+ k'_{j_1}(-s_1) \right ) \big ), 
\]
is in the kernel of $1+T_2$ for any pair of integers $j_0, j_1$ in $\{3, 4, 5, 6, 7\}$. 
In particular, considering the differential equations \eqref{diffeqK3} -- \eqref{diffeqK7second} and 
the above discussion following the equation \eqref{leftK3K7groupactionform}, we have obtained 
expressions that are in terms of finite differences of the $k_j$ and are in the kernel of $1+T_2$: 
\begin{enumerate}
\item When $(j_0, j_1) = (3, 3)$.
\item When $(j_0, j_1) = (4, 4)$.
\item When $(j_0, j_1) = (5, 5)$.
\item When $(j_0, j_1) = (6, 7)$.
\end{enumerate}
 
\begin{proof} We multiply \eqref{leftK3K7groupactionform} by 
\[
\alpha_1(s_1):=e^{-\frac{s_1}{2}}, 
\]
which yields
\[
e^{-\frac{s_1}{2}} \left (- k'_{j_0}(s_1) + e^{s_1} k'_{j_1}(-s_1)  \right)= 
- (\alpha_2 \cdot k'_{j_0})(s_1) + T_2(  \alpha_2 \cdot k'_{j_1} )(s_1). 
\]
By switching $j_0$ and $j_1$ in this equation we obtain a new equation, 
and summing up the two equations we get 
\begin{eqnarray*}
&&e^{-\frac{s_1}{2}} \big ( - (k'_{j_0}(s_1) + k'_{j_1}(s_1) )+ e^{s_1} \left( k'_{j_0}(-s_1)+ k'_{j_1}(-s_1) \right ) \big ) \\
&=& - \left (\alpha_1 \cdot (k'_{j_0}+ k'_{j_1}) \right)(s_1) + T_2 \left(  \alpha_1 \cdot ( k'_{j_0}+k'_{j_1}) \right )(s_1) \\
&=& (-1 + T_2) \left (  \alpha_1 \cdot (k'_{j_0}+ k'_{j_1}) \right ) (s_1), 
\end{eqnarray*}
which is in the kernel of $1+T_2$ since $T_2^2=1. $
\end{proof}
\end{theorem}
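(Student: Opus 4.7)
The plan is to exhibit the given expression as $(T_2 - 1)$ applied to some function, which automatically lands in the kernel of $1+T_2$ since
\[
(1+T_2)(T_2-1) = T_2^2 - 1 = 0.
\]
This is the only algebraic idea needed; the rest of the argument is bookkeeping tied to the concise form \eqref{leftK3K7groupactionform}.

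I would first multiply \eqref{leftK3K7groupactionform} by the weight $\alpha_1(s_1) := e^{-s_1/2}$. The point of this particular weight is that it is, up to sign, ``half-equivariant'' for $T_2$: one has $e^{s_1/2} = T_2(e^{-s_1/2})$, so the factor $e^{s_1}$ sitting in front of $k'_{j_1}(-s_1)$ exactly converts $\alpha_1(s_1) k'_{j_1}(-s_1)$ into $T_2(\alpha_1 \cdot k'_{j_1})(s_1)$. Thus
\[
\alpha_1(s_1)\bigl(-k'_{j_0}(s_1) + e^{s_1} k'_{j_1}(-s_1)\bigr) = -(\alpha_1 \cdot k'_{j_0})(s_1) + T_2(\alpha_1 \cdot k'_{j_1})(s_1).
\]

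Next I would swap the roles of $j_0$ and $j_1$ to obtain a second identity, and then add the two. The two right-hand sides combine into
\[
-\bigl(\alpha_1 \cdot (k'_{j_0}+k'_{j_1})\bigr)(s_1) + T_2\bigl(\alpha_1 \cdot (k'_{j_0}+k'_{j_1})\bigr)(s_1) = (T_2 - 1)\bigl(\alpha_1 \cdot (k'_{j_0}+k'_{j_1})\bigr)(s_1),
\]
which, by the displayed annihilation identity at the top, lies in the kernel of $1+T_2$. This establishes the general invariance claim for any pair $(j_0,j_1)$ in $\{3,4,5,6,7\}$.

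Finally, to harvest the four labelled finite-difference expressions, I would plug each of the differential equations \eqref{diffeqK3}--\eqref{diffeqK7second} into the form \eqref{diffeqnK3K7consice} and apply the construction above. Cases (1), (2), (3) use \eqref{diffeqK3}, \eqref{diffeqK4}, \eqref{diffeqK5} respectively with $j_0 = j_1$, so the symmetrization just doubles the single right-hand side and multiplies by $e^{-s_1/2}/s_1$. Case (4) pairs \eqref{diffeqK6first} or \eqref{diffeqK6second} (with $(j_0,j_1)=(6,7)$) against \eqref{diffeqK7first} or \eqref{diffeqK7second} (with $(j_0,j_1)=(7,6)$); summing the appropriate right-hand sides and weighting by $e^{-s_1/2}$ yields a finite-difference expression in the $k_j$ that is annihilated by $1+T_2$. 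There is no real obstacle here: the only subtlety is matching the $e^{s_1}$ prefactor with the half-weight $\alpha_1$ so that the $T_2$ action becomes manifest, and everything else is routine.
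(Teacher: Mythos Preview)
Your proposal is correct and follows essentially the same approach as the paper: multiply by the weight $\alpha_1(s_1)=e^{-s_1/2}$, symmetrize in $j_0,j_1$, and recognize the result as $(T_2-1)$ applied to $\alpha_1\cdot(k'_{j_0}+k'_{j_1})$, which lies in $\ker(1+T_2)$ since $T_2^2=1$. Your explicit remark that $e^{s_1/2}=T_2(e^{-s_1/2})$ is exactly the mechanism the paper uses (implicitly) to absorb the $e^{s_1}$ factor into the $T_2$-action.
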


We elaborate on the second part of the statement of the above theorem. Using the 
notation introduced by the equation \eqref{diffeqnK3K7consice} and the first part of the 
above theorem, we have proved that the following expressions are invariant under the 
transformation $T_2: s_1 \mapsto -s_1$ on $\mathbb{R}$:  
\[
s_1 e^{-\frac{s_1}{2}} \big ( - k'_{3}(s_1) + e^{s_1}  k'_{3}(-s_1) \big ) = e^{-\frac{s_1}{2}} f_{3, 3}(s_1),
\]
\[
s_ 1e^{-\frac{s_1}{2}} \big ( - k'_{4}(s_1) + e^{s_1}  k'_{4}(-s_1) \big ) = e^{-\frac{s_1}{2}} f_{4, 4}(s_1) ,
\]
\[
s_1 e^{-\frac{s_1}{2}} \big ( - k'_{5}(s_1) + e^{s_1}  k'_{5}(-s_1) \big ) = e^{-\frac{s_1}{2}} f_{5, 5}(s_1),
\]
\[
s_1 e^{-\frac{s_1}{2}} \big ( - (k'_{6}(s_1) + k'_{7}(s_1) )+ e^{s_1} \left( k'_{6}(-s_1)+ k'_{7}(-s_1) \right ) \big ) 
= e^{-\frac{s_1}{2}} \left( f_{6, 7}(s_1) + f_{7, 6}(s_1) \right). 
\]

\smallskip

We now wish to investigate invariance properties of expressions that can be associated with the 
differential equations \eqref{diffeqK8} -- \eqref{diffeqK16}. 
In these equations, where partial derivatives of the  functions $k_8(s_1, s_2),  \dots, k_{16}(s_1, s_2)$ 
are involved, two transformations are apparently present, namely:  
$$
(s_1,s_2)\mapsto (-s_1-s_2,s_1), \qquad  (s_1,s_2)\mapsto (s_2,-s_1-s_2). 
$$
They correspond to the matrices
$$
T_3 =
\left(
 \begin{array}{cc}
 -1 & -1 \\
 1 & 0 \\
\end{array}
\right), \qquad 
T_3^2 = \left(
\begin{array}{cc}
 0 & 1 \\
 -1 & -1 \\
\end{array}
\right), 
$$
and together with the identity matrix they form a cyclic group of order $3$. 
The partial differential system involves the values on the orbit of points for this group 
and hence these values should be considered simultaneously. Taking a close look on the 
left hand side of the equations \eqref{diffeqK8} -- \eqref{diffeqK16}, it is clear that, up to an 
overall factor of $\frac{1}{8}$,  
they are all of the form 
\begin{equation} \label{leftK8K16groupactionform}
 -\ddsone k_{j_0}{}\left(s_1,s_2\right) - e^{s_1+s_2} (\ddstwo - \ddsone )k_{j_1}{}\left(-s_1-s_2,s_1\right) + 
 e^{s_1} \ddstwo k_{j_2}{}\left(s_2,-s_1-s_2\right)
\end{equation}
\[
= -\ddsone k_{j_0}{}\left(s_1,s_2\right) 
- T_3( e^{-s_1} (\ddstwo - \ddsone )k_{j_1} )\left(s_1,s_2\right)
+ T_3^2(e^{-s_1-s_2} \ddstwo k_{j_2}) \left(s_1, s_2 \right), 
\]
where
\begin{eqnarray} \label{diffeqK8K17indices}
\textrm{for the equation } \eqref{diffeqK8}:   &&\qquad (j_0, j_1, j_2) = ( 8, 12, 11),   \nonumber \\
\textrm{for  the equation } \eqref{diffeqK9}:   &&\qquad (j_0, j_1, j_2) = ( 9, 13, 10), \nonumber \\
\textrm{for the equation } \eqref{diffeqK10}:   &&\qquad (j_0, j_1, j_2) = ( 10, 9, 13), \nonumber \\
\textrm{for the equation } \eqref{diffeqK11}:   &&\qquad (j_0, j_1, j_2) = ( 11, 8, 12), \nonumber \\
\textrm{for the equation } \eqref{diffeqK12}:   &&\qquad (j_0, j_1, j_2) = ( 12, 11, 8),  \\
\textrm{for the equation } \eqref{diffeqK13}:   &&\qquad (j_0, j_1, j_2) = ( 13, 10, 9), \nonumber \\
\textrm{for the equation } \eqref{diffeqK14}:   &&\qquad (j_0, j_1, j_2) = ( 14, 16, 15),\nonumber \\
\textrm{for the equation } \eqref{diffeqK15}:   &&\qquad (j_0, j_1, j_2) = ( 15, 14, 16), \nonumber \\
\textrm{for the equation } \eqref{diffeqK16}:   &&\qquad (j_0, j_1, j_2) = ( 16, 15, 14). \nonumber 
\end{eqnarray}

\smallskip

We note that while ranging from the equation \eqref{diffeqK8} to the equation 
\eqref{diffeqK16}, each $j_0, j_1, j_2$ attains each of the integers in $\{ 8, 9, \dots, 16 \}$ 
exactly once. Moreover, we can put the equations in the following three groups.  
The first group consists of the equations \eqref{diffeqK8}, \eqref{diffeqK11}, \eqref{diffeqK12}, 
which involve partial derivatives of the functions $k_8, k_{11}, k_{12}$. We can put the equations  
\eqref{diffeqK9}, \eqref{diffeqK10}, \eqref{diffeqK13} in the second group since 
they  involve partial derivatives of only $k_9, k_{10}, k_{13}$. The last group consists of 
the equations \eqref{diffeqK14}, \eqref{diffeqK15}, \eqref{diffeqK16}, which 
involve partial derivatives of $k_{14}, k_{15}, k_{16}$.

\smallskip

It will be useful to introduce the following notation $f_{j_0, j_1, j_2}(s_1, s_2)$ to rewrite the differential equations 
\eqref{diffeqK8} -- \eqref{diffeqK16} concisely: 
\begin{equation} \label{diffeqsK8K16concise}
 -\ddsone k_{j_0}{}\left(s_1,s_2\right) - e^{s_1+s_2} (\ddstwo - \ddsone )k_{j_1}{}\left(-s_1-s_2,s_1\right) + 
 e^{s_1} \ddstwo k_{j_2}{}\left(s_2,-s_1-s_2\right) 
 \end{equation}
 \[
 = 
 \frac{f_{j_0, j_1, j_2}(s_1, s_2)}{s_1 s_2 (s_1 + s_2)}, 
 \]
 where for each differential equation, $(j_0, j_1, j_2)$ is as in \eqref{diffeqK8K17indices}. 
 It is clear from the equations that each $f_{j_0, j_1, j_2}(s_1, s_2)$ is a finite difference 
 expression of restriction of the involved functions $k_j$ to $s_1+s_2+s_3=0$. 
Another important 
observation that we have made is that on the right hand side of each of the equations  \eqref{diffeqK8} -- \eqref{diffeqK16}, 
the denominator is the expression  $s_1 s_2 (s_1+s_2)$. We see in the following lemma that 
the above action of $\mathbb{Z}/3\mathbb{Z}$ leaves this expression invariant. The quadratic form written in 
this lemma can be obtained by averaging the standard quadratic form using the action of the cyclic group. 

\begin{lemma} \label{quadratic2Lemma}
The following expressions are invariants of the action of $\mathbb{Z}/3\mathbb{Z}$ on $\mathbb{R}^2$ given 
by the above matrices: 
\begin{enumerate}
   \item The positive definite quadratic form 
    \[
    Q_2(s) = s_1^2 + s_2^2 + s_1 s_2, \qquad s = (s_1, s_2) \in \mathbb{R}^2. 
    \] 
    \item The product $s_1 s_2 \left(s_1+s_2\right)$. 
\end{enumerate}
\end{lemma}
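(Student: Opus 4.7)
The plan is straightforward: since the cyclic group is generated by the single matrix $T_3$, it suffices in each case to verify invariance under $T_3$, and then invariance under $T_3^2$ follows automatically from the functional identity applied twice. So I would just substitute $(s_1,s_2) \mapsto (-s_1-s_2, s_1)$ into each of the two expressions and expand.

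For the quadratic form $Q_2(s_1,s_2) = s_1^2 + s_2^2 + s_1 s_2$, substitution gives
\[
Q_2(-s_1-s_2, s_1) = (s_1+s_2)^2 + s_1^2 + (-s_1-s_2)s_1 = s_1^2 + 2 s_1 s_2 + s_2^2 + s_1^2 - s_1^2 - s_1 s_2,
\]
which simplifies to $s_1^2 + s_2^2 + s_1 s_2 = Q_2(s_1,s_2)$. Conceptually this is the symmetrization of the standard inner product over the orbit of the $\mathbb{Z}/3\mathbb{Z}$ action; positive definiteness then follows since $Q_2$ is a sum of a nonnegative form $\tfrac12(s_1^2 + s_2^2) + \tfrac12(s_1+s_2)^2$ or, more directly, from the discriminant of the associated $2\times 2$ matrix.

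For the product $s_1 s_2 (s_1+s_2)$, the key observation is that the three linear forms $s_1$, $s_2$, $s_1+s_2$ are permuted up to sign by $T_3$:
\[
s_1 \longmapsto -s_1 - s_2, \qquad s_2 \longmapsto s_1, \qquad s_1 + s_2 \longmapsto -s_2.
\]
Hence the product transforms as $(-s_1-s_2)\cdot s_1 \cdot (-s_2) = s_1 s_2 (s_1+s_2)$, where the two minus signs cancel. Equivalently, writing the three roots as $\ell_1 = s_1$, $\ell_2 = s_2$, $\ell_3 = -(s_1+s_2)$, one has $\ell_1 + \ell_2 + \ell_3 = 0$ and the action cycles $\ell_1 \mapsto \ell_3 \mapsto \ell_2 \mapsto \ell_1$, so the product $\ell_1 \ell_2 \ell_3 = -s_1 s_2 (s_1+s_2)$ is invariant.

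I do not expect a serious obstacle here: both statements reduce to a one-line verification. The only mild subtlety is bookkeeping of signs in the second item, which is cleanly handled by passing to the three coroots $\ell_i$ summing to zero. Once invariance under $T_3$ is checked, applying $T_3$ twice yields invariance under $T_3^2$, completing the proof for the whole cyclic group.
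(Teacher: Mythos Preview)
Your proof is correct and is precisely the direct verification one expects; the paper itself does not give a detailed proof of this lemma, only remarking that $Q_2$ arises by averaging the standard quadratic form over the $\mathbb{Z}/3\mathbb{Z}$-orbit, which you also note. There is nothing to add.
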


\begin{figure}
\begin{center}
\includegraphics[scale=0.5]{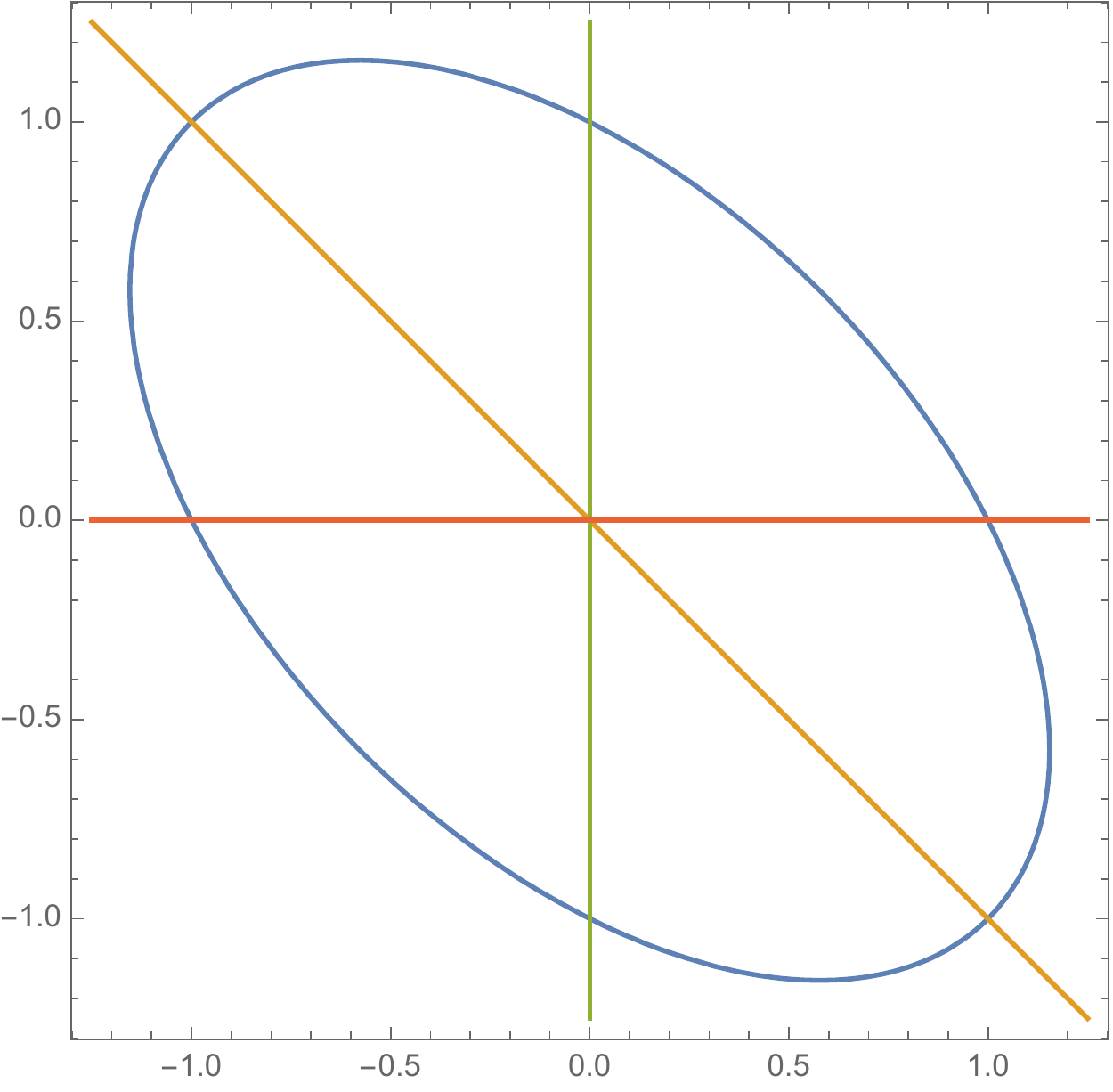}
\end{center}
\caption{The blue ellipse is the unit ball for the quadratic form $Q_2$, and the 
orange, green and  yellow lines are respectively the lines $s_1 =0, s_2=0$, 
and $s_1+s_2=0$.\label{graphquadratic2} }
\end{figure}

\smallskip

We can now present expressions that are associated with the functions appearing in the 
differential equations and are in kernel of the transformation $1+T_3+T_3^2$, where $T_3$ is 
the above action of the cyclic group of order 3 on $\mathbb{R}^2$. 

\begin{theorem} \label{k8tok16eqnsinvariance}
The  expression  
\begin{eqnarray*}
&&e^{-\frac{2s_1}{3}-\frac{s_2}{3} } \Big ( 
 -\ddsone (k_{j_0}+k_{j_1}+k_{j_2})\left(s_1,s_2\right) \\
 && \qquad \qquad - e^{s_1+s_2} (\ddstwo - \ddsone )(k_{j_0}+k_{j_1}+k_{j_2}) 
\left(-s_1-s_2,s_1\right) \\
&& \qquad \qquad + e^{s_1} \ddstwo (k_{j_0}+k_{j_1}+k_{j_2})\left(s_2,-s_1-s_2\right) \Big ) 
\end{eqnarray*}
is in the kernel of $1+T_3+T_3^2$ for any integers $j_0, j_1, j_2$ in $\{8, 9, \dots, 16 \}$. 
In particular, considering the differential equations \eqref{diffeqK8} -- \eqref{diffeqK16} and 
the above discussion following the equation \eqref{leftK8K16groupactionform}, 
in the following cases we have obtained expressions that are given by finite 
differences of the functions $k_j$ and are in the kernel of $1+T_3+T_3^2$: 
\begin{enumerate}
\item  When $(j_0, j_1, j_2)=( 8, 12, 11)$. %in $\{ ( 8, 12, 11), ( 11, 8, 12), ( 12, 11, 8) \}$. 
\item When $(j_0, j_1, j_2)= ( 9, 13, 10)$. %in $\{ ( 9, 13, 10), ( 10, 9, 13), ( 13, 10, 9) \}$. 
\item When $(j_0, j_1, j_2)= ( 14, 16, 15)$. %in $\{ ( 14, 16, 15), ( 15, 14, 16), ( 16, 15, 14) \}$. 
\end{enumerate}
\begin{proof}
By multiplying  \eqref{leftK8K16groupactionform} by 
\[
\alpha_2(s_1, s_2):=e^{-\frac{2s_1}{3}-\frac{s_2}{3} },
\] 
since 
\begin{eqnarray*}
T_3 \alpha_2 \left(s_1,s_2\right)  &=& e^{s_1+s_2} \alpha_2(s_1, s_2)= e^{\frac{1}{3} \left(s_1+2 s_2\right)}, \\
T_3^2 \alpha_2 \left(s_1,s_2\right) &=& e^{s_1} \alpha_2(s_1, s_2)= e^{\frac{1}{3} \left(s_1-s_2\right)}, 
\end{eqnarray*}
we obtain: 
\[
 e^{-\frac{2s_1}{3}-\frac{s_2}{3} }\big (
 -\ddsone k_{j_0}{}\left(s_1,s_2\right) - e^{s_1+s_2} (\ddstwo - \ddsone )k_{j_1}{}\left(-s_1-s_2,s_1\right) + 
 e^{s_1} \ddstwo k_{j_2}{}\left(s_2,-s_1-s_2\right) \big ) 
 \]
\begin{eqnarray*}
&=& -e^{-\frac{2s_1}{3}-\frac{s_2}{3}}
 \ddsone k_{j_0}{}\left(s_1,s_2\right) - e^{\frac{s_1}{3}+\frac{2 s_2}{3}} (\ddstwo - \ddsone )k_{j_1}{}\left(-s_1-s_2,s_1\right) \\
 &&+ 
 e^{\frac{s_1}{3} -\frac{s_2}{3}} \ddstwo k_{j_2}{}\left(s_2,-s_1-s_2\right) \\
&=& - \alpha_2(s_1, s_2)  \ddsone k_{j_0}{}\left(s_1,s_2\right) 
 - T_3 \left(  \alpha_2 \cdot \ddstwo k_{j_1} \right) \left(s_1,s_2\right)  \\
&&+ T_3 \left(  \alpha_2 \cdot \ddsone k_{j_1} \right) \left(s_1,s_2\right)  
+ T_3^2 \left(  \alpha_2 \cdot \ddstwo k_{j_2} \right ) \left(s_1,s_2\right)  \\
&=& - \big ( \alpha_2 \cdot \ddsone k_{j_0}  \left(s_1,s_2\right) + 
T_3 \left ( \alpha_2 \cdot \ddstwo k_{j_1} \right )  \left(s_1,s_2\right) \big ) \\
&&+ T_3 \big (   \alpha_2 \cdot \ddsone k_{j_1} + T_3 \left ( \alpha_2 \cdot \ddstwo k_{j_2} \right )    \big )\left(s_1,s_2\right).   
\end{eqnarray*}
Thinking of this formula as being associated with the tripe $(j_0, j_1, j_2)$, we 
consider the formulas corresponding to the cyclic permutations $(j_2, j_0, j_1)$ and $(j_1, j_2, j_0)$ of 
$(j_0, j_1, j_2)$ to get two more equations, and summing up the three equations we have:  
\[
e^{-\frac{2s_1}{3}-\frac{s_2}{3} } \Big ( 
 -\ddsone (k_{j_0}+k_{j_1}+k_{j_2})\left(s_1,s_2\right) - e^{s_1+s_2} (\ddstwo - \ddsone )(k_{j_0}+k_{j_1}+k_{j_2}) 
\left(-s_1-s_2,s_1\right)
\]
\[
 + e^{s_1} \ddstwo (k_{j_0}+k_{j_1}+k_{j_2})\left(s_2,-s_1-s_2\right) \Big ) 
\]
\[
= 
 - \big ( \alpha(s_1, s_2) \ddsone ( k_{j_0} + k_{j_1} + k_{j_2} ) \left(s_1,s_2\right) + T_3 \left ( \alpha_2 \cdot \ddstwo ( k_{j_0} + k_{j_1} + k_{j_2} )  \right )  \left(s_1,s_2\right) \big ) 
 \]
 \[
+ T_3 \big (   \alpha_2 \cdot \ddsone ( k_{j_0} + k_{j_1} + k_{j_2} )  + T_3 \left ( \alpha_2 \cdot \ddstwo ( k_{j_0} + k_{j_1} + k_{j_2} )  \right )    \big )\left(s_1,s_2\right)
\]
\[
= (-1 + T_3) \Big (  \alpha_2 \cdot \ddsone ( k_{j_0} + k_{j_1} + k_{j_2} )  + T_3 \left ( \alpha_2 \cdot \ddstwo ( k_{j_0} + k_{j_1} + k_{j_2} )  \right )  \Big ) \left(s_1,s_2\right).
\]
The latter is clearly in the kernel of $1+T_3+T_3^2$ since $T_3^3 =1$. 
\end{proof}

\end{theorem}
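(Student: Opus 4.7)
The plan is to adapt the order-two argument of Theorem \ref{k1tok7eqnsinvariance} to the order-three cyclic group generated by $T_3$. My strategy is to introduce a weight function $\alpha_2(s_1,s_2)$ such that the three exponential prefactors $1$, $e^{s_1+s_2}$, $e^{s_1}$ appearing in the three terms of \eqref{leftK8K16groupactionform} are exactly the ratios $(T_3^i\alpha_2)/\alpha_2$ for $i=0,1,2$. Solving the two resulting functional equations in the class of exponentials $\alpha_2(s_1,s_2)=e^{a s_1 + b s_2}$ forces $(a,b)=(-2/3,-1/3)$, yielding $\alpha_2(s_1,s_2)=e^{-\frac{2s_1}{3}-\frac{s_2}{3}}$.

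After multiplying \eqref{leftK8K16groupactionform} by $\alpha_2$, the three terms can be rewritten as $-\alpha_2\cdot\ddsone k_{j_0}(s_1,s_2)$, $-T_3(\alpha_2\cdot(\ddstwo-\ddsone)k_{j_1})(s_1,s_2)$, and $T_3^2(\alpha_2\cdot\ddstwo k_{j_2})(s_1,s_2)$, using the fact that evaluation at $T_3(s_1,s_2)$ and at $T_3^2(s_1,s_2)$ realizes the action of $T_3$ and $T_3^2$ on the corresponding factors. I would then average the resulting identity over its cyclic counterparts, obtained by replacing the ordered triple $(j_0,j_1,j_2)$ in turn by $(j_2,j_0,j_1)$ and $(j_1,j_2,j_0)$. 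After summing the three identities, each of the three slots is occupied by the common combination $K:=k_{j_0}+k_{j_1}+k_{j_2}$.

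The symmetrized expression then rearranges, by separating the $\ddsone$ and $\ddstwo$ contributions, into $(T_3-1)\!\left(\alpha_2\cdot\ddsone K + T_3(\alpha_2\cdot\ddstwo K)\right)$. The conclusion is immediate from the identity $(1+T_3+T_3^2)(T_3-1) = T_3^3-1 = 0$, which shows that anything in the image of $T_3-1$ is annihilated by $1+T_3+T_3^2$, proving the first part.

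For the more concrete second part I would return to the differential equations \eqref{diffeqK8}--\eqref{diffeqK16} and the table of triples \eqref{diffeqK8K17indices}. The nine equations split into three orbits under cyclic permutation of $(j_0,j_1,j_2)$, namely $\{8,11,12\}$, $\{9,10,13\}$, and $\{14,15,16\}$. Within each orbit, summing the three associated equations produces on the left precisely the invariant combination whose kernel property was just established, while on the right it produces a finite-difference expression with denominator $s_1s_2(s_1+s_2)$ as recorded in Remark \ref{denK8-K16diffsys}. The main anticipated obstacle is purely bookkeeping: pinning down $\alpha_2$ explicitly and verifying that the regrouping yielding $(T_3-1)(\,\cdot\,)$ holds without hidden sign or index errors; the $T_3$-invariance of the common denominator $s_1s_2(s_1+s_2)$, guaranteed by Lemma \ref{quadratic2Lemma}, ensures that the kernel property of the symmetrized left-hand side transfers cleanly to the finite-difference description on the right.
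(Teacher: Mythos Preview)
Your proposal is correct and follows essentially the same approach as the paper: introduce the weight $\alpha_2(s_1,s_2)=e^{-\frac{2s_1}{3}-\frac{s_2}{3}}$ so that the three exponential prefactors become $(T_3^i\alpha_2)/\alpha_2$, cyclically permute $(j_0,j_1,j_2)$ and sum, and regroup the result as $(T_3-1)\big(\alpha_2\cdot\ddsone K + T_3(\alpha_2\cdot\ddstwo K)\big)$ with $K=k_{j_0}+k_{j_1}+k_{j_2}$, which lies in $\ker(1+T_3+T_3^2)$ since $T_3^3=1$. Your additional remarks on deriving $\alpha_2$ from the ratio conditions and on the role of Lemma~\ref{quadratic2Lemma} for the second part are consistent with the paper's surrounding discussion.
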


Let us elaborate on the second part of the statement of the above theorem. 
Using the notation \eqref{diffeqsK8K16concise}, Lemma \ref{quadratic2Lemma}, and the above 
theorem we have proved that the following expressions are in the kernel 
of the transformation $1+T_3+T_3^2$: 
\[
e^{-\frac{2s_1}{3}-\frac{s_2}{3} } \left ( f_{8, 12, 11}(s_1, s_2) + f_{11, 8, 12}(s_1, s_2) + f_{12, 11, 8}(s_1, s_2)  \right ), 
\]
\[
e^{-\frac{2s_1}{3}-\frac{s_2}{3} } \left ( f_{9, 13, 10}(s_1, s_2) + f_{13, 10, 9}(s_1, s_2) + f_{10, 9, 13}(s_1, s_2)  \right ), 
\]
\[
e^{-\frac{2s_1}{3}-\frac{s_2}{3} } \left ( f_{14, 15, 16}(s_1, s_2) + f_{15, 16, 14}(s_1, s_2) + f_{16, 14, 15}(s_1, s_2)  \right ). 
\]

\smallskip

We now turn our focus to the functions $k_{17}(s_1, s_2, s_3), \dots, k_{20}(s_1, s_2, s_3)$. By 
considering the differential equations \eqref{diffeqK17} -- \eqref{diffeqK20}, one gets a 
cyclic group of order $4$ which corresponds to the matrices:
  $$
T_4= \left(
\begin{array}{ccc}
 -1 & -1 & -1 \\
 1 & 0 & 0 \\
 0 & 1 & 0 \\
\end{array}
\right)
,\ \
T_4^2=
\left(
\begin{array}{ccc}
 0 & 0 & 1 \\
 -1 & -1 & -1 \\
 1 & 0 & 0 \\
\end{array}
\right), \ \ 
T_4^3=
 \left(
\begin{array}{ccc}
 0 & 1 & 0 \\
 0 & 0 & 1 \\
 -1 & -1 & -1 \\
\end{array}
\right). 
$$
Together with the identity matrix, these matrices form a cyclic group of order 4. 
The  transformations corresponding to the above matrices are 
\[
(s_1,s_2,s_3)\mapsto (-s_1-s_2-s_3,s_1,s_2),
\] 
\[
(s_1,s_2,s_3)\mapsto (s_3,-s_1-s_2-s_3,s_1), 
\]
\[
(s_1,s_2,s_3)\mapsto (s_2,s_3,-s_1-s_2-s_3).
\] 
We  investigate 
the compatibility of these transformations  with the denominators and the various 
functions involved.

\smallskip

In Remark \ref{denK17-K20diffsys}, we pointed out that the denominator on the 
right hand side of the differential equations \eqref{diffeqK17} -- \eqref{diffeqK20} 
is the expression 
\[
s_1 s_2 \left(s_1+s_2\right) s_3 \left(s_2+s_3\right) \left(s_1+s_2+s_3\right). 
\]
The following lemma  shows the invariance of this expression under the above 
transformations.

\begin{lemma} \label{quadratic3Lemma}
	The action of $\Z/4\Z$ on $\R^3$ given by the above matrices leaves the following 
	expressions invariant up to sign:
	\begin{enumerate}
	\item The positive definite quadratic form 
	 $$
Q_3(s)= s_1^2+s_2^2+s_3^2+s_1 s_2+s_2 s_3+s_3 s_1, \qquad s=(s_1, s_2, s_3) \in \mathbb{R}^3.  
 $$
	\item The linear form $s_1+s_3$ whose kernel defines a plane in which a generator 
	of $\Z/4\Z$ acts by a rotation of angle $\pi/2$ for the metric  induced by $Q_3$.
	\item 	The product $\left(s_1+s_2\right)  \left(s_2+s_3\right)$ and the line 
	$\{s= (s_1,-s_1,s_1)\mid s_1 \in \R\}$ on which it acts as the symmetry $s\mapsto -s$.
	\item The product $
s_1 s_2 s_3\left(s_1+s_2+s_3\right)
$.
	\end{enumerate}

\end{lemma}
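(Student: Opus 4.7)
The plan is to reduce everything to direct verification on the generator $T_4$, since any expression left invariant up to sign by $T_4$ is automatically left invariant up to sign by all its powers. Writing $t = T_4(s) = (-s_1-s_2-s_3,\, s_1,\, s_2)$ turns each claim into a short polynomial identity. For item (4), I first compute $t_1+t_2+t_3 = -s_3$ and $t_1 t_2 t_3 = -(s_1+s_2+s_3)s_1 s_2$, whose product gives back $s_1 s_2 s_3(s_1+s_2+s_3)$ with no sign change. For item (1), I expand $Q_3(t) = t_1^2+t_2^2+t_3^2+t_1 t_2+t_2 t_3+t_3 t_1$ and collect, using the expansion of $(s_1+s_2+s_3)^2$ to supply the cross terms; the cancellations leave precisely $Q_3(s)$. (Equivalently, the identity $Q_3(s)=\tfrac{1}{2}\bigl(s_1^2+s_2^2+s_3^2+(s_1+s_2+s_3)^2\bigr)$ makes the invariance under the cyclic permutation $(s_1,s_2,s_3)\mapsto(s_2,s_3,-s_1-s_2-s_3)$ visible at a glance.)

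For items (2) and (3) the same substitution gives the sign flip directly: $t_1+t_3 = -s_1-s_2-s_3+s_2 = -(s_1+s_3)$ and $(t_1+t_2)(t_2+t_3) = (-s_2-s_3)(s_1+s_2) = -(s_1+s_2)(s_2+s_3)$, so each of these forms is an eigenfunction of $T_4$ with eigenvalue $-1$, which makes its zero locus $T_4$-stable and shows that the product (resp.\ the linear form) is invariant after squaring, i.e.\ up to sign. For the additional geometric statements I then restrict to the relevant invariant subspace. Substituting $s_3 = -s_1$ into $Q_3$ collapses it to $s_1^2 + s_1 s_2 + s_2^2 + s_1 \cdot s_2 + s_2 \cdot (-s_1) + s_1^2 - s_1^2 = s_1^2+s_2^2$, the standard Euclidean metric on the plane $\{s_1+s_3=0\}$. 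Moreover $T_4(s_1,s_2,-s_1) = (-s_2,\,s_1,\,s_2)$, which in the coordinates $(s_1,s_2)$ of that plane reads $(s_1,s_2)\mapsto(-s_2,s_1)$, the standard rotation by $\pi/2$. On the line $\{(s_1,-s_1,s_1):s_1\in\R\}$ one similarly checks $T_4(s_1,-s_1,s_1) = (-s_1,s_1,-s_1) = -(s_1,-s_1,s_1)$, which is the claimed symmetry $s\mapsto -s$.

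The conceptual structure that underlies all these pieces, and which I would mention for clarity, is the spectral decomposition of $T_4$ itself: its characteristic polynomial factors as $-(1+\lambda)(1+\lambda^2)$, so $T_4$ has eigenvalues $-1$ and $\pm i$. The real $-1$-eigenline is precisely $\R\cdot(1,-1,1)$ appearing in (3), and the two-dimensional real invariant subspace corresponding to the pair $\pm i$ is exactly the plane $\{s_1+s_3=0\}$ of (2), on which $T_4$ necessarily acts as a rotation of order $4$ for the induced $Q_3$-metric. No serious obstacle is expected; the entire lemma is a bookkeeping exercise whose purpose is to identify the natural geometry underlying the cyclic $\Z/4\Z$-action that will organize the differential system for $k_{17},\dots,k_{20}$. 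The one place where care is needed is the computation in item (2) that shows the restricted metric is already Euclidean in the coordinates $(s_1,s_2)$, so that the matrix $(s_1,s_2)\mapsto(-s_2,s_1)$ really is an isometric rotation and not merely a conjugate of one.
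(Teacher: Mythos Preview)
Your proof is correct and follows exactly the approach the paper intends: the lemma is stated there without a formal proof, with only the explanatory paragraph after it indicating that one simply checks the action of the generator $T_4$ directly and reads off the induced Euclidean structure $s_1^2+s_2^2$ on the plane $s_1+s_3=0$. One small slip: your displayed intermediate expression for $Q_3(s_1,s_2,-s_1)$ has a transcription error (as written it sums to $s_1^2+s_2^2+s_1s_2$); the correct list of six terms is $s_1^2+s_2^2+s_1^2+s_1s_2-s_1s_2-s_1^2=s_1^2+s_2^2$, which is the conclusion you state.
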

This means that when using the positive definite quadratic form $Q_3$, the generator of the action of $\Z/4\Z$ has the following form: it preserves the line $\{s=(s_1,-s_1,s_1)\mid s_1 \in \R\}$ on which it acts as the symmetry $s\mapsto -s$. The orthogonal of this line for the quadratic form $Q_3$ is the plane $s_1+s_3=0$. On this plane the generator of the action of $\Z/4\Z$ is a rotation of angle $\pi/2$ for the metric  induced by $Q_3$ which agrees with $s_1^2+s_2^2$ since $s_1+s_3=0$. 

\begin{figure}
\begin{center}
\includegraphics[scale=1]{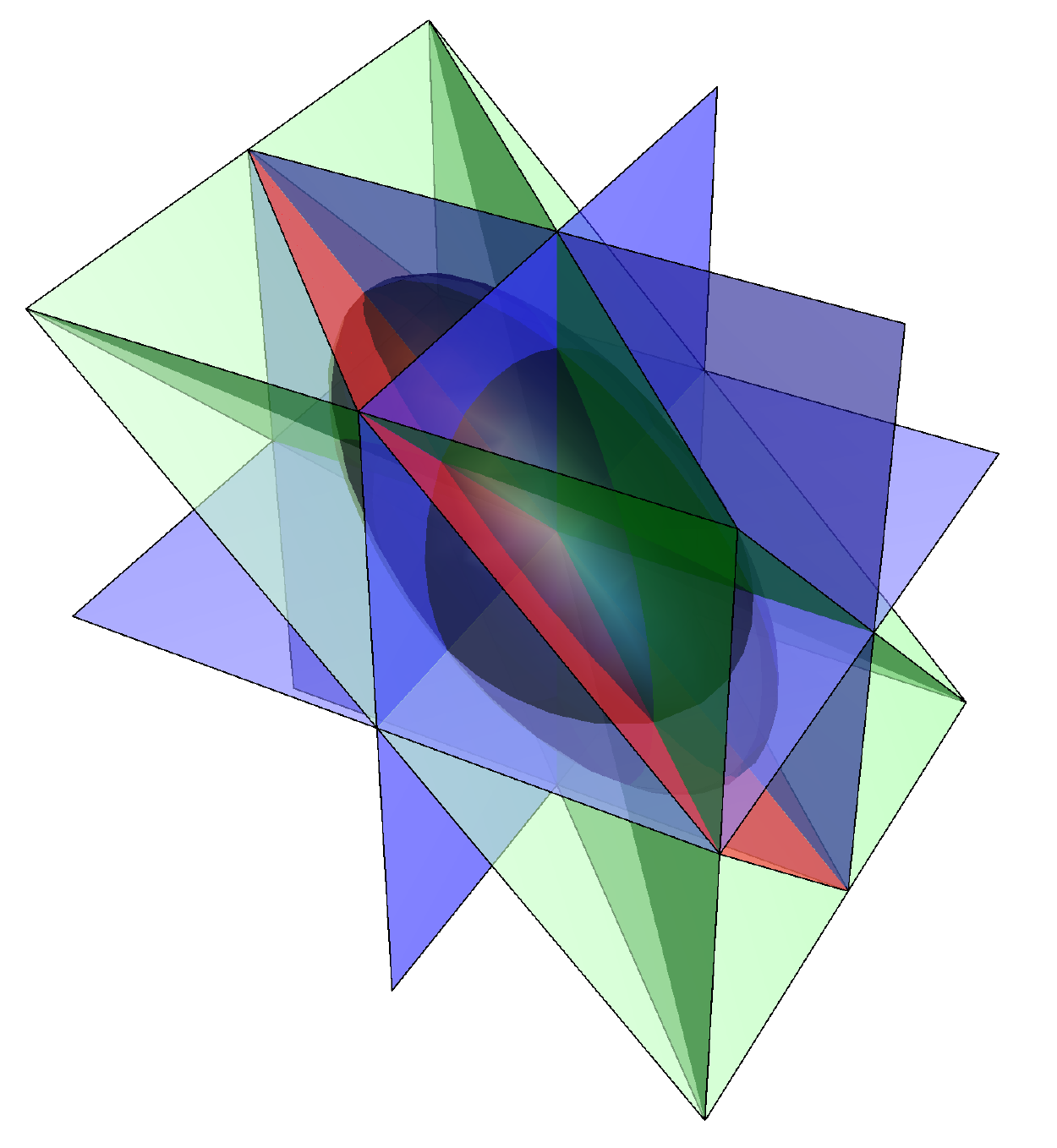}
\end{center}
\caption{The black ellipsoid is the unit ball for the quadratic form $Q_3$ and the various planes are: in blue the $s_j=0$, in red $s_1+s_2+s_3=0$ and in light green the plane $s_1+s_3=0$ in which the generator of the action of $\Z/4\Z$ is a rotation of angle $\pi/2$. The remaining two planes in dark green correspond to the product $\left(s_1+s_2\right)  \left(s_2+s_3\right)$.\label{symmetries} }
\end{figure}

%\begin{figure}
%\begin{center}
%\includegraphics[scale=1]{symmnctorus}
%\end{center}
%\caption{The black ellipsoid is the unit ball for the quadratic form $Q_3$ and the various planes are: in blue the $s_j=0$, in red $s_1+s_2+s_3=0$ and in light green the plane $s_1+s_3=0$ in which the generator of the action of $\Z/4\Z$ is a rotation of angle $\pi/2$. The remaining two planes in dark green correspond to the product $\left(s_1+s_2\right)  \left(s_2+s_3\right)$.\label{symmetries} }
%\end{figure}

\smallskip

Note also that the natural lattice $L=\Z^3\subset \R^3$ is contained in its dual $L^\perp$ with respect to the quadratic form $Q_3$ which is defined by 
$$
L^\perp=\{s=(s_1,s_2,s_3)\mid Q_3(s, t)\in\Z, \ \forall t \in L\}. 
$$
One finds that the quotient $H=L^\perp/L$ is the group $\Z/4\Z$ generated by $(\frac 14,\frac 14,\frac 14)$.

\smallskip

The left hand side of the differential equations \eqref{diffeqK17} -- \eqref{diffeqK20}, up to multiplication 
by $\frac{1}{16}$, is of the form 
\begin{eqnarray} \label{leftK17K20groupactionform}
&& -  \ddsone k_{j_0}{}\left(s_1,s_2,s_3\right) 
- e^{s_1+s_2+s_3} (\ddstwo - \ddsone) k_{j_1}(-s_1-s_2-s_3, s_1, s_2) \nonumber \\
&&- e^{s_1+s_2} (\ddsthree - \ddstwo) k_{j_0} (s_3, -s_1-s_2-s_3, s_1) 
+e^{s_1} \ddsthree k_{j_1} (s_2, s_3, -s_1-s_2-s_3) \nonumber \\
&=& -  \ddsone k_{j_0}{}\left(s_1,s_2,s_3\right)  - 
T_4(e^{-s_1} (\ddstwo - \ddsone) k_{j_1}) \left(s_1,s_2,s_3\right)  \nonumber \\
&& - T_4^2(e^{-s_1-s_2} (\ddsthree - \ddstwo) k_{j_0})\left(s_1,s_2,s_3\right) 
+  T_4^3(e^{-s_1-s_2-s_3} k_{j_1})   \left(s_1,s_2,s_3\right),  
\end{eqnarray}
where 
\begin{eqnarray} \label{diffeqsK17K20indices}
\textrm{for the equation } \eqref{diffeqK17}:   &&\qquad (j_0, j_1) = ( 17, 19), \nonumber \\
\textrm{for the equation } \eqref{diffeqK18}:   &&\qquad (j_0, j_1) = ( 18, 18), \nonumber \\
\textrm{for the equation } \eqref{diffeqK19}:   &&\qquad (j_0, j_1) = ( 19, 17),\\
\textrm{for the equation } \eqref{diffeqK20}:   &&\qquad (j_0, j_1) = ( 20, 20). \nonumber
\end{eqnarray}
This suggests that we can put the differential equations \eqref{diffeqK17} -- \eqref{diffeqK20} 
into three groups: one consisting of the equation \eqref{diffeqK17} and \eqref{diffeqK19} 
which involve partial derivatives of only $k_{17}$ and $k_{19}$, one group consisting 
of the equation \eqref{diffeqK18} since it involves partial derivatives of  $k_{18}$, and 
finally we can consider \eqref{diffeqK20} as the last group since it involves partial 
derivatives of only $k_{20}$.  Also we will introduce the following concise notation 
$f_{j_0, j_1}(s_1, s_2, s_3)$ to rewrite the differential equations \eqref{diffeqK17} -- \eqref{diffeqK20} 
as 
\begin{eqnarray} \label{diffeqsK17K20concise}
&& -  \ddsone k_{j_0}{}\left(s_1,s_2,s_3\right) 
- e^{s_1+s_2+s_3} (\ddstwo - \ddsone) k_{j_1}(-s_1-s_2-s_3, s_1, s_2) \nonumber \\
&&- e^{s_1+s_2} (\ddsthree - \ddstwo) k_{j_0} (s_3, -s_1-s_2-s_3, s_1) \\
&&+e^{s_1} \ddsthree k_{j_1} (s_2, s_3, -s_1-s_2-s_3) \nonumber \\
&=& \frac{f_{j_0, j_1}(s_1, s_2, s_3)}{s_1 s_2 \left(s_1+s_2\right) s_3 \left(s_2+s_3\right) \left(s_1+s_2+s_3\right)}, \nonumber
\end{eqnarray}
where in each case, $(j_0, j_1)$ is given as in \eqref{diffeqsK17K20indices}. Note that 
each $f_{j_0, j_1}(s_1, s_2, s_3)$ is written in terms of finite differences of restriction to $s_1 + s_2 +s_3 + s_4=0$ of the 
involved functions $k_j$.  

\smallskip

We can now exploit the above action of the cyclic group of order 4 on $\mathbb{R}^3$ generated by 
$T_4$ to derive 
expressions from our differential equations that are in the kernel of the transformation 
$1+ T_4 + T_4^2 + T_4^3$. 
\begin{theorem} \label{k17tok20eqnsinvariance}
The expression 
\begin{eqnarray*}
&&e^{-\frac{3s_1}{4}-\frac{s_2}{2}-\frac{s_3}{4}} \Big (  -  \ddsone (k_{j_0}{} + k_{j_1})\left(s_1,s_2,s_3\right) \\
&& \qquad \qquad \qquad - e^{s_1+s_2+s_3} (\ddstwo - \ddsone) (k_{j_0}{} + k_{j_1}) (-s_1-s_2-s_3, s_1, s_2)  \\
&& \qquad \qquad \qquad - e^{s_1+s_2} (\ddsthree - \ddstwo) (k_{j_0}{} + k_{j_1}) (s_3, -s_1-s_2-s_3, s_1) \\
&& \qquad  \qquad \qquad +e^{s_1} \ddsthree (k_{j_0}{} + k_{j_1}) (s_2, s_3, -s_1-s_2-s_3) \Big ) 
\end{eqnarray*}
is in the kernel of $1 + T_4 + T_4^2 + T_4^3$ for any pair of integers $j_0, j_1$ in $\{17, 18, 19, 20 \}$.  
In particular, considering the differential equations \eqref{diffeqK17} -- \eqref{diffeqK20} and 
the above discussion following the equation \eqref{leftK17K20groupactionform}, we have obtained 
in the following cases expressions that are given by finite differences of the $k_j$ and are 
in the kernel of $1 + T_4 + T_4^2 + T_4^3$: 
\begin{enumerate}
\item When $(j_0, j_1) = (17, 19)$.
\item When $(j_0, j_1) =(18, 18)$. 
\item When $(j_0, j_1)= (20, 20)$. 
\end{enumerate}
\begin{proof}
We multiply \eqref{leftK17K20groupactionform} by 
\[
\alpha_3 \left (s_1, s_2, s_3 \right) := e^{-\frac{3s_1}{4}-\frac{s_2}{2}-\frac{s_3}{4}}, 
\]
and since 
\begin{eqnarray*}
T_4 \alpha_3 \left (s_1, s_2, s_3 \right) &=& \alpha_3(s_1, s_2, s_3) \, e^{s_1+s_2+s_3} 
= e^{\frac{1}{4} \left(s_1+2 s_2+3 s_3\right)}, \\
T_4^2 \alpha_3 \left (s_1, s_2, s_3 \right) &=& \alpha_3(s_1, s_2, s_3) \, e^{s_1+s_2}  
= e^{\frac{1}{4} \left(s_1+2 s_2-s_3\right)}, \\
T_4^3 \alpha_3 \left (s_1, s_2, s_3 \right) &=& \alpha_3(s_1, s_2, s_3) \, e^{s_1} 
= e^{\frac{1}{4} \left(s_1-2 s_2-s_3\right)}, 
\end{eqnarray*}
we find that 
\begin{eqnarray*}
&& e^{-\frac{3s_1}{4}-\frac{s_2}{2}-\frac{s_3}{4}} \Big (  -  \ddsone k_{j_0}{}\left(s_1,s_2,s_3\right) 
- e^{s_1+s_2+s_3} (\ddstwo - \ddsone) k_{j_1}(-s_1-s_2-s_3, s_1, s_2)  \\
&&- e^{s_1+s_2} (\ddsthree - \ddstwo) k_{j_0} (s_3, -s_1-s_2-s_3, s_1) 
+e^{s_1} \ddsthree k_{j_1} (s_2, s_3, -s_1-s_2-s_3) \Big )  \\
&=& - ( \alpha_3 \cdot \ddsone k_{j_0})  \left (s_1, s_2, s_3 \right) - T_4(\alpha_3 \cdot \ddstwo k_{j_1} )  \left (s_1, s_2, s_3 \right)
+ T_4(\alpha_3 \cdot \ddsone k_{j_1} )  \left (s_1, s_2, s_3 \right)- \\ 
&& T_4^2(\alpha_3 \cdot \ddsthree k_{j_0} )  \left (s_1, s_2, s_3 \right)
+ T_4^2(\alpha_3 \cdot \ddstwo k_{j_0} )  \left (s_1, s_2, s_3 \right)+ T_4^3(\alpha_3 \cdot \ddsthree k_{j_1} ) \left (s_1, s_2, s_3 \right). 
\end{eqnarray*}
We switch $j_0$ and $j_1$ in this equation to get  a new one, and adding the two equations we 
have  
\begin{eqnarray*}
&&
 - ( \alpha_3 \cdot \ddsone (k_{j_0}+k_{j_1}))  
 - T_4(\alpha_3 \cdot \ddstwo (k_{j_0}+k_{j_1}) ) 
+ T_4(\alpha_3 \cdot \ddsone (k_{j_0}+k_{j_1}) )   \\ 
&& -T_4^2(\alpha_3 \cdot \ddsthree (k_{j_0}+k_{j_1}) )  
+ T_4^2(\alpha_3 \cdot \ddstwo (k_{j_0}+k_{j_1}) ) + T_4^3(\alpha_3 \cdot \ddsthree (k_{j_0}+k_{j_1}) ) \\
&=& - \left ( \alpha_3 \cdot \ddsone(k_{j_0}+k_{j_1}) + T_4( \alpha_3 \cdot \ddstwo (k_{j_0}+k_{j_1}))
+ T_4^2( \alpha_3 \cdot \ddsthree (k_{j_0}+k_{j_1})) \right ) \\
&&+ T_4  \left ( \alpha_3 \cdot \ddsone(k_{j_0}+k_{j_1}) + T_4( \alpha_3 \cdot \ddstwo (k_{j_0}+k_{j_1}))
+ T_4^2( \alpha_3 \cdot \ddsthree (k_{j_0}+k_{j_1})) \right ) \\
&=& (-1+T_4) \left ( \alpha_3 \cdot \ddsone(k_{j_0}+k_{j_1}) + T_4( \alpha_3 \cdot \ddstwo (k_{j_0}+k_{j_1}))
+ T_4^2( \alpha_3 \cdot \ddsthree (k_{j_0}+k_{j_1})) \right ),  
\end{eqnarray*}
which is in the kernel of $1 + T_4 + T_4^2 + T_4^3$ since $T_4^4=1$. 
\end{proof}
\end{theorem}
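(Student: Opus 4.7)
The plan is to mirror, one order higher, the argument given for Theorem \ref{k8tok16eqnsinvariance}. The key observation is that the conjugating factor $\alpha_3(s_1,s_2,s_3) := e^{-\frac{3s_1}{4}-\frac{s_2}{2}-\frac{s_3}{4}}$ has been chosen precisely so that the multiplicative cocycles $e^{s_1+s_2+s_3},\ e^{s_1+s_2},\ e^{s_1}$ appearing in the shifted terms on the left hand side of \eqref{leftK17K20groupactionform} are exactly the ratios $T_4(\alpha_3)/\alpha_3,\ T_4^2(\alpha_3)/\alpha_3,\ T_4^3(\alpha_3)/\alpha_3$. The first step is therefore to verify these three identities by a direct calculation, exactly as in the order-$3$ case.

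Using these identities, I would then multiply the four-term expression inside the big parentheses by $\alpha_3$ and rewrite it as
\[
-(\alpha_3\cdot\ddsone k_{j_0}) - T_4(\alpha_3\cdot\ddstwo k_{j_1}) + T_4(\alpha_3\cdot\ddsone k_{j_1}) - T_4^2(\alpha_3\cdot\ddsthree k_{j_0}) + T_4^2(\alpha_3\cdot\ddstwo k_{j_0}) + T_4^3(\alpha_3\cdot\ddsthree k_{j_1}).
\]
The next step is the symmetrization trick that was already used for $T_2$ and $T_3$: write the analogous identity with $j_0$ and $j_1$ swapped, and add the two. Denoting $F = \ddsone(k_{j_0}+k_{j_1})$, $G = \ddstwo(k_{j_0}+k_{j_1})$, $H = \ddsthree(k_{j_0}+k_{j_1})$, the sum collapses to
\[
-\alpha_3 F + T_4(\alpha_3(F-G)) + T_4^2(\alpha_3(G-H)) + T_4^3(\alpha_3 H).
\]

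The core algebraic identity I then need is that this equals $(-1+T_4)(X)$ with
\[
X := \alpha_3\cdot F + T_4(\alpha_3\cdot G) + T_4^2(\alpha_3\cdot H);
\]
this is a direct verification by expanding $-X+T_4(X)$ and regrouping, using $T_4^4=1$ only for bookkeeping. Once this is in place, one concludes by observing that $(1+T_4+T_4^2+T_4^3)(-1+T_4) = T_4^4 - 1 = 0$, so the full expression lies in the kernel of $1+T_4+T_4^2+T_4^3$, which proves the first assertion for arbitrary $j_0, j_1 \in \{17, 18, 19, 20\}$.

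For the second assertion, I would just read off the cases $(j_0, j_1)\in\{(17,19),(18,18),(20,20)\}$ from the index table \eqref{diffeqsK17K20indices}: in these cases the differential equations \eqref{diffeqK17}, \eqref{diffeqK18}, \eqref{diffeqK20} express the four-term combination itself as a finite difference of the $k_j$ divided by the $T_4$-invariant denominator $s_1s_2(s_1+s_2)s_3(s_2+s_3)(s_1+s_2+s_3)$ from Lemma \ref{quadratic3Lemma}, so the finite-difference side inherits the same $1+T_4+T_4^2+T_4^3$-kernel property. The only mildly subtle point is the case $(j_0,j_1)=(19,17)$, which swaps relative to \eqref{diffeqK17}; but symmetrization in $j_0,j_1$ makes the pair $(17,19)$ and the pair $(19,17)$ give the same expression, so no extra work is needed. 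Since the whole argument is a structural repetition of the proof of Theorem \ref{k8tok16eqnsinvariance} with the cyclic group of order $3$ replaced by that of order $4$, I do not expect any genuine obstacle; the only point requiring care is getting the signs and the $T_4$-exponents on $\alpha_3$ right at the first step, and this is pinned down by the explicit verification of $T_4^k(\alpha_3)/\alpha_3$.
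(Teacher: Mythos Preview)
Your proposal is correct and follows essentially the same approach as the paper's proof: define the conjugating factor $\alpha_3$, verify the three identities $T_4^k(\alpha_3)=\alpha_3\cdot e^{\cdots}$, rewrite the four-term expression in terms of $T_4^k(\alpha_3\cdot\partial_\ell k_{j_\bullet})$, symmetrize in $(j_0,j_1)$, and recognize the result as $(-1+T_4)$ applied to $\alpha_3\cdot\ddsone(k_{j_0}+k_{j_1})+T_4(\alpha_3\cdot\ddstwo(k_{j_0}+k_{j_1}))+T_4^2(\alpha_3\cdot\ddsthree(k_{j_0}+k_{j_1}))$, which lies in $\ker(1+T_4+T_4^2+T_4^3)$ since $T_4^4=1$. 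Your intermediate collapsed form and the choice of $X$ match the paper exactly, and your handling of the second assertion via the index table \eqref{diffeqsK17K20indices} and the $(j_0,j_1)$-symmetry is also the intended reading.
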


\smallskip

In order to explain the second statement in this theorem, it is important to mention that 
with the aid of the notation used in \eqref{diffeqsK17K20concise}, Lemma \ref{quadratic3Lemma}, and 
the above theorem, we have shown that the following expressions are in the kernel of the 
transformation $1 + T_4 + T_4^2 + T_4^3$: 
\[
e^{-\frac{3s_1}{4}-\frac{s_2}{2}-\frac{s_3}{4}} \left ( f_{17, 19}(s_1, s_2, s_3) + f_{19, 17}(s_1, s_2, s_3) \right ), 
\]
\[
e^{-\frac{3s_1}{4}-\frac{s_2}{2}-\frac{s_3}{4}}  f_{18, 18}(s_1, s_2, s_3),
\]
\[
e^{-\frac{3s_1}{4}-\frac{s_2}{2}-\frac{s_3}{4}} f_{20, 20}(s_1, s_2, s_3). 
\]

\subsection{Flow on the orbits and the differential system}

For any 1-variable function $k(s_1)$ let us consider the orbit $\mathcal{O}k$ of the 
function under the action of $T_2$ on $\R$: 
\[
\mathcal{O}k (s_1) = \left (k(s_1), k(-s_1) \right ). 
\]
Since 
\[
\ddt \mathcal{O}k(s_1 +t) = \ddt \left (k(s_1+t), k(-s_1-t) \right ) = \left ( k'(s_1), - k'(-s_1) \right ), 
\]
the differential parts of the differential equations \eqref{diffeqK3} -- \eqref{diffeqK7second} 
when put together as in the statement of Theorem \ref{k1tok7eqnsinvariance} are generated by 
the following flow when passed to the orbit using the action of $T_2$ on $\R$:  
\[
s_1 \mapsto s_1 + t. 
\]
On the other hand from the proof of Theorem \ref{k1tok7eqnsinvariance}, we can use the 
function $\alpha_1(s_1)=e^{-\frac{s_1}{2}}$ with $\mathcal{O} \alpha_1(s_1) = (e^{-\frac{s_1}{2}}, e^{\frac{s_1}{2}})$ to obtain the following statement, in which we use the notation given by 
\eqref{diffeqnK3K7consice}.

\begin{theorem} Considering the cases for $(j_0, j_1)$ listed below and by setting for each case  
\[
k= k_{j_0} + k_{j_1}, \qquad f = f_{j_0, j_1} + f_{j_1, j_0},  
\]
we have 
\[
\left ( \ddt   \mathcal{O} k (s_1 +t) \right )  \cdot  \left (\mathcal{O} \alpha_1 (s_1) \right ) 
 = -
 \frac{\alpha_1 (s_1) \, f(s_1)}{s_1}. 
\]
The following are the cases: 
\begin{enumerate}
\item When $(j_0, j_1) = (3, 3)$.
\item When $(j_0, j_1) = (4, 4)$.
\item When $(j_0, j_1) = (5, 5)$.
\item When $(j_0, j_1) = (6, 7)$.
\end{enumerate}
\end{theorem}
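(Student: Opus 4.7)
The plan is to unpack the definitions of the orbit map $\mathcal{O}$ and the flow, so that the left-hand side reduces to an explicit linear combination of the derivatives $k'_{j_0}(\pm s_1)$ and $k'_{j_1}(\pm s_1)$, and then to identify this combination with the one that already appears on the left of the concise form \eqref{diffeqnK3K7consice} of the differential equations. This makes the theorem essentially an interpretation, in flow-theoretic language, of the computations already carried out in the proof of Theorem \ref{k1tok7eqnsinvariance}.

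First I would compute $\ddt \mathcal{O} k(s_1+t)$ componentwise. Writing $\mathcal{O}k(s_1+t) = (k(s_1+t),\, k(-s_1-t))$ and differentiating at $t=0$ yields $(k'(s_1),\, -k'(-s_1))$. Next, by direct substitution, $\mathcal{O}\alpha_1(s_1) = (e^{-s_1/2},\, e^{s_1/2})$. Forming the dot product with $k = k_{j_0} + k_{j_1}$, the left-hand side of the stated identity becomes
\[
e^{-s_1/2}\bigl(k'_{j_0}(s_1) + k'_{j_1}(s_1)\bigr) - e^{s_1/2}\bigl(k'_{j_0}(-s_1) + k'_{j_1}(-s_1)\bigr).
\]
The next step is to invoke the relation \eqref{diffeqnK3K7consice} in the form $-k'_{j_0}(s_1) + e^{s_1} k'_{j_1}(-s_1) = f_{j_0,j_1}(s_1)/s_1$, multiply both sides by $\alpha_1(s_1) = e^{-s_1/2}$, and then add the version of the identity obtained by swapping $j_0 \leftrightarrow j_1$. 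This is precisely the symmetrisation already used in the proof of Theorem \ref{k1tok7eqnsinvariance}. After this addition, the left-hand side equals the negative of the dot product above, while the right-hand side is $\alpha_1(s_1)(f_{j_0,j_1}(s_1) + f_{j_1,j_0}(s_1))/s_1 = \alpha_1(s_1) f(s_1)/s_1$, yielding the claimed identity up to the overall sign.

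For the four specific cases in the statement, in the diagonal cases $(j_0, j_1) \in \{(3,3), (4,4), (5,5)\}$ the symmetrisation is trivial and $f = 2 f_{j_0, j_0}$, so no further work is needed. The case $(6,7)$ is the only one requiring attention: the quantities $f_{6,7}$ and $f_{7,6}$ each admit two a priori distinct finite-difference representatives coming from equations \eqref{diffeqK6first}--\eqref{diffeqK7second}, so one must verify that the sum $f_{6,7}+f_{7,6}$ entering the statement is well defined. The main (and only real) obstacle is therefore this compatibility check for the mixed pair; it is resolved by Corollary \ref{FnRlnK6DiffEqns} and Corollary \ref{FnRlnK7DiffEqns}, which assert exactly that the two representatives in each case agree. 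Once this consistency is in hand, the remaining steps reduce to the direct algebraic manipulation outlined above.
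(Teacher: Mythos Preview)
Your proposal is correct and follows essentially the same route as the paper, which presents the theorem as a direct repackaging of the computation in the proof of Theorem~\ref{k1tok7eqnsinvariance} via the concise notation \eqref{diffeqnK3K7consice}. One small remark: the consistency check you flag for $(6,7)$ is not actually needed, since by \eqref{diffeqnK3K7consice} each $f_{j_0,j_1}$ is determined by the unambiguous left-hand side $-k'_{j_0}(s_1)+e^{s_1}k'_{j_1}(-s_1)$; the existence of two finite-difference expressions is a feature of the right-hand side representations, not an obstruction to well-definedness.
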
 

We note that Theorem \ref{k1tok7eqnsinvariance} asserts that in each of the above cases  
the function $ \alpha_1 (s_1) \, f(s_1)/s_1$ is in the kernel of transformation $1+T_2$, 
which sends via averaging any function defined on $\R$ to a function defined on  the quotient space.  

\smallskip

Now for any 2-variable function $k(s_1, s_2)$ we similarly define $\mathcal{O}k$ to be the 
orbit of the function $k$ under the action of $T_3$ on $\R^2$: 
\[
\mathcal{O}k (s_1, s_2) = \left (  k(s_1, s_2), k(-s_1-s_2, s_1), k(s_2, -s_1 -s_2)  \right ). 
\]
It is easy to see that 
\begin{eqnarray*}
&&\ddt \mathcal{O}k (s_1 +t, s_2) \\
&=&   \ddt  \left (  k(s_1+t, s_2), k(-s_1-t -s_2, s_1+t), k(s_2, -s_1-t -s_2)  \right ) \\
&=&  \left ( \ddsone k(s_1, s_2),  \, (\ddstwo - \ddsone) k (-s_1-s_2, s_1), \, - \ddstwo k(s_2, -s_1-s_2) \right ).
\end{eqnarray*}
This means that the flow induced on the orbit from the flow on $\R^2$ given by 
\[
(s_1, s_2) \mapsto (s_1+t, s_2)
\]
generates the differential components of the differential equations \eqref{diffeqK8} -- \eqref{diffeqK16} 
when put together as in the statement of Theorem \ref{k8tok16eqnsinvariance}.  
Also we know from the proof of this theorem that the coefficients appearing on the left 
hand sides of these equations are 
induced by the action of $T_3$ starting from the function 
\[
\alpha_2(s_1, s_2) = e^{-\frac{2s_1}{3}-\frac{s_2}{3} },
\] 
since 
\[
\mathcal{O} \alpha_2 (s_1, s_2) = \left (  e^{-\frac{2s_1}{3}-\frac{s_2}{3} }, e^{\frac{1}{3} \left(s_1+2 s_2\right)}, e^{\frac{1}{3} \left(s_1-s_2\right)} \right ). 
\]
Therefore, using the  notation introduced by \eqref{diffeqsK8K16concise}  we have the following statement.  
\begin{theorem}
For each of the cases listed below we set 
\[
k= k_{j_0} + k_{j_1} + k_{j_2}, \qquad f = f_{j_0, j_1, j_2} + f_{j_2, j_0, j_1} + f_{j_1, j_2, j_0}. 
\]
Then we have 
\begin{eqnarray*}
 \left ( \ddt   \mathcal{O} k (s_1 +t, s_2) \right )  \cdot  \left (\mathcal{O} \alpha_2 (s_1, s_2) \right ) 
 = -
 \frac{\alpha_2 (s_1, s_2) \, f(s_1, s_2)}{s_1 s_2 (s_1 + s_2)},  
\end{eqnarray*}
where the following are the cases for $(j_0, j_1, j_2)$:  
\begin{enumerate}
\item  When $(j_0, j_1, j_2)=( 8, 12, 11)$. 
\item When $(j_0, j_1, j_2)= ( 9, 13, 10)$. 
\item When $(j_0, j_1, j_2)= ( 14, 16, 15)$.  
\end{enumerate}

\end{theorem}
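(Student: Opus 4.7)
The plan is to verify the claimed identity by directly matching its left-hand side against the sum, over the three cyclic rotations of the triple $(j_0, j_1, j_2)$, of the differential equations \eqref{diffeqsK8K16concise} after multiplication by $\alpha_2$.

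First I would compute the derivative of the orbit componentwise. Since
\[
\mathcal{O}k(s_1, s_2) = \bigl( k(s_1, s_2),\, k(-s_1-s_2, s_1),\, k(s_2, -s_1-s_2) \bigr),
\]
the chain rule yields
\[
\ddt \mathcal{O}k(s_1+t, s_2) = \bigl( \ddsone k(s_1, s_2),\, (\ddstwo - \ddsone) k(-s_1-s_2, s_1),\, -\ddstwo k(s_2, -s_1-s_2) \bigr),
\]
exactly as recorded just before the theorem statement. Taking the dot product with $\mathcal{O}\alpha_2(s_1, s_2)$ and using the identities $T_3 \alpha_2 = e^{s_1+s_2}\alpha_2$ and $T_3^2 \alpha_2 = e^{s_1}\alpha_2$ (established in the proof of Theorem \ref{k8tok16eqnsinvariance}), the inner product collapses to
\[
\alpha_2\, \ddsone k + e^{s_1+s_2}\alpha_2\,(\ddstwo - \ddsone) k(-s_1-s_2, s_1) - e^{s_1}\alpha_2\, \ddstwo k(s_2, -s_1-s_2).
\]

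Next I would add the three equations \eqref{diffeqsK8K16concise} indexed by the three cyclic rotations of $(j_0, j_1, j_2)$, after multiplying each by $\alpha_2$. In each of the three listed cases, the table \eqref{diffeqK8K17indices} reveals that the three relevant triples are precisely the cyclic rotations of one another: $(8,12,11), (11,8,12), (12,11,8)$ for case (1); $(9,13,10), (10,9,13), (13,10,9)$ for case (2); and $(14,16,15), (15,14,16), (16,15,14)$ for case (3). Consequently, each of $k_{j_0}, k_{j_1}, k_{j_2}$ appears exactly once in each of the three syntactic slots (the $\ddsone$ slot, the $(\ddstwo - \ddsone)$ slot weighted by $e^{s_1+s_2}$, and the $\ddstwo$ slot weighted by $e^{s_1}$) across the three equations. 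The three left-hand sides therefore add up to exactly the negative of the dot product displayed above, with $k = k_{j_0} + k_{j_1} + k_{j_2}$, while the three right-hand sides combine to $\alpha_2\, f/(s_1 s_2 (s_1+s_2))$ by definition of $f$.

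The entire argument is therefore bookkeeping, and the only step requiring care is the combinatorial matching of the three cyclically rotated triples to the three rows of \eqref{diffeqK8K17indices} in each listed case. Once this matching is verified by inspection, the claimed identity follows immediately with the correct overall sign, and no further calculation is required. The main obstacle is purely one of cleanly organizing the bookkeeping so that the sign conventions from the derivative of the last component of $\mathcal{O}k$ and from the $-\ddsone$ on the left-hand side of \eqref{diffeqsK8K16concise} are handled consistently.
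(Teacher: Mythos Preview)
Your proposal is correct and follows essentially the same approach as the paper: the theorem is presented in the paper as a direct consequence of the preceding discussion, which computes $\ddt\mathcal{O}k(s_1+t,s_2)$ componentwise, identifies $\mathcal{O}\alpha_2$ via the relations $T_3\alpha_2=e^{s_1+s_2}\alpha_2$ and $T_3^2\alpha_2=e^{s_1}\alpha_2$ from the proof of Theorem \ref{k8tok16eqnsinvariance}, and then sums the three differential equations \eqref{diffeqsK8K16concise} indexed by the cyclic rotations listed in \eqref{diffeqK8K17indices}. Your combinatorial check that the three relevant rows of \eqref{diffeqK8K17indices} in each case are exactly the cyclic rotations of $(j_0,j_1,j_2)$, and your sign bookkeeping, are the only content, and both are handled correctly.
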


We recall from  Theorem \ref{k8tok16eqnsinvariance} that in each of the cases 
mentioned in the above theorem, 
the function $\alpha_2 (s_1, s_2) f(s_1, s_2)$ is in the kernel of the averaging 
operator $1+T_3 + T_3^2$, which means that its average (defined on the quotient space)   
vanishes.   We also note from Lemma \ref{quadratic2Lemma} that the denominator $s_1 s_2 (s_1 + s_2)$ 
is invariant under the action of $T_3$. 

\smallskip

Similarly, for any 3-variable function $k(s_1, s_2, s_3)$ we define 
$\mathcal{O}k$ to be orbit of the function 
$k$ under the action of $T_4$ on $\R^3$: 
\[
\mathcal{O}k (s_1, s_2, s_3) =
\]
\[
 \left (  k(s_1, s_2, s_3), k(-s_1-s_2-s_3, s_1, s_2), k(s_3, -s_1 -s_2-s_3, s_1), 
k(s_2, s_3, -s_1 -s_2-s_3)  \right ). 
\]
In this case as well  the flow induced on the orbit from the flow on $\R^3$ given by 
\[
(s_1, s_2, s_3) \mapsto (s_1+t, s_2, s_3)
\]
generates the differential components of the differential equations \eqref{diffeqK17} -- 
\eqref{diffeqK20} when considered together as mentioned in the statement of 
Theorem \ref{k17tok20eqnsinvariance}. That is, one can 
see easily that 
\begin{eqnarray*}
&&\ddt \mathcal{O}k (s_1 +t, s_2, s_3) \\
&=&  \ddt  \Big (  k(s_1+t, s_2, s_3), k(-s_1-t-s_2-s_3, s_1+t, s_2),  \\
&&
\qquad \qquad \qquad \qquad  k(s_3, -s_1 -t -s_2-s_3, s_1+t), 
k(s_2, s_3, -s_1-t -s_2-s_3)  \Big ) \\
&=&  \Big ( \ddsone k(s_1, s_2, s_3),  \, (\ddstwo - \ddsone) k (-s_1-s_2-s_3, s_1, s_2), \\
&&
 \qquad \qquad \, ( \ddsthree - \ddstwo ) k(s_3, -s_1 -s_2-s_3, s_1), 
\, - \ddsthree k (s_2, s_3, -s_1-s_2-s_3) \Big ).
\end{eqnarray*}

As we observed in the proof of Theorem \ref{k17tok20eqnsinvariance}  the coefficients appearing 
on the left hand sides of these equations are 
induced by the transformation $T_4$ starting from the function 
\[
\alpha_3 \left (s_1, s_2, s_3 \right) = e^{-\frac{3s_1}{4}-\frac{s_2}{2}-\frac{s_3}{4}}, 
\]
because we have
\[
\mathcal{O} \alpha_3 (s_1, s_2, s_3) = 
\left (  e^{-\frac{3s_1}{4}-\frac{s_2}{2}-\frac{s_3}{4}},   
e^{\frac{1}{4} \left(s_1+2 s_2+3 s_3\right)},  
e^{\frac{1}{4} \left(s_1+2 s_2-s_3\right)},
 e^{\frac{1}{4} \left(s_1-2 s_2-s_3\right)} 
\right ). 
\]
Hence, using the notation introduced by \eqref{diffeqsK17K20concise} we have: 

\begin{theorem}
For each case from the list given below set
\[
k= k_{j_0} + k_{j_1}, \qquad f = f_{j_0, j_1} + f_{j_1, j_0}. 
\]
Then we have: 
\begin{eqnarray*}
&& \left ( \ddt   \mathcal{O} k (s_1 +t, s_2, s_3) \right )  \cdot  
 \left (\mathcal{O} \alpha_3 (s_1, s_2, s_3) \right )   \\
 &=& 
- \frac{\alpha_3 (s_1, s_2, s_3) \, f(s_1, s_2, s_3)}{s_1 s_2 \left(s_1+s_2\right) s_3 \left(s_2+s_3\right) \left(s_1+s_2+s_3\right)},  
\end{eqnarray*}
where the cases are the following: 
\begin{enumerate}
\item When $(j_0, j_1) = (17, 19)$.
\item When $(j_0, j_1) =(18, 18)$. 
\item When $(j_0, j_1)= (20, 20)$. 
\end{enumerate}
\end{theorem}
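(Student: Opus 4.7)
The plan is to combine the chain rule calculation displayed just before the statement with a symmetrized pair of applications of the differential system \eqref{diffeqsK17K20concise}. First I would record, for any single smooth function $k$ of three variables, the componentwise unpacking
\[
\ddt \mathcal{O}k(s_1+t, s_2, s_3) = \bigl(\ddsone k,\, (\ddstwo - \ddsone) k,\, (\ddsthree - \ddstwo) k,\, -\ddsthree k\bigr),
\]
each slot evaluated at the appropriate point of the $T_4$-orbit of $(s_1, s_2, s_3)$. Taking the componentwise product with the four entries of $\mathcal{O}\alpha_3(s_1, s_2, s_3)$ and summing, I would factor out the prefactor $\alpha_3(s_1,s_2,s_3) = e^{-3s_1/4-s_2/2-s_3/4}$ from every term; the residual exponential weights become $1$, $e^{s_1+s_2+s_3}$, $e^{s_1+s_2}$, $e^{s_1}$, which are exactly the coefficients appearing on the left-hand side of \eqref{diffeqsK17K20concise}.

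Second, I would invoke \eqref{diffeqsK17K20concise} twice, once for the ordered pair $(j_0, j_1)$ and once with the labels swapped to $(j_1, j_0)$, and add the two resulting scalar identities. By linearity in $k$, the sum of the left-hand sides equals
\[
\alpha_3(s_1,s_2,s_3)^{-1}\, \bigl(\ddt \mathcal{O}(k_{j_0}+k_{j_1})(s_1+t, s_2, s_3)\bigr) \cdot \mathcal{O}\alpha_3(s_1, s_2, s_3),
\]
while the sum of the right-hand sides yields $(f_{j_0, j_1} + f_{j_1, j_0})/\bigl(s_1 s_2(s_1+s_2) s_3(s_2+s_3)(s_1+s_2+s_3)\bigr)$ over the denominator, whose $T_4$-invariance is recorded in Lemma \ref{quadratic3Lemma}. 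Rearranging and multiplying by $\alpha_3$ gives the claimed formula with $k = k_{j_0}+k_{j_1}$ and $f = f_{j_0, j_1} + f_{j_1, j_0}$. For $(j_0, j_1)\in\{(18,18),(20,20)\}$ the two applications coincide and the theorem reduces to twice a single instance of \eqref{diffeqsK17K20concise}, which is consistent; for $(17,19)$ the two equations \eqref{diffeqK17} and \eqref{diffeqK19} are genuinely distinct and their sum produces the required combination.

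The main obstacle is purely clerical, namely confirming that the four exponential factors surviving after the extraction of $\alpha_3$ coincide, slot by slot, with the weights multiplying $\ddsone k_{j_0}$, $(\ddstwo - \ddsone)k_{j_1}$, $(\ddsthree - \ddstwo)k_{j_0}$, $\ddsthree k_{j_1}$ in \eqref{diffeqsK17K20concise}. This alignment is precisely the computation $T_4 \alpha_3 = \alpha_3\, e^{s_1+s_2+s_3}$, $T_4^2 \alpha_3 = \alpha_3\, e^{s_1+s_2}$, $T_4^3 \alpha_3 = \alpha_3\, e^{s_1}$ already carried out inside the proof of Theorem \ref{k17tok20eqnsinvariance}. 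Consequently, no fresh computation is required: the theorem follows by reading that alignment backwards and invoking linearity of both sides of \eqref{diffeqsK17K20concise} in the function on which the operator acts.
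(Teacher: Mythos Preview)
Your proposal is correct and matches the paper's approach: the theorem is presented there as an immediate reformulation of the orbit-derivative computation displayed just before it, combined with the differential system \eqref{diffeqsK17K20concise} and the $T_4$-transformation identities for $\alpha_3$ from the proof of Theorem~\ref{k17tok20eqnsinvariance}. Your symmetrization step (using the instances for both $(j_0,j_1)$ and $(j_1,j_0)$, both of which appear in \eqref{diffeqsK17K20indices}) is exactly what is needed, and your sign bookkeeping is correct.
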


We note that we proved in Theorem \ref{k17tok20eqnsinvariance} that in each of the above cases 
the function $\alpha_3 (s_1, s_2, s_3) f(s_1, s_2, s_3)$ is in the kernel of the averaging 
operator $1+T_4 + T_4^2 + T_4^3$, hence its average (defined on the quotient space)  
vanishes.   We also recall from Lemma \ref{quadratic3Lemma} that the denominator 
$s_1 s_2 \left(s_1+s_2\right) s_3 \left(s_2+s_3\right) \left(s_1+s_2+s_3\right)$ is invariant 
under the action of $T_4$.

\smallskip

\section{The method and tools for proving the functional relations}
\label{FuncRelationsProofSec}

In this section we provide the tools for 
calculating in two different ways the gradient of the map that sends a 
general dilaton $h = h^* \in \CNT$ to $\vphi_0(a_4)$, which will 
eventually lead to the functional relations stated in Theorem \ref{FuncRelationsThm} in  
Section \ref{a_4Sec}. 
For the sake of clarity, let us write $a_4 = a_4(h)$ so that we 
can clearly view it as a function of the dilaton  
$h$. Also, we will use the following notation for the 
conformally perturbed Laplacian given 
by \eqref{conformalLaplacian}:  
\[
\triangle_h = e^{h/2} \triangle e^{h/2}.    
\]

\smallskip

The methods are in fact closely based on the techniques initiated in 
\cite{ConMosModular}. The first method employs a fundamental 
identity proved in 
\cite{ConMosModular} as follows. For a general element 
$ a \in \CNT,$  consider the spectral zeta function 
defined by 
\[
\zeta_{h}(a, s) = \Tr(a \, \triangle_h^{-s}), 
\qquad s \in \C, \,\, \Re(s) \gg 0. 
\]
The zeta function is initially defined when the real part of the 
complex number $s$ is large enough, and it has a meromorphic 
extension to the complex plane. 
It is proved in \cite{ConMosModular} that if $h$ and $a$ are smooth selfadjoint 
elements in $C(\NT)$, then 
\begin{equation} \label{GradientIdentity}
\dep \zeta_{h + \vep a}(1, s) 
=  
-\frac{s}{2}\, \zeta_h \left( \widetilde a , s \right),  
\end{equation}
where 
\[
\widetilde a = \int_{-1}^1 e^{uh/2} a e^{-uh/2} \, du. 
\]

\smallskip

Using the Mellin transform and the asymptotic expansion \eqref{heatexp}, 
one can see that 
\[
\zeta_h(a, -1) = - \vphi_0(a \, a_4(h)), \qquad a \in \CNT, \,\, h=h^* \in \CNT.  
\]
Therefore, it follows from \eqref{GradientIdentity} that 
\begin{equation} \label{tildea4start}
\dep \vphi_0(a_4(h + \vep a)) 
= 
-\frac{1}{2} \zeta_h (\widetilde a, -1) 
=
\frac{1}{2} \vphi_0(\widetilde a \, a_4(h)). 
\end{equation} 
At this stage, following the method in \cite{ConMosModular}, it is 
crucial to observe that, in general, given a smooth function 
$L$ and elements $x_1, \dots, x_n \in \CNT$, we have:  
\begin{eqnarray} \label{tildegeneral}
&&\vphi_0 \left ( \widetilde a e^h L(\nab, \dots, \nab)(x_1 \cdots x_n) \right )  \\
&=& \vphi_0 \left (  \int_{-1}^1 e^{uh/2} a e^h e^{-uh/2} \, du \, L(\nab, \dots, \nab)(x_1 \cdots x_n) \right )  \nonumber \\
&=&  \vphi_0 \left (  a e^h \int_{-1}^1   e^{-uh/2}  \, L(\nab, \dots, \nab)(x_1 \cdots x_n) \, e^{uh/2}  \, du \right ) \nonumber \\
&=& \vphi_0 \left ( a e^h \left ( 2 \frac{\sinh(\nab/2)}{\nab/2} \right ) 
\left (L(\nab, \dots, \nab)(x_1 \cdots x_n) \right ) \right ) \nonumber \\
&=& \vphi_0 \left ( a e^h \left( 2 \frac{\sinh((s_1+ \cdots + s_n)/2)}{(s_1+ \cdots + s_n)/2} L(s_1, \dots, s_n) \right )\bigm|_{s_j = \nab} (x_1 \cdots x_n)  \right ). \nonumber
\end{eqnarray}
This justifies the reason for defining the variants $\widetilde K_j$ of the 
functions $K_j$ as in \eqref{Ktildesdef}.

\smallskip

By using  the identities  \eqref{tildea4start} and 
\eqref{tildegeneral}, and considering the 
expression \eqref{a_4expression} for the term $a_4(h)$ with $\ell = h/2$, 
we can conclude that under the trace $\vphi_0$, we have: 
\begin{equation} \label{Gradientofa4}
\dep \vphi_0 \left ( a_4(h + \vep a) \right ) =
\end{equation}
\begin{eqnarray}
&&- a e^{h} \Big (  \widetilde K_1(\nab) \left ( \delta _1^2 \delta _2^2 ( h ) \right ) 
+ 
\widetilde K_2 (\nab) \left (   \delta _1^4( h )+\delta _2^4 (h ) \right )  
+
\widetilde K_3 (\nab, \nab) \left (
 \left(\delta _1 \delta _2(h
   )\right) \cdot \left(\delta _1 \delta _2(h
   )\right) 
   \right )
\nonumber \\ && 
+ 
\widetilde K_4 (\nab, \nab) \left (  \delta _1^2(h ) \cdot \delta _2^2(h )+\delta
   _2^2(h ) \cdot \delta _1^2(h )\right )  
+
\widetilde K_5 (\nab, \nab) \left ( \delta _1^2( h )\cdot \delta _1^2(h )+\delta
   _2^2(h ) \cdot \delta _2^2(h ) \right ) 
\nonumber \\ && 
+ 
\widetilde K_6 (\nab, \nab) \left ( 
\delta _1(h )\cdot \delta _1^3(h )+\delta_1(h )\cdot \left(\delta _1 \delta _2^2 (h
   )\right)+\delta _2(h )\cdot \delta _2^3(h
   )+\delta _2(h )\cdot \left(\delta _1^2
   \delta _2(h )\right)
 \right ) 
\nonumber \\ &&
+ 
\widetilde K_7 (\nab, \nab) \left ( 
\delta _1^3(h )\cdot \delta _1(h
   )+\left(\delta _1 \delta _2^2(h
   )\right)\cdot \delta _1(h )+\delta _2^3(h
   )\cdot \delta _2(h )+\left(\delta _1^2
   \delta _2(h )\right)\cdot \delta _2(h )
 \right ) 
\nonumber 
\end{eqnarray}
\begin{eqnarray}
 &&
+ 
\widetilde K_8 (\nab, \nab, \nab) \left ( 
\delta _1(h ) \cdot \delta _1(h )\cdot \delta
   _2^2(h )+\delta _2(h )\cdot \delta
   _2(h )\cdot \delta _1^2(h )
\right ) 
\nonumber \\ && 
+ 
\widetilde K_9 (\nab, \nab, \nab) \left ( 
\delta _1(h )\cdot \delta _2(h
   )\cdot \left(\delta _1 \delta _2(h
   )\right)+\delta _2(h )\cdot \delta _1(h
   )\cdot \left(\delta _1 \delta _2(h )\right)
\right ) 
\nonumber \\ && 
+ 
\widetilde K_{10} (\nab, \nab, \nab) \left ( 
\delta _1(h )\cdot \left(\delta _1 \delta
   _2(h )\right)\cdot \delta _2(h )+\delta
   _2(h )\cdot  \left(\delta _1 \delta _2(h
   )\right) \cdot  \delta _1(h )
\right )
\nonumber \\ && 
+ 
\widetilde K_{11} (\nab, \nab, \nab) \left ( 
\delta _1(h )\cdot \delta _2^2(h ) \cdot \delta
   _1(h )+\delta _2(h )\cdot \delta
   _1^2(h ) \cdot \delta _2(h )
\right ) 
\nonumber \\ && 
+ 
\widetilde K_{12} (\nab, \nab, \nab) \left ( 
\delta _1^2(h ) \cdot \delta _2(h )\cdot \delta
   _2(h )+\delta _2^2( h ) \cdot \delta
   _1(h )\cdot \delta _1(h )
\right ) 
\nonumber 
\end{eqnarray}
\begin{eqnarray}
&&+ 
\widetilde K_{13} (\nab, \nab, \nab) \left ( 
\left(\delta _1 \delta _2(h
   )\right)\cdot \delta _1(h )\cdot \delta _2(h
   )+\left(\delta _1 \delta _2(h
   )\right) \cdot \delta _2(h )\cdot \delta _1(h )
\right ) 
\nonumber \\ && 
+ 
\widetilde K_{14} (\nab, \nab, \nab) \left ( 
\delta _1^2(h ) \cdot \delta _1(h )\cdot 
\delta_1(h )+\delta _2^2(h )\cdot \delta_2(h )\cdot \delta _2(h )
\right )
\nonumber \\ && 
+ 
\widetilde K_{15} (\nab, \nab, \nab) \left ( 
\delta _1(h ) \cdot \delta _1(h )\cdot \delta
   _1^2(h )+\delta _2(h )\cdot \delta
   _2(h )\cdot \delta _2^2(h )
\right )
\nonumber \\ && 
+ 
\widetilde K_{16} (\nab, \nab, \nab) \left ( 
\delta _1(h )\cdot \delta _1^2(h ) \cdot \delta
   _1(h )+\delta _2(h )\cdot \delta
   _2^2(h )\cdot \delta _2(h )
\right ) 
\nonumber 
\end{eqnarray} 
\begin{eqnarray}
&& 
+ 
\widetilde K_{17} (\nab, \nab, \nab, \nab ) \left ( 
\delta _1(h )\cdot \delta _1(h )\cdot \delta
   _2(h )\cdot \delta _2(h )+\delta _2(h
   )\cdot \delta _2(h )\cdot \delta _1(h )\cdot \delta
   _1(h )
\right ) 
\nonumber \\ && 
+ 
\widetilde K_{18} (\nab, \nab, \nab, \nab ) \left ( 
\delta _1(h )\cdot \delta _2(h )\cdot \delta
   _1(h )\cdot \delta _2(h )+\delta _2(h
   )\cdot \delta _1(h )\cdot \delta _2(h )\cdot \delta
   _1(h )
\right ) 
\nonumber \\ && 
+ 
\widetilde K_{19} (\nab, \nab, \nab, \nab ) \left ( 
\delta _1(h )\cdot \delta _2(h )\cdot \delta
   _2(h )\cdot \delta _1(h )+\delta _2(h
   )\cdot \delta _1(h )\cdot \delta _1(h )\cdot \delta
   _2(h )
\right ) 
\nonumber \\ && 
+ 
\widetilde K_{20} (\nab, \nab, \nab, \nab ) \left ( 
\delta _1(h )\cdot \delta _1(h )\cdot \delta
   _1(h )\cdot \delta _1(h )+\delta _2(h
   )\cdot \delta _2(h )\cdot \delta _2(h )\cdot \delta
   _2(h )
\right ) \Big ). \nonumber
\end{eqnarray}

\smallskip

For our second calculation of the above gradient, we use 
Duhamel's formula in a crucial way, which, given a family of 
operators $A_s$, allows one to write: 
\[
\frac{d}{ds} e^{-A_s} = - \int_0^1  e^{-u A_s} \frac{d A_s}{ds} e^{-(1-u)A_s} \,ds. 
\]
This method will give rise to a formula for the desired gradient 
in terms of finite differences, which is described by performing 
explicit calculation in Section \ref{GradientCalculationSec}. However, first, 
we need to prove a series of lemmas, which are given in the following subsections.

\subsection{Gradients of functional calculi with $\nab$} In this subsection, 
we find explicit formulas for certain gradients of functions of the modular 
automorphism acting on elements of the noncommutative torus. For convenience, 
we will use the notation 
\[
\nabep = \nab_{h + \vep a}= \ad_{-h - \vep a} = \nab - \vep \ad_{a}, 
\]
where $\vep$ is a real number, and $h$ and $a$ are selfadjoint elements  
in $\CNT$.

\begin{lemma} \label{GrofFuncCalc1} Let $L(s_1)$ be a smooth function and 
$x_1, x_2$ be elements of the algebra  $C(\NT)$ of the  noncommutative torus. 
Under the trace $\varphi_0$, we can write: 
\begin{eqnarray*} \label{gr1}
e^h  \left ( \dep L(\nabep)(x_1 ) \right )  x_2 
&=& a e^h L^\vep_{1,1}(\nab, \nab) (x_1 \cdot x_2) \\
&&+
a e^h L^\vep_{1,2}(\nab, \nab)(x_2  \cdot x_1),  
\end{eqnarray*}
where 
\begin{eqnarray*}
L^\vep_{1,1}(s_1, s_2) &=& e^{s_1+s_2} \frac{L(-s_2) - L(s_1)}{s_1+s_2}, \\
L^\vep_{1,2}(s_1, s_2) &=& e^{s_1} \frac{L(s_2) - L(-s_1)}{s_1+s_2}.  
\end{eqnarray*}
\end{lemma}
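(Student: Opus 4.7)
\textit{Proof sketch.} The plan is to combine the Fourier representation of $L$ with Duhamel's formula, and then use the cyclicity of $\varphi_0$ together with the KMS-like identity $y e^h = e^h \sigma_i(y)$ (which follows from $\sigma_i(y) = e^{-h} y e^h$) to rewrite everything in the form $\varphi_0(a e^h \cdot (\cdots))$. Writing $L(s_1) = \int_{\R} g(t) e^{-it s_1}\, dt$, one has $L(\nabep)(x_1) = \int_{\R} g(t)\, e^{it(h+\vep a)} x_1 e^{-it(h+\vep a)}\, dt$, and Duhamel's formula applied to the two exponentials gives
\[
\dep L(\nabep)(x_1) = \int_{\R} g(t)\, it \int_0^1 \bigl(\sigma_{st}(a)\sigma_t(x_1) - \sigma_t(x_1)\sigma_{st}(a)\bigr)\, ds\, dt.
\]

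Next I would multiply by $e^h$ on the left and $x_2$ on the right, apply $\varphi_0$, and treat the two resulting pieces separately. For each piece I would cyclically permute to bring $a$ to the front and then conjugate the inner word by $e^{-isth}$ and $e^{isth}$; these factors commute with $e^h$ and convert every other $\sigma$-factor into one with its parameter shifted by $-st$. The key observation is that the position of $e^h$ after this cyclic shift depends on the relative order of $\sigma_{st}(a)$ and $\sigma_t(x_1)$ in the original word. In the first piece $e^h$ ends up at the far right of the cyclically shifted expression, producing $\varphi_0\bigl(a\,\sigma_{(1-s)t}(x_1)\sigma_{-st}(x_2)\,e^h\bigr)$; in the second piece $e^h$ ends up sandwiched between the two modular-translated factors, producing $\varphi_0\bigl(a\,\sigma_{-st}(x_2)\,e^h\,\sigma_{(1-s)t}(x_1)\bigr)$. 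Pushing $e^h$ all the way to the left via $y e^h = e^h \sigma_i(y)$ then requires two $i$-shifts in the first case (yielding $\varphi_0(a e^h \sigma_{i+(1-s)t}(x_1)\sigma_{i-st}(x_2))$) but only one in the second (yielding $\varphi_0(a e^h \sigma_{i-st}(x_2)\sigma_{(1-s)t}(x_1))$).

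Finally, I would extract the symbols $L^\vep_{1,1}$ and $L^\vep_{1,2}$ by comparison with the Fourier representation of the functional calculus in two variables. Using the divided-difference integral
\[
\int_0^1 e^{it[s\alpha + (1-s)\beta]}\, ds = \frac{e^{it\alpha} - e^{it\beta}}{it(\alpha-\beta)},
\]
with $(\alpha,\beta) = (s_2,-s_1)$ in the first case and $(\alpha,\beta) = (s_1,-s_2)$ in the second, together with $\int g(t) e^{-it\tau}\, dt = L(\tau)$, the $s$-integration produces the divided differences $(L(-s_2)-L(s_1))/(s_1+s_2)$ and $(L(s_2)-L(-s_1))/(s_1+s_2)$ respectively, while the two $i$-shifts in the first case contribute the prefactor $e^{s_1+s_2}$ and the single $i$-shift in the second contributes the prefactor $e^{s_1}$; combining these yields exactly the two symbols in the statement.

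The main obstacle is precisely the asymmetric bookkeeping for $e^h$ just described: one cannot slide $e^h$ past $x_1$ or $x_2$ in the second piece without invoking the modular identity, and a careless attempt to force both pieces into the same ``$e^h$ on the far right'' form would spoil the correct prefactor $e^{s_1}$ in $L^\vep_{1,2}$. This structural asymmetry is what ultimately distinguishes the two symbols $L^\vep_{1,1}$ and $L^\vep_{1,2}$.
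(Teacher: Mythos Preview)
Your proof is correct and follows essentially the same route as the paper: Fourier representation of $L$, Duhamel's formula for $\dep e^{-it\nabep}$, then $\sigma$-invariance and cyclicity of $\varphi_0$ to bring $a$ to the front, followed by the divided-difference evaluation of the $s$-integral. The paper records the intermediate result as $a\,L_1(\nabla,\nabla)(x_1 x_2 e^h) + a\,L_2(\nabla,\nabla)(x_2 e^h x_1)$ and leaves the extraction of the prefactors $e^{s_1+s_2}$ and $e^{s_1}$ implicit in the placement of $e^h$; you make this step explicit with the ``$i$-shift'' bookkeeping, which is a nice clarification but not a different argument.
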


\begin{proof}
Writing $L$ as a Fourier transform $L(v) = \int e^{-itv} g(t)\, dt$,  and 
using Duhamel's formula and trace property of $\vphi_0$, we can write under 
the latter:  
\begin{eqnarray*}
&&e^h  \left ( \dep L(\nabep)(x_1 ) \right )  x_2 \\
&=&
e^h \int \int_0^1 it \, \sigma_{ut} \ad_a \sigma_{(1-u)t} (x_1) \, g(t)\, x_2 \, du \, dt \\
&=& e^h \int \int_0^1 it \, a\, \sigma_{t-ut} (x_1)  \sigma_{-ut} (x_2) \, g(t) \, du \, dt \\
&&- e^h \int \int_0^1 it \, \sigma_{t-ut} (x_1)  \, a\, \sigma_{-ut} (x_2) \, g(t) \, du \, dt \\
&=& a  L_1(\nab, \nab) (x_1 x_2 e^h) + a  L_2(\nab, \nab) (x_2  e^h x_1),  
\end{eqnarray*}
where 
\begin{eqnarray*}
 L_1( s_1, s_2) 
&:=& 
\int_0^1 \int it e^{-i(t-ut)s_1+iuts_2} g(t)\, dt \, du
=
\frac{G(-s_2)-G(s_1)}{s_1+s_2},\\
 L_2( s_1, s_2) 
&:=& 
-\int_0^1 \int it e^{iuts_1-i(t-ut)s_2} g(t)\, dt \, du
=
\frac{G(s_2)-G(-s_1)}{s_1+s_2}. 
\end{eqnarray*}

\end{proof}

We need a version of the above lemma that involves functions of higher number of 
variables involved. In the following we treat the two and three variable cases, which 
will suffice for our purposes in Section \ref{GradientCalculationSec}. 

\begin{lemma} \label{GrofFuncCalc2} Let $L(s_1, s_2)$ be a smooth function and 
$x_1, x_2, x_3$ be elements in $C(\NT)$. 
Under the trace $\varphi_0$, we have: 
\begin{eqnarray*}
e^h  \left ( \dep L(\nabep, \nabep)(x_1 \cdot x_2) \right )  x_3 
&=& a e^h L^\vep_{2,1}(\nab, \nab, \nab)(x_1 \cdot x_2 \cdot x_3) \\
&&+
a e^h L^\vep_{2,2}(\nab, \nab, \nab)(x_2 \cdot x_3 \cdot x_1) \\
&&+
a e^h L^\vep_{2,3}(\nab, \nab, \nab)(x_3 \cdot x_1 \cdot x_2),  
\end{eqnarray*}
where 
\begin{eqnarray*}
L^\vep_{2,1}(s_1, s_2, s_3) &:=& e^{s_1+s_2+s_3} \frac{L(-s_2-s_3, s_2) - L(s_1, s_2)}{s_1+s_2+s_3}, \\
L^\vep_{2,2}(s_1, s_2, s_3) &:=& e^{s_1+s_2} \frac{L(s_3, - s_2- s_3) - L(-s_1 - s_2, s_1)}{s_1+s_2+s_3}, \\
L^\vep_{2,3}(s_1, s_2, s_3) &:=& e^{s_1} \frac{L(s_2, s_3) - L(s_2, -s_1-s_2)}{s_1+s_2+s_3}. 
\end{eqnarray*}
\end{lemma}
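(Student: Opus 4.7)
\emph{Proof plan.} The strategy follows the template of Lemma \ref{GrofFuncCalc1}, just carried through with one extra tensor slot. First I would write $L$ as a Fourier integral
\[
L(s_1,s_2)=\iint e^{-i(t_1s_1+t_2s_2)}g(t_1,t_2)\,dt_1\,dt_2,
\]
so that $L(\nabep,\nabep)(x_1\cdot x_2)=\iint \sigma^\varepsilon_{t_1}(x_1)\,\sigma^\varepsilon_{t_2}(x_2)\,g(t_1,t_2)\,dt_1\,dt_2$, where $\sigma^\varepsilon_t=\operatorname{Ad}(e^{it(h+\varepsilon a)})$. Applying $\dep$ and the Leibniz rule splits this into two parts, and Duhamel's formula (exactly as in the proof of Lemma \ref{GrofFuncCalc1}) gives
\[
\dep\sigma^\varepsilon_t(x)=\int_0^1 it\bigl(\sigma_{tu}(a)\,\sigma_t(x)-\sigma_t(x)\,\sigma_{tu}(a)\bigr)du.
\]
So $\dep L(\nabep,\nabep)(x_1\cdot x_2)$ decomposes into four integrals indexed by (which slot the derivative hits) $\times$ (which side of $a$ the inserted $a$ sits on).

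\emph{Reduction to the three cyclic orderings.} For each of the four pieces I would multiply on the left by $e^h$ and on the right by $x_3$, apply $\vphi_0$, and use cyclicity together with the two structural identities $\sigma_t(e^h)=e^h$ (so $\sigma$-shifts leave $e^h$ invariant) and $\sigma_i(y)=e^{-h}ye^h$ (the KMS identity) to (i) bring $a$ to the leftmost position and (ii) then bring $e^h$ immediately after $a$. Each application of $\sigma_i$ needed to perform step (ii) produces an exponential factor $e^{s_j}$ once the Fourier representation is re-inverted. The trace then leaves three cyclic orderings of the $x_j$, namely $x_1 x_2 x_3$, $x_2 x_3 x_1$ and $x_3 x_1 x_2$, and each of the four Duhamel contributions falls naturally into one of these bins.

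\emph{Finite differences.} Within each bin the $u$-integration is of the form $\int_0^1 it\, e^{-i((1-u)\alpha+u\beta)}du=\frac{e^{-i\alpha}-e^{-i\beta}}{\alpha-\beta}$, which after undoing the Fourier transform produces the divided-difference kernel $(L(\cdot)-L(\cdot))/(s_i+s_j+\cdots)$. Explicitly, the bin $(x_1\cdot x_2\cdot x_3)$ comes from the branch where $a$ is inserted to the left of $\sigma^\varepsilon_{t_1}(x_1)$ with no further shift, giving the three $\sigma_i$ passes that produce $e^{s_1+s_2+s_3}$ and the numerator $L(-s_2-s_3,s_2)-L(s_1,s_2)$; the bin $(x_3\cdot x_1\cdot x_2)$ comes from $a$ inserted to the right of $\sigma^\varepsilon_{t_2}(x_2)$ and needs a single $\sigma_i$ pass, producing $e^{s_1}$ and $L(s_2,s_3)-L(s_2,-s_1-s_2)$. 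This verifies the formulas for $L^\vep_{2,1}$ and $L^\vep_{2,3}$.

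\emph{Main obstacle.} The hardest step is the middle bin $L^\vep_{2,2}$, since both Leibniz branches contribute to it: namely the branch where the derivative hits $\sigma^\varepsilon_{t_1}(x_1)$ with $a$ on its right, and the branch where it hits $\sigma^\varepsilon_{t_2}(x_2)$ with $a$ on its left. After the appropriate substitution $u\mapsto 1-u$ in one of them to align the parameter ranges, the two divided-difference kernels must telescope into the single expression $(L(s_3,-s_2-s_3)-L(-s_1-s_2,s_1))/(s_1+s_2+s_3)$, with the two $\sigma_i$ passes supplying the $e^{s_1+s_2}$ prefactor. All of this is purely algebraic once the bookkeeping is set up; nevertheless the sign- and index-tracking is the principal source of error, and I would check the identity by specializing to a monomial $L(s_1,s_2)=e^{-i(t_1s_1+t_2s_2)}$ before integrating against $g$.
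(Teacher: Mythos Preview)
Your proposal is correct and follows essentially the same route as the paper: Fourier representation, Duhamel's formula yielding four pieces, then cyclicity of $\vphi_0$ together with the modular automorphism to bring $a e^h$ to the left. The paper performs the $u$-integration first (obtaining four intermediate kernels $L_1,\dots,L_4$ acting on $a x_1 x_2$, $x_1 a x_2$, $x_1 a x_2$, $x_1 x_2 a$) and only afterwards invokes cyclicity and the modular shift, whereas you interleave the cyclic rearrangement with the integration; the telescoping you flag in the middle bin is exactly the cancellation $L_2+L_3=\frac{L(s_1+s_2,s_3)-L(s_1,s_2+s_3)}{s_2}$ in the paper's notation.
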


\begin{proof}

Writing $L(s_1, s_2) = \int e^{-i t_1 s_1 - i t_2 s_2} g(t_1, t_2) \, dt_1 \, dt_2$, we have
\begin{eqnarray*}
L(\nabep, \nabep)(x_1 x_2) 
= 
\int e^{-it_1 \nabep}(x_1) e^{-it_2 \nabep}(x_2) g(t_1, t_2) \, dt_1 \, dt_2. 
\end{eqnarray*}
We can then use the Duhamel formula to write the following equalities under the trace $\vphi_0$:
\begin{eqnarray*}
&&\dep L(\nabep, \nabep)(x_1 x_2) \\
&=&  \int \left ( \dep e^{-it_1 \nabep}(x_1) \right ) e^{-it_2 \nabep}(x_2) g(t_1, t_2) \, dt_1 \, dt_2 \\
&&+ 
\int e^{-it_1 \nabep}(x_1) \left ( \dep e^{-it_2 \nabep}(x_2) \right ) g(t_1, t_2) \, dt_1 \, dt_2 \\
&=& \int \int_0^1 i t_1 \sigma_{ut_1} \ad_a \sigma_{(1-u)t_1}(x_1) \, \sigma_{t_2}(x_2) g(t_1, t_2) \, du \, dt_1 \,dt_2 \\
&&+ \int \int_0^1 i t_2 \sigma_{t_1}(x_1) \sigma_{ut_2} \ad_a \sigma_{(1-u)t_2}(x_2)  g(t_1, t_2) \, du\, dt_1 \,dt_2 
\end{eqnarray*}
\begin{eqnarray*}
&=& \int_0^1 \int i t_1 \sigma_{ut_1} (a) \sigma_{t_1}(x_1) \sigma_{t_2}(x_2) g(t_1, t_2) \, 
dt_1 \, dt_2\, du 
\\
&&- 
\int_0^1 \int i t_1 \sigma_{t_1} (x_1) \sigma_{ut_1}(a) \sigma_{t_2}(x_2) g(t_1, t_2) \, 
dt_1 \, dt_2\, du 
\\
&&+ 
\int_0^1 \int i t_2 \sigma_{t_1} (x_1) \sigma_{ut_2}(a) \sigma_{t_2}(x_2) g(t_1, t_2) \, 
dt_1 \, dt_2\, du \\
&&- 
\int_0^1 \int i t_2 \sigma_{t_1} (x_1) \sigma_{t_2}(x_2) \sigma_{ut_2}(a) g(t_1, t_2) \, 
dt_1 \, dt_2\, du 
\end{eqnarray*}
\begin{eqnarray*}
&=& L_1(\nab, \nab, \nab) (a x_1 x_2) + L_2(\nab, \nab, \nab) ( x_1 a x_2) \\
&& + 
L_3(\nab, \nab, \nab) ( x_1 a x_2) + L_4(\nab, \nab, \nab) ( x_1 x_2 a),  
\end{eqnarray*}
where 
\begin{eqnarray*}
L_1(s_1, s_2, s_3) &=& \int_0^1 \int it_1 e^{-iut_1 s_1-it_1 s_2 -it_2s_3} g(t_1, t_2) 
\, dt_1 \, dt_2 \, du \\
&=& -\frac{L(s_1+s_2, s_3) - L(s_2, s_3)}{s_1}, 
\end{eqnarray*}
and with similar calculations we have 
\begin{eqnarray*}
L_2(s_1, s_2, s_3) &=& \frac{L(s_1+s_2, s_3) - L(s_1, s_3)}{s_2}, \\
L_3(s_1, s_2, s_3) &=& -\frac{L(s_1, s_2 + s_3) - L(s_1, s_3)}{s_2}, \\
L_4(s_1, s_2, s_3) &=& \frac{L(s_1, s_2 + s_3) - L(s_1, s_2)}{s_3}. 
\end{eqnarray*}
The statement of the lemma then follows if one uses the trace property of 
$\vphi_0$ to move the element $a$ cyclically to the very left, and then uses the modular 
automorphism. 
\end{proof}

We also need a version of the above lemmas for the case when $L$ is a function 
of three variables, which can be proved in a similar way:  

\begin{lemma}  \label{GrofFuncCalc3} Let $L(s_1, s_2, s_3)$ be a smooth function and 
$x_1, x_2, x_3, x_4$ be elements in $C(\NT)$. 
Under the trace $\varphi_0$, we have: 
\begin{eqnarray*}
&& e^h  \left ( \dep L(\nabep, \nabep, \nabep)(x_1 \cdot x_2 \cdot x_3) \right )  x_4 \\
&=& a e^h L^\vep_{3,1}(\nab, \nab, \nab, \nab)(x_1 \cdot x_2 \cdot x_3 \cdot x_4) 
+
a e^h L^\vep_{3,2}(\nab, \nab, \nab, \nab)(x_2 \cdot x_3 \cdot x_4 \cdot x_1) \\
&&+
a e^h L^\vep_{3,3}(\nab, \nab, \nab, \nab)(x_3 \cdot x_4 \cdot x_1 \cdot x_2) 
+
a e^h L^\vep_{3,4}(\nab, \nab, \nab, \nab)(x_4 \cdot x_1 \cdot x_2 \cdot x_3),    
\end{eqnarray*}
where 
\begin{eqnarray*}
L^\vep_{3,1}(s_1, s_2, s_3, s_4) &:=& e^{s_1+s_2+s_3+s_4} \frac{L(-s_2-s_3-s_4, s_2, s_3) - L(s_1, s_2, s_3)}{s_1+s_2+s_3+s_4}, \\
L^\vep_{3,2}(s_1, s_2, s_3, s_4) &:=& e^{s_1+s_2+s_3} \frac{L(s_4, - s_2- s_3 -s_4, s_2) - L(-s_1 - s_2 -s_3, s_1, s_2)}{s_1+s_2+s_3+s_4}, \\
L^\vep_{3,3}(s_1, s_2, s_3, s_4) &:=& e^{s_1+s_2} \frac{L( s_3, s_4, -s_2 -s_3 -s_4) - L(s_3, -s_1-s_2-s_3, s_1)}{s_1+s_2+s_3+s_4}, \\
L^\vep_{3,4}(s_1, s_2, s_3, s_4) &:=& e^{s_1} \frac{L(s_2, s_3, s_4) - L(s_2, s_3, -s_1-s_2-s_3)}{s_1+s_2+s_3+s_4}. 
\end{eqnarray*}
\end{lemma}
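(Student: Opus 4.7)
The argument will closely parallel the proof of Lemma \ref{GrofFuncCalc2}, adapted to one extra variable, so I will organize the steps to mirror that proof. First I write $L$ as a Fourier transform,
\[
L(s_1, s_2, s_3) = \int e^{-i(t_1 s_1 + t_2 s_2 + t_3 s_3)} g(t_1, t_2, t_3)\, dt_1\, dt_2\, dt_3,
\]
which represents $L(\nabep, \nabep, \nabep)(x_1 \cdot x_2 \cdot x_3)$ as a triple integral of the ordered product $e^{-it_1 \nabep}(x_1)\, e^{-it_2 \nabep}(x_2)\, e^{-it_3 \nabep}(x_3)$ against $g$. Applying $\dep$ via the Leibniz rule produces three summands, one for each factor being differentiated.

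Next, each differentiated factor is expanded by Duhamel's identity, $\dep e^{-it_k \nabep}(x_k) = \int_0^1 it_k\, \sigma_{ut_k}\, \ad_a\, \sigma_{(1-u)t_k}(x_k)\, du$, and the commutator $\ad_a$ is split into its two pieces. This yields six terms in total, distinguished by the position, among four possible slots, of the inserted factor $a$ inside the ordered product of the $x_j$'s. I then multiply on the left by $e^h$ and on the right by $x_4$ and use the trace property of $\vphi_0$ to cyclically rotate each of the six expressions so that $a$ appears at the far left. During this rotation the modular flow $\sigma$ contributes exponential factors which will match the prefactors $e^{s_1+s_2+s_3+s_4}$, $e^{s_1+s_2+s_3}$, $e^{s_1+s_2}$, and $e^{s_1}$ attached to $L^\vep_{3,1}, L^\vep_{3,2}, L^\vep_{3,3}, L^\vep_{3,4}$ respectively.

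The remaining work is to perform the inner integrals $\int_0^1 \int it_k\, e^{\pm i(\cdots)}\, g\, du\, dt$ in each of the six terms. Each such integral evaluates to a first order finite difference of $L$ divided by a linear combination of $s_1, s_2, s_3, s_4$, exactly as in the two-variable case. The six finite differences group by $a$-position into four sums: the two extreme positions (with $a$ leftmost or rightmost among the $x_j$) each receive a single term, while the two intermediate positions each receive two contributions whose difference quotients telescope. After this combination the four resulting expressions should be, variable by variable, the functions $L^\vep_{3,1}, L^\vep_{3,2}, L^\vep_{3,3}, L^\vep_{3,4}$ given in the statement.

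The main obstacle, as with the lower-dimensional analogue, is the bookkeeping in the two middle position classes: one must verify that the two contributing terms really do combine into a single clean finite difference of $L$, and that the KMS condition is applied consistently so that the accumulating powers of $e^h$ pick up exactly the exponentials $e^{s_1+\cdots+s_r}$ appearing in each $L^\vep_{3,j}$. Once the four position classes have been correctly organized, the assembly of the final formula is a routine reading off of Fourier integrals, parallel to what is done in the proof of Lemma \ref{GrofFuncCalc2}.
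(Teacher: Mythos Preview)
Your proposal is correct and follows exactly the route the paper intends: the paper itself does not spell out the proof of Lemma~\ref{GrofFuncCalc3} but simply remarks that it ``can be proved in a similar way'' to Lemma~\ref{GrofFuncCalc2}, and what you have written is precisely that similar proof, carried out with the expected Leibniz/Duhamel expansion, the splitting of $\ad_a$, and the cyclic rearrangement under $\vphi_0$.
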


It should be noted that the reason for considering the particular expressions in the 
statement of the above lemmas is that we wish to prepare the ground for making 
the comparison between the gradients calculated in the beginning of this Section 
and in Section \ref{GradientCalculationSec}, and derive the functional relations 
presented in Section \ref{a_4Sec}.

\subsection{Derivatives $\delta_j$ of functional calculi with $\nab$} For our purposes, 
given a function of the modular automorphism acting on an 
element of $C(\NT)$, it is important to have an explicit formula for the  derivative 
$\delta_j,$ $j=1, 2,$  of such a term. In this subsection we work out  the necessary 
explicit formulas. 
The one variable case, which is given in the following lemma, was also proved in 
\cite{ConMosModular}  
and it is given here for the sake of explaining the idea in the simplest case. We present 
the cases when two and three variable functions are involved in Lemma \ref{DerFuncCalc2} and 
Lemma \ref{DerFuncCalc3}. 

\begin{lemma}
\label{DerFuncCalc1}
Let $L(s_1)$ be a smooth function and 
$x_1$ be an element of the algebra  $C(\NT)$ of the  noncommutative torus. We have: 
\begin{eqnarray*}
\delta_j \left (  L( \nab)(x_1) \right ) 
=
L(\nab)( \delta_j(x_1) )  
+ L^{\delta}_{1, 1}(\nab, \nab) (\delta_j(h) \cdot x_1)  
+ L^{\delta}_{1, 2}(\nab, \nab) (  x_1 \cdot \delta_j(h)),  
\end{eqnarray*}
where 
\begin{eqnarray*}
L^{\delta}_{1, 1}(s_1, s_2) &:=& \frac{L(s_2) - L(s_1+s_2)}{s_1}, \\
L^{\delta}_{1, 2}(s_1, s_2) &:=& \frac{L(s_1+s_2) - L(s_1)}{s_2}. 
\end{eqnarray*}

\end{lemma}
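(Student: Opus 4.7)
The plan is to represent $L$ via its Fourier transform, so that $L(\nab)(x_1) = \int \sigma_t(x_1) \, g(t)\, dt$, and then apply $\delta_j$ directly to this integral. Since $\sigma_t(x_1) = e^{ith} x_1 e^{-ith}$ and $\delta_j$ is a derivation commuting with the integration in $t$, the computation reduces to finding $\delta_j(e^{\pm ith})$ and assembling three contributions. The first contribution, $\int \sigma_t(\delta_j(x_1)) g(t)\,dt = L(\nab)(\delta_j(x_1))$, is immediate.

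For the remaining two contributions, I would invoke the Duhamel formula
\[
\delta_j(e^{ith}) = it \int_0^1 e^{iuth}\, \delta_j(h)\, e^{i(1-u)th}\, du,
\]
and then rewrite the products so that the modular automorphism $\sigma_{ut}$ appears in front of $\delta_j(h)$. A direct manipulation gives
\[
\delta_j\bigl(e^{ith}\bigr) x_1 e^{-ith} = it \int_0^1 \sigma_{ut}(\delta_j(h)) \cdot \sigma_t(x_1)\, du,
\]
and symmetrically
\[
e^{ith} x_1 \, \delta_j\bigl(e^{-ith}\bigr) = -\,it \int_0^1 \sigma_t(x_1) \cdot \sigma_{ut}(\delta_j(h))\, du.
\]
Thus the gradient of $\sigma_t(x_1)$ naturally splits into a left-multiplication piece by $\delta_j(h)$ and a right-multiplication piece.

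It remains to integrate against $g(t)\, dt$ and identify the resulting scalar kernels with finite differences of $L$. For the left piece, using $s_1$ to record the action of $\nab$ on $\delta_j(h)$ and $s_2$ for its action on $x_1$, the kernel is
\[
\int g(t)\, it\, e^{-its_2} \int_0^1 e^{-iuts_1}\, du\, dt = \int g(t)\, \frac{e^{-its_2} - e^{-it(s_1+s_2)}}{s_1}\, dt = \frac{L(s_2)-L(s_1+s_2)}{s_1},
\]
recovering $L^\delta_{1,1}(s_1,s_2)$. The right piece yields $\frac{L(s_1+s_2)-L(s_1)}{s_2}$ by an entirely analogous calculation, which is $L^\delta_{1,2}(s_1,s_2)$.

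The argument is essentially routine once one commits to the Fourier-transform representation together with Duhamel; there is no real obstacle, only the small bookkeeping care needed to keep track of the order of $\delta_j(h)$ and $x_1$ and of the sign coming from differentiating $e^{-ith}$. The same template, scaled up to functions of two and three variables with several interleaved Duhamel expansions, underlies Lemma \ref{GrofFuncCalc2} and Lemma \ref{GrofFuncCalc3}, so this one-variable computation also serves as the base case clarifying why the resulting kernels are always finite differences of the original symbol $L$.
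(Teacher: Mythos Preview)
Your proof is correct and follows essentially the same approach as the paper: both represent $L$ via its Fourier transform and use Duhamel's formula to produce the two finite-difference kernels. The only cosmetic difference is that the paper packages the Duhamel step as the operator commutator $[\delta_j,\sigma_t]=it\int_0^1\sigma_{ut}\,\ad_{\delta_j(h)}\,\sigma_{(1-u)t}\,du$ and then expands $\ad_{\delta_j(h)}$, whereas you write $\sigma_t(x_1)=e^{ith}x_1e^{-ith}$ explicitly and apply Leibniz plus Duhamel to each exponential factor; these are the same computation. (One small slip in your closing remark: the higher-variable analogues are Lemmas~\ref{DerFuncCalc2} and~\ref{DerFuncCalc3}, not Lemmas~\ref{GrofFuncCalc2} and~\ref{GrofFuncCalc3}.)
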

\begin{proof}
One can start by writing 
\begin{eqnarray*}
\delta_j \sigma_t(x_1) - \sigma_t \del_j(x_1) &=& 
- \int_0^1 e^{-iut \nab} \,[\delta_j, it \nab] \, e^{-i(1-u)t\nab}(x_1) \, du  \\
&=&
it \int_0^1 \sigma_{ut} \ad_{\delta_j(h)} \sigma_{(1-u)t}(x_1) \, du. 
\end{eqnarray*}
Therefore, after writing $L(s_1) = \int e^{-it s_1} g(t)\, dt$, one gets
\begin{eqnarray*}
\del_j (L(\nab)(x_1)) - L(\nab)(\del_j(x_1)) &=& 
\int_0^1 \int it \sigma_{ut}(\delta_j(h)) \, \sigma_t(x_1) \, g(t) \, dt \, du \\
&&- \int_0^1 \int it \, \sigma_{t}(x_1) \, \sigma_{ut}(\del_j(h)) \, g(t) \, dt \, du \\
&=& L^{\delta}_{1, 1}(\nab, \nab) (\delta_j(h) \cdot x_1) 
+ L^{\delta}_{1, 2}(\nab, \nab) (  x_1 \cdot \delta_j(h)),
\end{eqnarray*}
where 
\begin{eqnarray*}
L^{\delta}_{1, 1}(s_1, s_2) &=& \int_0^1 \int it e^{-iuts_1-itS_2} g(t) \, dt \, du = 
 \frac{L(s_2) - L(s_1+s_2)}{s_1}, \\
L^{\delta}_{1, 2}(s_1, s_2) &=& - \int_0^1 \int it e^{-its_1-iutS_2} g(t) \, dt \, du = 
 \frac{L(s_1+s_2) - L(s_1)}{s_2}.
\end{eqnarray*}
\end{proof}

Now we prove a generalization of the above lemma for the case when the function $L$ 
depends smoothly on two variables. 

\begin{lemma}\label{DerFuncCalc2}
Let $L(s_1, s_2)$ be a smooth function and 
$x_1, x_2$ be elements of  $C(\NT)$. We have: 
\begin{eqnarray*}
&&\delta_j \left (  L(\nab, \nab)(x_1 \cdot x_2) \right ) \\
&=& 
L(\nab, \nab)( \delta_j(x_1) \cdot x_2) 
+ L(\nab, \nab)( x_1 \cdot \delta_j(x_2)) 
+ L^{\delta}_{2, 1}(\nab, \nab, \nab) (\delta_j(h) \cdot x_1 \cdot x_2) \\
&& 
+ L^{\delta}_{2, 2}(\nab, \nab, \nab) (  x_1 \cdot \delta_j(h) \cdot x_2)  
+ L^{\delta}_{2, 3}(\nab, \nab, \nab) (  x_1  \cdot x_2 \cdot \delta_j(h)),  
\end{eqnarray*}
where 
\begin{eqnarray*}
L^{\delta}_{2, 1}(s_1, s_2, s_3) &:=& \frac{L(s_2, s_3) - L(s_1+s_2, s_3)}{s_1}, \\
L^{\delta}_{2, 2}(s_1, s_2, s_3) &:=& \frac{L(s_1+s_2, s_3) - L(s_1, s_2+s_3)}{s_2}, \\
L^{\delta}_{2, 3}(s_1, s_2, s_3) &:=& \frac{L(s_1, s_2+s_3) - L(s_1, s_2)}{s_3}. 
\end{eqnarray*}

\end{lemma}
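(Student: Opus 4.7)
The plan is to mimic the proof of Lemma \ref{DerFuncCalc1} while carefully tracking the extra insertions that come from the two-factor structure. First I would write $L$ as an inverse Fourier transform, $L(s_1,s_2)=\int e^{-it_1s_1-it_2s_2}g(t_1,t_2)\,dt_1\,dt_2$, so that
\[
L(\nab,\nab)(x_1\cdot x_2)=\int \sigma_{t_1}(x_1)\,\sigma_{t_2}(x_2)\,g(t_1,t_2)\,dt_1\,dt_2.
\]
Then I would apply $\delta_j$ under the integral using the Leibniz rule, which produces two terms, one carrying $\delta_j(\sigma_{t_1}(x_1))$ and one carrying $\delta_j(\sigma_{t_2}(x_2))$.

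Next I would use the commutator identity already established in the proof of Lemma \ref{DerFuncCalc1},
\[
\delta_j\sigma_t(x)=\sigma_t\delta_j(x)+it\int_0^1\big[\sigma_{ut}(\delta_j(h)),\sigma_t(x)\big]\,du,
\]
in each of these two terms. The ``main'' pieces $\sigma_{t_1}\delta_j(x_1)$ and $\sigma_{t_2}\delta_j(x_2)$ reassemble directly into $L(\nab,\nab)(\delta_j(x_1)\cdot x_2)$ and $L(\nab,\nab)(x_1\cdot \delta_j(x_2))$, yielding the first two summands of the claimed formula. The commutator pieces produce four correction terms of the form
\begin{align*}
&it_1\int_0^1\sigma_{ut_1}(\delta_j(h))\sigma_{t_1}(x_1)\sigma_{t_2}(x_2)\,du,\qquad
-it_1\int_0^1\sigma_{t_1}(x_1)\sigma_{ut_1}(\delta_j(h))\sigma_{t_2}(x_2)\,du,\\
&it_2\int_0^1\sigma_{t_1}(x_1)\sigma_{ut_2}(\delta_j(h))\sigma_{t_2}(x_2)\,du,\qquad
-it_2\int_0^1\sigma_{t_1}(x_1)\sigma_{t_2}(x_2)\sigma_{ut_2}(\delta_j(h))\,du.
\end{align*}

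I would then perform the $u$-integration using the elementary identity $\int_0^1 it\,e^{-iuts}\,du=(1-e^{-its})/s$, which converts each $u$-integral into a finite difference of $L$. Reading off the resulting coefficients after matching Fourier variables: the first correction gives $(L(s_2,s_3)-L(s_1+s_2,s_3))/s_1$ attached to $\delta_j(h)\cdot x_1\cdot x_2$, which is exactly $L^{\delta}_{2,1}$; the last gives $(L(s_1,s_2+s_3)-L(s_1,s_2))/s_3$ attached to $x_1\cdot x_2\cdot \delta_j(h)$, which is $L^{\delta}_{2,3}$. The step I expect to be the main bookkeeping point is the middle one: the second and third corrections both put $\delta_j(h)$ between $x_1$ and $x_2$, and must be added, producing
\[
\frac{L(s_1+s_2,s_3)-L(s_1,s_3)}{s_2}+\frac{L(s_1,s_3)-L(s_1,s_2+s_3)}{s_2}=\frac{L(s_1+s_2,s_3)-L(s_1,s_2+s_3)}{s_2},
\]
which is the claimed $L^{\delta}_{2,2}$; the telescoping of the intermediate $L(s_1,s_3)$ terms is the key algebraic observation. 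Once these three coefficients are identified, substitution back yields the stated formula.
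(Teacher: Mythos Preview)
Your proposal is correct and follows essentially the same approach as the paper's proof: write $L$ as a Fourier transform, apply the Leibniz rule together with the commutator identity $[\delta_j,\sigma_t](x)=it\int_0^1[\sigma_{ut}(\delta_j(h)),\sigma_t(x)]\,du$ from Lemma~\ref{DerFuncCalc1}, expand into the same four correction integrals, perform the $u$-integration to obtain the finite differences, and observe the telescoping that produces $L^{\delta}_{2,2}$. The paper presents exactly these steps with the same four integrals and the same cancellation of the intermediate $L(s_1,s_3)$ term.
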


\begin{proof}
We start by considering the identity 
\begin{eqnarray*}
&&\delta_j \left ( \sigma_{t_1}(x_1) \sigma_{t_2}(x_2) \right ) 
- \sigma_{t_1}(\delta_j(x_1)) \sigma_{t_2}(x_2) 
- \sigma_{t_1}(x_1) \sigma_{t_2}(\delta_j(x_2)) \\
&=& [\del_j, \sigma_{t_1}](x_1)\, \sigma_{t_2}(x_2) + 
\sigma_{t_1}(x_1)\, [\delta_j, \sigma_{t_2}](x_2). 
\end{eqnarray*}
Therefore, by writing $L(s_1, s_2) = \int e^{-i t_1 s_1 - i t_2 s_2} g(t_1, t_2) \, dt_1 \, dt_2$, 
we have 
\begin{eqnarray*}
&&\delta_j L(\nab, \nab)(x_1 x_2) - L(\nab, \nab)(\delta_j(x_1) x_2) - L(\nab, \nab)(x_1 \delta_j(x_2) ) \\
&=& \int it_1 \int_0^1 \sigma_{ut_1} \ad_{\delta_j(h)} \sigma_{(1-u)t_1}(x_1) \, du \, \sigma_{t_2} (x_2) \, g(t_1, t_2) \, dt_1 \, dt_2 \\
&&+ \int \sigma_{t_1}(x_1) it_2 \int_0^1 \sigma_{ut_2} \ad_{\delta_j(h)}  \sigma_{(1-u)t_2}(x_2) \, du \, g(t_1, t_2) \, dt_1 \, dt_2 
\end{eqnarray*}
\begin{eqnarray*}
&=&\int_0^1 \int i t_1 \sigma_{ut_1} (\delta_j(h)) \sigma_{t_1}(x_1)  \sigma_{t_2}(x_2) g(t_1, t_2) \, dt_1 \, dt_2 \, du \\
&& - \int_0^1 \int i t_1 \sigma_{t_1} (x_1) \sigma_{ut_1}(\delta_j(h))  \sigma_{t_2}(x_2) g(t_1, t_2) \, dt_1 \, dt_2 \, du \\
&&+\int_0^1 \int i t_2 \sigma_{t_1} (x_1) \sigma_{ut_2}(\delta_j(h))  \sigma_{t_2}(x_2) g(t_1, t_2) \, dt_1 \, dt_2 \, du \\
&&- \int_0^1 \int i t_2 \sigma_{t_1} (x_1) \sigma_{t_2}(x_2)  \sigma_{ut_2}(\delta_j(h)) g(t_1, t_2) \, dt_1 \, dt_2 \, du
\end{eqnarray*}
\begin{eqnarray*}
&=&
 L^{\delta}_{2, 1}(\nab, \nab, \nab) (\delta_j(h) \cdot x_1 \cdot x_2) 
+ L^{\delta}_{2, 2}(\nab, \nab, \nab) (  x_1 \cdot \delta_j(h) \cdot x_2)  \\
&&+ L^{\delta}_{2, 3}(\nab, \nab, \nab) (  x_1  \cdot x_2 \cdot \delta_j(h)),  
\end{eqnarray*}
where 
\begin{eqnarray*}
L^{\delta}_{2, 1}(s_1, s_2, s_3) &=& 
\int_0^1 \int i t_1 e^{-iut_1s_1 -it_1s_2 -it_2s_3} g(t_1, t_2) \,dt_1 \, dt_2 \, du \\
&=&  \frac{L(s_2, s_3) - L(s_1+s_2, s_3)}{s_1}, 
\end{eqnarray*}
\begin{eqnarray*}
L^{\delta}_{2, 2}(s_1, s_2, s_3) &=& 
-\int_0^1 \int i t_1 e^{-it_1s_1 -iut_1s_2 -it_2s_3} g(t_1, t_2) \,dt_1 \, dt_2 \, du \\
&&+\int_0^1 \int i t_2 e^{-it_1s_1 -iut_2s_2 -it_2s_3} g(t_1, t_2) \,dt_1 \, dt_2 \, du \\
&=& \frac{L(s_1+s_2, s_3) - L(s_1, s_3)}{s_2} -  \frac{L(s_1,s_2+ s_3) - L(s_1, s_3)}{s_2} \\
&=&  \frac{L(s_1+s_2, s_3) - L(s_1, s_2+s_3)}{s_2},
\end{eqnarray*}
\begin{eqnarray*}
L^{\delta}_{2, 3}(s_1, s_2, s_3) &=& 
-\int_0^1 \int i t_2 e^{-it_1s_1 -it_2s_2 -iut_2s_3} g(t_1, t_2) \,dt_1 \, dt_2 \, du \\
&=&  \frac{L(s_1, s_2+ s_3) - L(s_1, s_2)}{s_3}.
\end{eqnarray*}

\end{proof}

Finally we treat the case when a smooth three variable function is involved, and find an 
explicit formula for the derivative $\delta_j,$ $j=1, 2$, of a general associated element 
of the noncommutative torus. 

 \begin{lemma} \label{DerFuncCalc3}
Let $L(s_1, s_2, s_3)$ be a smooth function and 
$x_1, x_2, x_3$ be elements of  $C(\NT)$. We have: 
\begin{eqnarray*}
&&\delta_j \left (  L(\nab, \nab, \nab )(x_1 \cdot x_2 \cdot x_3) \right ) \\
&=& 
L(\nab, \nab, \nab)( \delta_j(x_1) \cdot x_2 \cdot x_3) 
+ L(\nab, \nab, \nab)( x_1 \cdot \delta_j(x_2) \cdot x_3) \\
&&
+ L(\nab, \nab, \nab)( x_1 \cdot x_2 \cdot \delta_j(x_3))
+ L^{\delta}_{3, 1}(\nab, \nab, \nab, \nab) (\delta_j(h) \cdot x_1 \cdot x_2 \cdot x_3) \\
&& 
+ L^{\delta}_{3, 2}(\nab, \nab, \nab, \nab) (  x_1 \cdot \delta_j(h) \cdot x_2 \cdot x_3) 
+ L^{\delta}_{3, 3}(\nab, \nab, \nab, \nab) (  x_1  \cdot x_2 \cdot \delta_j(h) \cdot x_3) \\
&& 
+ L^{\delta}_{3, 4}(\nab, \nab, \nab, \nab) (  x_1  \cdot x_2 \cdot x_3 \cdot \delta_j(h) ),  
\end{eqnarray*}
where 
\begin{eqnarray*}
L^{\delta}_{3, 1}(s_1, s_2, s_3, s_4) &:=& \frac{L(s_2, s_3, s_4) - L(s_1+s_2, s_3, s_4)}{s_1}, 
\\
L^{\delta}_{3, 2}(s_1, s_2, s_3, s_4) &:=& \frac{L(s_1+s_2, s_3, s_4) - L(s_1, s_2+s_3, s_4)}{s_2}, \\
L^{\delta}_{3, 3}(s_1, s_2, s_3, s_4) &:=& \frac{L(s_1, s_2+s_3, s_4) - L(s_1, s_2, s_3+s_4)}{s_3}, \\
L^{\delta}_{3, 4}(s_1, s_2, s_3, s_4) &:=& \frac{L(s_1, s_2, s_3+s_4) - L(s_1, s_2, s_3)}{s_4}. 
\end{eqnarray*}

\end{lemma}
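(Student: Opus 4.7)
The plan is to imitate the proofs of Lemma \ref{DerFuncCalc1} and Lemma \ref{DerFuncCalc2}, with the main new bookkeeping coming from the fact that $\delta_j(h)$ can now occupy four different positions in the resulting product. I would begin by writing $L$ as a Fourier transform
\[
L(s_1,s_2,s_3) = \int_{\R^3} e^{-i(t_1 s_1 + t_2 s_2 + t_3 s_3)} g(t_1,t_2,t_3)\, dt_1\, dt_2\, dt_3,
\]
so that
\[
L(\nab,\nab,\nab)(x_1 \cdot x_2 \cdot x_3) = \int \sigma_{t_1}(x_1)\, \sigma_{t_2}(x_2)\, \sigma_{t_3}(x_3)\, g\, dt_1\, dt_2\, dt_3.
\]
Applying $\delta_j$ and using the Leibniz rule on the triple product gives three summands, one for each factor.

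In each summand I would split $\delta_j \sigma_{t_i}(x_i) = \sigma_{t_i}(\delta_j(x_i)) + [\delta_j,\sigma_{t_i}](x_i)$, with the commutator expressed via the standard Duhamel formula used in Lemma \ref{DerFuncCalc1},
\[
[\delta_j,\sigma_{t_i}](x_i) = i t_i \int_0^1 \sigma_{u t_i}\, \ad_{\delta_j(h)}\, \sigma_{(1-u) t_i}(x_i)\, du.
\]
The three ``diagonal'' pieces recombine into the expected three terms $L(\nab,\nab,\nab)(\delta_j(x_1)\cdot x_2\cdot x_3)$, $L(\nab,\nab,\nab)(x_1\cdot \delta_j(x_2)\cdot x_3)$ and $L(\nab,\nab,\nab)(x_1\cdot x_2\cdot \delta_j(x_3))$. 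The six commutator contributions each produce a product with $\delta_j(h)$ inserted once; I would sort them by the position of $\delta_j(h)$ among the four ``slots'' in $\delta_j(h)\cdot x_1 \cdot x_2 \cdot x_3$, $x_1\cdot \delta_j(h)\cdot x_2 \cdot x_3$, etc.

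The key observation is that the outermost two slots receive only one commutator each (from expanding at position~$1$ on the left, and at position~$3$ on the right), whereas each of the two interior slots receives exactly two commutator contributions whose sign is opposite, yielding a telescoping that collapses onto a single finite difference. Carrying out the Fourier integration in each of the four groups, with $s_1,\dots,s_4$ assigned to the four consecutive factors, reduces to the one-dimensional identities
\[
\int_0^1 \int_{\R} it\, e^{-i(u t\,a + t\,b)} g(t)\, dt\, du = \frac{G(b) - G(a+b)}{a},
\]
where $G$ is the appropriate specialization of $L$ after integrating out the other two Fourier variables. This yields successively the four formulas for $L^{\delta}_{3,1},\ldots,L^{\delta}_{3,4}$ in exactly the form stated.

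The only technical obstacle I foresee is the bookkeeping of which of the two commutator contributions reaches each of the interior slots, and of the signs coming from the two ``sides'' of each commutator; but the calculation is directly parallel to the one performed in Lemma \ref{DerFuncCalc2} (where the interior slot labeled $L^{\delta}_{2,2}$ already exhibits the same cancellation pattern), so no new analytic difficulty is expected. The convergence and the legality of interchanging integrals and $\delta_j$ follow from $g$ being Schwartz and from $\delta_j$ being a bounded derivation on $\CNT$, exactly as in the earlier lemmas of this section.
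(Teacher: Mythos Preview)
Your proposal is correct and follows essentially the same approach as the paper: the paper's proof simply records the identity
\[
\delta_j\bigl(\sigma_{t_1}(x_1)\sigma_{t_2}(x_2)\sigma_{t_3}(x_3)\bigr) - \sum_{k=1}^3 \sigma_{t_1}(x_1)\cdots\sigma_{t_k}(\delta_j(x_k))\cdots\sigma_{t_3}(x_3) = \sum_{k=1}^3 \sigma_{t_1}(x_1)\cdots[\delta_j,\sigma_{t_k}](x_k)\cdots\sigma_{t_3}(x_3)
\]
and instructs the reader to continue as in Lemma~\ref{DerFuncCalc2}, which is exactly the Duhamel-expansion-and-regrouping argument you outline. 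One small wording correction: $\delta_j$ is not bounded on $C(\NT)$ but is a well-defined derivation on the smooth subalgebra $\CNT$, which is all that is needed here.
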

\begin{proof}
One can prove this lemma by starting with the following identity and by 
continuing as in the proof of Lemma \ref{DerFuncCalc2}: 
\begin{eqnarray*}
&&\delta_j \left ( \sigma_{t_1}(x_1) \sigma_{t_2}(x_2) \sigma_{t_3}(x_3)\right ) 
- \sigma_{t_1}(\delta_j(x_1)) \sigma_{t_2}(x_2) \sigma_{t_3}(x_3) \\
&&- \sigma_{t_1}(x_1) \sigma_{t_2}(\delta_j(x_2)) \sigma_{t_3}(x_3) 
- \sigma_{t_1}(x_1) \sigma_{t_2}(x_2) \sigma_{t_3}(\del_j(x_3))  \\
&=& [\del_j, \sigma_{t_1}](x_1)\, \sigma_{t_2}(x_2)\sigma_{t_3}(x_3)  + 
\sigma_{t_1}(x_1)\, [\delta_j, \sigma_{t_2}](x_2) \sigma_{t_3}(x_3) \\
&&+ \sigma_{t_1}(x_1)\, \sigma_{t_2}(x_2) [\delta_j, \sigma_{t_3}](x_3) . 
\end{eqnarray*}
\end{proof}

It is interesting that in the explicit formulas derived in this subsection, certain finite 
differences of an original function determine the final formulas.

\section{Calculation of gradient of $h \mapsto \vphi_0(a_4)$ in terms of finite differences}
\label{GradientCalculationSec}

In the beginning of Section \ref{FuncRelationsProofSec}, we explained how, for selfadjoint 
elements $h, a \in \CNT$, the following 
gradient can be calculated by using an important 
identity proved in \cite{ConMosModular}: 
\[
 \dep \vphi_0(a_4(h+ \vep a)).
\]
In this section, we demonstrate a second way of calculating the above gradient, 
which is based on using the Duhamel formula and the lemmas proved in the 
subsections of Section 
\ref{FuncRelationsProofSec}. This method gives rise to expressions that involve 
finite differences, and by comparing the final outcome with the first formula 
\eqref{Gradientofa4} derived for 
the above gradient, we find the 
functional relations presented in Theorem \ref{FuncRelationsThm} in 
Section \ref{a_4Sec}. Then, we confirm 
the accuracy of the lengthy formulas presented in Section \ref{ExplicitFormulasSec} 
and in the appendices for the functions $K_1, \dots,  K_{20}$ 
appearing in formula \eqref{a_4expression} for the $a_4$, by 
checking that they satisfy the expected functional relations. 
Before starting the second method of calculating the desired gradient, 
we need some lemmas.

\subsection{Cyclic permutations and functional caculi with $\nab$}
The first type of lemmas that we will need concern exploiting the trace 
and invariance property of $\vphi_0$ to prepare the outcome of 
the second calculation of the mentioned gradient for comparison 
with the first formula given by \eqref{Gradientofa4}.

\begin{lemma} \label{LowerwithTrace1} 
Let $K$ be a smooth function of $n$ variables and $x_1, \dots, x_n$ belong to the noncommutative 
torus $C(\NT)$. If $n=1$, 
under the trace $\vphi_0$ we have 
\[
 K(\nab) (x_1) = K(0) \, x_1, 
\]
and if $n>1$, under $\vphi_0$ we have
\[
K(\nab, \dots, \nab)(x_1 \cdots x_n ) =  L(\nab, \dots, \nab) (x_1 \cdots x_{n-1}) \, x_n,
\]
where 
\[
L(s_1, \dots, s_{n-1}) := K(s_1, \dots, s_{n-1}, -s_1 - \cdots - s_{n-1}). 
\]
\end{lemma}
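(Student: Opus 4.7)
\smallskip

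The plan is to rely on two ingredients: the integral representation of the functional calculus with $\nab$ introduced at the end of Section~\ref{PrelSec}, and the trace property (cyclicity) of $\vphi_0$ combined with the fact that $\vphi_0$ is invariant under $\sigma_t$, since $\sigma_t(x) = e^{ith} x e^{-ith}$ is inner and $\vphi_0$ is tracial.

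First, for the case $n=1$, write $K$ as a Fourier transform $K(s_1) = \int e^{-its_1} g(t)\,dt$ so that $K(\nab)(x_1) = \int \sigma_t(x_1) \, g(t)\,dt$. Since $\vphi_0 \circ \sigma_t = \vphi_0$ by cyclicity, one obtains $\vphi_0(K(\nab)(x_1)) = \vphi_0(x_1) \int g(t)\,dt = K(0)\, \vphi_0(x_1)$, which is exactly the first identity since $K(0) = \int g(t)\,dt$.

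For $n>1$, write
\[
K(\nab, \dots, \nab)(x_1\cdots x_n) = \int \sigma_{t_1}(x_1)\cdots \sigma_{t_n}(x_n)\, g(t_1,\dots,t_n)\, dt_1\cdots dt_n,
\]
where $g$ is the Fourier kernel of $K$. The key step is to use cyclicity of $\vphi_0$ to move the factor $\sigma_{t_n}(x_n) = e^{it_nh} x_n e^{-it_nh}$: after a cyclic rotation, the two exponentials $e^{\pm it_nh}$ become an outer conjugation, so that under $\vphi_0$ one has
\[
\sigma_{t_1}(x_1)\cdots \sigma_{t_n}(x_n) \;\equiv\; \sigma_{t_1-t_n}(x_1)\cdots \sigma_{t_{n-1}-t_n}(x_{n-1})\cdot x_n.
\]
Then I would perform the change of variables $u_j := t_j - t_n$ for $j<n$ and $u_n := t_n$ (Jacobian one), so the integrand becomes $\sigma_{u_1}(x_1)\cdots\sigma_{u_{n-1}}(x_{n-1})\cdot x_n$ multiplied by $g(u_1+u_n,\dots,u_{n-1}+u_n,u_n)$, and the variable $u_n$ appears only in the kernel.

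It remains to identify the partially integrated kernel
\[
g_L(u_1,\dots,u_{n-1}) := \int g(u_1+u_n,\dots,u_{n-1}+u_n,u_n)\, du_n
\]
as the Fourier kernel of $L(s_1,\dots,s_{n-1}) := K(s_1,\dots,s_{n-1},-s_1-\cdots-s_{n-1})$. This is a direct calculation: substituting into the Fourier formula for $K$ and setting the last variable equal to $-s_1-\cdots-s_{n-1}$ produces precisely the exponential $e^{-i\sum_j(t_j-t_n)s_j}$, which after the same change of variables yields $L(s_1,\dots,s_{n-1}) = \int e^{-i\sum_j u_j s_j} g_L(u_1,\dots,u_{n-1})\,du_1\cdots du_{n-1}$. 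With this identification the integral on the right becomes $L(\nab,\dots,\nab)(x_1\cdots x_{n-1})\cdot x_n$, proving the claim. There is no real obstacle here; the only thing to be careful about is the bookkeeping of the change of variables and the verification that the Fourier kernel of the specialization $K(s_1,\dots,s_{n-1},-s_1-\cdots-s_{n-1})$ is indeed the partial integral of $g$ against the last time variable.
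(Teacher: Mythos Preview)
Your proof is correct and follows essentially the same approach as the paper: write $K$ via its Fourier kernel, use the trace property of $\vphi_0$ to replace each $\sigma_{t_j}(x_j)$ by $\sigma_{t_j-t_n}(x_j)$ (with $x_n$ left bare), and then identify the resulting kernel with that of $L$. The paper's version is slightly terser---it skips the explicit change of variables and directly reads off $L(s_1,\dots,s_{n-1}) = \int e^{-i\sum_j(t_j-t_n)s_j} g(t_1,\dots,t_n)\,dt = K(s_1,\dots,s_{n-1},-s_1-\cdots-s_{n-1})$---but the content is identical.
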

\begin{proof}
Writing  $K(s_1,  \dots, s_n) = \int e^{-it_1 s_1 -  \cdots- it_ns_n } g(t_1, \dots, t_n) \, dt_1  \cdots \, dt_n,$ we have 
\[ 
K(\nab, \dots, \nab)(x_1 \cdots x_n) = 
\int \sigma_{t_1}(x_1) \cdots \sigma_{t_n}(x_n) g(t_1, \dots, t_n) \, dt_1 \cdots \, dt_n.  
\]
Under the trace $\vphi_0$, the latter is equal to
\begin{eqnarray*}
&&\int \sigma_{t_1-t_n}(x_1) \cdots \sigma_{t_{n-1}-t_n}(x_{n-1}) g(t_1, \dots, t_n) \, dt_1 \cdots dt_n \, \cdot x_n \\
&=&  L(\nab, \dots, \nab) (x_1 \cdots x_{n-1})\,  x_n, 
\end{eqnarray*}
where
\begin{eqnarray*}
L(s_1, \dots, s_{n-1}) &=& \int e^{-i(t_1 - t_n) s_1- \cdots -i(t_{n-1}-t_n)s_n} \, g(t_1, \dots, t_n)\,dt_1  \cdots \, dt_n\\ 
&=& K(s_1, \dots, s_{n-1}, -s_1 - \cdots - s_{n-1}). 
\end{eqnarray*}
\end{proof}

 The following lemma  allows us to cyclically permute elements 
of $C(\NT)$ by varying appropriately the function of the modular automorphism that is 
involved in each calculation. 

\begin{lemma}\label{ShiftwithTrace}
Let $L$ be a smooth function of $n-1$ variables and $x_1, \dots, x_n$ belong 
to $C(\NT)$. For any $1 \leq j \leq n-1$, under the trace $\vphi_0$ we have
\[
L(\nab, \dots, \nab)(x_1 \cdots x_{n-1} )\, x_n =  x_j\, L^c_j (\nab, \dots, \nab) (x_{j+1} \cdots x_{n} \cdot x_1 \cdots x_{j-1}),  
\]
where 
\[
L^c_j(s_1, \dots, s_{n-1}) := L(s_{j+2}, \dots, s_{n-1} ,-s_1- s_2 - \cdots -s_{n-1}, s_1, s_2, \dots, s_j). 
\]
\end{lemma}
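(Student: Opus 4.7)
The plan is to reduce this to bookkeeping via the Fourier representation of $L$, exactly as in the proof of Lemma \ref{LowerwithTrace1}. First I would write
\[
L(u_1, \ldots, u_{n-1}) = \int_{\R^{n-1}} e^{-i(t_1 u_1 + \cdots + t_{n-1} u_{n-1})} g(t_1, \ldots, t_{n-1}) \, dt_1 \cdots dt_{n-1},
\]
so that the left hand side becomes
\[
L(\nab, \ldots, \nab)(x_1 \cdots x_{n-1}) \cdot x_n = \int \sigma_{t_1}(x_1) \, \sigma_{t_2}(x_2) \cdots \sigma_{t_{n-1}}(x_{n-1}) \cdot x_n \; g(t)\, dt.
\]
The two properties of $\vphi_0$ I would then use are the trace property, which lets me cyclically permute operators inside the integrand, and the invariance $\vphi_0 \circ \sigma_{t} = \vphi_0$ for every $t$, which lets me apply an overall $\sigma_{-t_j}$ to the integrand without changing the trace.

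Under $\vphi_0$, I would first cycle the block $\sigma_{t_j}(x_j) \cdots \sigma_{t_{n-1}}(x_{n-1})\, x_n$ to the far left, obtaining
\[
\vphi_0\Big(\sigma_{t_j}(x_j)\, \sigma_{t_{j+1}}(x_{j+1}) \cdots \sigma_{t_{n-1}}(x_{n-1})\, x_n\, \sigma_{t_1}(x_1) \cdots \sigma_{t_{j-1}}(x_{j-1})\Big),
\]
and then apply $\sigma_{-t_j}$ throughout, which strips the factor $\sigma_{t_j}$ from $x_j$ and shifts every other phase by $-t_j$ (with $x_n$ effectively acquiring phase $-t_j$, since its original phase was $0$). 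This puts the integrand in the form $x_j$ times a product of the cyclically permuted elements $x_{j+1}, \ldots, x_n, x_1, \ldots, x_{j-1}$ with new phases, which is manifestly of the shape $x_j \cdot M(\nab, \ldots, \nab)(x_{j+1} \cdots x_n \cdot x_1 \cdots x_{j-1})$ for some function $M$ of $n-1$ variables.

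The final step is to identify $M$ by matching Fourier coefficients. Reading off the coefficients of $t_1, \ldots, t_{n-1}$ in the new exponent $-i\sum_k (t_{p(k)} - t_j)\, s_k$ (with $t_n := 0$ corresponding to the slot occupied by $x_n$), I expect the coefficient of $t_j$ to collapse into $-(s_1 + \cdots + s_{n-1})$ because of the overall $\sigma_{-t_j}$, while the remaining $t_m$'s pick up the shifted $s$-variables dictated by the cyclic permutation of positions. Comparing this exponent with the defining Fourier expansion of $L$ then yields the explicit substitution defining $L^c_j$ asserted in the statement.

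The only obstacle here is combinatorial: keeping the indexing of the cyclic permutation straight, and in particular placing the entry $-s_1 - s_2 - \cdots - s_{n-1}$ in the slot of $L$ corresponding to the (formerly phase-free) element $x_n$. There is no analytic subtlety — the argument is essentially the same as the proof of Lemma \ref{LowerwithTrace1}, with the trace property doing all the work and the invariance of $\vphi_0$ under $\sigma_{-t_j}$ producing the shift that generates the $-\sum s_k$ entry.
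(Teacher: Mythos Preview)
Your proposal is correct and follows essentially the same approach as the paper: write $L$ via its Fourier representation, use the trace property of $\vphi_0$ to cycle $\sigma_{t_j}(x_j)\cdots \sigma_{t_{n-1}}(x_{n-1})\,x_n$ to the left, apply the invariance of $\vphi_0$ under $\sigma_{-t_j}$ to strip the phase from $x_j$ and shift all others by $-t_j$, and then read off $L^c_j$ from the resulting exponent. The paper carries out exactly this computation, with the same bookkeeping you describe.
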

\begin{proof}
Writing 
\[
L(s_1,  \dots, s_{n-1}) = \int e^{-it_1 s_1 -  \cdots- it_{n-1}s_{n-1} } g(t_1, \dots, t_{n-1}) \, dt_1  \cdots \, dt_{n-1},
\] 
and using the trace property and the invariance of $\vphi_0$, under the latter 
we can write
\begin{eqnarray*}
&&L(\nab, \dots, \nab)(x_1 \cdots x_{n-1} )\, x_n  \\ 
&=&  \int \sigma_{t_1}(x_1) \cdots \sigma_{t_j}(x_j)\cdots \sigma_{t_{n-1}}(x_{n-1}) \, x_n \,g(t_1, \dots, t_{n-1}) \, dt_1 \cdots dt_j \cdots dt_{n-1} \\
&=& \int  \sigma_{t_j}(x_j)\cdots \sigma_{t_{n-1}}(x_{n-1}) \, x_n \, \sigma_{t_1}(x_1) \cdots \sigma_{t_{j-1}}(x_{j-1}) \, g(t_1, \dots, t_{n-1}) \, dt_1  \cdots dt_{n-1} \\
&=& x_j \int  \sigma_{t_{j+1}-t_j}(x_{j+1})\cdots \sigma_{t_{n-1}-t_j}(x_{n-1}) \, \sigma_{-t_j} (x_n) \, \sigma_{t_1-t_j}(x_1) \cdots \sigma_{t_{j-1}-t_j}(x_{j-1}) \times \, \\
&& \qquad \qquad \qquad \qquad \qquad \qquad    g(t_1, \dots, t_{n-1}) \, dt_1  \cdots dt_{n-1} \\ 
&=&x_j\, L^c_j (\nab, \dots, \nab) (x_{j+1} \cdots x_{n} \cdot x_1 \cdots x_{j-1}), 
\end{eqnarray*}
where 
\begin{eqnarray*}
&&L^c_j(s_1, \dots, s_{n-1}) \\
&=& 
\int e^{-i(t_{j+1}-t_j)s_1- \cdots - i(t_{n-1}-t_j) s_{n-1-j} + i t_j s_{n-j} - i (t_1-t_j)s_{n-j+1}- \cdots - i (t_{j-1}-t_j)s_{n-1} }\, \times \\
&& \qquad \qquad \qquad \qquad \qquad \qquad   g(t_1, \dots, t_{n-1}) \, dt_1  \cdots dt_{n-1}\\
&=& L(s_{j+2}, \dots, s_{n-1} ,-s_1- s_2 - \cdots -s_{n-1}, s_1, s_2, \dots, s_j). 
\end{eqnarray*}
\end{proof}

\subsection{Finite differences of $G_1(\nab)$ relating derivatives 
of $h$ and $e^{h}$} 
The second type of lemmas that we shall 
need in the remainder of this section concern writing the 
derivatives up to order four of the conformal factor $e^h \in \CNT$ in terms 
of the derivatives of the dilaton $h$.  
In \cite{ConMosModular, FatKhaSC2T}, for the purpose of 
writing the final formula for the scalar curvature in terms 
of derivatives of $h$, 
the functions $G_1(s_1)$ and $G_2(s_1, s_2)$ were found such that  
for $j_1, j_2 \in \{1, 2 \}$:
\[
e^{-h} \delta_{j_1}(e^h) = G_1(\nab ) ( \delta _{j_1}(h) ), 
\]
\[
e^{-h}  \delta_{j_1} \delta_{j_2} (e^h) = G_1(\nab) ( \delta _{j_1} \delta _{j_2}(h) ) + 
G_2(\nab, \nab) \left ( \delta _{j_1}(h) \cdot \delta _{j_2}(h)+\delta _{j_2}(h) \cdot \delta
   _{j_1}(h) \right ). 
\]

\smallskip

For our purposes in this paper,  we will need in the sequel  to treat terms 
of the form $e^{-h}  \delta_{j_1} \delta_{j_2} \delta_{j_3} (e^h)$ 
and of the form $e^{-h}  \delta_{j_1} \delta_{j_2} \delta_{j_3} \delta_{j_4}(e^h) $. 
This is done in the following lemmas,  where we first give a general form of 
these expressions in terms of derivatives of $h$ and will then describe the 
functions $G_1, G_2, G_3, G_4$ in 
Lemma \ref{BabyFunctions1} and Lemma \ref{BabyFunctions2}. 

\begin{lemma} \label{ToLogBaby}
Let $j_1, j_2, j_3, j_4$ be either equal to 1 or 2. Using the functions 
$G_1, G_2,$ $G_3, G_4$ presented in Lemma \ref{BabyFunctions1} and Lemma \ref{BabyFunctions2}, we can write: 
\begin{eqnarray*}
&& e^{-h}  \delta_{j_1} \delta_{j_2} \delta_{j_3} (e^h) 
= 
G_1(\nab) ( \Box_{3, 1}(h) ) 
+ 
G_2(\nab, \nab) ( \Box_{3, 2}(h) )  
+ 
G_3(\nab, \nab, \nab) ( \Box_{3, 3}(h) ),
\end{eqnarray*}
where \\

$
\Box_{3, 1}(h) := \delta _{j_1} \delta _{j_2} \delta _{j_3}(h), 
$\\

$\Box_{3, 2}(h) := \delta _{j_1}(h)\cdot \left(\delta _{j_2} \delta
   _{j_3}\right)(h)+\delta
   _{j_2}(h)\cdot \left(\delta _{j_1} \delta
   _{j_3}\right)(h)+\left(\delta _{j_1}
   \delta _{j_2}\right)(h)\cdot \delta
   _{j_3}(h)+\delta _{j_3}(h)\cdot \left(\delta
   _{j_1} \delta
   _{j_2}\right)(h)+\left(\delta _{j_1}
   \delta _{j_3}\right)(h)\cdot \delta
   _{j_2}(h)+\left(\delta _{j_2} \delta
   _{j_3}\right)(h)\cdot \delta _{j_1}(h), $\\

$ \Box_{3, 3}(h) := \delta _{j_1}(h)\cdot \delta _{j_2}(h)\cdot \delta
   _{j_3}(h)+\delta _{j_1}(h)\cdot \delta
   _{j_3}(h)\cdot \delta _{j_2}(h)+\delta
   _{j_2}(h)\cdot \delta _{j_1}(h)\cdot \delta
   _{j_3}(h)+\delta _{j_2}(h)\cdot \delta
   _{j_3}(h)\cdot \delta _{j_1}(h)+\delta
   _{j_3}(h)\cdot \delta _{j_1}(h)\cdot \delta
   _{j_2}(h)+\delta _{j_3}(h)\cdot \delta
   _{j_2}(h)\cdot \delta _{j_1}(h). $\\

\smallskip

Moreover, for the case when the order of differentiation is 4, we have: 
\begin{eqnarray*}
 e^{-h}  \delta_{j_1} \delta_{j_2} \delta_{j_3} \delta_{j_4} (e^h) 
&=& 
G_1(\nab) ( \Box_{4, 1}(h) ) 
+ 
G_2(\nab, \nab) ( \Box_{4, 2}(h) )  
\\ 
&&
+ 
G_3(\nab, \nab, \nab) ( \Box_{4, 3}(h) ) 
+
G_4(\nab, \nab, \nab, \nab) ( \Box_{4, 4}(h) ),  
\end{eqnarray*}
where 
\[
\Box_{4, 1} (h) := \left(\delta _{j_1} \delta _{j_2} \delta
   _{j_3} \delta _{j_4}\right)(h),
\] \\

$
\Box_{4, 2} (h) := 
\delta _{j_1}(h)\cdot \left(\delta _{j_2} \delta
   _{j_3} \delta _{j_4}\right)(h)+\delta
   _{j_2}(h)\cdot \left(\delta _{j_1} \delta
   _{j_3} \delta
   _{j_4}\right)(h)+\left(\delta _{j_1}
   \delta _{j_2}\right)(h)\cdot \left(\delta
   _{j_3} \delta _{j_4}\right)(h)+\delta
   _{j_3}(h)\cdot \left(\delta _{j_1} \delta
   _{j_2} \delta
   _{j_4}\right)(h)+\left(\delta _{j_1}
   \delta _{j_3}\right)(h)\cdot \left(\delta
   _{j_2} \delta
   _{j_4}\right)(h)+\left(\delta _{j_2}
   \delta _{j_3}\right)(h)\cdot \left(\delta
   _{j_1} \delta
   _{j_4}\right)(h)+\left(\delta _{j_1}
   \delta _{j_2} \delta
   _{j_3}\right)(h)\cdot \delta _{j_4}(h)+\delta
   _{j_4}(h)\cdot \left(\delta _{j_1} \delta
   _{j_2} \delta
   _{j_3}\right)(h)+\left(\delta _{j_1}
   \delta _{j_4}\right)(h)\cdot \left(\delta
   _{j_2} \delta
   _{j_3}\right)(h)+\left(\delta _{j_2}
   \delta _{j_4}\right)(h)\cdot \left(\delta
   _{j_1} \delta
   _{j_3}\right)(h)+\left(\delta _{j_1}
   \delta _{j_2} \delta
   _{j_4}\right)(h)\cdot \delta
   _{j_3}(h)+\left(\delta _{j_3} \delta
   _{j_4}\right)(h)\cdot \left(\delta _{j_1}
   \delta _{j_2}\right)(h)+\left(\delta
   _{j_1} \delta _{j_3} \delta
   _{j_4}\right)(h)\cdot \delta
   _{j_2}(h)+\left(\delta _{j_2} \delta
   _{j_3} \delta _{j_4}\right)(h)\cdot \delta
   _{j_1}(h), 
$\\

$
\Box_{4,3}(h) := 
\delta _{j_1}(h)\cdot \delta
   _{j_2}(h)\cdot \left(\delta _{j_3} \delta
   _{j_4}\right)(h)+\delta _{j_1}(h)\cdot \delta
   _{j_3}(h)\cdot \left(\delta _{j_2} \delta
   _{j_4}\right)(h)+\delta
   _{j_1}(h)\cdot \left(\delta _{j_2} \delta
   _{j_3}\right)(h)\cdot \delta _{j_4}(h)+\delta
   _{j_1}(h)\cdot \delta _{j_4}(h)\cdot \left(\delta
   _{j_2} \delta _{j_3}\right)(h)+\delta
   _{j_1}(h)\cdot \left(\delta _{j_2} \delta
   _{j_4}\right)(h)\cdot \delta _{j_3}(h)+\delta
   _{j_1}(h)\cdot \left(\delta _{j_3} \delta
   _{j_4}\right)(h)\cdot \delta _{j_2}(h)+\delta
   _{j_2}(h)\cdot \delta _{j_1}(h)\cdot \left(\delta
   _{j_3} \delta _{j_4}\right)(h)+\delta
   _{j_2}(h)\cdot \delta _{j_3}(h)\cdot \left(\delta
   _{j_1} \delta _{j_4}\right)(h)+\delta
   _{j_2}(h)\cdot \left(\delta _{j_1} \delta
   _{j_3}\right)(h)\cdot \delta _{j_4}(h)+\delta
   _{j_2}(h)\cdot \delta _{j_4}(h)\cdot \left(\delta
   _{j_1} \delta _{j_3}\right)(h)+\delta
   _{j_2}(h)\cdot \left(\delta _{j_1} \delta
   _{j_4}\right)(h)\cdot \delta _{j_3}(h)+\delta
   _{j_2}(h)\cdot \left(\delta _{j_3} \delta
   _{j_4}\right)(h)\cdot \delta
   _{j_1}(h)+\left(\delta _{j_1} \delta
   _{j_2}\right)(h)\cdot \delta _{j_3}(h)\cdot \delta
   _{j_4}(h)+\left(\delta _{j_1} \delta
   _{j_2}\right)(h)\cdot \delta _{j_4}(h)\cdot \delta
   _{j_3}(h)+\delta _{j_3}(h)\cdot \delta
   _{j_1}(h)\cdot \left(\delta _{j_2} \delta
   _{j_4}\right)(h)+\delta _{j_3}(h)\cdot \delta
   _{j_2}(h)\cdot \left(\delta _{j_1} \delta
   _{j_4}\right)(h)+\delta
   _{j_3}(h)\cdot \left(\delta _{j_1} \delta
   _{j_2}\right)(h)\cdot \delta _{j_4}(h)+\delta
   _{j_3}(h)\cdot \delta _{j_4}(h)\cdot \left(\delta
   _{j_1} \delta _{j_2}\right)(h)+\delta
   _{j_3}(h)\cdot \left(\delta _{j_1} \delta
   _{j_4}\right)(h)\cdot \delta _{j_2}(h)+\delta
   _{j_3}(h)\cdot \left(\delta _{j_2} \delta
   _{j_4}\right)(h)\cdot \delta
   _{j_1}(h)+\left(\delta _{j_1} \delta
   _{j_3}\right)(h)\cdot \delta _{j_2}(h)\cdot \delta
   _{j_4}(h)+\left(\delta _{j_1} \delta
   _{j_3}\right)(h)\cdot \delta _{j_4}(h)\cdot \delta
   _{j_2}(h)+\left(\delta _{j_2} \delta
   _{j_3}\right)(h)\cdot \delta _{j_1}(h)\cdot \delta
   _{j_4}(h)+\left(\delta _{j_2} \delta
   _{j_3}\right)(h)\cdot \delta _{j_4}(h)\cdot \delta
   _{j_1}(h)+\delta _{j_4}(h)\cdot \delta
   _{j_1}(h)\cdot \left(\delta _{j_2} \delta
   _{j_3}\right)(h)+\delta _{j_4}(h)\cdot \delta
   _{j_2}(h)\cdot \left(\delta _{j_1} \delta
   _{j_3}\right)(h)+\delta
   _{j_4}(h)\cdot \left(\delta _{j_1} \delta
   _{j_2}\right)(h)\cdot \delta _{j_3}(h)+\delta
   _{j_4}(h)\cdot \delta _{j_3}(h)\cdot \left(\delta
   _{j_1} \delta _{j_2}\right)(h)+\delta
   _{j_4}(h)\cdot \left(\delta _{j_1} \delta
   _{j_3}\right)(h)\cdot \delta _{j_2}(h)+\delta
   _{j_4}(h)\cdot \left(\delta _{j_2} \delta
   _{j_3}\right)(h)\cdot \delta
   _{j_1}(h)+\left(\delta _{j_1} \delta
   _{j_4}\right)(h)\cdot \delta _{j_2}(h)\cdot \delta
   _{j_3}(h)+\left(\delta _{j_1} \delta
   _{j_4}\right)(h)\cdot \delta _{j_3}(h)\cdot \delta
   _{j_2}(h)+\left(\delta _{j_2} \delta
   _{j_4}\right)(h)\cdot \delta _{j_1}(h)\cdot \delta
   _{j_3}(h)+\left(\delta _{j_2} \delta
   _{j_4}\right)(h)\cdot \delta _{j_3}(h)\cdot \delta
   _{j_1}(h)+\left(\delta _{j_3} \delta
   _{j_4}\right)(h)\cdot \delta _{j_1}(h)\cdot \delta
   _{j_2}(h)+\left(\delta _{j_3} \delta
   _{j_4}\right)(h)\cdot \delta _{j_2}(h)\cdot \delta
   _{j_1}(h), 
$\\

$\Box_{4, 4}(h) :=
\delta _{j_1}(h)\cdot \delta _{j_2}(h)\cdot \delta
   _{j_3}(h)\cdot \delta _{j_4}(h)+\delta
   _{j_1}(h)\cdot \delta _{j_2}(h)\cdot \delta
   _{j_4}(h)\cdot \delta _{j_3}(h)+\delta
   _{j_1}(h)\cdot \delta _{j_3}(h)\cdot \delta
   _{j_2}(h)\cdot \delta _{j_4}(h)+\delta
   _{j_1}(h)\cdot \delta _{j_3}(h)\cdot \delta
   _{j_4}(h)\cdot \delta _{j_2}(h)+\delta
   _{j_1}(h)\cdot \delta _{j_4}(h)\cdot \delta
   _{j_2}(h)\cdot \delta _{j_3}(h)+\delta
   _{j_1}(h)\cdot \delta _{j_4}(h)\cdot \delta
   _{j_3}(h)\cdot \delta _{j_2}(h)+\delta
   _{j_2}(h)\cdot \delta _{j_1}(h)\cdot \delta
   _{j_3}(h)\cdot \delta _{j_4}(h)+\delta
   _{j_2}(h)\cdot \delta _{j_1}(h)\cdot \delta
   _{j_4}(h)\cdot \delta _{j_3}(h)+\delta
   _{j_2}(h)\cdot \delta _{j_3}(h)\cdot \delta
   _{j_1}(h)\cdot \delta _{j_4}(h)+\delta
   _{j_2}(h)\cdot \delta _{j_3}(h)\cdot \delta
   _{j_4}(h)\cdot \delta _{j_1}(h)+\delta
   _{j_2}(h)\cdot \delta _{j_4}(h)\cdot \delta
   _{j_1}(h)\cdot \delta _{j_3}(h)+\delta
   _{j_2}(h)\cdot \delta _{j_4}(h)\cdot \delta
   _{j_3}(h)\cdot \delta _{j_1}(h)+\delta
   _{j_3}(h)\cdot \delta _{j_1}(h)\cdot \delta
   _{j_2}(h)\cdot \delta _{j_4}(h)+\delta
   _{j_3}(h)\cdot \delta _{j_1}(h)\cdot \delta
   _{j_4}(h)\cdot \delta _{j_2}(h)+\delta
   _{j_3}(h)\cdot \delta _{j_2}(h)\cdot \delta
   _{j_1}(h)\cdot \delta _{j_4}(h)+\delta
   _{j_3}(h)\cdot \delta _{j_2}(h)\cdot \delta
   _{j_4}(h)\cdot \delta _{j_1}(h)+\delta
   _{j_3}(h)\cdot \delta _{j_4}(h)\cdot \delta
   _{j_1}(h)\cdot \delta _{j_2}(h)+\delta
   _{j_3}(h)\cdot \delta _{j_4}(h)\cdot \delta
   _{j_2}(h)\cdot \delta _{j_1}(h)+\delta
   _{j_4}(h)\cdot \delta _{j_1}(h)\cdot \delta
   _{j_2}(h)\cdot \delta _{j_3}(h)+\delta
   _{j_4}(h)\cdot \delta _{j_1}(h)\cdot \delta
   _{j_3}(h)\cdot \delta _{j_2}(h)+\delta
   _{j_4}(h)\cdot \delta _{j_2}(h)\cdot \delta
   _{j_1}(h)\cdot \delta _{j_3}(h)+\delta
   _{j_4}(h)\cdot \delta _{j_2}(h)\cdot \delta
   _{j_3}(h)\cdot \delta _{j_1}(h)+\delta
   _{j_4}(h)\cdot \delta _{j_3}(h)\cdot \delta
   _{j_1}(h)\cdot \delta _{j_2}(h)+\delta
   _{j_4}(h)\cdot \delta _{j_3}(h)\cdot \delta
   _{j_2}(h)\cdot \delta _{j_1}(h). 
$
\end{lemma}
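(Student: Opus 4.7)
\medskip

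\textbf{Proof proposal.} The plan is to prove the lemma by induction on the order of differentiation, using as base case the already-established formula $e^{-h}\delta_{j_1}\delta_{j_2}(e^h) = G_1(\nabla)(\delta_{j_1}\delta_{j_2}(h)) + G_2(\nabla,\nabla)(\delta_{j_1}(h)\cdot \delta_{j_2}(h)+\delta_{j_2}(h)\cdot \delta_{j_1}(h))$, and at each inductive step exploiting the Leibniz rule together with the derivative-of-functional-calculus identities of Lemmas \ref{DerFuncCalc1}, \ref{DerFuncCalc2}, \ref{DerFuncCalc3}. Concretely, set $F_{k} := e^{-h}\delta_{j_1}\cdots \delta_{j_k}(e^h)$ and write $\delta_{j_1}\cdots\delta_{j_k}(e^h) = e^h F_k$. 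Applying $\delta_{j_{k+1}}$ and using $e^{-h}\delta_{j_{k+1}}(e^h) = G_1(\nabla)(\delta_{j_{k+1}}(h))$ one gets the recursion
\[
F_{k+1} \;=\; G_1(\nabla)\bigl(\delta_{j_{k+1}}(h)\bigr)\cdot F_k \;+\; \delta_{j_{k+1}}(F_k),
\]
so the whole task reduces to simplifying the right-hand side when $F_k$ is already expressed as a sum of functional calculi applied to products of derivatives of $h$.

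To simplify the first summand I would use the elementary identity that, for any smooth functions $L_1, L_2$ and elements $x,y\in \CNT$,
\[
L_1(\nabla)(x)\cdot L_2(\nabla)(y) \;=\; (L_1\otimes L_2)(\nabla,\nabla)(x\cdot y),
\qquad (L_1\otimes L_2)(s_1,s_2) := L_1(s_1)L_2(s_2),
\]
which follows immediately from the Fourier-transform formulation of functional calculus introduced at the end of Section \ref{PrelSec}. This turns $G_1(\nabla)(\delta_{j_{k+1}}(h))\cdot F_k$ into a sum of multi-variable functional calculi with one extra slot, whose arguments are products of derivatives of $h$ with $\delta_{j_{k+1}}(h)$ inserted on the left. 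The second summand $\delta_{j_{k+1}}(F_k)$ is handled slot-by-slot by Lemmas \ref{DerFuncCalc1}--\ref{DerFuncCalc3}: each application produces the same functional calculus evaluated on the Leibniz-differentiated product, plus terms $L^{\delta}_{n,i}(\nabla,\dots,\nabla)$ in which $\delta_{j_{k+1}}(h)$ is inserted into the $i$-th slot, with coefficients given by explicit finite differences of $L$.

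Collecting the resulting terms according to how the derivatives $j_1,\dots,j_{k+1}$ are distributed on the $h$-factors, one obtains precisely the combinatorial patterns $\Box_{k+1,1},\Box_{k+1,2},\ldots,\Box_{k+1,k+1}$ displayed in the statement: $\Box_{k+1,m}$ enumerates all ordered products of $m$ derivative-factors of $h$ whose total derivative multiset is $\{j_1,\dots,j_{k+1}\}$. A crucial observation is that since the iterated derivative $\delta_{j_1}\cdots \delta_{j_{k+1}}$ is symmetric in its indices, the left-hand side is invariant under permutations of $j_1,\dots,j_{k+1}$; consequently the coefficient of each term in $\Box_{k+1,m}$ depends only on the \emph{pattern} of the term (i.e.\ on the shape of the ordered partition of $\{j_1,\dots,j_{k+1}\}$ that labels it), not on which particular indices appear. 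This forces the coefficients to be the universal functions $G_1,\ldots,G_{k+1}$, and identifies them as finite-difference combinations produced by the preceding step. Carrying out this procedure for $k=2$ gives the order-3 identity with $G_3$ emerging from a single new pattern (three single-derivative factors), and for $k=3$ gives the order-4 identity with a single new pattern (four single-derivative factors) producing $G_4$.

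The main obstacle I anticipate is purely combinatorial bookkeeping: at order four one must verify that after the Leibniz expansion, the tensor-product rewriting of $G_1(\nabla)(\delta_{j_4}(h))\cdot F_3$, and the three kinds of finite-difference terms produced by Lemma \ref{DerFuncCalc3}, all contributions with the same shape aggregate to a single coefficient matching the explicit $G_4$ of \eqref{ExplicitBabyFunctions}. The cleanest way to control this will be to organize the calculation by the ordered positions of the ``single'' $\delta_{j_i}(h)$ factors among the higher-order ones, using the symmetry under permutations of the $j_i$ to reduce the number of independent patterns to one per part of the relevant composition of $k+1$; the explicit formulas for $G_3$ and $G_4$ then follow by evaluating at a representative monomial and matching the finite-difference expressions so obtained against \eqref{ExplicitBabyFunctions}.
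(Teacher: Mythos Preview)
Your approach differs from the paper's. The paper does not give a detailed argument but simply invokes the expansional (Duhamel) formula from Section~6.1 of \cite{ConMosModular}: one writes $\delta(e^h)=\int_0^1 e^{(1-r)h}\delta(h)\,e^{rh}\,dr$ and iterates, obtaining $e^{-h}\delta_{j_1}\cdots\delta_{j_n}(e^h)$ directly as a sum over ordered products with coefficients given by the simplex integrals of Lemma~\ref{BabyFunctions1}. Your inductive route---building $F_{k+1}$ from $F_k$ via the recursion $F_{k+1}=G_1(\nabla)(\delta_{j_{k+1}}(h))\cdot F_k+\delta_{j_{k+1}}(F_k)$ and expanding with Lemmas~\ref{DerFuncCalc1}--\ref{DerFuncCalc3}---stays entirely within the finite-difference formalism already developed in the paper and recovers the $G_m$ in the form of Lemma~\ref{BabyFunctions2} instead. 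Both are valid; the expansional method is conceptually cleaner (a single $G_m$ appears automatically for every $m$-factor term with no positional case analysis), while yours is more hands-on but requires no new machinery beyond what Sections~\ref{FuncRelationsProofSec}--\ref{GradientCalculationSec} already provide.

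One point deserves more care than you give it. Your recursion treats $j_{k+1}$ asymmetrically: the coefficient it assigns to an $m$-factor term depends on whether $\delta_{j_{k+1}}$ was merged into an existing factor (coefficient $G_m$ by induction) or inserted as a standalone $\delta_{j_{k+1}}(h)$ in slot $i$ (coefficient $L^{\delta}_{m-1,i}$ with $L=G_{m-1}$, plus the tensor contribution $G_1\otimes G_{m-1}$ when $i=1$). Your symmetry argument is meant to force all of these to coincide, but that inference requires the monomials $T$ to be linearly independent---which holds only if you regard $j_1,\dots,j_{k+1}$ as formal distinct labels, i.e.\ prove the identity for an arbitrary finite family of commuting derivations and then specialize to $\delta_1,\delta_2$. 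With that caveat the argument does close: for the last slot the coefficient is exactly $\frac{G_{m-1}(s_1,\dots,s_{m-2},s_{m-1}+s_m)-G_{m-1}(s_1,\dots,s_{m-1})}{s_m}$, which is the finite-difference recursion of Lemma~\ref{BabyFunctions2} defining $G_m$, and symmetry then propagates $G_m$ to every other slot and to the merged case. Without making the formal-label interpretation explicit, the step ``symmetry forces the coefficients to be $G_m$'' is not justified, since for concrete indices $j_i\in\{1,2\}$ many terms coincide and the independence needed to read off coefficients fails.
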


\smallskip

The above lemma can be proved by writing an expansional formula as 
in Section 6.1 of \cite{ConMosModular} (or the method used in \cite{ConTreGB, FatKhaGB}). 
This method will also show that the functions $G_1, G_2, G_3, G_4$ can be constructed 
as follows. 

\begin{lemma} \label{BabyFunctions1} The functions $G_1, G_2, G_3, G_4$ 
can be constructed recursively by setting 
\[
G_0=1, 
\]
and by writing
\[
G_n(s_1, \dots, s_n) 
= 
\int_{0}^1 r^{n-1} e^{s_1 r} \,G_{n-1} (r s_2, r s_3, \dots, r s_n)  \, dr. 
\]
Therefore, we have 
\[
G_1(s_1) = \frac{e^{s_1}-1}{s_1}, 
\]
and 
\[
G_n(s_1, \dots, s_n) 
= 
\]
\[
\int_0^1 \int_0^1 \dots \int_0^1 
r_1^{n-1} r_2^{n-2} \cdots r_{n-1} \, e^{s_1 r_1 + s_2 r_1 r_2 + \cdots + s_{n-1} r_1 r_2 \cdots r_{n-1}} \times \]
\[
\qquad \qquad \qquad G_1(r_1 r_2 \cdots r_{n-1} s_n) \, dr_1 \, dr_2 \cdots \, dr_{n-1}
\]
\[
= \int_0^1 \int_0^1 \dots \int_0^1 \prod_{j=1}^{n-1} \left ( r_j^{n-j} \, e^{s_j \prod_{i=1}^j r_i} \right )  G_1(r_1 r_2 \cdots r_{n-1} s_n) \, dr_1 \, dr_2 \cdots \, dr_{n-1}. 
\]
\end{lemma}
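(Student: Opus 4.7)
The plan is to identify $G_n$ with an iterated simplex integral and then obtain the recursion by a change of variables, treating the step from Lemma \ref{ToLogBaby} as the input. The starting point will be the Dyson expansional (Volterra) formula for the $n$-th Fr\'echet derivative of the exponential: for any smooth selfadjoint $h$ and any tangent vectors $a_1,\dots,a_n \in \CNT$,
\[
D^n \exp(h)(a_1,\dots,a_n)=\sum_{\sigma\in S_n}\int_{\Delta_n}e^{r_0h}\,a_{\sigma(1)}\,e^{r_1h}\,a_{\sigma(2)}\cdots a_{\sigma(n)}\,e^{r_nh}\,dr_0\cdots dr_n,
\]
where $\Delta_n=\{(r_0,\dots,r_n)\mid r_i\ge0,\ \sum r_i=1\}$. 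First I would apply the noncommutative Fa\`a di Bruno formula to expand $\del_{j_1}\cdots\del_{j_n}(e^h)$ as a sum over partitions of $\{1,\dots,n\}$; the contribution from the singleton partition is exactly $D^n\exp(h)(\del_{j_1}h,\dots,\del_{j_n}h)$, and by Lemma \ref{ToLogBaby} its coefficient in $e^{-h}\,\del_{j_1}\cdots\del_{j_n}(e^h)$ on the symmetric monomial $\Box_{n,n}(h)$ is precisely the symbol $G_n(\nab,\dots,\nab)$ one seeks.

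Next I would left-multiply by $e^{-h}$ and use $r_0=1-(r_1+\cdots+r_n)$ to telescope each block $e^{-(r_k+\cdots+r_n)h}(\cdot)\,e^{(r_k+\cdots+r_n)h}$ into a single modular exponential $e^{u_k\nab}(\cdot)$, where $u_k:=r_k+r_{k+1}+\cdots+r_n$. The Jacobian of the change of variables $(r_0,\dots,r_n)\mapsto(u_1,\dots,u_n)$ is $1$, and it sends $\Delta_n$ onto the ordered simplex $1\ge u_1\ge\cdots\ge u_n\ge 0$. Collecting the $n!$ permutations matches the symmetric monomial $\Box_{n,n}(h)$, and reading off the symbol (using that the $\del_{j_i}h$ can be varied independently as one varies $h$ and the indices $j_i$) yields the closed form
\[
G_n(s_1,\dots,s_n)=\int_{1\ge u_1\ge\cdots\ge u_n\ge 0} e^{u_1s_1+u_2s_2+\cdots+u_ns_n}\,du_1\cdots du_n.
\]
A direct sanity check for $n=1,2$ recovers $G_1(s_1)=(e^{s_1}-1)/s_1$ and the explicit formula for $G_2$ given in \eqref{ExplicitBabyFunctions}.

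The recursion will then follow by the substitution $u_1=r$, $v_j=u_{j+1}/r$ for $j=1,\dots,n-1$, whose Jacobian is $r^{n-1}$, and which maps the ordered simplex for $(u_2,\dots,u_n)$ onto the ordered simplex for $(v_1,\dots,v_{n-1})$ while scaling each $s_j\mapsto rs_j$ for $j\ge 2$; the inner integral over $v$ is then $G_{n-1}(rs_2,\dots,rs_n)$, giving
\[
G_n(s_1,\dots,s_n)=\int_0^1 r^{n-1} e^{rs_1}\,G_{n-1}(rs_2,\dots,rs_n)\,dr.
\]
Iterating the recursion produces the $(n-1)$-fold integral expression displayed in the statement, and the base case $G_0=1$ is immediate from the empty integral.

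The hard part will be justifying the ``read off the symbol'' step cleanly: the relation between operators and their symbols is only a priori determined on the specific monomials $\Box_{n,k}(h)$, and one must argue that the map sending a polynomial symbol $G_n(s_1,\dots,s_n)$ to the associated operator $G_n(\nab,\dots,\nab)$ acting on arbitrary $n$-fold products in $\CNT$ is injective, so that the identity of operators forces the identity of symbols. The cleanest route will be to invoke the analytic functional calculus together with the density of finite-rank dilatons $h=h^*\in\CNT$ for which $\del_1h,\dots,\del_nh$ can be made into linearly independent elements whose orbits under $\sigma_t$ separate the relevant symbols; this reduces the identification to a standard uniqueness-of-Fourier-transform argument and disposes of the remaining bookkeeping.
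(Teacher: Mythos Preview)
Your approach is essentially the one the paper sketches: both Lemma~\ref{ToLogBaby} and Lemma~\ref{BabyFunctions1} are proved simultaneously via the Dyson/expansional formula, and the paper simply refers to ``writing an expansional formula as in Section 6.1 of \cite{ConMosModular}'' without spelling out the details you provide. Your ordered-simplex representation
\[
G_n(s_1,\dots,s_n)=\int_{1\ge u_1\ge\cdots\ge u_n\ge 0} e^{u_1s_1+\cdots+u_ns_n}\,du_1\cdots du_n
\]
is correct, and the substitution $u_1=r$, $u_{j+1}=rv_j$ cleanly yields the recursion and the iterated integral.

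One comment on structure: you treat Lemma~\ref{ToLogBaby} as input and then worry about ``reading off the symbol,'' flagging injectivity of $G\mapsto G(\nabla,\dots,\nabla)$ as the hard part. This is more machinery than needed and slightly circular, since in the paper the $G_n$ of Lemma~\ref{ToLogBaby} are \emph{defined} by Lemma~\ref{BabyFunctions1}. A cleaner logical order is: (i) \emph{define} $G_n$ by the simplex integral the expansional hands you; (ii) verify the recursion by your change of variables (a pure calculus fact, no operator theory); (iii) observe that plugging this $G_n$ into the expansional reproduces $e^{-h}\delta_{j_1}\cdots\delta_{j_n}(e^h)$, which \emph{is} Lemma~\ref{ToLogBaby}. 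With that ordering the injectivity question never arises, and your density/Fourier argument can be dropped.
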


\smallskip

Now, we can explain how the functions $G_2, G_3, G_4$ can be obtained 
by finite differences of the function $G_1$:

\begin{lemma} \label{BabyFunctions2}
Starting from the function
\[
G_1(s_1)
=
\frac{e^{s_1}-1}{s_1}, 
\]
we have: 
\[
G_2(s_1, s_2)
= 
\frac{1}{s_2} \left( G_1(s_1+s_2) - G_1(s_1)  \right), 
\]
\[
G_3(s_1, s_2, s_3) 
= 
\frac{1}{s_3} \left ( \frac{G_1(s_1+ s_2 + s_3) - G_1(s_1)}{s_2 + s_3} - \frac{G_1(s_1+s_2)-G_1(s_1)}{s_2} \right ), 
\]
\begin{eqnarray*}
&& G_4(s_1, s_2, s_3, s_4) 
= 
\\
&&
\qquad \frac{1}{s_4} \frac{1}{s_3 + s_4} \left(  \frac{G_1(s_1+s_2+s_3+s_4) - G_1(s_1)}{s_2 + s_3 + s_4}  
- \frac{G_1(s_1+s_2) - G_1(s_1)}{s_2}\right ) 
\\
&&
\qquad -\frac{1}{s_4} \frac{1}{s_3} 
\left(  \frac{G_1(s_1+s_2+s_3) - G_1(s_1)}{s_2 + s_3 }  
- \frac{G_1(s_1+s_2) - G_1(s_1)}{s_2}\right ).  
\end{eqnarray*}
\end{lemma}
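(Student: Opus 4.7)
My plan is to prove the three identities by induction on $n$, using the one-step recursion $G_n(s_1,\dots,s_n) = \int_0^1 r^{n-1} e^{s_1 r}\,G_{n-1}(rs_2,\dots,rs_n)\,dr$ from Lemma~\ref{BabyFunctions1} together with the key scaling identity $r\,G_1(ru) = (e^{ru}-1)/u$. The base case $n=2$ is a direct computation: since $G_0=1$, the recursion gives $G_1(s_1) = \int_0^1 e^{s_1 r}\,dr = (e^{s_1}-1)/s_1$, and then
\[
G_2(s_1,s_2) = \int_0^1 r e^{s_1 r}\,G_1(rs_2)\,dr = \int_0^1 \frac{e^{(s_1+s_2)r}-e^{s_1 r}}{s_2}\,dr = \frac{G_1(s_1+s_2)-G_1(s_1)}{s_2},
\]
which matches the claim.

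For $G_3$, I would plug the already established $G_2$ formula into the recursion, apply it with arguments $(rs_2,rs_3)$, and use the scaling identity to clear the factor $r$. Explicitly,
\[
G_3(s_1,s_2,s_3) = \int_0^1 r^2 e^{s_1 r}\,\frac{G_1(r(s_2+s_3))-G_1(rs_2)}{rs_3}\,dr = \int_0^1 \frac{e^{s_1 r}}{s_3}\bigl[r G_1(r(s_2+s_3))-r G_1(rs_2)\bigr]\,dr.
\]
Replacing each $rG_1(ru)$ by $(e^{ru}-1)/u$ and integrating term-by-term produces the two divided differences of $G_1$ divided by $s_3$ that appear in the statement. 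The $G_4$ case is handled identically: insert the just-proved $G_3$ expression into the recursion with arguments $(rs_2,rs_3,rs_4)$, observe that the overall degree of homogeneity (the $r^{n-1}$ factor in the recursion cancelled against the $r^{-(n-2)}$ scaling of $G_{n-1}$) leaves exactly one power of $r$ attached to each $G_1$, then apply the scaling identity and integrate.

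More structurally, the induction hypothesis I would carry is that $G_{n-1}$ is a finite sum of terms of the form $G_1(L_\alpha)/(L_\alpha \cdot P_\alpha)$ (after writing $(G_1(L)-G_1(0)\cdot 1)$ in the natural way), where each $L_\alpha$ is a linear form in $s_2,\dots,s_n$ and each $P_\alpha$ is a product of such linear forms with total degree $n-2$; this is precisely the shape already visible in the displayed formula for $G_3$. Under the substitution $s_i \mapsto rs_i$, each summand scales as $r^{-(n-2)}$, so $r^{n-1}G_{n-1}(rs_2,\dots,rs_n)$ reduces to a sum of $r G_1(rL_\alpha)/P_\alpha = (e^{rL_\alpha}-1)/(L_\alpha P_\alpha)$. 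Multiplying by $e^{s_1 r}$ and integrating over $r\in[0,1]$ converts each such summand into $(G_1(s_1+L_\alpha)-G_1(s_1))/(L_\alpha P_\alpha)$, which is exactly the type of finite difference appearing in the stated formulas.

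I expect no substantive obstacle: the argument is essentially algebraic bookkeeping once the scaling identity $rG_1(ru)=(e^{ru}-1)/u$ is exploited. The only step that requires a little care is the $G_4$ case, where one must expand the $G_3$ formula, apply the recursion, and then regroup the resulting four finite difference terms so that the denominators factor correctly as $s_4(s_3+s_4)(s_2+s_3+s_4)$, $s_4(s_3+s_4)s_2$, $s_4 s_3(s_2+s_3)$, and $s_4 s_3 s_2$. This is mechanical but bookkeeping-heavy; the structural induction outlined above guarantees the answer will match the stated closed form.
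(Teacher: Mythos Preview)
Your proposal is correct. The paper itself does not supply a proof of this lemma; it simply states the finite-difference formulas after introducing the recursion of Lemma~\ref{BabyFunctions1}, so your approach---feeding the recursion $G_n(s_1,\dots,s_n)=\int_0^1 r^{n-1}e^{s_1r}G_{n-1}(rs_2,\dots,rs_n)\,dr$ together with the scaling identity $rG_1(ru)=(e^{ru}-1)/u$---is exactly the intended argument, and your explicit verifications for $G_2$, $G_3$, $G_4$ are accurate.

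One minor remark: your description of the structural induction hypothesis (``terms of the form $G_1(L_\alpha)/(L_\alpha\cdot P_\alpha)$'') is slightly off as stated; the correct shape is that $G_{n-1}(s_2,\dots,s_n)$ is a signed sum of terms $G_1(L_\alpha)/P_\alpha$ where each $P_\alpha$ is a product of linear forms of total degree $n-2$ (e.g.\ for $G_2$ the denominators are just $s_3$, for $G_3$ they are $s_4(s_3+s_4)$ and $s_4s_3$). This is precisely what makes the homogeneity count work: under $s_i\mapsto rs_i$ the denominator contributes $r^{-(n-2)}$, leaving exactly one factor of $r$ attached to each $G_1$, and then $rG_1(rL)=(e^{rL}-1)/L$ closes the induction. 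Your computations already use this correctly; only the verbal summary needed adjusting.
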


\smallskip

We recall that we provided the explicit formulas for the 
functions $G_1, G_2, G_3, G_4$ by \eqref{ExplicitBabyFunctions}, 
since they play an important role in the functional 
relations stated in Theorem \ref{FuncRelationsThm} in Section \ref{a_4Sec}.

\smallskip

Now we  are absolutely ready to start the second calculation of the gradient 
\[
\dep \vphi_0(a_4(h+ \vep a)), 
\] 
where $h, a \in \CNT$ are selfadjoint elements, 
by computing 
the gradient corresponding to each term appearing in the 
expression \eqref{a_4expression} for the term $a_4$. In fact, the terms 
that have functions of the modular automorphism with the same 
number of variables involved give rise to, although lengthy and different,  
but somewhat similar calculations. Therefore, 
we have chosen four different terms from \eqref{a_4expression} that 
respectively involve  one, two, three and four variable functions, and 
will demonstrate the gradient calculation for each individual case 
in the following subsections.

\subsection{Gradient of $ 
\frac{1}{2} \, \varphi_0 \left ( e^{h} K_1(\nab)  ( \delta _1^2 \delta _2^2 ( h )  \right )$}

In order to compute 
\[
\dep \varphi_0 \left ( e^{h+\vep a} K_1(\nab_{\vep })  ( \delta _1^2 \delta _2^2 ( h+\vep a )  \right ), 
\]
first we use Lemma \ref{LowerwithTrace1} to write 
\[
\varphi_0 \left ( e^{h} K_1(\nab)  ( \delta _1^2 \delta _2^2 ( h )  \right )
= 
\frac{-2 \pi}{15} \, \varphi_0 \left( e^h \del_1^2 \del_2^2(h)  \right ).  
\] 
Therefore it suffices to calculate 
\[
\dep \Omega (h+ \vep a), 
\]
where
\[
\Omega(h) = \varphi_0 \left( e^h \del_1^2 \del_2^2(h)  \right ).
\]
Since 
\[
\Omega(h + \vep a) = \vphi_0 \left ( e^{h+ \vep a} \del_1^2 \del_2^2(h) \right ) +
\vep \vphi_0 \left (  e^{h+ \vep a} \del_1^2 \del_2^2(a) \right ),   
\]
under the trace $\vphi_0$ we have
\begin{eqnarray*}
\dep \Omega (h+ \vep a) &=& \dep e^{h+ \vep a} \, \del_1^2 \del_2^2(h) + 
e^h \del_1^2 \del_2^2 (a) \\
&=& \frac{1-e^{-\nab}}{\nab}(a) e^h \del_1^2 \del_2^2 (h) + e^h \del_1^2 \del_2^2 (a) \\
&=& ae^h \frac{1-e^\nab}{-\nab}(\del_1^2 \del_2^2(h)) + a e^h e^{-h} \del_1^2 \del_2^2(e^h). 
\end{eqnarray*}
In this expression, the term $e^{-h} \del_1^2 \del_2^2(e^h)$ can then be expanded 
using Lemma \ref{ToLogBaby}. 

\smallskip

Putting everything together, up to multiplying the right hand side of the following equality by $a e^h$, 
under the trace $\vphi_0$, we have 
\[
\dep \frac{1}{2} \, \varphi_0 \left ( e^{h +\vep a} K_1(\nab)  ( \delta _1^2 \delta _2^2 (h +\vep a  )  \right )
\]
\begin{center}
%\begin{math}
$=
\left(\frac{\left(1-e^{s_1}\right) \pi }{15 s_1}-\frac{1}{15} \pi  G_1\left(s_1\right)\right)\bigm|_{s_j = \nab}\left(\delta _1^2 \delta _2^2(h)\right)$

$+\left(-\frac{1}{15} \pi  G_2\left(s_1,s_2\right)\right)\bigm|_{s_j = \nab}\delta _1^2(h)\delta _2^2(h)$

$+\left(-\frac{1}{15} \pi  G_2\left(s_1,s_2\right)\right)\bigm|_{s_j = \nab}\delta _2^2(h)\delta _1^2(h)$

$+\left(\frac{1}{15} (-2) \pi  G_2\left(s_1,s_2\right)\right)\bigm|_{s_j = \nab}\delta _1(h)\left(\delta _1 \delta _2^2(h)\right)$

$+\left(\frac{1}{15} (-2) \pi  G_2\left(s_1,s_2\right)\right)\bigm|_{s_j = \nab}\delta _2(h)\left(\delta _1^2 \delta _2(h)\right)$

$+\left(\frac{1}{15} (-2) \pi  G_2\left(s_1,s_2\right)\right)\bigm|_{s_j = \nab}\left(\delta _1^2 \delta _2(h)\right)\delta _2(h)$

$+\left(\frac{1}{15} (-2) \pi  G_2\left(s_1,s_2\right)\right)\bigm|_{s_j = \nab}\left(\delta _1 \delta _2^2(h)\right)\delta _1(h)$

$+\left(\frac{1}{15} (-4) \pi  G_2\left(s_1,s_2\right)\right)\bigm|_{s_j = \nab}\left(\delta _1 \delta _2(h)\right)\left(\delta _1 \delta _2(h)\right)$

$+\left(\frac{1}{15} (-2) \pi  G_3\left(s_1,s_2,s_3\right)\right)\bigm|_{s_j = \nab}\delta _1(h)\delta _1(h)\delta _2^2(h)$

$+\left(\frac{1}{15} (-2) \pi  G_3\left(s_1,s_2,s_3\right)\right)\bigm|_{s_j = \nab}\delta _1(h)\delta _2^2(h)\delta _1(h)$

$+\left(\frac{1}{15} (-2) \pi  G_3\left(s_1,s_2,s_3\right)\right)\bigm|_{s_j = \nab}\delta _1^2(h)\delta _2(h)\delta _2(h)$

$+\left(\frac{1}{15} (-2) \pi  G_3\left(s_1,s_2,s_3\right)\right)\bigm|_{s_j = \nab}\delta _2(h)\delta _1^2(h)\delta _2(h)$

$+\left(\frac{1}{15} (-2) \pi  G_3\left(s_1,s_2,s_3\right)\right)\bigm|_{s_j = \nab}\delta _2(h)\delta _2(h)\delta _1^2(h)$

$+\left(\frac{1}{15} (-2) \pi  G_3\left(s_1,s_2,s_3\right)\right)\bigm|_{s_j = \nab}\delta _2^2(h)\delta _1(h)\delta _1(h)$

$+\left(\frac{1}{15} (-4) \pi  G_3\left(s_1,s_2,s_3\right)\right)\bigm|_{s_j = \nab}\delta _1(h)\delta _2(h)\left(\delta _1 \delta _2(h)\right)$

$+\left(\frac{1}{15} (-4) \pi  G_3\left(s_1,s_2,s_3\right)\right)\bigm|_{s_j = \nab}\delta _1(h)\left(\delta _1 \delta _2(h)\right)\delta _2(h)$

$+\left(\frac{1}{15} (-4) \pi  G_3\left(s_1,s_2,s_3\right)\right)\bigm|_{s_j = \nab}\delta _2(h)\delta _1(h)\left(\delta _1 \delta _2(h)\right)$

$+\left(\frac{1}{15} (-4) \pi  G_3\left(s_1,s_2,s_3\right)\right)\bigm|_{s_j = \nab}\delta _2(h)\left(\delta _1 \delta _2(h)\right)\delta _1(h)$

$+\left(\frac{1}{15} (-4) \pi  G_3\left(s_1,s_2,s_3\right)\right)\bigm|_{s_j = \nab}\left(\delta _1 \delta _2(h)\right)\delta _1(h)\delta _2(h)$

$+\left(\frac{1}{15} (-4) \pi  G_3\left(s_1,s_2,s_3\right)\right)\bigm|_{s_j = \nab}\left(\delta _1 \delta _2(h)\right)\delta _2(h)\delta _1(h)$

$+\left(\frac{1}{15} (-4) \pi  G_4\left(s_1,s_2,s_3,s_4\right)\right)\bigm|_{s_j = \nab}\delta _1(h)\delta _1(h)\delta _2(h)\delta _2(h)$

$+\left(\frac{1}{15} (-4) \pi  G_4\left(s_1,s_2,s_3,s_4\right)\right)\bigm|_{s_j = \nab}\delta _1(h)\delta _2(h)\delta _1(h)\delta _2(h)$

$+\left(\frac{1}{15} (-4) \pi  G_4\left(s_1,s_2,s_3,s_4\right)\right)\bigm|_{s_j = \nab}\delta _1(h)\delta _2(h)\delta _2(h)\delta _1(h)$

$+\left(\frac{1}{15} (-4) \pi  G_4\left(s_1,s_2,s_3,s_4\right)\right)\bigm|_{s_j = \nab}\delta _2(h)\delta _1(h)\delta _1(h)\delta _2(h)$

$+\left(\frac{1}{15} (-4) \pi  G_4\left(s_1,s_2,s_3,s_4\right)\right)\bigm|_{s_j = \nab}\delta _2(h)\delta _1(h)\delta _2(h)\delta _1(h)$

$+\left(\frac{1}{15} (-4) \pi  G_4\left(s_1,s_2,s_3,s_4\right)\right)\bigm|_{s_j = \nab}\delta _2(h)\delta _2(h)\delta _1(h)\delta _1(h). 
$
%\end{math}
\end{center}

\subsection{Gradient of $ \frac{1}{4}\,\varphi_0 \left ( e^{h} K_3 (\nab, \nab) \left (
 \left(\delta _1 \delta _2(h
   )\right) \cdot \left(\delta _1 \delta _2(h
   )\right) 
   \right ) \right )$}
In order to compute 
\[
\dep \Omega (h+ \vep a)
\]
where
\[
\Omega(h) = \varphi_0 
\left ( e^h K_3 (\nab, \nab) \left (
 \left(\delta _1 \delta _2(h
   )\right) \cdot \left(\delta _1 \delta _2(h
   )\right) 
   \right ) \right ),
\]
first we use Lemma \ref{LowerwithTrace1} to write 
\[
\Omega(h)= \varphi_0 \left( e^h K(\nab)(\del_1 \del_2(h)) \del_1 \del_2 (h) \right ),  
\] 
where the function $K$ is defined by 
\[
K(s_1)=K_3(s_1, -s_1).
\]
Therefore, under the trace $\vphi_0$, 
\begin{eqnarray}
\Omega(h+ \vep a) &=&   e^{h+\vep a} K(\nab_\vep)(\del_1 \del_2 (h))\, \del_1 \del_2 (h) \nonumber \\
&&+ \vep e^{h+\vep a} K(\nab_\vep)(\del_1 \del_2 (a)) \, \del_1 \del_2 (h) \nonumber \\
&&+  \vep e^{h+\vep a} K(\nab_\vep)(\del_1 \del_2 (h)) \, \del_1 \del_2 (a) \nonumber \\
&& + \vep^2 (\cdots). \nonumber
\end{eqnarray}
Therefore 
\begin{eqnarray} \label{preGr2}
\dep \Omega (h + \vep a) &=& 
\dep e^{h+\vep a}\cdot K(\nab)(\del_1 \del_2(h)) \, \del_1\del_2(h)  \\
&&+ e^h \dep K(\nab_\vep) (\del_1 \del_2(h)) \, \del_1 \del_2(h) \nonumber \\
&&+  e^h  K(\nab) (\del_1 \del_2(a)) \, \del_1 \del_2(h) \nonumber \\
&&+  e^h  K(\nab) (\del_1 \del_2(h)) \, \del_1 \del_2(a). \nonumber
\end{eqnarray}

\smallskip

The first term in \eqref{preGr2}, under $\vphi_0$, is equal to 
\begin{eqnarray*}
&&\frac{1-e^{-\nab}}{\nab}(a) \,e^h K(\nab) ( \del_1 \del_2(h)  ) \, \del_1 \del_2(h) \\
&=&a e^h \frac{1-e^{-\nab}}{\nab} \left( K(\nab) ( \del_1 \del_2(h)  ) \, \del_1 \del_2(h)  \right ) \\
&=& a e^h \left ( \frac{1-e^{s_1+s_2}}{-(s_1+s_2)} K(s_1) \right )\bigm|_{s_1=\nab, s_2=\nab}
(\del_1 \del_2 (h) \, \del_1 \del_2(h))
\end{eqnarray*}

\smallskip

Using Lemma \ref{GrofFuncCalc1}, for the second term in \eqref{preGr2}, under $\vphi_0$, we have 
\begin{eqnarray*}
&&e^h \dep K(\nab_\vep) (\del_1 \del_2(h)) \, \del_1 \del_2(h) \\
&=&  a e^h \left(  e^{s_1+s_2}\frac{K(-s_2)-K(s_1)}{s_1+s_2} \right ) \bigm|_{s_1, s_2=\nab}(\del_1 \del_2 (h) \, \del_1 \del_2(h))\\
&&+  a e^h \left(  e^{s_1}\frac{K(s_2)-K(-s_1)}{s_1+s_2} \right ) \bigm|_{s_1, s_2=\nab}(\del_1 \del_2 (h) \, \del_1 \del_2(h)). 
\end{eqnarray*}

\smallskip

For the third term in \eqref{preGr2}, under $\vphi_0$, we can write 
\begin{eqnarray*}
&&e^h  K(\nab) (\del_1 \del_2(a)) \, \del_1 \del_2(h)  \\
&=& \del_1\del_2(a) e^h K(-\nab) e^\nab(\del_1 \del_2(h)) \\
&=& a e^h e^{-h} \del_1 \del_2(e^h) K_v(\nab)(\del_1 \del_2(h)) + 
a e^h e^{-h} \del_1 (e^h) \del_2 (K_v(\nab)(\del_1 \del_2(h))) \\
&&+ a e^h e^{-h} \del_2 (e^h) \, \del_1 \left (K_v(\nab)(\del_1 \del_2(h)) \right) + 
a e^h \del_1 \del_2  \left( K_v(\nab)(\del_1 \del_2(h)) \right ),  
\end{eqnarray*}
where 
\[
K_v(s_1) = K(-s_1)e^{s_1}.
\] 
In the above expression, terms of the 
form $e^{-h} \del_{j_1}(e^h)$ and $e^{-h} \del_{j_1} \del_{j_2}(e^h)$ can then 
be expanded using Lemma \ref{ToLogBaby}. Moreover, we use  Lemma \ref{DerFuncCalc1} and Lemma \ref{DerFuncCalc2} to expand the terms of the form $ \del_2 (K_v(\nab)(\del_1 \del_2(h)))$ and $\del_1 \del_2  \left( K_v(\nab)(\del_1 \del_2(h)) \right )$.

\smallskip

Similarly, for the fourth term in \eqref{preGr2}, under $\vphi_0$, we can write 
\begin{eqnarray*}
&& e^h  K(\nab) (\del_1 \del_2(h)) \, \del_1 \del_2(a) \\
&=& a \del_1 \del_2 (e^h  K(\nab) (\del_1 \del_2(h)))  \\
&=& a \, e^h e^{-h} \del_1 \del_2 (e^h)  K(\nab) (\del_1 \del_2(h)) + 
ae^h e^{-h} \del_1 (e^h) \del_2 \left(  K(\nab) (\del_1 \del_2(h)) \right ) \\
&&+ ae^h e^{-h} \del_2 (e^h) \, \del_1 \left(  K(\nab) (\del_1 \del_2(h)) \right ) 
+  ae^h   \del_1 \del_2 \left(  K(\nab) (\del_1 \del_2(h)) \right ).
\end{eqnarray*}
In this expression also,  terms of the 
form $e^{-h} \del_{j_1}(e^h)$ and $e^{-h} \del_{j_1} \del_{j_2}(e^h)$ can then 
be expanded using Lemma \ref{ToLogBaby}, and the terms of the form 
$\del_2 \left(  K(\nab) (\del_1 \del_2(h)) \right )$ and   
$\del_1 \del_2 \left(  K(\nab) (\del_1 \del_2(h)) \right )$ need to be expanded 
using Lemma \ref{DerFuncCalc1} and Lemma \ref{DerFuncCalc2}. 

\smallskip

Finally, putting everything together, up to multiplying the right hand side of the following equality by $a e^h$, 
under the trace $\vphi_0$, we have 
\[
\dep
\frac{1}{4}\,\varphi_0 \left ( e^{h+\vep a} K_3 (\nab_{\vep}, \nab_{\vep }) \left (
 \left(\delta _1 \delta _2(h+\vep a
   )\right) \cdot \left(\delta _1 \delta _2(h+\vep a
   )\right) 
   \right ) \right )
\]
\begin{center}
\begin{math}
=(\frac{1}{4} e^{s_1} k_3(-s_1)+\frac{1}{4} k_3(s_1))\bigm|_{s_j = \nab}(\delta _1^2 \delta _2^2(h))
\end{math}
\end{center}
\begin{center}
\begin{math}
+
(-\frac{e^{s_1+s_2} k_3(-s_1-s_2)}{4 s_1}+\frac{1}{4} e^{s_2} G_1(s_1) k_3(-s_2)+\frac{e^{s_2} k_3(-s_2)}{4 s_1}+\frac{1}{4} G_1(s_1) k_3(s_2)+\frac{k_3(s_2)}{4 s_1}-\frac{k_3(s_1+s_2)}{4 s_1})\bigm|_{s_j = \nab}\delta _1(h)(\delta _1 \delta _2^2(h))
\end{math}
\end{center}
\begin{center}
\begin{math}
+(-\frac{e^{s_1+s_2} k_3(-s_1-s_2)}{4 s_1}+\frac{1}{4} e^{s_2} G_1(s_1) k_3(-s_2)+\frac{e^{s_2} k_3(-s_2)}{4 s_1}+\frac{1}{4} G_1(s_1) k_3(s_2)+\frac{k_3(s_2)}{4 s_1}-\frac{k_3(s_1+s_2)}{4 s_1})\bigm|_{s_j = \nab}\delta _2(h)(\delta _1^2 \delta _2(h))
\end{math}
\end{center}
\begin{center}
\begin{math}
+(-\frac{e^{s_1} k_3(-s_1)}{4 s_2}+\frac{e^{s_1+s_2} k_3(-s_1-s_2)}{4 s_2}+\frac{k_3(s_1+s_2)}{4 s_2}-\frac{k_3(s_1)}{4 s_2})\bigm|_{s_j = \nab}(\delta _1^2 \delta _2(h))\delta _2(h)
\end{math}
\end{center}
\begin{center}
\begin{math}
+(-\frac{e^{s_1} k_3(-s_1)}{4 s_2}+\frac{e^{s_1+s_2} k_3(-s_1-s_2)}{4 s_2}+\frac{k_3(s_1+s_2)}{4 s_2}-\frac{k_3(s_1)}{4 s_2})\bigm|_{s_j = \nab}(\delta _1 \delta _2^2(h))\delta _1(h)
\end{math}
\end{center}
\begin{center}
\begin{math}
+(-\frac{e^{s_1} k_3(-s_1)}{2 (s_1+s_2)}-\frac{e^{s_1} s_1 k_3(-s_1)}{4 s_2 (s_1+s_2)}+\frac{e^{s_1+s_2} s_1 k_3(-s_1-s_2)}{4 s_2 (s_1+s_2)}+\frac{e^{s_2} s_1 G_1(s_1) k_3(-s_2)}{4 (s_1+s_2)}+\frac{e^{s_2} s_2 G_1(s_1) k_3(-s_2)}{4 (s_1+s_2)}+\frac{e^{s_2} k_3(-s_2)}{4 (s_1+s_2)}+\frac{e^{s_1+s_2} k_3(-s_2)}{4 (s_1+s_2)}+\frac{e^{s_2} s_2 k_3(-s_2)}{4 s_1 (s_1+s_2)}+\frac{s_1 G_1(s_1) k_3(s_2)}{4 (s_1+s_2)}+\frac{s_2 G_1(s_1) k_3(s_2)}{4 (s_1+s_2)}+\frac{e^{s_1} k_3(s_2)}{4 (s_1+s_2)}+\frac{k_3(s_2)}{4 (s_1+s_2)}+\frac{s_2 k_3(s_2)}{4 s_1 (s_1+s_2)}+\frac{s_1 k_3(s_1+s_2)}{4 s_2 (s_1+s_2)}-\frac{k_3(s_1)}{2 (s_1+s_2)}-\frac{e^{s_1+s_2} s_2 k_3(-s_1-s_2)}{4 s_1 (s_1+s_2)}-\frac{s_2 k_3(s_1+s_2)}{4 s_1 (s_1+s_2)}-\frac{s_1 k_3(s_1)}{4 s_2 (s_1+s_2)})\bigm|_{s_j = \nab}(\delta _1 \delta _2(h))(\delta _1 \delta _2(h))
\end{math}
\end{center}
\begin{center}
\begin{math}
+(-\frac{e^{s_2+s_3} G_1(s_1) k_3(-s_2-s_3)}{4 (s_1+s_2)}-\frac{e^{s_2+s_3} k_3(-s_2-s_3)}{4 s_1 (s_1+s_2)}-\frac{e^{s_2+s_3} k_3(-s_2-s_3)}{4 s_2 (s_1+s_2)}-\frac{e^{s_2+s_3} s_1 G_1(s_1) k_3(-s_2-s_3)}{4 s_2 (s_1+s_2)}+\frac{e^{s_1+s_2+s_3} k_3(-s_1-s_2-s_3)}{4 s_1 (s_1+s_2)}+\frac{e^{s_3} G_1(s_1) k_3(-s_3)}{4 (s_1+s_2)}+\frac{e^{s_3} s_1 G_1(s_1) k_3(-s_3)}{4 s_2 (s_1+s_2)}+\frac{e^{s_3} s_1 G_2(s_1,s_2) k_3(-s_3)}{4 (s_1+s_2)}+\frac{e^{s_3} s_2 G_2(s_1,s_2) k_3(-s_3)}{4 (s_1+s_2)}+\frac{e^{s_3} k_3(-s_3)}{4 s_2 (s_1+s_2)}+\frac{G_1(s_1) k_3(s_3)}{4 (s_1+s_2)}+\frac{s_1 G_1(s_1) k_3(s_3)}{4 s_2 (s_1+s_2)}+\frac{s_1 G_2(s_1,s_2) k_3(s_3)}{4 (s_1+s_2)}+\frac{s_2 G_2(s_1,s_2) k_3(s_3)}{4 (s_1+s_2)}+\frac{k_3(s_3)}{4 s_2 (s_1+s_2)}+\frac{k_3(s_1+s_2+s_3)}{4 s_1 (s_1+s_2)}-\frac{G_1(s_1) k_3(s_2+s_3)}{4 (s_1+s_2)}-\frac{k_3(s_2+s_3)}{4 s_1 (s_1+s_2)}-\frac{s_1 G_1(s_1) k_3(s_2+s_3)}{4 s_2 (s_1+s_2)}-\frac{k_3(s_2+s_3)}{4 s_2 (s_1+s_2)})\bigm|_{s_j = \nab}\delta _1(h)\delta _2(h)(\delta _1 \delta _2(h))
\end{math}
\end{center}
\begin{center}
\begin{math}
+(-\frac{e^{s_2+s_3} G_1(s_1) k_3(-s_2-s_3)}{4 (s_1+s_2)}-\frac{e^{s_2+s_3} k_3(-s_2-s_3)}{4 s_1 (s_1+s_2)}-\frac{e^{s_2+s_3} k_3(-s_2-s_3)}{4 s_2 (s_1+s_2)}-\frac{e^{s_2+s_3} s_1 G_1(s_1) k_3(-s_2-s_3)}{4 s_2 (s_1+s_2)}+\frac{e^{s_1+s_2+s_3} k_3(-s_1-s_2-s_3)}{4 s_1 (s_1+s_2)}+\frac{e^{s_3} G_1(s_1) k_3(-s_3)}{4 (s_1+s_2)}+\frac{e^{s_3} s_1 G_1(s_1) k_3(-s_3)}{4 s_2 (s_1+s_2)}+\frac{e^{s_3} s_1 G_2(s_1,s_2) k_3(-s_3)}{4 (s_1+s_2)}+\frac{e^{s_3} s_2 G_2(s_1,s_2) k_3(-s_3)}{4 (s_1+s_2)}+\frac{e^{s_3} k_3(-s_3)}{4 s_2 (s_1+s_2)}+\frac{G_1(s_1) k_3(s_3)}{4 (s_1+s_2)}+\frac{s_1 G_1(s_1) k_3(s_3)}{4 s_2 (s_1+s_2)}+\frac{s_1 G_2(s_1,s_2) k_3(s_3)}{4 (s_1+s_2)}+\frac{s_2 G_2(s_1,s_2) k_3(s_3)}{4 (s_1+s_2)}+\frac{k_3(s_3)}{4 s_2 (s_1+s_2)}+\frac{k_3(s_1+s_2+s_3)}{4 s_1 (s_1+s_2)}-\frac{G_1(s_1) k_3(s_2+s_3)}{4 (s_1+s_2)}-\frac{k_3(s_2+s_3)}{4 s_1 (s_1+s_2)}-\frac{s_1 G_1(s_1) k_3(s_2+s_3)}{4 s_2 (s_1+s_2)}-\frac{k_3(s_2+s_3)}{4 s_2 (s_1+s_2)})\bigm|_{s_j = \nab}\delta _2(h)\delta _1(h)(\delta _1 \delta _2(h))
\end{math}
\end{center}
\begin{center}
\begin{math}
+(\frac{e^{s_1+s_2} k_3(-s_1-s_2)}{4 s_1 s_3}+\frac{k_3(s_1+s_2)}{4 s_1 s_3}+\frac{e^{s_2+s_3} G_1(s_1) k_3(-s_2-s_3)}{4 s_3}+\frac{e^{s_2+s_3} k_3(-s_2-s_3)}{4 s_1 s_3}+\frac{G_1(s_1) k_3(s_2+s_3)}{4 s_3}+\frac{k_3(s_2+s_3)}{4 s_1 s_3}-\frac{e^{s_2} G_1(s_1) k_3(-s_2)}{4 s_3}-\frac{G_1(s_1) k_3(s_2)}{4 s_3}-\frac{e^{s_2} k_3(-s_2)}{4 s_1 s_3}-\frac{k_3(s_2)}{4 s_1 s_3}-\frac{e^{s_1+s_2+s_3} k_3(-s_1-s_2-s_3)}{4 s_1 s_3}-\frac{k_3(s_1+s_2+s_3)}{4 s_1 s_3})\bigm|_{s_j = \nab}\delta _1(h)(\delta _1 \delta _2(h))\delta _2(h)
\end{math}
\end{center}
\begin{center}
\begin{math}
+(\frac{e^{s_1+s_2} k_3(-s_1-s_2)}{4 s_1 s_3}+\frac{k_3(s_1+s_2)}{4 s_1 s_3}+\frac{e^{s_2+s_3} G_1(s_1) k_3(-s_2-s_3)}{4 s_3}+\frac{e^{s_2+s_3} k_3(-s_2-s_3)}{4 s_1 s_3}+\frac{G_1(s_1) k_3(s_2+s_3)}{4 s_3}+\frac{k_3(s_2+s_3)}{4 s_1 s_3}-\frac{e^{s_2} G_1(s_1) k_3(-s_2)}{4 s_3}-\frac{G_1(s_1) k_3(s_2)}{4 s_3}-\frac{e^{s_2} k_3(-s_2)}{4 s_1 s_3}-\frac{k_3(s_2)}{4 s_1 s_3}-\frac{e^{s_1+s_2+s_3} k_3(-s_1-s_2-s_3)}{4 s_1 s_3}-\frac{k_3(s_1+s_2+s_3)}{4 s_1 s_3})\bigm|_{s_j = \nab}\delta _2(h)(\delta _1 \delta _2(h))\delta _1(h)
\end{math}
\end{center}
\begin{center}
\begin{math}
+(\frac{e^{s_1} k_3(-s_1)}{4 s_2 (s_2+s_3)}+\frac{k_3(s_1)}{4 s_2 (s_2+s_3)}+\frac{e^{s_1+s_2+s_3} k_3(-s_1-s_2-s_3)}{4 s_3 (s_2+s_3)}+\frac{k_3(s_1+s_2+s_3)}{4 s_3 (s_2+s_3)}-\frac{e^{s_1+s_2} k_3(-s_1-s_2)}{4 s_2 (s_2+s_3)}-\frac{k_3(s_1+s_2)}{4 s_2 (s_2+s_3)}-\frac{e^{s_1+s_2} k_3(-s_1-s_2)}{4 s_3 (s_2+s_3)}-\frac{k_3(s_1+s_2)}{4 s_3 (s_2+s_3)})\bigm|_{s_j = \nab}(\delta _1 \delta _2(h))\delta _1(h)\delta _2(h)
\end{math}
\end{center}
\begin{center}
\begin{math}
+(\frac{e^{s_1} k_3(-s_1)}{4 s_2 (s_2+s_3)}+\frac{k_3(s_1)}{4 s_2 (s_2+s_3)}+\frac{e^{s_1+s_2+s_3} k_3(-s_1-s_2-s_3)}{4 s_3 (s_2+s_3)}+\frac{k_3(s_1+s_2+s_3)}{4 s_3 (s_2+s_3)}-\frac{e^{s_1+s_2} k_3(-s_1-s_2)}{4 s_2 (s_2+s_3)}-\frac{k_3(s_1+s_2)}{4 s_2 (s_2+s_3)}-\frac{e^{s_1+s_2} k_3(-s_1-s_2)}{4 s_3 (s_2+s_3)}-\frac{k_3(s_1+s_2)}{4 s_3 (s_2+s_3)})\bigm|_{s_j = \nab}(\delta _1 \delta _2(h))\delta _2(h)\delta _1(h). 
\end{math}
\end{center}

\subsection{Gradient of $\frac{1}{8}\, \vphi_0 \left (  e^{h} K_{13} (\nab, \nab, \nab) \left ( 
\left(\delta _1 \delta _2(h
   )\right)\cdot \delta _1(h )\cdot \delta _2(h
   )
\right ) \right )$} We use Lemma \ref{LowerwithTrace1} to write
\begin{eqnarray*}
\Omega(h) &=& \vphi_0 \left (  e^h K_{13} (\nab, \nab, \nab) \left ( 
\left(\delta _1 \delta _2(h
   )\right)\cdot \delta _1(h )\cdot \delta _2(h
   )
\right ) \right ) \\
&=& \vphi_0 \left (  e^h K (\nab, \nab) \left ( 
\left(\delta _1 \delta _2(h
   )\right)\cdot \delta _1(h )
\right ) \cdot \delta _2(h
   ) \right ),
\end{eqnarray*}
where the function $K$ is defined by 
\[
K(s_1, s_2) = K_{13}(s_1, s_2, -s_1-s_2). 
\]
Therefore, under the trace $\vphi_0$, we have 
\begin{eqnarray*}
\Omega(h+ \vep a) &=& e^{h+ \vep a} K(\nab_\vep, \nab_\vep)(\del_1 \del_2(h) \, \del_1(h)) \, \del_2(h) \\
&&+ \vep e^{h+ \vep a} K(\nab_\vep, \nab_\vep)(\del_1 \del_2(h) \, \del_1(h)) \, \del_2(a) \\
&& +  \vep e^{h+ \vep a} K(\nab_\vep, \nab_\vep)(\del_1 \del_2(h) \, \del_1(a)) \, \del_2(h)\\
&& +  \vep e^{h+ \vep a} K(\nab_\vep, \nab_\vep)(\del_1 \del_2(a) \, \del_1(h)) \, \del_2(h)\\
&& + \vep^2 (\cdots)+ \vep^3(\cdots). 
\end{eqnarray*} 
Under $\vphi_0$, we can thus continue to write 
\begin{eqnarray} \label{preGr3}
\dep \Omega(h+\vep a) &=&
\dep e^{h+\vep a}\, K(\nab, \nab) (\del_1 \del_2(h) \, \del_1(h)) \, \del_2(h)  \\
&&+  e^h\, \dep K(\nab_\vep, \nab_\vep) (\del_1 \del_2(h) \, \del_1(h)) \, \del_2(h) \nonumber \\
&&+   e^h\,  K(\nab, \nab) (\del_1 \del_2(h) \, \del_1(h)) \, \del_2(a) \nonumber \\
&&+    e^h\,  K(\nab, \nab) (\del_1 \del_2(h) \, \del_1(a)) \, \del_2(h) \nonumber \\ 
&&+    e^h\,  K(\nab, \nab) (\del_1 \del_2(a) \, \del_1(h)) \, \del_2(h). \nonumber
\end{eqnarray}

\smallskip

The first term in the above expression, under $\vphi_0$, is equal to 
\begin{eqnarray*}
&&\frac{1-e^{-\nab}}{\nab}(a) \, e^h \, K(\nab, \nab) (\del_1 \del_2(h) \, \del_1(h)) \, \del_2(h) \\
&=& a e^h \frac{1-e^{\nab}}{-\nab} \left ( K(\nab, \nab) (\del_1 \del_2(h) \, \del_1(h))  \right ) \\
&=& a e^h \left ( \frac{1-e^{s_1+s_2+s_3}}{-s_1-s_2-s_3} K(s_1, s_2)\right )  \bigm|_{s_1, s_2, s_3 = \nab} (\del_1 \del_2(h) \, \del_1(h)  \, \del_2(h) ). 
\end{eqnarray*}

\smallskip

Using Lemma \ref{GrofFuncCalc2}, for the second term in \eqref{preGr3}, under $\vphi_0$, we have 
\begin{eqnarray*}
&&e^h\, \dep K(\nab_\vep, \nab_\vep) (\del_1 \del_2(h) \, \del_1(h)) \, \del_2(h) \\
&=& a e^h \left ( e^{s_1+s_2+s_3} \frac{ K(s_1, s_2) - K(-s_2-s_3,s_2) }{-s_1-s_2-s_3}\right ) \bigm|_{s_1, s_2, s_3 = \nab} ( \del_1\del_2(h) \del_1(h) \del_2(h)) + \\
&& a e^h \left ( e^{s_1+s_2} \frac{ K(-s_1-s_2, s_2) - K(s_3,-s_2-s_3) }{-s_1-s_2-s_3}\right ) \bigm|_{s_1, s_2, s_3 = \nab} (  \del_1(h) \del_2(h) \del_1\del_2(h))\\
&&+ a e^h \left ( e^{s_1} \frac{ K(s_2,-s_1 -s_2) - K(s_2,s_3) }{-s_1-s_2-s_3}\right ) \bigm|_{s_1, s_2, s_3 = \nab} (   \del_2(h) \del_1\del_2(h) \del_1(h)). 
\end{eqnarray*}

\smallskip

For the third term in \eqref{preGr3}, under $\vphi_0$, we can write 
\begin{eqnarray*}
&& e^h\,  K(\nab, \nab) (\del_1 \del_2(h) \, \del_1(h)) \, \del_2(a) \\
&=& \del_2(a) e^h\,  K(\nab, \nab) (\del_1 \del_2(h) \, \del_1(h)) \\
&=& - a e^h e^{-h} \del_2(e^h)\,  K(\nab, \nab) (\del_1 \del_2(h) \, \del_1(h)) 
-  a e^h\, \del_2  ( K(\nab, \nab) (\del_1 \del_2(h) \, \del_1(h))  ). 
\end{eqnarray*}

\smallskip

The forth term in \eqref{preGr3}, under $\vphi_0$ and by using Lemma \ref{ShiftwithTrace}, can be written as
\begin{eqnarray*}
&&e^h\,  K(\nab, \nab) (\del_1 \del_2(h) \, \del_1(a)) \, \del_2(h) \\
&=& \del_1(a) K_v(\nab, \nab)(\del_2(h) \cdot e^h\del_1\del_2(h)) \\
&=& \del_1(a) e^h K_{vv}(\nab, \nab)(\del_2(h) \, \del_1\del_2(h)) \\
&=& - a e^h e^{-h} \del_1(e^h) K_{vv}(\nab, \nab)(\del_2(h) \, \del_1\del_2(h)) \\
&&- a  e^h \del_1(K_{vv}(\nab, \nab)(\del_2(h) \, \del_1\del_2(h))),  
\end{eqnarray*}
where 
\[
K_v(s_1, s_2)= K(s_2, -s_1-s_2), \qquad K_{vv}(s_1, s_2)= e^{s_1} K_v(s_1, s_2).  
\]
In the above expression, the  term of the form 
$e^{-h} \del_{j_1}(e^h)$ can  
be expanded using Lemma \ref{ToLogBaby}.  We use  Lemma \ref{DerFuncCalc2}  to expand $ \del_1(K_{vv}(\nab, \nab)(\del_2(h) \, \del_1\del_2(h)))$.

\smallskip

Similarly, for the fifth term in \eqref{preGr3}, under $\vphi_0$ and using Lemma \ref{ShiftwithTrace}, we write
\begin{eqnarray*}
&& e^h\,  K(\nab, \nab) (\del_1 \del_2(a) \, \del_1(h)) \, \del_2(h) \\
&=& e^h \del_1 \del_2(a)  K_w(\nab, \nab) ( \del_1(h) \, \del_2(h))  \\
&=& \del_1 \del_2(a) e^h K_{ww}(\nab, \nab) ( \del_1(h) \, \del_2(h))   \\
&=& a e^h e^{-h}\del_1 \del_2(e^h) K_{ww}(\nab, \nab) ( \del_1(h) \, \del_2(h)) \\
&&+ a e^h e^{-h} \del_1(e^h) \,\del_2( K_{ww}(\nab, \nab) ( \del_1(h) \, \del_2(h)) ) \\
&&+  a e^h e^{-h} \del_2(e^h) \, \del_1( K_{ww}(\nab, \nab) ( \del_1(h) \, \del_2(h)) ) \\
&&+ ae^h \, \del_1 \del_2( K_{ww}(\nab, \nab) ( \del_1(h) \, \del_2(h)) ), 
\end{eqnarray*}
where 
\[
K_w(s_1, s_2)= K(-s_1-s_2, s_1), \qquad K_{ww}(s_1, s_2)= e^{s_1+s_2} K_w(s_1, s_2).  
\]
Here also, one can use Lemma \ref{ToLogBaby} for expanding the 
terms of the form $e^{-h} \del_{j_1}(e^h)$ and $e^{-h} \del_{j_1} \del_{j_2}(e^h)$, 
Lemma \ref{DerFuncCalc2} can be used for 
$\del_{j_1}( K_{ww}(\nab, \nab) ( \del_1(h) \, \del_2(h)) )$, and Lemma \ref{DerFuncCalc2} 
and Lemma \ref{DerFuncCalc3} allow one to 
expand the term  $\del_1 \del_2( K_{ww}(\nab, \nab) ( \del_1(h) \, \del_2(h)) )$. 

\smallskip

Putting everything together, up to multiplying the right hand side of the following equality by $a e^h$, 
under the trace $\vphi_0$, we have 
\[
\dep
\frac{1}{8}\, \vphi_0 \left (  e^{h+\vep a} K_{13} (\nab_\vep, \nab_\vep, \nab_\vep) \left ( 
\left(\delta _1 \delta _2( h+\vep a
   )\right)\cdot \delta _1( h+\vep a )\cdot \delta _2(h+\vep a 
   )
\right ) \right )
\]
{\tiny 
\begin{center}
\begin{math}
= (-\frac{1}{8} k_{13}(s_1,s_2))\bigm|_{s_j = \nab}(\delta _1 \delta _2^2(h))\delta _1(h)
\end{math}
\end{center}
\begin{center}
\begin{math}
+(\frac{1}{8} e^{s_1+s_2} k_{13}(-s_1-s_2,s_1))\bigm|_{s_j = \nab}\delta _1(h)(\delta _1 \delta _2^2(h)) 
\end{math}
\end{center}
\begin{center}
\begin{math}
+(\frac{1}{8} e^{s_1+s_2} k_{13}(-s_1-s_2,s_1))\bigm|_{s_j = \nab}\delta _1^2(h)\delta _2^2(h)
\end{math}
\end{center}
\begin{center}
\begin{math}
+(\frac{1}{8} e^{s_1+s_2} k_{13}(-s_1-s_2,s_1))\bigm|_{s_j = \nab}(\delta _1^2 \delta _2(h))\delta _2(h)
\end{math}
\end{center}
\begin{center}
\begin{math}
+(-\frac{1}{8} e^{s_1} k_{13}(s_2,-s_1-s_2))\bigm|_{s_j = \nab}\delta _2(h)(\delta _1^2 \delta _2(h))
\end{math}
\end{center}
\begin{center}
\begin{math}
+(-\frac{1}{8} k_{13}(s_1,s_2)+\frac{1}{8} e^{s_1+s_2} k_{13}(-s_1-s_2,s_1)-\frac{1}{8} e^{s_1} k_{13}(s_2,-s_1-s_2))\bigm|_{s_j = \nab}(\delta _1 \delta _2(h))(\delta _1 \delta _2(h))
\end{math}
\end{center}
\begin{center}
\begin{math}
+(\frac{1}{8} e^{s_2+s_3} G_1(s_1) k_{13}(-s_2-s_3,s_2)+\frac{e^{s_2+s_3} k_{13}(-s_2-s_3,s_2)}{8 s_1}-\frac{e^{s_1+s_2+s_3} k_{13}(-s_1-s_2-s_3,s_1+s_2)}{8 s_1})\bigm|_{s_j = \nab}\delta _2(h)\delta _1^2(h)\delta _2(h)
\end{math}
\end{center}
\begin{center}
\begin{math}
+(\frac{1}{8} e^{s_2+s_3} G_1(s_1) k_{13}(-s_2-s_3,s_2)+\frac{e^{s_2+s_3} k_{13}(-s_2-s_3,s_2)}{8 s_1}+\frac{e^{s_1+s_2+s_3} k_{13}(-s_1-s_2-s_3,s_1+s_2)}{8 s_2}-\frac{e^{s_1+s_2+s_3} k_{13}(-s_1-s_2-s_3,s_1+s_2)}{8 s_1}-\frac{e^{s_1+s_2+s_3} k_{13}(-s_1-s_2-s_3,s_1)}{8 s_2})\bigm|_{s_j = \nab}\delta _1(h)\delta _1(h)\delta _2^2(h)
\end{math}
\end{center}
\begin{center}
\begin{math}
+(-\frac{e^{s_1+s_2} k_{13}(-s_1-s_2,s_1)}{8 s_3}+\frac{1}{8} e^{s_2+s_3} G_1(s_1) k_{13}(-s_2-s_3,s_2)+\frac{e^{s_2+s_3} k_{13}(-s_2-s_3,s_2)}{8 s_1}+\frac{e^{s_1+s_2+s_3} k_{13}(-s_1-s_2-s_3,s_1)}{8 s_3}+\frac{e^{s_1+s_2+s_3} k_{13}(-s_1-s_2-s_3,s_1+s_2)}{8 s_2}-\frac{e^{s_1+s_2+s_3} k_{13}(-s_1-s_2-s_3,s_1+s_2)}{8 s_1}-\frac{e^{s_1+s_2+s_3} k_{13}(-s_1-s_2-s_3,s_1)}{8 s_2})\bigm|_{s_j = \nab}\delta _1(h)(\delta _1 \delta _2(h))\delta _2(h)
\end{math}
\end{center}
\begin{center}
\begin{math}
+(-\frac{e^{s_1+s_2} k_{13}(-s_1-s_2,s_1)}{8 s_3}+\frac{e^{s_1+s_2+s_3} k_{13}(-s_1-s_2-s_3,s_1)}{8 s_3}+\frac{e^{s_1+s_2+s_3} k_{13}(-s_1-s_2-s_3,s_1+s_2)}{8 s_2}-\frac{e^{s_1+s_2+s_3} k_{13}(-s_1-s_2-s_3,s_1)}{8 s_2})\bigm|_{s_j = \nab}\delta _1^2(h)\delta _2(h)\delta _2(h)
\end{math}
\end{center}
\begin{center}
\begin{math}
+(\frac{1}{8} e^{s_2+s_3} G_1(s_1) k_{13}(-s_2-s_3,s_2)+\frac{e^{s_2+s_3} k_{13}(-s_2-s_3,s_2)}{8 s_1}+\frac{e^{s_1} k_{13}(s_2+s_3,-s_1-s_2-s_3)}{8 s_2}-\frac{e^{s_1+s_2+s_3} k_{13}(-s_1-s_2-s_3,s_1+s_2)}{8 s_1}-\frac{e^{s_1+s_2} k_{13}(s_3,-s_1-s_2-s_3)}{8 s_2})\bigm|_{s_j = \nab} \delta _2(h)\delta _1(h)(\delta _1 \delta _2(h))
\end{math}
\end{center}
\begin{center}
\begin{math}
+(\frac{k_{13}(s_1,s_2+s_3)}{8 s_2}+\frac{e^{s_1+s_2+s_3} k_{13}(-s_1-s_2-s_3,s_1)}{8 s_3}-\frac{k_{13}(s_1+s_2,s_3)}{8 s_2}-\frac{e^{s_1+s_2} k_{13}(-s_1-s_2,s_1)}{8 s_3})\bigm|_{s_j = \nab}(\delta _1 \delta _2(h))\delta _2(h)\delta _1(h)
\end{math}
\end{center}
\begin{center}
\begin{math}
+(\frac{e^{s_1+s_2+s_3} k_{13}(-s_1-s_2-s_3,s_1)}{8 s_3}-\frac{e^{s_1+s_2} k_{13}(-s_1-s_2,s_1)}{8 s_3})\bigm|_{s_j = \nab}\delta _1(h)\delta _2^2(h)\delta _1(h)
\end{math}
\end{center}
\begin{center}
\begin{math}
+(-\frac{e^{s_1+s_2} k_{13}(-s_1-s_2,s_1)}{4 (s_1+s_2+s_3)}-\frac{e^{s_1+s_2} s_1 k_{13}(-s_1-s_2,s_1)}{8 s_3 (s_1+s_2+s_3)}-\frac{e^{s_1+s_2} s_2 k_{13}(-s_1-s_2,s_1)}{8 s_3 (s_1+s_2+s_3)}+\frac{e^{s_1+s_2+s_3} s_1 k_{13}(-s_1-s_2-s_3,s_1)}{8 s_3 (s_1+s_2+s_3)}+\frac{e^{s_1+s_2+s_3} s_2 k_{13}(-s_1-s_2-s_3,s_1)}{8 s_3 (s_1+s_2+s_3)}+\frac{e^{s_1+s_2+s_3} k_{13}(-s_1-s_2-s_3,s_1+s_2)}{8 (s_1+s_2+s_3)}+\frac{e^{s_1+s_2+s_3} s_1 k_{13}(-s_1-s_2-s_3,s_1+s_2)}{8 s_2 (s_1+s_2+s_3)}+\frac{e^{s_1+s_2+s_3} s_3 k_{13}(-s_1-s_2-s_3,s_1+s_2)}{8 s_2 (s_1+s_2+s_3)}+\frac{e^{s_1+s_2} k_{13}(s_3,-s_2-s_3)}{8 (s_1+s_2+s_3)}+\frac{e^{s_1+s_2} k_{13}(s_3,-s_1-s_2-s_3)}{8 (s_1+s_2+s_3)}+\frac{e^{s_1+s_2} s_2 k_{13}(s_3,-s_1-s_2-s_3)}{8 s_1 (s_1+s_2+s_3)}+\frac{e^{s_1+s_2} s_3 k_{13}(s_3,-s_1-s_2-s_3)}{8 s_1 (s_1+s_2+s_3)}-\frac{e^{s_2} k_{13}(s_3,-s_2-s_3)}{8 (s_1+s_2+s_3)}-\frac{e^{s_2} s_1 G_1(s_1) k_{13}(s_3,-s_2-s_3)}{8 (s_1+s_2+s_3)}-\frac{e^{s_2} s_2 G_1(s_1) k_{13}(s_3,-s_2-s_3)}{8 (s_1+s_2+s_3)}-\frac{e^{s_2} s_3 G_1(s_1) k_{13}(s_3,-s_2-s_3)}{8 (s_1+s_2+s_3)}-\frac{e^{s_2} s_2 k_{13}(s_3,-s_2-s_3)}{8 s_1 (s_1+s_2+s_3)}-\frac{e^{s_2} s_3 k_{13}(s_3,-s_2-s_3)}{8 s_1 (s_1+s_2+s_3)}-\frac{e^{s_1+s_2+s_3} s_1 k_{13}(-s_1-s_2-s_3,s_1)}{8 s_2 (s_1+s_2+s_3)}-\frac{e^{s_1+s_2+s_3} s_3 k_{13}(-s_1-s_2-s_3,s_1)}{8 s_2 (s_1+s_2+s_3)})\bigm|_{s_j = \nab}\delta _1(h)\delta _2(h)(\delta _1 \delta _2(h))
\end{math}
\end{center}
\begin{center}
\begin{math}
+(\frac{s_1 k_{13}(s_1,s_2)}{8 s_3 (s_1+s_2+s_3)}+\frac{s_2 k_{13}(s_1,s_2)}{8 s_3 (s_1+s_2+s_3)}+\frac{e^{s_2+s_3} s_1 G_1(s_1) k_{13}(-s_2-s_3,s_2)}{8 (s_1+s_2+s_3)}+\frac{e^{s_2+s_3} s_2 G_1(s_1) k_{13}(-s_2-s_3,s_2)}{8 (s_1+s_2+s_3)}+\frac{e^{s_2+s_3} s_3 G_1(s_1) k_{13}(-s_2-s_3,s_2)}{8 (s_1+s_2+s_3)}+\frac{e^{s_2+s_3} k_{13}(-s_2-s_3,s_2)}{8 (s_1+s_2+s_3)}+\frac{e^{s_1+s_2+s_3} k_{13}(-s_2-s_3,s_2)}{8 (s_1+s_2+s_3)}+\frac{e^{s_2+s_3} s_2 k_{13}(-s_2-s_3,s_2)}{8 s_1 (s_1+s_2+s_3)}+\frac{e^{s_2+s_3} s_3 k_{13}(-s_2-s_3,s_2)}{8 s_1 (s_1+s_2+s_3)}+\frac{e^{s_1+s_2+s_3} s_1 k_{13}(-s_1-s_2-s_3,s_1+s_2)}{8 s_2 (s_1+s_2+s_3)}+\frac{e^{s_1+s_2+s_3} s_3 k_{13}(-s_1-s_2-s_3,s_1+s_2)}{8 s_2 (s_1+s_2+s_3)}-\frac{k_{13}(s_1,s_2+s_3)}{8 (s_1+s_2+s_3)}-\frac{e^{s_1+s_2+s_3} k_{13}(-s_1-s_2-s_3,s_1)}{8 (s_1+s_2+s_3)}-\frac{e^{s_1+s_2+s_3} s_2 k_{13}(-s_1-s_2-s_3,s_1+s_2)}{8 s_1 (s_1+s_2+s_3)}-\frac{e^{s_1+s_2+s_3} s_3 k_{13}(-s_1-s_2-s_3,s_1+s_2)}{8 s_1 (s_1+s_2+s_3)}-\frac{e^{s_1+s_2+s_3} s_1 k_{13}(-s_1-s_2-s_3,s_1)}{8 s_2 (s_1+s_2+s_3)}-\frac{e^{s_1+s_2+s_3} s_3 k_{13}(-s_1-s_2-s_3,s_1)}{8 s_2 (s_1+s_2+s_3)}-\frac{s_1 k_{13}(s_1,s_2+s_3)}{8 s_3 (s_1+s_2+s_3)}-\frac{s_2 k_{13}(s_1,s_2+s_3)}{8 s_3 (s_1+s_2+s_3)})\bigm|_{s_j = \nab}(\delta _1 \delta _2(h))\delta _1(h)\delta _2(h)
\end{math}
\end{center}
\begin{center}
\begin{math}
+(\frac{e^{s_1} s_1 k_{13}(s_2,-s_1-s_2)}{8 s_3 (s_1+s_2+s_3)}+\frac{e^{s_1} s_2 k_{13}(s_2,-s_1-s_2)}{8 s_3 (s_1+s_2+s_3)}+\frac{e^{s_1} k_{13}(s_2,s_3)}{8 (s_1+s_2+s_3)}+\frac{k_{13}(s_1+s_2,s_3)}{8 (s_1+s_2+s_3)}+\frac{s_2 k_{13}(s_1+s_2,s_3)}{8 s_1 (s_1+s_2+s_3)}+\frac{s_3 k_{13}(s_1+s_2,s_3)}{8 s_1 (s_1+s_2+s_3)}-\frac{s_1 G_1(s_1) k_{13}(s_2,s_3)}{8 (s_1+s_2+s_3)}-\frac{s_2 G_1(s_1) k_{13}(s_2,s_3)}{8 (s_1+s_2+s_3)}-\frac{s_3 G_1(s_1) k_{13}(s_2,s_3)}{8 (s_1+s_2+s_3)}-\frac{k_{13}(s_2,s_3)}{8 (s_1+s_2+s_3)}-\frac{e^{s_1} k_{13}(s_2+s_3,-s_1-s_2-s_3)}{8 (s_1+s_2+s_3)}-\frac{s_2 k_{13}(s_2,s_3)}{8 s_1 (s_1+s_2+s_3)}-\frac{s_3 k_{13}(s_2,s_3)}{8 s_1 (s_1+s_2+s_3)}-\frac{e^{s_1} s_1 k_{13}(s_2+s_3,-s_1-s_2-s_3)}{8 s_3 (s_1+s_2+s_3)}-\frac{e^{s_1} s_2 k_{13}(s_2+s_3,-s_1-s_2-s_3)}{8 s_3 (s_1+s_2+s_3)})\bigm|_{s_j = \nab}\delta _2(h)(\delta _1 \delta _2(h))\delta _1(h)
\end{math}
\end{center}
\begin{center}
\begin{math}
+(\frac{e^{s_3+s_4} G_1(s_1) k_{13}(-s_3-s_4,s_3)}{8 (s_1+s_2)}+\frac{e^{s_3+s_4} s_1 G_1(s_1) k_{13}(-s_3-s_4,s_3)}{8 s_2 (s_1+s_2)}+\frac{e^{s_3+s_4} s_1 G_2(s_1,s_2) k_{13}(-s_3-s_4,s_3)}{8 (s_1+s_2)}+\frac{e^{s_3+s_4} s_2 G_2(s_1,s_2) k_{13}(-s_3-s_4,s_3)}{8 (s_1+s_2)}+\frac{e^{s_3+s_4} k_{13}(-s_3-s_4,s_3)}{8 s_2 (s_1+s_2)}+\frac{e^{s_2+s_3+s_4} s_1 G_1(s_1) k_{13}(-s_2-s_3-s_4,s_2+s_3)}{8 (s_1+s_2) s_3}+\frac{e^{s_2+s_3+s_4} s_2 G_1(s_1) k_{13}(-s_2-s_3-s_4,s_2+s_3)}{8 (s_1+s_2) s_3}+\frac{e^{s_2+s_3+s_4} k_{13}(-s_2-s_3-s_4,s_2+s_3)}{8 (s_1+s_2) s_3}+\frac{e^{s_2+s_3+s_4} s_2 k_{13}(-s_2-s_3-s_4,s_2+s_3)}{8 s_1 (s_1+s_2) s_3}+\frac{e^{s_1+s_2+s_3+s_4} k_{13}(-s_1-s_2-s_3-s_4,s_1+s_2)}{8 (s_1+s_2) s_3}+\frac{e^{s_1+s_2+s_3+s_4} s_2 k_{13}(-s_1-s_2-s_3-s_4,s_1+s_2)}{8 s_1 (s_1+s_2) s_3}+\frac{e^{s_1+s_2+s_3+s_4} k_{13}(-s_1-s_2-s_3-s_4,s_1+s_2+s_3)}{8 s_1 (s_1+s_2)}-\frac{e^{s_2+s_3+s_4} G_1(s_1) k_{13}(-s_2-s_3-s_4,s_2+s_3)}{8 (s_1+s_2)}-\frac{e^{s_2+s_3+s_4} k_{13}(-s_2-s_3-s_4,s_2+s_3)}{8 s_1 (s_1+s_2)}-\frac{e^{s_2+s_3+s_4} k_{13}(-s_2-s_3-s_4,s_2+s_3)}{8 s_2 (s_1+s_2)}-\frac{e^{s_2+s_3+s_4} s_1 G_1(s_1) k_{13}(-s_2-s_3-s_4,s_2+s_3)}{8 s_2 (s_1+s_2)}-\frac{e^{s_2+s_3+s_4} k_{13}(-s_2-s_3-s_4,s_2)}{8 (s_1+s_2) s_3}-\frac{e^{s_2+s_3+s_4} s_1 G_1(s_1) k_{13}(-s_2-s_3-s_4,s_2)}{8 (s_1+s_2) s_3}-\frac{e^{s_2+s_3+s_4} s_2 G_1(s_1) k_{13}(-s_2-s_3-s_4,s_2)}{8 (s_1+s_2) s_3}-\frac{e^{s_1+s_2+s_3+s_4} k_{13}(-s_1-s_2-s_3-s_4,s_1+s_2+s_3)}{8 (s_1+s_2) s_3}-\frac{e^{s_2+s_3+s_4} s_2 k_{13}(-s_2-s_3-s_4,s_2)}{8 s_1 (s_1+s_2) s_3}-\frac{e^{s_1+s_2+s_3+s_4} s_2 k_{13}(-s_1-s_2-s_3-s_4,s_1+s_2+s_3)}{8 s_1 (s_1+s_2) s_3})\bigm|_{s_j = \nab}\delta _2(h)\delta _1(h)\delta _1(h)\delta _2(h)
\end{math}
\end{center}
\begin{center}
\begin{math}
+(-\frac{e^{s_2+s_3} s_2 G_1(s_1) k_{13}(-s_2-s_3,s_2)}{8 (s_2+s_3) s_4}-\frac{e^{s_2+s_3} s_3 G_1(s_1) k_{13}(-s_2-s_3,s_2)}{8 (s_2+s_3) s_4}-\frac{e^{s_2+s_3} s_2 k_{13}(-s_2-s_3,s_2)}{8 s_1 (s_2+s_3) s_4}-\frac{e^{s_2+s_3} s_3 k_{13}(-s_2-s_3,s_2)}{8 s_1 (s_2+s_3) s_4}+\frac{e^{s_1+s_2+s_3} k_{13}(-s_1-s_2-s_3,s_1)}{8 (s_2+s_3) s_4}+\frac{e^{s_1+s_2+s_3} s_3 k_{13}(-s_1-s_2-s_3,s_1)}{8 s_2 (s_2+s_3) s_4}+\frac{e^{s_1+s_2+s_3} s_2 k_{13}(-s_1-s_2-s_3,s_1+s_2)}{8 s_1 (s_2+s_3) s_4}+\frac{e^{s_1+s_2+s_3} s_3 k_{13}(-s_1-s_2-s_3,s_1+s_2)}{8 s_1 (s_2+s_3) s_4}+\frac{e^{s_2+s_3+s_4} s_2 G_1(s_1) k_{13}(-s_2-s_3-s_4,s_2)}{8 (s_2+s_3) s_4}+\frac{e^{s_2+s_3+s_4} s_3 G_1(s_1) k_{13}(-s_2-s_3-s_4,s_2)}{8 (s_2+s_3) s_4}+\frac{e^{s_2+s_3+s_4} s_2 k_{13}(-s_2-s_3-s_4,s_2)}{8 s_1 (s_2+s_3) s_4}+\frac{e^{s_2+s_3+s_4} s_3 k_{13}(-s_2-s_3-s_4,s_2)}{8 s_1 (s_2+s_3) s_4}+\frac{e^{s_2+s_3+s_4} G_1(s_1) k_{13}(-s_2-s_3-s_4,s_2+s_3)}{8 (s_2+s_3)}+\frac{e^{s_2+s_3+s_4} s_2 G_1(s_1) k_{13}(-s_2-s_3-s_4,s_2+s_3)}{8 s_3 (s_2+s_3)}+\frac{e^{s_2+s_3+s_4} k_{13}(-s_2-s_3-s_4,s_2+s_3)}{8 s_1 (s_2+s_3)}+\frac{e^{s_2+s_3+s_4} s_2 k_{13}(-s_2-s_3-s_4,s_2+s_3)}{8 s_1 s_3 (s_2+s_3)}+\frac{e^{s_1+s_2+s_3+s_4} k_{13}(-s_1-s_2-s_3-s_4,s_1)}{8 s_2 (s_2+s_3)}+\frac{e^{s_1+s_2+s_3+s_4} k_{13}(-s_1-s_2-s_3-s_4,s_1+s_2)}{8 s_1 (s_2+s_3)}+\frac{e^{s_1+s_2+s_3+s_4} s_2 k_{13}(-s_1-s_2-s_3-s_4,s_1+s_2)}{8 s_1 s_3 (s_2+s_3)}+\frac{e^{s_1+s_2+s_3+s_4} k_{13}(-s_1-s_2-s_3-s_4,s_1+s_2)}{8 (s_2+s_3) s_4}+\frac{e^{s_1+s_2+s_3+s_4} s_3 k_{13}(-s_1-s_2-s_3-s_4,s_1+s_2)}{8 s_2 (s_2+s_3) s_4}+\frac{e^{s_1+s_2+s_3+s_4} k_{13}(-s_1-s_2-s_3-s_4,s_1+s_2+s_3)}{8 s_3 (s_2+s_3)}-\frac{e^{s_2+s_3+s_4} G_1(s_1) k_{13}(-s_2-s_3-s_4,s_2)}{8 (s_2+s_3)}-\frac{e^{s_2+s_3+s_4} k_{13}(-s_2-s_3-s_4,s_2)}{8 s_1 (s_2+s_3)}-\frac{e^{s_1+s_2+s_3+s_4} k_{13}(-s_1-s_2-s_3-s_4,s_1+s_2+s_3)}{8 s_1 (s_2+s_3)}-\frac{e^{s_1+s_2+s_3+s_4} k_{13}(-s_1-s_2-s_3-s_4,s_1+s_2)}{8 s_2 (s_2+s_3)}-\frac{e^{s_2+s_3+s_4} s_2 G_1(s_1) k_{13}(-s_2-s_3-s_4,s_2)}{8 s_3 (s_2+s_3)}-\frac{e^{s_1+s_2+s_3+s_4} k_{13}(-s_1-s_2-s_3-s_4,s_1+s_2)}{8 s_3 (s_2+s_3)}-\frac{e^{s_2+s_3+s_4} s_2 k_{13}(-s_2-s_3-s_4,s_2)}{8 s_1 s_3 (s_2+s_3)}-\frac{e^{s_1+s_2+s_3+s_4} s_2 k_{13}(-s_1-s_2-s_3-s_4,s_1+s_2+s_3)}{8 s_1 s_3 (s_2+s_3)}-\frac{e^{s_1+s_2+s_3} k_{13}(-s_1-s_2-s_3,s_1+s_2)}{8 (s_2+s_3) s_4}-\frac{e^{s_1+s_2+s_3+s_4} k_{13}(-s_1-s_2-s_3-s_4,s_1)}{8 (s_2+s_3) s_4}-\frac{e^{s_1+s_2+s_3+s_4} s_2 k_{13}(-s_1-s_2-s_3-s_4,s_1+s_2)}{8 s_1 (s_2+s_3) s_4}-\frac{e^{s_1+s_2+s_3+s_4} s_3 k_{13}(-s_1-s_2-s_3-s_4,s_1+s_2)}{8 s_1 (s_2+s_3) s_4}-\frac{e^{s_1+s_2+s_3} s_3 k_{13}(-s_1-s_2-s_3,s_1+s_2)}{8 s_2 (s_2+s_3) s_4}-\frac{e^{s_1+s_2+s_3+s_4} s_3 k_{13}(-s_1-s_2-s_3-s_4,s_1)}{8 s_2 (s_2+s_3) s_4})\bigm|_{s_j = \nab}\delta _1(h)\delta _1(h)\delta _2(h)\delta _2(h)
\end{math}
\end{center}
\begin{center}
\begin{math}
+(-\frac{e^{s_2+s_3} G_1(s_1) k_{13}(-s_2-s_3,s_2)}{8 s_4}-\frac{e^{s_2+s_3} k_{13}(-s_2-s_3,s_2)}{8 s_1 s_4}+\frac{e^{s_1+s_2+s_3} k_{13}(-s_1-s_2-s_3,s_1+s_2)}{8 s_1 s_4}+\frac{e^{s_2+s_3+s_4} G_1(s_1) k_{13}(-s_2-s_3-s_4,s_2)}{8 s_4}+\frac{e^{s_2+s_3+s_4} k_{13}(-s_2-s_3-s_4,s_2)}{8 s_1 s_4}-\frac{e^{s_1+s_2+s_3+s_4} k_{13}(-s_1-s_2-s_3-s_4,s_1+s_2)}{8 s_1 s_4})\bigm|_{s_j = \nab}\delta _2(h)\delta _1(h)\delta _2(h)\delta _1(h)
\end{math}
\end{center}
\begin{center}
\begin{math}
+(\frac{e^{s_1+s_2} k_{13}(-s_1-s_2,s_1) s_2^2}{8 (s_1+s_2) s_3 (s_2+s_3) (s_3+s_4)}+\frac{e^{s_3+s_4} s_3 G_2(s_1,s_2) k_{13}(-s_3-s_4,s_3) s_2^2}{8 (s_1+s_2) (s_2+s_3) (s_3+s_4)}+\frac{e^{s_3+s_4} s_4 G_2(s_1,s_2) k_{13}(-s_3-s_4,s_3) s_2^2}{8 (s_1+s_2) (s_2+s_3) (s_3+s_4)}+\frac{e^{s_1+s_2+s_3+s_4} k_{13}(-s_1-s_2-s_3-s_4,s_1) s_2^2}{8 (s_1+s_2) (s_2+s_3) s_4 (s_3+s_4)}-\frac{e^{s_1+s_2+s_3} k_{13}(-s_1-s_2-s_3,s_1) s_2^2}{8 (s_1+s_2) s_3 (s_2+s_3) (s_3+s_4)}-\frac{e^{s_1+s_2+s_3} k_{13}(-s_1-s_2-s_3,s_1) s_2^2}{8 (s_1+s_2) (s_2+s_3) s_4 (s_3+s_4)}+\frac{e^{s_1+s_2} k_{13}(-s_1-s_2,s_1) s_2}{8 (s_1+s_2) (s_2+s_3) (s_3+s_4)}+\frac{e^{s_1+s_2} s_1 k_{13}(-s_1-s_2,s_1) s_2}{8 (s_1+s_2) s_3 (s_2+s_3) (s_3+s_4)}+\frac{e^{s_3+s_4} s_3 G_1(s_1) k_{13}(-s_3-s_4,s_3) s_2}{8 (s_1+s_2) (s_2+s_3) (s_3+s_4)}+\frac{e^{s_3+s_4} s_4 G_1(s_1) k_{13}(-s_3-s_4,s_3) s_2}{8 (s_1+s_2) (s_2+s_3) (s_3+s_4)}+\frac{e^{s_3+s_4} s_3^2 G_2(s_1,s_2) k_{13}(-s_3-s_4,s_3) s_2}{8 (s_1+s_2) (s_2+s_3) (s_3+s_4)}+\frac{e^{s_3+s_4} s_1 s_3 G_2(s_1,s_2) k_{13}(-s_3-s_4,s_3) s_2}{8 (s_1+s_2) (s_2+s_3) (s_3+s_4)}+\frac{e^{s_3+s_4} s_1 s_4 G_2(s_1,s_2) k_{13}(-s_3-s_4,s_3) s_2}{8 (s_1+s_2) (s_2+s_3) (s_3+s_4)}+\frac{e^{s_3+s_4} s_3 s_4 G_2(s_1,s_2) k_{13}(-s_3-s_4,s_3) s_2}{8 (s_1+s_2) (s_2+s_3) (s_3+s_4)}+\frac{e^{s_1+s_2+s_3+s_4} s_1 k_{13}(-s_1-s_2-s_3-s_4,s_1) s_2}{8 (s_1+s_2) (s_2+s_3) s_4 (s_3+s_4)}+\frac{e^{s_1+s_2+s_3+s_4} s_3 k_{13}(-s_1-s_2-s_3-s_4,s_1) s_2}{8 (s_1+s_2) (s_2+s_3) s_4 (s_3+s_4)}+\frac{e^{s_1+s_2+s_3+s_4} k_{13}(-s_1-s_2-s_3-s_4,s_1+s_2+s_3) s_2}{8 (s_1+s_2) (s_2+s_3) (s_3+s_4)}+\frac{e^{s_1+s_2+s_3+s_4} s_3 k_{13}(-s_1-s_2-s_3-s_4,s_1+s_2+s_3) s_2}{8 s_1 (s_1+s_2) (s_2+s_3) (s_3+s_4)}+\frac{e^{s_1+s_2+s_3+s_4} s_4 k_{13}(-s_1-s_2-s_3-s_4,s_1+s_2+s_3) s_2}{8 s_1 (s_1+s_2) (s_2+s_3) (s_3+s_4)}+\frac{e^{s_1+s_2+s_3+s_4} s_4 k_{13}(-s_1-s_2-s_3-s_4,s_1+s_2+s_3) s_2}{8 (s_1+s_2) s_3 (s_2+s_3) (s_3+s_4)}-\frac{e^{s_1+s_2+s_3} k_{13}(-s_1-s_2-s_3,s_1) s_2}{8 (s_1+s_2) (s_2+s_3) (s_3+s_4)}-\frac{e^{s_2+s_3+s_4} s_3 G_1(s_1) k_{13}(-s_2-s_3-s_4,s_2+s_3) s_2}{8 (s_1+s_2) (s_2+s_3) (s_3+s_4)}-\frac{e^{s_2+s_3+s_4} s_4 G_1(s_1) k_{13}(-s_2-s_3-s_4,s_2+s_3) s_2}{8 (s_1+s_2) (s_2+s_3) (s_3+s_4)}-\frac{e^{s_1+s_2+s_3+s_4} k_{13}(-s_1-s_2-s_3-s_4,s_1+s_2) s_2}{8 (s_1+s_2) (s_2+s_3) (s_3+s_4)}-\frac{e^{s_2+s_3+s_4} s_3 k_{13}(-s_2-s_3-s_4,s_2+s_3) s_2}{8 s_1 (s_1+s_2) (s_2+s_3) (s_3+s_4)}-\frac{e^{s_2+s_3+s_4} s_4 k_{13}(-s_2-s_3-s_4,s_2+s_3) s_2}{8 s_1 (s_1+s_2) (s_2+s_3) (s_3+s_4)}-\frac{e^{s_1+s_2+s_3} s_1 k_{13}(-s_1-s_2-s_3,s_1) s_2}{8 (s_1+s_2) s_3 (s_2+s_3) (s_3+s_4)}-\frac{e^{s_1+s_2+s_3+s_4} s_4 k_{13}(-s_1-s_2-s_3-s_4,s_1+s_2) s_2}{8 (s_1+s_2) s_3 (s_2+s_3) (s_3+s_4)}-\frac{e^{s_1+s_2+s_3} s_1 k_{13}(-s_1-s_2-s_3,s_1) s_2}{8 (s_1+s_2) (s_2+s_3) s_4 (s_3+s_4)}-\frac{e^{s_1+s_2+s_3} s_3 k_{13}(-s_1-s_2-s_3,s_1) s_2}{8 (s_1+s_2) (s_2+s_3) s_4 (s_3+s_4)}+\frac{e^{s_1+s_2} s_1 k_{13}(-s_1-s_2,s_1)}{8 (s_1+s_2) (s_2+s_3) (s_3+s_4)}+\frac{e^{s_3+s_4} s_3^2 G_1(s_1) k_{13}(-s_3-s_4,s_3)}{8 (s_1+s_2) (s_2+s_3) (s_3+s_4)}+\frac{e^{s_3+s_4} s_1 s_3 G_1(s_1) k_{13}(-s_3-s_4,s_3)}{8 (s_1+s_2) (s_2+s_3) (s_3+s_4)}+\frac{e^{s_3+s_4} s_1 s_4 G_1(s_1) k_{13}(-s_3-s_4,s_3)}{8 (s_1+s_2) (s_2+s_3) (s_3+s_4)}+\frac{e^{s_3+s_4} s_3 s_4 G_1(s_1) k_{13}(-s_3-s_4,s_3)}{8 (s_1+s_2) (s_2+s_3) (s_3+s_4)}+\frac{e^{s_3+s_4} s_1 s_3^2 G_1(s_1) k_{13}(-s_3-s_4,s_3)}{8 s_2 (s_1+s_2) (s_2+s_3) (s_3+s_4)}+\frac{e^{s_3+s_4} s_1 s_3 s_4 G_1(s_1) k_{13}(-s_3-s_4,s_3)}{8 s_2 (s_1+s_2) (s_2+s_3) (s_3+s_4)}+\frac{e^{s_3+s_4} s_1 s_3^2 G_2(s_1,s_2) k_{13}(-s_3-s_4,s_3)}{8 (s_1+s_2) (s_2+s_3) (s_3+s_4)}+\frac{e^{s_3+s_4} s_1 s_3 s_4 G_2(s_1,s_2) k_{13}(-s_3-s_4,s_3)}{8 (s_1+s_2) (s_2+s_3) (s_3+s_4)}+\frac{e^{s_3+s_4} s_3 k_{13}(-s_3-s_4,s_3)}{8 (s_1+s_2) (s_2+s_3) (s_3+s_4)}+\frac{e^{s_3+s_4} s_4 k_{13}(-s_3-s_4,s_3)}{8 (s_1+s_2) (s_2+s_3) (s_3+s_4)}+\frac{e^{s_3+s_4} s_3^2 k_{13}(-s_3-s_4,s_3)}{8 s_2 (s_1+s_2) (s_2+s_3) (s_3+s_4)}+\frac{e^{s_3+s_4} s_3 s_4 k_{13}(-s_3-s_4,s_3)}{8 s_2 (s_1+s_2) (s_2+s_3) (s_3+s_4)}+\frac{e^{s_1+s_2+s_3+s_4} s_3 k_{13}(-s_1-s_2-s_3-s_4,s_1)}{8 (s_1+s_2) (s_2+s_3) (s_3+s_4)}+\frac{e^{s_1+s_2+s_3+s_4} s_4 k_{13}(-s_1-s_2-s_3-s_4,s_1)}{8 (s_1+s_2) (s_2+s_3) (s_3+s_4)}+\frac{e^{s_1+s_2+s_3+s_4} s_1 s_3 k_{13}(-s_1-s_2-s_3-s_4,s_1)}{8 s_2 (s_1+s_2) (s_2+s_3) (s_3+s_4)}+\frac{e^{s_1+s_2+s_3+s_4} s_1 s_4 k_{13}(-s_1-s_2-s_3-s_4,s_1)}{8 s_2 (s_1+s_2) (s_2+s_3) (s_3+s_4)}+\frac{e^{s_1+s_2+s_3+s_4} s_1 s_3 k_{13}(-s_1-s_2-s_3-s_4,s_1)}{8 (s_1+s_2) (s_2+s_3) s_4 (s_3+s_4)}+\frac{e^{s_1+s_2+s_3+s_4} s_1 k_{13}(-s_1-s_2-s_3-s_4,s_1+s_2+s_3)}{8 (s_1+s_2) (s_2+s_3) (s_3+s_4)}+\frac{e^{s_1+s_2+s_3+s_4} s_3^2 k_{13}(-s_1-s_2-s_3-s_4,s_1+s_2+s_3)}{8 s_1 (s_1+s_2) (s_2+s_3) (s_3+s_4)}+\frac{e^{s_1+s_2+s_3+s_4} s_3 s_4 k_{13}(-s_1-s_2-s_3-s_4,s_1+s_2+s_3)}{8 s_1 (s_1+s_2) (s_2+s_3) (s_3+s_4)}+\frac{e^{s_1+s_2+s_3+s_4} s_1 s_4 k_{13}(-s_1-s_2-s_3-s_4,s_1+s_2+s_3)}{8 (s_1+s_2) s_3 (s_2+s_3) (s_3+s_4)}-\frac{e^{s_1+s_2+s_3} s_1 k_{13}(-s_1-s_2-s_3,s_1)}{8 (s_1+s_2) (s_2+s_3) (s_3+s_4)}-\frac{e^{s_2+s_3+s_4} s_3 k_{13}(-s_2-s_3-s_4,s_2+s_3)}{8 (s_1+s_2) (s_2+s_3) (s_3+s_4)}-\frac{e^{s_2+s_3+s_4} s_4 k_{13}(-s_2-s_3-s_4,s_2+s_3)}{8 (s_1+s_2) (s_2+s_3) (s_3+s_4)}-\frac{e^{s_2+s_3+s_4} s_3^2 G_1(s_1) k_{13}(-s_2-s_3-s_4,s_2+s_3)}{8 (s_1+s_2) (s_2+s_3) (s_3+s_4)}-\frac{e^{s_2+s_3+s_4} s_1 s_3 G_1(s_1) k_{13}(-s_2-s_3-s_4,s_2+s_3)}{8 (s_1+s_2) (s_2+s_3) (s_3+s_4)}-\frac{e^{s_2+s_3+s_4} s_1 s_4 G_1(s_1) k_{13}(-s_2-s_3-s_4,s_2+s_3)}{8 (s_1+s_2) (s_2+s_3) (s_3+s_4)}-\frac{e^{s_2+s_3+s_4} s_3 s_4 G_1(s_1) k_{13}(-s_2-s_3-s_4,s_2+s_3)}{8 (s_1+s_2) (s_2+s_3) (s_3+s_4)}-\frac{e^{s_1+s_2+s_3+s_4} s_1 k_{13}(-s_1-s_2-s_3-s_4,s_1+s_2)}{8 (s_1+s_2) (s_2+s_3) (s_3+s_4)}-\frac{e^{s_1+s_2+s_3+s_4} s_3 k_{13}(-s_1-s_2-s_3-s_4,s_1+s_2)}{8 (s_1+s_2) (s_2+s_3) (s_3+s_4)}-\frac{e^{s_1+s_2+s_3+s_4} s_4 k_{13}(-s_1-s_2-s_3-s_4,s_1+s_2)}{8 (s_1+s_2) (s_2+s_3) (s_3+s_4)}-\frac{e^{s_2+s_3+s_4} s_3^2 k_{13}(-s_2-s_3-s_4,s_2+s_3)}{8 s_1 (s_1+s_2) (s_2+s_3) (s_3+s_4)}-\frac{e^{s_2+s_3+s_4} s_3 s_4 k_{13}(-s_2-s_3-s_4,s_2+s_3)}{8 s_1 (s_1+s_2) (s_2+s_3) (s_3+s_4)}-\frac{e^{s_2+s_3+s_4} s_3^2 k_{13}(-s_2-s_3-s_4,s_2+s_3)}{8 s_2 (s_1+s_2) (s_2+s_3) (s_3+s_4)}-\frac{e^{s_2+s_3+s_4} s_3 s_4 k_{13}(-s_2-s_3-s_4,s_2+s_3)}{8 s_2 (s_1+s_2) (s_2+s_3) (s_3+s_4)}-\frac{e^{s_2+s_3+s_4} s_1 s_3^2 G_1(s_1) k_{13}(-s_2-s_3-s_4,s_2+s_3)}{8 s_2 (s_1+s_2) (s_2+s_3) (s_3+s_4)}-\frac{e^{s_2+s_3+s_4} s_1 s_3 s_4 G_1(s_1) k_{13}(-s_2-s_3-s_4,s_2+s_3)}{8 s_2 (s_1+s_2) (s_2+s_3) (s_3+s_4)}-\frac{e^{s_1+s_2+s_3+s_4} s_1 s_3 k_{13}(-s_1-s_2-s_3-s_4,s_1+s_2)}{8 s_2 (s_1+s_2) (s_2+s_3) (s_3+s_4)}-\frac{e^{s_1+s_2+s_3+s_4} s_1 s_4 k_{13}(-s_1-s_2-s_3-s_4,s_1+s_2)}{8 s_2 (s_1+s_2) (s_2+s_3) (s_3+s_4)}-\frac{e^{s_1+s_2+s_3+s_4} s_1 s_4 k_{13}(-s_1-s_2-s_3-s_4,s_1+s_2)}{8 (s_1+s_2) s_3 (s_2+s_3) (s_3+s_4)}-\frac{e^{s_1+s_2+s_3} s_1 s_3 k_{13}(-s_1-s_2-s_3,s_1)}{8 (s_1+s_2) (s_2+s_3) s_4 (s_3+s_4)})\bigm|_{s_j = \nab}\delta _1(h)\delta _2(h)\delta _1(h)\delta _2(h)
\end{math}
\end{center}
\begin{center}
\begin{math}
+(\frac{e^{s_1+s_2} k_{13}(-s_1-s_2,s_1)}{8 s_3 (s_3+s_4)}+\frac{e^{s_1+s_2+s_3} k_{13}(-s_1-s_2-s_3,s_1)}{8 s_2 (s_3+s_4)}+\frac{e^{s_1+s_2+s_3} s_3 k_{13}(-s_1-s_2-s_3,s_1)}{8 s_2 s_4 (s_3+s_4)}+\frac{e^{s_1+s_2+s_3+s_4} k_{13}(-s_1-s_2-s_3-s_4,s_1)}{8 s_4 (s_3+s_4)}+\frac{e^{s_1+s_2+s_3+s_4} k_{13}(-s_1-s_2-s_3-s_4,s_1+s_2)}{8 s_2 (s_3+s_4)}+\frac{e^{s_1+s_2+s_3+s_4} s_3 k_{13}(-s_1-s_2-s_3-s_4,s_1+s_2)}{8 s_2 s_4 (s_3+s_4)}-\frac{e^{s_1+s_2+s_3} k_{13}(-s_1-s_2-s_3,s_1+s_2)}{8 s_2 (s_3+s_4)}-\frac{e^{s_1+s_2+s_3+s_4} k_{13}(-s_1-s_2-s_3-s_4,s_1)}{8 s_2 (s_3+s_4)}-\frac{e^{s_1+s_2+s_3} k_{13}(-s_1-s_2-s_3,s_1)}{8 s_3 (s_3+s_4)}-\frac{e^{s_1+s_2+s_3} k_{13}(-s_1-s_2-s_3,s_1)}{8 s_4 (s_3+s_4)}-\frac{e^{s_1+s_2+s_3} s_3 k_{13}(-s_1-s_2-s_3,s_1+s_2)}{8 s_2 s_4 (s_3+s_4)}-\frac{e^{s_1+s_2+s_3+s_4} s_3 k_{13}(-s_1-s_2-s_3-s_4,s_1)}{8 s_2 s_4 (s_3+s_4)})\bigm|_{s_j = \nab}\delta _1(h)\delta _2(h)\delta _2(h)\delta _1(h). 
\end{math}
\end{center}
}

\subsection{Gradient of $ \frac{1}{16} \, \vphi_0 \left ( e^{h} K_{18} (\nab, \nab, \nab, \nab ) \left ( 
\delta _1(h )\cdot \delta _2(h )\cdot \delta
   _1(h)\cdot \delta _2(h )
\right )  \right )$ }
We need to calculate 
\[
\dep \Omega(h+\vep a), 
\]
where 
\[
\Omega(h) = \vphi_0 \left ( e^h K_{18} (\nab, \nab, \nab, \nab ) \left ( 
\delta _1(h )\cdot \delta _2(h )\cdot \delta
   _1(h )\cdot \delta _2(h)
\right )  \right ). 
\]
Using Lemma  \ref{LowerwithTrace1}, we have 
\[
\Omega(h)=  \vphi_0 \left ( e^h K (\nab, \nab, \nab ) \left ( 
\delta _1(h )\cdot \delta _2(h )\cdot \delta
   _1(h )
\right )  \right ) \, \delta _2(h),  
\]
where the function $K$ is defined as  
\[
K(s_1, s_2, s_3) = K_{18}(s_1, s_2, s_3, -s_1-s_2-s_3). 
\]
Therefore, under the trace $\vphi_0$, we have 
\begin{eqnarray*}
\Omega(h+\vep a) &=& e^{h+\vep a} K (\nab_\vep, \nab_\vep, \nab_\vep ) (\delta _1(h )\, \delta _2(h )\, \delta_1(h )) \, \del_2(h) \\
&&+\vep e^{h+\vep a} K (\nab_\vep, \nab_\vep, \nab_\vep ) (\delta _1(a)\, \delta _2(h )\, \delta_1(h )) \, \del_2(h)  \\
&&+\vep e^{h+\vep a} K (\nab_\vep, \nab_\vep, \nab_\vep ) (\delta _1(h)\, \delta _2(a)\, \delta_1(h )) \, \del_2(h) \\
&&+\vep e^{h+\vep a} K (\nab_\vep, \nab_\vep, \nab_\vep ) (\delta _1(h)\, \delta _2(h)\, \delta_1(a)) \, \del_2(h) \\
&&+\vep e^{h+\vep a} K (\nab_\vep, \nab_\vep, \nab_\vep ) (\delta _1(h)\, \delta _2(h)\, \delta_1(h)) \, \del_2(a) \\
&&+ \vep^2 (\cdots) + \vep^3 (\cdots)+ \vep^4 (\cdots). 
\end{eqnarray*}
Therefore 
\begin{eqnarray}  \label{preGr4}
\dep \Omega(h+\vep a) &=& \dep e^{h+\vep a}\,  K (\nab, \nab, \nab ) (\delta _1(h )\, \delta _2(h )\, \delta_1(h )) \, \del_2(h) \nonumber  \\
&&+ e^h \dep K (\nab_\vep, \nab_\vep, \nab_\vep ) (\delta _1(h )\, \delta _2(h )\, \delta_1(h )) \, \del_2(h)  \\
&& + e^{h} K (\nab, \nab, \nab ) (\delta _1(a)\, \delta _2(h )\, \delta_1(h )) \, \del_2(h) \nonumber \\
&& + e^{h} K (\nab, \nab, \nab ) (\delta _1(h)\, \delta _2(a )\, \delta_1(h )) \, \del_2(h) \nonumber \\
&& + e^{h} K (\nab, \nab, \nab ) (\delta _1(h)\, \delta _2(h )\, \delta_1(a )) \, \del_2(h) \nonumber \\
&& + e^{h} K (\nab, \nab, \nab ) (\delta _1(h)\, \delta _2(h )\, \delta_1(h )) \, \del_2(a).  \nonumber
\end{eqnarray}
The first term in \eqref{preGr4}, under the trace $\vphi_0$, is equal to 
\begin{eqnarray*}
&&\frac{1-e^{-\nab}}{\nab}(a) \, e^h \,  K (\nab, \nab, \nab ) (\delta _1(h )\, \delta _2(h )\, \delta_1(h )) \, \del_2(h) \\
&=&a e^h \left ( \frac{1-e^{s_1+s_2+s_3+s_4}}{-s_1-s_2-s_3-s_4} K(s_1, s_2, s_3)\right )  \bigm|_{s_1, s_2, s_3, s_4 = \nab} (\del_1(h) \, \del_2(h) \,\del_1(h) \, \del_2(h) ). 
\end{eqnarray*}

\smallskip

Using Lemma \ref{GrofFuncCalc3}, for the second term in \eqref{preGr4}, under $\vphi_0$, 
we have 
\[
 e^h \dep K (\nab_\vep, \nab_\vep, \nab_\vep ) (\delta _1(h )\, \delta _2(h )\, \delta_1(h )) \, \del_2(h) = 
\]
{\small
\[ a e^h  \Big \{ \left (  \frac{ K(s_1, s_2, s_3) - K(-s_2-s_3-s_4, s_2, s_3) }{e^{-s_1-s_2-s_3-s_4}(-s_1-s_2-s_3-s_4)}\right ) \bigm|_{s_j = \nab} ( \del_1(h)\del_2(h) \del_1(h) \del_2(h)) 
\]
\[
+  \left (  \frac{ K(-s_1-s_2-s_3, s_1, s_2) - K(s_4,-s_2-s_3-s_4, s_2) }{e^{-s_1-s_2-s_3}(-s_1-s_2-s_3-s_4)}\right ) \bigm|_{s_j = \nab} ( \del_2(h)\del_1(h) \del_2(h) \del_1(h))
\]
\[
+  \left (  \frac{ K(s_3,-s_1-s_2-s_3, s_1) - K(s_3, s_4,-s_2-s_3-s_4) }{e^{-s_1-s_2}(-s_1-s_2-s_3-s_4)}\right ) \bigm|_{s_j = \nab} ( \del_1(h)\del_2(h) \del_1(h) \del_2(h))
\]
\[
+  \left (  \frac{ K(s_2, s_3,-s_1-s_2-s_3) - K(s_2, s_3, s_4) }{e^{-s_1}(-s_1-s_2-s_3-s_4)}\right ) \bigm|_{s_j = \nab} ( \del_2(h) \del_1(h) \del_2(h) \del_1(h)) \Big \}. 
\]}

\smallskip

For the third term in \eqref{preGr4}, using Lemma \ref{ShiftwithTrace}, 
under the trace $\vphi_0$, we write: 
\begin{eqnarray*}
&& e^{h} K (\nab, \nab, \nab ) (\delta _1(a)\, \delta _2(h )\, \delta_1(h )) \, \del_2(h)\\
&=& e^h \del_1(a) K_v(\nab, \nab, \nab) (\del_2(h) \, \del_1(h)\, \del_2(h)) \\
&=& \del_1(a) e^h K_{vv}(\nab, \nab, \nab) (\del_2(h) \, \del_1(h)\, \del_2(h)) \\
&=& - a e^h e^{-h} \del_1(e^h) K_{vv}(\nab, \nab, \nab) (\del_2(h) \, \del_1(h)\, \del_2(h)) \\
&& - a e^h \del_1 (K_{vv}(\nab, \nab, \nab) (\del_2(h) \, \del_1(h)\, \del_2(h)) ),   
\end{eqnarray*}
where the functions $K_v$ and $K_{vv}$ are defined by 
\[
K_v(s_1, s_2, s_3) = K(-s_1-s_2-s_3, s_1, s_2), 
\qquad 
K_{vv}(s_1, s_2, s_3) = e^{s_1+s_2+s_3} K_v(s_1, s_2, s_3).   
\]
Here, $e^{-h} \del_1(e^h) $ can be expanded using Lemma \ref{ToLogBaby} and 
\[
\del_1 (K_{vv}(\nab, \nab, \nab) (\del_2(h) \, \del_1(h)\, \del_2(h)) )
\] 
can be expanded using Lemma \ref{DerFuncCalc3}.

\smallskip

For the fourth term in \eqref{preGr4}, using Lemma \ref{ShiftwithTrace}, 
under the trace $\vphi_0$, we write:
\begin{eqnarray*}
&& e^{h} K (\nab, \nab, \nab ) (\delta _1(h)\, \delta _2(a )\, \delta_1(h )) \, \del_2(h)\\
&=& \del_2(a) K_w(\nab, \nab, \nab )(\del_1(h) \cdot \del_2(h)\cdot e^h \del_1(h)) \\
&=& \del_2(a) e^h K_{ww}(\nab, \nab, \nab )(\del_1(h) \, \del_2(h)\,\del_1(h)) \\
&=& - ae^h e^{-h} \del_2(e^h) K_{ww}(\nab, \nab, \nab )(\del_1(h) \, \del_2(h)\,\del_1(h)) \\
&&-  ae^h  \del_2( K_{ww}(\nab, \nab, \nab )(\del_1(h) \, \del_2(h)\,\del_1(h)) ), 
\end{eqnarray*}
where the functions $K_w$ and $K_{ww}$ are given by 
\[
K_w(s_1, s_2, s_3) = K(s_3, -s_1-s_2-s_3, s_1), 
\qquad 
K_{ww}(s_1, s_2, s_3) = e^{s_1+s_2} K_w(s_1, s_2, s_3).
\]
Here also, $e^{-h} \del_2(e^h) $ can be expanded using Lemma \ref{ToLogBaby} and 
\[
\del_2( K_{ww}(\nab, \nab, \nab )(\del_1(h) \, \del_2(h)\,\del_1(h)) )
\] can be expanded 
using Lemma \ref{DerFuncCalc3}.

\smallskip

For the fifth term in \eqref{preGr4}, using Lemma \ref{ShiftwithTrace}, 
under the trace $\vphi_0$, we have:
\begin{eqnarray*}
&&e^{h} K (\nab, \nab, \nab ) (\delta _1(h)\, \delta _2(h )\, \delta_1(a )) \, \del_2(h) \\
&=&\del_1(a) K_z(\nab, \nab, \nab )(\del_2(h) \cdot e^h \del_1(h)\cdot  \del_2(h)) \\
&=&\del_1(a)e^h K_{zz}(\nab, \nab, \nab )(\del_2(h) \,  \del_1(h)\,  \del_2(h)) \\ 
&=& - a e^h e^{-h} \del_1(e^h) K_{zz}(\nab, \nab, \nab )(\del_2(h) \,  \del_1(h)\,  \del_2(h)) \\
&&-  a e^h  \del_1(K_{zz}(\nab, \nab, \nab )(\del_2(h) \,  \del_1(h)\,  \del_2(h))), 
\end{eqnarray*}
where 
\[
K_z(s_1, s_2, s_3) = K(s_2, s_3, -s_1-s_2-s_3), 
\qquad
K_{zz}(s_1, s_2, s_3) = e^{s_1} K_z(s_1, s_2, s_3). 
\]
Similar to the previous cases, Lemma \ref{ToLogBaby} and Lemma \ref{DerFuncCalc3} 
can be used for further expansions of the above expression. 

\smallskip

Finally, for the sixth term in \eqref{preGr4}, under $\vphi_0$, we have 
\begin{eqnarray*}
&&e^{h} K (\nab, \nab, \nab ) (\delta _1(h)\, \delta _2(h )\, \delta_1(h )) \, \del_2(a) \\
&=& \del_2(a) e^{h} K (\nab, \nab, \nab ) (\delta _1(h)\, \delta _2(h )\, \delta_1(h )) \\
&=& -ae^h e^{-h} \del_2(e^{h}) K (\nab, \nab, \nab ) (\delta _1(h)\, \delta _2(h )\, \delta_1(h )) \\
&& - a e^h \del_2(K (\nab, \nab, \nab ) (\delta _1(h)\, \delta _2(h )\, \delta_1(h ))). 
\end{eqnarray*}
Similarly to the previous cases, we apply Lemma \ref{ToLogBaby} and Lemma \ref{DerFuncCalc3} 
for expanding the latter further. 

\smallskip

Putting everything together, up to multiplying the right hand side of the following equality by $a e^h$, 
under the trace $\vphi_0$, we have 
{\small
\[
\dep
\frac{1}{16} \, \vphi_0 \left ( e^{h+\vep a} K_{18} (\nab_\vep, \nab_\vep, \nab_\vep, \nab_\vep ) \left ( 
\delta _1(h + \vep a)\cdot \delta _2(h + \vep a )\cdot \delta
   _1(h + \vep a)\cdot \delta _2(h + \vep a )
\right )  \right )
\]
}
{\small
\begin{center}
\begin{math}
=(-\frac{1}{16} e^{s_1} k_{18}(s_2,s_3,-s_1-s_2-s_3)-\frac{1}{16} e^{s_1+s_2+s_3} k_{18}(-s_1-s_2-s_3,s_1,s_2))\bigm|_{s_j = \nab}\delta _2(h)\delta _1(h)(\delta _1 \delta _2(h))
\end{math}
\end{center}
\begin{center}
\begin{math}
+(-\frac{1}{16} e^{s_1} k_{18}(s_2,s_3,-s_1-s_2-s_3)-\frac{1}{16} e^{s_1+s_2+s_3} k_{18}(-s_1-s_2-s_3,s_1,s_2))\bigm|_{s_j = \nab}\delta _2(h)\delta _1^2(h)\delta _2(h)
\end{math}
\end{center}
\begin{center}
\begin{math}
+(-\frac{1}{16} e^{s_1} k_{18}(s_2,s_3,-s_1-s_2-s_3)-\frac{1}{16} e^{s_1+s_2+s_3} k_{18}(-s_1-s_2-s_3,s_1,s_2))\bigm|_{s_j = \nab}(\delta _1 \delta _2(h))\delta _1(h)\delta _2(h)
\end{math}
\end{center}
\begin{center}
\begin{math}
+(-\frac{1}{16} k_{18}(s_1,s_2,s_3)-\frac{1}{16} e^{s_1+s_2} k_{18}(s_3,-s_1-s_2-s_3,s_1))\bigm|_{s_j = \nab}\delta _1(h)\delta _2(h)(\delta _1 \delta _2(h))
\end{math}
\end{center}
\begin{center}
\begin{math}
+(-\frac{1}{16} k_{18}(s_1,s_2,s_3)-\frac{1}{16} e^{s_1+s_2} k_{18}(s_3,-s_1-s_2-s_3,s_1))\bigm|_{s_j = \nab}\delta _1(h)\delta _2^2(h)\delta _1(h)
\end{math}
\end{center}
\begin{center}
\begin{math}
+(-\frac{1}{16} k_{18}(s_1,s_2,s_3)-\frac{1}{16} e^{s_1+s_2} k_{18}(s_3,-s_1-s_2-s_3,s_1))\bigm|_{s_j = \nab}(\delta _1 \delta _2(h))\delta _2(h)\delta _1(h)
\end{math}
\end{center}
\begin{center}
\begin{math}
+(-\frac{e^{s_1} k_{18}(s_2+s_3,s_4,-s_1-s_2-s_3-s_4)-e^{s_1} k_{18}(s_2,s_3+s_4,-s_1-s_2-s_3-s_4)}{16 s_3}-\frac{e^{s_1+s_2} k_{18}(s_3,s_4,-s_1-s_2-s_3-s_4)-e^{s_1} k_{18}(s_2+s_3,s_4,-s_1-s_2-s_3-s_4)}{16 s_2}-\frac{e^{s_1+s_2+s_3+s_4} k_{18}(-s_1-s_2-s_3-s_4,s_1,s_2+s_3)-e^{s_1+s_2+s_3+s_4} k_{18}(-s_1-s_2-s_3-s_4,s_1,s_2)}{16 s_3}-\frac{e^{s_1+s_2+s_3+s_4} k_{18}(-s_1-s_2-s_3-s_4,s_1+s_2,s_3)-e^{s_1+s_2+s_3+s_4} k_{18}(-s_1-s_2-s_3-s_4,s_1,s_2+s_3)}{16 s_2})\bigm|_{s_j = \nab}\delta _2(h)\delta _1(h)\delta _1(h)\delta _2(h)
\end{math}
\end{center}
\begin{center}
\begin{math}
+(-\frac{1}{16} G_1(s_1) k_{18}(s_2,s_3,s_4)+\frac{e^{s_1} (k_{18}(s_2,s_3,s_4)-k_{18}(s_2,s_3,-s_1-s_2-s_3))}{16 (s_1+s_2+s_3+s_4)}+\frac{k_{18}(s_1+s_2,s_3,s_4)-k_{18}(s_2,s_3,s_4)}{16 s_1}-\frac{e^{s_1+s_2+s_3+s_4} k_{18}(-s_1-s_2-s_3-s_4,s_1,s_2)-e^{s_1+s_2+s_3} k_{18}(-s_1-s_2-s_3,s_1,s_2)}{16 s_4}-\frac{1}{16} e^{s_2+s_3} G_1(s_1) k_{18}(s_4,-s_2-s_3-s_4,s_2)+\frac{e^{s_1+s_2+s_3} (k_{18}(s_4,-s_2-s_3-s_4,s_2)-k_{18}(-s_1-s_2-s_3,s_1,s_2))}{16 (s_1+s_2+s_3+s_4)}+\frac{e^{s_1+s_2+s_3} k_{18}(s_4,-s_1-s_2-s_3-s_4,s_1+s_2)-e^{s_2+s_3} k_{18}(s_4,-s_2-s_3-s_4,s_2)}{16 s_1}-\frac{e^{s_1} k_{18}(s_2,s_3+s_4,-s_1-s_2-s_3-s_4)-e^{s_1} k_{18}(s_2,s_3,-s_1-s_2-s_3)}{16 s_4})\bigm|_{s_j = \nab}\delta _2(h)\delta _1(h)\delta _2(h)\delta _1(h)
\end{math}
\end{center}
\begin{center}
\begin{math}
+(\frac{(1-e^{s_1+s_2+s_3+s_4}) k_{18}(s_1,s_2,s_3)}{16 (-s_1-s_2-s_3-s_4)}-\frac{1}{16} e^{s_2} G_1(s_1) k_{18}(s_3,s_4,-s_2-s_3-s_4)+\frac{e^{s_1+s_2} (k_{18}(s_3,s_4,-s_2-s_3-s_4)-k_{18}(s_3,-s_1-s_2-s_3,s_1))}{16 (s_1+s_2+s_3+s_4)}+\frac{e^{s_1+s_2} k_{18}(s_3,s_4,-s_1-s_2-s_3-s_4)-e^{s_2} k_{18}(s_3,s_4,-s_2-s_3-s_4)}{16 s_1}-\frac{1}{16} e^{s_2+s_3+s_4} G_1(s_1) k_{18}(-s_2-s_3-s_4,s_2,s_3)+\frac{e^{s_1+s_2+s_3+s_4} (k_{18}(-s_2-s_3-s_4,s_2,s_3)-k_{18}(s_1,s_2,s_3))}{16 (s_1+s_2+s_3+s_4)}+\frac{e^{s_1+s_2+s_3+s_4} k_{18}(-s_1-s_2-s_3-s_4,s_1+s_2,s_3)-e^{s_2+s_3+s_4} k_{18}(-s_2-s_3-s_4,s_2,s_3)}{16 s_1}-\frac{e^{s_1+s_2} k_{18}(s_3+s_4,-s_1-s_2-s_3-s_4,s_1)-e^{s_1+s_2} k_{18}(s_3,-s_1-s_2-s_3,s_1)}{16 s_4}-\frac{k_{18}(s_1,s_2,s_3+s_4)-k_{18}(s_1,s_2,s_3)}{16 s_4})\bigm|_{s_j = \nab}\delta _1(h)\delta _2(h)\delta _1(h)\delta _2(h)
\end{math}
\end{center}
\begin{center}
\begin{math}
+(-\frac{k_{18}(s_1,s_2+s_3,s_4)-k_{18}(s_1,s_2,s_3+s_4)}{16 s_3}-\frac{e^{s_1+s_2+s_3} k_{18}(s_4,-s_1-s_2-s_3-s_4,s_1+s_2)-e^{s_1+s_2+s_3} k_{18}(s_4,-s_1-s_2-s_3-s_4,s_1)}{16 s_2}-\frac{e^{s_1+s_2+s_3} k_{18}(s_4,-s_1-s_2-s_3-s_4,s_1)-e^{s_1+s_2} k_{18}(s_3+s_4,-s_1-s_2-s_3-s_4,s_1)}{16 s_3}-\frac{k_{18}(s_1+s_2,s_3,s_4)-k_{18}(s_1,s_2+s_3,s_4)}{16 s_2})\bigm|_{s_j = \nab}\delta _1(h)\delta _2(h)\delta _2(h)\delta _1(h). 
\end{math}
\end{center}
}

\section{Functional relations among the functions $k_3, \dots, k_{20}$}
\label{FuncRelationsforksSec}

In the results presented so far, there are indications about 
functional relations that involve only the functions $k_3, \dots, k_{20}$. 
For example, the basic functional relations presented in Section \ref{a_4Sec} 
and in Appendix \ref{lengthyfnrelationsappsec} 
are derived by setting equal a function $\widetilde K_j $ that appears 
in the first calculation of the gradient in Section \ref{FuncRelationsProofSec} with the corresponding 
term in the outcome of the second calculation of the same gradient in terms 
of finite differences of the functions $k_j$ in Section \ref{GradientCalculationSec}. 
Since the operator associated with 
a function $\widetilde K_j $ acts on different elements of $\CNT$, this raises 
the question whether there are different finite difference expressions for the 
$\widetilde K_j$, which would provide a functional relation that  involves  
only the functions $k_j$. 

\smallskip
By studying the result of the second gradient 
calculation from this perspective, we find that if in the formula \eqref{Gradientofa4} 
the operator associated with a function $\widetilde K_j$ acts on two 
elements of $\CNT$ that are the same modulo switching $\done$ 
and $\dtwo$, then the corresponding finite difference expressions 
in the second calculation are precisely the same. However, there are 
two cases, namely for $\widetilde K_6$ and $\widetilde K_7$, where 
the corresponding operators act on different kinds of elements of $\CNT$ 
that are not the same up to  switching  $\done$ and $\dtwo$. It turns 
out that in these cases, the corresponding finite difference expressions 
have different ingredients, hence functional relations of the desired type. 
We should also recall that in a closely related manner, in Corollary \ref{FnRlnK6DiffEqns} 
and Corollary \ref{FnRlnK7DiffEqns} we presented functional relations that 
involve restrictions of the functions $k_j$. We will see shortly that there are more 
functional relations between the $k_j$, which are derived from the fact that the final 
explicit expressions for some of the functions $K_j$ are identical up to multiplication by 
scalars. We should note that the explicit expressions for $K_j$ are given in Section \ref{ExplicitFormulasSec} 
and in  Appendix \ref{explicit3and4fnsappsec}.

\begin{theorem} \label{funcrlnsamongksThm}
There are  functional relations that involve only the functions $k_3,$ $\dots,$  $k_{20},$ 
as presented in the following subsections and in Appendix \ref{lengthykfnrelationsappsec}. 
\end{theorem}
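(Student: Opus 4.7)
The proof proceeds by exploiting redundancy in the two gradient calculations presented in Sections \ref{FuncRelationsProofSec} and \ref{GradientCalculationSec}. In the first calculation, the identity \eqref{Gradientofa4} expresses $\dep \vphi_0(a_4(h + \vep a))$ as a sum in which each operator $\widetilde K_j(\nab, \dots, \nab)$ acts on a prescribed symmetrized monomial in $\done(h), \dtwo(h)$ and their higher derivatives. In the second calculation, each of these operators is re-expressed as an explicit combination of finite differences of $k_3, \dots, k_{20}$. The key observation is that whenever a single $\widetilde K_j$ appears in \eqref{Gradientofa4} acting on two distinct monomials that are not related by the symmetry $\done \leftrightarrow \dtwo$, the second method yields two a priori distinct finite difference expressions for the very same function $\widetilde K_j$; equating them produces an identity that involves only the $k_j$'s, since the contributions from $\widetilde K_j$ itself and from the auxiliary functions $G_1, \dots, G_4$ in fact cancel.

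First I will carry out this strategy for $\widetilde K_6$, where the operator $\widetilde K_6(\nab,\nab)$ acts in \eqref{Gradientofa4} both on $\done(h) \cdot \done^3(h) + \dtwo(h) \cdot \dtwo^3(h)$ and on $\done(h) \cdot \done \dtwo^2(h) + \dtwo(h) \cdot \done^2 \dtwo(h)$. The gradient calculation of Section \ref{GradientCalculationSec} assigns to each of these terms a finite difference expression, yielding the two equations \eqref{basicK6eqnfirst} and \eqref{basicK6eqnsecond}. Subtracting them eliminates $\widetilde K_6$ together with the $\pi G_2$ term, leaving a genuine identity among the $k_j$'s. The same procedure applied to $\widetilde K_7$ via \eqref{basicK7eqnfirst} and \eqref{basicK7eqnsecond} furnishes a second family of relations. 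I expect these two-variable relations, once specialized to the hyperplane $s_1+s_2=0$, to recover the $k_j$-identities already isolated in Corollary \ref{FnRlnK6DiffEqns} and Corollary \ref{FnRlnK7DiffEqns}, providing an internal consistency check.

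A second source of relations comes from the explicit formulas for the $K_j$ collected in Section \ref{ExplicitFormulasSec} and Appendix \ref{explicit3and4fnsappsec}. As already observed in the discussion preceding \eqref{basicK2eqn}, $\widetilde K_2$ is a scalar multiple of $\widetilde K_1$, and inspection of the remaining explicit formulas reveals further proportionality and linear dependencies among certain $K_j$'s of the same arity. Each such dependence, upon substituting the finite difference representations of the corresponding $\widetilde K_j$'s on both sides and tracking which terms survive the cancellations, yields a functional relation involving only the $k_j$'s. Collecting the relations obtained in this manner for each arity gives the statements recorded in the following subsections and in Appendix \ref{lengthykfnrelationsappsec}.

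The principal obstacle will be algebraic bookkeeping rather than conceptual: already the four variable basic equation \eqref{basicK17eqn} contains several hundred terms, and the cancellations that produce the clean $k_j$-only identities occur only after clearing the common denominator $s_1 s_2 (s_1+s_2) s_3 (s_2+s_3)(s_1+s_2+s_3)$ highlighted in Remark \ref{denK17-K20diffsys}. Consequently the derivation of each functional relation must be carried out with the aid of a symbolic computation system, and its verification reduces to checking that a large rational expression simplifies to zero --- the same type of consistency check that, in the spirit of Theorem \ref{FuncRelationsThm}, corroborates the explicit formulas we have found for $K_1, \dots, K_{20}$.
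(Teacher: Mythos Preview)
Your approach is essentially the same as the paper's: the two sources of relations you identify---(i) equating the two distinct finite-difference expressions for $\widetilde K_6$ (equations \eqref{basicK6eqnfirst}, \eqref{basicK6eqnsecond}) and for $\widetilde K_7$ (equations \eqref{basicK7eqnfirst}, \eqref{basicK7eqnsecond}), and (ii) exploiting the scalar proportionalities among the explicit $K_j$'s (e.g.\ $2K_2=K_1$, $2K_{11}=K_{10}$, $2K_{16}=3K_{10}$, $K_{18}=K_{19}$) to combine their basic equations---are exactly what the paper does in the subsections following the theorem and in Appendix~\ref{lengthykfnrelationsappsec}.

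One small correction: your claim that ``the contributions from \dots\ the auxiliary functions $G_1, \dots, G_4$ in fact cancel'' is not quite right. The standalone $\pi G_n$ terms do cancel (both basic equations for a given $\widetilde K_j$ carry the same $-c\,\pi G_n$), but the cross terms of the form $G_1(s_1)\,k_i(\cdot)$ generally do \emph{not} cancel; see for instance the relation in \S7.1, where terms like $2 s_1 G_1(s_1) k_3(-s_1)$ survive. This is harmless for the theorem, since $G_1$ is an explicit elementary function and the resulting identity still involves only the $k_j$'s as unknowns, but you should not assert that the $G_j$'s disappear entirely.
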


We now start writing the new functional relations and will explain more specifically how each 
relation is derived. 

\subsection{Functional relation between $k_3,$ $k_4,$ $k_5,$ $k_8,$ $ k_9,$ 
$\dots, $ $ k_{16}$ }

In \eqref{Gradientofa4}, the operator $\widetilde K_6(\nab, \nab)$ acts on two kinds of elements, 
namely $\done(h) \cdot \done^3(h)$ and $\done(h) \cdot \done \dtwo^2(h)$, from which 
one can obtain the other two elements by switching $\done$ and $\dtwo$. On the other hand, 
in the outcome of the second calculation of the same gradient the finite difference expressions 
of $\nab$ acting on these elements have different ingredients, hence we have two different 
finite different expressions for $\widetilde K_6(\nab, \nab)$ and thereby obtain the following 
functional relation: 

\begin{center}
\begin{math}
-s_1 k_8(s_1,s_2)+e^{s_1+s_2} s_1 k_8(-s_1-s_2,s_1)-s_1 k_9(s_1,s_2)+e^{s_1+s_2} s_1 k_9(-s_1-s_2,s_1)+s_1 k_{10}(s_1,s_2)-e^{s_1} s_1 k_{10}(s_2,-s_1-s_2)+s_1 k_{11}(s_1,s_2)-e^{s_1} s_1 k_{11}(s_2,-s_1-s_2)-e^{s_1+s_2} s_1 k_{12}(-s_1-s_2,s_1)+e^{s_1} s_1 k_{12}(s_2,-s_1-s_2)-e^{s_1+s_2} s_1 k_{13}(-s_1-s_2,s_1)+e^{s_1} s_1 k_{13}(s_2,-s_1-s_2)+e^{s_1+s_2} s_1 k_{14}(-s_1-s_2,s_1)-e^{s_1} s_1 k_{14}(s_2,-s_1-s_2)+s_1 k_{15}(s_1,s_2)-e^{s_1+s_2} s_1 k_{15}(-s_1-s_2,s_1)-s_1 k_{16}(s_1,s_2)+e^{s_1} s_1 k_{16}(s_2,-s_1-s_2)-2 e^{s_2} (s_1 G_1(s_1)+1) k_3(-s_2)-2 s_1 G_1(s_1) k_3(s_2)-4 e^{s_2} s_1 G_1(s_1) k_4(-s_2)-4 s_1 G_1(s_1) k_4(s_2)+4 e^{s_2} s_1 G_1(s_1) k_5(-s_2)+4 s_1 G_1(s_1) k_5(s_2)+2 e^{s_1+s_2} k_3(-s_1-s_2)-2 k_3(s_2)+2 k_3(s_1+s_2)+4 e^{s_1+s_2} k_4(-s_1-s_2)-4 e^{s_2} k_4(-s_2)-4 k_4(s_2)+4 k_4(s_1+s_2)-4 e^{s_1+s_2} k_5(-s_1-s_2)+4 e^{s_2} k_5(-s_2)+4 k_5(s_2)-4 k_5(s_1+s_2) = 0. 
\end{math}
\end{center}

\subsection{Another functional relation between  $k_3,$ $k_4,$ $k_5,$ $k_8,$ $ k_9,$ $\dots,$ $ k_{16}$  }
The situation in formula \eqref{Gradientofa4} for $\widetilde K_7(\nab, \nab)$
is similar to the case of its preceding term with $\widetilde K_6(\nab, \nab)$ involved: 
there are two kinds of elements that $\widetilde K_7(\nab, \nab)$ acts on, 
namely $\done^3(h) \cdot \done(h)$ and $\done \dtwo^2(h) \cdot \done(h)$, 
that are not the same up to switching $\done$ and $\dtwo$, and in the second 
gradient calculation, there are correspondingly two different finite difference 
expressions for $\widetilde K_7$. This yields the following functional relation: 
\begin{center}
\begin{math}
-s_2 k_8(s_1,s_2)+e^{s_1} s_2 k_8(s_2,-s_1-s_2)-s_2 k_9(s_1,s_2)+e^{s_1} s_2 k_9(s_2,-s_1-s_2)+e^{s_1+s_2} s_2 k_{10}(-s_1-s_2,s_1)-e^{s_1} s_2 k_{10}(s_2,-s_1-s_2)+e^{s_1+s_2} s_2 k_{11}(-s_1-s_2,s_1)-e^{s_1} s_2 k_{11}(s_2,-s_1-s_2)+s_2 k_{12}(s_1,s_2)-e^{s_1+s_2} s_2 k_{12}(-s_1-s_2,s_1)+s_2 k_{13}(s_1,s_2)-e^{s_1+s_2} s_2 k_{13}(-s_1-s_2,s_1)-s_2 k_{14}(s_1,s_2)+e^{s_1+s_2} s_2 k_{14}(-s_1-s_2,s_1)+s_2 k_{15}(s_1,s_2)-e^{s_1} s_2 k_{15}(s_2,-s_1-s_2)-e^{s_1+s_2} s_2 k_{16}(-s_1-s_2,s_1)+e^{s_1} s_2 k_{16}(s_2,-s_1-s_2)+2 e^{s_1} k_3(-s_1)+2 k_3(s_1)-2 e^{s_1+s_2} k_3(-s_1-s_2)-2 k_3(s_1+s_2)+4 e^{s_1} k_4(-s_1)+4 k_4(s_1)-4 e^{s_1+s_2} k_4(-s_1-s_2)-4 k_4(s_1+s_2)-4 e^{s_1} k_5(-s_1)-4 k_5(s_1)+4 e^{s_1+s_2} k_5(-s_1-s_2)+4 k_5(s_1+s_2) = 0. 
\end{math}
\end{center}

 \subsection{Functional relation between $k_3,$ $k_4,$ $k_5$} 
 The explicit final formulas for the functions $K_1, \dots, K_{20}$ are provided in 
 Section \ref{ExplicitFormulasSec} and in Appendix \ref{explicit3and4fnsappsec}. One can see by using these 
 explicit formulas that some of these functions are scalar multiples of each other. 
 For example we have $2 K_2(s_1) = K_1(s_1)$, which clearly implies that  
 $2 \widetilde K_2 = \widetilde K_1$. However, the functional relations given 
 by \eqref{basicK1eqn} and \eqref{basicK2eqn} for $\widetilde K_1$ and $\widetilde K_2$ apparently 
 have different ingredients, which yields the following functional relation: 
 \[
-e^{s_1} k_3\left(-s_1\right)-k_3\left(s_1\right)+2 \left(-e^{s_1} k_4\left(-s_1\right)-k_4\left(s_1\right)+e^{s_1} k_5\left(-s_1\right)+k_5\left(s_1\right)\right)
=0. 
\]

\subsection{Functional relation between  $k_3,$ $k_4,$ $k_6,$ $k_7,$ $k_8,$ $\dots,$ $ k_{13},$ $ k_{17},$ $ k_{18},$ $ k_{19}$ } In a very similar manner, 
one can use the final explicit formulas given in Appendix \ref{explicit3and4fnsappsec} to see that 
$2 \widetilde K_{11} = \widetilde K_{10}$. Therefore, one can then use the finite 
difference expressions given by \eqref{basicK10eqn} and \eqref{basicK11eqn} to obtain the following functional relation: 

\smallskip

{\small
\begin{center}
\begin{math}
\frac{e^{s_1+s_2} k_3(-s_1-s_2)}{4 s_1 s_3}+\frac{k_3(s_1+s_2)}{4 s_1 s_3}+\frac{e^{s_2+s_3} G_1(s_1) k_3(-s_2-s_3)}{4 s_3}+\frac{e^{s_2+s_3} k_3(-s_2-s_3)}{4 s_1 s_3}+\frac{G_1(s_1) k_3(s_2+s_3)}{4 s_3}+\frac{k_3(s_2+s_3)}{4 s_1 s_3}+\frac{e^{s_2} G_1(s_1) k_4(-s_2)}{s_3}+\frac{e^{s_2} k_4(-s_2)}{s_1 s_3}+\frac{G_1(s_1) k_4(s_2)}{s_3}+\frac{k_4(s_2)}{s_1 s_3}+\frac{e^{s_1+s_2+s_3} k_4(-s_1-s_2-s_3)}{s_1 s_3}+\frac{k_4(s_1+s_2+s_3)}{s_1 s_3}+\frac{k_6(s_1)}{2 s_2 (s_2+s_3)}+\frac{k_6(s_1)}{2 s_3 (s_2+s_3)}+\frac{k_6(s_3)}{4 s_1 (s_1+s_2)}+\frac{k_6(s_3)}{4 s_2 (s_1+s_2)}+\frac{k_6(s_1+s_2+s_3)}{4 s_1 s_2}+\frac{k_6(s_1+s_2+s_3)}{2 s_2 s_3}+\frac{e^{s_1} k_7(-s_1)}{2 s_2 (s_2+s_3)}+\frac{e^{s_1} k_7(-s_1)}{2 s_3 (s_2+s_3)}+\frac{e^{s_1+s_2+s_3} k_7(-s_1-s_2-s_3)}{4 s_1 s_2}+\frac{e^{s_1+s_2+s_3} k_7(-s_1-s_2-s_3)}{2 s_2 s_3}+\frac{e^{s_3} k_7(-s_3)}{4 s_1 (s_1+s_2)}+\frac{e^{s_3} k_7(-s_3)}{4 s_2 (s_1+s_2)}+\frac{k_8(s_1,s_2+s_3)}{4 s_2}+\frac{k_8(s_1,s_2+s_3)}{4 s_3}+\frac{e^{s_1+s_2} k_8(-s_1-s_2,s_1)}{4 (s_1+s_2+s_3)}+\frac{1}{4} G_1(s_1) k_8(s_2,s_3)+\frac{k_8(s_2,s_3)}{4 s_1}+\frac{e^{s_1+s_2+s_3} k_8(-s_1-s_2-s_3,s_1)}{4 s_3}+\frac{1}{4} e^{s_2} G_1(s_1) k_8(s_3,-s_2-s_3)+\frac{e^{s_2} k_8(s_3,-s_2-s_3)}{4 s_1}+\frac{k_9(s_1,s_2)}{8 s_3}+\frac{e^{s_1+s_2} k_9(-s_1-s_2,s_1)}{8 s_3}-\frac{1}{8} G_1(s_1) k_9(s_2,s_3)+\frac{k_9(s_1+s_2,s_3)}{8 s_1}+\frac{k_9(s_1+s_2,s_3)}{8 s_2}-\frac{1}{8} e^{s_2} G_1(s_1) k_9(s_3,-s_2-s_3)+\frac{e^{s_1+s_2} k_9(s_3,-s_2-s_3)}{8 (s_1+s_2+s_3)}+\frac{e^{s_1+s_2} k_9(s_3,-s_1-s_2-s_3)}{8 s_1}+\frac{k_{10}(s_1,s_2)}{8 (-s_1-s_2-s_3)}+\frac{k_{10}(s_1,s_2)}{8 s_3}+\frac{e^{s_1} k_{10}(s_2,-s_1-s_2)}{8 s_3}-\frac{1}{8} e^{s_2+s_3} G_1(s_1) k_{10}(-s_2-s_3,s_2)+\frac{e^{s_1+s_2+s_3} k_{10}(-s_2-s_3,s_2)}{8 (s_1+s_2+s_3)}+\frac{e^{s_1+s_2+s_3} k_{10}(-s_1-s_2-s_3,s_1+s_2)}{8 s_1}-\frac{1}{8} e^{s_2} G_1(s_1) k_{10}(s_3,-s_2-s_3)+\frac{e^{s_1+s_2} k_{10}(s_3,-s_1-s_2-s_3)}{8 s_1}+\frac{e^{s_1+s_2} k_{10}(s_3,-s_1-s_2-s_3)}{8 s_2}+\frac{e^{s_1+s_2+s_3} k_{11}(s_1,s_2)}{4 (-s_1-s_2-s_3)}+\frac{e^{s_1+s_2+s_3} k_{11}(s_1,s_2)}{4 (s_1+s_2+s_3)}+\frac{k_{11}(s_1,s_2+s_3)}{4 s_3}+\frac{1}{4} e^{s_2+s_3} G_1(s_1) k_{11}(-s_2-s_3,s_2)+\frac{e^{s_2+s_3} k_{11}(-s_2-s_3,s_2)}{4 s_1}+\frac{1}{4} e^{s_2} G_1(s_1) k_{11}(s_3,-s_2-s_3)+\frac{e^{s_2} k_{11}(s_3,-s_2-s_3)}{4 s_1}+\frac{e^{s_1} k_{11}(s_2+s_3,-s_1-s_2-s_3)}{4 s_2}+\frac{e^{s_1} k_{11}(s_2+s_3,-s_1-s_2-s_3)}{4 s_3}+\frac{e^{s_1} k_{12}(s_2,-s_1-s_2)}{4 (s_1+s_2+s_3)}+\frac{1}{4} G_1(s_1) k_{12}(s_2,s_3)+\frac{k_{12}(s_2,s_3)}{4 s_1}+\frac{1}{4} e^{s_2+s_3} G_1(s_1) k_{12}(-s_2-s_3,s_2)+\frac{e^{s_2+s_3} k_{12}(-s_2-s_3,s_2)}{4 s_1}+\frac{e^{s_1+s_2+s_3} k_{12}(-s_1-s_2-s_3,s_1)}{4 s_2}+\frac{e^{s_1+s_2+s_3} k_{12}(-s_1-s_2-s_3,s_1)}{4 s_3}+\frac{e^{s_1} k_{12}(s_2+s_3,-s_1-s_2-s_3)}{4 s_3}+\frac{e^{s_1+s_2} k_{13}(-s_1-s_2,s_1)}{8 s_3}+\frac{e^{s_1} k_{13}(s_2,-s_1-s_2)}{8 s_3}-\frac{1}{8} G_1(s_1) k_{13}(s_2,s_3)+\frac{e^{s_1} k_{13}(s_2,s_3)}{8 (s_1+s_2+s_3)}+\frac{k_{13}(s_1+s_2,s_3)}{8 s_1}-\frac{1}{8} e^{s_2+s_3} G_1(s_1) k_{13}(-s_2-s_3,s_2)+\frac{e^{s_1+s_2+s_3} k_{13}(-s_1-s_2-s_3,s_1+s_2)}{8 s_1}+\frac{e^{s_1+s_2+s_3} k_{13}(-s_1-s_2-s_3,s_1+s_2)}{8 s_2}-\frac{1}{16} k_{17}(s_1,s_2,s_3)-\frac{1}{16} e^{s_1} k_{17}(s_2,s_3,-s_1-s_2-s_3)-\frac{1}{16} e^{s_1+s_2+s_3} k_{17}(-s_1-s_2-s_3,s_1,s_2)-\frac{1}{16} e^{s_1+s_2} k_{17}(s_3,-s_1-s_2-s_3,s_1)+\frac{1}{8} k_{18}(s_1,s_2,s_3)+\frac{1}{8} e^{s_1} k_{18}(s_2,s_3,-s_1-s_2-s_3)+\frac{1}{8} e^{s_1+s_2+s_3} k_{18}(-s_1-s_2-s_3,s_1,s_2)+\frac{1}{8} e^{s_1+s_2} k_{18}(s_3,-s_1-s_2-s_3,s_1)-\frac{1}{16} k_{19}(s_1,s_2,s_3)-\frac{1}{16} e^{s_1} k_{19}(s_2,s_3,-s_1-s_2-s_3)-\frac{1}{16} e^{s_1+s_2+s_3} k_{19}(-s_1-s_2-s_3,s_1,s_2)-\frac{1}{16} e^{s_1+s_2} k_{19}(s_3,-s_1-s_2-s_3,s_1)-\frac{k_8(s_1+s_2,s_3)}{4 s_1}-\frac{e^{s_1+s_2} k_8(s_3,-s_1-s_2-s_3)}{4 s_1}-\frac{e^{s_1+s_2+s_3} k_{11}(-s_1-s_2-s_3,s_1+s_2)}{4 s_1}-\frac{e^{s_1+s_2} k_{11}(s_3,-s_1-s_2-s_3)}{4 s_1}-\frac{k_{12}(s_1+s_2,s_3)}{4 s_1}-\frac{e^{s_1+s_2+s_3} k_{12}(-s_1-s_2-s_3,s_1+s_2)}{4 s_1}-\frac{k_9(s_2,s_3)}{8 s_1}-\frac{e^{s_2} k_9(s_3,-s_2-s_3)}{8 s_1}-\frac{e^{s_2+s_3} k_{10}(-s_2-s_3,s_2)}{8 s_1}-\frac{e^{s_2} k_{10}(s_3,-s_2-s_3)}{8 s_1}-\frac{k_{13}(s_2,s_3)}{8 s_1}-\frac{e^{s_2+s_3} k_{13}(-s_2-s_3,s_2)}{8 s_1}-\frac{k_8(s_1+s_2,s_3)}{4 s_2}-\frac{e^{s_1+s_2} k_{11}(s_3,-s_1-s_2-s_3)}{4 s_2}-\frac{e^{s_1+s_2+s_3} k_{12}(-s_1-s_2-s_3,s_1+s_2)}{4 s_2}-\frac{k_9(s_1,s_2+s_3)}{8 s_2}-\frac{e^{s_1} k_{10}(s_2+s_3,-s_1-s_2-s_3)}{8 s_2}-\frac{e^{s_1+s_2+s_3} k_{13}(-s_1-s_2-s_3,s_1)}{8 s_2}-\frac{k_6(s_3)}{4 s_1 s_2}-\frac{e^{s_3} k_7(-s_3)}{4 s_1 s_2}-\frac{k_6(s_1+s_2+s_3)}{4 s_1 (s_1+s_2)}-\frac{e^{s_1+s_2+s_3} k_7(-s_1-s_2-s_3)}{4 s_1 (s_1+s_2)}-\frac{k_6(s_1+s_2+s_3)}{4 s_2 (s_1+s_2)}-\frac{e^{s_1+s_2+s_3} k_7(-s_1-s_2-s_3)}{4 s_2 (s_1+s_2)}-\frac{k_{11}(s_1,s_2)}{4 (-s_1-s_2-s_3)}-\frac{e^{s_1+s_2+s_3} k_{10}(s_1,s_2)}{8 (-s_1-s_2-s_3)}-\frac{e^{s_2+s_3} G_1(s_1) k_4(-s_2-s_3)}{s_3}-\frac{G_1(s_1) k_4(s_2+s_3)}{s_3}-\frac{e^{s_2} G_1(s_1) k_3(-s_2)}{4 s_3}-\frac{G_1(s_1) k_3(s_2)}{4 s_3}-\frac{k_8(s_1,s_2)}{4 s_3}-\frac{e^{s_1+s_2} k_8(-s_1-s_2,s_1)}{4 s_3}-\frac{k_{11}(s_1,s_2)}{4 s_3}-\frac{e^{s_1} k_{11}(s_2,-s_1-s_2)}{4 s_3}-\frac{e^{s_1+s_2} k_{12}(-s_1-s_2,s_1)}{4 s_3}-\frac{e^{s_1} k_{12}(s_2,-s_1-s_2)}{4 s_3}-\frac{k_9(s_1,s_2+s_3)}{8 s_3}-\frac{e^{s_1+s_2+s_3} k_9(-s_1-s_2-s_3,s_1)}{8 s_3}-\frac{k_{10}(s_1,s_2+s_3)}{8 s_3}-\frac{e^{s_1} k_{10}(s_2+s_3,-s_1-s_2-s_3)}{8 s_3}-\frac{e^{s_1+s_2+s_3} k_{13}(-s_1-s_2-s_3,s_1)}{8 s_3}-\frac{e^{s_1} k_{13}(s_2+s_3,-s_1-s_2-s_3)}{8 s_3}-\frac{e^{s_1+s_2} k_4(-s_1-s_2)}{s_1 s_3}-\frac{k_4(s_1+s_2)}{s_1 s_3}-\frac{e^{s_2+s_3} k_4(-s_2-s_3)}{s_1 s_3}-\frac{k_4(s_2+s_3)}{s_1 s_3}-\frac{e^{s_2} k_3(-s_2)}{4 s_1 s_3}-\frac{k_3(s_2)}{4 s_1 s_3}-\frac{e^{s_1+s_2+s_3} k_3(-s_1-s_2-s_3)}{4 s_1 s_3}-\frac{k_3(s_1+s_2+s_3)}{4 s_1 s_3}-\frac{k_6(s_1)}{2 s_2 s_3}-\frac{e^{s_1} k_7(-s_1)}{2 s_2 s_3}-\frac{k_6(s_1+s_2+s_3)}{2 s_2 (s_2+s_3)}-\frac{e^{s_1+s_2+s_3} k_7(-s_1-s_2-s_3)}{2 s_2 (s_2+s_3)}-\frac{k_6(s_1+s_2+s_3)}{2 s_3 (s_2+s_3)}-\frac{e^{s_1+s_2+s_3} k_7(-s_1-s_2-s_3)}{2 s_3 (s_2+s_3)}-\frac{e^{s_1+s_2} k_8(s_3,-s_2-s_3)}{4 (s_1+s_2+s_3)}-\frac{e^{s_1+s_2+s_3} k_{11}(-s_2-s_3,s_2)}{4 (s_1+s_2+s_3)}-\frac{e^{s_1} k_{12}(s_2,s_3)}{4 (s_1+s_2+s_3)}-\frac{e^{s_1+s_2} k_9(-s_1-s_2,s_1)}{8 (s_1+s_2+s_3)}-\frac{e^{s_1+s_2+s_3} k_{10}(s_1,s_2)}{8 (s_1+s_2+s_3)}-\frac{e^{s_1} k_{13}(s_2,-s_1-s_2)}{8 (s_1+s_2+s_3)} = 0. 
\end{math}
\end{center}
}

\smallskip

There are more functional relations of this type, which are written in Appendix \ref{lengthykfnrelationsappsec} 
because of their algebraically  lengthy expressions.

\section{Calculation of the term $a_4$} 
\label{Calculatea_4Sec}

In this section we provide the details of the calculation of the term $a_4$, 
whose final formula is given by \eqref{a_4expression}. In order to derive the 
small-time asymptotic expansion \eqref{heatexp}, one can start by using 
the Cauchy integral formula to write: 
\[
e^{-t \triangle_\vphi} 
= 
\int_\gamma e^{-t \lambda} (\triangle_\vphi- \lambda)^{-1} \, d\lambda,  
\]
where the contour $\gamma$ goes clockwise around the non-negative real axis, where 
the eigenvalues of $\triangle_\vphi$ are located.  Then one can use the pseudodifferential 
calculus developed in \cite{ConC*algDiffGeo}  for $C^*$-dynamical systems to approximate the parametrix of 
$\triangle_\vphi- \lambda.$ We elaborate on this 
procedure in the following subsection. This approach is indeed an implementation of 
the heat kernel method explained in \cite{GilBook} in a noncommutative setting.  

\subsection{Heat expansion}

The pseudodifferential calculus developed in \cite{ConC*algDiffGeo}  
reduces to the following case for the noncommutative two torus $\NT$. The symbols 
are smooth maps  $\rho : \R^2 \to \CNT$ such that the norm of $\rho(\xi)$ and 
its derivatives satisfy certain growth conditions depending on the order of the symbol, 
see \cite{ConC*algDiffGeo} for details. 
For example, the symbol of a differential operator 
$\sum a_{i,j} \done^i \dtwo^j,$ where $a_{i,j} \in \CNT,$ is the 
polynomial $\sum a_{i,j} \xi_1^i \xi_2^j$ whose coefficients apparently belong to 
$\CNT$.  The correspondence between a general symbol $\rho$ 
and its associated pseudodifferential operator $P_\rho$ acting on $\CNT$ is established by 
the formula 
\begin{equation} \label{PDOformula}
P_\rho(x) = (2 \pi)^{-2} \int \int e^{-i s \cdot \xi}\, \rho(\xi) \, \alpha_s(x) \, ds \, d\xi, \qquad x \in \CNT, 
\end{equation}
where the dynamics $\alpha_s$ is given by \eqref{NC2TDynSys}. 
Given two symbols $\rho_1$ and $\rho_2$, the symbol of the composition of their  
corresponding pseudodifferential operators is asymptotically given by 
\[ 
\rho_1 \circ \rho_2 \sim 
\sum_{\alpha_1, \alpha_2 \in \Z_{\geq 0}} \frac{1}{\alpha_1 ! \alpha_2 !} \left (
\frac{\partial^{\alpha_1 + \alpha_2} }{\partial \xi_1^{\alpha_1} \partial \xi_2^{\alpha_2}} \rho_1 (\xi) \right )
\, \done^{\alpha_1} \dtwo^{\alpha_2}\left (\rho_2(\xi) \right ). 
\]

\smallskip

Using this calculus, the parametrix $R_\lambda$ of $\triangle_\vphi - \lambda$ can be 
approximated by a recursive procedure. That is, one can start by assuming that the symbol $\sigma(R_\lambda)$ 
of $R_\lambda$ has an expansion of the form $\sum_{j=0}^\infty r_j(\xi, \lambda)$, where 
each $r_j$ is homogeneous of order $-2 - j$ in the sense that for any $t>0$: 
\[
r_j(t \xi, t^2 \lambda) = t^{-2-j} r_j(\xi, \lambda). 
\]
The reason for starting from order $-2$ is that $\triangle_\vphi - \lambda$ is of order 2 
whose symbol is of the form $(p_2(\xi) - \lambda)+ p_1(\xi)+ p_0(\xi)$, where as calculated 
in \cite{ConTreGB}, we have:
\begin{eqnarray} \label{LaplacianSymbol}
p_2(\xi) &=& (\xi_1^2 + \xi_2^2)e^h, \\
p_1(\xi) &=& 2 \xi_1 e^{h/2} \done(e^{h/2})+ 2 \xi_2 e^{h/2} \dtwo(e^{h/2}), \nonumber \\
p_0(\xi) &=& e^{h/2} \done^2(e^{h/2}) + e^{h/2} \dtwo^2(e^{h/2}). \nonumber
\end{eqnarray}

\smallskip

As it is explained in detail in \cite{FatKhaGB}, using the calculus of symbols 
mentioned above, one can solve the following equation recursively: 
\[
\left ( \sum_{j=0}^\infty r_j (\xi, \lambda) \right )\circ \left ( (p_2(\xi) - \lambda)+ p_1(\xi)+ p_0(\xi) \right ) \sim 1. 
\]
In fact one finds that 
\[
r_0(\xi, \lambda) = (p_2(\xi) - \lambda )^{-1}, 
\]
and for each $n>1:$
\[
r_n (\xi, \lambda) = - \sum \frac{1}{ \alpha_1 ! \alpha_2 !}
\frac{\partial^{\alpha_1 + \alpha_2} r_j(\xi, \lambda)}{\partial \xi_1^{\alpha_1} \partial \xi_2^{\alpha_2}} 
\, \done^{\alpha_1} \dtwo^{\alpha_2} \left (p_k(\xi) \right ) \, r_0(\xi, \lambda), 
\]
where the summation is over all $\alpha_1, \alpha_2 \in \Z_{\geq 0}$, $j \in \{1, \dots, n-1\}$ and $k \in \{0, 1, 2\}$ such that $2+j+\alpha_1+ \alpha_2-k=n$. 

\smallskip

Finally, after finding each $r_{n}$ recursively, one can show that each $a_{2n} \in \CNT$ appearing 
in the small-time heat kernel expansion \eqref{heatexp} can be written as 
\begin{equation} \label{generala_2nformula}
a_{2n} = \frac{1}{2 \pi i}\int_{\R^2} \int_{\gamma} e^{-\lambda} r_{2n}(\xi, \lambda) \, d\lambda \, d\xi. 
\end{equation}
Using a homogeneity argument and classical complex analysis  
one can explicitly calculate the integrals involved in the above expression, except for 
a final part that has a purely noncommutative obstruction. That is, one has to 
calculate the integral of certain $\CNT$-valued functions defined over the positive 
real line (due to passing to the polar coordinates and the necessity to integrate 
in the radial direction). This task can be accomplished by the so called rearrangement 
lemmas, see \cite{ConTreGB, BhuMarRicci, ConMosModular, 
FatKhaSC2T, LesDivided}. We discuss the necessary rearrangement lemma for the calculation 
of the term $a_4$ in the following subsection. 

\subsection{Rearrangement lemma}

As we explained above, 
the appearance of the modular automorphism $\sigma_i(x)=e^\nab(x)=e^{-h}xe^h$, 
$x \in C(\NT)$, in the calculation of the term $a_4$ comes from the 
fact that in the process of applying the method explained in the previous subsection, one  encounters integrals of $\CNT$-valued functions defined on the positive 
real line. This type of integrals can be worked out by means of the so called 
rearrangement lemmas. In Lemma \ref{Rearrfora4} we present the 
rearrangement lemma that is needed for the 
calculation of the term $a_4$. However, since we wish to explain a 
recursive procedure for constructing the functions appearing in this lemma, 
it is useful to first recall the rearrangement lemma that was used in 
\cite{ConMosModular, FatKhaSC2T} 
for the calculation of the term $a_2$. 

\begin{lemma} \label{Rearrfora2}
For any $m = (m_0, m_1, \dots, m_{n}) \in \Z_{>0}^{n+1}$ and $\rho_1, \dots, \rho_n \in \CNT$, one has: 
\begin{eqnarray*}
&&\int_0^\infty (e^h u +1)^{-m_0} \prod_{j=1}^n \rho_j (e^h u +1)^{-m_j} u^{|m|-2} \, du \\
&=&
e^{-(|m|-1)h} F_m(\sigma_i, \dots, \sigma_i)(\rho_1 \cdots \rho_n), 
\end{eqnarray*}
where the function $F_m$ is defined by 
\[
F_m(u_1, \dots, u_n)
=
\int_{0}^\infty \frac{x^{|m|-2}}{(x+1)^{m_0}} \prod_{j=1}^n \left (x \prod_{\nu=1}^j u_\nu +1 \right )^{-m_j} \, dx. 
\]
\end{lemma}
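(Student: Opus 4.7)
The plan is to combine the KMS commutation $\rho \cdot e^h = e^h \cdot \sigma_i(\rho)$ with a scaling change of variable $u \rightsquigarrow x = u e^h$ in the scalar integration, so that everything reduces to a one-variable integral over $x$ applied in functional calculus. First I would observe that, since $\sigma_i$ is an automorphism and fixes $e^h$, the operators $L_{e^h}$ (left multiplication by $e^h$) and $\sigma_i$ (acting on $C^\infty(\mathbb{T}_\theta^2)$) commute, and iteration of $\rho e^h = e^h \sigma_i(\rho)$ gives $\rho \cdot e^{kh} = e^{kh}\sigma_i^k(\rho)$ for all $k\ge 0$. Extending by analytic continuation of the KMS flow, or equivalently via the joint spectral calculus of the commuting pair $(L_{e^h},\sigma_i)$, one obtains for any analytic $f$ the key identity
\[
\rho \cdot f(e^h) \;=\; f(e^h\,\sigma_i)(\rho),
\]
understood as the functional calculus applied to the commuting operator product $L_{e^h}\sigma_i$ acting on $\rho$. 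Applied to $f(y)=(yu+1)^{-m}$ this yields $\rho \cdot (e^h u+1)^{-m} = (e^h\sigma_i\, u+1)^{-m}(\rho)$.

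Second, I would iterate this to move each $\rho_j$ to the extreme right of the integrand. Because $\sigma_i$ is an algebra automorphism, commuting $\rho_j$ past a factor $(e^h u+1)^{-m_k}$ sitting to its right applies $\sigma_i^{m_k}$-type operators only to $\rho_j$, not to the $\rho_\ell$'s nested further to the right, which carry their own $\sigma_i$'s. To bookkeep this cleanly I would introduce, as in \cite{ConMosModular}, slot operators $\sigma_i^{(j)}$ acting by $\sigma_i$ on the $j$-th variable of a multilinear expression; the $\sigma_i^{(j)}$'s commute with each other and with $L_{e^h}$. A straightforward induction on $n$ then shows that the integrand equals
\[
\Bigl(\prod_{j=0}^{n} \bigl(e^h\, u\, \sigma_i^{(1)}\sigma_i^{(2)}\cdots \sigma_i^{(j)} + 1\bigr)^{-m_j}\Bigr)(\rho_1\rho_2\cdots \rho_n),
\]
with the empty product convention for $j=0$. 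Every occurrence of the noncommutative variable $e^h$ now appears only as a left multiplication, so the remaining $u$-integral can be evaluated by joint functional calculus in the commuting family $\{L_{e^h},\sigma_i^{(1)},\ldots,\sigma_i^{(n)}\}$.

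Third, I would perform the substitution $u = x e^{-h}$ (valid by the spectral theorem for the positive invertible element $e^h$, with $du = e^{-h}\,dx$). Then $e^h u + 1 \rightsquigarrow x+1$; more generally $e^h u\, \sigma_i^{(1)}\cdots\sigma_i^{(j)} + 1 \rightsquigarrow x\, \sigma_i^{(1)}\cdots\sigma_i^{(j)} + 1$; and $u^{|m|-2}\,du \rightsquigarrow x^{|m|-2}\, e^{-(|m|-1)h}\,dx$. Pulling the scalar factor $e^{-(|m|-1)h}$ out to the left and recognizing that evaluating the slot operator $\sigma_i^{(j)}$ on the $j$-th factor of $\rho_1\cdots\rho_n$ reproduces $F_m(\sigma_i,\ldots,\sigma_i)(\rho_1\cdots\rho_n)$ exactly matches the asserted right-hand side.

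The main obstacle I anticipate is not conceptual but notational: justifying the joint functional calculus in the second step, in particular the intertwining between left multiplication by $e^h$, the analytic-continuation operator $\sigma_i = e^{\nabla}$, and the multi-slot operators $\sigma_i^{(j)}$, and then verifying that the operator-valued $u$-integral converges absolutely so the substitution $u = xe^{-h}$ is legitimate. Convergence is ensured by the hypothesis $m\in\mathbb{Z}_{>0}^{n+1}$: near $u=0$ the integrand is bounded by $u^{|m|-2}$ with $|m|\ge n+1\ge 2$, and near $u=\infty$ each $(e^h u+1)^{-m_j}$ behaves like $(e^h u)^{-m_j}$, giving overall decay $u^{-2}$ after multiplying by $u^{|m|-2}$.
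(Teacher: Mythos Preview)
Your argument is correct. The paper does not give its own proof of this lemma; it is recalled from \cite{ConMosModular, FatKhaSC2T}, and the only hint of the original method is the remark in the proof of Lemma~\ref{Rearrfora4} that the Connes--Moscovici argument ``works verbatim, by choosing the positive numbers $\alpha_j$ such that they add up to $2$ instead of $1$'' --- indicating that in \cite{ConMosModular} one distributes the power $u^{|m|-2}$ among the $n+1$ factors via exponents $m_j-\alpha_j$ with $\sum\alpha_j=1$, so that each factor can be handled by a separate scaling. Your route is the more direct one: rewrite right multiplication by $e^h$ as $L_{e^h}\sigma_i$, push all $\rho_j$ to the right to obtain a function of the commuting family $\{L_{e^h},\sigma_i^{(1)},\dots,\sigma_i^{(n)}\}$ acting on $\rho_1\cdots\rho_n$, and then change variables $u\mapsto x e^{-h}$ in the joint functional calculus. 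This avoids the auxiliary $\alpha_j$'s altogether and is essentially the viewpoint later systematized in \cite{LesDivided}. Both arguments hinge on the same identity $R_{e^h}=L_{e^h}\sigma_i$; the $\alpha_j$ device buys a factor-by-factor reduction to one-dimensional integrals, while your approach buys a single global substitution once commutativity of $L_{e^h}$ with the slot operators $\sigma_i^{(j)}$ is established.
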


\smallskip

In the following proposition we present functional relations that allow 
us to construct recursively any function $F_m(u_1, \dots, u_n)$ appearing in the 
previous lemma, essentially starting from the generating function of the Bernoulli numbers.

\begin{proposition} \label{RecursiveforRearr1}
For the functions $F_m$ appearing in the statement of 
Lemma \ref{Rearrfora2}, we have: 
\[
F_{(1,1)}(u_1) = \frac{\log u_1}{u_1 - 1}, 
\]
if $m_0, m_1 \geq 2$ and $m_3, \dots, m_n \geq 1$, then 
\begin{eqnarray*}
F_{(m_0, m_1, \dots, m_n)}(u_1, u_2,  \dots, u_n) &=& \frac{1}{u_1-1} F_{(m_0, m_1-1, m_2, \dots, m_n)}(u_1, \dots, u_n) - \\
&& \frac{1}{u_1-1} F_{(m_0-1, m_1, \dots, m_n)}(u_1, \dots, u_n),
\end{eqnarray*}
if $m_1 \geq 2$ and $m_2, \dots, m_n \geq 1$, then 
\begin{eqnarray*}
F_{(1, m_1, m_2,  \dots, m_n)}(u_1, u_2,  \dots, u_n) &=& \frac{1}{u_1-1} F_{(1, m_1-1, m_2, \dots, m_n)}(u_1, \dots, u_n) -\\
&& \frac{u_1^{-m_1-m_2-\cdots - m_n+1}}{u_1-1} F_{(m_1, m_2, \dots, m_n)}(u_2, u_3, \dots, u_n),
\end{eqnarray*}
if $m_0 \geq 2$ and $m_2, \dots, m_n \geq 1$, then 
\begin{eqnarray*}
F_{(m_0, 1, m_2,  \dots, m_n)}(u_1, u_2, \dots, u_n) &=& \frac{1}{u_1-1} F_{(m_0, m_2, \dots, m_n)}(u_1 u_2, u_3, \dots, u_n) -\\
&& \frac{1}{u_1-1} F_{(m_0-1, 1, m_2, \dots, m_n)}(u_1, u_2, \dots, u_n),  
\end{eqnarray*}
and if $m_2, \dots, m_n \geq 1$, then 
\begin{eqnarray*}
F_{(1, 1, m_2,  \dots, m_n)}(u_1, u_2, \dots, u_n) &=& \frac{1}{u_1-1} F_{(1, m_2, \dots, m_n)}(u_1 u_2, u_3, \dots, u_n) -\\
&& \frac{u_1^{-m_2-m_3 - \cdots - m_n}}{u_1-1} F_{(m_0-1, 1, m_2, \dots, m_n)}(u_1, u_2, \dots, u_n).  
\end{eqnarray*}
\begin{proof}
It follows from considering the definition of the functions and writing a partial fraction decomposition for $\frac{1}{(x+1)(xu+1)}$.   
\end{proof}
\end{proposition}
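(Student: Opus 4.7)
The plan is to show that every identity in the proposition arises, after expanding the integrand of $F_m$, from a single elementary partial fraction identity, namely
\[
\frac{x(u_1-1)}{(x+1)^{a}(xu_1+1)^{b}} \;=\; \frac{1}{(x+1)^{a-1}(xu_1+1)^{b}} \;-\; \frac{1}{(x+1)^{a}(xu_1+1)^{b-1}},
\]
valid for integers $a,b\geq 0$ (with the convention that a factor raised to exponent zero is absent), which is verified directly by cross-multiplying the right-hand side. First I would dispose of the base case $F_{(1,1)}(u_1)$ by specializing $a=b=1$: dividing by $u_1-1$ gives $\frac{1}{(x+1)(xu_1+1)} = \frac{1}{u_1-1}\bigl(\frac{u_1}{xu_1+1}-\frac{1}{x+1}\bigr)$, and integrating from $0$ to $\infty$ yields $\bigl[\log\frac{xu_1+1}{x+1}\bigr]_{0}^{\infty}/(u_1-1) = \log u_1/(u_1-1)$.

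For the four recursive identities, the key observation is that the integrand defining $F_{(m_0,m_1,\dots,m_n)}$ carries the prefactor $x^{|m|-2}$, and that the displayed identity, multiplied by $x^{|m|-3}\prod_{j\geq 2}(xU_j+1)^{-m_j}$ with $U_j=u_1u_2\cdots u_j$, converts $x^{|m|-2}/\bigl((x+1)^{m_0}(xu_1+1)^{m_1}\bigr)$ times the tail into the difference of two integrands in which the total weight $|m|$ has dropped by one. Since the loss of one power of $x$ from the factor $x(u_1-1)$ is exactly compensated by the drop in $|m|$, both resulting integrals are again of the form $F_{m'}$, and the tail product $\prod_{j\geq 2}(xU_j+1)^{-m_j}$ is unaffected. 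This directly yields the first recursion (where $m_0,m_1\geq 2$) and, by the same identity with the convention that $(x+1)^{0}=1$, the third recursion (where $m_1=1$, the relevant factor being absent after the reduction and the arguments collapsing as $u_1u_2,u_3,\dots,u_n$).

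The remaining two identities, corresponding to $m_0=1$, require an additional rescaling: after the partial fraction step, one of the two resulting integrals loses its $(x+1)$-factor entirely, and in it I would substitute $y=xu_1$. Under this substitution the tail factors become $(y\, u_2\cdots u_j+1)^{-m_j}$, the factor $(xu_1+1)^{m_1}$ becomes $(y+1)^{m_1}$, and the monomial $x^{|m|-3}\,dx$ transforms into $u_1^{-(|m|-2)}y^{|m|-3}\,dy$. One verifies that $|m|-3=|m'|-2$ for $m'=(m_1,m_2,\dots,m_n)$, so the rescaled integral equals $F_{(m_1,\dots,m_n)}(u_2,\dots,u_n)$ multiplied by $u_1^{2-|m|}=u_1^{-m_1-m_2-\cdots-m_n+1}$; this produces the second recursion, and the fourth follows by combining the rescaling with the $m_1=1$ specialization used for the third.

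The proof is essentially bookkeeping rather than analysis: the integrals converge absolutely because the integrands are rational with enough decay at infinity, and the boundary behavior at $x=0$ and $x=\infty$ in the base case is tame. The one point that demands care is the consistent interpretation of factors raised to exponent zero and the tracking of how the arguments $(u_1,u_2,\dots,u_n)$ collapse to $(u_1u_2,u_3,\dots,u_n)$ or $(u_2,u_3,\dots,u_n)$ depending on whether the partial fraction step merges the first two weights or drops the first one; I expect this combinatorial checking, rather than any genuine mathematical difficulty, to be the main obstacle in writing out the full proof.
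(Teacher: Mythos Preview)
Your approach is correct and is exactly what the paper's one-line proof hints at: everything follows from the partial fraction decomposition of $\frac{1}{(x+1)(xu_1+1)}$, and your treatment of the base case, the tail bookkeeping, and the rescaling $y=xu_1$ for the $m_0=1$ cases are all on target. One minor slip: your displayed identity has the two terms on the right-hand side swapped (cross-multiplying gives $(x+1)-(xu_1+1)=-x(u_1-1)$), so it should read $\frac{1}{(x+1)^{a}(xu_1+1)^{b-1}} - \frac{1}{(x+1)^{a-1}(xu_1+1)^{b}}$; with that correction the recursions come out with the signs stated in the proposition.
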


For the calculation of $a_4$, since the pseudodifferential symbol that we have 
to use has a different homogeneity order than the one for $a_2$, we need 
a variant of Lemma \ref{Rearrfora2}, which is given in the following lemma.  

\begin{lemma}  \label{Rearrfora4}
For any $m = (m_0, m_1, \dots, m_{n}) \in \Z_{>0}^{n+1}$ and $\rho_1, \dots, \rho_n \in \CNT$, we have: 
\begin{eqnarray*}
&&\int_0^\infty (e^h u +1)^{-m_0} \prod_{j=1}^n \rho_j (e^h u +1)^{-m_j} u^{|m|-3} \, du \\
&=&
e^{-(|m|-2)h} F^v_m(\sigma_i, \dots, \sigma_i)(\rho_1 \cdots \rho_n), 
\end{eqnarray*}
where the function $F^v_m$ is defined by 
\[
F^v_m(u_1, \dots, u_n)
=
\int_{0}^\infty \frac{x^{|m|-3}}{(x+1)^{m_0}} \prod_{j=1}^n \left (x \prod_{\nu=1}^j u_\nu +1 \right )^{-m_j} \, dx. 
\]
\end{lemma}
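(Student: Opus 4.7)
The plan is to adapt the standard rearrangement-lemma argument that establishes Lemma \ref{Rearrfora2} (along the lines of \cite{ConMosModular, FatKhaSC2T, LesDivided}) and track the effect of the single modification: the $u$-power in the integrand has been lowered by one, from $u^{|m|-2}$ to $u^{|m|-3}$. This shift propagates through the computation to produce the matching shift $x^{|m|-3}$ inside $F^v_m$ and the extra factor of $e^{-h}$ in the prefactor (so $e^{-(|m|-1)h}$ becomes $e^{-(|m|-2)h}$).

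First I would insert, for each $j=0,1,\ldots,n$, the Laplace-type representation
\[
(e^h u + 1)^{-m_j} = \frac{1}{(m_j-1)!}\int_0^\infty t_j^{m_j-1} e^{-t_j}\, e^{-t_j e^h u}\, dt_j,
\]
which is legitimate since $e^h$ is positive and $u\ge 0$. After substituting into the LHS and invoking Fubini, the remaining noncommutativity lives entirely in the interleaving $e^{-t_0 e^h u}\rho_1 e^{-t_1 e^h u}\rho_2\cdots\rho_n e^{-t_n e^h u}$. I would then move each $\rho_j$ past the exponentials on its left using the commutation rule obtained from $e^{kh}\rho = \sigma_{-i}^{k}(\rho)\, e^{kh}$, namely
\[
e^{-s e^h u}\,\rho = \bigl(e^{-s e^h u\,\sigma_i^{-1}}\bigr)(\rho)\cdot\text{(no change to exponential)},
\]
with cumulative action: the $j$-th element $\rho_j$ picks up a product of $j$ copies of (the appropriate power of) $\sigma_i$, which is precisely the cumulative product $\prod_{\nu=1}^{j}u_\nu$ that appears in $F^v_m$ once we specialize $u_\nu=\sigma_i$.

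Next I would carry out the $u$-integral by the elementary formula
\[
\int_0^\infty u^{|m|-3} e^{-u X}\, du = \Gamma(|m|-2)\, X^{-(|m|-2)},
\]
applied with $X$ the positive operator built from $e^h$ and the $t_j$'s. Writing $X = e^h\cdot Y$ with $Y$ a positive combination of the parameters, this is where the prefactor $e^{-(|m|-2)h}$ is produced; the exponent is exactly one lower than in Lemma \ref{Rearrfora2} because we started with one less power of $u$. Finally, absorbing the $t_j$'s via the substitution $x = Y$ (handled spectrally) collapses the remaining $n{+}1$ parameter integrals into the single integral that defines $F^v_m$, with the correct $x^{|m|-3}/(x+1)^{m_0}\prod_j(x\prod_{\nu\le j}u_\nu+1)^{-m_j}$ integrand.

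The main obstacle is the careful bookkeeping of the cumulative $\sigma_i$-action on the $\rho_j$'s and the rigorous justification of the operator-exponential manipulations and the Fubini exchanges, particularly since one needs $|m|\ge 3$ for convergence of the $u$-integral at the origin (this is automatic in the application to $a_4$). The cleanest route, exactly as in the existing rearrangement-lemma proofs cited above, is to diagonalize $h$ via its spectral resolution so that every operator expression reduces to a classical scalar integral parameterized by the spectrum of $h$; once on that level, the identity is a direct change-of-variables verification.
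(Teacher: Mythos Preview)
Your spectral-diagonalization argument in the final paragraph is correct and suffices to prove the lemma; the bookkeeping for why $e^{-(|m|-1)h}$ becomes $e^{-(|m|-2)h}$ and $x^{|m|-2}$ becomes $x^{|m|-3}$ is also right, as is the convergence remark $|m|\ge 3$. One caution: the displayed commutation rule $e^{-se^hu}\rho = \bigl(e^{-se^hu\,\sigma_i^{-1}}\bigr)(\rho)$ is not a valid way to move $\rho$ past the exponential. Unlike $e^{kh}$, the operator $e^{-se^hu}=\sum_n \tfrac{(-su)^n}{n!}(e^h)^n$ has each summand conjugating $\rho$ by a \emph{different} power of $\sigma_i$, so no single twist factors $\rho$ out. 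The cumulative products $\prod_{\nu\le j}u_\nu$ really emerge from the scalar change of variables $x=e^{\lambda_{i_0}}u$ after spectral diagonalization, not from an operator commutation; so your last paragraph is doing the real work, and the Laplace-representation portion is unnecessary once you invoke it.

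The paper's own proof takes a different and much shorter route: it simply notes that the proof of Lemma~6.2 of \cite{ConMosModular} carries over verbatim once the auxiliary positive numbers $\alpha_j$ appearing there are chosen to sum to $2$ rather than $1$. That argument distributes the power of $u$ among the $n{+}1$ factors rather than introducing Laplace representations, so the mechanism is genuinely different. Your route is more self-contained; theirs is a one-line reduction to existing literature.
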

\begin{proof}
The proof of Lemma 6.2 of \cite{ConMosModular} works verbatim, by choosing 
the positive numbers $\alpha_j$ such that they add up to 2 instead of 1. 
\end{proof}

In fact, the functions $F^v_m$ appearing in the statement of Lemma 
\ref{Rearrfora4} are closely related to those appearing in Lemma \ref{Rearrfora2}:

\begin{proposition} \label{RecursiveforRearr2}
If $m_0, m_1 \geq 1$ and $m_3, \dots, m_n \geq 0$, then 
\begin{eqnarray*} 
F^v_{(m_0, m_1, \dots, m_n)}(u_1, \dots, u_n) 
&=& 
\frac{1}{u_1-1} F_{(m_0, m_1-1 , \dots, m_n)}(u_1, \dots, u_n) \\
&& + \frac{u_1}{u_1-1} F_{(m_0-1, m_1\, \dots, m_n)}(u_1, \dots, u_n). 
\end{eqnarray*}
\end{proposition}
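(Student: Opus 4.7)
The key observation is that the three functions appearing in the proposed identity have integrands with exactly the same power of $x$ in the numerator. Indeed, $F^v_{(m_0,m_1,\dots,m_n)}$ integrates $x^{|m|-3}$ against $(x+1)^{-m_0}\prod_{j=1}^n (x U_j+1)^{-m_j}$ (with $U_j = \prod_{\nu=1}^j u_\nu$), while both $F_{(m_0,m_1-1,\dots,m_n)}$ and $F_{(m_0-1,m_1,\dots,m_n)}$ integrate $x^{|m|-3}$ as well, since their total weight is $|m|-1$ and the convention for $F_m$ is $x^{|m|-2}$. These three integrands differ only in whether $(x+1)^{m_0}$ or $(xu_1+1)^{m_1}$ is reduced by one in the denominator. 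In other words, the claim reduces to a pointwise algebraic decomposition of the constant $1$ as a linear combination of the factors $(x+1)$ and $(xu_1+1)$.

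My plan is to verify the elementary partial-fraction identity
\[
1 \;=\; \frac{u_1}{u_1-1}(x+1) \;-\; \frac{1}{u_1-1}(xu_1+1), \qquad u_1 \neq 1,
\]
by direct expansion, since $u_1(x+1)-(xu_1+1) = u_1 - 1$. Inserting this decomposition of $1$ into the integrand of $F^v_m$ and splitting the integral produces two pieces: the first cancels exactly one $(x+1)$ from the denominator, giving the integrand of $F_{(m_0-1,m_1,\dots,m_n)}$ weighted by $u_1/(u_1-1)$; the second cancels exactly one $(xu_1+1)$, giving the integrand of $F_{(m_0,m_1-1,\dots,m_n)}$ weighted by $-1/(u_1-1)$. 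Recognizing these pieces as the $F$-functions in the statement and recombining yields the claimed recursion (after reconciling signs with the stated form).

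There is no serious obstacle; the only points deserving care are bookkeeping and convergence. The hypotheses $m_0, m_1 \geq 1$ are precisely what is needed to keep the shifted indices $m_0-1$ and $m_1-1$ non-negative so that the right-hand $F$-functions are defined. Integrability at $x=0$ is unaffected since the weight $x^{|m|-3}$ is preserved, and at infinity each resulting integrand still inherits the decay from the hypotheses of Lemma~\ref{Rearrfora4}, so termwise integration is legitimate. A useful sanity check is to carry out the case $(m_0,m_1) = (2,1)$ explicitly, where $F^v_{(2,1)}$, $F_{(2,0)}$ and $F_{(1,1)}$ can all be evaluated in closed form by elementary partial fractions and the identity verified by direct comparison.
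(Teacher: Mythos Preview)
Your approach is exactly the one the paper has in mind: the proposition is stated without proof, and the proof of the companion Proposition~\ref{RecursiveforRearr1} says only ``it follows from \dots\ a partial fraction decomposition for $\frac{1}{(x+1)(xu+1)}$,'' which is precisely your identity $1=\frac{u_1}{u_1-1}(x+1)-\frac{1}{u_1-1}(xu_1+1)$ in multiplied-out form.

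One point deserves to be said plainly rather than hidden in your phrase ``after reconciling signs with the stated form.'' Your derivation gives
\[
F^v_{(m_0,m_1,\dots,m_n)}=\;-\;\frac{1}{u_1-1}\,F_{(m_0,m_1-1,\dots,m_n)}\;+\;\frac{u_1}{u_1-1}\,F_{(m_0-1,m_1,\dots,m_n)},
\]
with a \emph{minus} in front of the first term, whereas the paper's displayed formula has a plus. Your version is the correct one, as your own suggested sanity check confirms: for $(m_0,m_1)=(2,1)$ one computes directly $F^v_{(2,1)}(u_1)=\frac{u_1\ln u_1}{(u_1-1)^2}-\frac{1}{u_1-1}$, which matches $-\frac{1}{u_1-1}F_{(2,0)}+\frac{u_1}{u_1-1}F_{(1,1)}$ but not the formula as printed. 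So rather than ``reconciling,'' you should state explicitly that the sign in the displayed statement is a typo and that the partial-fraction argument yields the corrected identity above.
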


\smallskip

Hence, all functions $F^v_m$ that are necessary for the calculation of the 
term $a_4$ can be constructed recursively, as explained in this subsection, 
 from  $F_{(1, 1)}(u_1)=$ $\log u_1 /(u_1 -1)$.  

\smallskip

The methods and the tools that we have discussed so far in this section allow us to calculate 
an expression for the term $a_4 \in \CNT$ that involves functions of the 
operator $\nab$ acting on terms that involve derivatives of the conformal factor 
$e^h \in \CNT$. However, it is important to write the final expression 
in terms of derivatives of $h$. This task can be achieved by using 
Lemma \ref{ToLogBaby} to employ the functions $G_1, G_2, G_3, 
G_4$, which are constructed in Lemma \ref{BabyFunctions1} and 
Lemma \ref{BabyFunctions2}, to find explicitly an expression of the 
form $\eqref{a_4expression}$ for the term $a_4$. We note that 
such a procedure was performed for the final formula for the term 
$a_2$ as well in \cite{ConMosModular, FatKhaSC2T}, where 
making use of the functions $G_1$ and $G_2$ was sufficient.

\newpage

\section{Explicit formulas for the local functions describing $a_4$}
\label{ExplicitFormulasSec}

After performing the calculation of the term $a_4 \in \CNT$ as explained in 
Section \ref{Calculatea_4Sec}, which involved heavy calculations, we find 
explicit formulas for the 
functions $K_1, \dots, K_{20}$ that appear in the expression \eqref{a_4expression}. 
In this section we present the explicit formulas for the one and two variable functions 
$K_1, \dots, K_7$. The lengthy final expressions for the three and four variable  
functions $K_8, \dots, K_{20}$ are given in Appendix \ref{explicit3and4fnsappsec}. Although the final 
formulas are quite lengthy, it should be noted 
that they are the results of enormous amount of cancellations and 
simplifications that typically occur in this type of calculations, 
cf. \cite{ConMosModular, FatKhaSC2T, FatRes}. More importantly, in order to 
confirm the accuracy of the final formulas, we have checked that 
they satisfy the functional relations stated in Theorem \ref{FuncRelationsThm}.

\subsection{The functions of one variable}
The function $K_1$ has a simple expression: 
\begin{equation} \label{K_1explicitformula}
K_1(s_1)= -\frac{4 \pi  e^{\frac{3 s_1}{2}}
   \left(\left(4 e^{s_1}+e^{2 s_1}+1\right)
   s_1-3 e^{2
   s_1}+3\right)}{\left(e^{s_1}-1\right){}^4
   s_1}. 
\end{equation}
The graph of this function is given in Figure \ref{K1graph}.

\begin{figure}[H]
\includegraphics[scale=0.6]{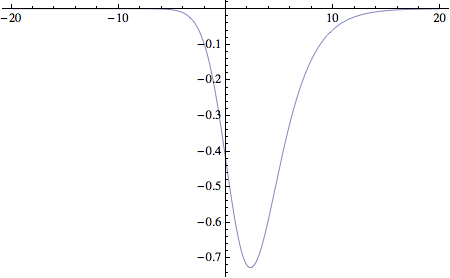}
\caption{Graph of $K_1$.}
\label{K1graph}
\end{figure}

\smallskip

The function $K_2$ turns out to be a scalar multiple of $K_1$: 
\[
K_2(s_1) = -\frac{2 \pi  e^{\frac{3 s_1}{2}}
   \left(\left(4 e^{s_1}+e^{2 s_1}+1\right)
   s_1-3 e^{2
   s_1}+3\right)}{\left(e^{s_1}-1\right){}^4
   s_1} 
= \frac{1}{2} K_1(s_1). 
\]

\begin{figure}[H]
\includegraphics[scale=0.6]{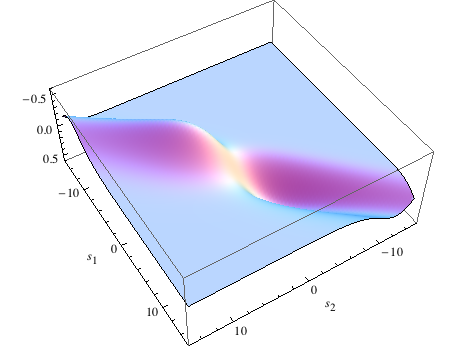}
\caption{Graph of $K_3$.}
\label{K3graph}
\end{figure}

\subsection{The functions of two variables}
The functions $K_3, \dots, K_7$ in the expression 
\eqref{a_4expression} are the two variable functions. 

\smallskip

The function $K_3$ is of the form 
\begin{equation} \label{K_3explicitformula}
K_3(s_1, s_2) = \frac{K_3^{\text{num}}(s_1, s_2)}{
\left(e^{s_1}-1\right){}^2 \left(e^{s_2}-1\right){}^2
   \left(e^{s_1+s_2}-1\right){}^4 s_1 s_2 \left(s_1+s_2\right)}, 
   \end{equation}
where the function in the numerator  is given by 
\begin{eqnarray*}
  K_3^{\text{num}}(s_1, s_2)&=&
16 \, e^{\frac{3 s_1}{2}+\frac{3 s_2}{2}} \pi \Big  [\left(e^{s_1}-1\right) \left(e^{s_2}-1\right) \left(e^{s_1+s_2}-1\right)  \big \{ 
\\
&&
\left(-5 e^{s_1}-e^{s_2}+6 e^{s_1+s_2}-e^{2 s_1+s_2}-5 e^{s_1+2 s_2}+3 e^{2 s_1+2 s_2}+3\right) s_1+
\\
&&
\left(e^{s_1}+5 e^{s_2}-6 e^{s_1+s_2}+5 e^{2 s_1+s_2}+e^{s_1+2 s_2}-3 e^{2 s_1+2 s_2}-3\right)s_2 \big \}
\\
&&
  -2 \left(e^{s_1}-e^{s_2}\right) \left(e^{s_1+s_2}-1\right)
\\
&&
\left(-e^{s_1}-e^{s_2}-e^{2 s_1+s_2}-e^{s_1+2 s_2}+2 e^{2 s_1+2 s_2}+2\right) s_1 s_2
\\
&&
+2 e^{s_1} \left(e^{s_2}-1\right){}^3 \left(e^{s_1}-e^{s_1+s_2}+2 e^{2 s_1+s_2}-2\right) s_1^2 
\\
&&
-2 e^{s_2} \left(e^{s_1}-1\right){}^3\left(e^{s_2}-e^{s_1+s_2}+2 e^{s_1+2 s_2}-2\right) s_2^2 \Big ].
\end{eqnarray*}

\smallskip

Clearly, the function $K_3^{\text{num}}$ is a polynomial in 
$s_1, s_2, e^{s_1/2}, e^{s_2/2}$. By plotting the points $(i, j)$ 
and $(m, n)$ such that $s_1^i s_2^j e^{ms_1/2} e^{n s_2/2}$ 
appears in $K_3^{\text{num}}$, we find that these exponents appear 
in an orderly fashion. This can be seen in Figure \ref{K3sEtospowersgraph},  
where the points $(i, j)$ are the blue dots on the lower left corner, and the 
points $(m, n)$ are the yellow dots on the upper right side.  The graph 
of the function $K_3$ is provided in Figure \ref{K3graph}.

\begin{figure}[H] 
\includegraphics[scale=0.3]{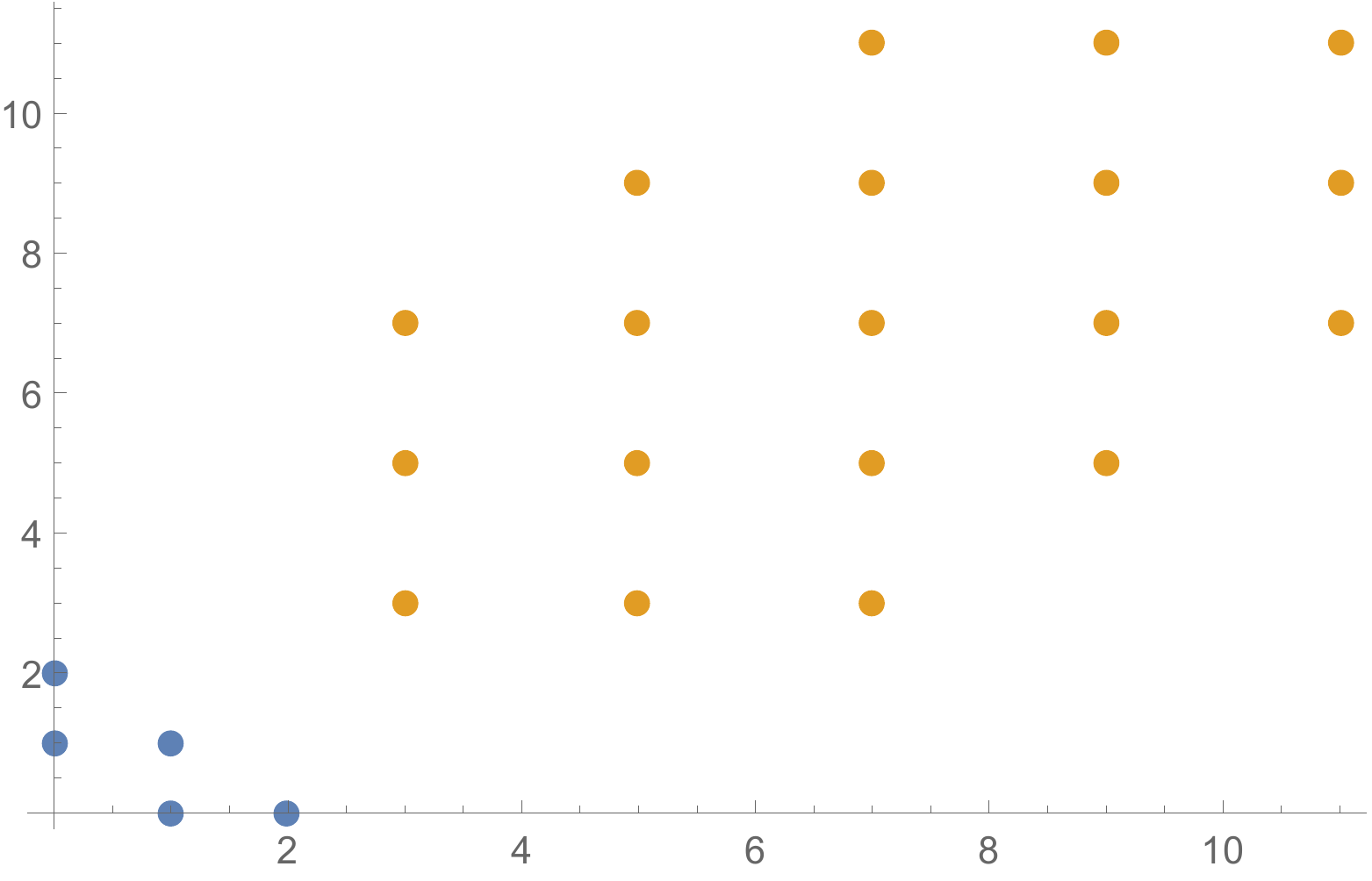}
\caption{The points $(i, j)$ in blue and $(m, n)$ in yellow such that 
$s_1^i \, s_2^j \,e^{m s_1/2}\, e^{n s_2/2}$  appears in the expression for $K_3^{\text{num}}$.}
\label{K3sEtospowersgraph}
\end{figure}

\begin{figure}[H]
\includegraphics[scale=0.5]{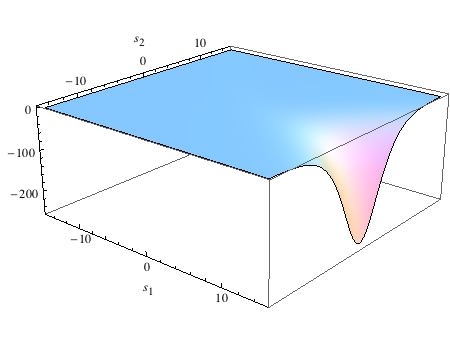}
\caption{Graph of $K_4$.}
\label{K4graph}
\end{figure}

\smallskip

The function $K_4$ is given by 
\[ K_4(s_1, s_2) = \frac{K_4^{\text{num}}(s_1, s_2)}{
\left(e^{s_1}-1\right){}^2 \left(e^{s_2}-1\right){}^2
   \left(e^{s_1+s_2}-1\right){}^4 s_1 s_2 \left(s_1+s_2\right) }, 
\]
where 
\begin{eqnarray*}
 K_4^{\text{num}}(s_1, s_2) &=& 4 e^{\frac{3 s_1}{2}+\frac{s_2}{2}} \pi \Big [\left(e^{s_1}-1\right) \left(e^{s_2}-1\right) \left(e^{s_1+s_2}-1\right) \Big \{ 
\end{eqnarray*}
$$
\Big (5 e^{s_2}-3 e^{2 s_2}-e^{s_1+s_2}-5 e^{2 \left(s_1+s_2\right)}+2 e^{3 \left(s_1+s_2\right)}+6 e^{s_1+2 s_2}-3 e^{s_1+3 s_2}+e^{2 s_1+3 s_2}-2 \Big ) s_1+$$   
$$
\Big (-e^{s_2}+3 e^{2 s_2}+5 e^{s_1+s_2}-6 e^{s_1+2 s_2}+e^{2 s_1+2 s_2}+3 e^{s_1+3 s_2}-5 e^{2 s_1+3 s_2}+2 e^{3 s_1+3 s_2}-2 \Big )s_2 \Big \}
$$
$$
  - \left(e^{s_1+s_2}-1\right) \left(-2 e^{s_2}+e^{s_1+s_2}+1\right)
$$
$$
\Big (e^{s_1}-e^{s_2}+3 e^{s_1+s_2}+3 e^{2 \left(s_1+s_2\right)}+e^{3 \left(s_1+s_2\right)}-6 e^{2 s_1+s_2}-2 e^{s_1+2 s_2}+e^{3 s_1+2 s_2}-e^{2 s_1+3 s_2}+1 \Big) s_1 s_2
$$
$$
+\left(e^{s_2}-1\right){}^3 \left(-e^{s_1}-4 e^{s_1+s_2}-e^{2 \left(s_1+s_2\right)}+6 e^{2 s_1+s_2}+e^{3 s_1+2 s_2}-1\right) s_1^2
$$
$$
-e^{2 s_2}  \left(e^{s_1}-1\right){}^3 \left(e^{s_2}-6 e^{s_1+s_2}+e^{2 \left(s_1+s_2\right)}+4 e^{s_1+2 s_2}+e^{2 s_1+3 s_2}-1\right) s_2^2 \Big ]. 
$$

\smallskip

Clearly, the function $K_4^{\text{num}}$ is also a polynomial in 
$s_1, s_2, e^{s_1/2}, e^{s_2/2}$. In Figure \ref{K4sEtospowersgraph},  
the points $(i, j)$ and $(m, n)$ such that $s_1^i s_2^j e^{ms_1/2} e^{n s_2/2}$ 
appears in $K_4^{\text{num}}$ are plotted. The graph of $K_4$ 
is given in Figure \ref{K4graph}.

\begin{figure}[H]
\includegraphics[scale=0.6]{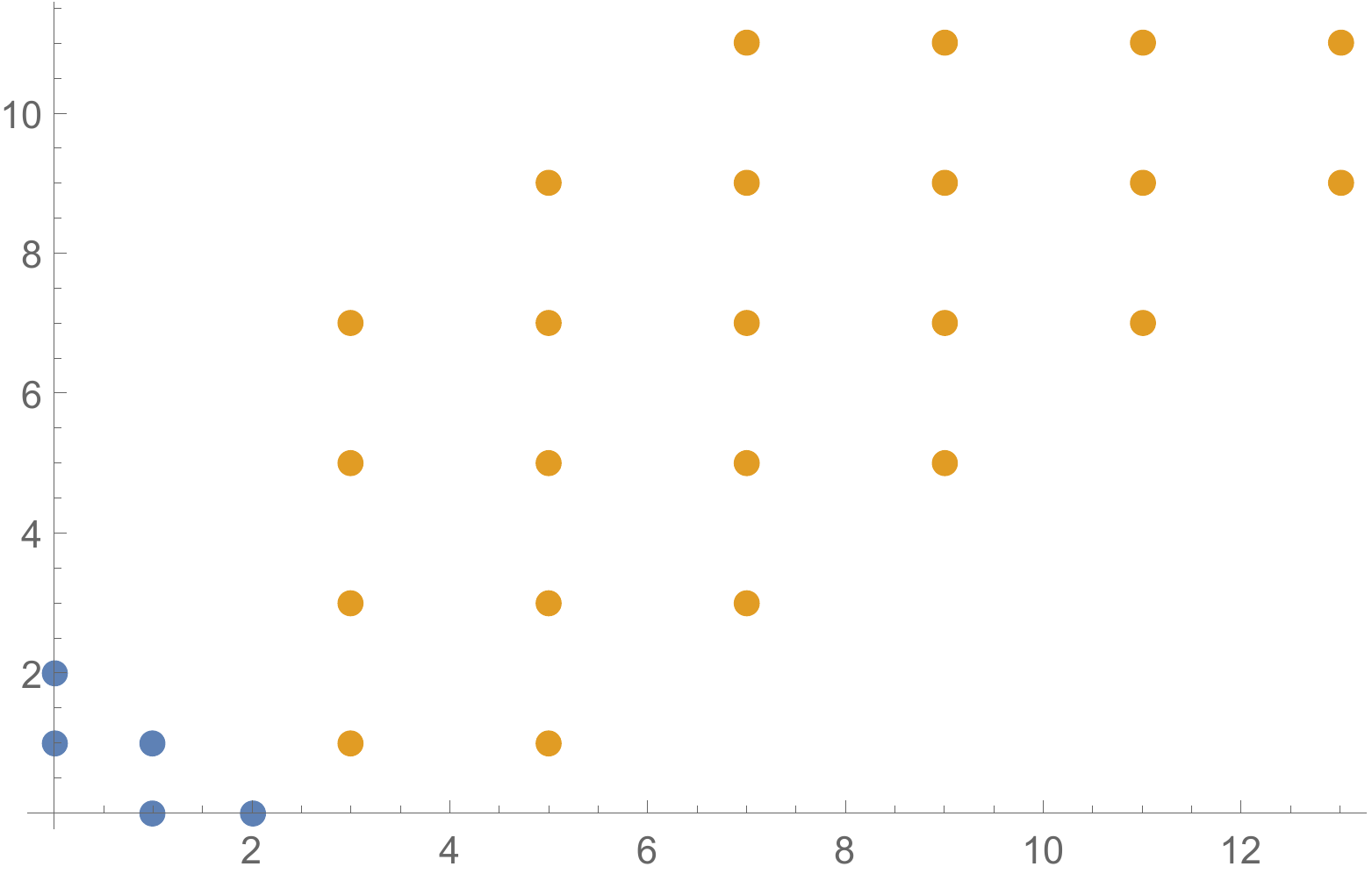}
\caption{The points $(i, j)$ in blue and $(m, n)$ in yellow such that 
$s_1^i \, s_2^j \, e^{m s_1/2} \, e^{n s_2/2}$  appears in the expressions  
for $K_4^{\text{num}}$ and $K_5^{\text{num}}$.}  
\label{K4sEtospowersgraph}
\end{figure}

\smallskip

The function $K_5$ is given by the quotient 
\[
K_5(s_1, s_2) = \frac{K_5^{\text{num}}(s_1, s_2)}{
\left(e^{s_1}-1\right){}^2 \left(e^{s_2}-1\right){}^2
   \left(e^{s_1+s_2}-1\right){}^4 s_1 s_2 \left(s_1+s_2\right)}, 
\]
where the function $K_5^{\text{num}}$ is given by 

\begin{eqnarray*}
  K_5^{\text{num}}(s_1, s_2)&=& 4 e^{\frac{3 s_1}{2}+\frac{s_2}{2}} \pi \Big [\left(e^{s_1}-1\right) \left(e^{s_2}-1\right) \left(e^{s_1+s_2}-1\right) \Big \{ 
 \end{eqnarray*}
$$\Big (11 e^{s_2}-5 e^{2 s_2}-11 e^{s_1+s_2}-7 e^{2 \left(s_1+s_2\right)}+2 e^{3 \left(s_1+s_2\right)}+18 e^{s_1+2 s_2}-13 e^{s_1+3 s_2}+7 e^{2 s_1+3 s_2}-2 \Big ) s_1+$$   
     $$
\Big (-7 e^{s_2}+13 e^{2 s_2}+7 e^{s_1+s_2}-18 e^{s_1+2 s_2}+11 e^{2 s_1+2 s_2}+5 e^{s_1+3 s_2}-11 e^{2 s_1+3 s_2}+2 e^{3 s_1+3 s_2}-2 \Big )s_2 \Big \}
   $$
   $$
  \left(e^{s_1+s_2}-1\right) \Big (-e^{s_1}+3 e^{s_2}+6 e^{2 s_2}-4 e^{3 s_2}-10 e^{s_1+s_2}+9 e^{2 s_1+s_2}+9 e^{s_1+2 s_2}-18 e^{2 s_1+2 s_2}+9 e^{3 s_1+2 s_2}$$
  $$-4 e^{s_1+3 s_2}+9 e^{2 s_1+3 s_2}-10 e^{3 s_1+3 s_2}-e^{4 s_1+3 s_2}-4 e^{s_1+4 s_2}+6 e^{2 s_1+4 s_2}+3 e^{3 s_1+4 s_2}-e^{4 s_1+4 s_2}-1 \Big )
  s_1 s_2
$$
$$
+\left(e^{s_2}-1\right){}^3 \left(-e^{s_1}-12 e^{s_1+s_2}+10 e^{2 s_1+s_2}-5 e^{2 s_1+2 s_2}+9 e^{3 s_1+2 s_2}-1\right) s_1^2
$$
$$
e^{2 s_2}  \left(e^{s_1}-1\right){}^3 \left(-5 e^{s_2}+10 e^{s_1+s_2}-12 e^{s_1+2 s_2}-e^{2 s_1+2 s_2}-e^{2 s_1+3 s_2}+9\right) s_2^2 \Big ]. 
$$

The points $(i, j)$ and $(m, n)$ such that $s_1^i s_2^j e^{ms_1/2} e^{n s_2/2}$ 
appears in $K_5^{\text{num}}$ are the same as those for $K_4^{\text{num}}$, 
given in Figure \ref{K4sEtospowersgraph}. The graph of $K_5$ is given in Figure 
\ref{K5graph}.

\begin{figure}[H]
\includegraphics[scale=0.6]{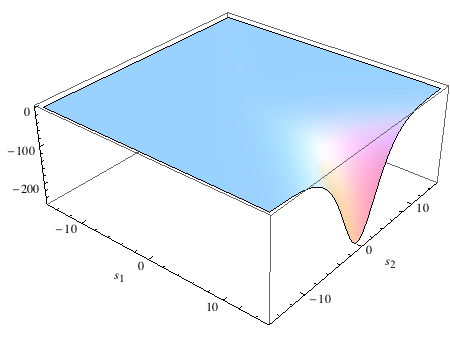}
\caption{Graph of $K_5$.}
\label{K5graph}
\end{figure}
\vspace{2cm}

\smallskip

The function $K_6$ is of the form 
\[
K_6(s_1, s_2) = \frac{K_6^{\text{num}}(s_1, s_2)}{\left(e^{s_1}-1\right) 
   \left(e^{s_2}-1\right){}^3 \left(e^{s_1+s_2}-1\right){}^4 s_1 s_2
   \left(s_1+s_2\right)}, 
\]
where 
\begin{eqnarray*}
  K_6^{\text{num}}(s_1, s_2)&=&4 e^{\frac{3 s_1}{2}+\frac{3s_2}{2}} \pi \Big [\left(e^{s_1}-1\right) \left(e^{s_2}-1\right) \left(e^{s_1+s_2}-1\right)\big \{ 
\end{eqnarray*}
    $$\left(-3 e^{s_2}+e^{2 s_2}-3 e^{s_1+s_2}-14 e^{s_1+2 s_2}+e^{2 s_1+2 s_2}+5 e^{s_1+3 s_2}+5 e^{2 s_1+3 s_2}+8\right) s_1+$$   
     $$
\left(21 e^{s_2}-11 e^{2 s_2}-15 e^{s_1+s_2}+10 e^{s_1+2 s_2}+e^{2 s_1+2 s_2}-7 e^{s_1+3 s_2}+5 e^{2 s_1+3 s_2}-4\right)s_2 \big \}
   $$
   $$
  -2 \Big (-2 e^{s_1}+7 e^{s_2}-6 e^{2 s_2}+2 e^{3 s_2}-6 e^{s_1+s_2}+6 e^{2 s_1+s_2}+2 e^{s_1+2 s_2}-6 e^{2 s_1+2 s_2}+2 e^{3 s_1+2 s_2}-10 e^{s_1+3 s_2} $$
  $$+24 e^{2 s_1+3 s_2}-14 e^{3 s_1+3 s_2}+4 e^{s_1+4 s_2}-6 e^{2 s_1+4 s_2}+2 e^{3 s_1+4 s_2}+2 e^{4 s_1+4 s_2}-2 e^{3 s_1+5 s_2}+e^{4 s_1+5 s_2}\Big )
  s_1 s_2
$$
$$
+2 e^{s_1} \left(e^{s_2}-1\right){}^4 \left(e^{s_1+s_2}+2\right) s_1^2
$$
$$
e^{2 s_2}  \left(e^{s_1}-1\right){}^2 \big (6 e^{s_2}-2 e^{2 s_2}-2 e^{s_1+s_2}+14 e^{s_1+2 s_2}-6 e^{s_1+3 s_2}-2 e^{2 s_1+3 s_2}-e^{2 s_1+4 s_2}-7\big ) s_2^2 \Big ].
$$

\smallskip

The points $(i, j)$ and $(m, n)$ such that $s_1^i s_2^j e^{ms_1/2} e^{n s_2/2}$ 
appears in $K_6^{\text{num}}$ are presented in Figure  
\ref{K6sEtospowersgraph}, and the graph of $K_6$ is given in 
Figure \ref{K6graph}. 

\begin{figure}[H]
\includegraphics[scale=0.4]{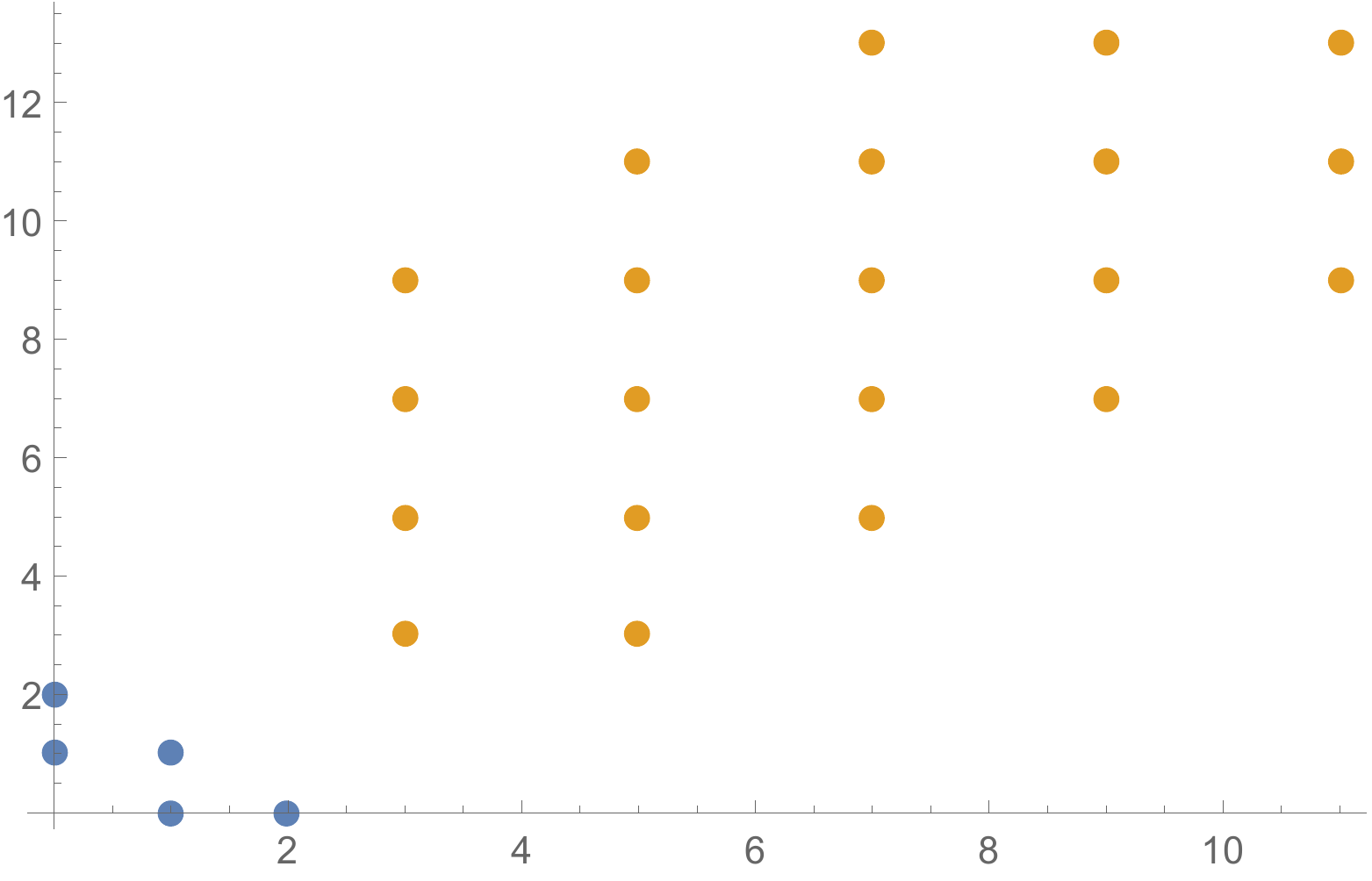}
\caption{The points $(i, j)$ in blue and $(m, n)$ in yellow such that 
$s_1^i \, s_2^j \, e^{m s_1/2} \, e^{n s_2/2}$  appears in the expression   
for $K_6^{\text{num}}$. }  
\label{K6sEtospowersgraph}
\end{figure}

\begin{figure}[H]
\includegraphics[scale=0.5]{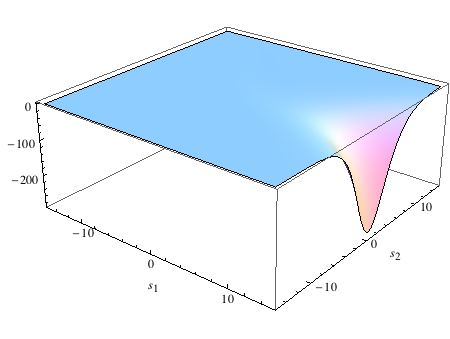}
\caption{Graph of $K_6$.}
\label{K6graph}
\end{figure}

\smallskip

The function $K_7$ is also a fraction of the form 
\[
K_7(s_1, s_2)=\frac{K_7^{\text{num}}(s_1, s_2)}{ \left(e^{s_1}-1\right){}^3 \left(e^{s_2}-1\right)
    \left(e^{s_1+s_2}-1\right){}^4 s_1
   s_2 \left(s_1+s_2\right)},  
\]
where 
\begin{eqnarray*}
  K_7^{\text{num}}(s_1, s_2)&=& 4 \pi  e^{\frac{3 s_1}{2}+\frac{s_2}{2}} \Big [\left(e^{s_1}-1\right) \left(e^{s_2}-1\right) \left(e^{s_1+s_2}-1\right) \big \{ 
\end{eqnarray*}
    $$\left(-e^{s_1}+7 e^{s_2}-10 e^{s_1+s_2}+15 e^{2 s_1+s_2}+11 e^{s_1+2 s_2}-21 e^{2 s_1+2 s_2}+4 e^{3 s_1+2 s_2}-5\right) s_1+$$   
     $$
\left(-e^{s_1}-5 e^{s_2}+14 e^{s_1+s_2}+3 e^{2 s_1+s_2}-e^{s_1+2 s_2}+3 e^{2 s_1+2 s_2}-8 e^{3 s_1+2 s_2}-5\right)s_2 \big \}
   $$
   $$
  -2 \Big (-2 e^{s_1}+2 e^{s_2}-2 e^{s_1+s_2}-24 e^{2 \left(s_1+s_2\right)}-2 e^{3 \left(s_1+s_2\right)}-7 e^{4 \left(s_1+s_2\right)}+14 e^{2 s_1+s_2}-2 e^{3 s_1+s_2}+6 e^{s_1+2 s_2}$$
  $$+6 e^{3 s_1+2 s_2}-6 e^{4 s_1+2 s_2}-4 e^{s_1+3 s_2}+10 e^{2 s_1+3 s_2}+6 e^{4 s_1+3 s_2}+2 e^{5 s_1+3 s_2}-2 e^{2 s_1+4 s_2}+6 e^{3 s_1+4 s_2}-1 \Big )
  s_1 s_2
$$
$$
+2 \left(e^{s_2}-1\right){}^2 \big (2 e^{s_1}+6 e^{s_1+s_2}-14 e^{2 s_1+s_2}+2 e^{3 s_1+s_2}+2 e^{2 s_1+2 s_2}-6 e^{3 s_1+2 s_2}+7 e^{4 s_1+2 s_2}+1\big ) s_1^2
$$
$$
-2 e^{2 s_2} \left(e^{s_1}-1\right){}^4\left(2 e^{s_1+s_2}+1\right) s_2^2 \Big]. 
$$

\smallskip

The points $(i, j)$ and $(m, n)$ such that $s_1^i s_2^j e^{ms_1/2} e^{n s_2/2}$ 
appears in $K_7^{\text{num}}$ are plotted in
\ref{K7sEtospowersgraph} and the graph of $K_7$ is given in 
Figure \ref{K7graph}. 
 
 \begin{figure}[H]
\includegraphics[scale=0.4]{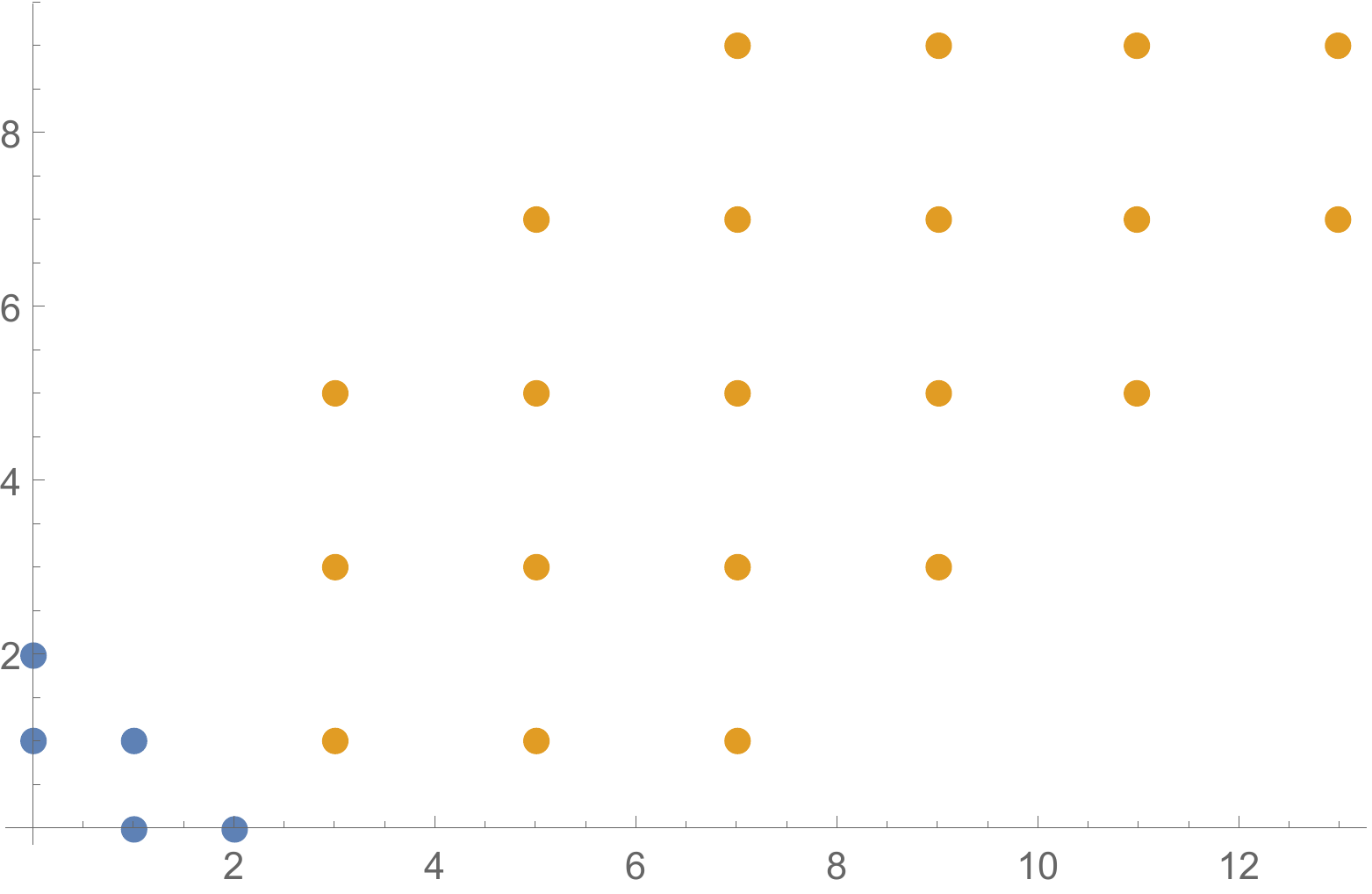}
\caption{The points $(i, j)$ in blue and $(m, n)$ in yellow such that 
$s_1^i\, s_2^j\, e^{m s_1/2}\, e^{n s_2/2}$  appears in the expression   
for $K_7^{\text{num}}$. }  
\label{K7sEtospowersgraph}
\end{figure}

\begin{figure}[H]
\includegraphics[scale=0.6]{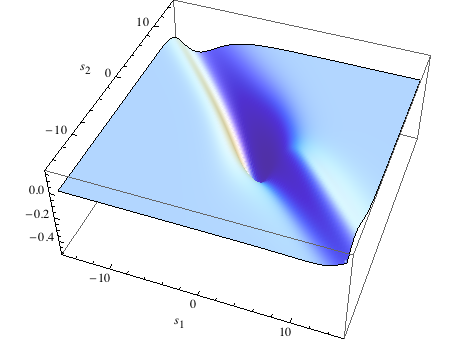}
\caption{Graph of $K_7$.}
\label{K7graph}
\end{figure}

\smallskip

\section{General structure of the noncommutative local  invariants}
\label{StructureSec}

In Section \ref{ExplicitFormulasSec} and in Appendix \ref{explicit3and4fnsappsec}, 
we have presented the explicit formulas for the 
functions $K_1, \dots, K_{20}$ that appear in the local expression 
\eqref{a_4expression} for the term $a_4 \in \CNT$. Also, we observed 
that each function $K_j,$ $j=1, \dots, 20,$ is a rational function 
in $s_1, \dots, s_n,$ $e^{s_1/2}, \dots , e^{s_n/2}$, where $n \in \{1, 2, 3, 4\}$ 
is the number of variables that each function depends on. This fact can indeed be justified 
by using Lemma \ref{BabyFunctions2} and our results in Section \ref{Calculatea_4Sec} where we construct the main 
ingredients of our calculations with finite differences from the generating function of 
the Bernoulli numbers and its inverse $G_1(s_1)=(e^{s_1}-1)/s_1$. 
Moreover, these considerations  explain why the denominators of the functions $K_j$ 
have nice product formulas. At the first glance, it might seem odd that 
the fractions $s_i/2$ appear in the exponents of the calculated functions, since 
finite differences of $G_1(s_1)$ and its inverse yield rational functions in 
$s_i$ and $e^{s_i}$. The reason for the appearance of the $e^{s_i/2}$ in the 
final formulas is that the pseudodifferential symbol of the Laplacian $\triangle_\vphi$,  
which is given by \eqref{LaplacianSymbol}, has $e^{h/2}$ and its derivatives in 
its expression. Therefore, when we apply the rearrangement lemma, namely 
Lemma \ref{Rearrfora4}, to calculate the integral in the formula \eqref{generala_2nformula} 
in the case of $a_4$, the elements $\rho_j$ in the statement of the lemma 
might be of the form $e^{-h/2} \delta_1(h/2) e^{h/2} = e^{\nab/2}(\delta_1(h/2))$ or 
$e^{-3h/2} \delta_1\delta_2(h/2) e^{3h/2} = e^{3 \nab/2}(\delta_1 \delta_2(h/2))$, 
hence the appearance of the fractions $s_i/2$ in the exponents of the final formulas.  

\smallskip

This discussion justifies that each function $K_j,$ $j=1, \dots, 20,$ is a smooth 
rational function in variables $s_i$ and $e^{s_i/2}$, whose denominator 
has a concise product formula of a general type, see for example 
the explicit formulas \eqref{K_1explicitformula}, \eqref{K_3explicitformula}, 
\eqref{K8den}, \eqref{K17den}  presented in Section \ref{ExplicitFormulasSec} 
and in Appendix \ref{explicit3and4fnsappsec}. We now 
wish to argue that the coefficients of the numerator of each function 
$K_j$, considering a monomial ordering, belong to a linear space that 
can potentially be of low dimension. 
Therefore, up to multiplication by a constant, the concise denominator 
and the exponents appearing in the numerator can have significant 
information about each function $K_j$.

\smallskip

Let us first analyse  the function $K_1$ from this perspective. We have 
\begin{eqnarray*}
K_1(s_1)&=&
-\frac{4 \pi  e^{\frac{3 s_1}{2}} \left(\left(4 e^{s_1}+e^{2 s_1}+1\right) s_1-3 e^{2 s_1}+3\right)}{\left(e^{s_1}-1\right){}^4 s_1} \\
&=& 
\frac{-4 \pi  e^{\frac{3 s_1}{2}} s_1-16 \pi  e^{\frac{5 s_1}{2}} s_1-4 \pi  e^{\frac{7 s_1}{2}} s_1-12 \pi  e^{\frac{3 s_1}{2}}+12 \pi  e^{\frac{7 s_1}{2}}}{\left(e^{s_1}-1\right){}^4 s_1}. 
\end{eqnarray*} 
By considering the monomial ordering of the numerator inherited from the dictionary 
ordering of the pairs $(i, j)$ such that $s_1^i e^{js_1/2}$ appears in the numerator, 
we replace the specific coefficients by $c_1, \dots, c_5$ and write: 
\[
K_1(s_1) 
= 
\frac{c_1 e^{\frac{3 s_1}{2}}+c_2 e^{\frac{7 s_1}{2}}+c_3 e^{\frac{3 s_1}{2}} s_1+c_4 e^{\frac{5 s_1}{2}} s_1+c_5 e^{\frac{7 s_1}{2}} s_1}{\left(e^{s_1}-1\right){}^4 s_1}. 
\]
Since for the denominator we have 
\[
K_1^{\text{den}}(s_1)
=
\left(e^{s_1}-1\right){}^4 s_1 
= 
O(s_1^5), 
\]
and the function $K_1$ is smooth, we conclude that 
\[ 
c_1 e^{\frac{3 s_1}{2}}+c_2 e^{\frac{7 s_1}{2}}+
c_3 e^{\frac{3 s_1}{2}} s_1+c_4 e^{\frac{5 s_1}{2}} s_1+
c_5 e^{\frac{7 s_1}{2}} s_1 = O(s_1^5). 
\]
Therefore, by writing the Taylor series for each term of the left 
hand side of the above expression, we have: 
\[
c_1\left (\frac{27 s_1^4}{128}+\frac{9 s_1^3}{16}+\frac{9 s_1^2}{8}+\frac{3 s_1}{2}+1 \right )
+c_2 \left (\frac{2401 s_1^4}{384}+\frac{343 s_1^3}{48}+\frac{49 s_1^2}{8}+\frac{7 s_1}{2}+1 \right ) 
\]
\[
+c_3 \left ( \frac{9 s_1^4}{16}+\frac{9 s_1^3}{8}+\frac{3 s_1^2}{2}+s_1 \right ) 
+c_4 \left ( \frac{125 s_1^4}{48}+\frac{25 s_1^3}{8}+\frac{5 s_1^2}{2}+s_1 \right ) 
\]
\[
+c_5 \left ( \frac{343 s_1^4}{48}+\frac{49 s_1^3}{8}+\frac{7 s_1^2}{2}+s_1 \right ) = 0. 
\]
This implies that the vector $(c_1, c_2, c_3, c_4, c_5)$ belongs 
to the kernel of the $5 \times 5$ matrix 
\[
\left(
\begin{array}{ccccc}
 1 & 1 & 0 & 0 & 0 \\
 3/2 & 7/2 & 1 & 1 & 1 \\
 9/8 & 49/8 & 3/2 & 5/2 & 7/2 \\
 9/16 & 343/48 & 9/8 & 25/8 & 49/8 \\
 27/128 & 2401/384 & 9/16 & 125/48& 343/48 \\
\end{array}
\right), 
\]
whose rank is $4$. Therefore, the vector of the coefficients 
$(c_1, c_2, c_3, c_4, c_5)$ of the numerator of the function 
$K_1$ belongs to a 1-dimensional linear space, 
and up to multiplication by a constant, the numerator of 
$K_1$ is determined by the exponents appearing in its 
numerator, which are plotted in Figure \ref{ExponentsofNumofK1Fig}.

\begin{figure} 
\includegraphics[scale=0.4]{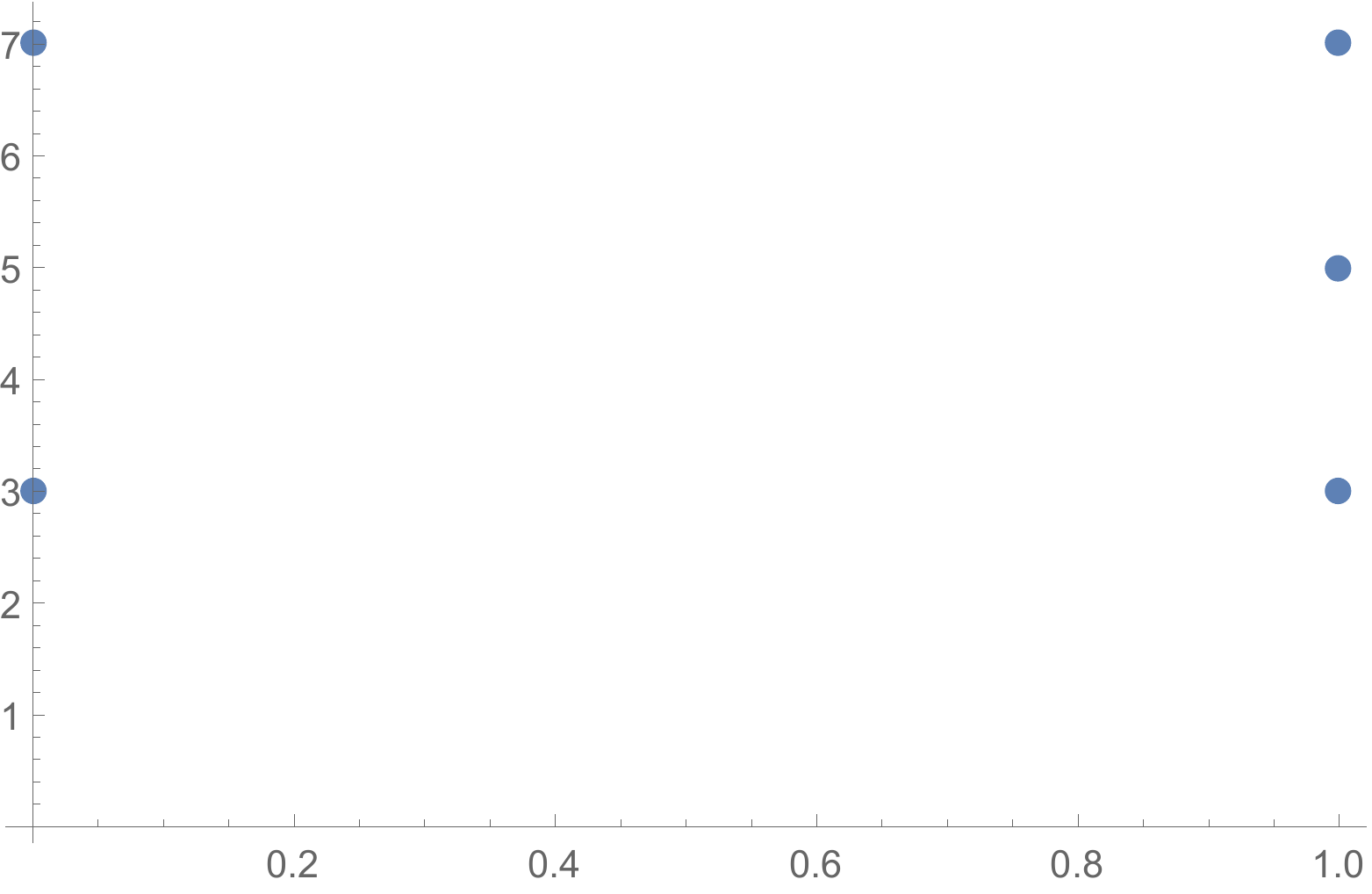}
\caption{The points $(i, j)$ such that $s_1^i e^{j s_1/2}$ 
appears in the numerator of $K_1$.}
\label{ExponentsofNumofK1Fig}
\end{figure}

\smallskip

A similar analysis can in fact be performed for all functions $K_j$, $j=1, \dots, 20,$ 
as far as each monomial in the numerator cannot afford to provide a 
non-zero smooth quotient by itself. We just need to confirm that chosen a number 
of such monomials for the numerator, a system of linear equations for their 
coefficients will be equivalent to smoothness of the quotient.  
This is easy to see as we discuss it for example for the function $K_3$. 
This function is given by 
\[
K_3(s_1, s_2) = \frac{K_3^{\text{num}}(s_1, s_2)}{K_3^{\text{den}}(s_1, s_2)}, 
\]
where
\[
K_3^{\text{den}}(s_1, s_2)
=
\left(e^{s_1}-1\right){}^2 \left(e^{s_2}-1\right){}^2
   \left(e^{s_1+s_2}-1\right){}^4 s_1 s_2 \left(s_1+s_2\right), 
\]
and $K_3^{\text{num}}$ is the sum of 74 different  monomials in 
$s_1, s_2, e^{s_1/2} e^{s_2/2}$: 
\[
K_3^{\text{num}}(s_1, s_2)= 
\]
\begin{center}
\begin{math}
64 e^{\frac{5 s_1}{2}+\frac{3 s_2}{2}} \pi  s_1^2-32 e^{\frac{7 s_1}{2}+\frac{3 s_2}{2}} \pi  s_1^2-192 e^{\frac{5 s_1}{2}+\frac{5 s_2}{2}} \pi  s_1^2+128 e^{\frac{7 s_1}{2}+\frac{5 s_2}{2}} \pi  s_1^2-64 e^{\frac{9 s_1}{2}+\frac{5 s_2}{2}} \pi  s_1^2+192 e^{\frac{5 s_1}{2}+\frac{7 s_2}{2}} \pi  s_1^2-192 e^{\frac{7 s_1}{2}+\frac{7 s_2}{2}} \pi  s_1^2+192 e^{\frac{9 s_1}{2}+\frac{7 s_2}{2}} \pi  s_1^2-64 e^{\frac{5 s_1}{2}+\frac{9 s_2}{2}} \pi  s_1^2+128 e^{\frac{7 s_1}{2}+\frac{9 s_2}{2}} \pi  s_1^2-192 e^{\frac{9 s_1}{2}+\frac{9 s_2}{2}} \pi  s_1^2-32 e^{\frac{7 s_1}{2}+\frac{11 s_2}{2}} \pi  s_1^2+64 e^{\frac{9 s_1}{2}+\frac{11 s_2}{2}} \pi  s_1^2+64 e^{\frac{5 s_1}{2}+\frac{3 s_2}{2}} \pi  s_2 s_1-32 e^{\frac{7 s_1}{2}+\frac{3 s_2}{2}} \pi  s_2 s_1-64 e^{\frac{3 s_1}{2}+\frac{5 s_2}{2}} \pi  s_2 s_1-64 e^{\frac{7 s_1}{2}+\frac{5 s_2}{2}} \pi  s_2 s_1+32 e^{\frac{3 s_1}{2}+\frac{7 s_2}{2}} \pi  s_2 s_1+64 e^{\frac{5 s_1}{2}+\frac{7 s_2}{2}} \pi  s_2 s_1+64 e^{\frac{9 s_1}{2}+\frac{7 s_2}{2}} \pi  s_2 s_1+32 e^{\frac{11 s_1}{2}+\frac{7 s_2}{2}} \pi  s_2 s_1-64 e^{\frac{7 s_1}{2}+\frac{9 s_2}{2}} \pi  s_2 s_1-64 e^{\frac{11 s_1}{2}+\frac{9 s_2}{2}} \pi  s_2 s_1-32 e^{\frac{7 s_1}{2}+\frac{11 s_2}{2}} \pi  s_2 s_1+64 e^{\frac{9 s_1}{2}+\frac{11 s_2}{2}} \pi  s_2 s_1-48 e^{\frac{3 s_1}{2}+\frac{3 s_2}{2}} \pi  s_1+128 e^{\frac{5 s_1}{2}+\frac{3 s_2}{2}} \pi  s_1-80 e^{\frac{7 s_1}{2}+\frac{3 s_2}{2}} \pi  s_1+64 e^{\frac{3 s_1}{2}+\frac{5 s_2}{2}} \pi  s_1-192 e^{\frac{5 s_1}{2}+\frac{5 s_2}{2}} \pi  s_1+64 e^{\frac{7 s_1}{2}+\frac{5 s_2}{2}} \pi  s_1+64 e^{\frac{9 s_1}{2}+\frac{5 s_2}{2}} \pi  s_1-16 e^{\frac{3 s_1}{2}+\frac{7 s_2}{2}} \pi  s_1+128 e^{\frac{5 s_1}{2}+\frac{7 s_2}{2}} \pi  s_1-128 e^{\frac{9 s_1}{2}+\frac{7 s_2}{2}} \pi  s_1+16 e^{\frac{11 s_1}{2}+\frac{7 s_2}{2}} \pi  s_1-64 e^{\frac{5 s_1}{2}+\frac{9 s_2}{2}} \pi  s_1-64 e^{\frac{7 s_1}{2}+\frac{9 s_2}{2}} \pi  s_1+192 e^{\frac{9 s_1}{2}+\frac{9 s_2}{2}} \pi  s_1-64 e^{\frac{11 s_1}{2}+\frac{9 s_2}{2}} \pi  s_1+80 e^{\frac{7 s_1}{2}+\frac{11 s_2}{2}} \pi  s_1-128 e^{\frac{9 s_1}{2}+\frac{11 s_2}{2}} \pi  s_1+48 e^{\frac{11 s_1}{2}+\frac{11 s_2}{2}} \pi  s_1-64 e^{\frac{3 s_1}{2}+\frac{5 s_2}{2}} \pi  s_2^2+192 e^{\frac{5 s_1}{2}+\frac{5 s_2}{2}} \pi  s_2^2-192 e^{\frac{7 s_1}{2}+\frac{5 s_2}{2}} \pi  s_2^2+64 e^{\frac{9 s_1}{2}+\frac{5 s_2}{2}} \pi  s_2^2+32 e^{\frac{3 s_1}{2}+\frac{7 s_2}{2}} \pi  s_2^2-128 e^{\frac{5 s_1}{2}+\frac{7 s_2}{2}} \pi  s_2^2+192 e^{\frac{7 s_1}{2}+\frac{7 s_2}{2}} \pi  s_2^2-128 e^{\frac{9 s_1}{2}+\frac{7 s_2}{2}} \pi  s_2^2+32 e^{\frac{11 s_1}{2}+\frac{7 s_2}{2}} \pi  s_2^2+64 e^{\frac{5 s_1}{2}+\frac{9 s_2}{2}} \pi  s_2^2-192 e^{\frac{7 s_1}{2}+\frac{9 s_2}{2}} \pi  s_2^2+192 e^{\frac{9 s_1}{2}+\frac{9 s_2}{2}} \pi  s_2^2-64 e^{\frac{11 s_1}{2}+\frac{9 s_2}{2}} \pi  s_2^2+48 e^{\frac{3 s_1}{2}+\frac{3 s_2}{2}} \pi  s_2-64 e^{\frac{5 s_1}{2}+\frac{3 s_2}{2}} \pi  s_2+16 e^{\frac{7 s_1}{2}+\frac{3 s_2}{2}} \pi  s_2-128 e^{\frac{3 s_1}{2}+\frac{5 s_2}{2}} \pi  s_2+192 e^{\frac{5 s_1}{2}+\frac{5 s_2}{2}} \pi  s_2-128 e^{\frac{7 s_1}{2}+\frac{5 s_2}{2}} \pi  s_2+64 e^{\frac{9 s_1}{2}+\frac{5 s_2}{2}} \pi  s_2+80 e^{\frac{3 s_1}{2}+\frac{7 s_2}{2}} \pi  s_2-64 e^{\frac{5 s_1}{2}+\frac{7 s_2}{2}} \pi  s_2+64 e^{\frac{9 s_1}{2}+\frac{7 s_2}{2}} \pi  s_2-80 e^{\frac{11 s_1}{2}+\frac{7 s_2}{2}} \pi  s_2-64 e^{\frac{5 s_1}{2}+\frac{9 s_2}{2}} \pi  s_2+128 e^{\frac{7 s_1}{2}+\frac{9 s_2}{2}} \pi  s_2-192 e^{\frac{9 s_1}{2}+\frac{9 s_2}{2}} \pi  s_2+128 e^{\frac{11 s_1}{2}+\frac{9 s_2}{2}} \pi  s_2-16 e^{\frac{7 s_1}{2}+\frac{11 s_2}{2}} \pi  s_2+64 e^{\frac{9 s_1}{2}+\frac{11 s_2}{2}} \pi  s_2-48 e^{\frac{11 s_1}{2}+\frac{11 s_2}{2}} \pi  s_2. 
\end{math}
\end{center}

\smallskip

By considering the monomial ordering of the numerator $K_3^{\text{num}}$ 
inherited from the dictionary ordering of the 4-tuples 
$(\alpha_i,  \beta_i, \gamma_i,  \nu_i)$ such that 
$s_1^{\alpha_i} s_2^{\beta_i} e^{\gamma_i s_1/2} e^{\nu_i s_2/2}$ 
appears in the numerator, let us replace the specific coefficients by 
$c_1, \dots, c_{74}$ and write:
\[
K_3^{\text{num}}(s_1, s_2) 
= 
\sum_{i=1}^{74} c_i \,s_1^{\alpha_i} \,s_2^{\beta_i}\, e^{\gamma_i s_1/2} \, e^{\nu_i s_2/2}. 
\]

By passing to the polar coordinates 
\[
s_1 = r \cos \theta, \qquad s_2 = r \sin \theta,
\] for the 
denominator of $K_3$ we have: 
\[
K_3^{\text{den}}(s_1, s_2) 
=
\left(e^{s_1}-1\right){}^2 \left(e^{s_2}-1\right){}^2
   \left(e^{s_1+s_2}-1\right){}^4 s_1 s_2 \left(s_1+s_2\right) 
= 
O(r^{11}), 
\] 
which, combined with the smoothness of the function $K_3$ at the origin, 
implies that 
\[
K_3^{\text{num}}(s_1, s_2)= \sum_{i=1}^{74} c_i \,s_1^{\alpha_i} \,s_2^{\beta_i}\, e^{\gamma_i s_1/2} \, e^{\nu_i s_2/2} = O(r^{11}). 
\]
By considering the Taylor series of each term in the middle expression 
in the latter, we obtain an $11 \times 74$ matrix, whose entries are 
trigonometric functions of $\theta$, and the vector $(c_1, \dots, c_{74})$ 
of the coefficients of $K_3^{\text{num}}(s_1, s_2)$ belongs to the kernel 
of this matrix for each $\theta$. These linear equations only correspond to smoothness 
of the following mapping at the origin: 
\begin{equation} \label{K3relatedquotient}
(s_1, s_2) \mapsto 
\frac{\sum_{i=1}^{74} c_i \,s_1^{\alpha_i} \,s_2^{\beta_i}\, e^{\gamma_i s_1/2} \, e^{\nu_i s_2/2}}{K_3^{\text{den}}(s_1, s_2) }.
\end{equation}

\smallskip

Since the zeros of $K_3^{\text{den}}(s_1, s_2)$ are located on the lines 
$s_1=0,$ $s_2=0,$ $s_1+s_2=0$, we still need to observe that there 
are further linear equations that guarantee smoothness 
of the mapping given by \eqref{K3relatedquotient}. This can also be seen as follows. 
Since in the Taylor series of $K_3^{\text{den}}(s_1, s_2)$, 
the coefficients of $s_1^p s_2^q$ are zero for $p, q \in \{0, 1, 2, 3, 4, 5 \}$, 
the corresponding coefficients are required to be zero in Taylor series of 
$\sum_{i=1}^{74} c_i \,s_1^{\alpha_i} \,s_2^{\beta_i}\, 
e^{\gamma_i s_1/2} \, e^{\nu_i s_2/2}$ in order to have a smooth 
quotient, which settles the fact that smoothness of the quotient 
on $s_1=0$ and $s_2=0$ is equivalent to a system of linear equations 
for the coefficients $c_1, \dots, c_{74}$. Also, in order to treat the 
smoothness of the quotient \eqref{K3relatedquotient} on the line $s_1+s_2=0,$ 
we note that 
\[ 
\ddsone^j K_3^{\text{den}}(-s_2, s_2) 
= 
\ddstwo^j  K_3^{\text{den}}(s_1,-  s_1) = 0, \qquad 
0 \leq j \leq 4.  
\]
Therefore the limit of the map given by \eqref{K3relatedquotient} as $s_1 \to -s_2$ 
or $s_2 \to - s_1$ exists if and only if for any $0 \leq j \leq 4:$
\begin{eqnarray*}
&& \ddsone^j  \left ( 
\sum_{i=1}^{74} c_i \,s_1^{\alpha_i} \,s_2^{\beta_i}\, e^{\gamma_i s_1/2} \, e^{\nu_i s_2/2} 
\right ) \bigm|_{s_1 = -s_2} \\
&=&
\ddstwo^j \left ( 
\sum_{i=1}^{74} c_i \,s_1^{\alpha_i} \,s_2^{\beta_i}\, e^{\gamma_i s_1/2} \, e^{\nu_i s_2/2} 
\right ) \bigm|_{s_2 = -s_1} \\
&=& 0. 
\end{eqnarray*}
This finishes the justification of our claim that the smoothness of the quotient map given 
by \eqref{K3relatedquotient} is equivalent to a system of linear equations 
for the coefficients $c_1, \dots, c_{74}$.

\smallskip

Since the condition and the arguments used above apply to all functions 
appearing in the expression \eqref{a_4expression} for the term $a_4$, 
and based on the generality of the arguments given for 
proving Lemma \ref{BabyFunctions2} and the results in Section \ref{Calculatea_4Sec}, 
it is natural to expect a similar phenomena to occur for the following 
terms in the expansion \eqref{heatexp}. That is, we predict 
that each term $a_{2n} \in \CNT$ appearing in this expansion 
is described by smooth rational functions of variables $s_1, \dots, s_{2n},$ 
$e^{s_1/2}, \dots, e^{s_{2n}/2}$, whose denominators have 
concise product formulas, which vanish on certain hyperplanes, 
and the coefficients in the numerator of each function 
satisfy a family of linear equations.

\smallskip

\section{The term $a_4$ for certain noncommutative four tori}  
\label{NC4toriSec}

As a corollary of the main calculation of the present paper one obtains, for noncommutative 
four tori  with curved metrics of the product  
form $\NTp \times \NTpp$,  the  explicit form of the local  geometric invariant 
given by the term $a_4$. 
As explained below this required the computation of the term $a_4$ for each component of the product. 
The reason why this gives  a first  hint of  the analog of the Riemann curvature in the general noncommutative twisted case is that product metrics of the above form are in general not conformally flat while in the traditional Riemannian case  the term $a_4$ already involves complicated expressions (\cite{GilBook} Theorem 4.8.18),   in terms of the curvature tensor. More explicitly the classical (commutative) version of the product metric that we consider is the following. Using the 
local coordinates $(x_1, y_1, x_2, y_2) \in$  $\mathbb{T}^4 = (\R/ 2\pi \Z)^4$ on the four torus, 
the metric is written as 
\begin{equation*} 
g =  e^{-h_1(x_1, y_1)} \left ( dx_1^2 + dy_1^2\right ) + e^{-h_2(x_2, y_2)} \left ( dx_2^2 + dy_2^2\right ),  
\end{equation*}
where $h_1$ and $h_2$ are smooth real valued functions. The following non-vanishing components 
of the Weyl curvature tensor of this metric determine its Weyl curvature: 
\[
C_{1212} =
\]
\begin{center}
\begin{math}
\frac{1}{6} e^{-h_1\left(x_1,y_1\right)} \partial_{y_1}^2 h_1{}\left(x_1,y_1\right)
+\frac{1}{6} e^{h_2\left(x_2,y_2\right) -2 h_1\left(x_1,y_1\right)}  \partial_{y_2}^2 h_2{}\left(x_2,y_2\right)
+\frac{1}{6} e^{-h_1\left(x_1,y_1\right)}  \partial_{x_1}^2 h_1\left(x_1,y_1\right)
+\frac{1}{6} e^{h_2\left(x_2,y_2\right) -2 h_1\left(x_1,y_1\right)}  \partial_{x_2}^2h_2\left(x_2,y_2\right), 
\end{math}
\end{center}
\[
C_{1313}= -\frac{1}{2}e^{-h_2\left(x_2,y_2\right) + h_1\left(x_1,y_1\right)} C_{1212}, 
\]
\[
C_{2424} = C_{2323} = C_{1414}= C_{1313}, 
\]
\[
C_{3434} = e^{-2 h_2\left(x_2,y_2\right) +2 h_1\left(x_1,y_1\right)} C_{1212}. 
\]

\smallskip 

In the noncommutative case for the product metric 
on $\NTp \times \NTpp$,  the modular automorphism groups of both factors combine to give an action of $\R^2$ which defines a more refined twisting than the classical  determinant twisting of spectral triples, which itself corresponds to the restriction of the above action of $\R^2$ to the diagonal $\R\subset \R^2$.  This begs  to investigate the more general notion of twisting suggested in particular in \cite{ConTransverse} and which plays a fundamental role in the work of H. Moscovici and the first author \cite{CMos} on the transverse geometry of foliations and the reduction by duality to the almost isometric case.

\smallskip

For now we simply explain how to derive the $a_4$ term for the product metric 
on $\NTp \times \NTpp$.   
That is, 
let us consider a noncommutative four torus of the form 
$\NTp \times \NTpp$, whose algebra has four unitary generators 
$U_1, V_1, U_2, V_2$ such that each element of the pair $(U_1, V_1)$ 
commutes with each element of the pair $(U_2, V_2)$, and we have 
the following commutation relations for fixed 
irrational real numbers $\theta'$ and $\theta''$: 
\[
V_1 \, U_1 = e^{2 \pi i \theta'}\, U_1 \,V_1, 
\qquad 
V_2 \,U_2= e^{2 \pi i \theta''}\, U_2\, V_2.    
\] 
By conformally perturbing the flat metric on each two torus factor of such 
a noncommutative four torus, one obtains the Laplacian associated 
with a curved metric on this noncommutative space. That is, by using 
conformal factors $e^{-h'}$ and $e^{-h''}$, where $h'$ and $h''$ are 
respectively selfadjoint elements in $\CNTp$ and $\CNTpp$, 
we can consider the corresponding perturbed Laplacians 
$\triangle_{\vphi'}$ and $\triangle_{\vphi''}$ of the form 
given by \eqref{conformalLaplacian}, and form the following 
Laplacian on the noncommutative four torus: 
\[
\triangle_{\vphi', \vphi''} 
= 
\triangle_{\vphi'} \otimes 1 + 1 \otimes \triangle_{\vphi''}. 
\]

\smallskip

As the notation suggests, $\vphi'$ and $\vphi''$ are respectively the states on 
$C(\NTp)$ and $C(\NTpp)$ obtained from the corresponding canonical traces 
$\vphi'_0$ and $\vphi''_0$, using the conformal factors $e^{-h'}$ and $e^{-h''}$. 
There are unique elements $a_{2n} \in C^\infty(\NTp \times \NTpp)$ such that 
for any $a \in C^\infty(\NTp \times \NTpp)$, as $t \to 0^+,$ we have: 
\begin{eqnarray} \label{heatexp4d}
&& \Tr(a \exp ( -t \triangle_{\vphi', \vphi''}))  \sim \nonumber  \\
&& \qquad \qquad  t^{-2} \left ( ( \vphi'_0 \otimes \vphi''_0 ) (a\, a_0) + (\vphi'_0 \otimes \vphi''_0 ) (a\, a_2)\, t + ( \vphi'_0 \otimes \vphi''_0) (a\, a_4)\, t^2 + \cdots \right ).   
\end{eqnarray}
The term $a_4$ is the most fundamental and desirable term in this expansion since it is the first 
term in which  the analog of the Riemann curvature tensor manifests itself, whereas $a_0$ and $a_2$ are 
only the analogs of the volume form and the scalar curvature, respectively.

\smallskip

An extremely difficult way of calculating the desired $a_{4}$ appearing in the expansion \eqref{heatexp4d} 
is to use the method 
described in Section \ref{Calculatea_4Sec}, now in a four dimensional case. However,  
the expansion \eqref{heatexp} confirms the existence of the unique elements $a'_{2n} \in \CNTp$ 
and $a''_{2n} \in \CNTpp$ such that for any $a' \in \CNTp$ and $a'' \in \CNTpp,$ we have the following 
small-time expansions: 
\begin{eqnarray*}
\Tr( a' \exp (-t \triangle_{\vphi'}) )  
&\sim& 
t^{-1} \left ( \vphi'_0(a' \, a'_0) + \vphi'_0( a'\, a'_2) \, t + \vphi'_0(a'\, a'_4) \,t^2 + \cdots \right ),  \\
\Tr( a'' \exp ( -t \triangle_{\vphi''}) )  
&\sim& 
t^{-1} \left ( \vphi''_0(a'' \, a''_0) + \vphi''_0(a''\, a''_2) \, t + \vphi''_0(a''\, a''_4)\, t^2 + \cdots \right ). 
\end{eqnarray*}
Therefore, the uniqueness of the terms $a_{2n}$ appearing in the expansion  \eqref{heatexp4d}, 
combined with these expansions and making use of simple tensors as test elements, readily 
implies that we have:  
\[
a_{2n} = \sum_{i=0}^{n} a'_{2i} \otimes a''_{2(n-i)}   \in   C^\infty(\NTp \times \NTpp). 
\]
We recall from  \cite{FatKhaTraceThm} that the terms $a'_0$ and $a''_0$ are quite easy to 
calculate, namely $a'_0 = \pi e^{-h'}$ and $a''_0 = \pi e^{-h''}$. The terms $a'_2$ and $a''_2$ are given by 
\eqref{SCformula} which was calculated in \cite{ConMosModular, FatKhaSC2T}, 
and, most importantly, the $a'_4$ and $a''_4$ are now given explicitly by the 
formula \eqref{a_4expression}, emphasizing that 
we have achieved explicit formulas for all components of the latter. Hence, we have available an explicit 
formula for the term   
\[
a_4 = a'_0 \otimes a''_4 + a'_2 \otimes a''_2 + a'_4 \otimes a''_0 \in C^\infty(\NTp \times \NTpp), 
\]
which appears in the heat kernel expansion \eqref{heatexp4d} associated with a curved 
noncommutative geometry of spectral dimension four, hence explicit information about the 
analog of the Riemann curvature tensor in the noncommutative setting. 

\smallskip

\section{Conclusions}
\label{ConclusionsSec}

The local geometric invariants of the noncommutative 
two torus $\NT$ equipped with a conformally flat metric 
have complicated dependance upon the modular automorphism of the state that encodes the volume form of the 
metric and plays the role of the Weyl factor. This dependance  involves lengthy several 
variable functions of the modular automorphism 
of the state. In this paper we have calculated this dependance for the invariant 
$a_4 \in \CNT,$ which determines the third term 
in the small-time asymptotic expansion of the trace of the 
heat kernel of the Laplacian associated with the conformally 
flat metric. After performing  heavy calculations 
and simplifying the result, we have confirmed 
the accuracy of the final functions appearing 
in the expression for the term $a_4$ by checking 
that they satisfy a family of functional relations which were abstractly predicted before performing the computation. 
The derivation of these functional relations 
is based on using a fundamental spectral identity 
proved in \cite{ConMosModular} for the gradient 
of a functional, and by calculating the same gradient 
with finite differences. This method was indeed 
used in \cite{ConMosModular} for checking the 
validity of the term $a_2$, which is related to the 
scalar curvature for $\NT$. However, the tools 
needed for performing this check for the term $a_4$ and 
the consequent functional relations are by far more 
involved and complicated, compared to the case 
of the term $a_2$. Also by studying the functional relations 
abstractly, we have derived a partial differential system, on which 
the cyclic groups of order two, three and four act naturally and we have 
studied invariance properties and symmetries of the calculated expressions with respect 
to these actions. Moreover, we have found a natural flow that is associated with the 
differential system. 

\smallskip

We should stress that the basic functional relations for the functions 
$\widetilde K_1, \dots, \widetilde K_{20}$ stated in Theorem \ref{FuncRelationsThm} 
were conceptually predicted, and  lead to the differential system and the discovery 
of symmetric expressions, when studied abstractly. However, the functional relations among the functions $k_3, \dots, k_{20}$ 
stated in Theorem \ref{funcrlnsamongksThm} were found after comparing the final explicit  formulas for the 
functions $K_3, \dots, K_{20}$ and it is an open question to understand them a priori  in a conceptual manner.

\smallskip

We have shown that the main ingredients of our calculations 
can be derived by finite differences from the generating 
function of the Bernoulli numbers and its inverse, and we have paid special 
attention to the general structure of the final functions that 
describe the term $a_4$. That is, due to noncommutativity,  
several variable functions of the modular automorphism are needed 
for integrating certain $\CNT$-valued functions defined on the positive real line, 
and for writing the final formula for the term $a_4$ in terms of the derivatives 
of the logarithm of the conformal factor. We have argued 
that each of the final functions is a rational function of variables $s_i$ 
and $e^{s_i/2}$, whose denominator has a concise product 
formula, and the coefficients of its numerator satisfy a family of 
linear equations, which, given a monomial ordering, restrict them to 
belong to a potentially low dimensional linear space. 
This begs for bringing in the front scene   the algebraic geometry, familiar in 
transcendence theory \cite{Waldschmidt}, of exponential polynomials and their smooth 
fractions in understanding the general structure of the noncommutative local invariants.

\smallskip

\smallskip

It is also important to stress that, because of their geometric nature, 
the terms $a_{2n}$ in \eqref{heatexp4d}  are local geometric invariants of the curved metric 
on the noncommutative four torus in the following  sense. In noncommutative geometry the notion of locality is less obvious
than the classical naive notion. This notion nevertheless makes sense
based on Fourier transform, and what it means concretely in the above context is that only
the high frequency contributions are relevant in the computation of 
the coefficients appearing in the expansion of the trace of the heat kernel. This illuminates the locality of the terms $a_{2n} \in \CNT$ 
since when one considers using the pseudodifferential calculus built 
on a phase space with a noncommutative configuration space \cite{ConC*algDiffGeo}, the high 
frequencies appear preponderantly in the derivation of the heat expansion \eqref{heatexp4d}. 
This locality principle is already at the core of the local index formula of 
\cite{ConMosLocalIndex} and also plays an important simplifying role in the case of quantum groups.

\smallskip

As is well known in the case of  Riemannian manifolds, the local geometric invariants appearing in the heat expansion, are certain expressions in terms of the Riemann curvature tensor and its contractions 
and covariant derivatives. These expressions are very complicated, however, because of their 
local nature, one can  identify them using invariance theory \cite{GilBook}, noting that only a few terms 
have in practice been identified due to the rapid growth in complexity of the formulas. As indicated earlier, 
the first two terms are given by the volume form and the scalar curvature, and the third term, the analog 
of which in our noncommutative setting is the  term $a_4$,  
is the first place where the  Riemann curvature tensor manifests itself beyond the curvature scalar. Thus, we view   
the results achieved in this paper as an important step towards the exploration of  further properties of the analog of the Riemann curvature tensor in 
noncommutative geometry as explained briefly in Section \ref{NC4toriSec} for $4$-tori. This paper should be viewed as providing a vast reservoir of concrete data, in the form of the obtained explicit formulas,  
which  can be exploited further for testing ideas.  In fact, research in this area of noncommutative geometry possesses an experimental nature,  
and we intend to share in a mathematical program notebook the invaluable resource of data that we have 
obtained for the $a_4$ term of the heat expansion.   This will  pave the way for researchers to 
discover many more and far different phenomena in noncommutative differential geometry 
\cite{ConNDG, ConNCGBook, ConMosLocalIndex}, awaiting to be discovered in our data. Our final formulas are securely tested  since they satisfy a highly nontrivial 
family of conceptually predicted functional relations, derived by comparing the outcomes of two different  
abstract calculations of a gradient. This method, first introduced in \cite{ConMosModular}, plays in our setting the role of invariance theory in the classical 
case for performing a check on a calculated expression. The symmetries that we have discovered  
in the calculated expressions are quite striking and are closely related 
to the appearance of the action of the cyclic groups in the differential system.  
Note also that  considering the work carried out in \cite{LiuToric}, 
our calculations for noncommutative tori are universal 
in the sense that they extend to noncommutative toric manifolds \cite{ConLanTheta}.

\smallskip 

We end our conclusions by pointing out that while the results of this paper have a very concrete aspect,  
they suggest, as explained above in Section \ref{NC4toriSec} for $4$-tori, the need to  explore at the conceptual level the more general twisting of spectral triples already arising in the transverse geometry of foliations, which goes well beyond  the non-tracial type III nature of the measure theory since the latter is limited to the behavior of the determinant of the metric.     
The theory of  
twisted spectral triples has reached a satisfactory status \cite{ConMosTypeIII, ChaConScaleInv, ConTreGB, PonWanConformal, 
FatGabChernGB}.  The general case appears as an open land where  the case of the transverse geometry of foliations provides a wealth of examples and where general principles such as the reduction to the almost isometric case \cite{ConTransverse, CMos} should be valid in general and allow one to 
 apply the local geometric methods of \cite{ConNDG, ConMosLocalIndex, CMos2}.

\appendix

\smallskip

\section{Lengthy functional relations}
\label{lengthyfnrelationsappsec}

We explained in Section \ref{a_4Sec} that like the 
basic functional relations given respectively by the equations \eqref{basicK8eqn} and 
\eqref{basicK17eqn} for the functions $\widetilde K_9$ and $\widetilde K_{17}$, the 
remaining three and four variable functions have algebraically lengthy basic functional  identities. 
We present these lengthy expressions in this appendix.

\subsection{Functional relations for $\widetilde K_{9}, \dots, \widetilde K_{16}$} First we cover the 
remaining three variable functions.

\subsubsection{The function $\widetilde K_9$}
\label{basicK9}
We have: 

\begin{equation} \label{basicK9eqn}
\widetilde K_9(s_1, s_2, s_3) =
\end{equation}

\begin{center}
\begin{math}
  \frac{1}{15} (-4) \pi  G_3\left(s_1,s_2,s_3\right)+\frac{1}{4} e^{s_3} G_2\left(s_1,s_2\right) k_3\left(-s_3\right)-\frac{e^{s_3} \left(e^{s_2} s_1 k_3\left(-s_2-s_3\right)+e^{s_2} s_2 k_3\left(-s_2-s_3\right)-e^{s_1+s_2} s_2 k_3\left(-s_1-s_2-s_3\right)-s_1 k_3\left(-s_3\right)\right)}{4 s_1 s_2 \left(s_1+s_2\right)}+\frac{1}{4} G_2\left(s_1,s_2\right) k_3\left(s_3\right)+\frac{G_1\left(s_1\right) \left(k_3\left(s_3\right)-k_3\left(s_2+s_3\right)\right)}{4 s_2}+\frac{s_1 k_3\left(s_3\right)-s_1 k_3\left(s_2+s_3\right)-s_2 k_3\left(s_2+s_3\right)+s_2 k_3\left(s_1+s_2+s_3\right)}{4 s_1 s_2 \left(s_1+s_2\right)}-G_2\left(s_1,s_2\right) k_6\left(s_3\right)+\frac{G_1\left(s_1\right) \left(k_6\left(s_2\right)-k_6\left(s_2+s_3\right)\right)}{2 s_3}+\frac{k_6\left(s_2\right)-k_6\left(s_1+s_2\right)-k_6\left(s_2+s_3\right)+k_6\left(s_1+s_2+s_3\right)}{2 s_1 s_3}+\frac{-s_3 k_6\left(s_1\right)+s_2 k_6\left(s_1+s_2\right)+s_3 k_6\left(s_1+s_2\right)-s_2 k_6\left(s_1+s_2+s_3\right)}{2 s_2 s_3 \left(s_2+s_3\right)}+\frac{-s_1 k_6\left(s_3\right)+s_1 k_6\left(s_2+s_3\right)+s_2 k_6\left(s_2+s_3\right)-s_2 k_6\left(s_1+s_2+s_3\right)}{s_1 s_2 \left(s_1+s_2\right)}+\frac{e^{s_2} G_1\left(s_1\right) \left(k_7\left(-s_2\right)-e^{s_3} k_7\left(-s_2-s_3\right)\right)}{2 s_3}+\frac{e^{s_2} \left(-e^{s_1} k_7\left(-s_1-s_2\right)+k_7\left(-s_2\right)-e^{s_3} k_7\left(-s_2-s_3\right)+e^{s_1+s_3} k_7\left(-s_1-s_2-s_3\right)\right)}{4 s_1 s_3}-\frac{e^{s_1} \left(s_3 k_7\left(-s_1\right)-e^{s_2} s_2 k_7\left(-s_1-s_2\right)-e^{s_2} s_3 k_7\left(-s_1-s_2\right)+e^{s_2+s_3} s_2 k_7\left(-s_1-s_2-s_3\right)\right)}{2 s_2 s_3 \left(s_2+s_3\right)}-\frac{e^{s_2} \left(e^{s_1} k_7\left(-s_1-s_2\right)-k_7\left(-s_2\right)+e^{s_3} k_7\left(-s_2-s_3\right)-e^{s_1+s_3} k_7\left(-s_1-s_2-s_3\right)\right)}{4 s_1 s_3}+\frac{e^{s_3} G_1\left(s_1\right) \left(e^{s_2} k_7\left(-s_2-s_3\right)-k_7\left(-s_3\right)\right)}{s_2}-e^{s_3} G_2\left(s_1,s_2\right) k_7\left(-s_3\right)+\frac{e^{s_3} \left(e^{s_2} s_1 k_7\left(-s_2-s_3\right)+e^{s_2} s_2 k_7\left(-s_2-s_3\right)-e^{s_1+s_2} s_2 k_7\left(-s_1-s_2-s_3\right)-s_1 k_7\left(-s_3\right)\right)}{s_1 s_2 \left(s_1+s_2\right)}+\frac{1}{4} G_1\left(s_1\right) k_8\left(s_2,s_3\right)+\frac{k_8\left(s_2,s_3\right)-k_8\left(s_1+s_2,s_3\right)}{4 s_1}+\frac{k_8\left(s_1+s_2,s_3\right)-k_8\left(s_1,s_2+s_3\right)}{4 s_2}+\frac{\left(-1+e^{s_1+s_2+s_3}\right) k_9\left(s_1,s_2\right)}{8 \left(s_1+s_2+s_3\right)}+\frac{k_9\left(s_1,s_2+s_3\right)-k_9\left(s_1,s_2\right)}{8 s_3}+\frac{1}{8} G_1\left(s_1\right) k_9\left(s_2,s_3\right)+\frac{k_9\left(s_2,s_3\right)-k_9\left(s_1+s_2,s_3\right)}{8 s_1}+\frac{k_9\left(s_1+s_2,s_3\right)-k_9\left(s_1,s_2+s_3\right)}{8 s_2}-\frac{1}{8} e^{s_2+s_3} G_1\left(s_1\right) k_9\left(-s_2-s_3,s_2\right)+\frac{e^{s_1+s_2+s_3} \left(k_9\left(-s_1-s_2-s_3,s_1\right)-k_9\left(-s_1-s_2-s_3,s_1+s_2\right)\right)}{8 s_2}-\frac{1}{8} G_1\left(s_1\right) k_{10}\left(s_2,s_3\right)+\frac{k_{10}\left(s_1,s_2+s_3\right)-k_{10}\left(s_1+s_2,s_3\right)}{8 s_2}+\frac{k_{10}\left(s_1+s_2,s_3\right)-k_{10}\left(s_2,s_3\right)}{8 s_1}+\frac{1}{8} e^{s_2} G_1\left(s_1\right) k_{10}\left(s_3,-s_2-s_3\right)+\frac{e^{s_2} \left(k_{10}\left(s_3,-s_2-s_3\right)-e^{s_1} k_{10}\left(s_3,-s_1-s_2-s_3\right)\right)}{8 s_1}+\frac{e^{s_1} \left(e^{s_2} k_{10}\left(s_3,-s_1-s_2-s_3\right)-k_{10}\left(s_2+s_3,-s_1-s_2-s_3\right)\right)}{8 s_2}+\frac{1}{4} e^{s_2} G_1\left(s_1\right) k_{11}\left(s_3,-s_2-s_3\right)+\frac{e^{s_2} \left(k_{11}\left(s_3,-s_2-s_3\right)-e^{s_1} k_{11}\left(s_3,-s_1-s_2-s_3\right)\right)}{4 s_1}+\frac{e^{s_1} \left(e^{s_2} k_{11}\left(s_3,-s_1-s_2-s_3\right)-k_{11}\left(s_2+s_3,-s_1-s_2-s_3\right)\right)}{4 s_2}+\frac{1}{4} e^{s_2+s_3} G_1\left(s_1\right) k_{12}\left(-s_2-s_3,s_2\right)+\frac{e^{s_2+s_3} \left(k_{12}\left(-s_2-s_3,s_2\right)-e^{s_1} k_{12}\left(-s_1-s_2-s_3,s_1+s_2\right)\right)}{4 s_1}+\frac{1}{8} e^{s_2+s_3} G_1\left(s_1\right) k_{13}\left(-s_2-s_3,s_2\right)+\frac{e^{s_2+s_3} \left(k_{13}\left(-s_2-s_3,s_2\right)-e^{s_1} k_{13}\left(-s_1-s_2-s_3,s_1+s_2\right)\right)}{8 s_1}-\frac{1}{8} e^{s_2} G_1\left(s_1\right) k_{13}\left(s_3,-s_2-s_3\right)-\frac{1}{16} e^{s_1} k_{17}\left(s_2,s_3,-s_1-s_2-s_3\right)-\frac{1}{16} e^{s_1+s_2+s_3} k_{17}\left(-s_1-s_2-s_3,s_1,s_2\right)-\frac{1}{16} k_{18}\left(s_1,s_2,s_3\right)-\frac{1}{16} e^{s_1} k_{18}\left(s_2,s_3,-s_1-s_2-s_3\right)-\frac{1}{16} e^{s_1+s_2+s_3} k_{18}\left(-s_1-s_2-s_3,s_1,s_2\right)-\frac{1}{16} e^{s_1+s_2} k_{18}\left(s_3,-s_1-s_2-s_3,s_1\right)-\frac{1}{16} k_{19}\left(s_1,s_2,s_3\right)-\frac{1}{16} e^{s_1+s_2} k_{19}\left(s_3,-s_1-s_2-s_3,s_1\right)-\frac{e^{s_2+s_3} \left(k_9\left(-s_2-s_3,s_2\right)-e^{s_1} k_9\left(-s_1-s_2-s_3,s_1+s_2\right)\right)}{8 s_1}-\frac{e^{s_2} \left(k_{13}\left(s_3,-s_2-s_3\right)-e^{s_1} k_{13}\left(s_3,-s_1-s_2-s_3\right)\right)}{8 s_1}-\frac{G_1\left(s_1\right) \left(k_6\left(s_3\right)-k_6\left(s_2+s_3\right)\right)}{s_2}-\frac{e^{s_3} G_1\left(s_1\right) \left(e^{s_2} k_3\left(-s_2-s_3\right)-k_3\left(-s_3\right)\right)}{4 s_2}-\frac{e^{s_1+s_2+s_3} \left(k_{12}\left(-s_1-s_2-s_3,s_1\right)-k_{12}\left(-s_1-s_2-s_3,s_1+s_2\right)\right)}{4 s_2}-\frac{e^{s_1+s_2+s_3} \left(k_{13}\left(-s_1-s_2-s_3,s_1\right)-k_{13}\left(-s_1-s_2-s_3,s_1+s_2\right)\right)}{8 s_2}-\frac{e^{s_1} \left(e^{s_2} k_{13}\left(s_3,-s_1-s_2-s_3\right)-k_{13}\left(s_2+s_3,-s_1-s_2-s_3\right)\right)}{8 s_2}-\frac{e^{s_1} \left(k_{10}\left(s_2,-s_1-s_2\right)-k_{10}\left(s_2+s_3,-s_1-s_2-s_3\right)\right)}{8 s_3}-\frac{e^{s_1+s_2} \left(k_{13}\left(-s_1-s_2,s_1\right)-e^{s_3} k_{13}\left(-s_1-s_2-s_3,s_1\right)\right)}{8 s_3}-\frac{e^{s_1+s_2+s_3} \left(k_9\left(s_1,s_2\right)-k_9\left(-s_2-s_3,s_2\right)\right)}{8 \left(s_1+s_2+s_3\right)}-\frac{e^{s_1} \left(k_{10}\left(s_2,-s_1-s_2\right)-k_{10}\left(s_2,s_3\right)\right)}{8 \left(s_1+s_2+s_3\right)}-\frac{e^{s_1+s_2} \left(k_{13}\left(-s_1-s_2,s_1\right)-k_{13}\left(s_3,-s_2-s_3\right)\right)}{8 \left(s_1+s_2+s_3\right)}. 
\end{math}
\end{center}

\subsubsection{The function $\widetilde K_{10}$}
\label{basicK10}
We have:

\begin{equation} \label{basicK10eqn}
 \widetilde K_{10}(s_1, s_2, s_3) =
\end{equation}
\begin{center}
\begin{math}
 \frac{1}{15} (-4) \pi  G_3\left(s_1,s_2,s_3\right)+\frac{e^{s_2} \left(e^{s_1} k_3\left(-s_1-s_2\right)-k_3\left(-s_2\right)+e^{s_3} k_3\left(-s_2-s_3\right)-e^{s_1+s_3} k_3\left(-s_1-s_2-s_3\right)\right)}{4 s_1 s_3}+\frac{-k_3\left(s_2\right)+k_3\left(s_1+s_2\right)+k_3\left(s_2+s_3\right)-k_3\left(s_1+s_2+s_3\right)}{4 s_1 s_3}-\frac{1}{2} G_2\left(s_1,s_2\right) k_6\left(s_3\right)+\frac{G_1\left(s_1\right) \left(k_6\left(s_2\right)-k_6\left(s_2+s_3\right)\right)}{s_3}+\frac{k_6\left(s_2\right)-k_6\left(s_1+s_2\right)-k_6\left(s_2+s_3\right)+k_6\left(s_1+s_2+s_3\right)}{s_1 s_3}+\frac{-s_3 k_6\left(s_1\right)+s_2 k_6\left(s_1+s_2\right)+s_3 k_6\left(s_1+s_2\right)-s_2 k_6\left(s_1+s_2+s_3\right)}{2 s_2 s_3 \left(s_2+s_3\right)}+\frac{-s_1 k_6\left(s_3\right)+s_1 k_6\left(s_2+s_3\right)+s_2 k_6\left(s_2+s_3\right)-s_2 k_6\left(s_1+s_2+s_3\right)}{2 s_1 s_2 \left(s_1+s_2\right)}+\frac{e^{s_2} G_1\left(s_1\right) \left(k_7\left(-s_2\right)-e^{s_3} k_7\left(-s_2-s_3\right)\right)}{s_3}-\frac{e^{s_1} \left(s_3 k_7\left(-s_1\right)-e^{s_2} s_2 k_7\left(-s_1-s_2\right)-e^{s_2} s_3 k_7\left(-s_1-s_2\right)+e^{s_2+s_3} s_2 k_7\left(-s_1-s_2-s_3\right)\right)}{2 s_2 s_3 \left(s_2+s_3\right)}-\frac{e^{s_2} \left(e^{s_1} k_7\left(-s_1-s_2\right)-k_7\left(-s_2\right)+e^{s_3} k_7\left(-s_2-s_3\right)-e^{s_1+s_3} k_7\left(-s_1-s_2-s_3\right)\right)}{s_1 s_3}+\frac{e^{s_3} G_1\left(s_1\right) \left(e^{s_2} k_7\left(-s_2-s_3\right)-k_7\left(-s_3\right)\right)}{2 s_2}-\frac{1}{2} e^{s_3} G_2\left(s_1,s_2\right) k_7\left(-s_3\right)+\frac{e^{s_3} \left(e^{s_2} s_1 k_7\left(-s_2-s_3\right)+e^{s_2} s_2 k_7\left(-s_2-s_3\right)-e^{s_1+s_2} s_2 k_7\left(-s_1-s_2-s_3\right)-s_1 k_7\left(-s_3\right)\right)}{2 s_1 s_2 \left(s_1+s_2\right)}+\frac{k_8\left(s_1,s_2+s_3\right)-k_8\left(s_1,s_2\right)}{4 s_3}+\frac{1}{4} G_1\left(s_1\right) k_8\left(s_2,s_3\right)+\frac{k_8\left(s_2,s_3\right)-k_8\left(s_1+s_2,s_3\right)}{4 s_1}+\frac{k_9\left(s_1,s_2+s_3\right)-k_9\left(s_1,s_2\right)}{8 s_3}+\frac{1}{8} G_1\left(s_1\right) k_9\left(s_2,s_3\right)+\frac{k_9\left(s_2,s_3\right)-k_9\left(s_1+s_2,s_3\right)}{8 s_1}+\frac{k_9\left(s_1+s_2,s_3\right)-k_9\left(s_1,s_2+s_3\right)}{8 s_2}+\frac{e^{s_1+s_2} \left(k_9\left(-s_1-s_2,s_1\right)-e^{s_3} k_9\left(-s_1-s_2-s_3,s_1\right)\right)}{8 s_3}-\frac{1}{8} e^{s_2} G_1\left(s_1\right) k_9\left(s_3,-s_2-s_3\right)+\frac{\left(-1+e^{s_1+s_2+s_3}\right) k_{10}\left(s_1,s_2\right)}{8 \left(s_1+s_2+s_3\right)}+\frac{k_{10}\left(s_1,s_2\right)-k_{10}\left(s_1,s_2+s_3\right)}{8 s_3}-\frac{1}{8} e^{s_2+s_3} G_1\left(s_1\right) k_{10}\left(-s_2-s_3,s_2\right)+\frac{1}{8} e^{s_2} G_1\left(s_1\right) k_{10}\left(s_3,-s_2-s_3\right)+\frac{e^{s_2} \left(k_{10}\left(s_3,-s_2-s_3\right)-e^{s_1} k_{10}\left(s_3,-s_1-s_2-s_3\right)\right)}{8 s_1}+\frac{e^{s_1} \left(e^{s_2} k_{10}\left(s_3,-s_1-s_2-s_3\right)-k_{10}\left(s_2+s_3,-s_1-s_2-s_3\right)\right)}{8 s_2}+\frac{1}{4} e^{s_2} G_1\left(s_1\right) k_{11}\left(s_3,-s_2-s_3\right)+\frac{e^{s_2} \left(k_{11}\left(s_3,-s_2-s_3\right)-e^{s_1} k_{11}\left(s_3,-s_1-s_2-s_3\right)\right)}{4 s_1}+\frac{1}{4} e^{s_2+s_3} G_1\left(s_1\right) k_{12}\left(-s_2-s_3,s_2\right)+\frac{e^{s_2+s_3} \left(k_{12}\left(-s_2-s_3,s_2\right)-e^{s_1} k_{12}\left(-s_1-s_2-s_3,s_1+s_2\right)\right)}{4 s_1}-\frac{1}{8} G_1\left(s_1\right) k_{13}\left(s_2,s_3\right)+\frac{k_{13}\left(s_1+s_2,s_3\right)-k_{13}\left(s_2,s_3\right)}{8 s_1}+\frac{1}{8} e^{s_2+s_3} G_1\left(s_1\right) k_{13}\left(-s_2-s_3,s_2\right)+\frac{e^{s_2+s_3} \left(k_{13}\left(-s_2-s_3,s_2\right)-e^{s_1} k_{13}\left(-s_1-s_2-s_3,s_1+s_2\right)\right)}{8 s_1}+\frac{e^{s_1} \left(k_{13}\left(s_2,-s_1-s_2\right)-k_{13}\left(s_2+s_3,-s_1-s_2-s_3\right)\right)}{8 s_3}-\frac{1}{16} k_{17}\left(s_1,s_2,s_3\right)-\frac{1}{16} e^{s_1} k_{17}\left(s_2,s_3,-s_1-s_2-s_3\right)-\frac{1}{16} e^{s_1+s_2+s_3} k_{17}\left(-s_1-s_2-s_3,s_1,s_2\right)-\frac{1}{16} e^{s_1+s_2} k_{17}\left(s_3,-s_1-s_2-s_3,s_1\right)-\frac{1}{16} k_{19}\left(s_1,s_2,s_3\right)-\frac{1}{16} e^{s_1} k_{19}\left(s_2,s_3,-s_1-s_2-s_3\right)-\frac{1}{16} e^{s_1+s_2+s_3} k_{19}\left(-s_1-s_2-s_3,s_1,s_2\right)-\frac{1}{16} e^{s_1+s_2} k_{19}\left(s_3,-s_1-s_2-s_3,s_1\right)-\frac{e^{s_2} \left(k_9\left(s_3,-s_2-s_3\right)-e^{s_1} k_9\left(s_3,-s_1-s_2-s_3\right)\right)}{8 s_1}-\frac{e^{s_2+s_3} \left(k_{10}\left(-s_2-s_3,s_2\right)-e^{s_1} k_{10}\left(-s_1-s_2-s_3,s_1+s_2\right)\right)}{8 s_1}-\frac{G_1\left(s_1\right) \left(k_6\left(s_3\right)-k_6\left(s_2+s_3\right)\right)}{2 s_2}-\frac{e^{s_1+s_2+s_3} \left(k_{13}\left(-s_1-s_2-s_3,s_1\right)-k_{13}\left(-s_1-s_2-s_3,s_1+s_2\right)\right)}{8 s_2}-\frac{e^{s_2} G_1\left(s_1\right) \left(k_3\left(-s_2\right)-e^{s_3} k_3\left(-s_2-s_3\right)\right)}{4 s_3}-\frac{G_1\left(s_1\right) \left(k_3\left(s_2\right)-k_3\left(s_2+s_3\right)\right)}{4 s_3}-\frac{e^{s_1} \left(k_{11}\left(s_2,-s_1-s_2\right)-k_{11}\left(s_2+s_3,-s_1-s_2-s_3\right)\right)}{4 s_3}-\frac{e^{s_1+s_2} \left(k_{12}\left(-s_1-s_2,s_1\right)-e^{s_3} k_{12}\left(-s_1-s_2-s_3,s_1\right)\right)}{4 s_3}-\frac{e^{s_1} \left(k_{10}\left(s_2,-s_1-s_2\right)-k_{10}\left(s_2+s_3,-s_1-s_2-s_3\right)\right)}{8 s_3}-\frac{e^{s_1+s_2} \left(k_{13}\left(-s_1-s_2,s_1\right)-e^{s_3} k_{13}\left(-s_1-s_2-s_3,s_1\right)\right)}{8 s_3}-\frac{e^{s_1+s_2} \left(k_9\left(-s_1-s_2,s_1\right)-k_9\left(s_3,-s_2-s_3\right)\right)}{8 \left(s_1+s_2+s_3\right)}-\frac{e^{s_1+s_2+s_3} \left(k_{10}\left(s_1,s_2\right)-k_{10}\left(-s_2-s_3,s_2\right)\right)}{8 \left(s_1+s_2+s_3\right)}-\frac{e^{s_1} \left(k_{13}\left(s_2,-s_1-s_2\right)-k_{13}\left(s_2,s_3\right)\right)}{8 \left(s_1+s_2+s_3\right)}. 
\end{math}
\end{center}

\subsubsection{The function $\widetilde K_{11}$}
\label{basicK11}
We have: 

\begin{equation} \label{basicK11eqn}
 \widetilde K_{11}(s_1, s_2, s_3) =
\end{equation}
\begin{center}
\begin{math}
 \frac{1}{15} (-2) \pi  G_3\left(s_1,s_2,s_3\right)-\frac{e^{s_2} \left(-e^{s_1} k_4\left(-s_1-s_2\right)+k_4\left(-s_2\right)-e^{s_3} k_4\left(-s_2-s_3\right)+e^{s_1+s_3} k_4\left(-s_1-s_2-s_3\right)\right)}{4 s_1 s_3}+\frac{e^{s_2} \left(e^{s_1} k_4\left(-s_1-s_2\right)-k_4\left(-s_2\right)+e^{s_3} k_4\left(-s_2-s_3\right)-e^{s_1+s_3} k_4\left(-s_1-s_2-s_3\right)\right)}{4 s_1 s_3}+\frac{-k_4\left(s_2\right)+k_4\left(s_1+s_2\right)+k_4\left(s_2+s_3\right)-k_4\left(s_1+s_2+s_3\right)}{2 s_1 s_3}-\frac{1}{4} G_2\left(s_1,s_2\right) k_6\left(s_3\right)+\frac{G_1\left(s_1\right) \left(k_6\left(s_2\right)-k_6\left(s_2+s_3\right)\right)}{2 s_3}+\frac{k_6\left(s_2\right)-k_6\left(s_1+s_2\right)-k_6\left(s_2+s_3\right)+k_6\left(s_1+s_2+s_3\right)}{2 s_1 s_3}+\frac{-s_3 k_6\left(s_1\right)+s_2 k_6\left(s_1+s_2\right)+s_3 k_6\left(s_1+s_2\right)-s_2 k_6\left(s_1+s_2+s_3\right)}{4 s_2 s_3 \left(s_2+s_3\right)}+\frac{-s_1 k_6\left(s_3\right)+s_1 k_6\left(s_2+s_3\right)+s_2 k_6\left(s_2+s_3\right)-s_2 k_6\left(s_1+s_2+s_3\right)}{4 s_1 s_2 \left(s_1+s_2\right)}+\frac{e^{s_2} G_1\left(s_1\right) \left(k_7\left(-s_2\right)-e^{s_3} k_7\left(-s_2-s_3\right)\right)}{2 s_3}+\frac{e^{s_2} \left(-e^{s_1} k_7\left(-s_1-s_2\right)+k_7\left(-s_2\right)-e^{s_3} k_7\left(-s_2-s_3\right)+e^{s_1+s_3} k_7\left(-s_1-s_2-s_3\right)\right)}{4 s_1 s_3}-\frac{e^{s_1} \left(s_3 k_7\left(-s_1\right)-e^{s_2} s_2 k_7\left(-s_1-s_2\right)-e^{s_2} s_3 k_7\left(-s_1-s_2\right)+e^{s_2+s_3} s_2 k_7\left(-s_1-s_2-s_3\right)\right)}{4 s_2 s_3 \left(s_2+s_3\right)}-\frac{e^{s_2} \left(e^{s_1} k_7\left(-s_1-s_2\right)-k_7\left(-s_2\right)+e^{s_3} k_7\left(-s_2-s_3\right)-e^{s_1+s_3} k_7\left(-s_1-s_2-s_3\right)\right)}{4 s_1 s_3}+\frac{e^{s_3} G_1\left(s_1\right) \left(e^{s_2} k_7\left(-s_2-s_3\right)-k_7\left(-s_3\right)\right)}{4 s_2}-\frac{1}{4} e^{s_3} G_2\left(s_1,s_2\right) k_7\left(-s_3\right)+\frac{e^{s_3} \left(e^{s_2} s_1 k_7\left(-s_2-s_3\right)+e^{s_2} s_2 k_7\left(-s_2-s_3\right)-e^{s_1+s_2} s_2 k_7\left(-s_1-s_2-s_3\right)-s_1 k_7\left(-s_3\right)\right)}{4 s_1 s_2 \left(s_1+s_2\right)}+\frac{k_8\left(s_1+s_2,s_3\right)-k_8\left(s_1,s_2+s_3\right)}{8 s_2}+\frac{e^{s_1+s_2} \left(k_8\left(-s_1-s_2,s_1\right)-e^{s_3} k_8\left(-s_1-s_2-s_3,s_1\right)\right)}{8 s_3}-\frac{1}{8} e^{s_2} G_1\left(s_1\right) k_8\left(s_3,-s_2-s_3\right)+\frac{k_9\left(s_1,s_2+s_3\right)-k_9\left(s_1,s_2\right)}{8 s_3}+\frac{1}{8} G_1\left(s_1\right) k_9\left(s_2,s_3\right)+\frac{k_9\left(s_2,s_3\right)-k_9\left(s_1+s_2,s_3\right)}{8 s_1}+\frac{1}{8} e^{s_2} G_1\left(s_1\right) k_{10}\left(s_3,-s_2-s_3\right)+\frac{e^{s_2} \left(k_{10}\left(s_3,-s_2-s_3\right)-e^{s_1} k_{10}\left(s_3,-s_1-s_2-s_3\right)\right)}{8 s_1}+\frac{\left(-1+e^{s_1+s_2+s_3}\right) k_{11}\left(s_1,s_2\right)}{8 \left(s_1+s_2+s_3\right)}+\frac{k_{11}\left(s_1,s_2\right)-k_{11}\left(s_1,s_2+s_3\right)}{8 s_3}-\frac{1}{8} e^{s_2+s_3} G_1\left(s_1\right) k_{11}\left(-s_2-s_3,s_2\right)+\frac{e^{s_1} \left(e^{s_2} k_{11}\left(s_3,-s_1-s_2-s_3\right)-k_{11}\left(s_2+s_3,-s_1-s_2-s_3\right)\right)}{8 s_2}-\frac{1}{8} G_1\left(s_1\right) k_{12}\left(s_2,s_3\right)+\frac{k_{12}\left(s_1+s_2,s_3\right)-k_{12}\left(s_2,s_3\right)}{8 s_1}+\frac{e^{s_1} \left(k_{12}\left(s_2,-s_1-s_2\right)-k_{12}\left(s_2+s_3,-s_1-s_2-s_3\right)\right)}{8 s_3}+\frac{1}{8} e^{s_2+s_3} G_1\left(s_1\right) k_{13}\left(-s_2-s_3,s_2\right)+\frac{e^{s_2+s_3} \left(k_{13}\left(-s_2-s_3,s_2\right)-e^{s_1} k_{13}\left(-s_1-s_2-s_3,s_1+s_2\right)\right)}{8 s_1}-\frac{1}{16} k_{18}\left(s_1,s_2,s_3\right)-\frac{1}{16} e^{s_1} k_{18}\left(s_2,s_3,-s_1-s_2-s_3\right)-\frac{1}{16} e^{s_1+s_2+s_3} k_{18}\left(-s_1-s_2-s_3,s_1,s_2\right)-\frac{1}{16} e^{s_1+s_2} k_{18}\left(s_3,-s_1-s_2-s_3,s_1\right)-\frac{e^{s_2} \left(k_8\left(s_3,-s_2-s_3\right)-e^{s_1} k_8\left(s_3,-s_1-s_2-s_3\right)\right)}{8 s_1}-\frac{e^{s_2+s_3} \left(k_{11}\left(-s_2-s_3,s_2\right)-e^{s_1} k_{11}\left(-s_1-s_2-s_3,s_1+s_2\right)\right)}{8 s_1}-\frac{G_1\left(s_1\right) \left(k_6\left(s_3\right)-k_6\left(s_2+s_3\right)\right)}{4 s_2}-\frac{e^{s_1+s_2+s_3} \left(k_{12}\left(-s_1-s_2-s_3,s_1\right)-k_{12}\left(-s_1-s_2-s_3,s_1+s_2\right)\right)}{8 s_2}-\frac{e^{s_2} G_1\left(s_1\right) \left(k_4\left(-s_2\right)-e^{s_3} k_4\left(-s_2-s_3\right)\right)}{2 s_3}-\frac{G_1\left(s_1\right) \left(k_4\left(s_2\right)-k_4\left(s_2+s_3\right)\right)}{2 s_3}-\frac{e^{s_1} \left(k_{10}\left(s_2,-s_1-s_2\right)-k_{10}\left(s_2+s_3,-s_1-s_2-s_3\right)\right)}{8 s_3}-\frac{e^{s_1+s_2} \left(k_{13}\left(-s_1-s_2,s_1\right)-e^{s_3} k_{13}\left(-s_1-s_2-s_3,s_1\right)\right)}{8 s_3}-\frac{e^{s_1+s_2} \left(k_8\left(-s_1-s_2,s_1\right)-k_8\left(s_3,-s_2-s_3\right)\right)}{8 \left(s_1+s_2+s_3\right)}-\frac{e^{s_1+s_2+s_3} \left(k_{11}\left(s_1,s_2\right)-k_{11}\left(-s_2-s_3,s_2\right)\right)}{8 \left(s_1+s_2+s_3\right)}-\frac{e^{s_1} \left(k_{12}\left(s_2,-s_1-s_2\right)-k_{12}\left(s_2,s_3\right)\right)}{8 \left(s_1+s_2+s_3\right)}. 
\end{math}
\end{center}

\subsubsection{The function $\widetilde K_{12}$}
\label{basicK12}
We have: 

\begin{equation} \label{basicK12eqn}
 \widetilde K_{12}(s_1, s_2, s_3) =
\end{equation}

\begin{center}
\begin{math}
 \frac{1}{15} (-2) \pi  G_3\left(s_1,s_2,s_3\right)+\frac{e^{s_1} \left(s_3 k_4\left(-s_1\right)-e^{s_2} s_2 k_4\left(-s_1-s_2\right)-e^{s_2} s_3 k_4\left(-s_1-s_2\right)+e^{s_2+s_3} s_2 k_4\left(-s_1-s_2-s_3\right)\right)}{2 s_2 s_3 \left(s_2+s_3\right)}+\frac{s_3 k_4\left(s_1\right)-s_2 k_4\left(s_1+s_2\right)-s_3 k_4\left(s_1+s_2\right)+s_2 k_4\left(s_1+s_2+s_3\right)}{2 s_2 s_3 \left(s_2+s_3\right)}-\frac{1}{4} G_2\left(s_1,s_2\right) k_6\left(s_3\right)+\frac{G_1\left(s_1\right) \left(k_6\left(s_2\right)-k_6\left(s_2+s_3\right)\right)}{4 s_3}+\frac{k_6\left(s_2\right)-k_6\left(s_1+s_2\right)-k_6\left(s_2+s_3\right)+k_6\left(s_1+s_2+s_3\right)}{4 s_1 s_3}+\frac{-s_3 k_6\left(s_1\right)+s_2 k_6\left(s_1+s_2\right)+s_3 k_6\left(s_1+s_2\right)-s_2 k_6\left(s_1+s_2+s_3\right)}{2 s_2 s_3 \left(s_2+s_3\right)}+\frac{-s_1 k_6\left(s_3\right)+s_1 k_6\left(s_2+s_3\right)+s_2 k_6\left(s_2+s_3\right)-s_2 k_6\left(s_1+s_2+s_3\right)}{4 s_1 s_2 \left(s_1+s_2\right)}+\frac{e^{s_2} G_1\left(s_1\right) \left(k_7\left(-s_2\right)-e^{s_3} k_7\left(-s_2-s_3\right)\right)}{4 s_3}-\frac{e^{s_1} \left(s_3 k_7\left(-s_1\right)-e^{s_2} s_2 k_7\left(-s_1-s_2\right)-e^{s_2} s_3 k_7\left(-s_1-s_2\right)+e^{s_2+s_3} s_2 k_7\left(-s_1-s_2-s_3\right)\right)}{2 s_2 s_3 \left(s_2+s_3\right)}-\frac{e^{s_2} \left(e^{s_1} k_7\left(-s_1-s_2\right)-k_7\left(-s_2\right)+e^{s_3} k_7\left(-s_2-s_3\right)-e^{s_1+s_3} k_7\left(-s_1-s_2-s_3\right)\right)}{4 s_1 s_3}+\frac{e^{s_3} G_1\left(s_1\right) \left(e^{s_2} k_7\left(-s_2-s_3\right)-k_7\left(-s_3\right)\right)}{4 s_2}-\frac{1}{4} e^{s_3} G_2\left(s_1,s_2\right) k_7\left(-s_3\right)+\frac{e^{s_3} \left(e^{s_2} s_1 k_7\left(-s_2-s_3\right)+e^{s_2} s_2 k_7\left(-s_2-s_3\right)-e^{s_1+s_2} s_2 k_7\left(-s_1-s_2-s_3\right)-s_1 k_7\left(-s_3\right)\right)}{4 s_1 s_2 \left(s_1+s_2\right)}+\frac{1}{8} G_1\left(s_1\right) k_8\left(s_2,s_3\right)+\frac{k_8\left(s_2,s_3\right)-k_8\left(s_1+s_2,s_3\right)}{8 s_1}+\frac{e^{s_1} \left(k_8\left(s_2,-s_1-s_2\right)-k_8\left(s_2+s_3,-s_1-s_2-s_3\right)\right)}{8 s_3}+\frac{k_9\left(s_1,s_2+s_3\right)-k_9\left(s_1,s_2\right)}{8 s_3}+\frac{k_9\left(s_1+s_2,s_3\right)-k_9\left(s_1,s_2+s_3\right)}{8 s_2}+\frac{e^{s_1} \left(e^{s_2} k_{10}\left(s_3,-s_1-s_2-s_3\right)-k_{10}\left(s_2+s_3,-s_1-s_2-s_3\right)\right)}{8 s_2}+\frac{e^{s_1+s_2} \left(k_{11}\left(-s_1-s_2,s_1\right)-e^{s_3} k_{11}\left(-s_1-s_2-s_3,s_1\right)\right)}{8 s_3}+\frac{e^{s_1+s_2+s_3} \left(k_{11}\left(-s_1-s_2-s_3,s_1\right)-k_{11}\left(-s_1-s_2-s_3,s_1+s_2\right)\right)}{8 s_2}+\frac{1}{8} e^{s_2} G_1\left(s_1\right) k_{11}\left(s_3,-s_2-s_3\right)+\frac{e^{s_2} \left(k_{11}\left(s_3,-s_2-s_3\right)-e^{s_1} k_{11}\left(s_3,-s_1-s_2-s_3\right)\right)}{8 s_1}+\frac{\left(-1+e^{s_1+s_2+s_3}\right) k_{12}\left(s_1,s_2\right)}{8 \left(s_1+s_2+s_3\right)}+\frac{k_{12}\left(s_1,s_2\right)-k_{12}\left(s_1,s_2+s_3\right)}{8 s_3}+\frac{k_{12}\left(s_1,s_2+s_3\right)-k_{12}\left(s_1+s_2,s_3\right)}{8 s_2}+\frac{1}{8} e^{s_2+s_3} G_1\left(s_1\right) k_{12}\left(-s_2-s_3,s_2\right)+\frac{e^{s_2+s_3} \left(k_{12}\left(-s_2-s_3,s_2\right)-e^{s_1} k_{12}\left(-s_1-s_2-s_3,s_1+s_2\right)\right)}{8 s_1}-\frac{1}{16} e^{s_1} k_{17}\left(s_2,s_3,-s_1-s_2-s_3\right)-\frac{1}{16} e^{s_1+s_2+s_3} k_{17}\left(-s_1-s_2-s_3,s_1,s_2\right)-\frac{1}{16} k_{19}\left(s_1,s_2,s_3\right)-\frac{1}{16} e^{s_1+s_2} k_{19}\left(s_3,-s_1-s_2-s_3,s_1\right)-\frac{G_1\left(s_1\right) \left(k_6\left(s_3\right)-k_6\left(s_2+s_3\right)\right)}{4 s_2}-\frac{e^{s_1} \left(e^{s_2} k_8\left(s_3,-s_1-s_2-s_3\right)-k_8\left(s_2+s_3,-s_1-s_2-s_3\right)\right)}{8 s_2}-\frac{e^{s_1+s_2+s_3} \left(k_{13}\left(-s_1-s_2-s_3,s_1\right)-k_{13}\left(-s_1-s_2-s_3,s_1+s_2\right)\right)}{8 s_2}-\frac{e^{s_1} \left(k_{10}\left(s_2,-s_1-s_2\right)-k_{10}\left(s_2+s_3,-s_1-s_2-s_3\right)\right)}{8 s_3}-\frac{e^{s_1+s_2} \left(k_{13}\left(-s_1-s_2,s_1\right)-e^{s_3} k_{13}\left(-s_1-s_2-s_3,s_1\right)\right)}{8 s_3}-\frac{e^{s_1} \left(k_8\left(s_2,-s_1-s_2\right)-k_8\left(s_2,s_3\right)\right)}{8 \left(s_1+s_2+s_3\right)}-\frac{e^{s_1+s_2} \left(k_{11}\left(-s_1-s_2,s_1\right)-k_{11}\left(s_3,-s_2-s_3\right)\right)}{8 \left(s_1+s_2+s_3\right)}-\frac{e^{s_1+s_2+s_3} \left(k_{12}\left(s_1,s_2\right)-k_{12}\left(-s_2-s_3,s_2\right)\right)}{8 \left(s_1+s_2+s_3\right)}. 
\end{math}
\end{center}

\subsubsection{The function $\widetilde K_{13}$}
\label{basicK13}
We have:

\begin{equation} \label{basicK13eqn}
\widetilde K_{13}(s_1, s_2, s_3) =
\end{equation}

\begin{center}
\begin{math}
  \frac{1}{15} (-4) \pi  G_3\left(s_1,s_2,s_3\right)+\frac{e^{s_1} \left(s_3 k_3\left(-s_1\right)-e^{s_2} s_2 k_3\left(-s_1-s_2\right)-e^{s_2} s_3 k_3\left(-s_1-s_2\right)+e^{s_2+s_3} s_2 k_3\left(-s_1-s_2-s_3\right)\right)}{4 s_2 s_3 \left(s_2+s_3\right)}+\frac{s_3 k_3\left(s_1\right)-s_2 k_3\left(s_1+s_2\right)-s_3 k_3\left(s_1+s_2\right)+s_2 k_3\left(s_1+s_2+s_3\right)}{4 s_2 s_3 \left(s_2+s_3\right)}-\frac{1}{2} G_2\left(s_1,s_2\right) k_6\left(s_3\right)+\frac{G_1\left(s_1\right) \left(k_6\left(s_2\right)-k_6\left(s_2+s_3\right)\right)}{2 s_3}+\frac{k_6\left(s_2\right)-k_6\left(s_1+s_2\right)-k_6\left(s_2+s_3\right)+k_6\left(s_1+s_2+s_3\right)}{2 s_1 s_3}+\frac{-s_3 k_6\left(s_1\right)+s_2 k_6\left(s_1+s_2\right)+s_3 k_6\left(s_1+s_2\right)-s_2 k_6\left(s_1+s_2+s_3\right)}{s_2 s_3 \left(s_2+s_3\right)}+\frac{-s_1 k_6\left(s_3\right)+s_1 k_6\left(s_2+s_3\right)+s_2 k_6\left(s_2+s_3\right)-s_2 k_6\left(s_1+s_2+s_3\right)}{2 s_1 s_2 \left(s_1+s_2\right)}+\frac{e^{s_2} G_1\left(s_1\right) \left(k_7\left(-s_2\right)-e^{s_3} k_7\left(-s_2-s_3\right)\right)}{2 s_3}-\frac{e^{s_1} \left(s_3 k_7\left(-s_1\right)-e^{s_2} s_2 k_7\left(-s_1-s_2\right)-e^{s_2} s_3 k_7\left(-s_1-s_2\right)+e^{s_2+s_3} s_2 k_7\left(-s_1-s_2-s_3\right)\right)}{s_2 s_3 \left(s_2+s_3\right)}-\frac{e^{s_2} \left(e^{s_1} k_7\left(-s_1-s_2\right)-k_7\left(-s_2\right)+e^{s_3} k_7\left(-s_2-s_3\right)-e^{s_1+s_3} k_7\left(-s_1-s_2-s_3\right)\right)}{2 s_1 s_3}+\frac{e^{s_3} G_1\left(s_1\right) \left(e^{s_2} k_7\left(-s_2-s_3\right)-k_7\left(-s_3\right)\right)}{2 s_2}-\frac{1}{2} e^{s_3} G_2\left(s_1,s_2\right) k_7\left(-s_3\right)+\frac{e^{s_3} \left(e^{s_2} s_1 k_7\left(-s_2-s_3\right)+e^{s_2} s_2 k_7\left(-s_2-s_3\right)-e^{s_1+s_2} s_2 k_7\left(-s_1-s_2-s_3\right)-s_1 k_7\left(-s_3\right)\right)}{2 s_1 s_2 \left(s_1+s_2\right)}+\frac{k_8\left(s_1,s_2+s_3\right)-k_8\left(s_1,s_2\right)}{4 s_3}+\frac{k_8\left(s_1+s_2,s_3\right)-k_8\left(s_1,s_2+s_3\right)}{4 s_2}+\frac{k_9\left(s_1,s_2+s_3\right)-k_9\left(s_1,s_2\right)}{8 s_3}+\frac{1}{8} G_1\left(s_1\right) k_9\left(s_2,s_3\right)+\frac{k_9\left(s_2,s_3\right)-k_9\left(s_1+s_2,s_3\right)}{8 s_1}+\frac{k_9\left(s_1+s_2,s_3\right)-k_9\left(s_1,s_2+s_3\right)}{8 s_2}+\frac{e^{s_1} \left(k_9\left(s_2,-s_1-s_2\right)-k_9\left(s_2+s_3,-s_1-s_2-s_3\right)\right)}{8 s_3}+\frac{e^{s_1+s_2} \left(k_{10}\left(-s_1-s_2,s_1\right)-e^{s_3} k_{10}\left(-s_1-s_2-s_3,s_1\right)\right)}{8 s_3}+\frac{e^{s_1+s_2+s_3} \left(k_{10}\left(-s_1-s_2-s_3,s_1\right)-k_{10}\left(-s_1-s_2-s_3,s_1+s_2\right)\right)}{8 s_2}+\frac{1}{8} e^{s_2} G_1\left(s_1\right) k_{10}\left(s_3,-s_2-s_3\right)+\frac{e^{s_2} \left(k_{10}\left(s_3,-s_2-s_3\right)-e^{s_1} k_{10}\left(s_3,-s_1-s_2-s_3\right)\right)}{8 s_1}+\frac{e^{s_1} \left(e^{s_2} k_{10}\left(s_3,-s_1-s_2-s_3\right)-k_{10}\left(s_2+s_3,-s_1-s_2-s_3\right)\right)}{8 s_2}+\frac{e^{s_1} \left(e^{s_2} k_{11}\left(s_3,-s_1-s_2-s_3\right)-k_{11}\left(s_2+s_3,-s_1-s_2-s_3\right)\right)}{4 s_2}+\frac{\left(-1+e^{s_1+s_2+s_3}\right) k_{13}\left(s_1,s_2\right)}{8 \left(s_1+s_2+s_3\right)}+\frac{k_{13}\left(s_1,s_2\right)-k_{13}\left(s_1,s_2+s_3\right)}{8 s_3}+\frac{k_{13}\left(s_1,s_2+s_3\right)-k_{13}\left(s_1+s_2,s_3\right)}{8 s_2}+\frac{1}{8} e^{s_2+s_3} G_1\left(s_1\right) k_{13}\left(-s_2-s_3,s_2\right)+\frac{e^{s_2+s_3} \left(k_{13}\left(-s_2-s_3,s_2\right)-e^{s_1} k_{13}\left(-s_1-s_2-s_3,s_1+s_2\right)\right)}{8 s_1}-\frac{1}{16} k_{17}\left(s_1,s_2,s_3\right)-\frac{1}{16} e^{s_1+s_2} k_{17}\left(s_3,-s_1-s_2-s_3,s_1\right)-\frac{1}{16} k_{18}\left(s_1,s_2,s_3\right)-\frac{1}{16} e^{s_1} k_{18}\left(s_2,s_3,-s_1-s_2-s_3\right)-\frac{1}{16} e^{s_1+s_2+s_3} k_{18}\left(-s_1-s_2-s_3,s_1,s_2\right)-\frac{1}{16} e^{s_1+s_2} k_{18}\left(s_3,-s_1-s_2-s_3,s_1\right)-\frac{1}{16} e^{s_1} k_{19}\left(s_2,s_3,-s_1-s_2-s_3\right)-\frac{1}{16} e^{s_1+s_2+s_3} k_{19}\left(-s_1-s_2-s_3,s_1,s_2\right)-\frac{G_1\left(s_1\right) \left(k_6\left(s_3\right)-k_6\left(s_2+s_3\right)\right)}{2 s_2}-\frac{e^{s_1+s_2+s_3} \left(k_{12}\left(-s_1-s_2-s_3,s_1\right)-k_{12}\left(-s_1-s_2-s_3,s_1+s_2\right)\right)}{4 s_2}-\frac{e^{s_1} \left(e^{s_2} k_9\left(s_3,-s_1-s_2-s_3\right)-k_9\left(s_2+s_3,-s_1-s_2-s_3\right)\right)}{8 s_2}-\frac{e^{s_1+s_2+s_3} \left(k_{13}\left(-s_1-s_2-s_3,s_1\right)-k_{13}\left(-s_1-s_2-s_3,s_1+s_2\right)\right)}{8 s_2}-\frac{e^{s_1} \left(k_{11}\left(s_2,-s_1-s_2\right)-k_{11}\left(s_2+s_3,-s_1-s_2-s_3\right)\right)}{4 s_3}-\frac{e^{s_1+s_2} \left(k_{12}\left(-s_1-s_2,s_1\right)-e^{s_3} k_{12}\left(-s_1-s_2-s_3,s_1\right)\right)}{4 s_3}-\frac{e^{s_1} \left(k_{10}\left(s_2,-s_1-s_2\right)-k_{10}\left(s_2+s_3,-s_1-s_2-s_3\right)\right)}{8 s_3}-\frac{e^{s_1+s_2} \left(k_{13}\left(-s_1-s_2,s_1\right)-e^{s_3} k_{13}\left(-s_1-s_2-s_3,s_1\right)\right)}{8 s_3}-\frac{e^{s_1} \left(k_9\left(s_2,-s_1-s_2\right)-k_9\left(s_2,s_3\right)\right)}{8 \left(s_1+s_2+s_3\right)}-\frac{e^{s_1+s_2} \left(k_{10}\left(-s_1-s_2,s_1\right)-k_{10}\left(s_3,-s_2-s_3\right)\right)}{8 \left(s_1+s_2+s_3\right)}-\frac{e^{s_1+s_2+s_3} \left(k_{13}\left(s_1,s_2\right)-k_{13}\left(-s_2-s_3,s_2\right)\right)}{8 \left(s_1+s_2+s_3\right)}. 
\end{math}
\end{center}

\subsubsection{The function $\widetilde K_{14}$}
\label{basicK14}
We have: 

\begin{equation} \label{basicK14eqn}
 \widetilde K_{14}(s_1, s_2, s_3) =
\end{equation}

\begin{center}
\begin{math}
 \frac{1}{5} (-2) \pi  G_3\left(s_1,s_2,s_3\right)+\frac{e^{s_1} \left(s_3 k_5\left(-s_1\right)-e^{s_2} s_2 k_5\left(-s_1-s_2\right)-e^{s_2} s_3 k_5\left(-s_1-s_2\right)+e^{s_2+s_3} s_2 k_5\left(-s_1-s_2-s_3\right)\right)}{2 s_2 s_3 \left(s_2+s_3\right)}+\frac{s_3 k_5\left(s_1\right)-s_2 k_5\left(s_1+s_2\right)-s_3 k_5\left(s_1+s_2\right)+s_2 k_5\left(s_1+s_2+s_3\right)}{2 s_2 s_3 \left(s_2+s_3\right)}-\frac{3}{4} G_2\left(s_1,s_2\right) k_6\left(s_3\right)+\frac{3 G_1\left(s_1\right) \left(k_6\left(s_2\right)-k_6\left(s_2+s_3\right)\right)}{4 s_3}+\frac{3 \left(k_6\left(s_2\right)-k_6\left(s_1+s_2\right)-k_6\left(s_2+s_3\right)+k_6\left(s_1+s_2+s_3\right)\right)}{4 s_1 s_3}+\frac{3 \left(-s_1 k_6\left(s_3\right)+s_1 k_6\left(s_2+s_3\right)+s_2 k_6\left(s_2+s_3\right)-s_2 k_6\left(s_1+s_2+s_3\right)\right)}{4 s_1 s_2 \left(s_1+s_2\right)}+\frac{3 e^{s_2} G_1\left(s_1\right) \left(k_7\left(-s_2\right)-e^{s_3} k_7\left(-s_2-s_3\right)\right)}{4 s_3}+\frac{e^{s_2} \left(-e^{s_1} k_7\left(-s_1-s_2\right)+k_7\left(-s_2\right)-e^{s_3} k_7\left(-s_2-s_3\right)+e^{s_1+s_3} k_7\left(-s_1-s_2-s_3\right)\right)}{4 s_1 s_3}-\frac{3 e^{s_1} \left(s_3 k_7\left(-s_1\right)-e^{s_2} s_2 k_7\left(-s_1-s_2\right)-e^{s_2} s_3 k_7\left(-s_1-s_2\right)+e^{s_2+s_3} s_2 k_7\left(-s_1-s_2-s_3\right)\right)}{2 s_2 s_3 \left(s_2+s_3\right)}-\frac{e^{s_2} \left(e^{s_1} k_7\left(-s_1-s_2\right)-k_7\left(-s_2\right)+e^{s_3} k_7\left(-s_2-s_3\right)-e^{s_1+s_3} k_7\left(-s_1-s_2-s_3\right)\right)}{2 s_1 s_3}+\frac{3 e^{s_3} G_1\left(s_1\right) \left(e^{s_2} k_7\left(-s_2-s_3\right)-k_7\left(-s_3\right)\right)}{4 s_2}-\frac{3}{4} e^{s_3} G_2\left(s_1,s_2\right) k_7\left(-s_3\right)+\frac{3 e^{s_3} \left(e^{s_2} s_1 k_7\left(-s_2-s_3\right)+e^{s_2} s_2 k_7\left(-s_2-s_3\right)-e^{s_1+s_2} s_2 k_7\left(-s_1-s_2-s_3\right)-s_1 k_7\left(-s_3\right)\right)}{4 s_1 s_2 \left(s_1+s_2\right)}+\frac{\left(-1+e^{s_1+s_2+s_3}\right) k_{14}\left(s_1,s_2\right)}{8 \left(s_1+s_2+s_3\right)}+\frac{k_{14}\left(s_1,s_2\right)-k_{14}\left(s_1,s_2+s_3\right)}{8 s_3}+\frac{k_{14}\left(s_1,s_2+s_3\right)-k_{14}\left(s_1+s_2,s_3\right)}{8 s_2}+\frac{1}{8} e^{s_2+s_3} G_1\left(s_1\right) k_{14}\left(-s_2-s_3,s_2\right)+\frac{e^{s_2+s_3} \left(k_{14}\left(-s_2-s_3,s_2\right)-e^{s_1} k_{14}\left(-s_1-s_2-s_3,s_1+s_2\right)\right)}{8 s_1}+\frac{k_{15}\left(s_1,s_2+s_3\right)-k_{15}\left(s_1,s_2\right)}{4 s_3}+\frac{1}{8} G_1\left(s_1\right) k_{15}\left(s_2,s_3\right)+\frac{k_{15}\left(s_2,s_3\right)-k_{15}\left(s_1+s_2,s_3\right)}{8 s_1}+\frac{k_{15}\left(s_1+s_2,s_3\right)-k_{15}\left(s_1,s_2+s_3\right)}{4 s_2}+\frac{e^{s_1} \left(k_{15}\left(s_2,-s_1-s_2\right)-k_{15}\left(s_2+s_3,-s_1-s_2-s_3\right)\right)}{8 s_3}+\frac{e^{s_1+s_2} \left(k_{16}\left(-s_1-s_2,s_1\right)-e^{s_3} k_{16}\left(-s_1-s_2-s_3,s_1\right)\right)}{8 s_3}+\frac{e^{s_1+s_2+s_3} \left(k_{16}\left(-s_1-s_2-s_3,s_1\right)-k_{16}\left(-s_1-s_2-s_3,s_1+s_2\right)\right)}{8 s_2}+\frac{1}{8} e^{s_2} G_1\left(s_1\right) k_{16}\left(s_3,-s_2-s_3\right)+\frac{e^{s_2} \left(k_{16}\left(s_3,-s_2-s_3\right)-e^{s_1} k_{16}\left(s_3,-s_1-s_2-s_3\right)\right)}{8 s_1}+\frac{e^{s_1} \left(e^{s_2} k_{16}\left(s_3,-s_1-s_2-s_3\right)-k_{16}\left(s_2+s_3,-s_1-s_2-s_3\right)\right)}{4 s_2}-\frac{1}{16} k_{20}\left(s_1,s_2,s_3\right)-\frac{1}{16} e^{s_1} k_{20}\left(s_2,s_3,-s_1-s_2-s_3\right)-\frac{1}{16} e^{s_1+s_2+s_3} k_{20}\left(-s_1-s_2-s_3,s_1,s_2\right)-\frac{1}{16} e^{s_1+s_2} k_{20}\left(s_3,-s_1-s_2-s_3,s_1\right)-\frac{3 G_1\left(s_1\right) \left(k_6\left(s_3\right)-k_6\left(s_2+s_3\right)\right)}{4 s_2}-\frac{e^{s_1+s_2+s_3} \left(k_{14}\left(-s_1-s_2-s_3,s_1\right)-k_{14}\left(-s_1-s_2-s_3,s_1+s_2\right)\right)}{4 s_2}-\frac{e^{s_1} \left(e^{s_2} k_{15}\left(s_3,-s_1-s_2-s_3\right)-k_{15}\left(s_2+s_3,-s_1-s_2-s_3\right)\right)}{8 s_2}-\frac{e^{s_1+s_2} \left(k_{14}\left(-s_1-s_2,s_1\right)-e^{s_3} k_{14}\left(-s_1-s_2-s_3,s_1\right)\right)}{4 s_3}-\frac{e^{s_1} \left(k_{16}\left(s_2,-s_1-s_2\right)-k_{16}\left(s_2+s_3,-s_1-s_2-s_3\right)\right)}{4 s_3}-\frac{3 \left(s_3 k_6\left(s_1\right)-s_2 k_6\left(s_1+s_2\right)-s_3 k_6\left(s_1+s_2\right)+s_2 k_6\left(s_1+s_2+s_3\right)\right)}{2 s_2 s_3 \left(s_2+s_3\right)}-\frac{e^{s_1+s_2+s_3} \left(k_{14}\left(s_1,s_2\right)-k_{14}\left(-s_2-s_3,s_2\right)\right)}{8 \left(s_1+s_2+s_3\right)}-\frac{e^{s_1} \left(k_{15}\left(s_2,-s_1-s_2\right)-k_{15}\left(s_2,s_3\right)\right)}{8 \left(s_1+s_2+s_3\right)}-\frac{e^{s_1+s_2} \left(k_{16}\left(-s_1-s_2,s_1\right)-k_{16}\left(s_3,-s_2-s_3\right)\right)}{8 \left(s_1+s_2+s_3\right)}. 
\end{math}
\end{center}

\subsubsection{The function $\widetilde K_{15}$}
\label{basicK15}
We have: 

\begin{equation} \label{basicK15eqn}
\widetilde K_{15}(s_1, s_2, s_3) =
\end{equation}

\begin{center}
\begin{math}
  \frac{1}{5} (-2) \pi  G_3\left(s_1,s_2,s_3\right)+\frac{1}{2} e^{s_3} G_2\left(s_1,s_2\right) k_5\left(-s_3\right)-\frac{e^{s_3} \left(e^{s_2} s_1 k_5\left(-s_2-s_3\right)+e^{s_2} s_2 k_5\left(-s_2-s_3\right)-e^{s_1+s_2} s_2 k_5\left(-s_1-s_2-s_3\right)-s_1 k_5\left(-s_3\right)\right)}{2 s_1 s_2 \left(s_1+s_2\right)}+\frac{1}{2} G_2\left(s_1,s_2\right) k_5\left(s_3\right)+\frac{G_1\left(s_1\right) \left(k_5\left(s_3\right)-k_5\left(s_2+s_3\right)\right)}{2 s_2}+\frac{s_1 k_5\left(s_3\right)-s_1 k_5\left(s_2+s_3\right)-s_2 k_5\left(s_2+s_3\right)+s_2 k_5\left(s_1+s_2+s_3\right)}{2 s_1 s_2 \left(s_1+s_2\right)}-\frac{3}{2} G_2\left(s_1,s_2\right) k_6\left(s_3\right)+\frac{3 G_1\left(s_1\right) \left(k_6\left(s_2\right)-k_6\left(s_2+s_3\right)\right)}{4 s_3}+\frac{3 \left(k_6\left(s_2\right)-k_6\left(s_1+s_2\right)-k_6\left(s_2+s_3\right)+k_6\left(s_1+s_2+s_3\right)\right)}{4 s_1 s_3}+\frac{3 \left(-s_3 k_6\left(s_1\right)+s_2 k_6\left(s_1+s_2\right)+s_3 k_6\left(s_1+s_2\right)-s_2 k_6\left(s_1+s_2+s_3\right)\right)}{4 s_2 s_3 \left(s_2+s_3\right)}+\frac{3 e^{s_2} G_1\left(s_1\right) \left(k_7\left(-s_2\right)-e^{s_3} k_7\left(-s_2-s_3\right)\right)}{4 s_3}+\frac{e^{s_2} \left(-e^{s_1} k_7\left(-s_1-s_2\right)+k_7\left(-s_2\right)-e^{s_3} k_7\left(-s_2-s_3\right)+e^{s_1+s_3} k_7\left(-s_1-s_2-s_3\right)\right)}{2 s_1 s_3}-\frac{3 e^{s_1} \left(s_3 k_7\left(-s_1\right)-e^{s_2} s_2 k_7\left(-s_1-s_2\right)-e^{s_2} s_3 k_7\left(-s_1-s_2\right)+e^{s_2+s_3} s_2 k_7\left(-s_1-s_2-s_3\right)\right)}{4 s_2 s_3 \left(s_2+s_3\right)}-\frac{e^{s_2} \left(e^{s_1} k_7\left(-s_1-s_2\right)-k_7\left(-s_2\right)+e^{s_3} k_7\left(-s_2-s_3\right)-e^{s_1+s_3} k_7\left(-s_1-s_2-s_3\right)\right)}{4 s_1 s_3}+\frac{3 e^{s_3} G_1\left(s_1\right) \left(e^{s_2} k_7\left(-s_2-s_3\right)-k_7\left(-s_3\right)\right)}{2 s_2}-\frac{3}{2} e^{s_3} G_2\left(s_1,s_2\right) k_7\left(-s_3\right)+\frac{3 e^{s_3} \left(e^{s_2} s_1 k_7\left(-s_2-s_3\right)+e^{s_2} s_2 k_7\left(-s_2-s_3\right)-e^{s_1+s_2} s_2 k_7\left(-s_1-s_2-s_3\right)-s_1 k_7\left(-s_3\right)\right)}{2 s_1 s_2 \left(s_1+s_2\right)}+\frac{1}{4} e^{s_2+s_3} G_1\left(s_1\right) k_{14}\left(-s_2-s_3,s_2\right)+\frac{e^{s_2+s_3} \left(k_{14}\left(-s_2-s_3,s_2\right)-e^{s_1} k_{14}\left(-s_1-s_2-s_3,s_1+s_2\right)\right)}{4 s_1}-\frac{1}{8} e^{s_2} G_1\left(s_1\right) k_{14}\left(s_3,-s_2-s_3\right)+\frac{\left(-1+e^{s_1+s_2+s_3}\right) k_{15}\left(s_1,s_2\right)}{8 \left(s_1+s_2+s_3\right)}+\frac{k_{15}\left(s_1,s_2+s_3\right)-k_{15}\left(s_1,s_2\right)}{8 s_3}+\frac{1}{4} G_1\left(s_1\right) k_{15}\left(s_2,s_3\right)+\frac{k_{15}\left(s_2,s_3\right)-k_{15}\left(s_1+s_2,s_3\right)}{4 s_1}+\frac{k_{15}\left(s_1+s_2,s_3\right)-k_{15}\left(s_1,s_2+s_3\right)}{4 s_2}-\frac{1}{8} e^{s_2+s_3} G_1\left(s_1\right) k_{15}\left(-s_2-s_3,s_2\right)+\frac{e^{s_1+s_2+s_3} \left(k_{15}\left(-s_1-s_2-s_3,s_1\right)-k_{15}\left(-s_1-s_2-s_3,s_1+s_2\right)\right)}{8 s_2}-\frac{1}{8} G_1\left(s_1\right) k_{16}\left(s_2,s_3\right)+\frac{k_{16}\left(s_1,s_2+s_3\right)-k_{16}\left(s_1+s_2,s_3\right)}{8 s_2}+\frac{k_{16}\left(s_1+s_2,s_3\right)-k_{16}\left(s_2,s_3\right)}{8 s_1}+\frac{1}{4} e^{s_2} G_1\left(s_1\right) k_{16}\left(s_3,-s_2-s_3\right)+\frac{e^{s_2} \left(k_{16}\left(s_3,-s_2-s_3\right)-e^{s_1} k_{16}\left(s_3,-s_1-s_2-s_3\right)\right)}{4 s_1}+\frac{e^{s_1} \left(e^{s_2} k_{16}\left(s_3,-s_1-s_2-s_3\right)-k_{16}\left(s_2+s_3,-s_1-s_2-s_3\right)\right)}{4 s_2}-\frac{1}{16} k_{20}\left(s_1,s_2,s_3\right)-\frac{1}{16} e^{s_1} k_{20}\left(s_2,s_3,-s_1-s_2-s_3\right)-\frac{1}{16} e^{s_1+s_2+s_3} k_{20}\left(-s_1-s_2-s_3,s_1,s_2\right)-\frac{1}{16} e^{s_1+s_2} k_{20}\left(s_3,-s_1-s_2-s_3,s_1\right)-\frac{e^{s_2} \left(k_{14}\left(s_3,-s_2-s_3\right)-e^{s_1} k_{14}\left(s_3,-s_1-s_2-s_3\right)\right)}{8 s_1}-\frac{e^{s_2+s_3} \left(k_{15}\left(-s_2-s_3,s_2\right)-e^{s_1} k_{15}\left(-s_1-s_2-s_3,s_1+s_2\right)\right)}{8 s_1}-\frac{e^{s_3} G_1\left(s_1\right) \left(e^{s_2} k_5\left(-s_2-s_3\right)-k_5\left(-s_3\right)\right)}{2 s_2}-\frac{3 G_1\left(s_1\right) \left(k_6\left(s_3\right)-k_6\left(s_2+s_3\right)\right)}{2 s_2}-\frac{e^{s_1+s_2+s_3} \left(k_{14}\left(-s_1-s_2-s_3,s_1\right)-k_{14}\left(-s_1-s_2-s_3,s_1+s_2\right)\right)}{4 s_2}-\frac{e^{s_1} \left(e^{s_2} k_{14}\left(s_3,-s_1-s_2-s_3\right)-k_{14}\left(s_2+s_3,-s_1-s_2-s_3\right)\right)}{8 s_2}-\frac{3 \left(s_1 k_6\left(s_3\right)-s_1 k_6\left(s_2+s_3\right)-s_2 k_6\left(s_2+s_3\right)+s_2 k_6\left(s_1+s_2+s_3\right)\right)}{2 s_1 s_2 \left(s_1+s_2\right)}-\frac{e^{s_1+s_2} \left(k_{14}\left(-s_1-s_2,s_1\right)-e^{s_3} k_{14}\left(-s_1-s_2-s_3,s_1\right)\right)}{8 s_3}-\frac{e^{s_1} \left(k_{16}\left(s_2,-s_1-s_2\right)-k_{16}\left(s_2+s_3,-s_1-s_2-s_3\right)\right)}{8 s_3}-\frac{e^{s_1+s_2} \left(k_{14}\left(-s_1-s_2,s_1\right)-k_{14}\left(s_3,-s_2-s_3\right)\right)}{8 \left(s_1+s_2+s_3\right)}-\frac{e^{s_1+s_2+s_3} \left(k_{15}\left(s_1,s_2\right)-k_{15}\left(-s_2-s_3,s_2\right)\right)}{8 \left(s_1+s_2+s_3\right)}-\frac{e^{s_1} \left(k_{16}\left(s_2,-s_1-s_2\right)-k_{16}\left(s_2,s_3\right)\right)}{8 \left(s_1+s_2+s_3\right)}. 
\end{math}
\end{center}

\subsubsection{The function $\widetilde K_{16}$}
\label{basicK16}
We have: 
\begin{equation} \label{basicK16eqn}
 \widetilde K_{16}(s_1, s_2, s_3) =
\end{equation}

\begin{center}
\begin{math}
 \frac{1}{5} (-2) \pi  G_3\left(s_1,s_2,s_3\right)-\frac{e^{s_2} \left(-e^{s_1} k_5\left(-s_1-s_2\right)+k_5\left(-s_2\right)-e^{s_3} k_5\left(-s_2-s_3\right)+e^{s_1+s_3} k_5\left(-s_1-s_2-s_3\right)\right)}{4 s_1 s_3}+\frac{e^{s_2} \left(e^{s_1} k_5\left(-s_1-s_2\right)-k_5\left(-s_2\right)+e^{s_3} k_5\left(-s_2-s_3\right)-e^{s_1+s_3} k_5\left(-s_1-s_2-s_3\right)\right)}{4 s_1 s_3}+\frac{-k_5\left(s_2\right)+k_5\left(s_1+s_2\right)+k_5\left(s_2+s_3\right)-k_5\left(s_1+s_2+s_3\right)}{2 s_1 s_3}-\frac{3}{4} G_2\left(s_1,s_2\right) k_6\left(s_3\right)+\frac{3 G_1\left(s_1\right) \left(k_6\left(s_2\right)-k_6\left(s_2+s_3\right)\right)}{2 s_3}+\frac{3 \left(k_6\left(s_2\right)-k_6\left(s_1+s_2\right)-k_6\left(s_2+s_3\right)+k_6\left(s_1+s_2+s_3\right)\right)}{2 s_1 s_3}+\frac{3 \left(-s_3 k_6\left(s_1\right)+s_2 k_6\left(s_1+s_2\right)+s_3 k_6\left(s_1+s_2\right)-s_2 k_6\left(s_1+s_2+s_3\right)\right)}{4 s_2 s_3 \left(s_2+s_3\right)}+\frac{3 \left(-s_1 k_6\left(s_3\right)+s_1 k_6\left(s_2+s_3\right)+s_2 k_6\left(s_2+s_3\right)-s_2 k_6\left(s_1+s_2+s_3\right)\right)}{4 s_1 s_2 \left(s_1+s_2\right)}+\frac{3 e^{s_2} G_1\left(s_1\right) \left(k_7\left(-s_2\right)-e^{s_3} k_7\left(-s_2-s_3\right)\right)}{2 s_3}+\frac{3 e^{s_2} \left(-e^{s_1} k_7\left(-s_1-s_2\right)+k_7\left(-s_2\right)-e^{s_3} k_7\left(-s_2-s_3\right)+e^{s_1+s_3} k_7\left(-s_1-s_2-s_3\right)\right)}{4 s_1 s_3}-\frac{3 e^{s_1} \left(s_3 k_7\left(-s_1\right)-e^{s_2} s_2 k_7\left(-s_1-s_2\right)-e^{s_2} s_3 k_7\left(-s_1-s_2\right)+e^{s_2+s_3} s_2 k_7\left(-s_1-s_2-s_3\right)\right)}{4 s_2 s_3 \left(s_2+s_3\right)}-\frac{3 e^{s_2} \left(e^{s_1} k_7\left(-s_1-s_2\right)-k_7\left(-s_2\right)+e^{s_3} k_7\left(-s_2-s_3\right)-e^{s_1+s_3} k_7\left(-s_1-s_2-s_3\right)\right)}{4 s_1 s_3}+\frac{3 e^{s_3} G_1\left(s_1\right) \left(e^{s_2} k_7\left(-s_2-s_3\right)-k_7\left(-s_3\right)\right)}{4 s_2}-\frac{3}{4} e^{s_3} G_2\left(s_1,s_2\right) k_7\left(-s_3\right)+\frac{3 e^{s_3} \left(e^{s_2} s_1 k_7\left(-s_2-s_3\right)+e^{s_2} s_2 k_7\left(-s_2-s_3\right)-e^{s_1+s_2} s_2 k_7\left(-s_1-s_2-s_3\right)-s_1 k_7\left(-s_3\right)\right)}{4 s_1 s_2 \left(s_1+s_2\right)}-\frac{1}{8} G_1\left(s_1\right) k_{14}\left(s_2,s_3\right)+\frac{k_{14}\left(s_1+s_2,s_3\right)-k_{14}\left(s_2,s_3\right)}{8 s_1}+\frac{1}{4} e^{s_2+s_3} G_1\left(s_1\right) k_{14}\left(-s_2-s_3,s_2\right)+\frac{e^{s_2+s_3} \left(k_{14}\left(-s_2-s_3,s_2\right)-e^{s_1} k_{14}\left(-s_1-s_2-s_3,s_1+s_2\right)\right)}{4 s_1}+\frac{e^{s_1} \left(k_{14}\left(s_2,-s_1-s_2\right)-k_{14}\left(s_2+s_3,-s_1-s_2-s_3\right)\right)}{8 s_3}+\frac{k_{15}\left(s_1,s_2+s_3\right)-k_{15}\left(s_1,s_2\right)}{4 s_3}+\frac{1}{4} G_1\left(s_1\right) k_{15}\left(s_2,s_3\right)+\frac{k_{15}\left(s_2,s_3\right)-k_{15}\left(s_1+s_2,s_3\right)}{4 s_1}+\frac{k_{15}\left(s_1+s_2,s_3\right)-k_{15}\left(s_1,s_2+s_3\right)}{8 s_2}+\frac{e^{s_1+s_2} \left(k_{15}\left(-s_1-s_2,s_1\right)-e^{s_3} k_{15}\left(-s_1-s_2-s_3,s_1\right)\right)}{8 s_3}-\frac{1}{8} e^{s_2} G_1\left(s_1\right) k_{15}\left(s_3,-s_2-s_3\right)+\frac{\left(-1+e^{s_1+s_2+s_3}\right) k_{16}\left(s_1,s_2\right)}{8 \left(s_1+s_2+s_3\right)}+\frac{k_{16}\left(s_1,s_2\right)-k_{16}\left(s_1,s_2+s_3\right)}{8 s_3}-\frac{1}{8} e^{s_2+s_3} G_1\left(s_1\right) k_{16}\left(-s_2-s_3,s_2\right)+\frac{1}{4} e^{s_2} G_1\left(s_1\right) k_{16}\left(s_3,-s_2-s_3\right)+\frac{e^{s_2} \left(k_{16}\left(s_3,-s_2-s_3\right)-e^{s_1} k_{16}\left(s_3,-s_1-s_2-s_3\right)\right)}{4 s_1}+\frac{e^{s_1} \left(e^{s_2} k_{16}\left(s_3,-s_1-s_2-s_3\right)-k_{16}\left(s_2+s_3,-s_1-s_2-s_3\right)\right)}{8 s_2}-\frac{1}{16} k_{20}\left(s_1,s_2,s_3\right)-\frac{1}{16} e^{s_1} k_{20}\left(s_2,s_3,-s_1-s_2-s_3\right)-\frac{1}{16} e^{s_1+s_2+s_3} k_{20}\left(-s_1-s_2-s_3,s_1,s_2\right)-\frac{1}{16} e^{s_1+s_2} k_{20}\left(s_3,-s_1-s_2-s_3,s_1\right)-\frac{e^{s_2} \left(k_{15}\left(s_3,-s_2-s_3\right)-e^{s_1} k_{15}\left(s_3,-s_1-s_2-s_3\right)\right)}{8 s_1}-\frac{e^{s_2+s_3} \left(k_{16}\left(-s_2-s_3,s_2\right)-e^{s_1} k_{16}\left(-s_1-s_2-s_3,s_1+s_2\right)\right)}{8 s_1}-\frac{3 G_1\left(s_1\right) \left(k_6\left(s_3\right)-k_6\left(s_2+s_3\right)\right)}{4 s_2}-\frac{e^{s_1+s_2+s_3} \left(k_{14}\left(-s_1-s_2-s_3,s_1\right)-k_{14}\left(-s_1-s_2-s_3,s_1+s_2\right)\right)}{8 s_2}-\frac{e^{s_2} G_1\left(s_1\right) \left(k_5\left(-s_2\right)-e^{s_3} k_5\left(-s_2-s_3\right)\right)}{2 s_3}-\frac{G_1\left(s_1\right) \left(k_5\left(s_2\right)-k_5\left(s_2+s_3\right)\right)}{2 s_3}-\frac{e^{s_1+s_2} \left(k_{14}\left(-s_1-s_2,s_1\right)-e^{s_3} k_{14}\left(-s_1-s_2-s_3,s_1\right)\right)}{4 s_3}-\frac{e^{s_1} \left(k_{16}\left(s_2,-s_1-s_2\right)-k_{16}\left(s_2+s_3,-s_1-s_2-s_3\right)\right)}{4 s_3}-\frac{e^{s_1} \left(k_{14}\left(s_2,-s_1-s_2\right)-k_{14}\left(s_2,s_3\right)\right)}{8 \left(s_1+s_2+s_3\right)}-\frac{e^{s_1+s_2} \left(k_{15}\left(-s_1-s_2,s_1\right)-k_{15}\left(s_3,-s_2-s_3\right)\right)}{8 \left(s_1+s_2+s_3\right)}-\frac{e^{s_1+s_2+s_3} \left(k_{16}\left(s_1,s_2\right)-k_{16}\left(-s_2-s_3,s_2\right)\right)}{8 \left(s_1+s_2+s_3\right)}. 
\end{math}
\end{center}

\subsection{Functional relations for $\widetilde K_{18}, \widetilde K_{19}, \widetilde K_{20}$}
Now we present the remaining basic functional relations associated with the four variable functions. 

\subsubsection{The function $\widetilde K_{18}$}
\label{basicK18}
We have: 

\begin{equation} \label{basicK18eqn}
\widetilde K_{18}(s_1, s_2, s_3, s_4) =
\end{equation}
\begin{center}
\begin{math}
 \frac{1}{15} (-4) \pi  G_4\left(s_1,s_2,s_3,s_4\right)+\frac{s_3 k_6\left(s_1\right)}{2 s_2 \left(s_2+s_3\right) \left(s_3+s_4\right) \left(s_2+s_3+s_4\right)}+\frac{s_4 k_6\left(s_1\right)}{2 s_2 \left(s_2+s_3\right) \left(s_3+s_4\right) \left(s_2+s_3+s_4\right)}+\frac{k_6\left(s_1+s_2\right)}{2 s_1 s_3 \left(s_3+s_4\right)}+\frac{G_1\left(s_1\right) k_6\left(s_3\right)}{2 s_2 s_4}+\frac{G_2\left(s_1,s_2\right) k_6\left(s_3\right)}{2 s_4}+\frac{k_6\left(s_3\right)}{2 s_2 \left(s_1+s_2\right) s_4}+\frac{G_1\left(s_1\right) k_6\left(s_2+s_3\right)}{2 s_3 \left(s_3+s_4\right)}+\frac{G_1\left(s_1\right) k_6\left(s_2+s_3\right)}{2 s_4 \left(s_3+s_4\right)}+\frac{k_6\left(s_2+s_3\right)}{2 s_1 s_3 \left(s_3+s_4\right)}+\frac{k_6\left(s_2+s_3\right)}{2 s_1 s_4 \left(s_3+s_4\right)}+\frac{k_6\left(s_1+s_2+s_3\right)}{2 s_1 \left(s_1+s_2\right) s_4}+\frac{k_6\left(s_1+s_2+s_3\right)}{\left(s_2+s_3\right) \left(s_3+s_4\right) \left(s_2+s_3+s_4\right)}+\frac{s_2 k_6\left(s_1+s_2+s_3\right)}{2 s_3 \left(s_2+s_3\right) \left(s_3+s_4\right) \left(s_2+s_3+s_4\right)}+\frac{s_4 k_6\left(s_1+s_2+s_3\right)}{2 s_3 \left(s_2+s_3\right) \left(s_3+s_4\right) \left(s_2+s_3+s_4\right)}+\frac{s_2 k_6\left(s_1+s_2+s_3\right)}{2 \left(s_2+s_3\right) s_4 \left(s_3+s_4\right) \left(s_2+s_3+s_4\right)}+\frac{s_3 k_6\left(s_1+s_2+s_3\right)}{2 \left(s_2+s_3\right) s_4 \left(s_3+s_4\right) \left(s_2+s_3+s_4\right)}-\frac{1}{2} G_3\left(s_1,s_2,s_3\right) k_6\left(s_4\right)+\frac{G_1\left(s_1\right) k_6\left(s_3+s_4\right)}{2 s_2 \left(s_2+s_3\right)}+\frac{G_1\left(s_1\right) k_6\left(s_3+s_4\right)}{2 s_3 \left(s_2+s_3\right)}+\frac{G_2\left(s_1,s_2\right) k_6\left(s_3+s_4\right)}{2 s_3}+\frac{k_6\left(s_3+s_4\right)}{\left(s_1+s_2\right) \left(s_2+s_3\right) \left(s_1+s_2+s_3\right)}+\frac{s_1 k_6\left(s_3+s_4\right)}{2 s_2 \left(s_1+s_2\right) \left(s_2+s_3\right) \left(s_1+s_2+s_3\right)}+\frac{s_3 k_6\left(s_3+s_4\right)}{2 s_2 \left(s_1+s_2\right) \left(s_2+s_3\right) \left(s_1+s_2+s_3\right)}+\frac{s_1 k_6\left(s_3+s_4\right)}{2 \left(s_1+s_2\right) s_3 \left(s_2+s_3\right) \left(s_1+s_2+s_3\right)}+\frac{s_2 k_6\left(s_3+s_4\right)}{2 \left(s_1+s_2\right) s_3 \left(s_2+s_3\right) \left(s_1+s_2+s_3\right)}+\frac{G_1\left(s_1\right) k_6\left(s_2+s_3+s_4\right)}{2 s_2 s_4}+\frac{k_6\left(s_2+s_3+s_4\right)}{2 s_1 \left(s_1+s_2\right) s_4}+\frac{k_6\left(s_2+s_3+s_4\right)}{2 s_2 \left(s_1+s_2\right) s_4}+\frac{s_2 k_6\left(s_1+s_2+s_3+s_4\right)}{2 s_1 \left(s_1+s_2\right) \left(s_2+s_3\right) \left(s_1+s_2+s_3\right)}+\frac{s_3 k_6\left(s_1+s_2+s_3+s_4\right)}{2 s_1 \left(s_1+s_2\right) \left(s_2+s_3\right) \left(s_1+s_2+s_3\right)}+\frac{k_6\left(s_1+s_2+s_3+s_4\right)}{2 s_1 s_4 \left(s_3+s_4\right)}+\frac{e^{s_1} s_3 k_7\left(-s_1\right)}{2 s_2 \left(s_2+s_3\right) \left(s_3+s_4\right) \left(s_2+s_3+s_4\right)}+\frac{e^{s_1} s_4 k_7\left(-s_1\right)}{2 s_2 \left(s_2+s_3\right) \left(s_3+s_4\right) \left(s_2+s_3+s_4\right)}+\frac{e^{s_1+s_2} k_7\left(-s_1-s_2\right)}{2 s_1 s_3 \left(s_3+s_4\right)}+\frac{e^{s_2+s_3} G_1\left(s_1\right) k_7\left(-s_2-s_3\right)}{2 s_3 \left(s_3+s_4\right)}+\frac{e^{s_2+s_3} G_1\left(s_1\right) k_7\left(-s_2-s_3\right)}{2 s_4 \left(s_3+s_4\right)}+\frac{e^{s_2+s_3} k_7\left(-s_2-s_3\right)}{2 s_1 s_3 \left(s_3+s_4\right)}+\frac{e^{s_2+s_3} k_7\left(-s_2-s_3\right)}{2 s_1 s_4 \left(s_3+s_4\right)}+\frac{e^{s_1+s_2+s_3} k_7\left(-s_1-s_2-s_3\right)}{2 s_1 \left(s_1+s_2\right) s_4}+\frac{e^{s_1+s_2+s_3} k_7\left(-s_1-s_2-s_3\right)}{\left(s_2+s_3\right) \left(s_3+s_4\right) \left(s_2+s_3+s_4\right)}+\frac{e^{s_1+s_2+s_3} s_2 k_7\left(-s_1-s_2-s_3\right)}{2 s_3 \left(s_2+s_3\right) \left(s_3+s_4\right) \left(s_2+s_3+s_4\right)}+\frac{e^{s_1+s_2+s_3} s_4 k_7\left(-s_1-s_2-s_3\right)}{2 s_3 \left(s_2+s_3\right) \left(s_3+s_4\right) \left(s_2+s_3+s_4\right)}+\frac{e^{s_1+s_2+s_3} s_2 k_7\left(-s_1-s_2-s_3\right)}{2 \left(s_2+s_3\right) s_4 \left(s_3+s_4\right) \left(s_2+s_3+s_4\right)}+\frac{e^{s_1+s_2+s_3} s_3 k_7\left(-s_1-s_2-s_3\right)}{2 \left(s_2+s_3\right) s_4 \left(s_3+s_4\right) \left(s_2+s_3+s_4\right)}+\frac{e^{s_3} G_1\left(s_1\right) k_7\left(-s_3\right)}{2 s_2 s_4}+\frac{e^{s_3} G_2\left(s_1,s_2\right) k_7\left(-s_3\right)}{2 s_4}+\frac{e^{s_3} k_7\left(-s_3\right)}{2 s_2 \left(s_1+s_2\right) s_4}+\frac{e^{s_3+s_4} G_1\left(s_1\right) k_7\left(-s_3-s_4\right)}{2 s_2 \left(s_2+s_3\right)}+\frac{e^{s_3+s_4} G_1\left(s_1\right) k_7\left(-s_3-s_4\right)}{2 s_3 \left(s_2+s_3\right)}+\frac{e^{s_3+s_4} G_2\left(s_1,s_2\right) k_7\left(-s_3-s_4\right)}{2 s_3}+\frac{e^{s_3+s_4} k_7\left(-s_3-s_4\right)}{\left(s_1+s_2\right) \left(s_2+s_3\right) \left(s_1+s_2+s_3\right)}+\frac{e^{s_3+s_4} s_1 k_7\left(-s_3-s_4\right)}{2 s_2 \left(s_1+s_2\right) \left(s_2+s_3\right) \left(s_1+s_2+s_3\right)}+\frac{e^{s_3+s_4} s_3 k_7\left(-s_3-s_4\right)}{2 s_2 \left(s_1+s_2\right) \left(s_2+s_3\right) \left(s_1+s_2+s_3\right)}+\frac{e^{s_3+s_4} s_1 k_7\left(-s_3-s_4\right)}{2 \left(s_1+s_2\right) s_3 \left(s_2+s_3\right) \left(s_1+s_2+s_3\right)}+\frac{e^{s_3+s_4} s_2 k_7\left(-s_3-s_4\right)}{2 \left(s_1+s_2\right) s_3 \left(s_2+s_3\right) \left(s_1+s_2+s_3\right)}+\frac{e^{s_2+s_3+s_4} G_1\left(s_1\right) k_7\left(-s_2-s_3-s_4\right)}{2 s_2 s_4}+\frac{e^{s_2+s_3+s_4} k_7\left(-s_2-s_3-s_4\right)}{2 s_1 \left(s_1+s_2\right) s_4}+\frac{e^{s_2+s_3+s_4} k_7\left(-s_2-s_3-s_4\right)}{2 s_2 \left(s_1+s_2\right) s_4}+\frac{e^{s_1+s_2+s_3+s_4} s_2 k_7\left(-s_1-s_2-s_3-s_4\right)}{2 s_1 \left(s_1+s_2\right) \left(s_2+s_3\right) \left(s_1+s_2+s_3\right)}+\frac{e^{s_1+s_2+s_3+s_4} s_3 k_7\left(-s_1-s_2-s_3-s_4\right)}{2 s_1 \left(s_1+s_2\right) \left(s_2+s_3\right) \left(s_1+s_2+s_3\right)}+\frac{e^{s_1+s_2+s_3+s_4} k_7\left(-s_1-s_2-s_3-s_4\right)}{2 s_1 s_4 \left(s_3+s_4\right)}-\frac{1}{2} e^{s_4} G_3\left(s_1,s_2,s_3\right) k_7\left(-s_4\right)+\frac{k_8\left(s_1,s_2+s_3\right)}{4 s_2 s_4}+\frac{k_8\left(s_1+s_2,s_3+s_4\right)}{4 s_1 s_3}+\frac{k_8\left(s_1+s_2,s_3+s_4\right)}{4 s_2 s_4}+\frac{G_1\left(s_1\right) k_8\left(s_2+s_3,s_4\right)}{4 s_3}+\frac{k_8\left(s_2+s_3,s_4\right)}{4 s_1 s_3}+\frac{k_9\left(s_1,s_2\right)}{8 s_3 \left(s_3+s_4\right)}+\frac{k_9\left(s_1,s_2+s_3+s_4\right)}{8 s_2 \left(s_2+s_3\right)}+\frac{k_9\left(s_1,s_2+s_3+s_4\right)}{8 s_4 \left(s_3+s_4\right)}+\frac{G_1\left(s_1\right) k_9\left(s_2,s_3+s_4\right)}{8 s_4}+\frac{k_9\left(s_2,s_3+s_4\right)}{8 s_1 s_4}+\frac{k_9\left(s_1+s_2,s_3\right)}{8 s_1 s_4}+\frac{G_1\left(s_1\right) k_9\left(s_3,s_4\right)}{8 s_2}+\frac{1}{8} G_2\left(s_1,s_2\right) k_9\left(s_3,s_4\right)+\frac{k_9\left(s_3,s_4\right)}{8 s_2 \left(s_1+s_2\right)}+\frac{k_9\left(s_1+s_2+s_3,s_4\right)}{8 s_1 \left(s_1+s_2\right)}+\frac{k_9\left(s_1+s_2+s_3,s_4\right)}{8 s_3 \left(s_2+s_3\right)}+\frac{e^{s_1} k_{10}\left(s_2,-s_1-s_2\right)}{8 s_3 \left(s_3+s_4\right)}+\frac{e^{s_1+s_2} k_{10}\left(s_3,-s_1-s_2-s_3\right)}{8 s_1 s_4}+\frac{e^{s_3} G_1\left(s_1\right) k_{10}\left(s_4,-s_3-s_4\right)}{8 s_2}+\frac{1}{8} e^{s_3} G_2\left(s_1,s_2\right) k_{10}\left(s_4,-s_3-s_4\right)+\frac{e^{s_3} k_{10}\left(s_4,-s_3-s_4\right)}{8 s_2 \left(s_1+s_2\right)}+\frac{e^{s_1+s_2+s_3} k_{10}\left(s_4,-s_1-s_2-s_3-s_4\right)}{8 s_1 \left(s_1+s_2\right)}+\frac{e^{s_1+s_2+s_3} k_{10}\left(s_4,-s_1-s_2-s_3-s_4\right)}{8 s_3 \left(s_2+s_3\right)}+\frac{e^{s_2} G_1\left(s_1\right) k_{10}\left(s_3+s_4,-s_2-s_3-s_4\right)}{8 s_4}+\frac{e^{s_2} k_{10}\left(s_3+s_4,-s_2-s_3-s_4\right)}{8 s_1 s_4}+\frac{e^{s_1} k_{10}\left(s_2+s_3+s_4,-s_1-s_2-s_3-s_4\right)}{8 s_2 \left(s_2+s_3\right)}+\frac{e^{s_1} k_{10}\left(s_2+s_3+s_4,-s_1-s_2-s_3-s_4\right)}{8 s_4 \left(s_3+s_4\right)}+\frac{e^{s_1} k_{11}\left(s_2+s_3,-s_1-s_2-s_3\right)}{4 s_2 s_4}+\frac{e^{s_2+s_3} G_1\left(s_1\right) k_{11}\left(s_4,-s_2-s_3-s_4\right)}{4 s_3}+\frac{e^{s_2+s_3} k_{11}\left(s_4,-s_2-s_3-s_4\right)}{4 s_1 s_3}+\frac{e^{s_1+s_2} k_{11}\left(s_3+s_4,-s_1-s_2-s_3-s_4\right)}{4 s_1 s_3}+\frac{e^{s_1+s_2} k_{11}\left(s_3+s_4,-s_1-s_2-s_3-s_4\right)}{4 s_2 s_4}+\frac{e^{s_1+s_2+s_3} k_{12}\left(-s_1-s_2-s_3,s_1\right)}{4 s_2 s_4}+\frac{e^{s_2+s_3+s_4} G_1\left(s_1\right) k_{12}\left(-s_2-s_3-s_4,s_2+s_3\right)}{4 s_3}+\frac{e^{s_2+s_3+s_4} k_{12}\left(-s_2-s_3-s_4,s_2+s_3\right)}{4 s_1 s_3}+\frac{e^{s_1+s_2+s_3+s_4} k_{12}\left(-s_1-s_2-s_3-s_4,s_1+s_2\right)}{4 s_1 s_3}+\frac{e^{s_1+s_2+s_3+s_4} k_{12}\left(-s_1-s_2-s_3-s_4,s_1+s_2\right)}{4 s_2 s_4}+\frac{e^{s_1+s_2} k_{13}\left(-s_1-s_2,s_1\right)}{8 s_3 \left(s_3+s_4\right)}+\frac{e^{s_1+s_2+s_3} k_{13}\left(-s_1-s_2-s_3,s_1+s_2\right)}{8 s_1 s_4}+\frac{e^{s_3+s_4} G_1\left(s_1\right) k_{13}\left(-s_3-s_4,s_3\right)}{8 s_2}+\frac{1}{8} e^{s_3+s_4} G_2\left(s_1,s_2\right) k_{13}\left(-s_3-s_4,s_3\right)+\frac{e^{s_3+s_4} k_{13}\left(-s_3-s_4,s_3\right)}{8 s_2 \left(s_1+s_2\right)}+\frac{e^{s_2+s_3+s_4} G_1\left(s_1\right) k_{13}\left(-s_2-s_3-s_4,s_2\right)}{8 s_4}+\frac{e^{s_2+s_3+s_4} k_{13}\left(-s_2-s_3-s_4,s_2\right)}{8 s_1 s_4}+\frac{e^{s_1+s_2+s_3+s_4} k_{13}\left(-s_1-s_2-s_3-s_4,s_1\right)}{8 s_2 \left(s_2+s_3\right)}+\frac{e^{s_1+s_2+s_3+s_4} k_{13}\left(-s_1-s_2-s_3-s_4,s_1\right)}{8 s_4 \left(s_3+s_4\right)}+\frac{e^{s_1+s_2+s_3+s_4} k_{13}\left(-s_1-s_2-s_3-s_4,s_1+s_2+s_3\right)}{8 s_1 \left(s_1+s_2\right)}+\frac{e^{s_1+s_2+s_3+s_4} k_{13}\left(-s_1-s_2-s_3-s_4,s_1+s_2+s_3\right)}{8 s_3 \left(s_2+s_3\right)}+\frac{k_{17}\left(s_1,s_2+s_3,s_4\right)}{16 s_2}+\frac{e^{s_1} k_{17}\left(s_2,s_3+s_4,-s_1-s_2-s_3-s_4\right)}{16 s_3}+\frac{e^{s_1+s_2+s_3+s_4} k_{17}\left(-s_1-s_2-s_3-s_4,s_1,s_2\right)}{16 s_3}+\frac{e^{s_1+s_2+s_3} k_{17}\left(s_4,-s_1-s_2-s_3-s_4,s_1\right)}{16 s_2}+\frac{k_{18}\left(s_1,s_2,s_3\right)}{16 s_4}+\frac{e^{s_1} k_{18}\left(s_2,s_3,-s_1-s_2-s_3\right)}{16 s_4}-\frac{1}{16} G_1\left(s_1\right) k_{18}\left(s_2,s_3,s_4\right)+\frac{e^{s_1} k_{18}\left(s_2,s_3,s_4\right)}{16 \left(s_1+s_2+s_3+s_4\right)}+\frac{k_{18}\left(s_1+s_2,s_3,s_4\right)}{16 s_1}+\frac{e^{s_1+s_2+s_3} k_{18}\left(-s_1-s_2-s_3,s_1,s_2\right)}{16 s_4}+\frac{e^{s_1+s_2} k_{18}\left(s_3,-s_1-s_2-s_3,s_1\right)}{16 s_4}-\frac{1}{16} e^{s_2} G_1\left(s_1\right) k_{18}\left(s_3,s_4,-s_2-s_3-s_4\right)+\frac{e^{s_1+s_2} k_{18}\left(s_3,s_4,-s_2-s_3-s_4\right)}{16 \left(s_1+s_2+s_3+s_4\right)}+\frac{e^{s_1+s_2} k_{18}\left(s_3,s_4,-s_1-s_2-s_3-s_4\right)}{16 s_1}-\frac{1}{16} e^{s_2+s_3+s_4} G_1\left(s_1\right) k_{18}\left(-s_2-s_3-s_4,s_2,s_3\right)+\frac{e^{s_1+s_2+s_3+s_4} k_{18}\left(-s_2-s_3-s_4,s_2,s_3\right)}{16 \left(s_1+s_2+s_3+s_4\right)}+\frac{e^{s_1+s_2+s_3+s_4} k_{18}\left(-s_1-s_2-s_3-s_4,s_1+s_2,s_3\right)}{16 s_1}-\frac{1}{16} e^{s_2+s_3} G_1\left(s_1\right) k_{18}\left(s_4,-s_2-s_3-s_4,s_2\right)+\frac{e^{s_1+s_2+s_3} k_{18}\left(s_4,-s_2-s_3-s_4,s_2\right)}{16 \left(s_1+s_2+s_3+s_4\right)}+\frac{e^{s_1+s_2+s_3} k_{18}\left(s_4,-s_1-s_2-s_3-s_4,s_1+s_2\right)}{16 s_1}+\frac{k_{19}\left(s_1,s_2,s_3+s_4\right)}{16 s_3}+\frac{e^{s_1} k_{19}\left(s_2+s_3,s_4,-s_1-s_2-s_3-s_4\right)}{16 s_2}+\frac{e^{s_1+s_2+s_3+s_4} k_{19}\left(-s_1-s_2-s_3-s_4,s_1,s_2+s_3\right)}{16 s_2}+\frac{e^{s_1+s_2} k_{19}\left(s_3+s_4,-s_1-s_2-s_3-s_4,s_1\right)}{16 s_3}-\frac{k_{18}\left(s_2,s_3,s_4\right)}{16 s_1}-\frac{e^{s_2} k_{18}\left(s_3,s_4,-s_2-s_3-s_4\right)}{16 s_1}-\frac{e^{s_2+s_3+s_4} k_{18}\left(-s_2-s_3-s_4,s_2,s_3\right)}{16 s_1}-\frac{e^{s_2+s_3} k_{18}\left(s_4,-s_2-s_3-s_4,s_2\right)}{16 s_1}-\frac{G_1\left(s_1\right) k_9\left(s_2+s_3,s_4\right)}{8 s_2}-\frac{e^{s_2+s_3} G_1\left(s_1\right) k_{10}\left(s_4,-s_2-s_3-s_4\right)}{8 s_2}-\frac{e^{s_2+s_3+s_4} G_1\left(s_1\right) k_{13}\left(-s_2-s_3-s_4,s_2+s_3\right)}{8 s_2}-\frac{k_{17}\left(s_1+s_2,s_3,s_4\right)}{16 s_2}-\frac{e^{s_1+s_2+s_3} k_{17}\left(s_4,-s_1-s_2-s_3-s_4,s_1+s_2\right)}{16 s_2}-\frac{e^{s_1+s_2} k_{19}\left(s_3,s_4,-s_1-s_2-s_3-s_4\right)}{16 s_2}-\frac{e^{s_1+s_2+s_3+s_4} k_{19}\left(-s_1-s_2-s_3-s_4,s_1+s_2,s_3\right)}{16 s_2}-\frac{k_9\left(s_2+s_3,s_4\right)}{8 s_1 \left(s_1+s_2\right)}-\frac{e^{s_2+s_3} k_{10}\left(s_4,-s_2-s_3-s_4\right)}{8 s_1 \left(s_1+s_2\right)}-\frac{e^{s_2+s_3+s_4} k_{13}\left(-s_2-s_3-s_4,s_2+s_3\right)}{8 s_1 \left(s_1+s_2\right)}-\frac{k_9\left(s_2+s_3,s_4\right)}{8 s_2 \left(s_1+s_2\right)}-\frac{e^{s_2+s_3} k_{10}\left(s_4,-s_2-s_3-s_4\right)}{8 s_2 \left(s_1+s_2\right)}-\frac{e^{s_2+s_3+s_4} k_{13}\left(-s_2-s_3-s_4,s_2+s_3\right)}{8 s_2 \left(s_1+s_2\right)}-\frac{G_2\left(s_1,s_2\right) k_6\left(s_4\right)}{2 s_3}-\frac{e^{s_4} G_2\left(s_1,s_2\right) k_7\left(-s_4\right)}{2 s_3}-\frac{G_1\left(s_1\right) k_8\left(s_2,s_3+s_4\right)}{4 s_3}-\frac{e^{s_2} G_1\left(s_1\right) k_{11}\left(s_3+s_4,-s_2-s_3-s_4\right)}{4 s_3}-\frac{e^{s_2+s_3+s_4} G_1\left(s_1\right) k_{12}\left(-s_2-s_3-s_4,s_2\right)}{4 s_3}-\frac{e^{s_1} k_{17}\left(s_2+s_3,s_4,-s_1-s_2-s_3-s_4\right)}{16 s_3}-\frac{e^{s_1+s_2+s_3+s_4} k_{17}\left(-s_1-s_2-s_3-s_4,s_1,s_2+s_3\right)}{16 s_3}-\frac{k_{19}\left(s_1,s_2+s_3,s_4\right)}{16 s_3}-\frac{e^{s_1+s_2+s_3} k_{19}\left(s_4,-s_1-s_2-s_3-s_4,s_1\right)}{16 s_3}-\frac{k_8\left(s_2,s_3+s_4\right)}{4 s_1 s_3}-\frac{k_8\left(s_1+s_2+s_3,s_4\right)}{4 s_1 s_3}-\frac{e^{s_1+s_2+s_3} k_{11}\left(s_4,-s_1-s_2-s_3-s_4\right)}{4 s_1 s_3}-\frac{e^{s_2} k_{11}\left(s_3+s_4,-s_2-s_3-s_4\right)}{4 s_1 s_3}-\frac{e^{s_2+s_3+s_4} k_{12}\left(-s_2-s_3-s_4,s_2\right)}{4 s_1 s_3}-\frac{e^{s_1+s_2+s_3+s_4} k_{12}\left(-s_1-s_2-s_3-s_4,s_1+s_2+s_3\right)}{4 s_1 s_3}-\frac{G_1\left(s_1\right) k_6\left(s_2+s_3+s_4\right)}{2 s_2 \left(s_2+s_3\right)}-\frac{e^{s_2+s_3+s_4} G_1\left(s_1\right) k_7\left(-s_2-s_3-s_4\right)}{2 s_2 \left(s_2+s_3\right)}-\frac{k_9\left(s_1+s_2,s_3+s_4\right)}{8 s_2 \left(s_2+s_3\right)}-\frac{e^{s_1+s_2} k_{10}\left(s_3+s_4,-s_1-s_2-s_3-s_4\right)}{8 s_2 \left(s_2+s_3\right)}-\frac{e^{s_1+s_2+s_3+s_4} k_{13}\left(-s_1-s_2-s_3-s_4,s_1+s_2\right)}{8 s_2 \left(s_2+s_3\right)}-\frac{G_1\left(s_1\right) k_6\left(s_4\right)}{2 s_3 \left(s_2+s_3\right)}-\frac{e^{s_4} G_1\left(s_1\right) k_7\left(-s_4\right)}{2 s_3 \left(s_2+s_3\right)}-\frac{k_9\left(s_1+s_2,s_3+s_4\right)}{8 s_3 \left(s_2+s_3\right)}-\frac{e^{s_1+s_2} k_{10}\left(s_3+s_4,-s_1-s_2-s_3-s_4\right)}{8 s_3 \left(s_2+s_3\right)}-\frac{e^{s_1+s_2+s_3+s_4} k_{13}\left(-s_1-s_2-s_3-s_4,s_1+s_2\right)}{8 s_3 \left(s_2+s_3\right)}-\frac{k_6\left(s_2+s_3+s_4\right)}{\left(s_1+s_2\right) \left(s_2+s_3\right) \left(s_1+s_2+s_3\right)}-\frac{e^{s_2+s_3+s_4} k_7\left(-s_2-s_3-s_4\right)}{\left(s_1+s_2\right) \left(s_2+s_3\right) \left(s_1+s_2+s_3\right)}-\frac{s_2 k_6\left(s_2+s_3+s_4\right)}{2 s_1 \left(s_1+s_2\right) \left(s_2+s_3\right) \left(s_1+s_2+s_3\right)}-\frac{s_3 k_6\left(s_2+s_3+s_4\right)}{2 s_1 \left(s_1+s_2\right) \left(s_2+s_3\right) \left(s_1+s_2+s_3\right)}-\frac{e^{s_2+s_3+s_4} s_2 k_7\left(-s_2-s_3-s_4\right)}{2 s_1 \left(s_1+s_2\right) \left(s_2+s_3\right) \left(s_1+s_2+s_3\right)}-\frac{e^{s_2+s_3+s_4} s_3 k_7\left(-s_2-s_3-s_4\right)}{2 s_1 \left(s_1+s_2\right) \left(s_2+s_3\right) \left(s_1+s_2+s_3\right)}-\frac{s_1 k_6\left(s_2+s_3+s_4\right)}{2 s_2 \left(s_1+s_2\right) \left(s_2+s_3\right) \left(s_1+s_2+s_3\right)}-\frac{s_3 k_6\left(s_2+s_3+s_4\right)}{2 s_2 \left(s_1+s_2\right) \left(s_2+s_3\right) \left(s_1+s_2+s_3\right)}-\frac{e^{s_2+s_3+s_4} s_1 k_7\left(-s_2-s_3-s_4\right)}{2 s_2 \left(s_1+s_2\right) \left(s_2+s_3\right) \left(s_1+s_2+s_3\right)}-\frac{e^{s_2+s_3+s_4} s_3 k_7\left(-s_2-s_3-s_4\right)}{2 s_2 \left(s_1+s_2\right) \left(s_2+s_3\right) \left(s_1+s_2+s_3\right)}-\frac{s_1 k_6\left(s_4\right)}{2 \left(s_1+s_2\right) s_3 \left(s_2+s_3\right) \left(s_1+s_2+s_3\right)}-\frac{s_2 k_6\left(s_4\right)}{2 \left(s_1+s_2\right) s_3 \left(s_2+s_3\right) \left(s_1+s_2+s_3\right)}-\frac{e^{s_4} s_1 k_7\left(-s_4\right)}{2 \left(s_1+s_2\right) s_3 \left(s_2+s_3\right) \left(s_1+s_2+s_3\right)}-\frac{e^{s_4} s_2 k_7\left(-s_4\right)}{2 \left(s_1+s_2\right) s_3 \left(s_2+s_3\right) \left(s_1+s_2+s_3\right)}-\frac{G_2\left(s_1,s_2\right) k_6\left(s_3+s_4\right)}{2 s_4}-\frac{e^{s_3+s_4} G_2\left(s_1,s_2\right) k_7\left(-s_3-s_4\right)}{2 s_4}-\frac{G_1\left(s_1\right) k_9\left(s_2,s_3\right)}{8 s_4}-\frac{e^{s_2} G_1\left(s_1\right) k_{10}\left(s_3,-s_2-s_3\right)}{8 s_4}-\frac{e^{s_2+s_3} G_1\left(s_1\right) k_{13}\left(-s_2-s_3,s_2\right)}{8 s_4}-\frac{k_{18}\left(s_1,s_2,s_3+s_4\right)}{16 s_4}-\frac{e^{s_1} k_{18}\left(s_2,s_3+s_4,-s_1-s_2-s_3-s_4\right)}{16 s_4}-\frac{e^{s_1+s_2+s_3+s_4} k_{18}\left(-s_1-s_2-s_3-s_4,s_1,s_2\right)}{16 s_4}-\frac{e^{s_1+s_2} k_{18}\left(s_3+s_4,-s_1-s_2-s_3-s_4,s_1\right)}{16 s_4}-\frac{k_9\left(s_2,s_3\right)}{8 s_1 s_4}-\frac{k_9\left(s_1+s_2,s_3+s_4\right)}{8 s_1 s_4}-\frac{e^{s_2} k_{10}\left(s_3,-s_2-s_3\right)}{8 s_1 s_4}-\frac{e^{s_1+s_2} k_{10}\left(s_3+s_4,-s_1-s_2-s_3-s_4\right)}{8 s_1 s_4}-\frac{e^{s_2+s_3} k_{13}\left(-s_2-s_3,s_2\right)}{8 s_1 s_4}-\frac{e^{s_1+s_2+s_3+s_4} k_{13}\left(-s_1-s_2-s_3-s_4,s_1+s_2\right)}{8 s_1 s_4}-\frac{G_1\left(s_1\right) k_6\left(s_2+s_3\right)}{2 s_2 s_4}-\frac{G_1\left(s_1\right) k_6\left(s_3+s_4\right)}{2 s_2 s_4}-\frac{e^{s_2+s_3} G_1\left(s_1\right) k_7\left(-s_2-s_3\right)}{2 s_2 s_4}-\frac{e^{s_3+s_4} G_1\left(s_1\right) k_7\left(-s_3-s_4\right)}{2 s_2 s_4}-\frac{k_8\left(s_1,s_2+s_3+s_4\right)}{4 s_2 s_4}-\frac{k_8\left(s_1+s_2,s_3\right)}{4 s_2 s_4}-\frac{e^{s_1+s_2} k_{11}\left(s_3,-s_1-s_2-s_3\right)}{4 s_2 s_4}-\frac{e^{s_1} k_{11}\left(s_2+s_3+s_4,-s_1-s_2-s_3-s_4\right)}{4 s_2 s_4}-\frac{e^{s_1+s_2+s_3} k_{12}\left(-s_1-s_2-s_3,s_1+s_2\right)}{4 s_2 s_4}-\frac{e^{s_1+s_2+s_3+s_4} k_{12}\left(-s_1-s_2-s_3-s_4,s_1\right)}{4 s_2 s_4}-\frac{k_6\left(s_2+s_3\right)}{2 s_1 \left(s_1+s_2\right) s_4}-\frac{k_6\left(s_1+s_2+s_3+s_4\right)}{2 s_1 \left(s_1+s_2\right) s_4}-\frac{e^{s_2+s_3} k_7\left(-s_2-s_3\right)}{2 s_1 \left(s_1+s_2\right) s_4}-\frac{e^{s_1+s_2+s_3+s_4} k_7\left(-s_1-s_2-s_3-s_4\right)}{2 s_1 \left(s_1+s_2\right) s_4}-\frac{k_6\left(s_2+s_3\right)}{2 s_2 \left(s_1+s_2\right) s_4}-\frac{k_6\left(s_3+s_4\right)}{2 s_2 \left(s_1+s_2\right) s_4}-\frac{e^{s_2+s_3} k_7\left(-s_2-s_3\right)}{2 s_2 \left(s_1+s_2\right) s_4}-\frac{e^{s_3+s_4} k_7\left(-s_3-s_4\right)}{2 s_2 \left(s_1+s_2\right) s_4}-\frac{G_1\left(s_1\right) k_6\left(s_2\right)}{2 s_3 \left(s_3+s_4\right)}-\frac{e^{s_2} G_1\left(s_1\right) k_7\left(-s_2\right)}{2 s_3 \left(s_3+s_4\right)}-\frac{k_9\left(s_1,s_2+s_3\right)}{8 s_3 \left(s_3+s_4\right)}-\frac{e^{s_1} k_{10}\left(s_2+s_3,-s_1-s_2-s_3\right)}{8 s_3 \left(s_3+s_4\right)}-\frac{e^{s_1+s_2+s_3} k_{13}\left(-s_1-s_2-s_3,s_1\right)}{8 s_3 \left(s_3+s_4\right)}-\frac{k_6\left(s_2\right)}{2 s_1 s_3 \left(s_3+s_4\right)}-\frac{k_6\left(s_1+s_2+s_3\right)}{2 s_1 s_3 \left(s_3+s_4\right)}-\frac{e^{s_2} k_7\left(-s_2\right)}{2 s_1 s_3 \left(s_3+s_4\right)}-\frac{e^{s_1+s_2+s_3} k_7\left(-s_1-s_2-s_3\right)}{2 s_1 s_3 \left(s_3+s_4\right)}-\frac{G_1\left(s_1\right) k_6\left(s_2+s_3+s_4\right)}{2 s_4 \left(s_3+s_4\right)}-\frac{e^{s_2+s_3+s_4} G_1\left(s_1\right) k_7\left(-s_2-s_3-s_4\right)}{2 s_4 \left(s_3+s_4\right)}-\frac{k_9\left(s_1,s_2+s_3\right)}{8 s_4 \left(s_3+s_4\right)}-\frac{e^{s_1} k_{10}\left(s_2+s_3,-s_1-s_2-s_3\right)}{8 s_4 \left(s_3+s_4\right)}-\frac{e^{s_1+s_2+s_3} k_{13}\left(-s_1-s_2-s_3,s_1\right)}{8 s_4 \left(s_3+s_4\right)}-\frac{k_6\left(s_1+s_2+s_3\right)}{2 s_1 s_4 \left(s_3+s_4\right)}-\frac{k_6\left(s_2+s_3+s_4\right)}{2 s_1 s_4 \left(s_3+s_4\right)}-\frac{e^{s_1+s_2+s_3} k_7\left(-s_1-s_2-s_3\right)}{2 s_1 s_4 \left(s_3+s_4\right)}-\frac{e^{s_2+s_3+s_4} k_7\left(-s_2-s_3-s_4\right)}{2 s_1 s_4 \left(s_3+s_4\right)}-\frac{k_6\left(s_1+s_2\right)}{\left(s_2+s_3\right) \left(s_3+s_4\right) \left(s_2+s_3+s_4\right)}-\frac{e^{s_1+s_2} k_7\left(-s_1-s_2\right)}{\left(s_2+s_3\right) \left(s_3+s_4\right) \left(s_2+s_3+s_4\right)}-\frac{s_3 k_6\left(s_1+s_2\right)}{2 s_2 \left(s_2+s_3\right) \left(s_3+s_4\right) \left(s_2+s_3+s_4\right)}-\frac{s_4 k_6\left(s_1+s_2\right)}{2 s_2 \left(s_2+s_3\right) \left(s_3+s_4\right) \left(s_2+s_3+s_4\right)}-\frac{e^{s_1+s_2} s_3 k_7\left(-s_1-s_2\right)}{2 s_2 \left(s_2+s_3\right) \left(s_3+s_4\right) \left(s_2+s_3+s_4\right)}-\frac{e^{s_1+s_2} s_4 k_7\left(-s_1-s_2\right)}{2 s_2 \left(s_2+s_3\right) \left(s_3+s_4\right) \left(s_2+s_3+s_4\right)}-\frac{s_2 k_6\left(s_1+s_2\right)}{2 s_3 \left(s_2+s_3\right) \left(s_3+s_4\right) \left(s_2+s_3+s_4\right)}-\frac{s_4 k_6\left(s_1+s_2\right)}{2 s_3 \left(s_2+s_3\right) \left(s_3+s_4\right) \left(s_2+s_3+s_4\right)}-\frac{e^{s_1+s_2} s_2 k_7\left(-s_1-s_2\right)}{2 s_3 \left(s_2+s_3\right) \left(s_3+s_4\right) \left(s_2+s_3+s_4\right)}-\frac{e^{s_1+s_2} s_4 k_7\left(-s_1-s_2\right)}{2 s_3 \left(s_2+s_3\right) \left(s_3+s_4\right) \left(s_2+s_3+s_4\right)}-\frac{s_2 k_6\left(s_1+s_2+s_3+s_4\right)}{2 \left(s_2+s_3\right) s_4 \left(s_3+s_4\right) \left(s_2+s_3+s_4\right)}-\frac{s_3 k_6\left(s_1+s_2+s_3+s_4\right)}{2 \left(s_2+s_3\right) s_4 \left(s_3+s_4\right) \left(s_2+s_3+s_4\right)}-\frac{e^{s_1+s_2+s_3+s_4} s_2 k_7\left(-s_1-s_2-s_3-s_4\right)}{2 \left(s_2+s_3\right) s_4 \left(s_3+s_4\right) \left(s_2+s_3+s_4\right)}-\frac{e^{s_1+s_2+s_3+s_4} s_3 k_7\left(-s_1-s_2-s_3-s_4\right)}{2 \left(s_2+s_3\right) s_4 \left(s_3+s_4\right) \left(s_2+s_3+s_4\right)}-\frac{k_{18}\left(s_1,s_2,s_3\right)}{16 \left(s_1+s_2+s_3+s_4\right)}-\frac{e^{s_1} k_{18}\left(s_2,s_3,-s_1-s_2-s_3\right)}{16 \left(s_1+s_2+s_3+s_4\right)}-\frac{e^{s_1+s_2+s_3} k_{18}\left(-s_1-s_2-s_3,s_1,s_2\right)}{16 \left(s_1+s_2+s_3+s_4\right)}-\frac{e^{s_1+s_2} k_{18}\left(s_3,-s_1-s_2-s_3,s_1\right)}{16 \left(s_1+s_2+s_3+s_4\right)}. 
\end{math}
\end{center}

\subsubsection{The function $\widetilde K_{19}$}
\label{basicK19}
We have: 
\begin{equation} \label{basicK19eqn}
\widetilde K_{19}(s_1, s_2, s_3, s_4) =
\end{equation}

\begin{center}
\begin{math}
 \frac{1}{15} (-4) \pi  G_4\left(s_1,s_2,s_3,s_4\right)+\frac{s_3 k_6\left(s_1\right)}{2 s_2 \left(s_2+s_3\right) \left(s_3+s_4\right) \left(s_2+s_3+s_4\right)}+\frac{s_4 k_6\left(s_1\right)}{2 s_2 \left(s_2+s_3\right) \left(s_3+s_4\right) \left(s_2+s_3+s_4\right)}+\frac{k_6\left(s_1+s_2\right)}{2 s_1 s_3 \left(s_3+s_4\right)}+\frac{G_1\left(s_1\right) k_6\left(s_3\right)}{2 s_2 s_4}+\frac{G_2\left(s_1,s_2\right) k_6\left(s_3\right)}{2 s_4}+\frac{k_6\left(s_3\right)}{2 s_2 \left(s_1+s_2\right) s_4}+\frac{G_1\left(s_1\right) k_6\left(s_2+s_3\right)}{2 s_3 \left(s_3+s_4\right)}+\frac{G_1\left(s_1\right) k_6\left(s_2+s_3\right)}{2 s_4 \left(s_3+s_4\right)}+\frac{k_6\left(s_2+s_3\right)}{2 s_1 s_3 \left(s_3+s_4\right)}+\frac{k_6\left(s_2+s_3\right)}{2 s_1 s_4 \left(s_3+s_4\right)}+\frac{k_6\left(s_1+s_2+s_3\right)}{2 s_1 \left(s_1+s_2\right) s_4}+\frac{k_6\left(s_1+s_2+s_3\right)}{\left(s_2+s_3\right) \left(s_3+s_4\right) \left(s_2+s_3+s_4\right)}+\frac{s_2 k_6\left(s_1+s_2+s_3\right)}{2 s_3 \left(s_2+s_3\right) \left(s_3+s_4\right) \left(s_2+s_3+s_4\right)}+\frac{s_4 k_6\left(s_1+s_2+s_3\right)}{2 s_3 \left(s_2+s_3\right) \left(s_3+s_4\right) \left(s_2+s_3+s_4\right)}+\frac{s_2 k_6\left(s_1+s_2+s_3\right)}{2 \left(s_2+s_3\right) s_4 \left(s_3+s_4\right) \left(s_2+s_3+s_4\right)}+\frac{s_3 k_6\left(s_1+s_2+s_3\right)}{2 \left(s_2+s_3\right) s_4 \left(s_3+s_4\right) \left(s_2+s_3+s_4\right)}-\frac{1}{2} G_3\left(s_1,s_2,s_3\right) k_6\left(s_4\right)+\frac{G_1\left(s_1\right) k_6\left(s_3+s_4\right)}{2 s_2 \left(s_2+s_3\right)}+\frac{G_1\left(s_1\right) k_6\left(s_3+s_4\right)}{2 s_3 \left(s_2+s_3\right)}+\frac{G_2\left(s_1,s_2\right) k_6\left(s_3+s_4\right)}{2 s_3}+\frac{k_6\left(s_3+s_4\right)}{\left(s_1+s_2\right) \left(s_2+s_3\right) \left(s_1+s_2+s_3\right)}+\frac{s_1 k_6\left(s_3+s_4\right)}{2 s_2 \left(s_1+s_2\right) \left(s_2+s_3\right) \left(s_1+s_2+s_3\right)}+\frac{s_3 k_6\left(s_3+s_4\right)}{2 s_2 \left(s_1+s_2\right) \left(s_2+s_3\right) \left(s_1+s_2+s_3\right)}+\frac{s_1 k_6\left(s_3+s_4\right)}{2 \left(s_1+s_2\right) s_3 \left(s_2+s_3\right) \left(s_1+s_2+s_3\right)}+\frac{s_2 k_6\left(s_3+s_4\right)}{2 \left(s_1+s_2\right) s_3 \left(s_2+s_3\right) \left(s_1+s_2+s_3\right)}+\frac{G_1\left(s_1\right) k_6\left(s_2+s_3+s_4\right)}{2 s_2 s_4}+\frac{k_6\left(s_2+s_3+s_4\right)}{2 s_1 \left(s_1+s_2\right) s_4}+\frac{k_6\left(s_2+s_3+s_4\right)}{2 s_2 \left(s_1+s_2\right) s_4}+\frac{s_2 k_6\left(s_1+s_2+s_3+s_4\right)}{2 s_1 \left(s_1+s_2\right) \left(s_2+s_3\right) \left(s_1+s_2+s_3\right)}+\frac{s_3 k_6\left(s_1+s_2+s_3+s_4\right)}{2 s_1 \left(s_1+s_2\right) \left(s_2+s_3\right) \left(s_1+s_2+s_3\right)}+\frac{k_6\left(s_1+s_2+s_3+s_4\right)}{2 s_1 s_4 \left(s_3+s_4\right)}+\frac{e^{s_1} s_3 k_7\left(-s_1\right)}{2 s_2 \left(s_2+s_3\right) \left(s_3+s_4\right) \left(s_2+s_3+s_4\right)}+\frac{e^{s_1} s_4 k_7\left(-s_1\right)}{2 s_2 \left(s_2+s_3\right) \left(s_3+s_4\right) \left(s_2+s_3+s_4\right)}+\frac{e^{s_1+s_2} k_7\left(-s_1-s_2\right)}{2 s_1 s_3 \left(s_3+s_4\right)}+\frac{e^{s_2+s_3} G_1\left(s_1\right) k_7\left(-s_2-s_3\right)}{2 s_3 \left(s_3+s_4\right)}+\frac{e^{s_2+s_3} G_1\left(s_1\right) k_7\left(-s_2-s_3\right)}{2 s_4 \left(s_3+s_4\right)}+\frac{e^{s_2+s_3} k_7\left(-s_2-s_3\right)}{2 s_1 s_3 \left(s_3+s_4\right)}+\frac{e^{s_2+s_3} k_7\left(-s_2-s_3\right)}{2 s_1 s_4 \left(s_3+s_4\right)}+\frac{e^{s_1+s_2+s_3} k_7\left(-s_1-s_2-s_3\right)}{2 s_1 \left(s_1+s_2\right) s_4}+\frac{e^{s_1+s_2+s_3} k_7\left(-s_1-s_2-s_3\right)}{\left(s_2+s_3\right) \left(s_3+s_4\right) \left(s_2+s_3+s_4\right)}+\frac{e^{s_1+s_2+s_3} s_2 k_7\left(-s_1-s_2-s_3\right)}{2 s_3 \left(s_2+s_3\right) \left(s_3+s_4\right) \left(s_2+s_3+s_4\right)}+\frac{e^{s_1+s_2+s_3} s_4 k_7\left(-s_1-s_2-s_3\right)}{2 s_3 \left(s_2+s_3\right) \left(s_3+s_4\right) \left(s_2+s_3+s_4\right)}+\frac{e^{s_1+s_2+s_3} s_2 k_7\left(-s_1-s_2-s_3\right)}{2 \left(s_2+s_3\right) s_4 \left(s_3+s_4\right) \left(s_2+s_3+s_4\right)}+\frac{e^{s_1+s_2+s_3} s_3 k_7\left(-s_1-s_2-s_3\right)}{2 \left(s_2+s_3\right) s_4 \left(s_3+s_4\right) \left(s_2+s_3+s_4\right)}+\frac{e^{s_3} G_1\left(s_1\right) k_7\left(-s_3\right)}{2 s_2 s_4}+\frac{e^{s_3} G_2\left(s_1,s_2\right) k_7\left(-s_3\right)}{2 s_4}+\frac{e^{s_3} k_7\left(-s_3\right)}{2 s_2 \left(s_1+s_2\right) s_4}+\frac{e^{s_3+s_4} G_1\left(s_1\right) k_7\left(-s_3-s_4\right)}{2 s_2 \left(s_2+s_3\right)}+\frac{e^{s_3+s_4} G_1\left(s_1\right) k_7\left(-s_3-s_4\right)}{2 s_3 \left(s_2+s_3\right)}+\frac{e^{s_3+s_4} G_2\left(s_1,s_2\right) k_7\left(-s_3-s_4\right)}{2 s_3}+\frac{e^{s_3+s_4} k_7\left(-s_3-s_4\right)}{\left(s_1+s_2\right) \left(s_2+s_3\right) \left(s_1+s_2+s_3\right)}+\frac{e^{s_3+s_4} s_1 k_7\left(-s_3-s_4\right)}{2 s_2 \left(s_1+s_2\right) \left(s_2+s_3\right) \left(s_1+s_2+s_3\right)}+\frac{e^{s_3+s_4} s_3 k_7\left(-s_3-s_4\right)}{2 s_2 \left(s_1+s_2\right) \left(s_2+s_3\right) \left(s_1+s_2+s_3\right)}+\frac{e^{s_3+s_4} s_1 k_7\left(-s_3-s_4\right)}{2 \left(s_1+s_2\right) s_3 \left(s_2+s_3\right) \left(s_1+s_2+s_3\right)}+\frac{e^{s_3+s_4} s_2 k_7\left(-s_3-s_4\right)}{2 \left(s_1+s_2\right) s_3 \left(s_2+s_3\right) \left(s_1+s_2+s_3\right)}+\frac{e^{s_2+s_3+s_4} G_1\left(s_1\right) k_7\left(-s_2-s_3-s_4\right)}{2 s_2 s_4}+\frac{e^{s_2+s_3+s_4} k_7\left(-s_2-s_3-s_4\right)}{2 s_1 \left(s_1+s_2\right) s_4}+\frac{e^{s_2+s_3+s_4} k_7\left(-s_2-s_3-s_4\right)}{2 s_2 \left(s_1+s_2\right) s_4}+\frac{e^{s_1+s_2+s_3+s_4} s_2 k_7\left(-s_1-s_2-s_3-s_4\right)}{2 s_1 \left(s_1+s_2\right) \left(s_2+s_3\right) \left(s_1+s_2+s_3\right)}+\frac{e^{s_1+s_2+s_3+s_4} s_3 k_7\left(-s_1-s_2-s_3-s_4\right)}{2 s_1 \left(s_1+s_2\right) \left(s_2+s_3\right) \left(s_1+s_2+s_3\right)}+\frac{e^{s_1+s_2+s_3+s_4} k_7\left(-s_1-s_2-s_3-s_4\right)}{2 s_1 s_4 \left(s_3+s_4\right)}-\frac{1}{2} e^{s_4} G_3\left(s_1,s_2,s_3\right) k_7\left(-s_4\right)+\frac{k_8\left(s_1,s_2+s_3+s_4\right)}{4 s_2 \left(s_2+s_3\right)}+\frac{G_1\left(s_1\right) k_8\left(s_2,s_3+s_4\right)}{4 s_4}+\frac{k_8\left(s_2,s_3+s_4\right)}{4 s_1 s_4}+\frac{k_8\left(s_1+s_2,s_3\right)}{4 s_1 s_4}+\frac{k_8\left(s_1+s_2+s_3,s_4\right)}{4 s_3 \left(s_2+s_3\right)}+\frac{k_9\left(s_1,s_2\right)}{8 s_3 \left(s_3+s_4\right)}+\frac{k_9\left(s_1,s_2+s_3\right)}{8 s_2 s_4}+\frac{k_9\left(s_1,s_2+s_3+s_4\right)}{8 s_4 \left(s_3+s_4\right)}+\frac{k_9\left(s_1+s_2,s_3+s_4\right)}{8 s_1 s_3}+\frac{k_9\left(s_1+s_2,s_3+s_4\right)}{8 s_2 s_4}+\frac{G_1\left(s_1\right) k_9\left(s_3,s_4\right)}{8 s_2}+\frac{1}{8} G_2\left(s_1,s_2\right) k_9\left(s_3,s_4\right)+\frac{k_9\left(s_3,s_4\right)}{8 s_2 \left(s_1+s_2\right)}+\frac{G_1\left(s_1\right) k_9\left(s_2+s_3,s_4\right)}{8 s_3}+\frac{k_9\left(s_2+s_3,s_4\right)}{8 s_1 s_3}+\frac{k_9\left(s_1+s_2+s_3,s_4\right)}{8 s_1 \left(s_1+s_2\right)}+\frac{e^{s_1} k_{10}\left(s_2,-s_1-s_2\right)}{8 s_3 \left(s_3+s_4\right)}+\frac{e^{s_1} k_{10}\left(s_2+s_3,-s_1-s_2-s_3\right)}{8 s_2 s_4}+\frac{e^{s_3} G_1\left(s_1\right) k_{10}\left(s_4,-s_3-s_4\right)}{8 s_2}+\frac{1}{8} e^{s_3} G_2\left(s_1,s_2\right) k_{10}\left(s_4,-s_3-s_4\right)+\frac{e^{s_3} k_{10}\left(s_4,-s_3-s_4\right)}{8 s_2 \left(s_1+s_2\right)}+\frac{e^{s_2+s_3} G_1\left(s_1\right) k_{10}\left(s_4,-s_2-s_3-s_4\right)}{8 s_3}+\frac{e^{s_2+s_3} k_{10}\left(s_4,-s_2-s_3-s_4\right)}{8 s_1 s_3}+\frac{e^{s_1+s_2+s_3} k_{10}\left(s_4,-s_1-s_2-s_3-s_4\right)}{8 s_1 \left(s_1+s_2\right)}+\frac{e^{s_1+s_2} k_{10}\left(s_3+s_4,-s_1-s_2-s_3-s_4\right)}{8 s_1 s_3}+\frac{e^{s_1+s_2} k_{10}\left(s_3+s_4,-s_1-s_2-s_3-s_4\right)}{8 s_2 s_4}+\frac{e^{s_1} k_{10}\left(s_2+s_3+s_4,-s_1-s_2-s_3-s_4\right)}{8 s_4 \left(s_3+s_4\right)}+\frac{e^{s_1+s_2} k_{11}\left(s_3,-s_1-s_2-s_3\right)}{4 s_1 s_4}+\frac{e^{s_1+s_2+s_3} k_{11}\left(s_4,-s_1-s_2-s_3-s_4\right)}{4 s_3 \left(s_2+s_3\right)}+\frac{e^{s_2} G_1\left(s_1\right) k_{11}\left(s_3+s_4,-s_2-s_3-s_4\right)}{4 s_4}+\frac{e^{s_2} k_{11}\left(s_3+s_4,-s_2-s_3-s_4\right)}{4 s_1 s_4}+\frac{e^{s_1} k_{11}\left(s_2+s_3+s_4,-s_1-s_2-s_3-s_4\right)}{4 s_2 \left(s_2+s_3\right)}+\frac{e^{s_1+s_2+s_3} k_{12}\left(-s_1-s_2-s_3,s_1+s_2\right)}{4 s_1 s_4}+\frac{e^{s_2+s_3+s_4} G_1\left(s_1\right) k_{12}\left(-s_2-s_3-s_4,s_2\right)}{4 s_4}+\frac{e^{s_2+s_3+s_4} k_{12}\left(-s_2-s_3-s_4,s_2\right)}{4 s_1 s_4}+\frac{e^{s_1+s_2+s_3+s_4} k_{12}\left(-s_1-s_2-s_3-s_4,s_1\right)}{4 s_2 \left(s_2+s_3\right)}+\frac{e^{s_1+s_2+s_3+s_4} k_{12}\left(-s_1-s_2-s_3-s_4,s_1+s_2+s_3\right)}{4 s_3 \left(s_2+s_3\right)}+\frac{e^{s_1+s_2} k_{13}\left(-s_1-s_2,s_1\right)}{8 s_3 \left(s_3+s_4\right)}+\frac{e^{s_1+s_2+s_3} k_{13}\left(-s_1-s_2-s_3,s_1\right)}{8 s_2 s_4}+\frac{e^{s_3+s_4} G_1\left(s_1\right) k_{13}\left(-s_3-s_4,s_3\right)}{8 s_2}+\frac{1}{8} e^{s_3+s_4} G_2\left(s_1,s_2\right) k_{13}\left(-s_3-s_4,s_3\right)+\frac{e^{s_3+s_4} k_{13}\left(-s_3-s_4,s_3\right)}{8 s_2 \left(s_1+s_2\right)}+\frac{e^{s_2+s_3+s_4} G_1\left(s_1\right) k_{13}\left(-s_2-s_3-s_4,s_2+s_3\right)}{8 s_3}+\frac{e^{s_2+s_3+s_4} k_{13}\left(-s_2-s_3-s_4,s_2+s_3\right)}{8 s_1 s_3}+\frac{e^{s_1+s_2+s_3+s_4} k_{13}\left(-s_1-s_2-s_3-s_4,s_1\right)}{8 s_4 \left(s_3+s_4\right)}+\frac{e^{s_1+s_2+s_3+s_4} k_{13}\left(-s_1-s_2-s_3-s_4,s_1+s_2\right)}{8 s_1 s_3}+\frac{e^{s_1+s_2+s_3+s_4} k_{13}\left(-s_1-s_2-s_3-s_4,s_1+s_2\right)}{8 s_2 s_4}+\frac{e^{s_1+s_2+s_3+s_4} k_{13}\left(-s_1-s_2-s_3-s_4,s_1+s_2+s_3\right)}{8 s_1 \left(s_1+s_2\right)}+\frac{e^{s_1} k_{17}\left(s_2,s_3,-s_1-s_2-s_3\right)}{16 s_4}-\frac{1}{16} G_1\left(s_1\right) k_{17}\left(s_2,s_3,s_4\right)+\frac{e^{s_1} k_{17}\left(s_2,s_3,s_4\right)}{16 \left(s_1+s_2+s_3+s_4\right)}+\frac{k_{17}\left(s_1+s_2,s_3,s_4\right)}{16 s_1}+\frac{e^{s_1+s_2+s_3} k_{17}\left(-s_1-s_2-s_3,s_1,s_2\right)}{16 s_4}-\frac{1}{16} e^{s_2+s_3} G_1\left(s_1\right) k_{17}\left(s_4,-s_2-s_3-s_4,s_2\right)+\frac{e^{s_1+s_2+s_3} k_{17}\left(s_4,-s_2-s_3-s_4,s_2\right)}{16 \left(s_1+s_2+s_3+s_4\right)}+\frac{e^{s_1+s_2+s_3} k_{17}\left(s_4,-s_1-s_2-s_3-s_4,s_1+s_2\right)}{16 s_1}+\frac{k_{18}\left(s_1,s_2,s_3+s_4\right)}{16 s_3}+\frac{k_{18}\left(s_1,s_2+s_3,s_4\right)}{16 s_2}+\frac{e^{s_1} k_{18}\left(s_2,s_3+s_4,-s_1-s_2-s_3-s_4\right)}{16 s_3}+\frac{e^{s_1} k_{18}\left(s_2+s_3,s_4,-s_1-s_2-s_3-s_4\right)}{16 s_2}+\frac{e^{s_1+s_2+s_3+s_4} k_{18}\left(-s_1-s_2-s_3-s_4,s_1,s_2\right)}{16 s_3}+\frac{e^{s_1+s_2+s_3+s_4} k_{18}\left(-s_1-s_2-s_3-s_4,s_1,s_2+s_3\right)}{16 s_2}+\frac{e^{s_1+s_2+s_3} k_{18}\left(s_4,-s_1-s_2-s_3-s_4,s_1\right)}{16 s_2}+\frac{e^{s_1+s_2} k_{18}\left(s_3+s_4,-s_1-s_2-s_3-s_4,s_1\right)}{16 s_3}+\frac{k_{19}\left(s_1,s_2,s_3\right)}{16 s_4}+\frac{e^{s_1+s_2} k_{19}\left(s_3,-s_1-s_2-s_3,s_1\right)}{16 s_4}-\frac{1}{16} e^{s_2} G_1\left(s_1\right) k_{19}\left(s_3,s_4,-s_2-s_3-s_4\right)+\frac{e^{s_1+s_2} k_{19}\left(s_3,s_4,-s_2-s_3-s_4\right)}{16 \left(s_1+s_2+s_3+s_4\right)}+\frac{e^{s_1+s_2} k_{19}\left(s_3,s_4,-s_1-s_2-s_3-s_4\right)}{16 s_1}-\frac{1}{16} e^{s_2+s_3+s_4} G_1\left(s_1\right) k_{19}\left(-s_2-s_3-s_4,s_2,s_3\right)+\frac{e^{s_1+s_2+s_3+s_4} k_{19}\left(-s_2-s_3-s_4,s_2,s_3\right)}{16 \left(s_1+s_2+s_3+s_4\right)}+\frac{e^{s_1+s_2+s_3+s_4} k_{19}\left(-s_1-s_2-s_3-s_4,s_1+s_2,s_3\right)}{16 s_1}-\frac{k_{17}\left(s_2,s_3,s_4\right)}{16 s_1}-\frac{e^{s_2+s_3} k_{17}\left(s_4,-s_2-s_3-s_4,s_2\right)}{16 s_1}-\frac{e^{s_2} k_{19}\left(s_3,s_4,-s_2-s_3-s_4\right)}{16 s_1}-\frac{e^{s_2+s_3+s_4} k_{19}\left(-s_2-s_3-s_4,s_2,s_3\right)}{16 s_1}-\frac{G_1\left(s_1\right) k_9\left(s_2+s_3,s_4\right)}{8 s_2}-\frac{e^{s_2+s_3} G_1\left(s_1\right) k_{10}\left(s_4,-s_2-s_3-s_4\right)}{8 s_2}-\frac{e^{s_2+s_3+s_4} G_1\left(s_1\right) k_{13}\left(-s_2-s_3-s_4,s_2+s_3\right)}{8 s_2}-\frac{k_{18}\left(s_1+s_2,s_3,s_4\right)}{16 s_2}-\frac{e^{s_1+s_2} k_{18}\left(s_3,s_4,-s_1-s_2-s_3-s_4\right)}{16 s_2}-\frac{e^{s_1+s_2+s_3+s_4} k_{18}\left(-s_1-s_2-s_3-s_4,s_1+s_2,s_3\right)}{16 s_2}-\frac{e^{s_1+s_2+s_3} k_{18}\left(s_4,-s_1-s_2-s_3-s_4,s_1+s_2\right)}{16 s_2}-\frac{k_9\left(s_2+s_3,s_4\right)}{8 s_1 \left(s_1+s_2\right)}-\frac{e^{s_2+s_3} k_{10}\left(s_4,-s_2-s_3-s_4\right)}{8 s_1 \left(s_1+s_2\right)}-\frac{e^{s_2+s_3+s_4} k_{13}\left(-s_2-s_3-s_4,s_2+s_3\right)}{8 s_1 \left(s_1+s_2\right)}-\frac{k_9\left(s_2+s_3,s_4\right)}{8 s_2 \left(s_1+s_2\right)}-\frac{e^{s_2+s_3} k_{10}\left(s_4,-s_2-s_3-s_4\right)}{8 s_2 \left(s_1+s_2\right)}-\frac{e^{s_2+s_3+s_4} k_{13}\left(-s_2-s_3-s_4,s_2+s_3\right)}{8 s_2 \left(s_1+s_2\right)}-\frac{G_2\left(s_1,s_2\right) k_6\left(s_4\right)}{2 s_3}-\frac{e^{s_4} G_2\left(s_1,s_2\right) k_7\left(-s_4\right)}{2 s_3}-\frac{G_1\left(s_1\right) k_9\left(s_2,s_3+s_4\right)}{8 s_3}-\frac{e^{s_2} G_1\left(s_1\right) k_{10}\left(s_3+s_4,-s_2-s_3-s_4\right)}{8 s_3}-\frac{e^{s_2+s_3+s_4} G_1\left(s_1\right) k_{13}\left(-s_2-s_3-s_4,s_2\right)}{8 s_3}-\frac{k_{18}\left(s_1,s_2+s_3,s_4\right)}{16 s_3}-\frac{e^{s_1} k_{18}\left(s_2+s_3,s_4,-s_1-s_2-s_3-s_4\right)}{16 s_3}-\frac{e^{s_1+s_2+s_3+s_4} k_{18}\left(-s_1-s_2-s_3-s_4,s_1,s_2+s_3\right)}{16 s_3}-\frac{e^{s_1+s_2+s_3} k_{18}\left(s_4,-s_1-s_2-s_3-s_4,s_1\right)}{16 s_3}-\frac{k_9\left(s_2,s_3+s_4\right)}{8 s_1 s_3}-\frac{k_9\left(s_1+s_2+s_3,s_4\right)}{8 s_1 s_3}-\frac{e^{s_1+s_2+s_3} k_{10}\left(s_4,-s_1-s_2-s_3-s_4\right)}{8 s_1 s_3}-\frac{e^{s_2} k_{10}\left(s_3+s_4,-s_2-s_3-s_4\right)}{8 s_1 s_3}-\frac{e^{s_2+s_3+s_4} k_{13}\left(-s_2-s_3-s_4,s_2\right)}{8 s_1 s_3}-\frac{e^{s_1+s_2+s_3+s_4} k_{13}\left(-s_1-s_2-s_3-s_4,s_1+s_2+s_3\right)}{8 s_1 s_3}-\frac{G_1\left(s_1\right) k_6\left(s_2+s_3+s_4\right)}{2 s_2 \left(s_2+s_3\right)}-\frac{e^{s_2+s_3+s_4} G_1\left(s_1\right) k_7\left(-s_2-s_3-s_4\right)}{2 s_2 \left(s_2+s_3\right)}-\frac{k_8\left(s_1+s_2,s_3+s_4\right)}{4 s_2 \left(s_2+s_3\right)}-\frac{e^{s_1+s_2} k_{11}\left(s_3+s_4,-s_1-s_2-s_3-s_4\right)}{4 s_2 \left(s_2+s_3\right)}-\frac{e^{s_1+s_2+s_3+s_4} k_{12}\left(-s_1-s_2-s_3-s_4,s_1+s_2\right)}{4 s_2 \left(s_2+s_3\right)}-\frac{G_1\left(s_1\right) k_6\left(s_4\right)}{2 s_3 \left(s_2+s_3\right)}-\frac{e^{s_4} G_1\left(s_1\right) k_7\left(-s_4\right)}{2 s_3 \left(s_2+s_3\right)}-\frac{k_8\left(s_1+s_2,s_3+s_4\right)}{4 s_3 \left(s_2+s_3\right)}-\frac{e^{s_1+s_2} k_{11}\left(s_3+s_4,-s_1-s_2-s_3-s_4\right)}{4 s_3 \left(s_2+s_3\right)}-\frac{e^{s_1+s_2+s_3+s_4} k_{12}\left(-s_1-s_2-s_3-s_4,s_1+s_2\right)}{4 s_3 \left(s_2+s_3\right)}-\frac{k_6\left(s_2+s_3+s_4\right)}{\left(s_1+s_2\right) \left(s_2+s_3\right) \left(s_1+s_2+s_3\right)}-\frac{e^{s_2+s_3+s_4} k_7\left(-s_2-s_3-s_4\right)}{\left(s_1+s_2\right) \left(s_2+s_3\right) \left(s_1+s_2+s_3\right)}-\frac{s_2 k_6\left(s_2+s_3+s_4\right)}{2 s_1 \left(s_1+s_2\right) \left(s_2+s_3\right) \left(s_1+s_2+s_3\right)}-\frac{s_3 k_6\left(s_2+s_3+s_4\right)}{2 s_1 \left(s_1+s_2\right) \left(s_2+s_3\right) \left(s_1+s_2+s_3\right)}-\frac{e^{s_2+s_3+s_4} s_2 k_7\left(-s_2-s_3-s_4\right)}{2 s_1 \left(s_1+s_2\right) \left(s_2+s_3\right) \left(s_1+s_2+s_3\right)}-\frac{e^{s_2+s_3+s_4} s_3 k_7\left(-s_2-s_3-s_4\right)}{2 s_1 \left(s_1+s_2\right) \left(s_2+s_3\right) \left(s_1+s_2+s_3\right)}-\frac{s_1 k_6\left(s_2+s_3+s_4\right)}{2 s_2 \left(s_1+s_2\right) \left(s_2+s_3\right) \left(s_1+s_2+s_3\right)}-\frac{s_3 k_6\left(s_2+s_3+s_4\right)}{2 s_2 \left(s_1+s_2\right) \left(s_2+s_3\right) \left(s_1+s_2+s_3\right)}-\frac{e^{s_2+s_3+s_4} s_1 k_7\left(-s_2-s_3-s_4\right)}{2 s_2 \left(s_1+s_2\right) \left(s_2+s_3\right) \left(s_1+s_2+s_3\right)}-\frac{e^{s_2+s_3+s_4} s_3 k_7\left(-s_2-s_3-s_4\right)}{2 s_2 \left(s_1+s_2\right) \left(s_2+s_3\right) \left(s_1+s_2+s_3\right)}-\frac{s_1 k_6\left(s_4\right)}{2 \left(s_1+s_2\right) s_3 \left(s_2+s_3\right) \left(s_1+s_2+s_3\right)}-\frac{s_2 k_6\left(s_4\right)}{2 \left(s_1+s_2\right) s_3 \left(s_2+s_3\right) \left(s_1+s_2+s_3\right)}-\frac{e^{s_4} s_1 k_7\left(-s_4\right)}{2 \left(s_1+s_2\right) s_3 \left(s_2+s_3\right) \left(s_1+s_2+s_3\right)}-\frac{e^{s_4} s_2 k_7\left(-s_4\right)}{2 \left(s_1+s_2\right) s_3 \left(s_2+s_3\right) \left(s_1+s_2+s_3\right)}-\frac{G_2\left(s_1,s_2\right) k_6\left(s_3+s_4\right)}{2 s_4}-\frac{e^{s_3+s_4} G_2\left(s_1,s_2\right) k_7\left(-s_3-s_4\right)}{2 s_4}-\frac{G_1\left(s_1\right) k_8\left(s_2,s_3\right)}{4 s_4}-\frac{e^{s_2} G_1\left(s_1\right) k_{11}\left(s_3,-s_2-s_3\right)}{4 s_4}-\frac{e^{s_2+s_3} G_1\left(s_1\right) k_{12}\left(-s_2-s_3,s_2\right)}{4 s_4}-\frac{e^{s_1} k_{17}\left(s_2,s_3+s_4,-s_1-s_2-s_3-s_4\right)}{16 s_4}-\frac{e^{s_1+s_2+s_3+s_4} k_{17}\left(-s_1-s_2-s_3-s_4,s_1,s_2\right)}{16 s_4}-\frac{k_{19}\left(s_1,s_2,s_3+s_4\right)}{16 s_4}-\frac{e^{s_1+s_2} k_{19}\left(s_3+s_4,-s_1-s_2-s_3-s_4,s_1\right)}{16 s_4}-\frac{k_8\left(s_2,s_3\right)}{4 s_1 s_4}-\frac{k_8\left(s_1+s_2,s_3+s_4\right)}{4 s_1 s_4}-\frac{e^{s_2} k_{11}\left(s_3,-s_2-s_3\right)}{4 s_1 s_4}-\frac{e^{s_1+s_2} k_{11}\left(s_3+s_4,-s_1-s_2-s_3-s_4\right)}{4 s_1 s_4}-\frac{e^{s_2+s_3} k_{12}\left(-s_2-s_3,s_2\right)}{4 s_1 s_4}-\frac{e^{s_1+s_2+s_3+s_4} k_{12}\left(-s_1-s_2-s_3-s_4,s_1+s_2\right)}{4 s_1 s_4}-\frac{G_1\left(s_1\right) k_6\left(s_2+s_3\right)}{2 s_2 s_4}-\frac{G_1\left(s_1\right) k_6\left(s_3+s_4\right)}{2 s_2 s_4}-\frac{e^{s_2+s_3} G_1\left(s_1\right) k_7\left(-s_2-s_3\right)}{2 s_2 s_4}-\frac{e^{s_3+s_4} G_1\left(s_1\right) k_7\left(-s_3-s_4\right)}{2 s_2 s_4}-\frac{k_9\left(s_1,s_2+s_3+s_4\right)}{8 s_2 s_4}-\frac{k_9\left(s_1+s_2,s_3\right)}{8 s_2 s_4}-\frac{e^{s_1+s_2} k_{10}\left(s_3,-s_1-s_2-s_3\right)}{8 s_2 s_4}-\frac{e^{s_1} k_{10}\left(s_2+s_3+s_4,-s_1-s_2-s_3-s_4\right)}{8 s_2 s_4}-\frac{e^{s_1+s_2+s_3} k_{13}\left(-s_1-s_2-s_3,s_1+s_2\right)}{8 s_2 s_4}-\frac{e^{s_1+s_2+s_3+s_4} k_{13}\left(-s_1-s_2-s_3-s_4,s_1\right)}{8 s_2 s_4}-\frac{k_6\left(s_2+s_3\right)}{2 s_1 \left(s_1+s_2\right) s_4}-\frac{k_6\left(s_1+s_2+s_3+s_4\right)}{2 s_1 \left(s_1+s_2\right) s_4}-\frac{e^{s_2+s_3} k_7\left(-s_2-s_3\right)}{2 s_1 \left(s_1+s_2\right) s_4}-\frac{e^{s_1+s_2+s_3+s_4} k_7\left(-s_1-s_2-s_3-s_4\right)}{2 s_1 \left(s_1+s_2\right) s_4}-\frac{k_6\left(s_2+s_3\right)}{2 s_2 \left(s_1+s_2\right) s_4}-\frac{k_6\left(s_3+s_4\right)}{2 s_2 \left(s_1+s_2\right) s_4}-\frac{e^{s_2+s_3} k_7\left(-s_2-s_3\right)}{2 s_2 \left(s_1+s_2\right) s_4}-\frac{e^{s_3+s_4} k_7\left(-s_3-s_4\right)}{2 s_2 \left(s_1+s_2\right) s_4}-\frac{G_1\left(s_1\right) k_6\left(s_2\right)}{2 s_3 \left(s_3+s_4\right)}-\frac{e^{s_2} G_1\left(s_1\right) k_7\left(-s_2\right)}{2 s_3 \left(s_3+s_4\right)}-\frac{k_9\left(s_1,s_2+s_3\right)}{8 s_3 \left(s_3+s_4\right)}-\frac{e^{s_1} k_{10}\left(s_2+s_3,-s_1-s_2-s_3\right)}{8 s_3 \left(s_3+s_4\right)}-\frac{e^{s_1+s_2+s_3} k_{13}\left(-s_1-s_2-s_3,s_1\right)}{8 s_3 \left(s_3+s_4\right)}-\frac{k_6\left(s_2\right)}{2 s_1 s_3 \left(s_3+s_4\right)}-\frac{k_6\left(s_1+s_2+s_3\right)}{2 s_1 s_3 \left(s_3+s_4\right)}-\frac{e^{s_2} k_7\left(-s_2\right)}{2 s_1 s_3 \left(s_3+s_4\right)}-\frac{e^{s_1+s_2+s_3} k_7\left(-s_1-s_2-s_3\right)}{2 s_1 s_3 \left(s_3+s_4\right)}-\frac{G_1\left(s_1\right) k_6\left(s_2+s_3+s_4\right)}{2 s_4 \left(s_3+s_4\right)}-\frac{e^{s_2+s_3+s_4} G_1\left(s_1\right) k_7\left(-s_2-s_3-s_4\right)}{2 s_4 \left(s_3+s_4\right)}-\frac{k_9\left(s_1,s_2+s_3\right)}{8 s_4 \left(s_3+s_4\right)}-\frac{e^{s_1} k_{10}\left(s_2+s_3,-s_1-s_2-s_3\right)}{8 s_4 \left(s_3+s_4\right)}-\frac{e^{s_1+s_2+s_3} k_{13}\left(-s_1-s_2-s_3,s_1\right)}{8 s_4 \left(s_3+s_4\right)}-\frac{k_6\left(s_1+s_2+s_3\right)}{2 s_1 s_4 \left(s_3+s_4\right)}-\frac{k_6\left(s_2+s_3+s_4\right)}{2 s_1 s_4 \left(s_3+s_4\right)}-\frac{e^{s_1+s_2+s_3} k_7\left(-s_1-s_2-s_3\right)}{2 s_1 s_4 \left(s_3+s_4\right)}-\frac{e^{s_2+s_3+s_4} k_7\left(-s_2-s_3-s_4\right)}{2 s_1 s_4 \left(s_3+s_4\right)}-\frac{k_6\left(s_1+s_2\right)}{\left(s_2+s_3\right) \left(s_3+s_4\right) \left(s_2+s_3+s_4\right)}-\frac{e^{s_1+s_2} k_7\left(-s_1-s_2\right)}{\left(s_2+s_3\right) \left(s_3+s_4\right) \left(s_2+s_3+s_4\right)}-\frac{s_3 k_6\left(s_1+s_2\right)}{2 s_2 \left(s_2+s_3\right) \left(s_3+s_4\right) \left(s_2+s_3+s_4\right)}-\frac{s_4 k_6\left(s_1+s_2\right)}{2 s_2 \left(s_2+s_3\right) \left(s_3+s_4\right) \left(s_2+s_3+s_4\right)}-\frac{e^{s_1+s_2} s_3 k_7\left(-s_1-s_2\right)}{2 s_2 \left(s_2+s_3\right) \left(s_3+s_4\right) \left(s_2+s_3+s_4\right)}-\frac{e^{s_1+s_2} s_4 k_7\left(-s_1-s_2\right)}{2 s_2 \left(s_2+s_3\right) \left(s_3+s_4\right) \left(s_2+s_3+s_4\right)}-\frac{s_2 k_6\left(s_1+s_2\right)}{2 s_3 \left(s_2+s_3\right) \left(s_3+s_4\right) \left(s_2+s_3+s_4\right)}-\frac{s_4 k_6\left(s_1+s_2\right)}{2 s_3 \left(s_2+s_3\right) \left(s_3+s_4\right) \left(s_2+s_3+s_4\right)}-\frac{e^{s_1+s_2} s_2 k_7\left(-s_1-s_2\right)}{2 s_3 \left(s_2+s_3\right) \left(s_3+s_4\right) \left(s_2+s_3+s_4\right)}-\frac{e^{s_1+s_2} s_4 k_7\left(-s_1-s_2\right)}{2 s_3 \left(s_2+s_3\right) \left(s_3+s_4\right) \left(s_2+s_3+s_4\right)}-\frac{s_2 k_6\left(s_1+s_2+s_3+s_4\right)}{2 \left(s_2+s_3\right) s_4 \left(s_3+s_4\right) \left(s_2+s_3+s_4\right)}-\frac{s_3 k_6\left(s_1+s_2+s_3+s_4\right)}{2 \left(s_2+s_3\right) s_4 \left(s_3+s_4\right) \left(s_2+s_3+s_4\right)}-\frac{e^{s_1+s_2+s_3+s_4} s_2 k_7\left(-s_1-s_2-s_3-s_4\right)}{2 \left(s_2+s_3\right) s_4 \left(s_3+s_4\right) \left(s_2+s_3+s_4\right)}-\frac{e^{s_1+s_2+s_3+s_4} s_3 k_7\left(-s_1-s_2-s_3-s_4\right)}{2 \left(s_2+s_3\right) s_4 \left(s_3+s_4\right) \left(s_2+s_3+s_4\right)}-\frac{e^{s_1} k_{17}\left(s_2,s_3,-s_1-s_2-s_3\right)}{16 \left(s_1+s_2+s_3+s_4\right)}-\frac{e^{s_1+s_2+s_3} k_{17}\left(-s_1-s_2-s_3,s_1,s_2\right)}{16 \left(s_1+s_2+s_3+s_4\right)}-\frac{k_{19}\left(s_1,s_2,s_3\right)}{16 \left(s_1+s_2+s_3+s_4\right)}-\frac{e^{s_1+s_2} k_{19}\left(s_3,-s_1-s_2-s_3,s_1\right)}{16 \left(s_1+s_2+s_3+s_4\right)}. 
\end{math}
\end{center}

\subsubsection{The function $\widetilde K_{20}$}
\label{basicK20}
Finally, for the last four variable function we have:
\begin{equation} \label{basicK20eqn}
\widetilde K_{20}(s_1, s_2, s_3, s_4) =
\end{equation}
\begin{center}
\begin{math}
 \frac{1}{5} (-4) \pi  G_4\left(s_1,s_2,s_3,s_4\right)+\frac{3 s_3 k_6\left(s_1\right)}{2 s_2 \left(s_2+s_3\right) \left(s_3+s_4\right) \left(s_2+s_3+s_4\right)}+\frac{3 s_4 k_6\left(s_1\right)}{2 s_2 \left(s_2+s_3\right) \left(s_3+s_4\right) \left(s_2+s_3+s_4\right)}+\frac{3 k_6\left(s_1+s_2\right)}{2 s_1 s_3 \left(s_3+s_4\right)}+\frac{3 G_1\left(s_1\right) k_6\left(s_3\right)}{2 s_2 s_4}+\frac{3 G_2\left(s_1,s_2\right) k_6\left(s_3\right)}{2 s_4}+\frac{3 k_6\left(s_3\right)}{2 s_2 \left(s_1+s_2\right) s_4}+\frac{3 G_1\left(s_1\right) k_6\left(s_2+s_3\right)}{2 s_3 \left(s_3+s_4\right)}+\frac{3 G_1\left(s_1\right) k_6\left(s_2+s_3\right)}{2 s_4 \left(s_3+s_4\right)}+\frac{3 k_6\left(s_2+s_3\right)}{2 s_1 s_3 \left(s_3+s_4\right)}+\frac{3 k_6\left(s_2+s_3\right)}{2 s_1 s_4 \left(s_3+s_4\right)}+\frac{3 k_6\left(s_1+s_2+s_3\right)}{2 s_1 \left(s_1+s_2\right) s_4}+\frac{3 k_6\left(s_1+s_2+s_3\right)}{\left(s_2+s_3\right) \left(s_3+s_4\right) \left(s_2+s_3+s_4\right)}+\frac{3 s_2 k_6\left(s_1+s_2+s_3\right)}{2 s_3 \left(s_2+s_3\right) \left(s_3+s_4\right) \left(s_2+s_3+s_4\right)}+\frac{3 s_4 k_6\left(s_1+s_2+s_3\right)}{2 s_3 \left(s_2+s_3\right) \left(s_3+s_4\right) \left(s_2+s_3+s_4\right)}+\frac{3 s_2 k_6\left(s_1+s_2+s_3\right)}{2 \left(s_2+s_3\right) s_4 \left(s_3+s_4\right) \left(s_2+s_3+s_4\right)}+\frac{3 s_3 k_6\left(s_1+s_2+s_3\right)}{2 \left(s_2+s_3\right) s_4 \left(s_3+s_4\right) \left(s_2+s_3+s_4\right)}-\frac{3}{2} G_3\left(s_1,s_2,s_3\right) k_6\left(s_4\right)+\frac{3 G_1\left(s_1\right) k_6\left(s_3+s_4\right)}{2 s_2 \left(s_2+s_3\right)}+\frac{3 G_1\left(s_1\right) k_6\left(s_3+s_4\right)}{2 s_3 \left(s_2+s_3\right)}+\frac{3 G_2\left(s_1,s_2\right) k_6\left(s_3+s_4\right)}{2 s_3}+\frac{3 k_6\left(s_3+s_4\right)}{\left(s_1+s_2\right) \left(s_2+s_3\right) \left(s_1+s_2+s_3\right)}+\frac{3 s_1 k_6\left(s_3+s_4\right)}{2 s_2 \left(s_1+s_2\right) \left(s_2+s_3\right) \left(s_1+s_2+s_3\right)}+\frac{3 s_3 k_6\left(s_3+s_4\right)}{2 s_2 \left(s_1+s_2\right) \left(s_2+s_3\right) \left(s_1+s_2+s_3\right)}+\frac{3 s_1 k_6\left(s_3+s_4\right)}{2 \left(s_1+s_2\right) s_3 \left(s_2+s_3\right) \left(s_1+s_2+s_3\right)}+\frac{3 s_2 k_6\left(s_3+s_4\right)}{2 \left(s_1+s_2\right) s_3 \left(s_2+s_3\right) \left(s_1+s_2+s_3\right)}+\frac{3 G_1\left(s_1\right) k_6\left(s_2+s_3+s_4\right)}{2 s_2 s_4}+\frac{3 k_6\left(s_2+s_3+s_4\right)}{2 s_1 \left(s_1+s_2\right) s_4}+\frac{3 k_6\left(s_2+s_3+s_4\right)}{2 s_2 \left(s_1+s_2\right) s_4}+\frac{3 s_2 k_6\left(s_1+s_2+s_3+s_4\right)}{2 s_1 \left(s_1+s_2\right) \left(s_2+s_3\right) \left(s_1+s_2+s_3\right)}+\frac{3 s_3 k_6\left(s_1+s_2+s_3+s_4\right)}{2 s_1 \left(s_1+s_2\right) \left(s_2+s_3\right) \left(s_1+s_2+s_3\right)}+\frac{3 k_6\left(s_1+s_2+s_3+s_4\right)}{2 s_1 s_4 \left(s_3+s_4\right)}+\frac{3 e^{s_1} s_3 k_7\left(-s_1\right)}{2 s_2 \left(s_2+s_3\right) \left(s_3+s_4\right) \left(s_2+s_3+s_4\right)}+\frac{3 e^{s_1} s_4 k_7\left(-s_1\right)}{2 s_2 \left(s_2+s_3\right) \left(s_3+s_4\right) \left(s_2+s_3+s_4\right)}+\frac{3 e^{s_1+s_2} k_7\left(-s_1-s_2\right)}{2 s_1 s_3 \left(s_3+s_4\right)}+\frac{3 e^{s_2+s_3} G_1\left(s_1\right) k_7\left(-s_2-s_3\right)}{2 s_3 \left(s_3+s_4\right)}+\frac{3 e^{s_2+s_3} G_1\left(s_1\right) k_7\left(-s_2-s_3\right)}{2 s_4 \left(s_3+s_4\right)}+\frac{3 e^{s_2+s_3} k_7\left(-s_2-s_3\right)}{2 s_1 s_3 \left(s_3+s_4\right)}+\frac{3 e^{s_2+s_3} k_7\left(-s_2-s_3\right)}{2 s_1 s_4 \left(s_3+s_4\right)}+\frac{3 e^{s_1+s_2+s_3} k_7\left(-s_1-s_2-s_3\right)}{2 s_1 \left(s_1+s_2\right) s_4}+\frac{3 e^{s_1+s_2+s_3} k_7\left(-s_1-s_2-s_3\right)}{\left(s_2+s_3\right) \left(s_3+s_4\right) \left(s_2+s_3+s_4\right)}+\frac{3 e^{s_1+s_2+s_3} s_2 k_7\left(-s_1-s_2-s_3\right)}{2 s_3 \left(s_2+s_3\right) \left(s_3+s_4\right) \left(s_2+s_3+s_4\right)}+\frac{3 e^{s_1+s_2+s_3} s_4 k_7\left(-s_1-s_2-s_3\right)}{2 s_3 \left(s_2+s_3\right) \left(s_3+s_4\right) \left(s_2+s_3+s_4\right)}+\frac{3 e^{s_1+s_2+s_3} s_2 k_7\left(-s_1-s_2-s_3\right)}{2 \left(s_2+s_3\right) s_4 \left(s_3+s_4\right) \left(s_2+s_3+s_4\right)}+\frac{3 e^{s_1+s_2+s_3} s_3 k_7\left(-s_1-s_2-s_3\right)}{2 \left(s_2+s_3\right) s_4 \left(s_3+s_4\right) \left(s_2+s_3+s_4\right)}+\frac{3 e^{s_3} G_1\left(s_1\right) k_7\left(-s_3\right)}{2 s_2 s_4}+\frac{3 e^{s_3} G_2\left(s_1,s_2\right) k_7\left(-s_3\right)}{2 s_4}+\frac{3 e^{s_3} k_7\left(-s_3\right)}{2 s_2 \left(s_1+s_2\right) s_4}+\frac{3 e^{s_3+s_4} G_1\left(s_1\right) k_7\left(-s_3-s_4\right)}{2 s_2 \left(s_2+s_3\right)}+\frac{3 e^{s_3+s_4} G_1\left(s_1\right) k_7\left(-s_3-s_4\right)}{2 s_3 \left(s_2+s_3\right)}+\frac{3 e^{s_3+s_4} G_2\left(s_1,s_2\right) k_7\left(-s_3-s_4\right)}{2 s_3}+\frac{3 e^{s_3+s_4} k_7\left(-s_3-s_4\right)}{\left(s_1+s_2\right) \left(s_2+s_3\right) \left(s_1+s_2+s_3\right)}+\frac{3 e^{s_3+s_4} s_1 k_7\left(-s_3-s_4\right)}{2 s_2 \left(s_1+s_2\right) \left(s_2+s_3\right) \left(s_1+s_2+s_3\right)}+\frac{3 e^{s_3+s_4} s_3 k_7\left(-s_3-s_4\right)}{2 s_2 \left(s_1+s_2\right) \left(s_2+s_3\right) \left(s_1+s_2+s_3\right)}+\frac{3 e^{s_3+s_4} s_1 k_7\left(-s_3-s_4\right)}{2 \left(s_1+s_2\right) s_3 \left(s_2+s_3\right) \left(s_1+s_2+s_3\right)}+\frac{3 e^{s_3+s_4} s_2 k_7\left(-s_3-s_4\right)}{2 \left(s_1+s_2\right) s_3 \left(s_2+s_3\right) \left(s_1+s_2+s_3\right)}+\frac{3 e^{s_2+s_3+s_4} G_1\left(s_1\right) k_7\left(-s_2-s_3-s_4\right)}{2 s_2 s_4}+\frac{3 e^{s_2+s_3+s_4} k_7\left(-s_2-s_3-s_4\right)}{2 s_1 \left(s_1+s_2\right) s_4}+\frac{3 e^{s_2+s_3+s_4} k_7\left(-s_2-s_3-s_4\right)}{2 s_2 \left(s_1+s_2\right) s_4}+\frac{3 e^{s_1+s_2+s_3+s_4} s_2 k_7\left(-s_1-s_2-s_3-s_4\right)}{2 s_1 \left(s_1+s_2\right) \left(s_2+s_3\right) \left(s_1+s_2+s_3\right)}+\frac{3 e^{s_1+s_2+s_3+s_4} s_3 k_7\left(-s_1-s_2-s_3-s_4\right)}{2 s_1 \left(s_1+s_2\right) \left(s_2+s_3\right) \left(s_1+s_2+s_3\right)}+\frac{3 e^{s_1+s_2+s_3+s_4} k_7\left(-s_1-s_2-s_3-s_4\right)}{2 s_1 s_4 \left(s_3+s_4\right)}-\frac{3}{2} e^{s_4} G_3\left(s_1,s_2,s_3\right) k_7\left(-s_4\right)+\frac{e^{s_1+s_2} k_{14}\left(-s_1-s_2,s_1\right)}{4 s_3 \left(s_3+s_4\right)}+\frac{e^{s_1+s_2+s_3} k_{14}\left(-s_1-s_2-s_3,s_1\right)}{4 s_2 s_4}+\frac{e^{s_1+s_2+s_3} k_{14}\left(-s_1-s_2-s_3,s_1+s_2\right)}{4 s_1 s_4}+\frac{e^{s_3+s_4} G_1\left(s_1\right) k_{14}\left(-s_3-s_4,s_3\right)}{4 s_2}+\frac{1}{4} e^{s_3+s_4} G_2\left(s_1,s_2\right) k_{14}\left(-s_3-s_4,s_3\right)+\frac{e^{s_3+s_4} k_{14}\left(-s_3-s_4,s_3\right)}{4 s_2 \left(s_1+s_2\right)}+\frac{e^{s_2+s_3+s_4} G_1\left(s_1\right) k_{14}\left(-s_2-s_3-s_4,s_2\right)}{4 s_4}+\frac{e^{s_2+s_3+s_4} k_{14}\left(-s_2-s_3-s_4,s_2\right)}{4 s_1 s_4}+\frac{e^{s_2+s_3+s_4} G_1\left(s_1\right) k_{14}\left(-s_2-s_3-s_4,s_2+s_3\right)}{4 s_3}+\frac{e^{s_2+s_3+s_4} k_{14}\left(-s_2-s_3-s_4,s_2+s_3\right)}{4 s_1 s_3}+\frac{e^{s_1+s_2+s_3+s_4} k_{14}\left(-s_1-s_2-s_3-s_4,s_1\right)}{4 s_2 \left(s_2+s_3\right)}+\frac{e^{s_1+s_2+s_3+s_4} k_{14}\left(-s_1-s_2-s_3-s_4,s_1\right)}{4 s_4 \left(s_3+s_4\right)}+\frac{e^{s_1+s_2+s_3+s_4} k_{14}\left(-s_1-s_2-s_3-s_4,s_1+s_2\right)}{4 s_1 s_3}+\frac{e^{s_1+s_2+s_3+s_4} k_{14}\left(-s_1-s_2-s_3-s_4,s_1+s_2\right)}{4 s_2 s_4}+\frac{e^{s_1+s_2+s_3+s_4} k_{14}\left(-s_1-s_2-s_3-s_4,s_1+s_2+s_3\right)}{4 s_1 \left(s_1+s_2\right)}+\frac{e^{s_1+s_2+s_3+s_4} k_{14}\left(-s_1-s_2-s_3-s_4,s_1+s_2+s_3\right)}{4 s_3 \left(s_2+s_3\right)}+\frac{k_{15}\left(s_1,s_2\right)}{4 s_3 \left(s_3+s_4\right)}+\frac{k_{15}\left(s_1,s_2+s_3\right)}{4 s_2 s_4}+\frac{k_{15}\left(s_1,s_2+s_3+s_4\right)}{4 s_2 \left(s_2+s_3\right)}+\frac{k_{15}\left(s_1,s_2+s_3+s_4\right)}{4 s_4 \left(s_3+s_4\right)}+\frac{G_1\left(s_1\right) k_{15}\left(s_2,s_3+s_4\right)}{4 s_4}+\frac{k_{15}\left(s_2,s_3+s_4\right)}{4 s_1 s_4}+\frac{k_{15}\left(s_1+s_2,s_3\right)}{4 s_1 s_4}+\frac{k_{15}\left(s_1+s_2,s_3+s_4\right)}{4 s_1 s_3}+\frac{k_{15}\left(s_1+s_2,s_3+s_4\right)}{4 s_2 s_4}+\frac{G_1\left(s_1\right) k_{15}\left(s_3,s_4\right)}{4 s_2}+\frac{1}{4} G_2\left(s_1,s_2\right) k_{15}\left(s_3,s_4\right)+\frac{k_{15}\left(s_3,s_4\right)}{4 s_2 \left(s_1+s_2\right)}+\frac{G_1\left(s_1\right) k_{15}\left(s_2+s_3,s_4\right)}{4 s_3}+\frac{k_{15}\left(s_2+s_3,s_4\right)}{4 s_1 s_3}+\frac{k_{15}\left(s_1+s_2+s_3,s_4\right)}{4 s_1 \left(s_1+s_2\right)}+\frac{k_{15}\left(s_1+s_2+s_3,s_4\right)}{4 s_3 \left(s_2+s_3\right)}+\frac{e^{s_1} k_{16}\left(s_2,-s_1-s_2\right)}{4 s_3 \left(s_3+s_4\right)}+\frac{e^{s_1+s_2} k_{16}\left(s_3,-s_1-s_2-s_3\right)}{4 s_1 s_4}+\frac{e^{s_1} k_{16}\left(s_2+s_3,-s_1-s_2-s_3\right)}{4 s_2 s_4}+\frac{e^{s_3} G_1\left(s_1\right) k_{16}\left(s_4,-s_3-s_4\right)}{4 s_2}+\frac{1}{4} e^{s_3} G_2\left(s_1,s_2\right) k_{16}\left(s_4,-s_3-s_4\right)+\frac{e^{s_3} k_{16}\left(s_4,-s_3-s_4\right)}{4 s_2 \left(s_1+s_2\right)}+\frac{e^{s_2+s_3} G_1\left(s_1\right) k_{16}\left(s_4,-s_2-s_3-s_4\right)}{4 s_3}+\frac{e^{s_2+s_3} k_{16}\left(s_4,-s_2-s_3-s_4\right)}{4 s_1 s_3}+\frac{e^{s_1+s_2+s_3} k_{16}\left(s_4,-s_1-s_2-s_3-s_4\right)}{4 s_1 \left(s_1+s_2\right)}+\frac{e^{s_1+s_2+s_3} k_{16}\left(s_4,-s_1-s_2-s_3-s_4\right)}{4 s_3 \left(s_2+s_3\right)}+\frac{e^{s_2} G_1\left(s_1\right) k_{16}\left(s_3+s_4,-s_2-s_3-s_4\right)}{4 s_4}+\frac{e^{s_2} k_{16}\left(s_3+s_4,-s_2-s_3-s_4\right)}{4 s_1 s_4}+\frac{e^{s_1+s_2} k_{16}\left(s_3+s_4,-s_1-s_2-s_3-s_4\right)}{4 s_1 s_3}+\frac{e^{s_1+s_2} k_{16}\left(s_3+s_4,-s_1-s_2-s_3-s_4\right)}{4 s_2 s_4}+\frac{e^{s_1} k_{16}\left(s_2+s_3+s_4,-s_1-s_2-s_3-s_4\right)}{4 s_2 \left(s_2+s_3\right)}+\frac{e^{s_1} k_{16}\left(s_2+s_3+s_4,-s_1-s_2-s_3-s_4\right)}{4 s_4 \left(s_3+s_4\right)}+\frac{k_{20}\left(s_1,s_2,s_3\right)}{16 s_4}+\frac{k_{20}\left(s_1,s_2,s_3+s_4\right)}{16 s_3}+\frac{k_{20}\left(s_1,s_2+s_3,s_4\right)}{16 s_2}+\frac{e^{s_1} k_{20}\left(s_2,s_3,-s_1-s_2-s_3\right)}{16 s_4}-\frac{1}{16} G_1\left(s_1\right) k_{20}\left(s_2,s_3,s_4\right)+\frac{e^{s_1} k_{20}\left(s_2,s_3,s_4\right)}{16 \left(s_1+s_2+s_3+s_4\right)}+\frac{e^{s_1} k_{20}\left(s_2,s_3+s_4,-s_1-s_2-s_3-s_4\right)}{16 s_3}+\frac{k_{20}\left(s_1+s_2,s_3,s_4\right)}{16 s_1}+\frac{e^{s_1+s_2+s_3} k_{20}\left(-s_1-s_2-s_3,s_1,s_2\right)}{16 s_4}+\frac{e^{s_1+s_2} k_{20}\left(s_3,-s_1-s_2-s_3,s_1\right)}{16 s_4}-\frac{1}{16} e^{s_2} G_1\left(s_1\right) k_{20}\left(s_3,s_4,-s_2-s_3-s_4\right)+\frac{e^{s_1+s_2} k_{20}\left(s_3,s_4,-s_2-s_3-s_4\right)}{16 \left(s_1+s_2+s_3+s_4\right)}+\frac{e^{s_1+s_2} k_{20}\left(s_3,s_4,-s_1-s_2-s_3-s_4\right)}{16 s_1}+\frac{e^{s_1} k_{20}\left(s_2+s_3,s_4,-s_1-s_2-s_3-s_4\right)}{16 s_2}-\frac{1}{16} e^{s_2+s_3+s_4} G_1\left(s_1\right) k_{20}\left(-s_2-s_3-s_4,s_2,s_3\right)+\frac{e^{s_1+s_2+s_3+s_4} k_{20}\left(-s_2-s_3-s_4,s_2,s_3\right)}{16 \left(s_1+s_2+s_3+s_4\right)}+\frac{e^{s_1+s_2+s_3+s_4} k_{20}\left(-s_1-s_2-s_3-s_4,s_1,s_2\right)}{16 s_3}+\frac{e^{s_1+s_2+s_3+s_4} k_{20}\left(-s_1-s_2-s_3-s_4,s_1,s_2+s_3\right)}{16 s_2}+\frac{e^{s_1+s_2+s_3+s_4} k_{20}\left(-s_1-s_2-s_3-s_4,s_1+s_2,s_3\right)}{16 s_1}-\frac{1}{16} e^{s_2+s_3} G_1\left(s_1\right) k_{20}\left(s_4,-s_2-s_3-s_4,s_2\right)+\frac{e^{s_1+s_2+s_3} k_{20}\left(s_4,-s_2-s_3-s_4,s_2\right)}{16 \left(s_1+s_2+s_3+s_4\right)}+\frac{e^{s_1+s_2+s_3} k_{20}\left(s_4,-s_1-s_2-s_3-s_4,s_1\right)}{16 s_2}+\frac{e^{s_1+s_2+s_3} k_{20}\left(s_4,-s_1-s_2-s_3-s_4,s_1+s_2\right)}{16 s_1}+\frac{e^{s_1+s_2} k_{20}\left(s_3+s_4,-s_1-s_2-s_3-s_4,s_1\right)}{16 s_3}-\frac{k_{20}\left(s_2,s_3,s_4\right)}{16 s_1}-\frac{e^{s_2} k_{20}\left(s_3,s_4,-s_2-s_3-s_4\right)}{16 s_1}-\frac{e^{s_2+s_3+s_4} k_{20}\left(-s_2-s_3-s_4,s_2,s_3\right)}{16 s_1}-\frac{e^{s_2+s_3} k_{20}\left(s_4,-s_2-s_3-s_4,s_2\right)}{16 s_1}-\frac{e^{s_2+s_3+s_4} G_1\left(s_1\right) k_{14}\left(-s_2-s_3-s_4,s_2+s_3\right)}{4 s_2}-\frac{G_1\left(s_1\right) k_{15}\left(s_2+s_3,s_4\right)}{4 s_2}-\frac{e^{s_2+s_3} G_1\left(s_1\right) k_{16}\left(s_4,-s_2-s_3-s_4\right)}{4 s_2}-\frac{k_{20}\left(s_1+s_2,s_3,s_4\right)}{16 s_2}-\frac{e^{s_1+s_2} k_{20}\left(s_3,s_4,-s_1-s_2-s_3-s_4\right)}{16 s_2}-\frac{e^{s_1+s_2+s_3+s_4} k_{20}\left(-s_1-s_2-s_3-s_4,s_1+s_2,s_3\right)}{16 s_2}-\frac{e^{s_1+s_2+s_3} k_{20}\left(s_4,-s_1-s_2-s_3-s_4,s_1+s_2\right)}{16 s_2}-\frac{e^{s_2+s_3+s_4} k_{14}\left(-s_2-s_3-s_4,s_2+s_3\right)}{4 s_1 \left(s_1+s_2\right)}-\frac{k_{15}\left(s_2+s_3,s_4\right)}{4 s_1 \left(s_1+s_2\right)}-\frac{e^{s_2+s_3} k_{16}\left(s_4,-s_2-s_3-s_4\right)}{4 s_1 \left(s_1+s_2\right)}-\frac{e^{s_2+s_3+s_4} k_{14}\left(-s_2-s_3-s_4,s_2+s_3\right)}{4 s_2 \left(s_1+s_2\right)}-\frac{k_{15}\left(s_2+s_3,s_4\right)}{4 s_2 \left(s_1+s_2\right)}-\frac{e^{s_2+s_3} k_{16}\left(s_4,-s_2-s_3-s_4\right)}{4 s_2 \left(s_1+s_2\right)}-\frac{3 G_2\left(s_1,s_2\right) k_6\left(s_4\right)}{2 s_3}-\frac{3 e^{s_4} G_2\left(s_1,s_2\right) k_7\left(-s_4\right)}{2 s_3}-\frac{e^{s_2+s_3+s_4} G_1\left(s_1\right) k_{14}\left(-s_2-s_3-s_4,s_2\right)}{4 s_3}-\frac{G_1\left(s_1\right) k_{15}\left(s_2,s_3+s_4\right)}{4 s_3}-\frac{e^{s_2} G_1\left(s_1\right) k_{16}\left(s_3+s_4,-s_2-s_3-s_4\right)}{4 s_3}-\frac{k_{20}\left(s_1,s_2+s_3,s_4\right)}{16 s_3}-\frac{e^{s_1} k_{20}\left(s_2+s_3,s_4,-s_1-s_2-s_3-s_4\right)}{16 s_3}-\frac{e^{s_1+s_2+s_3+s_4} k_{20}\left(-s_1-s_2-s_3-s_4,s_1,s_2+s_3\right)}{16 s_3}-\frac{e^{s_1+s_2+s_3} k_{20}\left(s_4,-s_1-s_2-s_3-s_4,s_1\right)}{16 s_3}-\frac{e^{s_2+s_3+s_4} k_{14}\left(-s_2-s_3-s_4,s_2\right)}{4 s_1 s_3}-\frac{e^{s_1+s_2+s_3+s_4} k_{14}\left(-s_1-s_2-s_3-s_4,s_1+s_2+s_3\right)}{4 s_1 s_3}-\frac{k_{15}\left(s_2,s_3+s_4\right)}{4 s_1 s_3}-\frac{k_{15}\left(s_1+s_2+s_3,s_4\right)}{4 s_1 s_3}-\frac{e^{s_1+s_2+s_3} k_{16}\left(s_4,-s_1-s_2-s_3-s_4\right)}{4 s_1 s_3}-\frac{e^{s_2} k_{16}\left(s_3+s_4,-s_2-s_3-s_4\right)}{4 s_1 s_3}-\frac{3 G_1\left(s_1\right) k_6\left(s_2+s_3+s_4\right)}{2 s_2 \left(s_2+s_3\right)}-\frac{3 e^{s_2+s_3+s_4} G_1\left(s_1\right) k_7\left(-s_2-s_3-s_4\right)}{2 s_2 \left(s_2+s_3\right)}-\frac{e^{s_1+s_2+s_3+s_4} k_{14}\left(-s_1-s_2-s_3-s_4,s_1+s_2\right)}{4 s_2 \left(s_2+s_3\right)}-\frac{k_{15}\left(s_1+s_2,s_3+s_4\right)}{4 s_2 \left(s_2+s_3\right)}-\frac{e^{s_1+s_2} k_{16}\left(s_3+s_4,-s_1-s_2-s_3-s_4\right)}{4 s_2 \left(s_2+s_3\right)}-\frac{3 G_1\left(s_1\right) k_6\left(s_4\right)}{2 s_3 \left(s_2+s_3\right)}-\frac{3 e^{s_4} G_1\left(s_1\right) k_7\left(-s_4\right)}{2 s_3 \left(s_2+s_3\right)}-\frac{e^{s_1+s_2+s_3+s_4} k_{14}\left(-s_1-s_2-s_3-s_4,s_1+s_2\right)}{4 s_3 \left(s_2+s_3\right)}-\frac{k_{15}\left(s_1+s_2,s_3+s_4\right)}{4 s_3 \left(s_2+s_3\right)}-\frac{e^{s_1+s_2} k_{16}\left(s_3+s_4,-s_1-s_2-s_3-s_4\right)}{4 s_3 \left(s_2+s_3\right)}-\frac{3 k_6\left(s_2+s_3+s_4\right)}{\left(s_1+s_2\right) \left(s_2+s_3\right) \left(s_1+s_2+s_3\right)}-\frac{3 e^{s_2+s_3+s_4} k_7\left(-s_2-s_3-s_4\right)}{\left(s_1+s_2\right) \left(s_2+s_3\right) \left(s_1+s_2+s_3\right)}-\frac{3 s_2 k_6\left(s_2+s_3+s_4\right)}{2 s_1 \left(s_1+s_2\right) \left(s_2+s_3\right) \left(s_1+s_2+s_3\right)}-\frac{3 s_3 k_6\left(s_2+s_3+s_4\right)}{2 s_1 \left(s_1+s_2\right) \left(s_2+s_3\right) \left(s_1+s_2+s_3\right)}-\frac{3 e^{s_2+s_3+s_4} s_2 k_7\left(-s_2-s_3-s_4\right)}{2 s_1 \left(s_1+s_2\right) \left(s_2+s_3\right) \left(s_1+s_2+s_3\right)}-\frac{3 e^{s_2+s_3+s_4} s_3 k_7\left(-s_2-s_3-s_4\right)}{2 s_1 \left(s_1+s_2\right) \left(s_2+s_3\right) \left(s_1+s_2+s_3\right)}-\frac{3 s_1 k_6\left(s_2+s_3+s_4\right)}{2 s_2 \left(s_1+s_2\right) \left(s_2+s_3\right) \left(s_1+s_2+s_3\right)}-\frac{3 s_3 k_6\left(s_2+s_3+s_4\right)}{2 s_2 \left(s_1+s_2\right) \left(s_2+s_3\right) \left(s_1+s_2+s_3\right)}-\frac{3 e^{s_2+s_3+s_4} s_1 k_7\left(-s_2-s_3-s_4\right)}{2 s_2 \left(s_1+s_2\right) \left(s_2+s_3\right) \left(s_1+s_2+s_3\right)}-\frac{3 e^{s_2+s_3+s_4} s_3 k_7\left(-s_2-s_3-s_4\right)}{2 s_2 \left(s_1+s_2\right) \left(s_2+s_3\right) \left(s_1+s_2+s_3\right)}-\frac{3 s_1 k_6\left(s_4\right)}{2 \left(s_1+s_2\right) s_3 \left(s_2+s_3\right) \left(s_1+s_2+s_3\right)}-\frac{3 s_2 k_6\left(s_4\right)}{2 \left(s_1+s_2\right) s_3 \left(s_2+s_3\right) \left(s_1+s_2+s_3\right)}-\frac{3 e^{s_4} s_1 k_7\left(-s_4\right)}{2 \left(s_1+s_2\right) s_3 \left(s_2+s_3\right) \left(s_1+s_2+s_3\right)}-\frac{3 e^{s_4} s_2 k_7\left(-s_4\right)}{2 \left(s_1+s_2\right) s_3 \left(s_2+s_3\right) \left(s_1+s_2+s_3\right)}-\frac{3 G_2\left(s_1,s_2\right) k_6\left(s_3+s_4\right)}{2 s_4}-\frac{3 e^{s_3+s_4} G_2\left(s_1,s_2\right) k_7\left(-s_3-s_4\right)}{2 s_4}-\frac{e^{s_2+s_3} G_1\left(s_1\right) k_{14}\left(-s_2-s_3,s_2\right)}{4 s_4}-\frac{G_1\left(s_1\right) k_{15}\left(s_2,s_3\right)}{4 s_4}-\frac{e^{s_2} G_1\left(s_1\right) k_{16}\left(s_3,-s_2-s_3\right)}{4 s_4}-\frac{k_{20}\left(s_1,s_2,s_3+s_4\right)}{16 s_4}-\frac{e^{s_1} k_{20}\left(s_2,s_3+s_4,-s_1-s_2-s_3-s_4\right)}{16 s_4}-\frac{e^{s_1+s_2+s_3+s_4} k_{20}\left(-s_1-s_2-s_3-s_4,s_1,s_2\right)}{16 s_4}-\frac{e^{s_1+s_2} k_{20}\left(s_3+s_4,-s_1-s_2-s_3-s_4,s_1\right)}{16 s_4}-\frac{e^{s_2+s_3} k_{14}\left(-s_2-s_3,s_2\right)}{4 s_1 s_4}-\frac{e^{s_1+s_2+s_3+s_4} k_{14}\left(-s_1-s_2-s_3-s_4,s_1+s_2\right)}{4 s_1 s_4}-\frac{k_{15}\left(s_2,s_3\right)}{4 s_1 s_4}-\frac{k_{15}\left(s_1+s_2,s_3+s_4\right)}{4 s_1 s_4}-\frac{e^{s_2} k_{16}\left(s_3,-s_2-s_3\right)}{4 s_1 s_4}-\frac{e^{s_1+s_2} k_{16}\left(s_3+s_4,-s_1-s_2-s_3-s_4\right)}{4 s_1 s_4}-\frac{3 G_1\left(s_1\right) k_6\left(s_2+s_3\right)}{2 s_2 s_4}-\frac{3 G_1\left(s_1\right) k_6\left(s_3+s_4\right)}{2 s_2 s_4}-\frac{3 e^{s_2+s_3} G_1\left(s_1\right) k_7\left(-s_2-s_3\right)}{2 s_2 s_4}-\frac{3 e^{s_3+s_4} G_1\left(s_1\right) k_7\left(-s_3-s_4\right)}{2 s_2 s_4}-\frac{e^{s_1+s_2+s_3} k_{14}\left(-s_1-s_2-s_3,s_1+s_2\right)}{4 s_2 s_4}-\frac{e^{s_1+s_2+s_3+s_4} k_{14}\left(-s_1-s_2-s_3-s_4,s_1\right)}{4 s_2 s_4}-\frac{k_{15}\left(s_1,s_2+s_3+s_4\right)}{4 s_2 s_4}-\frac{k_{15}\left(s_1+s_2,s_3\right)}{4 s_2 s_4}-\frac{e^{s_1+s_2} k_{16}\left(s_3,-s_1-s_2-s_3\right)}{4 s_2 s_4}-\frac{e^{s_1} k_{16}\left(s_2+s_3+s_4,-s_1-s_2-s_3-s_4\right)}{4 s_2 s_4}-\frac{3 k_6\left(s_2+s_3\right)}{2 s_1 \left(s_1+s_2\right) s_4}-\frac{3 k_6\left(s_1+s_2+s_3+s_4\right)}{2 s_1 \left(s_1+s_2\right) s_4}-\frac{3 e^{s_2+s_3} k_7\left(-s_2-s_3\right)}{2 s_1 \left(s_1+s_2\right) s_4}-\frac{3 e^{s_1+s_2+s_3+s_4} k_7\left(-s_1-s_2-s_3-s_4\right)}{2 s_1 \left(s_1+s_2\right) s_4}-\frac{3 k_6\left(s_2+s_3\right)}{2 s_2 \left(s_1+s_2\right) s_4}-\frac{3 k_6\left(s_3+s_4\right)}{2 s_2 \left(s_1+s_2\right) s_4}-\frac{3 e^{s_2+s_3} k_7\left(-s_2-s_3\right)}{2 s_2 \left(s_1+s_2\right) s_4}-\frac{3 e^{s_3+s_4} k_7\left(-s_3-s_4\right)}{2 s_2 \left(s_1+s_2\right) s_4}-\frac{3 G_1\left(s_1\right) k_6\left(s_2\right)}{2 s_3 \left(s_3+s_4\right)}-\frac{3 e^{s_2} G_1\left(s_1\right) k_7\left(-s_2\right)}{2 s_3 \left(s_3+s_4\right)}-\frac{e^{s_1+s_2+s_3} k_{14}\left(-s_1-s_2-s_3,s_1\right)}{4 s_3 \left(s_3+s_4\right)}-\frac{k_{15}\left(s_1,s_2+s_3\right)}{4 s_3 \left(s_3+s_4\right)}-\frac{e^{s_1} k_{16}\left(s_2+s_3,-s_1-s_2-s_3\right)}{4 s_3 \left(s_3+s_4\right)}-\frac{3 k_6\left(s_2\right)}{2 s_1 s_3 \left(s_3+s_4\right)}-\frac{3 k_6\left(s_1+s_2+s_3\right)}{2 s_1 s_3 \left(s_3+s_4\right)}-\frac{3 e^{s_2} k_7\left(-s_2\right)}{2 s_1 s_3 \left(s_3+s_4\right)}-\frac{3 e^{s_1+s_2+s_3} k_7\left(-s_1-s_2-s_3\right)}{2 s_1 s_3 \left(s_3+s_4\right)}-\frac{3 G_1\left(s_1\right) k_6\left(s_2+s_3+s_4\right)}{2 s_4 \left(s_3+s_4\right)}-\frac{3 e^{s_2+s_3+s_4} G_1\left(s_1\right) k_7\left(-s_2-s_3-s_4\right)}{2 s_4 \left(s_3+s_4\right)}-\frac{e^{s_1+s_2+s_3} k_{14}\left(-s_1-s_2-s_3,s_1\right)}{4 s_4 \left(s_3+s_4\right)}-\frac{k_{15}\left(s_1,s_2+s_3\right)}{4 s_4 \left(s_3+s_4\right)}-\frac{e^{s_1} k_{16}\left(s_2+s_3,-s_1-s_2-s_3\right)}{4 s_4 \left(s_3+s_4\right)}-\frac{3 k_6\left(s_1+s_2+s_3\right)}{2 s_1 s_4 \left(s_3+s_4\right)}-\frac{3 k_6\left(s_2+s_3+s_4\right)}{2 s_1 s_4 \left(s_3+s_4\right)}-\frac{3 e^{s_1+s_2+s_3} k_7\left(-s_1-s_2-s_3\right)}{2 s_1 s_4 \left(s_3+s_4\right)}-\frac{3 e^{s_2+s_3+s_4} k_7\left(-s_2-s_3-s_4\right)}{2 s_1 s_4 \left(s_3+s_4\right)}-\frac{3 k_6\left(s_1+s_2\right)}{\left(s_2+s_3\right) \left(s_3+s_4\right) \left(s_2+s_3+s_4\right)}-\frac{3 e^{s_1+s_2} k_7\left(-s_1-s_2\right)}{\left(s_2+s_3\right) \left(s_3+s_4\right) \left(s_2+s_3+s_4\right)}-\frac{3 s_3 k_6\left(s_1+s_2\right)}{2 s_2 \left(s_2+s_3\right) \left(s_3+s_4\right) \left(s_2+s_3+s_4\right)}-\frac{3 s_4 k_6\left(s_1+s_2\right)}{2 s_2 \left(s_2+s_3\right) \left(s_3+s_4\right) \left(s_2+s_3+s_4\right)}-\frac{3 e^{s_1+s_2} s_3 k_7\left(-s_1-s_2\right)}{2 s_2 \left(s_2+s_3\right) \left(s_3+s_4\right) \left(s_2+s_3+s_4\right)}-\frac{3 e^{s_1+s_2} s_4 k_7\left(-s_1-s_2\right)}{2 s_2 \left(s_2+s_3\right) \left(s_3+s_4\right) \left(s_2+s_3+s_4\right)}-\frac{3 s_2 k_6\left(s_1+s_2\right)}{2 s_3 \left(s_2+s_3\right) \left(s_3+s_4\right) \left(s_2+s_3+s_4\right)}-\frac{3 s_4 k_6\left(s_1+s_2\right)}{2 s_3 \left(s_2+s_3\right) \left(s_3+s_4\right) \left(s_2+s_3+s_4\right)}-\frac{3 e^{s_1+s_2} s_2 k_7\left(-s_1-s_2\right)}{2 s_3 \left(s_2+s_3\right) \left(s_3+s_4\right) \left(s_2+s_3+s_4\right)}-\frac{3 e^{s_1+s_2} s_4 k_7\left(-s_1-s_2\right)}{2 s_3 \left(s_2+s_3\right) \left(s_3+s_4\right) \left(s_2+s_3+s_4\right)}-\frac{3 s_2 k_6\left(s_1+s_2+s_3+s_4\right)}{2 \left(s_2+s_3\right) s_4 \left(s_3+s_4\right) \left(s_2+s_3+s_4\right)}-\frac{3 s_3 k_6\left(s_1+s_2+s_3+s_4\right)}{2 \left(s_2+s_3\right) s_4 \left(s_3+s_4\right) \left(s_2+s_3+s_4\right)}-\frac{3 e^{s_1+s_2+s_3+s_4} s_2 k_7\left(-s_1-s_2-s_3-s_4\right)}{2 \left(s_2+s_3\right) s_4 \left(s_3+s_4\right) \left(s_2+s_3+s_4\right)}-\frac{3 e^{s_1+s_2+s_3+s_4} s_3 k_7\left(-s_1-s_2-s_3-s_4\right)}{2 \left(s_2+s_3\right) s_4 \left(s_3+s_4\right) \left(s_2+s_3+s_4\right)}-\frac{k_{20}\left(s_1,s_2,s_3\right)}{16 \left(s_1+s_2+s_3+s_4\right)}-\frac{e^{s_1} k_{20}\left(s_2,s_3,-s_1-s_2-s_3\right)}{16 \left(s_1+s_2+s_3+s_4\right)}-\frac{e^{s_1+s_2+s_3} k_{20}\left(-s_1-s_2-s_3,s_1,s_2\right)}{16 \left(s_1+s_2+s_3+s_4\right)}-\frac{e^{s_1+s_2} k_{20}\left(s_3,-s_1-s_2-s_3,s_1\right)}{16 \left(s_1+s_2+s_3+s_4\right)}.
\end{math}
\end{center}

\section{Lengthy functional relations among functions $k_3, \dots, k_{20}$}
\label{lengthykfnrelationsappsec}

In Section \ref{FuncRelationsforksSec} we found functional relations 
that involve only the functions $k_3, \dots, k_{20}$. In this appendix 
we provide more functional relations of this type.

\subsection{Functional relation between $k_{3},$ $k_{5},$ $k_{6},$ $\dots,$ $k_{16},$ $k_{17},$ $k_{19},$ $k_{20}$ } 
The explicit formulas given in Appendix \ref{explicit3and4fnsappsec} can be used to see that 
$2 \widetilde K_{16} =3 \widetilde K_{10}$, which combined with the finite difference 
expressions \eqref{basicK10eqn} and \eqref{basicK16eqn}, gives the following functional relation:  
 
  {\small
    \begin{center}
    \begin{math}
     -\frac{3 e^{s_1+s_2} k_3(-s_1-s_2)}{4 s_1 s_3}+\frac{3 e^{s_2} G_1(s_1) k_3(-s_2)}{4 s_3}+\frac{3 e^{s_2} k_3(-s_2)}{4 s_1 s_3}+\frac{3 G_1(s_1) k_3(s_2)}{4 s_3}+\frac{3 k_3(s_2)}{4 s_1 s_3}+\frac{3 e^{s_1+s_2+s_3} k_3(-s_1-s_2-s_3)}{4 s_1 s_3}+\frac{3 k_3(s_1+s_2+s_3)}{4 s_1 s_3}+\frac{e^{s_1+s_2} k_5(-s_1-s_2)}{s_1 s_3}+\frac{k_5(s_1+s_2)}{s_1 s_3}+\frac{e^{s_2+s_3} G_1(s_1) k_5(-s_2-s_3)}{s_3}+\frac{e^{s_2+s_3} k_5(-s_2-s_3)}{s_1 s_3}+\frac{G_1(s_1) k_5(s_2+s_3)}{s_3}+\frac{k_5(s_2+s_3)}{s_1 s_3}+\frac{k_6(s_1)}{2 s_2 s_3}+\frac{k_6(s_3)}{4 s_1 (s_1+s_2)}+\frac{k_6(s_3)}{4 s_2 (s_1+s_2)}+\frac{k_6(s_1+s_2+s_3)}{4 s_1 s_2}+\frac{k_6(s_1+s_2+s_3)}{2 s_2 (s_2+s_3)}+\frac{k_6(s_1+s_2+s_3)}{2 s_3 (s_2+s_3)}+\frac{e^{s_1} k_7(-s_1)}{2 s_2 s_3}+\frac{e^{s_1+s_2+s_3} k_7(-s_1-s_2-s_3)}{4 s_1 s_2}+\frac{e^{s_1+s_2+s_3} k_7(-s_1-s_2-s_3)}{2 s_2 (s_2+s_3)}+\frac{e^{s_1+s_2+s_3} k_7(-s_1-s_2-s_3)}{2 s_3 (s_2+s_3)}+\frac{e^{s_3} k_7(-s_3)}{4 s_1 (s_1+s_2)}+\frac{e^{s_3} k_7(-s_3)}{4 s_2 (s_1+s_2)}+\frac{3 k_8(s_1,s_2)}{4 s_3}-\frac{3}{4} G_1(s_1) k_8(s_2,s_3)+\frac{3 k_8(s_1+s_2,s_3)}{4 s_1}+\frac{3 k_9(s_1,s_2)}{8 s_3}+\frac{3 k_9(s_1,s_2+s_3)}{8 s_2}+\frac{3 e^{s_1+s_2} k_9(-s_1-s_2,s_1)}{8 (s_1+s_2+s_3)}-\frac{3}{8} G_1(s_1) k_9(s_2,s_3)+\frac{3 k_9(s_1+s_2,s_3)}{8 s_1}+\frac{3 e^{s_1+s_2+s_3} k_9(-s_1-s_2-s_3,s_1)}{8 s_3}+\frac{3}{8} e^{s_2} G_1(s_1) k_9(s_3,-s_2-s_3)+\frac{3 e^{s_2} k_9(s_3,-s_2-s_3)}{8 s_1}+\frac{3 e^{s_1+s_2+s_3} k_{10}(s_1,s_2)}{8 (-s_1-s_2-s_3)}+\frac{3 e^{s_1+s_2+s_3} k_{10}(s_1,s_2)}{8 (s_1+s_2+s_3)}+\frac{3 k_{10}(s_1,s_2+s_3)}{8 s_3}+\frac{3 e^{s_1} k_{10}(s_2,-s_1-s_2)}{8 s_3}+\frac{3}{8} e^{s_2+s_3} G_1(s_1) k_{10}(-s_2-s_3,s_2)+\frac{3 e^{s_2+s_3} k_{10}(-s_2-s_3,s_2)}{8 s_1}-\frac{3}{8} e^{s_2} G_1(s_1) k_{10}(s_3,-s_2-s_3)+\frac{3 e^{s_1+s_2} k_{10}(s_3,-s_1-s_2-s_3)}{8 s_1}+\frac{3 e^{s_1} k_{10}(s_2+s_3,-s_1-s_2-s_3)}{8 s_2}+\frac{3 e^{s_1} k_{11}(s_2,-s_1-s_2)}{4 s_3}-\frac{3}{4} e^{s_2} G_1(s_1) k_{11}(s_3,-s_2-s_3)+\frac{3 e^{s_1+s_2} k_{11}(s_3,-s_1-s_2-s_3)}{4 s_1}+\frac{3 e^{s_1+s_2} k_{12}(-s_1-s_2,s_1)}{4 s_3}-\frac{3}{4} e^{s_2+s_3} G_1(s_1) k_{12}(-s_2-s_3,s_2)+\frac{3 e^{s_1+s_2+s_3} k_{12}(-s_1-s_2-s_3,s_1+s_2)}{4 s_1}+\frac{3 e^{s_1+s_2} k_{13}(-s_1-s_2,s_1)}{8 s_3}+\frac{3 e^{s_1} k_{13}(s_2,-s_1-s_2)}{8 (s_1+s_2+s_3)}+\frac{3}{8} G_1(s_1) k_{13}(s_2,s_3)+\frac{3 k_{13}(s_2,s_3)}{8 s_1}-\frac{3}{8} e^{s_2+s_3} G_1(s_1) k_{13}(-s_2-s_3,s_2)+\frac{3 e^{s_1+s_2+s_3} k_{13}(-s_1-s_2-s_3,s_1)}{8 s_2}+\frac{3 e^{s_1+s_2+s_3} k_{13}(-s_1-s_2-s_3,s_1+s_2)}{8 s_1}+\frac{3 e^{s_1} k_{13}(s_2+s_3,-s_1-s_2-s_3)}{8 s_3}+\frac{e^{s_1} k_{14}(s_2,-s_1-s_2)}{4 s_3}-\frac{1}{4} G_1(s_1) k_{14}(s_2,s_3)+\frac{e^{s_1} k_{14}(s_2,s_3)}{4 (s_1+s_2+s_3)}+\frac{k_{14}(s_1+s_2,s_3)}{4 s_1}+\frac{1}{2} e^{s_2+s_3} G_1(s_1) k_{14}(-s_2-s_3,s_2)+\frac{e^{s_2+s_3} k_{14}(-s_2-s_3,s_2)}{2 s_1}+\frac{e^{s_1+s_2+s_3} k_{14}(-s_1-s_2-s_3,s_1)}{2 s_3}+\frac{e^{s_1+s_2+s_3} k_{14}(-s_1-s_2-s_3,s_1+s_2)}{4 s_2}+\frac{k_{15}(s_1,s_2+s_3)}{2 s_3}+\frac{e^{s_1+s_2} k_{15}(-s_1-s_2,s_1)}{4 s_3}+\frac{1}{2} G_1(s_1) k_{15}(s_2,s_3)+\frac{k_{15}(s_2,s_3)}{2 s_1}+\frac{k_{15}(s_1+s_2,s_3)}{4 s_2}-\frac{1}{4} e^{s_2} G_1(s_1) k_{15}(s_3,-s_2-s_3)+\frac{e^{s_1+s_2} k_{15}(s_3,-s_2-s_3)}{4 (s_1+s_2+s_3)}+\frac{e^{s_1+s_2} k_{15}(s_3,-s_1-s_2-s_3)}{4 s_1}+\frac{k_{16}(s_1,s_2)}{4 (-s_1-s_2-s_3)}+\frac{k_{16}(s_1,s_2)}{4 s_3}-\frac{1}{4} e^{s_2+s_3} G_1(s_1) k_{16}(-s_2-s_3,s_2)+\frac{e^{s_1+s_2+s_3} k_{16}(-s_2-s_3,s_2)}{4 (s_1+s_2+s_3)}+\frac{e^{s_1+s_2+s_3} k_{16}(-s_1-s_2-s_3,s_1+s_2)}{4 s_1}+\frac{1}{2} e^{s_2} G_1(s_1) k_{16}(s_3,-s_2-s_3)+\frac{e^{s_2} k_{16}(s_3,-s_2-s_3)}{2 s_1}+\frac{e^{s_1+s_2} k_{16}(s_3,-s_1-s_2-s_3)}{4 s_2}+\frac{e^{s_1} k_{16}(s_2+s_3,-s_1-s_2-s_3)}{2 s_3}+\frac{3}{16} k_{17}(s_1,s_2,s_3)+\frac{3}{16} e^{s_1} k_{17}(s_2,s_3,-s_1-s_2-s_3)+\frac{3}{16} e^{s_1+s_2+s_3} k_{17}(-s_1-s_2-s_3,s_1,s_2)+\frac{3}{16} e^{s_1+s_2} k_{17}(s_3,-s_1-s_2-s_3,s_1)+\frac{3}{16} k_{19}(s_1,s_2,s_3)+\frac{3}{16} e^{s_1} k_{19}(s_2,s_3,-s_1-s_2-s_3)+\frac{3}{16} e^{s_1+s_2+s_3} k_{19}(-s_1-s_2-s_3,s_1,s_2)+\frac{3}{16} e^{s_1+s_2} k_{19}(s_3,-s_1-s_2-s_3,s_1)-\frac{1}{8} k_{20}(s_1,s_2,s_3)-\frac{1}{8} e^{s_1} k_{20}(s_2,s_3,-s_1-s_2-s_3)-\frac{1}{8} e^{s_1+s_2+s_3} k_{20}(-s_1-s_2-s_3,s_1,s_2)-\frac{1}{8} e^{s_1+s_2} k_{20}(s_3,-s_1-s_2-s_3,s_1)-\frac{e^{s_1+s_2+s_3} k_{14}(-s_1-s_2-s_3,s_1+s_2)}{2 s_1}-\frac{k_{15}(s_1+s_2,s_3)}{2 s_1}-\frac{e^{s_1+s_2} k_{16}(s_3,-s_1-s_2-s_3)}{2 s_1}-\frac{3 k_8(s_2,s_3)}{4 s_1}-\frac{3 e^{s_2} k_{11}(s_3,-s_2-s_3)}{4 s_1}-\frac{3 e^{s_2+s_3} k_{12}(-s_2-s_3,s_2)}{4 s_1}-\frac{k_{14}(s_2,s_3)}{4 s_1}-\frac{e^{s_2} k_{15}(s_3,-s_2-s_3)}{4 s_1}-\frac{e^{s_2+s_3} k_{16}(-s_2-s_3,s_2)}{4 s_1}-\frac{3 k_9(s_2,s_3)}{8 s_1}-\frac{3 e^{s_1+s_2} k_9(s_3,-s_1-s_2-s_3)}{8 s_1}-\frac{3 e^{s_1+s_2+s_3} k_{10}(-s_1-s_2-s_3,s_1+s_2)}{8 s_1}-\frac{3 e^{s_2} k_{10}(s_3,-s_2-s_3)}{8 s_1}-\frac{3 k_{13}(s_1+s_2,s_3)}{8 s_1}-\frac{3 e^{s_2+s_3} k_{13}(-s_2-s_3,s_2)}{8 s_1}-\frac{e^{s_1+s_2+s_3} k_{14}(-s_1-s_2-s_3,s_1)}{4 s_2}-\frac{k_{15}(s_1,s_2+s_3)}{4 s_2}-\frac{e^{s_1} k_{16}(s_2+s_3,-s_1-s_2-s_3)}{4 s_2}-\frac{3 k_9(s_1+s_2,s_3)}{8 s_2}-\frac{3 e^{s_1+s_2} k_{10}(s_3,-s_1-s_2-s_3)}{8 s_2}-\frac{3 e^{s_1+s_2+s_3} k_{13}(-s_1-s_2-s_3,s_1+s_2)}{8 s_2}-\frac{k_6(s_3)}{4 s_1 s_2}-\frac{e^{s_3} k_7(-s_3)}{4 s_1 s_2}-\frac{k_6(s_1+s_2+s_3)}{4 s_1 (s_1+s_2)}-\frac{e^{s_1+s_2+s_3} k_7(-s_1-s_2-s_3)}{4 s_1 (s_1+s_2)}-\frac{k_6(s_1+s_2+s_3)}{4 s_2 (s_1+s_2)}-\frac{e^{s_1+s_2+s_3} k_7(-s_1-s_2-s_3)}{4 s_2 (s_1+s_2)}-\frac{e^{s_1+s_2+s_3} k_{16}(s_1,s_2)}{4 (-s_1-s_2-s_3)}-\frac{3 k_{10}(s_1,s_2)}{8 (-s_1-s_2-s_3)}-\frac{e^{s_2} G_1(s_1) k_5(-s_2)}{s_3}-\frac{G_1(s_1) k_5(s_2)}{s_3}-\frac{e^{s_1+s_2} k_{14}(-s_1-s_2,s_1)}{2 s_3}-\frac{k_{15}(s_1,s_2)}{2 s_3}-\frac{e^{s_1} k_{16}(s_2,-s_1-s_2)}{2 s_3}-\frac{3 e^{s_2+s_3} G_1(s_1) k_3(-s_2-s_3)}{4 s_3}-\frac{3 G_1(s_1) k_3(s_2+s_3)}{4 s_3}-\frac{3 k_8(s_1,s_2+s_3)}{4 s_3}-\frac{3 e^{s_1} k_{11}(s_2+s_3,-s_1-s_2-s_3)}{4 s_3}-\frac{3 e^{s_1+s_2+s_3} k_{12}(-s_1-s_2-s_3,s_1)}{4 s_3}-\frac{e^{s_1} k_{14}(s_2+s_3,-s_1-s_2-s_3)}{4 s_3}-\frac{e^{s_1+s_2+s_3} k_{15}(-s_1-s_2-s_3,s_1)}{4 s_3}-\frac{k_{16}(s_1,s_2+s_3)}{4 s_3}-\frac{3 k_9(s_1,s_2+s_3)}{8 s_3}-\frac{3 e^{s_1+s_2} k_9(-s_1-s_2,s_1)}{8 s_3}-\frac{3 k_{10}(s_1,s_2)}{8 s_3}-\frac{3 e^{s_1} k_{10}(s_2+s_3,-s_1-s_2-s_3)}{8 s_3}-\frac{3 e^{s_1} k_{13}(s_2,-s_1-s_2)}{8 s_3}-\frac{3 e^{s_1+s_2+s_3} k_{13}(-s_1-s_2-s_3,s_1)}{8 s_3}-\frac{e^{s_2} k_5(-s_2)}{s_1 s_3}-\frac{k_5(s_2)}{s_1 s_3}-\frac{e^{s_1+s_2+s_3} k_5(-s_1-s_2-s_3)}{s_1 s_3}-\frac{k_5(s_1+s_2+s_3)}{s_1 s_3}-\frac{3 k_3(s_1+s_2)}{4 s_1 s_3}-\frac{3 e^{s_2+s_3} k_3(-s_2-s_3)}{4 s_1 s_3}-\frac{3 k_3(s_2+s_3)}{4 s_1 s_3}-\frac{k_6(s_1+s_2+s_3)}{2 s_2 s_3}-\frac{e^{s_1+s_2+s_3} k_7(-s_1-s_2-s_3)}{2 s_2 s_3}-\frac{k_6(s_1)}{2 s_2 (s_2+s_3)}-\frac{e^{s_1} k_7(-s_1)}{2 s_2 (s_2+s_3)}-\frac{k_6(s_1)}{2 s_3 (s_2+s_3)}-\frac{e^{s_1} k_7(-s_1)}{2 s_3 (s_2+s_3)}-\frac{e^{s_1} k_{14}(s_2,-s_1-s_2)}{4 (s_1+s_2+s_3)}-\frac{e^{s_1+s_2} k_{15}(-s_1-s_2,s_1)}{4 (s_1+s_2+s_3)}-\frac{e^{s_1+s_2+s_3} k_{16}(s_1,s_2)}{4 (s_1+s_2+s_3)}-\frac{3 e^{s_1+s_2} k_9(s_3,-s_2-s_3)}{8 (s_1+s_2+s_3)}-\frac{3 e^{s_1+s_2+s_3} k_{10}(-s_2-s_3,s_2)}{8 (s_1+s_2+s_3)}-\frac{3 e^{s_1} k_{13}(s_2,s_3)}{8 (s_1+s_2+s_3)}=0. 
   \end{math}
   \end{center}
   }

 \subsection{Functional relation between $k_{6},$ $k_{7},$ $\dots,$ $k_{13},$ $k_{17},$ $k_{18},$ $k_{19}$  } 
 Finally, one can use the equality 
 $\widetilde K_{18} = \widetilde K_{19}$, which can be observed 
 by using the final explicit formulas written in Appendix \ref{explicit3and4fnsappsec}, and the different finite difference 
 expression \eqref{basicK18eqn} and \eqref{basicK19eqn} for these functions to obtain the following 
 functional relation:  
 
 \smallskip
 
 {\small  
 \begin{center}
 \begin{math}
    \frac{k_6(s_1)}{2 s_2 (s_2+s_3) s_4}-\frac{k_6(s_1)}{4 s_2 s_3 (s_2+s_3+s_4)}-\frac{k_6(s_1)}{4 s_2 (s_2+s_3) (s_2+s_3+s_4)}+\frac{k_6(s_1)}{4 s_3 (s_2+s_3) (s_2+s_3+s_4)}-\frac{k_6(s_1)}{2 s_2 s_4 (s_2+s_3+s_4)}+\frac{G_1(s_1) k_6(s_2)}{2 s_3 (s_3+s_4)}+\frac{G_1(s_1) k_6(s_2)}{2 s_4 (s_3+s_4)}+\frac{k_6(s_2)}{2 s_1 s_3 (s_3+s_4)}+\frac{k_6(s_2)}{2 s_1 s_4 (s_3+s_4)}+\frac{k_6(s_1+s_2)}{2 s_1 s_3 s_4}+\frac{k_6(s_1+s_2)}{2 s_2 s_3 (s_3+s_4)}+\frac{k_6(s_1+s_2)}{2 s_2 s_4 (s_3+s_4)}+\frac{k_6(s_1+s_2+s_3)}{4 s_2 s_3 s_4}+\frac{G_1(s_1) k_6(s_4)}{4 s_2 (s_2+s_3)}+\frac{G_1(s_1) k_6(s_4)}{4 s_3 (s_2+s_3)}+\frac{k_6(s_4)}{4 s_1 s_2 (s_2+s_3)}+\frac{k_6(s_4)}{4 s_2 (s_1+s_2) (s_1+s_2+s_3)}+\frac{k_6(s_4)}{4 s_3 (s_2+s_3) (s_1+s_2+s_3)}+\frac{G_1(s_1) k_6(s_2+s_3+s_4)}{4 s_2 s_3}+\frac{G_1(s_1) k_6(s_2+s_3+s_4)}{2 s_3 s_4}+\frac{k_6(s_2+s_3+s_4)}{4 s_1 s_2 s_3}+\frac{k_6(s_2+s_3+s_4)}{2 s_1 s_3 s_4}+\frac{k_6(s_1+s_2+s_3+s_4)}{4 s_2 (s_1+s_2) s_3}+\frac{k_6(s_1+s_2+s_3+s_4)}{4 s_1 s_3 (s_2+s_3)}+\frac{k_6(s_1+s_2+s_3+s_4)}{4 s_1 s_2 (s_1+s_2+s_3)}+\frac{k_6(s_1+s_2+s_3+s_4)}{4 s_2 s_3 s_4}+\frac{k_6(s_1+s_2+s_3+s_4)}{4 s_3 (s_2+s_3) s_4}+\frac{k_6(s_1+s_2+s_3+s_4)}{2 s_1 s_3 (s_3+s_4)}+\frac{k_6(s_1+s_2+s_3+s_4)}{2 s_1 s_4 (s_3+s_4)}+\frac{k_6(s_1+s_2+s_3+s_4)}{4 s_2 s_3 (s_2+s_3+s_4)}+\frac{k_6(s_1+s_2+s_3+s_4)}{4 s_2 (s_2+s_3) (s_2+s_3+s_4)}+\frac{k_6(s_1+s_2+s_3+s_4)}{2 s_2 s_4 (s_2+s_3+s_4)}+\frac{e^{s_1} k_7(-s_1)}{2 s_2 (s_2+s_3) s_4}+\frac{e^{s_1} k_7(-s_1)}{4 s_3 (s_2+s_3) (s_2+s_3+s_4)}+\frac{e^{s_1+s_2} k_7(-s_1-s_2)}{2 s_1 s_3 s_4}+\frac{e^{s_1+s_2} k_7(-s_1-s_2)}{2 s_2 s_3 (s_3+s_4)}+\frac{e^{s_1+s_2} k_7(-s_1-s_2)}{2 s_2 s_4 (s_3+s_4)}+\frac{e^{s_2} G_1(s_1) k_7(-s_2)}{2 s_3 (s_3+s_4)}+\frac{e^{s_2} G_1(s_1) k_7(-s_2)}{2 s_4 (s_3+s_4)}+\frac{e^{s_2} k_7(-s_2)}{2 s_1 s_3 (s_3+s_4)}+\frac{e^{s_2} k_7(-s_2)}{2 s_1 s_4 (s_3+s_4)}+\frac{e^{s_1+s_2+s_3} k_7(-s_1-s_2-s_3)}{4 s_2 s_3 s_4}+\frac{e^{s_2+s_3+s_4} G_1(s_1) k_7(-s_2-s_3-s_4)}{4 s_2 s_3}+\frac{e^{s_2+s_3+s_4} G_1(s_1) k_7(-s_2-s_3-s_4)}{2 s_3 s_4}+\frac{e^{s_2+s_3+s_4} k_7(-s_2-s_3-s_4)}{4 s_1 s_2 s_3}+\frac{e^{s_2+s_3+s_4} k_7(-s_2-s_3-s_4)}{2 s_1 s_3 s_4}+\frac{e^{s_1+s_2+s_3+s_4} k_7(-s_1-s_2-s_3-s_4)}{4 s_2 (s_1+s_2) s_3}+\frac{e^{s_1+s_2+s_3+s_4} k_7(-s_1-s_2-s_3-s_4)}{4 s_1 s_3 (s_2+s_3)}+\frac{e^{s_1+s_2+s_3+s_4} k_7(-s_1-s_2-s_3-s_4)}{4 s_1 s_2 (s_1+s_2+s_3)}+\frac{e^{s_1+s_2+s_3+s_4} k_7(-s_1-s_2-s_3-s_4)}{4 s_2 s_3 s_4}+\frac{e^{s_1+s_2+s_3+s_4} k_7(-s_1-s_2-s_3-s_4)}{4 s_3 (s_2+s_3) s_4}+\frac{e^{s_1+s_2+s_3+s_4} k_7(-s_1-s_2-s_3-s_4)}{2 s_1 s_3 (s_3+s_4)}+\frac{e^{s_1+s_2+s_3+s_4} k_7(-s_1-s_2-s_3-s_4)}{2 s_1 s_4 (s_3+s_4)}+\frac{e^{s_1+s_2+s_3+s_4} k_7(-s_1-s_2-s_3-s_4)}{4 s_2 s_3 (s_2+s_3+s_4)}+\frac{e^{s_1+s_2+s_3+s_4} k_7(-s_1-s_2-s_3-s_4)}{4 s_2 (s_2+s_3) (s_2+s_3+s_4)}+\frac{e^{s_1+s_2+s_3+s_4} k_7(-s_1-s_2-s_3-s_4)}{2 s_2 s_4 (s_2+s_3+s_4)}+\frac{e^{s_4} G_1(s_1) k_7(-s_4)}{4 s_2 (s_2+s_3)}+\frac{e^{s_4} G_1(s_1) k_7(-s_4)}{4 s_3 (s_2+s_3)}+\frac{e^{s_4} k_7(-s_4)}{4 s_1 s_2 (s_2+s_3)}+\frac{e^{s_4} k_7(-s_4)}{4 s_2 (s_1+s_2) (s_1+s_2+s_3)}+\frac{e^{s_4} k_7(-s_4)}{4 s_3 (s_2+s_3) (s_1+s_2+s_3)}+\frac{k_8(s_1,s_2+s_3)}{4 s_2 s_4}+\frac{k_8(s_1,s_2+s_3+s_4)}{8 s_3 (s_2+s_3)}+\frac{G_1(s_1) k_8(s_2,s_3)}{4 s_4}+\frac{k_8(s_2,s_3)}{4 s_1 s_4}+\frac{k_8(s_1+s_2,s_3+s_4)}{4 s_1 s_3}+\frac{k_8(s_1+s_2,s_3+s_4)}{4 s_2 s_3}+\frac{k_8(s_1+s_2,s_3+s_4)}{4 s_1 s_4}+\frac{k_8(s_1+s_2,s_3+s_4)}{4 s_2 s_4}+\frac{G_1(s_1) k_8(s_2+s_3,s_4)}{4 s_3}+\frac{k_8(s_2+s_3,s_4)}{4 s_1 s_3}+\frac{k_8(s_1+s_2+s_3,s_4)}{8 s_2 (s_2+s_3)}+\frac{k_9(s_1,s_2)}{8 s_3 s_4}+\frac{k_9(s_1,s_2+s_3+s_4)}{8 s_2 (s_2+s_3)}+\frac{k_9(s_1,s_2+s_3+s_4)}{8 s_2 s_4}+\frac{k_9(s_1,s_2+s_3+s_4)}{8 s_3 (s_3+s_4)}+\frac{k_9(s_1,s_2+s_3+s_4)}{8 s_4 (s_3+s_4)}+\frac{G_1(s_1) k_9(s_2,s_3+s_4)}{8 s_3}+\frac{G_1(s_1) k_9(s_2,s_3+s_4)}{8 s_4}+\frac{k_9(s_2,s_3+s_4)}{8 s_1 s_3}+\frac{k_9(s_2,s_3+s_4)}{8 s_1 s_4}+\frac{k_9(s_1+s_2,s_3)}{8 s_1 s_4}+\frac{k_9(s_1+s_2,s_3)}{8 s_2 s_4}+\frac{k_9(s_1+s_2+s_3,s_4)}{8 s_1 s_3}+\frac{k_9(s_1+s_2+s_3,s_4)}{8 s_2 s_3}+\frac{e^{s_1} k_{10}(s_2,-s_1-s_2)}{8 s_3 s_4}+\frac{e^{s_1+s_2} k_{10}(s_3,-s_1-s_2-s_3)}{8 s_1 s_4}+\frac{e^{s_1+s_2} k_{10}(s_3,-s_1-s_2-s_3)}{8 s_2 s_4}+\frac{e^{s_1+s_2+s_3} k_{10}(s_4,-s_1-s_2-s_3-s_4)}{8 s_1 s_3}+\frac{e^{s_1+s_2+s_3} k_{10}(s_4,-s_1-s_2-s_3-s_4)}{8 s_2 s_3}+\frac{e^{s_2} G_1(s_1) k_{10}(s_3+s_4,-s_2-s_3-s_4)}{8 s_3}+\frac{e^{s_2} G_1(s_1) k_{10}(s_3+s_4,-s_2-s_3-s_4)}{8 s_4}+\frac{e^{s_2} k_{10}(s_3+s_4,-s_2-s_3-s_4)}{8 s_1 s_3}+\frac{e^{s_2} k_{10}(s_3+s_4,-s_2-s_3-s_4)}{8 s_1 s_4}+\frac{e^{s_1} k_{10}(s_2+s_3+s_4,-s_1-s_2-s_3-s_4)}{8 s_2 (s_2+s_3)}+\frac{e^{s_1} k_{10}(s_2+s_3+s_4,-s_1-s_2-s_3-s_4)}{8 s_2 s_4}+\frac{e^{s_1} k_{10}(s_2+s_3+s_4,-s_1-s_2-s_3-s_4)}{8 s_3 (s_3+s_4)}+\frac{e^{s_1} k_{10}(s_2+s_3+s_4,-s_1-s_2-s_3-s_4)}{8 s_4 (s_3+s_4)}+\frac{e^{s_2} G_1(s_1) k_{11}(s_3,-s_2-s_3)}{4 s_4}+\frac{e^{s_2} k_{11}(s_3,-s_2-s_3)}{4 s_1 s_4}+\frac{e^{s_1} k_{11}(s_2+s_3,-s_1-s_2-s_3)}{4 s_2 s_4}+\frac{e^{s_2+s_3} G_1(s_1) k_{11}(s_4,-s_2-s_3-s_4)}{4 s_3}+\frac{e^{s_2+s_3} k_{11}(s_4,-s_2-s_3-s_4)}{4 s_1 s_3}+\frac{e^{s_1+s_2+s_3} k_{11}(s_4,-s_1-s_2-s_3-s_4)}{8 s_2 (s_2+s_3)}+\frac{e^{s_1+s_2} k_{11}(s_3+s_4,-s_1-s_2-s_3-s_4)}{4 s_1 s_3}+\frac{e^{s_1+s_2} k_{11}(s_3+s_4,-s_1-s_2-s_3-s_4)}{4 s_2 s_3}+\frac{e^{s_1+s_2} k_{11}(s_3+s_4,-s_1-s_2-s_3-s_4)}{4 s_1 s_4}+\frac{e^{s_1+s_2} k_{11}(s_3+s_4,-s_1-s_2-s_3-s_4)}{4 s_2 s_4}+\frac{e^{s_1} k_{11}(s_2+s_3+s_4,-s_1-s_2-s_3-s_4)}{8 s_3 (s_2+s_3)}+\frac{e^{s_2+s_3} G_1(s_1) k_{12}(-s_2-s_3,s_2)}{4 s_4}+\frac{e^{s_2+s_3} k_{12}(-s_2-s_3,s_2)}{4 s_1 s_4}+\frac{e^{s_1+s_2+s_3} k_{12}(-s_1-s_2-s_3,s_1)}{4 s_2 s_4}+\frac{e^{s_2+s_3+s_4} G_1(s_1) k_{12}(-s_2-s_3-s_4,s_2+s_3)}{4 s_3}+\frac{e^{s_2+s_3+s_4} k_{12}(-s_2-s_3-s_4,s_2+s_3)}{4 s_1 s_3}+\frac{e^{s_1+s_2+s_3+s_4} k_{12}(-s_1-s_2-s_3-s_4,s_1)}{8 s_3 (s_2+s_3)}+\frac{e^{s_1+s_2+s_3+s_4} k_{12}(-s_1-s_2-s_3-s_4,s_1+s_2)}{4 s_1 s_3}+\frac{e^{s_1+s_2+s_3+s_4} k_{12}(-s_1-s_2-s_3-s_4,s_1+s_2)}{4 s_2 s_3}+\frac{e^{s_1+s_2+s_3+s_4} k_{12}(-s_1-s_2-s_3-s_4,s_1+s_2)}{4 s_1 s_4}+\frac{e^{s_1+s_2+s_3+s_4} k_{12}(-s_1-s_2-s_3-s_4,s_1+s_2)}{4 s_2 s_4}+\frac{e^{s_1+s_2+s_3+s_4} k_{12}(-s_1-s_2-s_3-s_4,s_1+s_2+s_3)}{8 s_2 (s_2+s_3)}+\frac{e^{s_1+s_2} k_{13}(-s_1-s_2,s_1)}{8 s_3 s_4}+\frac{e^{s_1+s_2+s_3} k_{13}(-s_1-s_2-s_3,s_1+s_2)}{8 s_1 s_4}+\frac{e^{s_1+s_2+s_3} k_{13}(-s_1-s_2-s_3,s_1+s_2)}{8 s_2 s_4}+\frac{e^{s_2+s_3+s_4} G_1(s_1) k_{13}(-s_2-s_3-s_4,s_2)}{8 s_3}+\frac{e^{s_2+s_3+s_4} G_1(s_1) k_{13}(-s_2-s_3-s_4,s_2)}{8 s_4}+\frac{e^{s_2+s_3+s_4} k_{13}(-s_2-s_3-s_4,s_2)}{8 s_1 s_3}+\frac{e^{s_2+s_3+s_4} k_{13}(-s_2-s_3-s_4,s_2)}{8 s_1 s_4}+\frac{e^{s_1+s_2+s_3+s_4} k_{13}(-s_1-s_2-s_3-s_4,s_1)}{8 s_2 (s_2+s_3)}+\frac{e^{s_1+s_2+s_3+s_4} k_{13}(-s_1-s_2-s_3-s_4,s_1)}{8 s_2 s_4}+\frac{e^{s_1+s_2+s_3+s_4} k_{13}(-s_1-s_2-s_3-s_4,s_1)}{8 s_3 (s_3+s_4)}+\frac{e^{s_1+s_2+s_3+s_4} k_{13}(-s_1-s_2-s_3-s_4,s_1)}{8 s_4 (s_3+s_4)}+\frac{e^{s_1+s_2+s_3+s_4} k_{13}(-s_1-s_2-s_3-s_4,s_1+s_2+s_3)}{8 s_1 s_3}+\frac{e^{s_1+s_2+s_3+s_4} k_{13}(-s_1-s_2-s_3-s_4,s_1+s_2+s_3)}{8 s_2 s_3}+\frac{k_{17}(s_1,s_2+s_3,s_4)}{16 s_2}+\frac{e^{s_1} k_{17}(s_2,s_3,-s_1-s_2-s_3)}{16 (s_1+s_2+s_3+s_4)}+\frac{1}{16} G_1(s_1) k_{17}(s_2,s_3,s_4)+\frac{k_{17}(s_2,s_3,s_4)}{16 s_1}+\frac{e^{s_1} k_{17}(s_2,s_3+s_4,-s_1-s_2-s_3-s_4)}{16 s_3}+\frac{e^{s_1} k_{17}(s_2,s_3+s_4,-s_1-s_2-s_3-s_4)}{16 s_4}+\frac{e^{s_1+s_2+s_3} k_{17}(-s_1-s_2-s_3,s_1,s_2)}{16 (s_1+s_2+s_3+s_4)}+\frac{e^{s_1+s_2+s_3+s_4} k_{17}(-s_1-s_2-s_3-s_4,s_1,s_2)}{16 s_3}+\frac{e^{s_1+s_2+s_3+s_4} k_{17}(-s_1-s_2-s_3-s_4,s_1,s_2)}{16 s_4}+\frac{1}{16} e^{s_2+s_3} G_1(s_1) k_{17}(s_4,-s_2-s_3-s_4,s_2)+\frac{e^{s_2+s_3} k_{17}(s_4,-s_2-s_3-s_4,s_2)}{16 s_1}+\frac{e^{s_1+s_2+s_3} k_{17}(s_4,-s_1-s_2-s_3-s_4,s_1)}{16 s_2}+\frac{k_{18}(s_1,s_2,s_3)}{16 (-s_1-s_2-s_3-s_4)}+\frac{k_{18}(s_1,s_2,s_3)}{16 s_4}+\frac{k_{18}(s_1,s_2+s_3,s_4)}{16 s_3}+\frac{e^{s_1} k_{18}(s_2,s_3,-s_1-s_2-s_3)}{16 s_4}-\frac{1}{16} G_1(s_1) k_{18}(s_2,s_3,s_4)+\frac{e^{s_1} k_{18}(s_2,s_3,s_4)}{16 (s_1+s_2+s_3+s_4)}+\frac{k_{18}(s_1+s_2,s_3,s_4)}{16 s_1}+\frac{k_{18}(s_1+s_2,s_3,s_4)}{16 s_2}+\frac{e^{s_1+s_2+s_3} k_{18}(-s_1-s_2-s_3,s_1,s_2)}{16 s_4}+\frac{e^{s_1+s_2} k_{18}(s_3,-s_1-s_2-s_3,s_1)}{16 s_4}-\frac{1}{16} e^{s_2} G_1(s_1) k_{18}(s_3,s_4,-s_2-s_3-s_4)+\frac{e^{s_1+s_2} k_{18}(s_3,s_4,-s_2-s_3-s_4)}{16 (s_1+s_2+s_3+s_4)}+\frac{e^{s_1+s_2} k_{18}(s_3,s_4,-s_1-s_2-s_3-s_4)}{16 s_1}+\frac{e^{s_1+s_2} k_{18}(s_3,s_4,-s_1-s_2-s_3-s_4)}{16 s_2}+\frac{e^{s_1} k_{18}(s_2+s_3,s_4,-s_1-s_2-s_3-s_4)}{16 s_3}-\frac{1}{16} e^{s_2+s_3+s_4} G_1(s_1) k_{18}(-s_2-s_3-s_4,s_2,s_3)+\frac{e^{s_1+s_2+s_3+s_4} k_{18}(-s_2-s_3-s_4,s_2,s_3)}{16 (s_1+s_2+s_3+s_4)}+\frac{e^{s_1+s_2+s_3+s_4} k_{18}(-s_1-s_2-s_3-s_4,s_1,s_2+s_3)}{16 s_3}+\frac{e^{s_1+s_2+s_3+s_4} k_{18}(-s_1-s_2-s_3-s_4,s_1+s_2,s_3)}{16 s_1}+\frac{e^{s_1+s_2+s_3+s_4} k_{18}(-s_1-s_2-s_3-s_4,s_1+s_2,s_3)}{16 s_2}-\frac{1}{16} e^{s_2+s_3} G_1(s_1) k_{18}(s_4,-s_2-s_3-s_4,s_2)+\frac{e^{s_1+s_2+s_3} k_{18}(s_4,-s_2-s_3-s_4,s_2)}{16 (s_1+s_2+s_3+s_4)}+\frac{e^{s_1+s_2+s_3} k_{18}(s_4,-s_1-s_2-s_3-s_4,s_1)}{16 s_3}+\frac{e^{s_1+s_2+s_3} k_{18}(s_4,-s_1-s_2-s_3-s_4,s_1+s_2)}{16 s_1}+\frac{e^{s_1+s_2+s_3} k_{18}(s_4,-s_1-s_2-s_3-s_4,s_1+s_2)}{16 s_2}+\frac{e^{s_1+s_2+s_3+s_4} k_{19}(s_1,s_2,s_3)}{16 (-s_1-s_2-s_3-s_4)}+\frac{e^{s_1+s_2+s_3+s_4} k_{19}(s_1,s_2,s_3)}{16 (s_1+s_2+s_3+s_4)}+\frac{k_{19}(s_1,s_2,s_3+s_4)}{16 s_3}+\frac{k_{19}(s_1,s_2,s_3+s_4)}{16 s_4}+\frac{e^{s_1+s_2} k_{19}(s_3,-s_1-s_2-s_3,s_1)}{16 (s_1+s_2+s_3+s_4)}+\frac{1}{16} e^{s_2} G_1(s_1) k_{19}(s_3,s_4,-s_2-s_3-s_4)+\frac{e^{s_2} k_{19}(s_3,s_4,-s_2-s_3-s_4)}{16 s_1}+\frac{e^{s_1} k_{19}(s_2+s_3,s_4,-s_1-s_2-s_3-s_4)}{16 s_2}+\frac{1}{16} e^{s_2+s_3+s_4} G_1(s_1) k_{19}(-s_2-s_3-s_4,s_2,s_3)+\frac{e^{s_2+s_3+s_4} k_{19}(-s_2-s_3-s_4,s_2,s_3)}{16 s_1}+\frac{e^{s_1+s_2+s_3+s_4} k_{19}(-s_1-s_2-s_3-s_4,s_1,s_2+s_3)}{16 s_2}+\frac{e^{s_1+s_2} k_{19}(s_3+s_4,-s_1-s_2-s_3-s_4,s_1)}{16 s_3}+\frac{e^{s_1+s_2} k_{19}(s_3+s_4,-s_1-s_2-s_3-s_4,s_1)}{16 s_4}-\frac{k_{17}(s_1+s_2,s_3,s_4)}{16 s_1}-\frac{e^{s_1+s_2+s_3} k_{17}(s_4,-s_1-s_2-s_3-s_4,s_1+s_2)}{16 s_1}-\frac{k_{18}(s_2,s_3,s_4)}{16 s_1}-\frac{e^{s_2} k_{18}(s_3,s_4,-s_2-s_3-s_4)}{16 s_1}-\frac{e^{s_2+s_3+s_4} k_{18}(-s_2-s_3-s_4,s_2,s_3)}{16 s_1}-\frac{e^{s_2+s_3} k_{18}(s_4,-s_2-s_3-s_4,s_2)}{16 s_1}-\frac{e^{s_1+s_2} k_{19}(s_3,s_4,-s_1-s_2-s_3-s_4)}{16 s_1}-\frac{e^{s_1+s_2+s_3+s_4} k_{19}(-s_1-s_2-s_3-s_4,s_1+s_2,s_3)}{16 s_1}-\frac{k_{17}(s_1+s_2,s_3,s_4)}{16 s_2}-\frac{e^{s_1+s_2+s_3} k_{17}(s_4,-s_1-s_2-s_3-s_4,s_1+s_2)}{16 s_2}-\frac{k_{18}(s_1,s_2+s_3,s_4)}{16 s_2}-\frac{e^{s_1} k_{18}(s_2+s_3,s_4,-s_1-s_2-s_3-s_4)}{16 s_2}-\frac{e^{s_1+s_2+s_3+s_4} k_{18}(-s_1-s_2-s_3-s_4,s_1,s_2+s_3)}{16 s_2}-\frac{e^{s_1+s_2+s_3} k_{18}(s_4,-s_1-s_2-s_3-s_4,s_1)}{16 s_2}-\frac{e^{s_1+s_2} k_{19}(s_3,s_4,-s_1-s_2-s_3-s_4)}{16 s_2}-\frac{e^{s_1+s_2+s_3+s_4} k_{19}(-s_1-s_2-s_3-s_4,s_1+s_2,s_3)}{16 s_2}-\frac{G_1(s_1) k_8(s_2,s_3+s_4)}{4 s_3}-\frac{e^{s_2} G_1(s_1) k_{11}(s_3+s_4,-s_2-s_3-s_4)}{4 s_3}-\frac{e^{s_2+s_3+s_4} G_1(s_1) k_{12}(-s_2-s_3-s_4,s_2)}{4 s_3}-\frac{G_1(s_1) k_9(s_2+s_3,s_4)}{8 s_3}-\frac{e^{s_2+s_3} G_1(s_1) k_{10}(s_4,-s_2-s_3-s_4)}{8 s_3}-\frac{e^{s_2+s_3+s_4} G_1(s_1) k_{13}(-s_2-s_3-s_4,s_2+s_3)}{8 s_3}-\frac{e^{s_1} k_{17}(s_2+s_3,s_4,-s_1-s_2-s_3-s_4)}{16 s_3}-\frac{e^{s_1+s_2+s_3+s_4} k_{17}(-s_1-s_2-s_3-s_4,s_1,s_2+s_3)}{16 s_3}-\frac{k_{18}(s_1,s_2,s_3+s_4)}{16 s_3}-\frac{e^{s_1} k_{18}(s_2,s_3+s_4,-s_1-s_2-s_3-s_4)}{16 s_3}-\frac{e^{s_1+s_2+s_3+s_4} k_{18}(-s_1-s_2-s_3-s_4,s_1,s_2)}{16 s_3}-\frac{e^{s_1+s_2} k_{18}(s_3+s_4,-s_1-s_2-s_3-s_4,s_1)}{16 s_3}-\frac{k_{19}(s_1,s_2+s_3,s_4)}{16 s_3}-\frac{e^{s_1+s_2+s_3} k_{19}(s_4,-s_1-s_2-s_3-s_4,s_1)}{16 s_3}-\frac{k_8(s_2,s_3+s_4)}{4 s_1 s_3}-\frac{k_8(s_1+s_2+s_3,s_4)}{4 s_1 s_3}-\frac{e^{s_1+s_2+s_3} k_{11}(s_4,-s_1-s_2-s_3-s_4)}{4 s_1 s_3}-\frac{e^{s_2} k_{11}(s_3+s_4,-s_2-s_3-s_4)}{4 s_1 s_3}-\frac{e^{s_2+s_3+s_4} k_{12}(-s_2-s_3-s_4,s_2)}{4 s_1 s_3}-\frac{e^{s_1+s_2+s_3+s_4} k_{12}(-s_1-s_2-s_3-s_4,s_1+s_2+s_3)}{4 s_1 s_3}-\frac{k_9(s_1+s_2,s_3+s_4)}{8 s_1 s_3}-\frac{k_9(s_2+s_3,s_4)}{8 s_1 s_3}-\frac{e^{s_2+s_3} k_{10}(s_4,-s_2-s_3-s_4)}{8 s_1 s_3}-\frac{e^{s_1+s_2} k_{10}(s_3+s_4,-s_1-s_2-s_3-s_4)}{8 s_1 s_3}-\frac{e^{s_2+s_3+s_4} k_{13}(-s_2-s_3-s_4,s_2+s_3)}{8 s_1 s_3}-\frac{e^{s_1+s_2+s_3+s_4} k_{13}(-s_1-s_2-s_3-s_4,s_1+s_2)}{8 s_1 s_3}-\frac{G_1(s_1) k_6(s_4)}{4 s_2 s_3}-\frac{e^{s_4} G_1(s_1) k_7(-s_4)}{4 s_2 s_3}-\frac{k_8(s_1,s_2+s_3+s_4)}{8 s_2 s_3}-\frac{k_8(s_1+s_2+s_3,s_4)}{8 s_2 s_3}-\frac{k_9(s_1+s_2,s_3+s_4)}{8 s_2 s_3}-\frac{e^{s_1+s_2} k_{10}(s_3+s_4,-s_1-s_2-s_3-s_4)}{8 s_2 s_3}-\frac{e^{s_1+s_2+s_3} k_{11}(s_4,-s_1-s_2-s_3-s_4)}{8 s_2 s_3}-\frac{e^{s_1} k_{11}(s_2+s_3+s_4,-s_1-s_2-s_3-s_4)}{8 s_2 s_3}-\frac{e^{s_1+s_2+s_3+s_4} k_{12}(-s_1-s_2-s_3-s_4,s_1)}{8 s_2 s_3}-\frac{e^{s_1+s_2+s_3+s_4} k_{12}(-s_1-s_2-s_3-s_4,s_1+s_2+s_3)}{8 s_2 s_3}-\frac{e^{s_1+s_2+s_3+s_4} k_{13}(-s_1-s_2-s_3-s_4,s_1+s_2)}{8 s_2 s_3}-\frac{k_6(s_1+s_2+s_3+s_4)}{4 s_1 s_2 s_3}-\frac{e^{s_1+s_2+s_3+s_4} k_7(-s_1-s_2-s_3-s_4)}{4 s_1 s_2 s_3}-\frac{k_6(s_4)}{4 s_2 (s_1+s_2) s_3}-\frac{e^{s_4} k_7(-s_4)}{4 s_2 (s_1+s_2) s_3}-\frac{G_1(s_1) k_6(s_2+s_3+s_4)}{4 s_2 (s_2+s_3)}-\frac{e^{s_2+s_3+s_4} G_1(s_1) k_7(-s_2-s_3-s_4)}{4 s_2 (s_2+s_3)}-\frac{k_8(s_1,s_2+s_3+s_4)}{8 s_2 (s_2+s_3)}-\frac{k_9(s_1+s_2+s_3,s_4)}{8 s_2 (s_2+s_3)}-\frac{e^{s_1+s_2+s_3} k_{10}(s_4,-s_1-s_2-s_3-s_4)}{8 s_2 (s_2+s_3)}-\frac{e^{s_1} k_{11}(s_2+s_3+s_4,-s_1-s_2-s_3-s_4)}{8 s_2 (s_2+s_3)}-\frac{e^{s_1+s_2+s_3+s_4} k_{12}(-s_1-s_2-s_3-s_4,s_1)}{8 s_2 (s_2+s_3)}-\frac{e^{s_1+s_2+s_3+s_4} k_{13}(-s_1-s_2-s_3-s_4,s_1+s_2+s_3)}{8 s_2 (s_2+s_3)}-\frac{k_6(s_2+s_3+s_4)}{4 s_1 s_2 (s_2+s_3)}-\frac{e^{s_2+s_3+s_4} k_7(-s_2-s_3-s_4)}{4 s_1 s_2 (s_2+s_3)}-\frac{G_1(s_1) k_6(s_2+s_3+s_4)}{4 s_3 (s_2+s_3)}-\frac{e^{s_2+s_3+s_4} G_1(s_1) k_7(-s_2-s_3-s_4)}{4 s_3 (s_2+s_3)}-\frac{k_8(s_1+s_2+s_3,s_4)}{8 s_3 (s_2+s_3)}-\frac{e^{s_1+s_2+s_3} k_{11}(s_4,-s_1-s_2-s_3-s_4)}{8 s_3 (s_2+s_3)}-\frac{e^{s_1+s_2+s_3+s_4} k_{12}(-s_1-s_2-s_3-s_4,s_1+s_2+s_3)}{8 s_3 (s_2+s_3)}-\frac{k_6(s_2+s_3+s_4)}{4 s_1 s_3 (s_2+s_3)}-\frac{e^{s_2+s_3+s_4} k_7(-s_2-s_3-s_4)}{4 s_1 s_3 (s_2+s_3)}-\frac{k_6(s_4)}{4 s_1 s_2 (s_1+s_2+s_3)}-\frac{e^{s_4} k_7(-s_4)}{4 s_1 s_2 (s_1+s_2+s_3)}-\frac{k_6(s_1+s_2+s_3+s_4)}{4 s_2 (s_1+s_2) (s_1+s_2+s_3)}-\frac{e^{s_1+s_2+s_3+s_4} k_7(-s_1-s_2-s_3-s_4)}{4 s_2 (s_1+s_2) (s_1+s_2+s_3)}-\frac{k_6(s_1+s_2+s_3+s_4)}{4 s_3 (s_2+s_3) (s_1+s_2+s_3)}-\frac{e^{s_1+s_2+s_3+s_4} k_7(-s_1-s_2-s_3-s_4)}{4 s_3 (s_2+s_3) (s_1+s_2+s_3)}-\frac{e^{s_1+s_2+s_3+s_4} k_{18}(s_1,s_2,s_3)}{16 (-s_1-s_2-s_3-s_4)}-\frac{k_{19}(s_1,s_2,s_3)}{16 (-s_1-s_2-s_3-s_4)}-\frac{G_1(s_1) k_8(s_2,s_3+s_4)}{4 s_4}-\frac{e^{s_2} G_1(s_1) k_{11}(s_3+s_4,-s_2-s_3-s_4)}{4 s_4}-\frac{e^{s_2+s_3+s_4} G_1(s_1) k_{12}(-s_2-s_3-s_4,s_2)}{4 s_4}-\frac{G_1(s_1) k_9(s_2,s_3)}{8 s_4}-\frac{e^{s_2} G_1(s_1) k_{10}(s_3,-s_2-s_3)}{8 s_4}-\frac{e^{s_2+s_3} G_1(s_1) k_{13}(-s_2-s_3,s_2)}{8 s_4}-\frac{e^{s_1} k_{17}(s_2,s_3,-s_1-s_2-s_3)}{16 s_4}-\frac{e^{s_1+s_2+s_3} k_{17}(-s_1-s_2-s_3,s_1,s_2)}{16 s_4}-\frac{k_{18}(s_1,s_2,s_3+s_4)}{16 s_4}-\frac{e^{s_1} k_{18}(s_2,s_3+s_4,-s_1-s_2-s_3-s_4)}{16 s_4}-\frac{e^{s_1+s_2+s_3+s_4} k_{18}(-s_1-s_2-s_3-s_4,s_1,s_2)}{16 s_4}-\frac{e^{s_1+s_2} k_{18}(s_3+s_4,-s_1-s_2-s_3-s_4,s_1)}{16 s_4}-\frac{k_{19}(s_1,s_2,s_3)}{16 s_4}-\frac{e^{s_1+s_2} k_{19}(s_3,-s_1-s_2-s_3,s_1)}{16 s_4}-\frac{k_8(s_2,s_3+s_4)}{4 s_1 s_4}-\frac{k_8(s_1+s_2,s_3)}{4 s_1 s_4}-\frac{e^{s_1+s_2} k_{11}(s_3,-s_1-s_2-s_3)}{4 s_1 s_4}-\frac{e^{s_2} k_{11}(s_3+s_4,-s_2-s_3-s_4)}{4 s_1 s_4}-\frac{e^{s_1+s_2+s_3} k_{12}(-s_1-s_2-s_3,s_1+s_2)}{4 s_1 s_4}-\frac{e^{s_2+s_3+s_4} k_{12}(-s_2-s_3-s_4,s_2)}{4 s_1 s_4}-\frac{k_9(s_2,s_3)}{8 s_1 s_4}-\frac{k_9(s_1+s_2,s_3+s_4)}{8 s_1 s_4}-\frac{e^{s_2} k_{10}(s_3,-s_2-s_3)}{8 s_1 s_4}-\frac{e^{s_1+s_2} k_{10}(s_3+s_4,-s_1-s_2-s_3-s_4)}{8 s_1 s_4}-\frac{e^{s_2+s_3} k_{13}(-s_2-s_3,s_2)}{8 s_1 s_4}-\frac{e^{s_1+s_2+s_3+s_4} k_{13}(-s_1-s_2-s_3-s_4,s_1+s_2)}{8 s_1 s_4}-\frac{k_8(s_1,s_2+s_3+s_4)}{4 s_2 s_4}-\frac{k_8(s_1+s_2,s_3)}{4 s_2 s_4}-\frac{e^{s_1+s_2} k_{11}(s_3,-s_1-s_2-s_3)}{4 s_2 s_4}-\frac{e^{s_1} k_{11}(s_2+s_3+s_4,-s_1-s_2-s_3-s_4)}{4 s_2 s_4}-\frac{e^{s_1+s_2+s_3} k_{12}(-s_1-s_2-s_3,s_1+s_2)}{4 s_2 s_4}-\frac{e^{s_1+s_2+s_3+s_4} k_{12}(-s_1-s_2-s_3-s_4,s_1)}{4 s_2 s_4}-\frac{k_9(s_1,s_2+s_3)}{8 s_2 s_4}-\frac{k_9(s_1+s_2,s_3+s_4)}{8 s_2 s_4}-\frac{e^{s_1} k_{10}(s_2+s_3,-s_1-s_2-s_3)}{8 s_2 s_4}-\frac{e^{s_1+s_2} k_{10}(s_3+s_4,-s_1-s_2-s_3-s_4)}{8 s_2 s_4}-\frac{e^{s_1+s_2+s_3} k_{13}(-s_1-s_2-s_3,s_1)}{8 s_2 s_4}-\frac{e^{s_1+s_2+s_3+s_4} k_{13}(-s_1-s_2-s_3-s_4,s_1+s_2)}{8 s_2 s_4}-\frac{G_1(s_1) k_6(s_2)}{2 s_3 s_4}-\frac{e^{s_2} G_1(s_1) k_7(-s_2)}{2 s_3 s_4}-\frac{k_9(s_1,s_2+s_3+s_4)}{8 s_3 s_4}-\frac{e^{s_1} k_{10}(s_2+s_3+s_4,-s_1-s_2-s_3-s_4)}{8 s_3 s_4}-\frac{e^{s_1+s_2+s_3+s_4} k_{13}(-s_1-s_2-s_3-s_4,s_1)}{8 s_3 s_4}-\frac{k_6(s_2)}{2 s_1 s_3 s_4}-\frac{k_6(s_1+s_2+s_3+s_4)}{2 s_1 s_3 s_4}-\frac{e^{s_2} k_7(-s_2)}{2 s_1 s_3 s_4}-\frac{e^{s_1+s_2+s_3+s_4} k_7(-s_1-s_2-s_3-s_4)}{2 s_1 s_3 s_4}-\frac{k_6(s_1+s_2)}{2 s_2 s_3 s_4}-\frac{e^{s_1+s_2} k_7(-s_1-s_2)}{2 s_2 s_3 s_4}-\frac{k_6(s_1+s_2+s_3)}{4 s_2 (s_2+s_3) s_4}-\frac{k_6(s_1+s_2+s_3+s_4)}{4 s_2 (s_2+s_3) s_4}-\frac{e^{s_1+s_2+s_3} k_7(-s_1-s_2-s_3)}{4 s_2 (s_2+s_3) s_4}-\frac{e^{s_1+s_2+s_3+s_4} k_7(-s_1-s_2-s_3-s_4)}{4 s_2 (s_2+s_3) s_4}-\frac{k_6(s_1+s_2+s_3)}{4 s_3 (s_2+s_3) s_4}-\frac{e^{s_1+s_2+s_3} k_7(-s_1-s_2-s_3)}{4 s_3 (s_2+s_3) s_4}-\frac{G_1(s_1) k_6(s_2+s_3+s_4)}{2 s_3 (s_3+s_4)}-\frac{e^{s_2+s_3+s_4} G_1(s_1) k_7(-s_2-s_3-s_4)}{2 s_3 (s_3+s_4)}-\frac{k_9(s_1,s_2)}{8 s_3 (s_3+s_4)}-\frac{e^{s_1} k_{10}(s_2,-s_1-s_2)}{8 s_3 (s_3+s_4)}-\frac{e^{s_1+s_2} k_{13}(-s_1-s_2,s_1)}{8 s_3 (s_3+s_4)}-\frac{k_6(s_1+s_2)}{2 s_1 s_3 (s_3+s_4)}-\frac{k_6(s_2+s_3+s_4)}{2 s_1 s_3 (s_3+s_4)}-\frac{e^{s_1+s_2} k_7(-s_1-s_2)}{2 s_1 s_3 (s_3+s_4)}-\frac{e^{s_2+s_3+s_4} k_7(-s_2-s_3-s_4)}{2 s_1 s_3 (s_3+s_4)}-\frac{k_6(s_1+s_2+s_3+s_4)}{2 s_2 s_3 (s_3+s_4)}-\frac{e^{s_1+s_2+s_3+s_4} k_7(-s_1-s_2-s_3-s_4)}{2 s_2 s_3 (s_3+s_4)}-\frac{G_1(s_1) k_6(s_2+s_3+s_4)}{2 s_4 (s_3+s_4)}-\frac{e^{s_2+s_3+s_4} G_1(s_1) k_7(-s_2-s_3-s_4)}{2 s_4 (s_3+s_4)}-\frac{k_9(s_1,s_2)}{8 s_4 (s_3+s_4)}-\frac{e^{s_1} k_{10}(s_2,-s_1-s_2)}{8 s_4 (s_3+s_4)}-\frac{e^{s_1+s_2} k_{13}(-s_1-s_2,s_1)}{8 s_4 (s_3+s_4)}-\frac{k_6(s_1+s_2)}{2 s_1 s_4 (s_3+s_4)}-\frac{k_6(s_2+s_3+s_4)}{2 s_1 s_4 (s_3+s_4)}-\frac{e^{s_1+s_2} k_7(-s_1-s_2)}{2 s_1 s_4 (s_3+s_4)}-\frac{e^{s_2+s_3+s_4} k_7(-s_2-s_3-s_4)}{2 s_1 s_4 (s_3+s_4)}-\frac{k_6(s_1+s_2+s_3+s_4)}{2 s_2 s_4 (s_3+s_4)}-\frac{e^{s_1+s_2+s_3+s_4} k_7(-s_1-s_2-s_3-s_4)}{2 s_2 s_4 (s_3+s_4)}-\frac{e^{s_1} k_7(-s_1)}{4 s_2 s_3 (s_2+s_3+s_4)}-\frac{e^{s_1} k_7(-s_1)}{4 s_2 (s_2+s_3) (s_2+s_3+s_4)}-\frac{k_6(s_1+s_2+s_3+s_4)}{4 s_3 (s_2+s_3) (s_2+s_3+s_4)}-\frac{e^{s_1+s_2+s_3+s_4} k_7(-s_1-s_2-s_3-s_4)}{4 s_3 (s_2+s_3) (s_2+s_3+s_4)}-\frac{e^{s_1} k_7(-s_1)}{2 s_2 s_4 (s_2+s_3+s_4)}-\frac{e^{s_1} k_{17}(s_2,s_3,s_4)}{16 (s_1+s_2+s_3+s_4)}-\frac{e^{s_1+s_2+s_3} k_{17}(s_4,-s_2-s_3-s_4,s_2)}{16 (s_1+s_2+s_3+s_4)}-\frac{e^{s_1+s_2+s_3+s_4} k_{18}(s_1,s_2,s_3)}{16 (s_1+s_2+s_3+s_4)}-\frac{e^{s_1} k_{18}(s_2,s_3,-s_1-s_2-s_3)}{16 (s_1+s_2+s_3+s_4)}-\frac{e^{s_1+s_2+s_3} k_{18}(-s_1-s_2-s_3,s_1,s_2)}{16 (s_1+s_2+s_3+s_4)}-\frac{e^{s_1+s_2} k_{18}(s_3,-s_1-s_2-s_3,s_1)}{16 (s_1+s_2+s_3+s_4)}-\frac{e^{s_1+s_2} k_{19}(s_3,s_4,-s_2-s_3-s_4)}{16 (s_1+s_2+s_3+s_4)}-\frac{e^{s_1+s_2+s_3+s_4} k_{19}(-s_2-s_3-s_4,s_2,s_3)}{16 (s_1+s_2+s_3+s_4)}=0. 
   \end{math}
   \end{center}
   }

\section{Lengthy explicit formulas}
\label{explicit3and4fnsappsec}

In Section \ref{ExplicitFormulasSec} we presented the explicit formula for the one and two 
variable functions $K_1, \dots, K_7$ that appear in the formula \eqref{a_4expression} for 
the term $a_4 \in \CNT$. Since the expressions associated with the three and four variable 
functions $K_8, \dots, K_{20}$ are quite lengthy, they are provided in this appendix.

\subsection{The three variable functions $K_8, \dots, K_{16}$}
\label{ExplicitThreeVarsSec}

First we cover the explicit formulas for the three variable functions  
$K_8, K_9, \dots, K_{16}$.

\subsubsection{The function $K_8$} 
We have 
\[
K_8(s_1, s_2, s_3) = \sum_{i=1}^{18} K_{8, i}(s_1, s_2, s_3), 
\]
where 
\[
K_{8, 1}(s_1, s_2, s_3)= -\frac{4 \pi  e^{\frac{3 s_1}{2}+\frac{5 s_2}{2}} \left(\left(e^{s_1} \left(2 e^{s_2}-1\right)-1\right) s_1+\left(e^{s_1}-1\right) s_2\right)}{\left(e^{s_2}-1\right) \left(e^{s_1+s_2}-1\right){}^2 \left(e^{\frac{s_3}{2}}-1\right){}^2 s_1 s_2 \left(s_1+s_2\right)}, 
\]
\[
K_{8, 2}(s_1, s_2, s_3)=\frac{4 \pi  e^{\frac{3 s_1}{2}+\frac{5 s_2}{2}} \left(\left(e^{s_1} \left(2 e^{s_2}-1\right)-1\right) s_1+\left(e^{s_1}-1\right) s_2\right)}{\left(e^{s_2}-1\right) \left(e^{s_1+s_2}-1\right){}^2 \left(e^{\frac{s_3}{2}}+1\right){}^2 s_1 s_2 \left(s_1+s_2\right)}, 
\]
\[
K_{8, 3}(s_1, s_2, s_3)=-\frac{2 \pi  e^{\frac{3 s_1}{2}} \left(s_2-2 s_3\right)}{\left(e^{s_1}-1\right) \left(e^{\frac{1}{2} \left(s_2+s_3\right)}-1\right){}^3 s_1 s_2 s_3}, 
\]
\[
K_{8, 4}(s_1, s_2, s_3)=-\frac{2 \pi  e^{\frac{3 s_1}{2}} \left(s_2-2 s_3\right)}{\left(e^{s_1}-1\right) \left(e^{\frac{1}{2} \left(s_2+s_3\right)}+1\right){}^3 s_1 s_2 s_3}, 
\]
\[
K_{8, 5}(s_1, s_2, s_3)=-\frac{3 \pi  \left(s_1^2-s_3 s_1-s_2^2+2 s_3^2+s_2 s_3\right)}{\left(e^{\frac{1}{2} \left(s_1+s_2+s_3\right)}-1\right){}^4 s_1 \left(s_1+s_2\right) s_3 \left(s_2+s_3\right)}, 
\]
\[
K_{8, 6}(s_1, s_2, s_3)=\frac{3 \pi  \left(s_1^2-s_3 s_1-s_2^2+2 s_3^2+s_2 s_3\right)}{\left(e^{\frac{1}{2} \left(s_1+s_2+s_3\right)}+1\right){}^4 s_1 \left(s_1+s_2\right) s_3 \left(s_2+s_3\right)}, 
\]
\[
K_{8, 7}(s_1, s_2, s_3)=\frac{4 \pi  e^{\frac{3 s_1}{2}} \left(2 \left(e^{s_2} \left(s_3-3\right)-s_3+2\right) s_3+s_2 \left(-2 s_3+e^{s_2} \left(2 s_3-1\right)-1\right)\right)}{\left(e^{s_1}-1\right) \left(e^{s_2}-1\right) \left(e^{\frac{1}{2} \left(s_2+s_3\right)}-1\right) s_1 s_2 s_3 \left(s_2+s_3\right)}, 
\]
\[
K_{8, 8}(s_1, s_2, s_3)=\frac{4 \pi  e^{\frac{3 s_1}{2}} \left(2 \left(e^{s_2} \left(s_3-3\right)-s_3+2\right) s_3+s_2 \left(-2 s_3+e^{s_2} \left(2 s_3-1\right)-1\right)\right)}{\left(e^{s_1}-1\right) \left(e^{s_2}-1\right) \left(e^{\frac{1}{2} \left(s_2+s_3\right)}+1\right) s_1 s_2 s_3 \left(s_2+s_3\right)}, 
\]
{\tiny
\[
K_{8, 9}(s_1, s_2, s_3)=\frac{\pi  e^{\frac{3 s_1}{2}} \left(\left(e^{s_2}+3\right) s_2^2+\left(-7 s_3+e^{s_2} \left(15 s_3+4\right)-4\right) s_2+2 s_3 \left(-5 s_3+e^{s_2} \left(7 s_3-4\right)+4\right)\right)}{\left(e^{s_1}-1\right) \left(e^{s_2}-1\right) \left(e^{\frac{1}{2} \left(s_2+s_3\right)}-1\right){}^2 s_1 s_2 s_3 \left(s_2+s_3\right)}
\]
\[
K_{8, 10}(s_1, s_2, s_3)=\frac{\pi  e^{\frac{3 s_1}{2}} \left(-\left(e^{s_2}+3\right) s_2^2+\left(7 s_3-e^{s_2} \left(15 s_3+4\right)+4\right) s_2-2 s_3 \left(-5 s_3+e^{s_2} \left(7 s_3-4\right)+4\right)\right)}{\left(e^{s_1}-1\right) \left(e^{s_2}-1\right) \left(e^{\frac{1}{2} \left(s_2+s_3\right)}+1\right){}^2 s_1 s_2 s_3 \left(s_2+s_3\right)}, 
\]
\[
K_{8, 11}(s_1, s_2, s_3)=\frac{8 \pi  e^{\frac{3}{2} \left(s_1+s_2\right)} \left(e^{s_2} \left(e^{s_1}-1\right) s_2+s_1 \left(-e^{s_1+2 s_2} \left(s_3-2\right)+e^{s_2} \left(e^{s_1}+1\right) \left(s_3-1\right)-s_3\right)\right)}{\left(e^{s_2}-1\right) \left(e^{s_1+s_2}-1\right){}^2 \left(e^{\frac{s_3}{2}}-1\right) s_1 s_2 \left(s_1+s_2\right) s_3}, 
\]
\[
K_{8, 12}(s_1, s_2, s_3)=\frac{8 \pi  e^{\frac{3}{2} \left(s_1+s_2\right)} \left(e^{s_2} \left(e^{s_1}-1\right) s_2+s_1 \left(-e^{s_1+2 s_2} \left(s_3-2\right)+e^{s_2} \left(e^{s_1}+1\right) \left(s_3-1\right)-s_3\right)\right)}{\left(e^{s_2}-1\right) \left(e^{s_1+s_2}-1\right){}^2 \left(e^{\frac{s_3}{2}}+1\right) s_1 s_2 \left(s_1+s_2\right) s_3}, 
\]
\[
K_{8, 13}(s_1, s_2, s_3)\left(e^{s_1}-1\right) \left(e^{s_1+s_2}-1\right) \left(e^{\frac{1}{2} \left(s_1+s_2+s_3\right)}-1\right){}^3 s_1 s_2 \left(s_1+s_2\right) s_3 \left(s_2+s_3\right) \left(s_1+s_2+s_3\right)=
\]
$\pi  (((e^{s_1} (e^{s_2} (7-5 e^{s_1})+9)-11) s_2-4 e^{s_1} (e^{s_2}-1) s_3) s_1^3+((e^{s_1} (e^{s_2} (11-5 e^{s_1})+9)-15) s_2^2+2 (3 (e^{s_1}-1) (e^{s_1+s_2}-1)-2 (e^{s_1} (2 e^{s_2}-3)+1) s_3) s_2-8 e^{s_1} (e^{s_2}-1) s_3^2) s_1^2+((e^{s_1} (e^{s_2} (5 e^{s_1}+1)-9)+3) s_2^3-4 (e^{s_1} (e^{s_2} (4 e^{s_1}-3)-3)+2) s_3 s_2^2+s_3 ((e^{s_1} (25-7 e^{s_2} (3 e^{s_1}-1))-11) s_3-6 (e^{s_1}-1) (e^{s_1+s_2}-1)) s_2-4 e^{s_1} (e^{s_2}-1) s_3^3) s_1+s_2 (s_2+s_3) (7 s_2^2-11 s_3 s_2-6 s_2-18 s_3^2+12 s_3+e^{2 s_1+s_2} (5 s_2^2-3 (7 s_3+2) s_2+2 s_3 (6-13 s_3))+e^{s_1} (-3 (e^{s_2}+3) s_2^2+(13 s_3+e^{s_2} (19 s_3+6)+6) s_2+2 (e^{s_2}+1) s_3 (11 s_3-6))))$, 
\[
K_{8, 14}(s_1, s_2, s_3)=\frac{K_{8, 13}^{\text{num}}(s_1, s_2, s_3)} {\left(e^{s_1}-1\right) \left(e^{s_1+s_2}-1\right) \left(e^{\frac{1}{2} \left(s_1+s_2+s_3\right)}+1\right){}^3 s_1 s_2 \left(s_1+s_2\right) s_3 \left(s_2+s_3\right) \left(s_1+s_2+s_3\right)},   
\]

\[
K_{8, 15}(s_1, s_2, s_3)\left(e^{s_1}-1\right) \left(e^{s_1+s_2}-1\right){}^2 \left(e^{\frac{1}{2} \left(s_1+s_2+s_3\right)}-1\right) s_1 s_2 \left(s_1+s_2\right) s_3 \left(s_2+s_3\right) \left(s_1+s_2+s_3\right)=
\]
$
4 \pi  (-2 (e^{s_1+s_2}-1) (e^{s_1} (e^{s_2}-1) s_3-(e^{s_1}-1) s_2) s_1^3+(4 (e^{s_1}-1) (e^{s_1+s_2}-1) s_2^2+((-8 e^{s_1}+4 e^{s_1+s_2}-6 e^{2 (s_1+s_2)}+8 e^{2 s_1+s_2}+2) s_3+4 e^{s_1}+6 e^{s_1+s_2}+e^{2 (s_1+s_2)}-6 e^{2 s_1+s_2}-5) s_2-2 e^{s_1} (e^{s_2}-1) s_3 (2 (e^{s_1+s_2}-1) s_3-3 e^{s_1+s_2}+2)) s_1^2+(2 (e^{s_1}-1) (e^{s_1+s_2}-1) s_2^3-2 (e^{s_1+s_2}-1) ((-5 e^{s_1}+e^{s_1+s_2}+2 e^{2 s_1+s_2}+2) s_3+e^{s_1}-e^{2 s_1+s_2}-2) s_2^2-s_3 (2 (5 e^{s_1}-e^{s_1+s_2}+2 e^{2 (s_1+s_2)}-7 e^{2 s_1+s_2}+2 e^{3 s_1+2 s_2}-1) s_3-6 e^{s_1}+10 e^{s_1+s_2}-11 e^{2 (s_1+s_2)}+10 e^{2 s_1+s_2}-2 e^{3 s_1+2 s_2}-1) s_2-2 e^{s_1} (e^{s_2}-1) s_3^2 ((e^{s_1+s_2}-1) s_3-3 e^{s_1+s_2}+2)) s_1-s_2 (s_2+s_3) (2 s_3 ((2 e^{s_1}-1) s_3 (e^{s_1+s_2}-1){}^2-5 e^{s_1}-6 e^{s_1+s_2}+4 e^{2 (s_1+s_2)}+11 e^{2 s_1+s_2}-7 e^{3 s_1+2 s_2}+3)+s_2 (2 (2 e^{s_1}-1) s_3 (e^{s_1+s_2}-1){}^2+2 e^{s_1}+2 e^{s_1+s_2}+e^{2 (s_1+s_2)}-2 e^{2 s_1+s_2}-2 e^{3 s_1+2 s_2}-1))), 
$
\[
K_{8, 16}(s_1, s_2, s_3)=\frac{K_{8, 15}^{\text{num}}(s_1, s_2, s_3)}{\left(e^{s_1}-1\right) \left(e^{s_1+s_2}-1\right){}^2 \left(e^{\frac{1}{2} \left(s_1+s_2+s_3\right)}+1\right) s_1 s_2 \left(s_1+s_2\right) s_3 \left(s_2+s_3\right) \left(s_1+s_2+s_3\right)}, 
\]
\[
K_{8, 17}(s_1, s_2, s_3) 2 \left(e^{s_1}-1\right) \left(e^{s_1+s_2}-1\right){}^2 \left(e^{\frac{1}{2} \left(s_1+s_2+s_3\right)}-1\right){}^2 s_1 s_2 \left(s_1+s_2\right) s_3 \left(s_2+s_3\right) \left(s_1+s_2+s_3\right)=
\]
$
-\pi  (((-29+23 e^{s_1}+38 e^{s_1+s_2}-e^{2 (s_1+s_2)}-34 e^{2 s_1+s_2}+3 e^{3 s_1+2 s_2}) s_2+4 e^{s_1} (-1+e^{s_2}) (-5+7 e^{s_1+s_2}) s_3) s_1^3+((-49+31 e^{s_1}+62 e^{s_1+s_2}-5 e^{2 (s_1+s_2)}-50 e^{2 s_1+s_2}+11 e^{3 s_1+2 s_2}) s_2^2+2 (2 (-5+17 e^{s_1}-9 e^{s_1+s_2}+20 e^{2 (s_1+s_2)}-25 e^{2 s_1+s_2}+2 e^{3 s_1+2 s_2}) s_3-15 e^{s_1}-30 e^{s_1+s_2}+11 e^{2 (s_1+s_2)}+22 e^{2 s_1+s_2}-7 e^{3 s_1+2 s_2}+19) s_2+8 e^{s_1} (-1+e^{s_2}) s_3 ((-5+7 e^{s_1+s_2}) s_3-2 e^{s_1+s_2}+2)) s_1^2+((-11-7 e^{s_1}+10 e^{s_1+s_2}-7 e^{2 (s_1+s_2)}+2 e^{2 s_1+s_2}+13 e^{3 s_1+2 s_2}) s_2^3+4 (4 (-1+e^{s_1+s_2}){}^2+(-10+19 e^{s_1}+9 e^{s_1+s_2}-e^{2 (s_1+s_2)}-41 e^{2 s_1+s_2}+24 e^{3 s_1+2 s_2}) s_3) s_2^2+s_3 (2 (-11-e^{s_1}+30 e^{s_1+s_2}-19 e^{2 (s_1+s_2)}-6 e^{2 s_1+s_2}+7 e^{3 s_1+2 s_2})+(-29+103 e^{s_1}+6 e^{s_1+s_2}+31 e^{2 (s_1+s_2)}-194 e^{2 s_1+s_2}+83 e^{3 s_1+2 s_2}) s_3) s_2+4 e^{s_1} (-1+e^{s_2}) s_3^2 ((-5+7 e^{s_1+s_2}) s_3-4 e^{s_1+s_2}+4)) s_1+s_2 (s_2+s_3) ((9-15 e^{s_1}-14 e^{s_1+s_2}-3 e^{2 (s_1+s_2)}+18 e^{2 s_1+s_2}+5 e^{3 s_1+2 s_2}) s_2^2+((-29+43 e^{s_1}+66 e^{s_1+s_2}-53 e^{2 (s_1+s_2)}-110 e^{2 s_1+s_2}+83 e^{3 s_1+2 s_2}) s_3+30 e^{s_1}+28 e^{s_1+s_2}-6 e^{2 (s_1+s_2)}-44 e^{2 s_1+s_2}+14 e^{3 s_1+2 s_2}-22) s_2+2 s_3 ((-19+29 e^{s_1}+40 e^{s_1+s_2}-25 e^{2 (s_1+s_2)}-64 e^{2 s_1+s_2}+39 e^{3 s_1+2 s_2}) s_3-38 e^{s_1}-68 e^{s_1+s_2}+38 e^{2 (s_1+s_2)}+84 e^{2 s_1+s_2}-46 e^{3 s_1+2 s_2}+30))), 
$
\[
K_{8, 18}(s_1, s_2, s_3)=\frac{-K_{8, 17}^{\text{num}}(s_1, s_2, s_3)}{2 \left(e^{s_1}-1\right) \left(e^{s_1+s_2}-1\right){}^2 \left(e^{\frac{1}{2} \left(s_1+s_2+s_3\right)}+1\right){}^2 s_1 s_2 \left(s_1+s_2\right) s_3 \left(s_2+s_3\right) \left(s_1+s_2+s_3\right)}. 
\]
}

\smallskip

Therefore, we can write the function $K_8$ as a quotient 
\[
K_8(s_1, s_2, s_3) 
= 
\frac{K_8^{\text{num}}(s_1, s_2, s_3)}{K_8^{\text{den}}(s_1, s_2, s_3)}, 
\]
where
\begin{equation} \label{K8den}
K_8^{\text{den}}(s_1, s_2, s_3)= 
\left(e^{s_1}-1\right) \left(e^{s_2}-1\right)  \left(e^{s_1+s_2}-1\right){}^2 \left(e^{s_3}-1\right){}^2 \left(e^{s_2+s_3}-1\right){}^3 \times
\end{equation}
\[
 \left(e^{s_1+s_2+s_3}-1\right){}^4 s_1 s_2 \left(s_1+s_2\right) s_3 \left(s_2+s_3\right) \left(s_1+s_2+s_3\right), 
\]
and $K_8^{\text{num}}(s_1, s_2, s_3)$
is a polynomial in $s_1, s_2, s_3, e^{s_1/2}, e^{s_2/2}, e^{s_3/2}$. The points 
$(i, j, m)$ and $(n, p, q)$ such that 
$s_1^i s_2^j s_3^m  e^{n s_1/2} e^{p s_2/2} e^{q s_3/2}$ appears in $K_8^{\text{num}}(s_1, s_2, s_3)$ are plotted in Figure \ref{K8spowers} and Figure \ref{K8Etospowers}.

\begin{figure}
\includegraphics[scale=0.7]{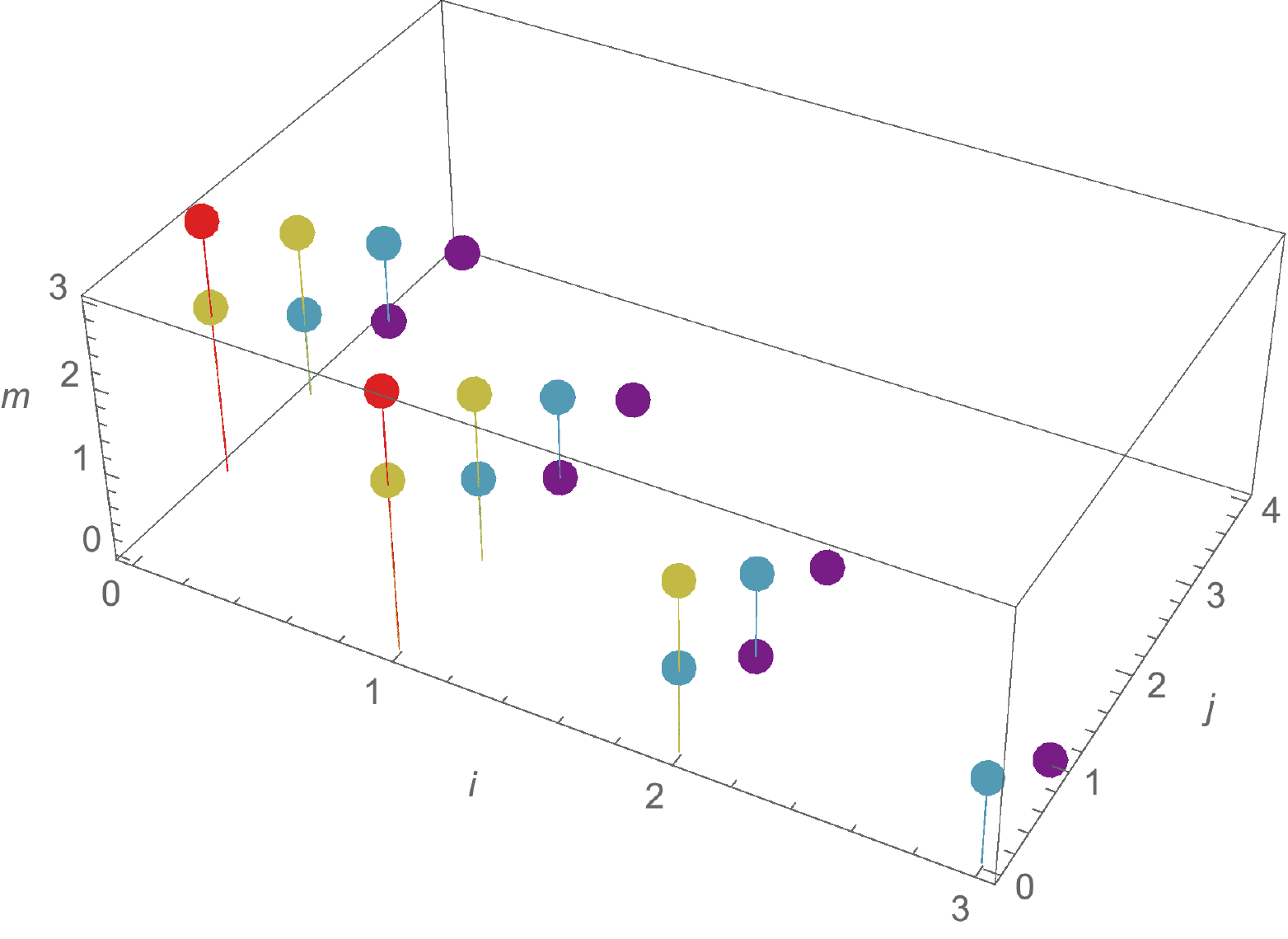}
\caption{The points  $(i, j, m)$  such that 
$s_1^i\, s_2^j\, s_3^m \, e^{n s_1/2}\, e^{p s_2/2} \, e^{q s_3/2}$  appears in the expressions    
for $K_8^{\text{num}}$, $K_9^{\text{num}}$, $\dots,$ $K_{16}^{\text{num}}$.}
\label{K8spowers}
\end{figure}

\begin{figure}
\includegraphics[scale=0.7]{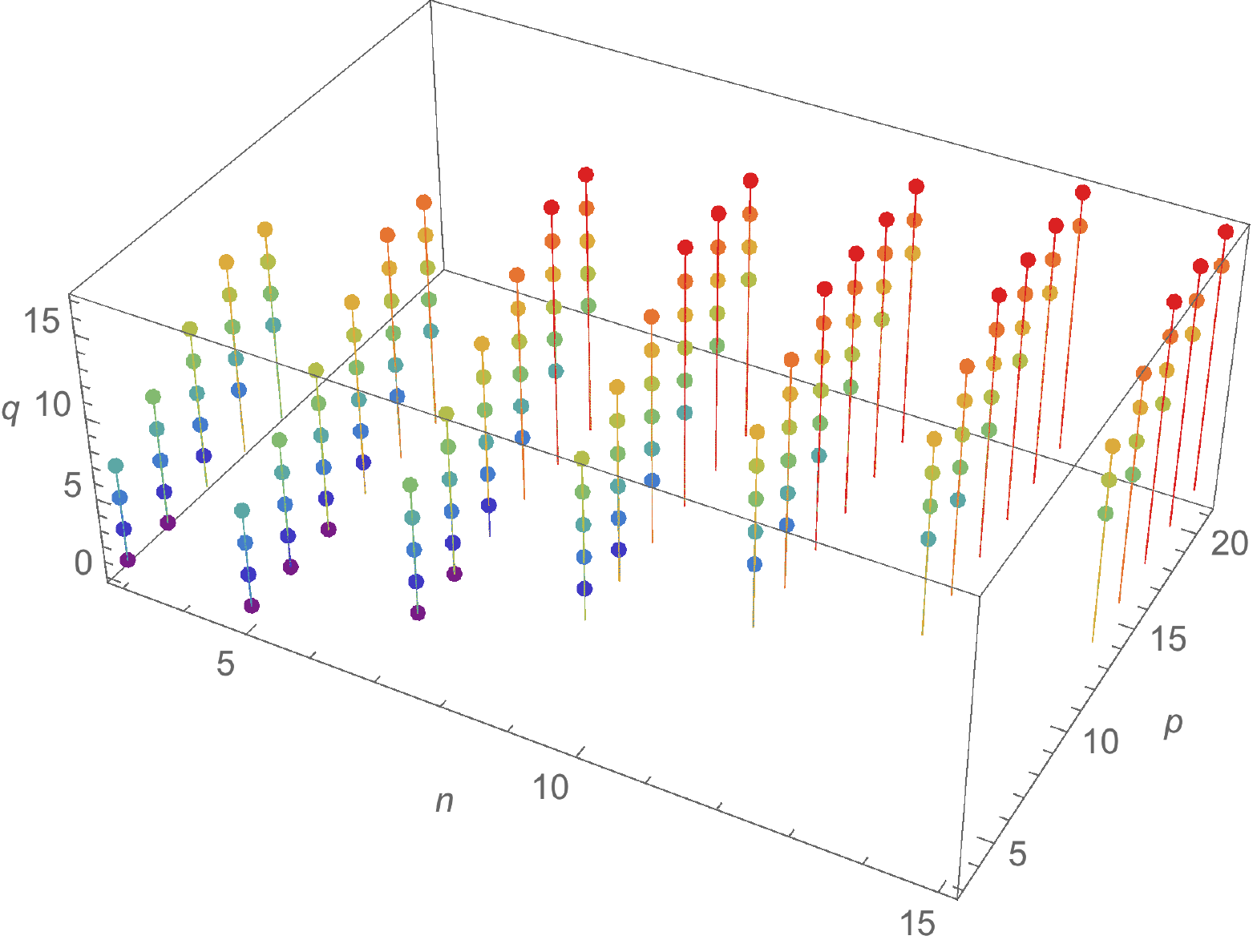}
\caption{The points  $(n, p, q)$  such that 
$s_1^i\, s_2^j\, s_3^m \, e^{n s_1/2}\, e^{p s_2/2} \, e^{q s_3/2}$  appears in the expressions   
for $K_8^{\text{num}}$ and $K_{15}^{\text{num}}$.}
\label{K8Etospowers}
\end{figure}

\subsubsection{The function $K_9$}
We have 
\[
K_9(s_1, s_2, s_3) = \sum_{i=1}^{18} K_{9, i}(s_1, s_2, s_3), 
\]
where 
\[
K_{9, 1}(s_1, s_2, s_3)=-\frac{8 \pi  e^{\frac{3}{2} \left(s_1+s_2\right)} \left(e^{s_2} \left(s_1-s_2+e^{s_1} \left(s_1+s_2\right)\right)-2 s_1\right)}{\left(e^{s_2}-1\right) \left(e^{s_1+s_2}-1\right){}^2 \left(e^{\frac{s_3}{2}}-1\right){}^2 s_1 s_2 \left(s_1+s_2\right)},
\]
\[
K_{9, 2}(s_1, s_2, s_3)= \frac{8 \pi  e^{\frac{3}{2} \left(s_1+s_2\right)} \left(e^{s_2} \left(s_1-s_2+e^{s_1} \left(s_1+s_2\right)\right)-2 s_1\right)}{\left(e^{s_2}-1\right) \left(e^{s_1+s_2}-1\right){}^2 \left(e^{\frac{s_3}{2}}+1\right){}^2 s_1 s_2 \left(s_1+s_2\right)}, 
\]
\[
K_{9, 3}(s_1, s_2, s_3)=-\frac{4 \pi  e^{\frac{3 s_1}{2}} \left(s_2-2 s_3\right)}{\left(e^{s_1}-1\right) \left(e^{\frac{1}{2} \left(s_2+s_3\right)}-1\right){}^3 s_1 s_2 s_3},
\]
\[
K_{9, 4}(s_1, s_2, s_3)=-\frac{4 \pi  e^{\frac{3 s_1}{2}} \left(s_2-2 s_3\right)}{\left(e^{s_1}-1\right) \left(e^{\frac{1}{2} \left(s_2+s_3\right)}+1\right){}^3 s_1 s_2 s_3}, 
\]
\[
K_{9, 5}(s_1, s_2, s_3)=-\frac{16 \pi  e^{\frac{3}{2} \left(s_1+s_2\right)} \left(e^{s_2} \left(s_1-s_2+e^{s_1} \left(s_1+s_2\right)\right)-2 s_1\right) \left(s_3-1\right)}{\left(e^{s_2}-1\right) \left(e^{s_1+s_2}-1\right){}^2 \left(e^{\frac{s_3}{2}}-1\right) s_1 s_2 \left(s_1+s_2\right) s_3}, 
\]
\[
K_{9, 6}(s_1, s_2, s_3)=-\frac{16 \pi  e^{\frac{3}{2} \left(s_1+s_2\right)} \left(e^{s_2} \left(s_1-s_2+e^{s_1} \left(s_1+s_2\right)\right)-2 s_1\right) \left(s_3-1\right)}{\left(e^{s_2}-1\right) \left(e^{s_1+s_2}-1\right){}^2 \left(e^{\frac{s_3}{2}}+1\right) s_1 s_2 \left(s_1+s_2\right) s_3}, 
\]
\[
K_{9, 7}(s_1, s_2, s_3)=-\frac{6 \pi  \left(s_1^2-s_3 s_1-\left(s_2-2 s_3\right) \left(s_2+s_3\right)\right)}{\left(e^{\frac{1}{2} \left(s_1+s_2+s_3\right)}-1\right){}^4 s_1 \left(s_1+s_2\right) s_3 \left(s_2+s_3\right)}, 
\]
\[
K_{9, 8}(s_1, s_2, s_3)=\frac{6 \pi  \left(s_1^2-s_3 s_1-\left(s_2-2 s_3\right) \left(s_2+s_3\right)\right)}{\left(e^{\frac{1}{2} \left(s_1+s_2+s_3\right)}+1\right){}^4 s_1 \left(s_1+s_2\right) s_3 \left(s_2+s_3\right)},
\]
{\tiny 
\[
K_{9, 9}(s_1, s_2, s_3)=\frac{8 \pi  e^{\frac{3 s_1}{2}} \left(2 s_2^2+\left(2 \left(e^{s_2}+1\right) s_3+e^{s_2}-3\right) s_2+2 \left(e^{s_2} \left(s_3-2\right)+1\right) s_3\right)}{\left(e^{s_1}-1\right) \left(e^{s_2}-1\right) \left(e^{\frac{1}{2} \left(s_2+s_3\right)}-1\right) s_1 s_2 s_3 \left(s_2+s_3\right)}, 
\]
\[
K_{9, 10}(s_1, s_2, s_3)=\frac{8 \pi  e^{\frac{3 s_1}{2}} \left(2 s_2^2+\left(2 \left(e^{s_2}+1\right) s_3+e^{s_2}-3\right) s_2+2 \left(e^{s_2} \left(s_3-2\right)+1\right) s_3\right)}{\left(e^{s_1}-1\right) \left(e^{s_2}-1\right) \left(e^{\frac{1}{2} \left(s_2+s_3\right)}+1\right) s_1 s_2 s_3 \left(s_2+s_3\right)},
\]
\[
K_{9, 11}(s_1, s_2, s_3)=\frac{2 \pi  e^{\frac{3 s_1}{2}} \left(\left(7-3 e^{s_2}\right) s_2^2+\left(s_3+e^{s_2} \left(7 s_3+4\right)-4\right) s_2+2 s_3 \left(-3 s_3+e^{s_2} \left(5 s_3-4\right)+4\right)\right)}{\left(e^{s_1}-1\right) \left(e^{s_2}-1\right) \left(e^{\frac{1}{2} \left(s_2+s_3\right)}-1\right){}^2 s_1 s_2 s_3 \left(s_2+s_3\right)},
\]
\[
K_{9, 12}(s_1, s_2, s_3)=\frac{2 \pi  e^{\frac{3 s_1}{2}} \left(\left(3 e^{s_2}-7\right) s_2^2-\left(s_3+e^{s_2} \left(7 s_3+4\right)-4\right) s_2-2 s_3 \left(-3 s_3+e^{s_2} \left(5 s_3-4\right)+4\right)\right)}{\left(e^{s_1}-1\right) \left(e^{s_2}-1\right) \left(e^{\frac{1}{2} \left(s_2+s_3\right)}+1\right){}^2 s_1 s_2 s_3 \left(s_2+s_3\right)}, 
\]
\[
K_{9, 13}(s_1, s_2, s_3) \left(e^{s_1}-1\right) \left(e^{s_1+s_2}-1\right) \left(e^{\frac{1}{2} \left(s_1+s_2+s_3\right)}-1\right){}^3 s_1 s_2 \left(s_1+s_2\right) s_3 \left(s_2+s_3\right) \left(s_1+s_2+s_3\right)=
\]
$
-2 \pi  (((-9 e^{s_1}-7 e^{s_1+s_2}+5 e^{2 s_1+s_2}+11) s_2+4 e^{s_1} (e^{s_2}-1) s_3) s_1^3+((-5 e^{s_1}-7 e^{s_1+s_2}+e^{2 s_1+s_2}+11) s_2^2-2 (3 (e^{s_1}-1) (e^{s_1+s_2}-1)+2 e^{s_1} (-3 e^{s_2}+e^{s_1+s_2}+2) s_3) s_2+8 e^{s_1} (e^{s_2}-1) s_3^2) s_1^2+((17 e^{s_1}+7 e^{s_1+s_2}-13 e^{2 s_1+s_2}-11) s_2^3+4 (e^{s_1}+e^{s_1+s_2}-2) s_3 s_2^2+s_3 (6 (e^{s_1}-1) (e^{s_1+s_2}-1)+(-17 e^{s_1}+e^{s_1+s_2}+13 e^{2 s_1+s_2}+3) s_3) s_2+4 e^{s_1} (e^{s_2}-1) s_3^3) s_1-s_2 (s_2+s_3) ((-13 e^{s_1}-7 e^{s_1+s_2}+9 e^{2 s_1+s_2}+11) s_2^2+((5 e^{s_1}+11 e^{s_1+s_2}-13 e^{2 s_1+s_2}-3) s_3-6 (e^{s_1}-1) (e^{s_1+s_2}-1)) s_2-2 s_3 ((-9 e^{s_1}-9 e^{s_1+s_2}+11 e^{2 s_1+s_2}+7) s_3-6 (e^{s_1}-1) (e^{s_1+s_2}-1)))), 
$
\[
K_{9, 14}(s_1, s_2, s_3)=\frac{K_{9, 13}^{\text{num}}(s_1, s_2, s_3)}{\left(e^{s_1}-1\right) \left(e^{s_1+s_2}-1\right) \left(e^{\frac{1}{2} \left(s_1+s_2+s_3\right)}+1\right){}^3 s_1 s_2 \left(s_1+s_2\right) s_3 \left(s_2+s_3\right) \left(s_1+s_2+s_3\right)},
\]
\[
K_{9, 15}(s_1, s_2, s_3)\left(e^{s_1}-1\right) \left(e^{s_1+s_2}-1\right){}^2 \left(e^{\frac{1}{2} \left(s_1+s_2+s_3\right)}-1\right) s_1 s_2 \left(s_1+s_2\right) s_3 \left(s_2+s_3\right) \left(s_1+s_2+s_3\right)=
\]
$
8 \pi  (2 ((1-2 e^{s_1+s_2}+e^{2 s_1+s_2}) s_2-e^{2 s_1+s_2} (-1+e^{s_2}) s_3) s_1^3+(-2 (-1-2 e^{s_1}+2 e^{s_1+s_2}+e^{2 s_1+s_2}) s_2^2-(2 e^{s_1} (-2-e^{s_1+s_2}+3 e^{s_1+2 s_2}) s_3-2 e^{s_1}-8 e^{s_1+s_2}+e^{2 (s_1+s_2)}+4 e^{2 s_1+s_2}+5) s_2-2 e^{s_1} (-1+e^{s_2}) s_3 (2 e^{s_1+s_2} s_3-2 e^{s_1+s_2}+1)) s_1^2-(2 (1-4 e^{s_1}-2 e^{s_1+s_2}+5 e^{2 s_1+s_2}) s_2^3+2 (e^{s_1} (3-4 e^{s_1+s_2}+e^{2 (s_1+s_2)})+(2-6 e^{s_1}-4 e^{s_1+s_2}+e^{2 (s_1+s_2)}+5 e^{2 s_1+s_2}+2 e^{3 s_1+2 s_2}) s_3) s_2^2+s_3 ((2-4 e^{s_1}-4 e^{s_1+s_2}+4 e^{2 (s_1+s_2)}-2 e^{2 s_1+s_2}+4 e^{3 s_1+2 s_2}) s_3+4 e^{s_1}+12 e^{s_1+s_2}-9 e^{2 (s_1+s_2)}-4 e^{2 s_1+s_2}+2 e^{3 s_1+2 s_2}-5) s_2+2 e^{s_1} (-1+e^{s_2}) s_3^2 (e^{s_1+s_2} s_3-2 e^{s_1+s_2}+1)) s_1-s_2 (s_2+s_3) ((2-4 e^{s_1}-4 e^{s_1+s_2}+6 e^{2 s_1+s_2}) s_2^2+((2-4 e^{s_1}-4 e^{s_1+s_2}-2 e^{2 (s_1+s_2)}+4 e^{2 s_1+s_2}+4 e^{3 s_1+2 s_2}) s_3+8 e^{s_1}+8 e^{s_1+s_2}-e^{2 (s_1+s_2)}-12 e^{2 s_1+s_2}+2 e^{3 s_1+2 s_2}-5) s_2+2 s_3 (e^{2 s_1+s_2} (-1-e^{s_2}+2 e^{s_1+s_2}) s_3-2 e^{s_1}-3 e^{s_1+s_2}+3 e^{2 (s_1+s_2)}+6 e^{2 s_1+s_2}-5 e^{3 s_1+2 s_2}+1))), 
$
\[
K_{9, 16}(s_1, s_2, s_3)=\frac{K_{9, 15}^{\text{num}}(s_1, s_2, s_3)}{\left(e^{s_1}-1\right) \left(e^{s_1+s_2}-1\right){}^2 \left(e^{\frac{1}{2} \left(s_1+s_2+s_3\right)}+1\right) s_1 s_2 \left(s_1+s_2\right) s_3 \left(s_2+s_3\right) \left(s_1+s_2+s_3\right)},  
\]
\[
K_{9, 17}(s_1, s_2, s_3)\left(e^{s_1}-1\right) \left(e^{s_1+s_2}-1\right){}^2 \left(e^{\frac{1}{2} \left(s_1+s_2+s_3\right)}-1\right){}^2 s_1 s_2 \left(s_1+s_2\right) s_3 \left(s_2+s_3\right) \left(s_1+s_2+s_3\right)=
\]
$
-\pi  (((-29+15 e^{s_1}+46 e^{s_1+s_2}-9 e^{2 (s_1+s_2)}-26 e^{2 s_1+s_2}+3 e^{3 s_1+2 s_2}) s_2+4 e^{s_1} (-1+e^{s_2}) (-3+5 e^{s_1+s_2}) s_3) s_1^3-((29+13 e^{s_1}-46 e^{s_1+s_2}+9 e^{2 (s_1+s_2)}-14 e^{2 s_1+s_2}+9 e^{3 s_1+2 s_2}) s_2^2+2 (2 e^{s_1} (-2+9 e^{s_2}+5 e^{s_1+s_2}+3 e^{2 (s_1+s_2)}-15 e^{s_1+2 s_2}) s_3+15 e^{s_1}+30 e^{s_1+s_2}-11 e^{2 (s_1+s_2)}-22 e^{2 s_1+s_2}+7 e^{3 s_1+2 s_2}-19) s_2-8 e^{s_1} (-1+e^{s_2}) s_3 ((-3+5 e^{s_1+s_2}) s_3-2 e^{s_1+s_2}+2)) s_1^2+((29-71 e^{s_1}-46 e^{s_1+s_2}+9 e^{2 (s_1+s_2)}+106 e^{2 s_1+s_2}-27 e^{3 s_1+2 s_2}) s_2^3+4 (4 e^{s_1} (-1+e^{s_1+s_2}){}^2+(10-15 e^{s_1}-17 e^{s_1+s_2}+5 e^{2 (s_1+s_2)}+13 e^{2 s_1+s_2}+4 e^{3 s_1+2 s_2}) s_3) s_2^2+s_3 ((11+23 e^{s_1}-34 e^{s_1+s_2}+31 e^{2 (s_1+s_2)}-74 e^{2 s_1+s_2}+43 e^{3 s_1+2 s_2}) s_3+14 e^{s_1}+92 e^{s_1+s_2}-54 e^{2 (s_1+s_2)}-44 e^{2 s_1+s_2}+30 e^{3 s_1+2 s_2}-38) s_2+4 e^{s_1} (-1+e^{s_2}) s_3^2 ((-3+5 e^{s_1+s_2}) s_3-4 e^{s_1+s_2}+4)) s_1-s_2 (s_2+s_3) ((-29+43 e^{s_1}+46 e^{s_1+s_2}-9 e^{2 (s_1+s_2)}-66 e^{2 s_1+s_2}+15 e^{3 s_1+2 s_2}) s_2^2+((-11+13 e^{s_1}-2 e^{s_1+s_2}+29 e^{2 (s_1+s_2)}+14 e^{2 s_1+s_2}-43 e^{3 s_1+2 s_2}) s_3-46 e^{s_1}-60 e^{s_1+s_2}+22 e^{2 (s_1+s_2)}+76 e^{2 s_1+s_2}-30 e^{3 s_1+2 s_2}+38) s_2-2 s_3 ((-9+15 e^{s_1}+24 e^{s_1+s_2}-19 e^{2 (s_1+s_2)}-40 e^{2 s_1+s_2}+29 e^{3 s_1+2 s_2}) s_3-30 e^{s_1}-52 e^{s_1+s_2}+30 e^{2 (s_1+s_2)}+68 e^{2 s_1+s_2}-38 e^{3 s_1+2 s_2}+22))), 
$
\[
K_{9, 18}(s_1, s_2, s_3)=\frac{-K_{9, 17}^{\text{num}}(s_1, s_2, s_3)}{\left(e^{s_1}-1\right) \left(e^{s_1+s_2}-1\right){}^2 \left(e^{\frac{1}{2} \left(s_1+s_2+s_3\right)}+1\right){}^2 s_1 s_2 \left(s_1+s_2\right) s_3 \left(s_2+s_3\right) \left(s_1+s_2+s_3\right)}.
\]
}

\smallskip

By putting together the above expressions, we have 
\[
K_9(s_1, s_2, s_3) 
= 
\frac{K_9^{\text{num}}(s_1, s_2, s_3)}{K_9^{\text{den}}(s_1, s_2, s_3)}, 
\]
where
\[
K_9^{\text{den}}(s_1, s_2, s_3) = K_8^{\text{den}}(s_1, s_2, s_3),  
\]
which is given by \eqref{K8den}, and $K_9^{\text{num}}$ 
is a polynomial in $s_1, s_2, s_3, e^{s_1/2}, e^{s_2/2}, e^{s_3/2}$. 
The points 
$(i, j, m)$ and $(n, p, q)$ such that 
$s_1^i s_2^j s_3^m  e^{n s_1/2} e^{p s_2/2} e^{q s_3/2}$ appears in $K_9^{\text{num}}(s_1, s_2, s_3)$ are plotted in Figure \ref{K8spowers} and 
Figure \ref{K9Etospowers}.   

\begin{figure}
\includegraphics[scale=0.7]{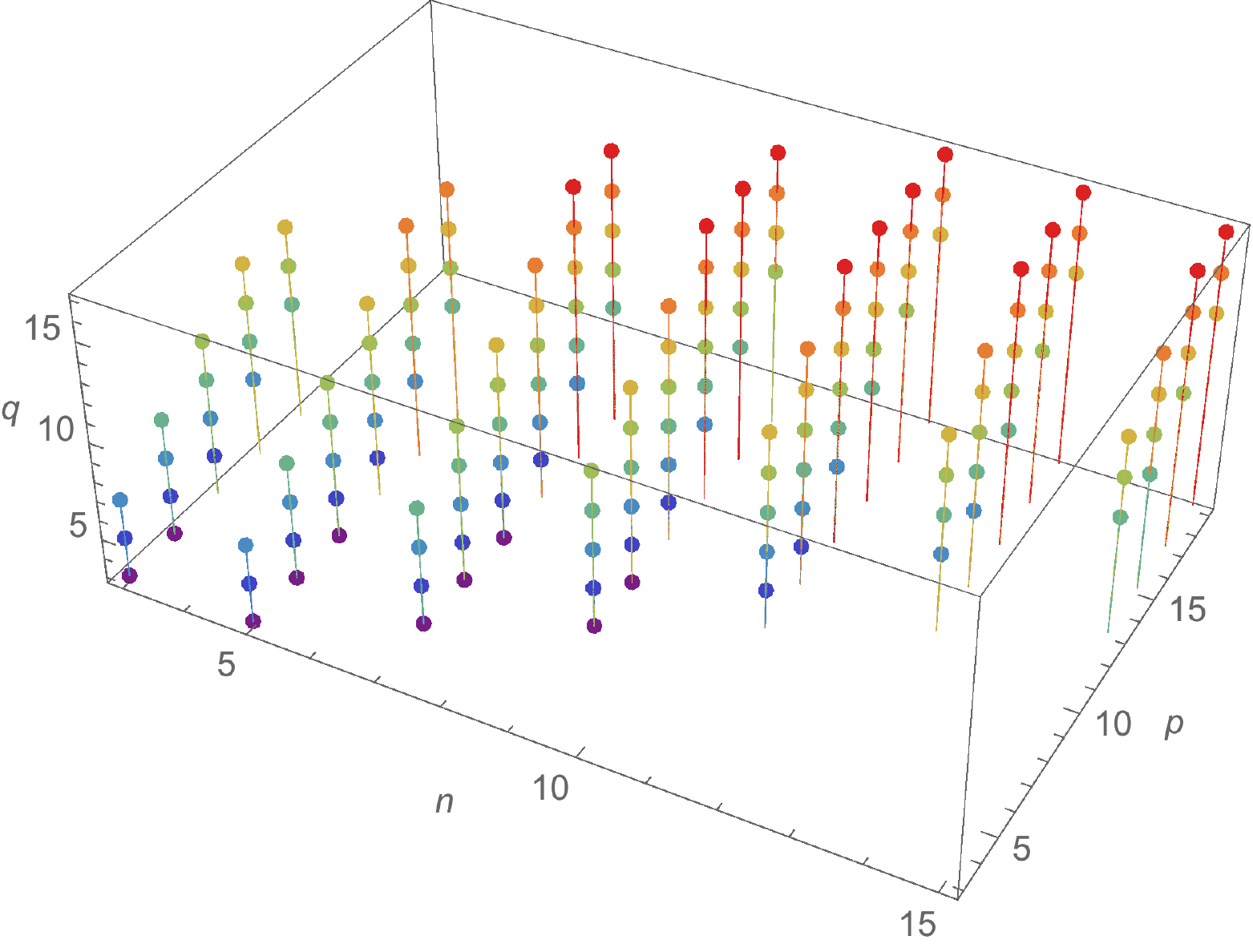}
\caption{The points  $(n, p, q)$  such that 
$s_1^i\, s_2^j\, s_3^m \, e^{n s_1/2}\, e^{p s_2/2} \, e^{q s_3/2}$  appears in the expression   
for $K_9^{\text{num}}$.}
\label{K9Etospowers}
\end{figure}

\subsubsection{The functions $K_{10}, K_{11}, K_{16}$}
We have 
\[
K_{10}(s_1, s_2, s_3) = \sum_{i=1}^{16} K_{10, i}(s_1, s_2, s_3), 
\]
where 
{\tiny 
\[
K_{10, 1}(s_1, s_2, s_3) =  \frac{16 \pi  e^{\frac{3}{2} \left(s_1+s_2\right)} \left(-\left(e^{s_1+2 s_2} \left(e^{s_2} \left(e^{s_1}+1\right)-3\right)+1\right) s_1-e^{s_2} \left(e^{s_1}-1\right) \left(e^{s_2}+e^{s_1+2 s_2}-2\right) s_2\right)}{\left(e^{s_2}-1\right){}^2 \left(e^{s_1+s_2}-1\right){}^3 \left(e^{\frac{s_3}{2}}-1\right) s_1 s_2 \left(s_1+s_2\right)},
\]
\[
K_{10, 2}(s_1, s_2, s_3) =\frac{16 \pi  e^{\frac{3}{2} \left(s_1+s_2\right)} \left(-\left(e^{s_1+2 s_2} \left(e^{s_2} \left(e^{s_1}+1\right)-3\right)+1\right) s_1-e^{s_2} \left(e^{s_1}-1\right) \left(e^{s_2}+e^{s_1+2 s_2}-2\right) s_2\right)}{\left(e^{s_2}-1\right){}^2 \left(e^{s_1+s_2}-1\right){}^3 \left(e^{\frac{s_3}{2}}+1\right) s_1 s_2 \left(s_1+s_2\right)},
\]
}
\[
K_{10, 3}(s_1, s_2, s_3) =-\frac{4 \pi  e^{\frac{3 s_1}{2}} \left(2 s_2-s_3\right)}{\left(e^{s_1}-1\right) \left(e^{\frac{1}{2} \left(s_2+s_3\right)}-1\right){}^3 s_1 s_2 s_3}, 
\]
\[
K_{10, 4}(s_1, s_2, s_3) =-\frac{4 \pi  e^{\frac{3 s_1}{2}} \left(2 s_2-s_3\right)}{\left(e^{s_1}-1\right) \left(e^{\frac{1}{2} \left(s_2+s_3\right)}+1\right){}^3 s_1 s_2 s_3}, 
\]
\[
K_{10, 5}(s_1, s_2, s_3) =\frac{6 \pi  \left(-s_1^2+\left(s_2+2 s_3\right) s_1+\left(2 s_2-s_3\right) \left(s_2+s_3\right)\right)}{\left(e^{\frac{1}{2} \left(s_1+s_2+s_3\right)}-1\right){}^4 s_1 \left(s_1+s_2\right) s_3 \left(s_2+s_3\right)},
\]
\[
K_{10, 6}(s_1, s_2, s_3) =\frac{6 \pi  \left(s_1^2-\left(s_2+2 s_3\right) s_1-\left(2 s_2-s_3\right) \left(s_2+s_3\right)\right)}{\left(e^{\frac{1}{2} \left(s_1+s_2+s_3\right)}+1\right){}^4 s_1 \left(s_1+s_2\right) s_3 \left(s_2+s_3\right)}, 
\]
{\tiny
\[
K_{10, 7}(s_1, s_2, s_3) =\frac{2 \pi  e^{\frac{3 s_1}{2}} \left(\left(10-6 e^{s_2}\right) s_2^2+\left(7 s_3+e^{s_2} \left(s_3+8\right)-8\right) s_2+s_3 \left(-3 s_3+e^{s_2} \left(7 s_3-4\right)+4\right)\right)}{\left(e^{s_1}-1\right) \left(e^{s_2}-1\right) \left(e^{\frac{1}{2} \left(s_2+s_3\right)}-1\right){}^2 s_1 s_2 s_3 \left(s_2+s_3\right)},
\]
\[
K_{10, 8}(s_1, s_2, s_3) =\frac{2 \pi  e^{\frac{3 s_1}{2}} \left(2 \left(3 e^{s_2}-5\right) s_2^2-\left(7 s_3+e^{s_2} \left(s_3+8\right)-8\right) s_2-s_3 \left(-3 s_3+e^{s_2} \left(7 s_3-4\right)+4\right)\right)}{\left(e^{s_1}-1\right) \left(e^{s_2}-1\right) \left(e^{\frac{1}{2} \left(s_2+s_3\right)}+1\right){}^2 s_1 s_2 s_3 \left(s_2+s_3\right)},
\]
$
K_{10, 9}(s_1, s_2, s_3) =
$
\[
\frac{8 \pi  e^{\frac{3 s_1}{2}} \left(\left(4 e^{s_2}-2\right) s_2^2+2 \left(\left(2 e^{s_2}+e^{2 s_2}-1\right) s_3-3 e^{s_2}+e^{2 s_2}+2\right) s_2+s_3 \left(2 e^{2 s_2} s_3+4 e^{s_2}-3 e^{2 s_2}-1\right)\right)}{\left(e^{s_1}-1\right) \left(e^{s_2}-1\right){}^2 \left(e^{\frac{1}{2} \left(s_2+s_3\right)}-1\right) s_1 s_2 s_3 \left(s_2+s_3\right)}, 
\]
$
K_{10, 10}(s_1, s_2, s_3) =
$
\[
\frac{8 \pi  e^{\frac{3 s_1}{2}} \left(\left(4 e^{s_2}-2\right) s_2^2+2 \left(\left(2 e^{s_2}+e^{2 s_2}-1\right) s_3-3 e^{s_2}+e^{2 s_2}+2\right) s_2+s_3 \left(2 e^{2 s_2} s_3+4 e^{s_2}-3 e^{2 s_2}-1\right)\right)}{\left(e^{s_1}-1\right) \left(e^{s_2}-1\right){}^2 \left(e^{\frac{1}{2} \left(s_2+s_3\right)}+1\right) s_1 s_2 s_3 \left(s_2+s_3\right)}, 
\]
\[
K_{10, 11}(s_1, s_2, s_3) \left(e^{s_1}-1\right) \left(e^{s_1+s_2}-1\right) \left(e^{\frac{1}{2} \left(s_1+s_2+s_3\right)}-1\right){}^3 s_1 s_2 \left(s_1+s_2\right) s_3 \left(s_2+s_3\right) \left(s_1+s_2+s_3\right)=
\]
$
-2 \pi  (((-7 e^{s_1}-9 e^{s_1+s_2}+5 e^{2 s_1+s_2}+11) s_2+2 e^{s_1} (e^{s_2}-1) s_3) s_1^3+(-4 (2 e^{s_1}+1) (e^{s_1+s_2}-1) s_2^2+((9 e^{s_1}+11 e^{s_1+s_2}-13 e^{2 s_1+s_2}-7) s_3-6 (e^{s_1}-1) (e^{s_1+s_2}-1)) s_2+4 e^{s_1} (e^{s_2}-1) s_3^2) s_1^2+((37 e^{s_1}+19 e^{s_1+s_2}-31 e^{2 s_1+s_2}-25) s_2^3+(6 (e^{s_1}-1) (e^{s_1+s_2}-1)+(46 e^{s_1}+26 e^{s_1+s_2}-36 e^{2 s_1+s_2}-36) s_3) s_2^2-s_3 ((-7 e^{s_1}-9 e^{s_1+s_2}+5 e^{2 s_1+s_2}+11) s_3-12 (e^{s_1}-1) (e^{s_1+s_2}-1)) s_2+2 e^{s_1} (e^{s_2}-1) s_3^3) s_1-s_2 (s_2+s_3) (2 (-11 e^{s_1}-7 e^{s_1+s_2}+9 e^{2 s_1+s_2}+9) s_2^2+((-13 e^{s_1}-3 e^{s_1+s_2}+5 e^{2 s_1+s_2}+11) s_3-12 (e^{s_1}-1) (e^{s_1+s_2}-1)) s_2-s_3 ((-9 e^{s_1}-11 e^{s_1+s_2}+13 e^{2 s_1+s_2}+7) s_3-6 (e^{s_1}-1) (e^{s_1+s_2}-1)))), 
$
\[
K_{10, 12}(s_1, s_2, s_3)= \frac{K_{10, 11}^{\text{num}}(s_1, s_2, s_3)}{\left(e^{s_1}-1\right) \left(e^{s_1+s_2}-1\right) \left(e^{\frac{1}{2} \left(s_1+s_2+s_3\right)}+1\right){}^3 s_1 s_2 \left(s_1+s_2\right) s_3 \left(s_2+s_3\right) \left(s_1+s_2+s_3\right)},  
\]
\[
K_{10, 13}(s_1, s_2, s_3)\left(e^{s_1}-1\right) \left(e^{s_1+s_2}-1\right){}^2 \left(e^{\frac{1}{2} \left(s_1+s_2+s_3\right)}+1\right){}^2 s_1 s_2 \left(s_1+s_2\right) s_3 \left(s_2+s_3\right) \left(s_1+s_2+s_3\right) = 
\]
$
\pi  (((-29+9 e^{s_1}+52 e^{s_1+s_2}-15 e^{2 (s_1+s_2)}-20 e^{2 s_1+s_2}+3 e^{3 s_1+2 s_2}) s_2+2 e^{s_1} (-1+e^{s_2}) (-3+7 e^{s_1+s_2}) s_3) s_1^3-(4 (1+2 e^{s_1}) (5-8 e^{s_1+s_2}+3 e^{2 (s_1+s_2)}) s_2^2+((-9+31 e^{s_1}+38 e^{s_1+s_2}-45 e^{2 (s_1+s_2)}-42 e^{2 s_1+s_2}+27 e^{3 s_1+2 s_2}) s_3+22 e^{s_1}+68 e^{s_1+s_2}-30 e^{2 (s_1+s_2)}-36 e^{2 s_1+s_2}+14 e^{3 s_1+2 s_2}-38) s_2-4 e^{s_1} (-1+e^{s_2}) s_3 ((-3+7 e^{s_1+s_2}) s_3-2 e^{s_1+s_2}+2)) s_1^2+((47-107 e^{s_1}-92 e^{s_1+s_2}+21 e^{2 (s_1+s_2)}+188 e^{2 s_1+s_2}-57 e^{3 s_1+2 s_2}) s_2^3-2 ((-38+69 e^{s_1}+77 e^{s_1+s_2}-19 e^{2 (s_1+s_2)}-111 e^{2 s_1+s_2}+22 e^{3 s_1+2 s_2}) s_3-27 e^{s_1}-18 e^{s_1+s_2}+7 e^{2 (s_1+s_2)}+50 e^{2 s_1+s_2}-23 e^{3 s_1+2 s_2}+11) s_2^2+s_3 (60 (-1+e^{s_1}) (-1+e^{s_1+s_2}){}^2+(29-25 e^{s_1}-68 e^{s_1+s_2}+31 e^{2 (s_1+s_2)}+20 e^{2 s_1+s_2}+13 e^{3 s_1+2 s_2}) s_3) s_2+2 e^{s_1} (-1+e^{s_2}) s_3^2 ((-3+7 e^{s_1+s_2}) s_3-4 e^{s_1+s_2}+4)) s_1-s_2 (s_2+s_3) (2 (-19+29 e^{s_1}+36 e^{s_1+s_2}-9 e^{2 (s_1+s_2)}-52 e^{2 s_1+s_2}+15 e^{3 s_1+2 s_2}) s_2^2+((-29+43 e^{s_1}+50 e^{s_1+s_2}+11 e^{2 (s_1+s_2)}-62 e^{2 s_1+s_2}-13 e^{3 s_1+2 s_2}) s_3-76 e^{s_1}-104 e^{s_1+s_2}+44 e^{2 (s_1+s_2)}+136 e^{2 s_1+s_2}-60 e^{3 s_1+2 s_2}+60) s_2-s_3 ((-9+15 e^{s_1}+22 e^{s_1+s_2}-29 e^{2 (s_1+s_2)}-42 e^{2 s_1+s_2}+43 e^{3 s_1+2 s_2}) s_3-30 e^{s_1}-60 e^{s_1+s_2}+38 e^{2 (s_1+s_2)}+76 e^{2 s_1+s_2}-46 e^{3 s_1+2 s_2}+22))), 
$
\[
K_{10, 14}(s_1, s_2, s_3)=\frac{-K_{10, 13}^{\text{num}}(s_1, s_2, s_3)}{\left(e^{s_1}-1\right) \left(e^{s_1+s_2}-1\right){}^2 \left(e^{\frac{1}{2} \left(s_1+s_2+s_3\right)}-1\right){}^2 s_1 s_2 \left(s_1+s_2\right) s_3 \left(s_2+s_3\right) \left(s_1+s_2+s_3\right)},
\]
\[
K_{10, 15}(s_1, s_2, s_3) \left(e^{s_1}-1\right) \left(e^{s_1+s_2}-1\right){}^3 \left(e^{\frac{1}{2} \left(s_1+s_2+s_3\right)}-1\right) s_1 s_2 \left(s_1+s_2\right) s_3 \left(s_2+s_3\right) \left(s_1+s_2+s_3\right) =
\]
$
8 \pi  (2 ((-1+3 e^{s_1+s_2}-3 e^{2 (s_1+s_2)}+e^{3 s_1+2 s_2}) s_2-e^{3 s_1+2 s_2} (-1+e^{s_2}) s_3) s_1^3-(2 (1+2 e^{s_1}) (1-3 e^{s_1+s_2}+2 e^{2 (s_1+s_2)}) s_2^2+(2 e^{s_1} (2-6 e^{s_1+s_2}+2 e^{2 (s_1+s_2)}-e^{s_1+2 s_2}+3 e^{2 s_1+3 s_2}) s_3+e^{s_1}+14 e^{s_1+s_2}-11 e^{2 (s_1+s_2)}+2 e^{3 (s_1+s_2)}-4 e^{2 s_1+s_2}+3 e^{3 s_1+2 s_2}-5) s_2+e^{s_1} (-1+e^{s_2}) s_3 (4 e^{2 (s_1+s_2)} s_3+4 e^{s_1+s_2}-3 e^{2 (s_1+s_2)}-1)) s_1^2-(2 (-1+4 e^{s_1}+3 e^{s_1+s_2}-5 e^{2 (s_1+s_2)}-12 e^{2 s_1+s_2}+11 e^{3 s_1+2 s_2}) s_2^3+(2 (-2+6 e^{s_1}+6 e^{s_1+s_2}-10 e^{2 (s_1+s_2)}+e^{3 (s_1+s_2)}-18 e^{2 s_1+s_2}+15 e^{3 s_1+2 s_2}+2 e^{4 s_1+3 s_2}) s_3-9 e^{s_1}-4 e^{s_1+s_2}+3 e^{2 (s_1+s_2)}+24 e^{2 s_1+s_2}-19 e^{3 s_1+2 s_2}+4 e^{4 s_1+3 s_2}+1) s_2^2+2 s_3 ((3-4 e^{s_1}-4 e^{s_1+s_2}+2 e^{2 s_1+s_2}) (-1+e^{s_1+s_2}){}^2+(-1+2 e^{s_1}+3 e^{s_1+s_2}-5 e^{2 (s_1+s_2)}+2 e^{3 (s_1+s_2)}-6 e^{2 s_1+s_2}+3 e^{3 s_1+2 s_2}+2 e^{4 s_1+3 s_2}) s_3) s_2+e^{s_1} (-1+e^{s_2}) s_3^2 (2 e^{2 (s_1+s_2)} s_3+4 e^{s_1+s_2}-3 e^{2 (s_1+s_2)}-1)) s_1-s_2 (s_2+s_3) (2 (-1+2 e^{s_1}+3 e^{s_1+s_2}-4 e^{2 (s_1+s_2)}-6 e^{2 s_1+s_2}+6 e^{3 s_1+2 s_2}) s_2^2+2 ((-1+2 e^{s_1}+3 e^{s_1+s_2}-5 e^{2 (s_1+s_2)}-e^{3 (s_1+s_2)}-6 e^{2 s_1+s_2}+6 e^{3 s_1+2 s_2}+2 e^{4 s_1+3 s_2}) s_3-5 e^{s_1}-9 e^{s_1+s_2}+7 e^{2 (s_1+s_2)}-e^{3 (s_1+s_2)}+14 e^{2 s_1+s_2}-11 e^{3 s_1+2 s_2}+2 e^{4 s_1+3 s_2}+3) s_2+s_3 (2 e^{2 (s_1+s_2)} (-1-e^{s_1+s_2}+2 e^{2 s_1+s_2}) s_3+2 e^{s_1}+3 e^{s_1+s_2}-7 e^{2 (s_1+s_2)}+5 e^{3 (s_1+s_2)}-8 e^{2 s_1+s_2}+14 e^{3 s_1+2 s_2}-8 e^{4 s_1+3 s_2}-1))), 
$
\[
K_{10, 16}(s_1, s_2, s_3)=\frac{K_{10, 15}^{\text{num}}(s_1, s_2, s_3)}{ \left(e^{s_1}-1\right) \left(e^{s_1+s_2}-1\right){}^3 \left(e^{\frac{1}{2} \left(s_1+s_2+s_3\right)}+1\right) s_1 s_2 \left(s_1+s_2\right) s_3 \left(s_2+s_3\right) \left(s_1+s_2+s_3\right)}. 
\]
}

\smallskip

In fact, by putting together the above expressions, we can 
write 
\[
K_{10}(s_1, s_2, s_3) 
= 
\frac{K_{10}^{\text{num}}(s_1, s_2, s_3)}{K_{10}^{\text{den}}(s_1, s_2, s_3)}, 
\]
where 
\[
K_{10}^{\text{den}}(s_1, s_2, s_3) =
\left(e^{s_1}-1\right) \left(e^{s_2}-1\right){}^2 \left(e^{s_1+s_2}-1\right){}^3 \left(e^{s_3}-1\right) \left(e^{s_2+s_3}-1\right){}^3 \times 
\]
\[
 \left(e^{s_1+s_2+s_3}-1\right){}^4  
s_1 s_2 \left(s_1+s_2\right) s_3 \left(s_2+s_3\right) \left(s_1+s_2+s_3\right), 
\]
and 
$K_{10}^{\text{num}}$ 
is a polynomial in $s_1, s_2, s_3, e^{s_1/2}, e^{s_2/2}, e^{s_3/2}$. 
The points 
$(i, j, m)$ and $(n, p, q)$ such that 
$s_1^i s_2^j s_3^m  e^{n s_1/2} e^{p s_2/2} e^{q s_3/2}$ appears in $K_{10}^{\text{num}}(s_1, s_2, s_3)$ are plotted in Figure \ref{K8spowers} 
and Figure \ref{K10Etospowers}.  

\begin{figure}
\includegraphics[scale=0.7]{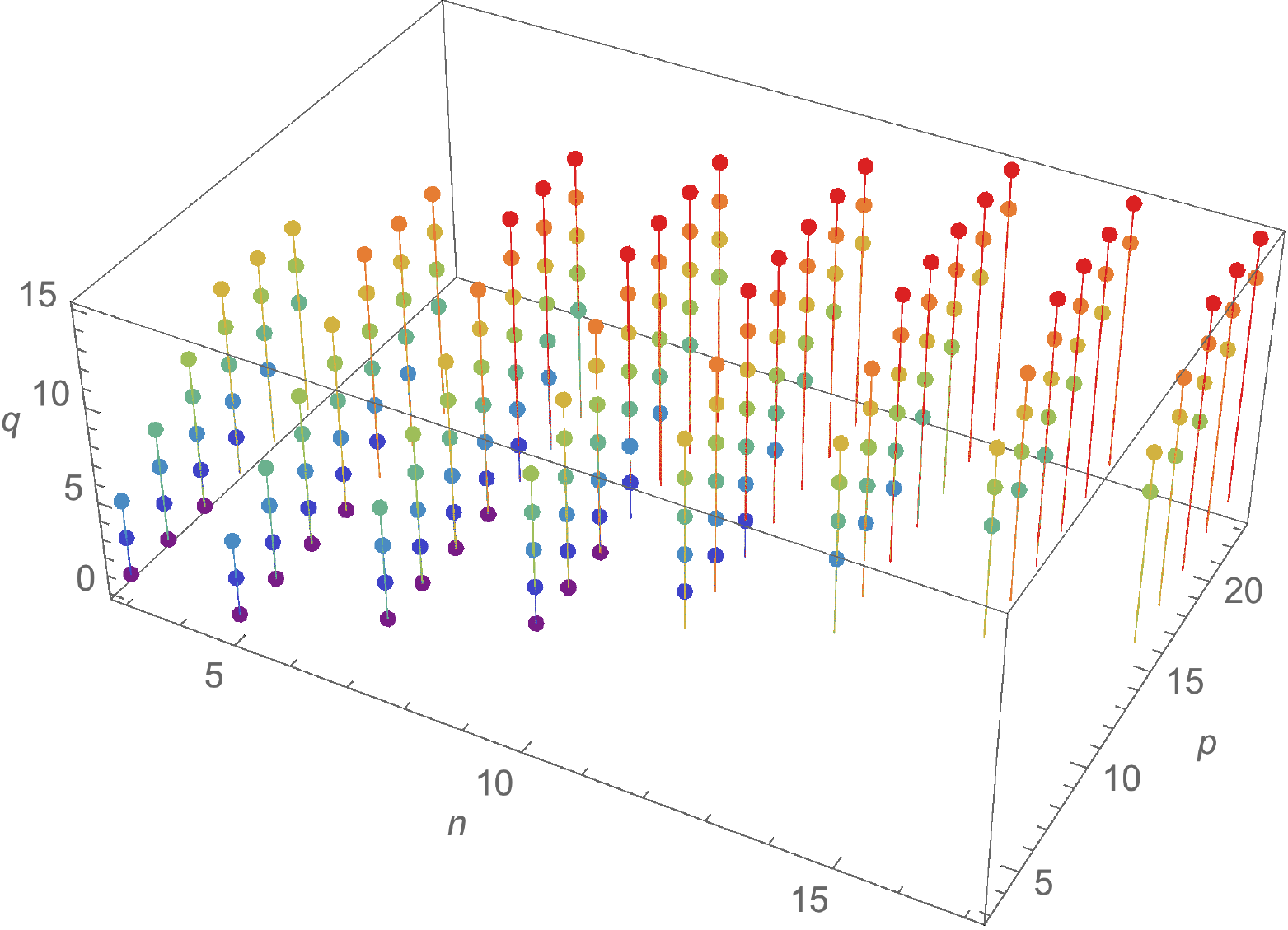}
\caption{The points $(n, p, q)$  such that 
$s_1^i\, s_2^j\, s_3^m \, e^{n s_1/2}\, e^{p s_2/2} \, e^{q s_3/2}$  appears in the expression   
for $K_{10}^{\text{num}}$.}
\label{K10Etospowers}
\end{figure}

\smallskip

The functions $K_{11}$ and $K_{16}$ are scalar multiples of $K_{10}$: 
\[
K_{11}(s_1, s_2, s_3) =\frac{1}{2} K_{10}(s_1, s_2, s_3), 
\]
\[
K_{16}(s_1, s_2, s_3) = \frac{3}{2} K_{10}(s_1, s_2, s_3). 
\]

\subsubsection{The function $K_{12}$} We have
\[
K_{12}(s_1, s_2, s_3) = \sum_{i=1}^{14} K_{12, i}(s_1, s_2, s_3), 
\]
where 
\[
 K_{12, 1}(s_1, s_2, s_3) = 
\frac{8 \pi  e^{\frac{3}{2} \left(s_1+s_2\right)} \left(e^{s_2} \left(2 e^{s_1}-1\right) s_1+s_1-e^{s_1+2 s_2} \left(s_1-s_2+e^{s_1} \left(s_1+s_2\right)\right)\right)}{\left(e^{s_2}-1\right) \left(e^{s_1+s_2}-1\right){}^3 \left(e^{\frac{s_3}{2}}-1\right) s_1 s_2 \left(s_1+s_2\right)},
\]
\[
 K_{12, 2}(s_1, s_2, s_3) =\frac{8 \pi  e^{\frac{3}{2} \left(s_1+s_2\right)} \left(e^{s_2} \left(2 e^{s_1}-1\right) s_1+s_1-e^{s_1+2 s_2} \left(s_1-s_2+e^{s_1} \left(s_1+s_2\right)\right)\right)}{\left(e^{s_2}-1\right) \left(e^{s_1+s_2}-1\right){}^3 \left(e^{\frac{s_3}{2}}+1\right) s_1 s_2 \left(s_1+s_2\right)}, 
\]
\[
 K_{12, 3}(s_1, s_2, s_3) =-\frac{4 \pi  e^{\frac{5 s_1}{2}} \left(s_2-s_3\right)}{\left(e^{s_1}-1\right){}^2 \left(e^{\frac{1}{2} \left(s_2+s_3\right)}-1\right){}^2 s_1 s_2 s_3}, 
\]
\[
 K_{12, 4}(s_1, s_2, s_3) =\frac{4 \pi  e^{\frac{5 s_1}{2}} \left(s_2-s_3\right)}{\left(e^{s_1}-1\right){}^2 \left(e^{\frac{1}{2} \left(s_2+s_3\right)}+1\right){}^2 s_1 s_2 s_3}, 
\]
\[
 K_{12, 5}(s_1, s_2, s_3) =-\frac{3 \pi  \left(2 s_1^2+\left(s_2-s_3\right) s_1-s_2^2+s_3^2\right)}{\left(e^{\frac{1}{2} \left(s_1+s_2+s_3\right)}-1\right){}^4 s_1 \left(s_1+s_2\right) s_3 \left(s_2+s_3\right)}, 
\]
\[
 K_{12, 6}(s_1, s_2, s_3) =\frac{3 \pi  \left(2 s_1^2+\left(s_2-s_3\right) s_1-s_2^2+s_3^2\right)}{\left(e^{\frac{1}{2} \left(s_1+s_2+s_3\right)}+1\right){}^4 s_1 \left(s_1+s_2\right) s_3 \left(s_2+s_3\right)}, 
\]
{\tiny
$
 K_{12, 7}(s_1, s_2, s_3) =
$
\[
\frac{8 \pi  e^{\frac{3 s_1}{2}} \left(e^{s_1} s_2^2+\left(e^{s_1} \left(s_3+e^{s_2} \left(s_3+1\right)-1\right)-\left(e^{s_2}-1\right) s_3\right) s_2+s_3 \left(e^{s_2} \left(e^{s_1} \left(s_3-1\right)-s_3\right)+e^{s_1}+s_3\right)\right)}{\left(e^{s_1}-1\right){}^2 \left(e^{s_2}-1\right) \left(e^{\frac{1}{2} \left(s_2+s_3\right)}-1\right) s_1 s_2 s_3 \left(s_2+s_3\right)}, 
\]
$
 K_{12, 8}(s_1, s_2, s_3) =
$
\[
\frac{8 \pi  e^{\frac{3 s_1}{2}} \left(e^{s_1} s_2^2+\left(e^{s_1} \left(s_3+e^{s_2} \left(s_3+1\right)-1\right)-\left(e^{s_2}-1\right) s_3\right) s_2+s_3 \left(e^{s_2} \left(e^{s_1} \left(s_3-1\right)-s_3\right)+e^{s_1}+s_3\right)\right)}{\left(e^{s_1}-1\right){}^2 \left(e^{s_2}-1\right) \left(e^{\frac{1}{2} \left(s_2+s_3\right)}+1\right) s_1 s_2 s_3 \left(s_2+s_3\right)}, 
\]
\[
 K_{12, 9}(s_1, s_2, s_3)\left(e^{s_1}-1\right) \left(e^{s_1+s_2}-1\right) \left(e^{\frac{1}{2} \left(s_1+s_2+s_3\right)}-1\right){}^3 s_1 s_2 \left(s_1+s_2\right) s_3 \left(s_2+s_3\right) \left(s_1+s_2+s_3\right) =
\]
$
\pi  (-2 ((-7 e^{s_1}-7 e^{s_1+s_2}+5 e^{2 s_1+s_2}+9) s_2+2 e^{s_1} (e^{s_2}-1) s_3) s_1^3+((13 e^{s_1}+19 e^{s_1+s_2}-7 e^{2 s_1+s_2}-25) s_2^2+(12 (e^{s_1}-1) (e^{s_1+s_2}-1)+(11 e^{s_1}-7 e^{s_1+s_2}+3 e^{2 s_1+s_2}-7) s_3) s_2-8 e^{s_1} (e^{s_2}-1) s_3^2) s_1^2+2 (2 (4 e^{s_1}-1) (e^{s_1+s_2}-1) s_2^3+(3 (e^{s_1}-1) (e^{s_1+s_2}-1)+(-6 e^{s_1}-6 e^{s_1+s_2}+8 e^{2 s_1+s_2}+4) s_3) s_2^2-s_3 (3 (e^{s_1}-1) (e^{s_1+s_2}-1)+(-4 e^{s_1}+6 e^{s_1+s_2}-2) s_3) s_2-2 e^{s_1} (e^{s_2}-1) s_3^3) s_1+s_2 (s_2+s_3) ((-15 e^{s_1}-9 e^{s_1+s_2}+13 e^{2 s_1+s_2}+11) s_2^2-2 (e^{s_1}-1) (2 s_3+3 e^{s_1+s_2}-3) s_2-s_3 ((-11 e^{s_1}-9 e^{s_1+s_2}+13 e^{2 s_1+s_2}+7) s_3-6 (e^{s_1}-1) (e^{s_1+s_2}-1)))), 
$
\[
 K_{12, 10}(s_1, s_2, s_3)=\frac{K_{12, 9}^{\text{num}}(s_1, s_2, s_3)}{\left(e^{s_1}-1\right) \left(e^{s_1+s_2}-1\right) \left(e^{\frac{1}{2} \left(s_1+s_2+s_3\right)}+1\right){}^3 s_1 s_2 \left(s_1+s_2\right) s_3 \left(s_2+s_3\right) \left(s_1+s_2+s_3\right) }, 
\]
\[
 K_{12, 11}(s_1, s_2, s_3)2 \left(e^{s_1}-1\right){}^2 \left(e^{s_1+s_2}-1\right){}^2 \left(e^{\frac{1}{2} \left(s_1+s_2+s_3\right)}-1\right){}^2 s_1 s_2 \left(s_1+s_2\right) s_3 \left(s_2+s_3\right) \left(s_1+s_2+s_3\right) =
\]
$
\pi  (2 ((-19+28 e^{s_1}-5 e^{2 s_1}+36 e^{s_1+s_2}-9 e^{2 (s_1+s_2)}-56 e^{2 s_1+s_2}+12 e^{3 s_1+s_2}+12 e^{3 s_1+2 s_2}+e^{4 s_1+2 s_2}) s_2-2 e^{s_1} (-1+e^{s_2}) (1-3 e^{s_1}-5 e^{s_1+s_2}+7 e^{2 s_1+s_2}) s_3) s_1^3+((-47+34 e^{s_1}+37 e^{2 s_1}+92 e^{s_1+s_2}-21 e^{2 (s_1+s_2)}-88 e^{2 s_1+s_2}-52 e^{3 s_1+s_2}+6 e^{3 s_1+2 s_2}+39 e^{4 s_1+2 s_2}) s_2^2+(4 (-1+e^{s_1}) (-15+11 e^{s_1}+26 e^{s_1+s_2}-11 e^{2 (s_1+s_2)}-18 e^{2 s_1+s_2}+7 e^{3 s_1+2 s_2})+(-9-10 e^{s_1}+11 e^{2 s_1}+8 e^{s_1+s_2}+57 e^{2 (s_1+s_2)}+8 e^{3 s_1+s_2}-102 e^{3 s_1+2 s_2}+37 e^{4 s_1+2 s_2}) s_3) s_2-8 e^{s_1} (-1+e^{s_2}) s_3 ((1-3 e^{s_1}-5 e^{s_1+s_2}+7 e^{2 s_1+s_2}) s_3-2 (-1+e^{s_1}) (-1+e^{s_1+s_2}))) s_1^2+2 (2 (5-25 e^{s_1}+26 e^{2 s_1}-8 e^{s_1+s_2}+3 e^{2 (s_1+s_2)}+40 e^{2 s_1+s_2}-44 e^{3 s_1+s_2}-15 e^{3 s_1+2 s_2}+18 e^{4 s_1+2 s_2}) s_2^3+(2 (10-37 e^{s_1}+29 e^{2 s_1}-17 e^{s_1+s_2}+11 e^{2 (s_1+s_2)}+54 e^{2 s_1+s_2}-41 e^{3 s_1+s_2}-25 e^{3 s_1+2 s_2}+16 e^{4 s_1+2 s_2}) s_3-(-1+e^{s_1}) (11+5 e^{s_1}-18 e^{s_1+s_2}+7 e^{2 (s_1+s_2)}-14 e^{2 s_1+s_2}+9 e^{3 s_1+2 s_2})) s_2^2-s_3 ((-1+e^{s_1}) (-19+11 e^{s_1}+50 e^{s_1+s_2}-31 e^{2 (s_1+s_2)}-34 e^{2 s_1+s_2}+23 e^{3 s_1+2 s_2})+2 (-5+11 e^{s_1}+10 e^{s_1+s_2}-13 e^{2 (s_1+s_2)}-12 e^{2 s_1+s_2}-10 e^{3 s_1+s_2}+17 e^{3 s_1+2 s_2}+2 e^{4 s_1+2 s_2}) s_3) s_2-2 e^{s_1} (-1+e^{s_2}) s_3^2 ((1-3 e^{s_1}-5 e^{s_1+s_2}+7 e^{2 s_1+s_2}) s_3-4 (-1+e^{s_1}) (-1+e^{s_1+s_2}))) s_1+s_2 (s_2+s_3) ((29-78 e^{s_1}+57 e^{2 s_1}-52 e^{s_1+s_2}+15 e^{2 (s_1+s_2)}+136 e^{2 s_1+s_2}-100 e^{3 s_1+s_2}-42 e^{3 s_1+2 s_2}+35 e^{4 s_1+2 s_2}) s_2^2-2 (-1+e^{s_1}) (2 (5-9 e^{s_1}-7 e^{s_1+s_2}-2 e^{2 (s_1+s_2)}+11 e^{2 s_1+s_2}+2 e^{3 s_1+2 s_2}) s_3+27 e^{s_1}+34 e^{s_1+s_2}-15 e^{2 (s_1+s_2)}-50 e^{2 s_1+s_2}+23 e^{3 s_1+2 s_2}-19) s_2-s_3 ((9-22 e^{s_1}+21 e^{2 s_1}-24 e^{s_1+s_2}+23 e^{2 (s_1+s_2)}+64 e^{2 s_1+s_2}-56 e^{3 s_1+s_2}-58 e^{3 s_1+2 s_2}+43 e^{4 s_1+2 s_2}) s_3-2 (-1+e^{s_1}) (-11+19 e^{s_1}+26 e^{s_1+s_2}-15 e^{2 (s_1+s_2)}-42 e^{2 s_1+s_2}+23 e^{3 s_1+2 s_2})))), 
$
\[
 K_{12, 12}(s_1, s_2, s_3)= \frac{-K_{12, 11}^{\text{num}}(s_1, s_2, s_3)}{2 \left(e^{s_1}-1\right){}^2 \left(e^{s_1+s_2}-1\right){}^2 \left(e^{\frac{1}{2} \left(s_1+s_2+s_3\right)}+1\right){}^2 s_1 s_2 \left(s_1+s_2\right) s_3 \left(s_2+s_3\right) \left(s_1+s_2+s_3\right)} , 
\]
\[
 K_{12, 13}(s_1, s_2, s_3)\left(e^{s_1}-1\right){}^2 \left(e^{s_1+s_2}-1\right){}^3 \left(e^{\frac{1}{2} \left(s_1+s_2+s_3\right)}-1\right) s_1 s_2 \left(s_1+s_2\right) s_3 \left(s_2+s_3\right) \left(s_1+s_2+s_3\right) =
\]
$
4 \pi  (2 ((1-e^{s_1}-e^{2 s_1}-3 e^{s_1+s_2}+4 e^{2 (s_1+s_2)}+3 e^{2 s_1+s_2}+2 e^{3 s_1+s_2}-6 e^{3 s_1+2 s_2}+e^{4 s_1+2 s_2}) s_2-e^{2 s_1} (-1+e^{s_2}) (-1-e^{s_2}+2 e^{s_1+s_2}+e^{2 (s_1+s_2)}-e^{s_1+2 s_2}) s_3) s_1^3-2 ((-1-e^{s_1}+5 e^{2 s_1}+3 e^{s_1+s_2}-5 e^{2 (s_1+s_2)}+3 e^{2 s_1+s_2}-12 e^{3 s_1+s_2}+4 e^{3 s_1+2 s_2}+4 e^{4 s_1+2 s_2}) s_2^2+(e^{s_1} (-2+7 e^{s_1}+6 e^{s_1+s_2}+7 e^{2 (s_1+s_2)}+3 e^{3 (s_1+s_2)}-16 e^{2 s_1+s_2}-4 e^{s_1+2 s_2}+2 e^{3 s_1+2 s_2}-3 e^{2 s_1+3 s_2}) s_3-4 e^{s_1}-9 e^{s_1+s_2}+7 e^{2 (s_1+s_2)}-e^{3 (s_1+s_2)}+13 e^{2 s_1+s_2}-e^{3 s_1+s_2}-10 e^{3 s_1+2 s_2}+e^{4 s_1+3 s_2}+e^{5 s_1+3 s_2}+3) s_2+e^{2 s_1} (-1+e^{s_2}) s_3 (2 (-1-e^{s_2}+2 e^{s_1+s_2}+e^{2 (s_1+s_2)}-e^{s_1+2 s_2}) s_3-2 e^{s_2}+4 e^{s_1+s_2}-3 e^{2 (s_1+s_2)}+2 e^{s_1+2 s_2}-1)) s_1^2-(2 (1-5 e^{s_1}+7 e^{2 s_1}-3 e^{s_1+s_2}+2 e^{2 (s_1+s_2)}+15 e^{2 s_1+s_2}-18 e^{3 s_1+s_2}-10 e^{3 s_1+2 s_2}+11 e^{4 s_1+2 s_2}) s_2^3+(2 (2-8 e^{s_1}+13 e^{2 s_1}-6 e^{s_1+s_2}+3 e^{2 (s_1+s_2)}-e^{3 (s_1+s_2)}+24 e^{2 s_1+s_2}-32 e^{3 s_1+s_2}-11 e^{3 s_1+2 s_2}+15 e^{4 s_1+2 s_2}-e^{4 s_1+3 s_2}+2 e^{5 s_1+3 s_2}) s_3+6 e^{s_1}-11 e^{2 s_1}-4 e^{s_1+s_2}+3 e^{2 (s_1+s_2)}-12 e^{2 s_1+s_2}+28 e^{3 s_1+s_2}+10 e^{3 s_1+2 s_2}-25 e^{4 s_1+2 s_2}-4 e^{4 s_1+3 s_2}+8 e^{5 s_1+3 s_2}+1) s_2^2+s_3 (2 (1-3 e^{s_1}+7 e^{2 s_1}-3 e^{s_1+s_2}-2 e^{3 (s_1+s_2)}+9 e^{2 s_1+s_2}-16 e^{3 s_1+s_2}+2 e^{3 s_1+2 s_2}+3 e^{4 s_1+2 s_2}+2 e^{5 s_1+3 s_2}) s_3+14 e^{s_1}-7 e^{2 s_1}+14 e^{s_1+s_2}-19 e^{2 (s_1+s_2)}+10 e^{3 (s_1+s_2)}-34 e^{2 s_1+s_2}+14 e^{3 s_1+s_2}+38 e^{3 s_1+2 s_2}-13 e^{4 s_1+2 s_2}-18 e^{4 s_1+3 s_2}+6 e^{5 s_1+3 s_2}-5) s_2+2 e^{2 s_1} (-1+e^{s_2}) s_3^2 ((-1-e^{s_2}+2 e^{s_1+s_2}+e^{2 (s_1+s_2)}-e^{s_1+2 s_2}) s_3-2 e^{s_2}+4 e^{s_1+s_2}-3 e^{2 (s_1+s_2)}+2 e^{s_1+2 s_2}-1)) s_1-s_2 (s_2+s_3) (2 (1-3 e^{s_1}+3 e^{2 s_1}-3 e^{s_1+s_2}+3 e^{2 (s_1+s_2)}+9 e^{2 s_1+s_2}-8 e^{3 s_1+s_2}-8 e^{3 s_1+2 s_2}+6 e^{4 s_1+2 s_2}) s_2^2+(2 (1-3 e^{s_1}+4 e^{2 s_1}-3 e^{s_1+s_2}+3 e^{2 (s_1+s_2)}+e^{3 (s_1+s_2)}+9 e^{2 s_1+s_2}-10 e^{3 s_1+s_2}-7 e^{3 s_1+2 s_2}+6 e^{4 s_1+2 s_2}-3 e^{4 s_1+3 s_2}+2 e^{5 s_1+3 s_2}) s_3+14 e^{s_1}-11 e^{2 s_1}+14 e^{s_1+s_2}-11 e^{2 (s_1+s_2)}+2 e^{3 (s_1+s_2)}-38 e^{2 s_1+s_2}+30 e^{3 s_1+s_2}+30 e^{3 s_1+2 s_2}-25 e^{4 s_1+2 s_2}-6 e^{4 s_1+3 s_2}+6 e^{5 s_1+3 s_2}-5) s_2+s_3 (2 e^{2 s_1} (1-2 e^{s_1+s_2}+2 e^{3 (s_1+s_2)}+e^{s_1+2 s_2}+e^{s_1+3 s_2}-3 e^{2 s_1+3 s_2}) s_3-2 e^{s_1}+3 e^{2 s_1}-4 e^{s_1+s_2}+7 e^{2 (s_1+s_2)}-4 e^{3 (s_1+s_2)}+10 e^{2 s_1+s_2}-12 e^{3 s_1+s_2}-18 e^{3 s_1+2 s_2}+17 e^{4 s_1+2 s_2}+10 e^{4 s_1+3 s_2}-8 e^{5 s_1+3 s_2}+1))), 
$
\[
 K_{12, 14}(s_1, s_2, s_3)=\frac{K_{12, 13}^{\text{num}}(s_1, s_2, s_3)}{\left(e^{s_1}-1\right){}^2 \left(e^{s_1+s_2}-1\right){}^3 \left(e^{\frac{1}{2} \left(s_1+s_2+s_3\right)}+1\right) s_1 s_2 \left(s_1+s_2\right) s_3 \left(s_2+s_3\right) \left(s_1+s_2+s_3\right)}.  
\]
}

\smallskip

Using the above expressions, we have  
\[
K_{12}(s_1, s_2, s_3) 
= 
\frac{K_{12}^{\text{num}}(s_1, s_2, s_3)}{K_{12}^{\text{den}}(s_1, s_2, s_3)}, 
\]
where 
\begin{equation} \label{K12den}
K_{12}^{\text{den}}(s_1, s_2, s_3) = 
\left(e^{s_1}-1\right){}^2 \left(e^{s_2}-1\right) \left(e^{s_1+s_2}-1\right){}^3 \left(e^{s_3}-1\right) \left(e^{s_2+s_3}-1\right){}^2 \times 
\end{equation}
\[
 \left(e^{s_1+s_2+s_3}-1\right){}^4 s_1 s_2 \left(s_1+s_2\right) s_3 \left(s_2+s_3\right) \left(s_1+s_2+s_3\right),
\]
and $K_{12}^{\text{num}}$ 
is a polynomial in $s_1, s_2, s_3, e^{s_1/2}, e^{s_2/2}, e^{s_3/2}$. 
The points 
$(i, j, m)$ and $(n, p, q)$ such that 
$s_1^i s_2^j s_3^m  e^{n s_1/2} e^{p s_2/2} e^{q s_3/2}$ appears in $K_{12}^{\text{num}}(s_1, s_2, s_3)$ are plotted in Figure \ref{K8spowers} 
and Figure \ref{K12Etospowers}.  

\begin{figure}
\includegraphics[scale=0.7]{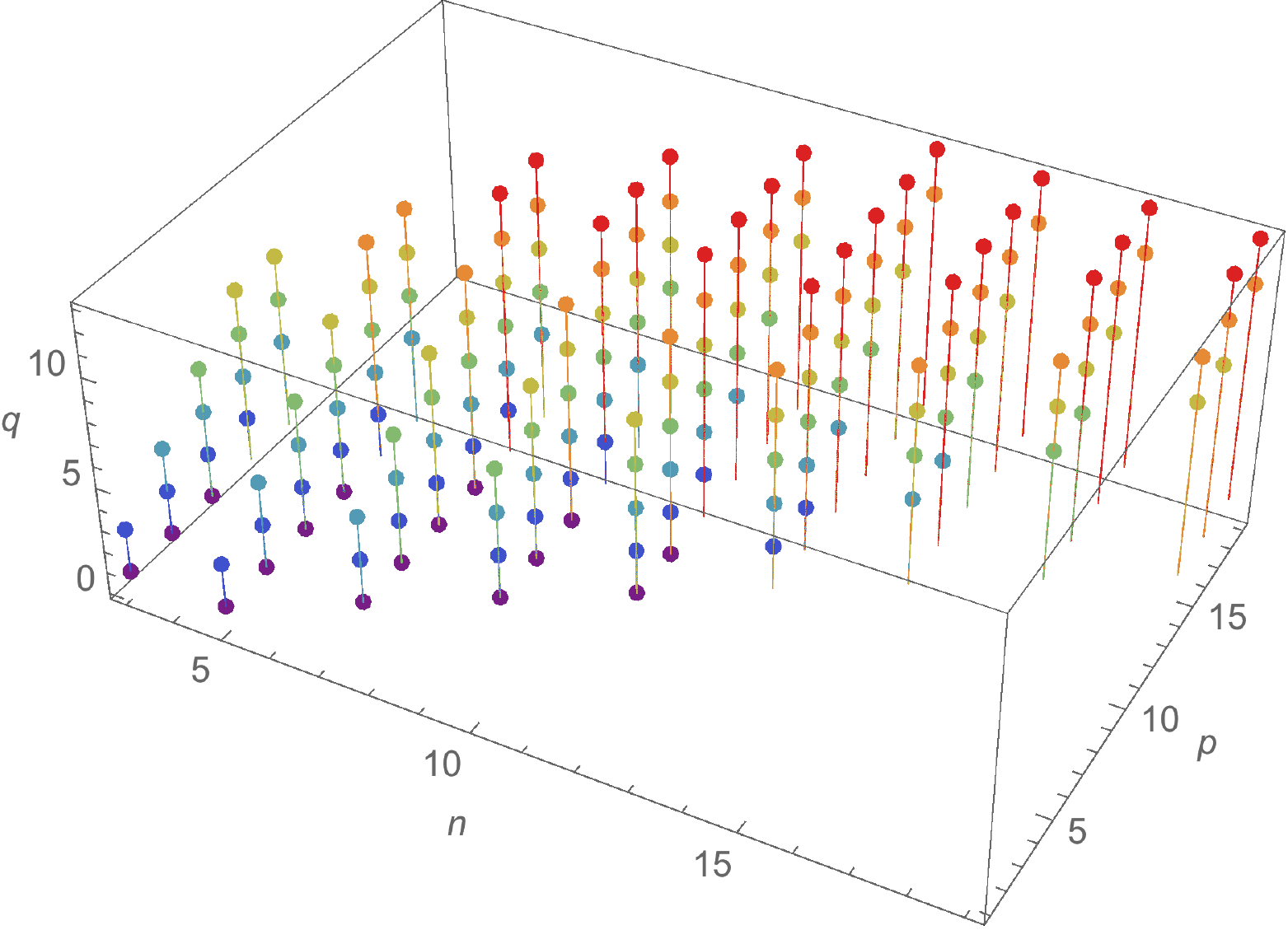}
\caption{The points $(n, p, q)$  such that 
$s_1^i\, s_2^j\, s_3^m \, e^{n s_1/2}\, e^{p s_2/2} \, e^{q s_3/2}$  appears in the expressions   
for $K_{12}^{\text{num}}$ and $K_{14}^{\text{num}}$.}
\label{K12Etospowers}
\end{figure}

\subsubsection{The function $K_{13}$}
We have 
\[
K_{13}(s_1, s_2, s_3) = \sum_{i=1}^{14} K_{13, i}(s_1, s_2, s_3), 
\]
where 
\[
 K_{13, 1}(s_1, s_2, s_3) = -\frac{16 \pi  e^{\frac{3}{2} \left(s_1+s_2\right)} \left(\left(e^{s_1+s_2} \left(2 e^{s_2}-1\right)-1\right) s_1+e^{s_2} \left(e^{s_1}-1\right) s_2\right)}{\left(e^{s_2}-1\right) \left(e^{s_1+s_2}-1\right){}^3 \left(e^{\frac{s_3}{2}}-1\right) s_1 s_2 \left(s_1+s_2\right)},
\]
\[
 K_{13, 2}(s_1, s_2, s_3) =-\frac{16 \pi  e^{\frac{3}{2} \left(s_1+s_2\right)} \left(\left(e^{s_1+s_2} \left(2 e^{s_2}-1\right)-1\right) s_1+e^{s_2} \left(e^{s_1}-1\right) s_2\right)}{\left(e^{s_2}-1\right) \left(e^{s_1+s_2}-1\right){}^3 \left(e^{\frac{s_3}{2}}+1\right) s_1 s_2 \left(s_1+s_2\right)},
\]
\[
 K_{13, 3}(s_1, s_2, s_3) =-\frac{8 \pi  e^{\frac{3 s_1}{2}} \left(s_2-s_3\right)}{\left(e^{s_1}-1\right){}^2 \left(e^{\frac{1}{2} \left(s_2+s_3\right)}-1\right){}^2 s_1 s_2 s_3}, 
\]
\[
 K_{13, 4}(s_1, s_2, s_3) =\frac{8 \pi  e^{\frac{3 s_1}{2}} \left(s_2-s_3\right)}{\left(e^{s_1}-1\right){}^2 \left(e^{\frac{1}{2} \left(s_2+s_3\right)}+1\right){}^2 s_1 s_2 s_3}, 
\]
\[
 K_{13, 5}(s_1, s_2, s_3) =-\frac{6 \pi  \left(2 s_1^2+\left(s_2-s_3\right) s_1-s_2^2+s_3^2\right)}{\left(e^{\frac{1}{2} \left(s_1+s_2+s_3\right)}-1\right){}^4 s_1 \left(s_1+s_2\right) s_3 \left(s_2+s_3\right)}, 
\]
\[
 K_{13, 6}(s_1, s_2, s_3) =\frac{6 \pi  \left(2 s_1^2+\left(s_2-s_3\right) s_1-s_2^2+s_3^2\right)}{\left(e^{\frac{1}{2} \left(s_1+s_2+s_3\right)}+1\right){}^4 s_1 \left(s_1+s_2\right) s_3 \left(s_2+s_3\right)}, 
\]
\[
 K_{13, 7}(s_1, s_2, s_3) =\frac{16 \pi  e^{\frac{3 s_1}{2}} \left(s_2^2+\left(e^{s_2} \left(2 s_3+1\right)-1\right) s_2+s_3 \left(-s_3+e^{s_2} \left(2 s_3-1\right)+1\right)\right)}{\left(e^{s_1}-1\right){}^2 \left(e^{s_2}-1\right) \left(e^{\frac{1}{2} \left(s_2+s_3\right)}-1\right) s_1 s_2 s_3 \left(s_2+s_3\right)}, 
\]
\[
 K_{13, 8}(s_1, s_2, s_3) = \frac{16 \pi  e^{\frac{3 s_1}{2}} \left(s_2^2+\left(e^{s_2} \left(2 s_3+1\right)-1\right) s_2+s_3 \left(-s_3+e^{s_2} \left(2 s_3-1\right)+1\right)\right)}{\left(e^{s_1}-1\right){}^2 \left(e^{s_2}-1\right) \left(e^{\frac{1}{2} \left(s_2+s_3\right)}+1\right) s_1 s_2 s_3 \left(s_2+s_3\right)}, 
\]
{\tiny 
\[
 K_{13, 9}(s_1, s_2, s_3)\left(e^{s_1}-1\right) \left(e^{s_1+s_2}-1\right) \left(e^{\frac{1}{2} \left(s_1+s_2+s_3\right)}-1\right){}^3 s_1 s_2 \left(s_1+s_2\right) s_3 \left(s_2+s_3\right) \left(s_1+s_2+s_3\right) = 
\]
$
-2 \pi  (2 ((-9 e^{s_1}-9 e^{s_1+s_2}+7 e^{2 s_1+s_2}+11) s_2+2 e^{s_1} (e^{s_2}-1) s_3) s_1^3+((-25 e^{s_1}-31 e^{s_1+s_2}+19 e^{2 s_1+s_2}+37) s_2^2+((-19 e^{s_1}-e^{s_1+s_2}+5 e^{2 s_1+s_2}+15) s_3-12 (e^{s_1}-1) (e^{s_1+s_2}-1)) s_2+8 e^{s_1} (e^{s_2}-1) s_3^2) s_1^2+(-4 (e^{s_1}+2) (e^{s_1+s_2}-1) s_2^3+(-6 (e^{s_1}-1) (e^{s_1+s_2}-1)-4 (e^{s_1}+e^{s_1+s_2}-2) s_3) s_2^2+2 s_3 (3 (e^{s_1}-1) (e^{s_1+s_2}-1)+2 e^{s_1} (2 e^{s_2}+e^{s_1+s_2}-3) s_3) s_2+4 e^{s_1} (e^{s_2}-1) s_3^3) s_1-s_2 (s_2+s_3) ((-11 e^{s_1}-5 e^{s_1+s_2}+9 e^{2 s_1+s_2}+7) s_2^2-2 (e^{s_1}-1) (2 e^{s_1+s_2} s_3+3 e^{s_1+s_2}-3) s_2-s_3 ((-11 e^{s_1}-9 e^{s_1+s_2}+13 e^{2 s_1+s_2}+7) s_3-6 (e^{s_1}-1) (e^{s_1+s_2}-1)))), 
$
\[
 K_{13, 10}(s_1, s_2, s_3)=\frac{K_{13, 9}^{\text{num}}(s_1, s_2, s_3)}{\left(e^{s_1}-1\right) \left(e^{s_1+s_2}-1\right) \left(e^{\frac{1}{2} \left(s_1+s_2+s_3\right)}+1\right){}^3 s_1 s_2 \left(s_1+s_2\right) s_3 \left(s_2+s_3\right) \left(s_1+s_2+s_3\right)},  
\]
\[
 K_{13, 11}(s_1, s_2, s_3)\left(e^{s_1}-1\right){}^2 \left(e^{s_1+s_2}-1\right){}^2 \left(e^{\frac{1}{2} \left(s_1+s_2+s_3\right)}-1\right){}^2 s_1 s_2 \left(s_1+s_2\right) s_3 \left(s_2+s_3\right) \left(s_1+s_2+s_3\right) = 
\]
$
-\pi  (2 ((29-52 e^{s_1}+19 e^{2 s_1}-52 e^{s_1+s_2}+15 e^{2 (s_1+s_2)}+96 e^{2 s_1+s_2}-36 e^{3 s_1+s_2}-28 e^{3 s_1+2 s_2}+9 e^{4 s_1+2 s_2}) s_2+2 e^{s_1} (-1+e^{s_2}) (3-5 e^{s_1}-7 e^{s_1+s_2}+9 e^{2 s_1+s_2}) s_3) s_1^3+((107-178 e^{s_1}+47 e^{2 s_1}-188 e^{s_1+s_2}+57 e^{2 (s_1+s_2)}+328 e^{2 s_1+s_2}-92 e^{3 s_1+s_2}-102 e^{3 s_1+2 s_2}+21 e^{4 s_1+2 s_2}) s_2^2+((49-110 e^{s_1}+69 e^{2 s_1}-48 e^{s_1+s_2}-57 e^{2 (s_1+s_2)}+160 e^{2 s_1+s_2}-128 e^{3 s_1+s_2}+62 e^{3 s_1+2 s_2}+3 e^{4 s_1+2 s_2}) s_3-4 (-1+e^{s_1}) (-19+15 e^{s_1}+34 e^{s_1+s_2}-15 e^{2 (s_1+s_2)}-26 e^{2 s_1+s_2}+11 e^{3 s_1+2 s_2})) s_2+8 e^{s_1} (-1+e^{s_2}) s_3 ((3-5 e^{s_1}-7 e^{s_1+s_2}+9 e^{2 s_1+s_2}) s_3-2 (-1+e^{s_1}) (-1+e^{s_1+s_2}))) s_1^2-2 (2 (-10+11 e^{s_1}+5 e^{2 s_1}+16 e^{s_1+s_2}-6 e^{2 (s_1+s_2)}-20 e^{2 s_1+s_2}-8 e^{3 s_1+s_2}+9 e^{3 s_1+2 s_2}+3 e^{4 s_1+2 s_2}) s_2^3+((-1+e^{s_1}) (-27+11 e^{s_1}+50 e^{s_1+s_2}-23 e^{2 (s_1+s_2)}-18 e^{2 s_1+s_2}+7 e^{3 s_1+2 s_2})-2 (10-17 e^{s_1}+5 e^{2 s_1}-9 e^{s_1+s_2}-5 e^{2 (s_1+s_2)}+26 e^{2 s_1+s_2}-13 e^{3 s_1+s_2}-e^{3 s_1+2 s_2}+4 e^{4 s_1+2 s_2}) s_3) s_2^2-s_3 ((-1+e^{s_1}) (-11+3 e^{s_1}+34 e^{s_1+s_2}-23 e^{2 (s_1+s_2)}-18 e^{2 s_1+s_2}+15 e^{3 s_1+2 s_2})+2 e^{s_1} (-9+15 e^{s_1}+10 e^{s_2}+8 e^{s_1+s_2}+17 e^{2 (s_1+s_2)}-30 e^{2 s_1+s_2}-18 e^{s_1+2 s_2}+7 e^{3 s_1+2 s_2}) s_3) s_2-2 e^{s_1} (-1+e^{s_2}) s_3^2 ((3-5 e^{s_1}-7 e^{s_1+s_2}+9 e^{2 s_1+s_2}) s_3-4 (-1+e^{s_1}) (-1+e^{s_1+s_2}))) s_1-s_2 (s_2+s_3) ((9-30 e^{s_1}+29 e^{2 s_1}-20 e^{s_1+s_2}+3 e^{2 (s_1+s_2)}+56 e^{2 s_1+s_2}-52 e^{3 s_1+s_2}-10 e^{3 s_1+2 s_2}+15 e^{4 s_1+2 s_2}) s_2^2-2 (-1+e^{s_1}) (2 e^{s_1+s_2} (-1-3 e^{s_1}-3 e^{s_1+s_2}+7 e^{2 s_1+s_2}) s_3+19 e^{s_1}+18 e^{s_1+s_2}-7 e^{2 (s_1+s_2)}-34 e^{2 s_1+s_2}+15 e^{3 s_1+2 s_2}-11) s_2-s_3 ((9-30 e^{s_1}+29 e^{2 s_1}-16 e^{s_1+s_2}+15 e^{2 (s_1+s_2)}+64 e^{2 s_1+s_2}-64 e^{3 s_1+s_2}-50 e^{3 s_1+2 s_2}+43 e^{4 s_1+2 s_2}) s_3-2 (-1+e^{s_1}) (-11+19 e^{s_1}+26 e^{s_1+s_2}-15 e^{2 (s_1+s_2)}-42 e^{2 s_1+s_2}+23 e^{3 s_1+2 s_2})))), 
$
\[
 K_{13, 12}(s_1, s_2, s_3)=\frac{-K_{13, 11}^{\text{num}}(s_1, s_2, s_3)}{\left(e^{s_1}-1\right){}^2 \left(e^{s_1+s_2}-1\right){}^2 \left(e^{\frac{1}{2} \left(s_1+s_2+s_3\right)}+1\right){}^2 s_1 s_2 \left(s_1+s_2\right) s_3 \left(s_2+s_3\right) \left(s_1+s_2+s_3\right)}, 
\]
\[
 K_{13, 13}(s_1, s_2, s_3) \left(e^{s_1}-1\right){}^2 \left(e^{s_1+s_2}-1\right){}^3 \left(e^{\frac{1}{2} \left(s_1+s_2+s_3\right)}-1\right) s_1 s_2 \left(s_1+s_2\right) s_3 \left(s_2+s_3\right) \left(s_1+s_2+s_3\right)= 
\]
$
8 \pi  (2 ((2-4 e^{s_1}+e^{2 s_1}-6 e^{s_1+s_2}+6 e^{2 (s_1+s_2)}+11 e^{2 s_1+s_2}-3 e^{3 s_1+s_2}-11 e^{3 s_1+2 s_2}+4 e^{4 s_1+2 s_2}) s_2-e^{2 s_1} (-1+e^{s_2}) (1-3 e^{s_1+s_2}+4 e^{2 (s_1+s_2)}-2 e^{s_1+2 s_2}) s_3) s_1^3+2 ((4-8 e^{s_1}+e^{2 s_1}-12 e^{s_1+s_2}+11 e^{2 (s_1+s_2)}+21 e^{2 s_1+s_2}-3 e^{3 s_1+s_2}-19 e^{3 s_1+2 s_2}+5 e^{4 s_1+2 s_2}) s_2^2+((2-4 e^{s_1}+3 e^{2 s_1}-6 e^{s_1+s_2}+5 e^{2 (s_1+s_2)}+6 e^{3 (s_1+s_2)}+7 e^{2 s_1+s_2}-9 e^{3 s_1+s_2}-5 e^{3 s_1+2 s_2}+13 e^{4 s_1+2 s_2}-12 e^{4 s_1+3 s_2}) s_3+9 e^{s_1}-3 e^{2 s_1}+14 e^{s_1+s_2}-11 e^{2 (s_1+s_2)}+2 e^{3 (s_1+s_2)}-26 e^{2 s_1+s_2}+9 e^{3 s_1+s_2}+21 e^{3 s_1+2 s_2}-7 e^{4 s_1+2 s_2}-4 e^{4 s_1+3 s_2}+e^{5 s_1+3 s_2}-5) s_2-e^{s_1} (-1+e^{s_2}) s_3 (2 e^{s_1} (1-3 e^{s_1+s_2}+4 e^{2 (s_1+s_2)}-2 e^{s_1+2 s_2}) s_3-2 e^{s_1}-4 e^{s_1+s_2}+3 e^{2 (s_1+s_2)}+6 e^{2 s_1+s_2}-4 e^{3 s_1+2 s_2}+1)) s_1^2-(2 (-2+4 e^{s_1}+e^{2 s_1}+6 e^{s_1+s_2}-4 e^{2 (s_1+s_2)}-9 e^{2 s_1+s_2}-3 e^{3 s_1+s_2}+5 e^{3 s_1+2 s_2}+2 e^{4 s_1+2 s_2}) s_2^3+(2 (-2+4 e^{s_1}-e^{2 s_1}+6 e^{s_1+s_2}-2 e^{2 (s_1+s_2)}-4 e^{3 (s_1+s_2)}-5 e^{2 s_1+s_2}+3 e^{3 s_1+s_2}-3 e^{3 s_1+2 s_2}-6 e^{4 s_1+2 s_2}+8 e^{4 s_1+3 s_2}+2 e^{5 s_1+3 s_2}) s_3-14 e^{s_1}+e^{2 s_1}-24 e^{s_1+s_2}+19 e^{2 (s_1+s_2)}-4 e^{3 (s_1+s_2)}+40 e^{2 s_1+s_2}-4 e^{3 s_1+s_2}-34 e^{3 s_1+2 s_2}+3 e^{4 s_1+2 s_2}+8 e^{4 s_1+3 s_2}+9) s_2^2+s_3 (2 e^{2 s_1} (-3+5 e^{s_2}+2 e^{2 s_2}+9 e^{s_1+s_2}-12 e^{2 (s_1+s_2)}+2 e^{3 (s_1+s_2)}-9 e^{s_1+2 s_2}-6 e^{s_1+3 s_2}+12 e^{2 s_1+3 s_2}) s_3+3 e^{2 s_1}+8 e^{s_1+s_2}-19 e^{2 (s_1+s_2)}+12 e^{3 (s_1+s_2)}-4 e^{2 s_1+s_2}-10 e^{3 s_1+s_2}+20 e^{3 s_1+2 s_2}+5 e^{4 s_1+2 s_2}-16 e^{4 s_1+3 s_2}+2 e^{5 s_1+3 s_2}-1) s_2+2 e^{s_1} (-1+e^{s_2}) s_3^2 (e^{s_1} (1-3 e^{s_1+s_2}+4 e^{2 (s_1+s_2)}-2 e^{s_1+2 s_2}) s_3-2 e^{s_1}-4 e^{s_1+s_2}+3 e^{2 (s_1+s_2)}+6 e^{2 s_1+s_2}-4 e^{3 s_1+2 s_2}+1)) s_1-s_2 (s_2+s_3) (2 e^{2 s_1} (1+e^{s_2}+e^{2 s_2}-3 e^{s_1+s_2}+3 e^{2 (s_1+s_2)}-3 e^{s_1+2 s_2}) s_2^2+(4 e^{2 s_1+s_2} (1+e^{s_2}-3 e^{s_1+s_2}+e^{3 s_1+2 s_2}) s_3+4 e^{s_1}-5 e^{2 s_1}+4 e^{s_1+s_2}-3 e^{2 (s_1+s_2)}-12 e^{2 s_1+s_2}+14 e^{3 s_1+s_2}+8 e^{3 s_1+2 s_2}-11 e^{4 s_1+2 s_2}+2 e^{5 s_1+3 s_2}-1) s_2+s_3 (2 e^{2 s_1} (-1+e^{s_2}+e^{2 s_2}+3 e^{s_1+s_2}-3 e^{2 (s_1+s_2)}+2 e^{3 (s_1+s_2)}-3 e^{s_1+2 s_2}) s_3-4 e^{s_1}+5 e^{2 s_1}-2 e^{s_1+s_2}+3 e^{2 (s_1+s_2)}-2 e^{3 (s_1+s_2)}+12 e^{2 s_1+s_2}-16 e^{3 s_1+s_2}-16 e^{3 s_1+2 s_2}+19 e^{4 s_1+2 s_2}+8 e^{4 s_1+3 s_2}-8 e^{5 s_1+3 s_2}+1))), 
$
\[
 K_{13, 14}(s_1, s_2, s_3)=\frac{K_{13, 13}^{\text{num}}(s_1, s_2, s_3)}{\left(e^{s_1}-1\right){}^2 \left(e^{s_1+s_2}-1\right){}^3 \left(e^{\frac{1}{2} \left(s_1+s_2+s_3\right)}+1\right) s_1 s_2 \left(s_1+s_2\right) s_3 \left(s_2+s_3\right) \left(s_1+s_2+s_3\right)}. 
\]
}

\smallskip

Using the above expressions, we have  
\[
K_{13}(s_1, s_2, s_3) 
= 
\frac{K_{13}^{\text{num}}(s_1, s_2, s_3)}{K_{13}^{\text{den}}(s_1, s_2, s_3)}, 
\]
where 
\[
K_{13}^{\text{den}}(s_1, s_2, s_3) =  K_{12}^{\text{den}}(s_1, s_2, s_3), 
\]
which is given by \eqref{K12den}, 
and $K_{13}^{\text{num}}$ 
is a polynomial in $s_1, s_2, s_3, e^{s_1/2}, e^{s_2/2}, e^{s_3/2}$. 
The points 
$(i, j, m)$ and $(n, p, q)$ such that 
$s_1^i s_2^j s_3^m  e^{n s_1/2} e^{p s_2/2} e^{q s_3/2}$ appears in $K_{13}^{\text{num}}(s_1, s_2, s_3)$ are plotted in Figure \ref{K8spowers} 
and Figure \ref{K13Etospowers}.  

\begin{figure}
\includegraphics[scale=0.7]{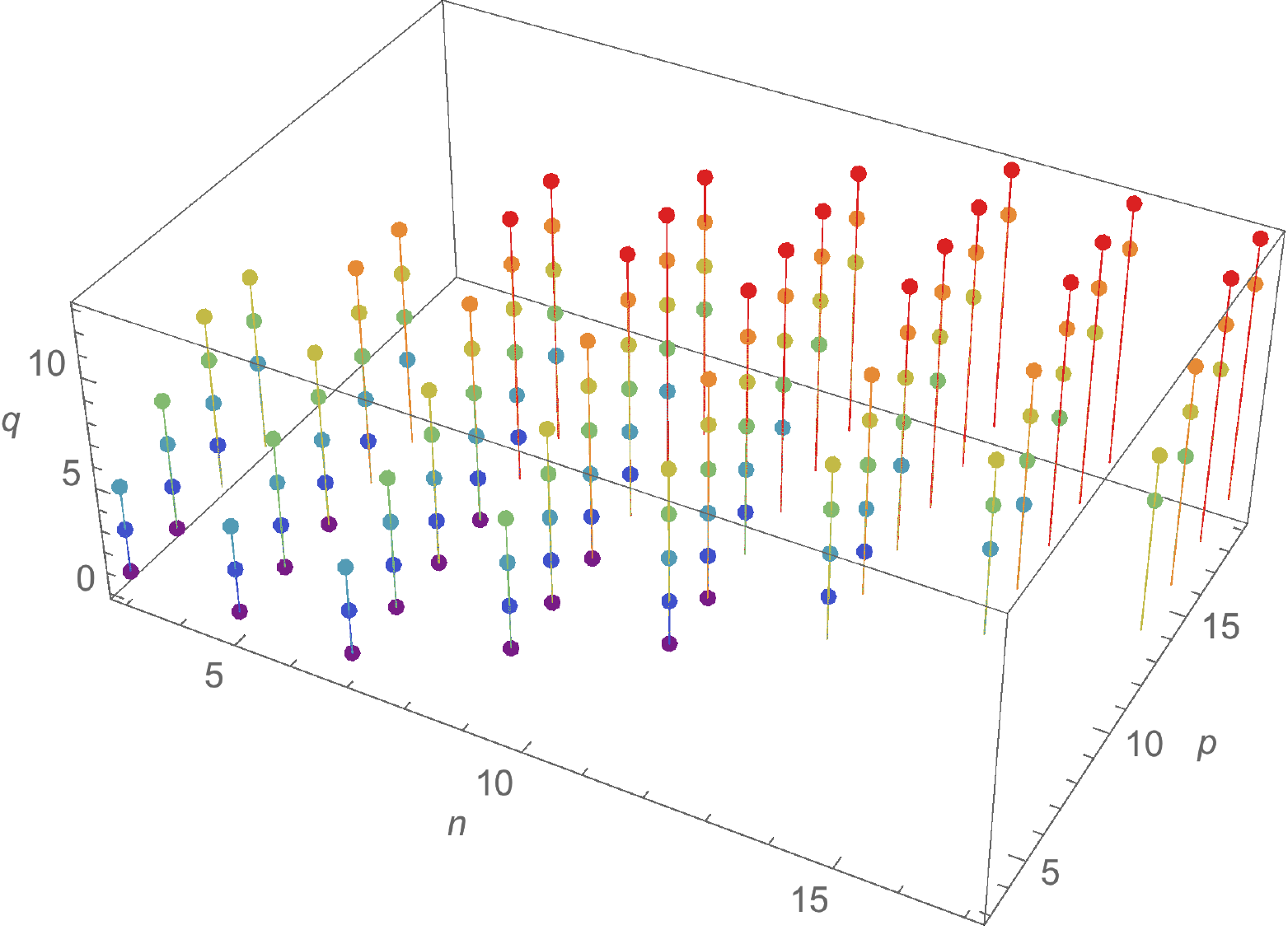}
\caption{The points $(n, p, q)$  such that 
$s_1^i\, s_2^j\, s_3^m \, e^{n s_1/2}\, e^{p s_2/2} \, e^{q s_3/2}$  appears in the expression   
for $K_{13}^{\text{num}}$.}
\label{K13Etospowers}
\end{figure}

\subsubsection{The function $K_{14}$} We have 
\[
K_{14}(s_1, s_2, s_3) = \sum_{i=1}^{14} K_{14, i}(s_1, s_2, s_3), 
\]
where 
{\tiny
\[
 K_{14, 1}(s_1, s_2, s_3) =\frac{8 \pi  e^{\frac{3}{2} \left(s_1+s_2\right)} \left(-\left(e^{s_2} \left(e^{s_1} \left(e^{s_2} \left(e^{s_1}+5\right)-4\right)+1\right)-3\right) s_1-e^{s_2} \left(e^{s_1}-1\right) \left(e^{s_1+s_2}+2\right) s_2\right)}{\left(e^{s_2}-1\right) \left(e^{s_1+s_2}-1\right){}^3 \left(e^{\frac{s_3}{2}}-1\right) s_1 s_2 \left(s_1+s_2\right)},
\]
\[
 K_{14, 2}(s_1, s_2, s_3) = \frac{8 \pi  e^{\frac{3}{2} \left(s_1+s_2\right)} \left(-\left(e^{s_2} \left(e^{s_1} \left(e^{s_2} \left(e^{s_1}+5\right)-4\right)+1\right)-3\right) s_1-e^{s_2} \left(e^{s_1}-1\right) \left(e^{s_1+s_2}+2\right) s_2\right)}{\left(e^{s_2}-1\right) \left(e^{s_1+s_2}-1\right){}^3 \left(e^{\frac{s_3}{2}}+1\right) s_1 s_2 \left(s_1+s_2\right)},
\]
}
\[
 K_{14, 3}(s_1, s_2, s_3) = -\frac{4 \pi  e^{\frac{3 s_1}{2}} \left(e^{s_1}+2\right) \left(s_2-s_3\right)}{\left(e^{s_1}-1\right){}^2 \left(e^{\frac{1}{2} \left(s_2+s_3\right)}-1\right){}^2 s_1 s_2 s_3}, 
\]
\[
 K_{14, 4}(s_1, s_2, s_3) = \frac{4 \pi  e^{\frac{3 s_1}{2}} \left(e^{s_1}+2\right) \left(s_2-s_3\right)}{\left(e^{s_1}-1\right){}^2 \left(e^{\frac{1}{2} \left(s_2+s_3\right)}+1\right){}^2 s_1 s_2 s_3}, 
\]
\[
 K_{14, 5}(s_1, s_2, s_3) = -\frac{9 \pi  \left(2 s_1^2+\left(s_2-s_3\right) s_1-s_2^2+s_3^2\right)}{\left(e^{\frac{1}{2} \left(s_1+s_2+s_3\right)}-1\right){}^4 s_1 \left(s_1+s_2\right) s_3 \left(s_2+s_3\right)},
\]
\[
 K_{14, 6}(s_1, s_2, s_3) = \frac{9 \pi  \left(2 s_1^2+\left(s_2-s_3\right) s_1-s_2^2+s_3^2\right)}{\left(e^{\frac{1}{2} \left(s_1+s_2+s_3\right)}+1\right){}^4 s_1 \left(s_1+s_2\right) s_3 \left(s_2+s_3\right)}, 
\]
{\tiny
\[
 K_{14, 7}(s_1, s_2, s_3)\left(e^{s_1}-1\right){}^2 \left(e^{s_2}-1\right) \left(e^{\frac{1}{2} \left(s_2+s_3\right)}-1\right) s_1 s_2 s_3 \left(s_2+s_3\right) = 
\]
$
8 \pi  e^{\frac{3 s_1}{2}} ((e^{s_1}+2) s_2^2+((e^{s_1}+2) (e^{s_2}-1)+(e^{s_1}+3 e^{s_2}+e^{s_1+s_2}+1) s_3) s_2+s_3 ((3 e^{s_2}+e^{s_1+s_2}-1) s_3-(e^{s_1}+2) (e^{s_2}-1))), 
$
\[
 K_{14, 8}(s_1, s_2, s_3) = \frac{K_{14, 7}^{\text{num}}(s_1, s_2, s_3)}{\left(e^{s_1}-1\right){}^2 \left(e^{s_2}-1\right) \left(e^{\frac{1}{2} \left(s_2+s_3\right)}+1\right) s_1 s_2 s_3 \left(s_2+s_3\right)}, 
\]
\[
 K_{14, 9}(s_1, s_2, s_3)\left(e^{s_1}-1\right) \left(e^{s_1+s_2}-1\right) \left(e^{\frac{1}{2} \left(s_1+s_2+s_3\right)}-1\right){}^3 s_1 s_2 \left(s_1+s_2\right) s_3 \left(s_2+s_3\right) \left(s_1+s_2+s_3\right) = 
\]
$
-\pi  (2 ((-25 e^{s_1}-25 e^{s_1+s_2}+19 e^{2 s_1+s_2}+31) s_2+6 e^{s_1} (e^{s_2}-1) s_3) s_1^3+(9 (-7 e^{s_1}-9 e^{s_1+s_2}+5 e^{2 s_1+s_2}+11) s_2^2+((-49 e^{s_1}+5 e^{s_1+s_2}+7 e^{2 s_1+s_2}+37) s_3-36 (e^{s_1}-1) (e^{s_1+s_2}-1)) s_2+24 e^{s_1} (e^{s_2}-1) s_3^2) s_1^2-2 (6 (2 e^{s_1}+1) (e^{s_1+s_2}-1) s_2^3+(9 (e^{s_1}-1) (e^{s_1+s_2}-1)+(-2 e^{s_1}-2 e^{s_1+s_2}+8 e^{2 s_1+s_2}-4) s_3) s_2^2-s_3 (9 (e^{s_1}-1) (e^{s_1+s_2}-1)+2 (-8 e^{s_1}+7 e^{s_1+s_2}+2 e^{2 s_1+s_2}-1) s_3) s_2-6 e^{s_1} (e^{s_2}-1) s_3^3) s_1-s_2 (s_2+s_3) ((-37 e^{s_1}-19 e^{s_1+s_2}+31 e^{2 s_1+s_2}+25) s_2^2-2 (e^{s_1}-1) (9 (e^{s_1+s_2}-1)+(4 e^{s_1+s_2}+2) s_3) s_2-3 s_3 ((-11 e^{s_1}-9 e^{s_1+s_2}+13 e^{2 s_1+s_2}+7) s_3-6 (e^{s_1}-1) (e^{s_1+s_2}-1)))), 
$
\[
 K_{14, 10}(s_1, s_2, s_3) = \frac{K_{14, 9}^{\text{num}}(s_1, s_2, s_3)}{\left(e^{s_1}-1\right) \left(e^{s_1+s_2}-1\right) \left(e^{\frac{1}{2} \left(s_1+s_2+s_3\right)}+1\right){}^3 s_1 s_2 \left(s_1+s_2\right) s_3 \left(s_2+s_3\right) \left(s_1+s_2+s_3\right)},
\]
\[
 K_{14, 11}(s_1, s_2, s_3)2 \left(e^{s_1}-1\right){}^2 \left(e^{s_1+s_2}-1\right){}^2 \left(e^{\frac{1}{2} \left(s_1+s_2+s_3\right)}-1\right){}^2 s_1 s_2 \left(s_1+s_2\right) s_3 \left(s_2+s_3\right) \left(s_1+s_2+s_3\right) = 
\]
$
-\pi  (2 ((77-132 e^{s_1}+43 e^{2 s_1}-140 e^{s_1+s_2}+39 e^{2 (s_1+s_2)}+248 e^{2 s_1+s_2}-84 e^{3 s_1+s_2}-68 e^{3 s_1+2 s_2}+17 e^{4 s_1+2 s_2}) s_2+2 e^{s_1} (-1+e^{s_2}) (7-13 e^{s_1}-19 e^{s_1+s_2}+25 e^{2 s_1+s_2}) s_3) s_1^3+(3 (87-130 e^{s_1}+19 e^{2 s_1}-156 e^{s_1+s_2}+45 e^{2 (s_1+s_2)}+248 e^{2 s_1+s_2}-44 e^{3 s_1+s_2}-70 e^{3 s_1+2 s_2}+e^{4 s_1+2 s_2}) s_2^2-(4 (-1+e^{s_1}) (-53+41 e^{s_1}+94 e^{s_1+s_2}-41 e^{2 (s_1+s_2)}-70 e^{2 s_1+s_2}+29 e^{3 s_1+2 s_2})+(-107+210 e^{s_1}-127 e^{2 s_1}+104 e^{s_1+s_2}+171 e^{2 (s_1+s_2)}-320 e^{2 s_1+s_2}+264 e^{3 s_1+s_2}-226 e^{3 s_1+2 s_2}+31 e^{4 s_1+2 s_2}) s_3) s_2+8 e^{s_1} (-1+e^{s_2}) s_3 ((7-13 e^{s_1}-19 e^{s_1+s_2}+25 e^{2 s_1+s_2}) s_3-6 (-1+e^{s_1}) (-1+e^{s_1+s_2}))) s_1^2-2 (6 (-5-e^{s_1}+12 e^{2 s_1}+8 e^{s_1+s_2}-3 e^{2 (s_1+s_2)}-20 e^{3 s_1+s_2}+e^{3 s_1+2 s_2}+8 e^{4 s_1+2 s_2}) s_2^3+((-1+e^{s_1}) (-65+17 e^{s_1}+118 e^{s_1+s_2}-53 e^{2 (s_1+s_2)}-22 e^{2 s_1+s_2}+5 e^{3 s_1+2 s_2})+2 (-10-3 e^{s_1}+19 e^{2 s_1}+e^{s_1+s_2}+21 e^{2 (s_1+s_2)}+2 e^{2 s_1+s_2}-15 e^{3 s_1+s_2}-23 e^{3 s_1+2 s_2}+8 e^{4 s_1+2 s_2}) s_3) s_2^2-s_3 ((-1+e^{s_1}) (-41+17 e^{s_1}+118 e^{s_1+s_2}-77 e^{2 (s_1+s_2)}-70 e^{2 s_1+s_2}+53 e^{3 s_1+2 s_2})+2 (-5-7 e^{s_1}+30 e^{2 s_1}+30 e^{s_1+s_2}-49 e^{2 (s_1+s_2)}+4 e^{2 s_1+s_2}-70 e^{3 s_1+s_2}+51 e^{3 s_1+2 s_2}+16 e^{4 s_1+2 s_2}) s_3) s_2-2 e^{s_1} (-1+e^{s_2}) s_3^2 ((7-13 e^{s_1}-19 e^{s_1+s_2}+25 e^{2 s_1+s_2}) s_3-12 (-1+e^{s_1}) (-1+e^{s_1+s_2}))) s_1-s_2 (s_2+s_3) ((47-138 e^{s_1}+115 e^{2 s_1}-92 e^{s_1+s_2}+21 e^{2 (s_1+s_2)}+248 e^{2 s_1+s_2}-204 e^{3 s_1+s_2}-62 e^{3 s_1+2 s_2}+65 e^{4 s_1+2 s_2}) s_2^2-2 (-1+e^{s_1}) (2 (5-9 e^{s_1}-9 e^{s_1+s_2}-8 e^{2 (s_1+s_2)}+5 e^{2 s_1+s_2}+16 e^{3 s_1+2 s_2}) s_3+65 e^{s_1}+70 e^{s_1+s_2}-29 e^{2 (s_1+s_2)}-118 e^{2 s_1+s_2}+53 e^{3 s_1+2 s_2}-41) s_2-s_3 ((27-82 e^{s_1}+79 e^{2 s_1}-56 e^{s_1+s_2}+53 e^{2 (s_1+s_2)}+192 e^{2 s_1+s_2}-184 e^{3 s_1+s_2}-158 e^{3 s_1+2 s_2}+129 e^{4 s_1+2 s_2}) s_3-6 (-1+e^{s_1}) (-11+19 e^{s_1}+26 e^{s_1+s_2}-15 e^{2 (s_1+s_2)}-42 e^{2 s_1+s_2}+23 e^{3 s_1+2 s_2})))),
$
\[
 K_{14, 12}(s_1, s_2, s_3) = \frac{-K_{14, 11}^{\text{num}}(s_1, s_2, s_3)}{2 \left(e^{s_1}-1\right){}^2 \left(e^{s_1+s_2}-1\right){}^2 \left(e^{\frac{1}{2} \left(s_1+s_2+s_3\right)}+1\right){}^2 s_1 s_2 \left(s_1+s_2\right) s_3 \left(s_2+s_3\right) \left(s_1+s_2+s_3\right)},
\]
\[
 K_{14, 13}(s_1, s_2, s_3) \left(e^{s_1}-1\right){}^2 \left(e^{s_1+s_2}-1\right){}^3 \left(e^{\frac{1}{2} \left(s_1+s_2+s_3\right)}-1\right) s_1 s_2 \left(s_1+s_2\right) s_3 \left(s_2+s_3\right) \left(s_1+s_2+s_3\right)= 
\]
$
4 \pi  (2 ((5-9 e^{s_1}+e^{2 s_1}-15 e^{s_1+s_2}+16 e^{2 (s_1+s_2)}+25 e^{2 s_1+s_2}-4 e^{3 s_1+s_2}-28 e^{3 s_1+2 s_2}+9 e^{4 s_1+2 s_2}) s_2-e^{2 s_1} (-1+e^{s_2}) (1-e^{s_2}-4 e^{s_1+s_2}+9 e^{2 (s_1+s_2)}-5 e^{s_1+2 s_2}) s_3) s_1^3+2 (3 (3-5 e^{s_1}-e^{2 s_1}-9 e^{s_1+s_2}+9 e^{2 (s_1+s_2)}+13 e^{2 s_1+s_2}+2 e^{3 s_1+s_2}-14 e^{3 s_1+2 s_2}+2 e^{4 s_1+2 s_2}) s_2^2+(-(-4+6 e^{s_1}+e^{2 s_1}+12 e^{s_1+s_2}-14 e^{2 (s_1+s_2)}-15 e^{3 (s_1+s_2)}-8 e^{2 s_1+s_2}+2 e^{3 s_1+s_2}+17 e^{3 s_1+2 s_2}-24 e^{4 s_1+2 s_2}+27 e^{4 s_1+3 s_2}) s_3+22 e^{s_1}-6 e^{2 s_1}+37 e^{s_1+s_2}-29 e^{2 (s_1+s_2)}+5 e^{3 (s_1+s_2)}-65 e^{2 s_1+s_2}+19 e^{3 s_1+s_2}+52 e^{3 s_1+2 s_2}-14 e^{4 s_1+2 s_2}-9 e^{4 s_1+3 s_2}+e^{5 s_1+3 s_2}-13) s_2-e^{s_1} (-1+e^{s_2}) s_3 (2 e^{s_1} (1-e^{s_2}-4 e^{s_1+s_2}+9 e^{2 (s_1+s_2)}-5 e^{s_1+2 s_2}) s_3-5 e^{s_1}-10 e^{s_1+s_2}+8 e^{2 (s_1+s_2)}+16 e^{2 s_1+s_2}-11 e^{3 s_1+2 s_2}+2)) s_1^2-(6 (-1+e^{s_1}+3 e^{2 s_1}+3 e^{s_1+s_2}-2 e^{2 (s_1+s_2)}-e^{2 s_1+s_2}-8 e^{3 s_1+s_2}+5 e^{4 s_1+2 s_2}) s_2^3+(2 (-2+11 e^{2 s_1}+6 e^{s_1+s_2}-e^{2 (s_1+s_2)}-9 e^{3 (s_1+s_2)}+14 e^{2 s_1+s_2}-26 e^{3 s_1+s_2}-17 e^{3 s_1+2 s_2}+3 e^{4 s_1+2 s_2}+15 e^{4 s_1+3 s_2}+6 e^{5 s_1+3 s_2}) s_3-22 e^{s_1}-9 e^{2 s_1}-52 e^{s_1+s_2}+41 e^{2 (s_1+s_2)}-8 e^{3 (s_1+s_2)}+68 e^{2 s_1+s_2}+20 e^{3 s_1+s_2}-58 e^{3 s_1+2 s_2}-19 e^{4 s_1+2 s_2}+12 e^{4 s_1+3 s_2}+8 e^{5 s_1+3 s_2}+19) s_2^2+s_3 (2 (1-3 e^{s_1}+e^{2 s_1}-3 e^{s_1+s_2}+4 e^{2 (s_1+s_2)}-14 e^{3 (s_1+s_2)}+19 e^{2 s_1+s_2}+2 e^{3 s_1+s_2}-16 e^{3 s_1+2 s_2}-21 e^{4 s_1+2 s_2}+24 e^{4 s_1+3 s_2}+6 e^{5 s_1+3 s_2}) s_3+14 e^{s_1}-e^{2 s_1}+30 e^{s_1+s_2}-57 e^{2 (s_1+s_2)}+34 e^{3 (s_1+s_2)}-42 e^{2 s_1+s_2}-6 e^{3 s_1+s_2}+78 e^{3 s_1+2 s_2}-3 e^{4 s_1+2 s_2}-50 e^{4 s_1+3 s_2}+10 e^{5 s_1+3 s_2}-7) s_2+2 e^{s_1} (-1+e^{s_2}) s_3^2 (e^{s_1} (1-e^{s_2}-4 e^{s_1+s_2}+9 e^{2 (s_1+s_2)}-5 e^{s_1+2 s_2}) s_3-5 e^{s_1}-10 e^{s_1+s_2}+8 e^{2 (s_1+s_2)}+16 e^{2 s_1+s_2}-11 e^{3 s_1+2 s_2}+2)) s_1-s_2 (s_2+s_3) (2 (1-3 e^{s_1}+5 e^{2 s_1}-3 e^{s_1+s_2}+5 e^{2 (s_1+s_2)}+11 e^{2 s_1+s_2}-14 e^{3 s_1+s_2}-14 e^{3 s_1+2 s_2}+12 e^{4 s_1+2 s_2}) s_2^2+(2 (1-3 e^{s_1}+4 e^{2 s_1}-3 e^{s_1+s_2}+7 e^{2 (s_1+s_2)}+e^{3 (s_1+s_2)}+13 e^{2 s_1+s_2}-10 e^{3 s_1+s_2}-19 e^{3 s_1+2 s_2}+6 e^{4 s_1+2 s_2}-3 e^{4 s_1+3 s_2}+6 e^{5 s_1+3 s_2}) s_3+22 e^{s_1}-21 e^{2 s_1}+22 e^{s_1+s_2}-17 e^{2 (s_1+s_2)}+2 e^{3 (s_1+s_2)}-62 e^{2 s_1+s_2}+58 e^{3 s_1+s_2}+46 e^{3 s_1+2 s_2}-47 e^{4 s_1+2 s_2}-6 e^{4 s_1+3 s_2}+10 e^{5 s_1+3 s_2}-7) s_2+s_3 (2 e^{2 s_1} (-1+2 e^{s_2}+2 e^{2 s_2}+4 e^{s_1+s_2}-6 e^{2 (s_1+s_2)}+6 e^{3 (s_1+s_2)}-5 e^{s_1+2 s_2}+e^{s_1+3 s_2}-3 e^{2 s_1+3 s_2}) s_3-10 e^{s_1}+13 e^{2 s_1}-8 e^{s_1+s_2}+13 e^{2 (s_1+s_2)}-8 e^{3 (s_1+s_2)}+34 e^{2 s_1+s_2}-44 e^{3 s_1+s_2}-50 e^{3 s_1+2 s_2}+55 e^{4 s_1+2 s_2}+26 e^{4 s_1+3 s_2}-24 e^{5 s_1+3 s_2}+3))), 
$
\[
 K_{14, 14}(s_1, s_2, s_3) = \frac{K_{14, 13}^{\text{num}}(s_1, s_2, s_3)}{\left(e^{s_1}-1\right){}^2 \left(e^{s_1+s_2}-1\right){}^3 \left(e^{\frac{1}{2} \left(s_1+s_2+s_3\right)}+1\right) s_1 s_2 \left(s_1+s_2\right) s_3 \left(s_2+s_3\right) \left(s_1+s_2+s_3\right)}. 
\]
}

\smallskip

By putting together the above expressions, we have  
\[
K_{14}(s_1, s_2, s_3) 
= 
\frac{K_{14}^{\text{num}}(s_1, s_2, s_3)}{K_{14}^{\text{den}}(s_1, s_2, s_3)}, 
\]
where 
\[
K_{14}^{\text{den}}(s_1, s_2, s_3) = K_{12}^{\text{den}}(s_1, s_2, s_3), 
\]
which is given by \eqref{K12den}, and $K_{14}^{\text{num}}$ 
is a polynomial in $s_1, s_2, s_3, e^{s_1/2}, e^{s_2/2}, e^{s_3/2}$.
The points 
$(i, j, m)$ and $(n, p, q)$ such that 
$s_1^i s_2^j s_3^m  e^{n s_1/2} e^{p s_2/2} e^{q s_3/2}$ appears in 
the expression of $K_{14}^{\text{num}}(s_1, s_2, s_3)$ are plotted in Figure \ref{K8spowers} 
and Figure \ref{K12Etospowers}.

\subsubsection{The function $K_{15}$} We have 
\[
K_{15}(s_1, s_2, s_3) = \sum_{i=1}^{18} K_{15, i}(s_1, s_2, s_3), 
\]
where 
\[
 K_{15, 1}(s_1, s_2, s_3)=-\frac{4 \pi  e^{\frac{3}{2} \left(s_1+s_2\right)} \left(\left(e^{s_2} \left(e^{s_1} \left(2 e^{s_2}+1\right)+1\right)-4\right) s_1+3 e^{s_2} \left(e^{s_1}-1\right) s_2\right)}{\left(e^{s_2}-1\right) \left(e^{s_1+s_2}-1\right){}^2 \left(e^{\frac{s_3}{2}}-1\right){}^2 s_1 s_2 \left(s_1+s_2\right)},
\]
\[
 K_{15, 2}(s_1, s_2, s_3)=\frac{4 \pi  e^{\frac{3}{2} \left(s_1+s_2\right)} \left(\left(e^{s_2} \left(e^{s_1} \left(2 e^{s_2}+1\right)+1\right)-4\right) s_1+3 e^{s_2} \left(e^{s_1}-1\right) s_2\right)}{\left(e^{s_2}-1\right) \left(e^{s_1+s_2}-1\right){}^2 \left(e^{\frac{s_3}{2}}+1\right){}^2 s_1 s_2 \left(s_1+s_2\right)},
\]
\[
 K_{15, 3}(s_1, s_2, s_3)=-\frac{6 \pi  e^{\frac{3 s_1}{2}} \left(s_2-2 s_3\right)}{\left(e^{s_1}-1\right) \left(e^{\frac{1}{2} \left(s_2+s_3\right)}-1\right){}^3 s_1 s_2 s_3}, 
\]
\[
 K_{15, 4}(s_1, s_2, s_3)=-\frac{6 \pi  e^{\frac{3 s_1}{2}} \left(s_2-2 s_3\right)}{\left(e^{s_1}-1\right) \left(e^{\frac{1}{2} \left(s_2+s_3\right)}+1\right){}^3 s_1 s_2 s_3}, 
\]
\[
 K_{15, 5}(s_1, s_2, s_3)=-\frac{9 \pi  \left(s_1^2-s_3 s_1-\left(s_2-2 s_3\right) \left(s_2+s_3\right)\right)}{\left(e^{\frac{1}{2} \left(s_1+s_2+s_3\right)}-1\right){}^4 s_1 \left(s_1+s_2\right) s_3 \left(s_2+s_3\right)},
\]
\[
 K_{15, 6}(s_1, s_2, s_3)=\frac{9 \pi  \left(s_1^2-s_3 s_1-\left(s_2-2 s_3\right) \left(s_2+s_3\right)\right)}{\left(e^{\frac{1}{2} \left(s_1+s_2+s_3\right)}+1\right){}^4 s_1 \left(s_1+s_2\right) s_3 \left(s_2+s_3\right)}, 
\]
{\tiny
\[
 K_{15, 7}(s_1, s_2, s_3)=\frac{4 \pi  e^{\frac{3 s_1}{2}} \left(4 s_2^2+\left(\left(6 e^{s_2}+2\right) s_3+e^{s_2}-7\right) s_2+2 s_3 \left(-s_3+e^{s_2} \left(3 s_3-7\right)+4\right)\right)}{\left(e^{s_1}-1\right) \left(e^{s_2}-1\right) \left(e^{\frac{1}{2} \left(s_2+s_3\right)}-1\right) s_1 s_2 s_3 \left(s_2+s_3\right)},
\]
\[
 K_{15, 8}(s_1, s_2, s_3)=\frac{4 \pi  e^{\frac{3 s_1}{2}} \left(4 s_2^2+\left(\left(6 e^{s_2}+2\right) s_3+e^{s_2}-7\right) s_2+2 s_3 \left(-s_3+e^{s_2} \left(3 s_3-7\right)+4\right)\right)}{\left(e^{s_1}-1\right) \left(e^{s_2}-1\right) \left(e^{\frac{1}{2} \left(s_2+s_3\right)}+1\right) s_1 s_2 s_3 \left(s_2+s_3\right)},
\]
\[
 K_{15, 9}(s_1, s_2, s_3)=\frac{\pi  e^{\frac{3 s_1}{2}} \left(\left(17-5 e^{s_2}\right) s_2^2+\left(-5 s_3+e^{s_2} \left(29 s_3+12\right)-12\right) s_2+2 s_3 \left(-11 s_3+e^{s_2} \left(17 s_3-12\right)+12\right)\right)}{\left(e^{s_1}-1\right) \left(e^{s_2}-1\right) \left(e^{\frac{1}{2} \left(s_2+s_3\right)}-1\right){}^2 s_1 s_2 s_3 \left(s_2+s_3\right)}, 
\]
\[
 K_{15, 10}(s_1, s_2, s_3)=\frac{\pi  e^{\frac{3 s_1}{2}} \left(\left(5 e^{s_2}-17\right) s_2^2+\left(5 s_3-e^{s_2} \left(29 s_3+12\right)+12\right) s_2-2 s_3 \left(-11 s_3+e^{s_2} \left(17 s_3-12\right)+12\right)\right)}{\left(e^{s_1}-1\right) \left(e^{s_2}-1\right) \left(e^{\frac{1}{2} \left(s_2+s_3\right)}+1\right){}^2 s_1 s_2 s_3 \left(s_2+s_3\right)}, 
\]
$
 K_{15, 11}(s_1, s_2, s_3)=$
\[
\frac{8 \pi  e^{\frac{3}{2} \left(s_1+s_2\right)} \left(s_1 \left(-e^{s_1+2 s_2} \left(s_3-2\right)-e^{s_2} \left(e^{s_1}+1\right) \left(s_3-1\right)+3 s_3-4\right)-e^{s_2} \left(e^{s_1}-1\right) s_2 \left(2 s_3-3\right)\right)}{\left(e^{s_2}-1\right) \left(e^{s_1+s_2}-1\right){}^2 \left(e^{\frac{s_3}{2}}-1\right) s_1 s_2 \left(s_1+s_2\right) s_3}, 
\]
$
 K_{15, 12}(s_1, s_2, s_3)=
$
\[
\frac{8 \pi  e^{\frac{3}{2} \left(s_1+s_2\right)} \left(s_1 \left(-e^{s_1+2 s_2} \left(s_3-2\right)-e^{s_2} \left(e^{s_1}+1\right) \left(s_3-1\right)+3 s_3-4\right)-e^{s_2} \left(e^{s_1}-1\right) s_2 \left(2 s_3-3\right)\right)}{\left(e^{s_2}-1\right) \left(e^{s_1+s_2}-1\right){}^2 \left(e^{\frac{s_3}{2}}+1\right) s_1 s_2 \left(s_1+s_2\right) s_3}, 
\]
\[
 K_{15, 13}(s_1, s_2, s_3)\left(e^{s_1}-1\right) \left(e^{s_1+s_2}-1\right) \left(e^{\frac{1}{2} \left(s_1+s_2+s_3\right)}-1\right){}^3 s_1 s_2 \left(s_1+s_2\right) s_3 \left(s_2+s_3\right) \left(s_1+s_2+s_3\right)=
\]
$
\pi  (-3 ((-9 e^{s_1}-7 e^{s_1+s_2}+5 e^{2 s_1+s_2}+11) s_2+4 e^{s_1} (e^{s_2}-1) s_3) s_1^3+((19 e^{s_1}+25 e^{s_1+s_2}-7 e^{2 s_1+s_2}-37) s_2^2+2 (9 (e^{s_1}-1) (e^{s_1+s_2}-1)+2 (7 e^{s_1}-8 e^{s_1+s_2}+2 e^{2 s_1+s_2}-1) s_3) s_2-24 e^{s_1} (e^{s_2}-1) s_3^2) s_1^2+((-43 e^{s_1}-13 e^{s_1+s_2}+31 e^{2 s_1+s_2}+25) s_2^3-4 (-e^{s_1}-e^{s_1+s_2}+4 e^{2 s_1+s_2}-2) s_3 s_2^2-s_3 (18 (e^{s_1}-1) (e^{s_1+s_2}-1)+(-59 e^{s_1}-5 e^{s_1+s_2}+47 e^{2 s_1+s_2}+17) s_3) s_2-12 e^{s_1} (e^{s_2}-1) s_3^3) s_1+s_2 (s_2+s_3) ((-35 e^{s_1}-17 e^{s_1+s_2}+23 e^{2 s_1+s_2}+29) s_2^2+((23 e^{s_1}+41 e^{s_1+s_2}-47 e^{2 s_1+s_2}-17) s_3-18 (e^{s_1}-1) (e^{s_1+s_2}-1)) s_2-2 s_3 ((-29 e^{s_1}-29 e^{s_1+s_2}+35 e^{2 s_1+s_2}+23) s_3-18 (e^{s_1}-1) (e^{s_1+s_2}-1)))),
$
\[
 K_{15, 14}(s_1, s_2, s_3)=\frac{K_{15,13}^{\text{num}}(s_1, s_2, s_3)}{\left(e^{s_1}-1\right) \left(e^{s_1+s_2}-1\right) \left(e^{\frac{1}{2} \left(s_1+s_2+s_3\right)}+1\right){}^3 s_1 s_2 \left(s_1+s_2\right) s_3 \left(s_2+s_3\right) \left(s_1+s_2+s_3\right)},
\]
\[
 K_{15, 15}(s_1, s_2, s_3)\left(e^{s_1}-1\right) \left(e^{s_1+s_2}-1\right){}^2 \left(e^{\frac{1}{2} \left(s_1+s_2+s_3\right)}-1\right) s_1 s_2 \left(s_1+s_2\right) s_3 \left(s_2+s_3\right) \left(s_1+s_2+s_3\right)=
\]
$
4 \pi  (2 ((3-e^{s_1}-5 e^{s_1+s_2}+3 e^{2 s_1+s_2}) s_2-e^{s_1} (-1+e^{s_2}) (-1+3 e^{s_1+s_2}) s_3) s_1^3+(4 (2+e^{s_1}-3 e^{s_1+s_2}) s_2^2+((2+4 e^{s_1+s_2}-18 e^{2 (s_1+s_2)}+12 e^{2 s_1+s_2}) s_3+8 e^{s_1}+22 e^{s_1+s_2}-e^{2 (s_1+s_2)}-14 e^{2 s_1+s_2}-15) s_2-2 e^{s_1} (-1+e^{s_2}) s_3 ((-2+6 e^{s_1+s_2}) s_3-7 e^{s_1+s_2}+4)) s_1^2-(2 (1-7 e^{s_1}-3 e^{s_1+s_2}+9 e^{2 s_1+s_2}) s_2^3+2 ((2-7 e^{s_1}-7 e^{s_1+s_2}+3 e^{2 (s_1+s_2)}+3 e^{2 s_1+s_2}+6 e^{3 s_1+2 s_2}) s_3+5 e^{s_1}-2 e^{s_1+s_2}-6 e^{2 s_1+s_2}+e^{3 s_1+2 s_2}+2) s_2^2+s_3 (2 (1+e^{s_1}-5 e^{s_1+s_2}+6 e^{2 (s_1+s_2)}-9 e^{2 s_1+s_2}+6 e^{3 s_1+2 s_2}) s_3+2 e^{s_1}+34 e^{s_1+s_2}-29 e^{2 (s_1+s_2)}+2 e^{2 s_1+s_2}+2 e^{3 s_1+2 s_2}-11) s_2+2 e^{s_1} (-1+e^{s_2}) s_3^2 ((-1+3 e^{s_1+s_2}) s_3-7 e^{s_1+s_2}+4)) s_1-s_2 (s_2+s_3) (4 (1-2 e^{s_1}-2 e^{s_1+s_2}+3 e^{2 s_1+s_2}) s_2^2+(2 (1-2 e^{s_1}-2 e^{s_1+s_2}-3 e^{2 (s_1+s_2)}+6 e^{3 s_1+2 s_2}) s_3+18 e^{s_1}+18 e^{s_1+s_2}-e^{2 (s_1+s_2)}-26 e^{2 s_1+s_2}+2 e^{3 s_1+2 s_2}-11) s_2+2 s_3 ((-1+2 e^{s_1}+2 e^{s_1+s_2}-3 e^{2 (s_1+s_2)}-6 e^{2 s_1+s_2}+6 e^{3 s_1+2 s_2}) s_3-9 e^{s_1}-12 e^{s_1+s_2}+10 e^{2 (s_1+s_2)}+23 e^{2 s_1+s_2}-17 e^{3 s_1+2 s_2}+5))), 
$
\[
 K_{15, 16}(s_1, s_2, s_3)=\frac{K_{15,15}^{\text{num}}(s_1, s_2, s_3)}{\left(e^{s_1}-1\right) \left(e^{s_1+s_2}-1\right){}^2 \left(e^{\frac{1}{2} \left(s_1+s_2+s_3\right)}+1\right) s_1 s_2 \left(s_1+s_2\right) s_3 \left(s_2+s_3\right) \left(s_1+s_2+s_3\right)}, 
\]
\[
 K_{15, 17}(s_1, s_2, s_3)2 \left(e^{s_1}-1\right) \left(e^{s_1+s_2}-1\right){}^2 \left(e^{\frac{1}{2} \left(s_1+s_2+s_3\right)}-1\right){}^2 s_1 s_2 \left(s_1+s_2\right) s_3 \left(s_2+s_3\right) \left(s_1+s_2+s_3\right)=
\]
$
\pi  (((87-53 e^{s_1}-130 e^{s_1+s_2}+19 e^{2 (s_1+s_2)}+86 e^{2 s_1+s_2}-9 e^{3 s_1+2 s_2}) s_2-4 e^{s_1} (-1+e^{s_2}) (-11+17 e^{s_1+s_2}) s_3) s_1^3+((107-5 e^{s_1}-154 e^{s_1+s_2}+23 e^{2 (s_1+s_2)}+22 e^{2 s_1+s_2}+7 e^{3 s_1+2 s_2}) s_2^2+2 (2 (5-21 e^{s_1}+27 e^{s_1+s_2}-50 e^{2 (s_1+s_2)}+35 e^{2 s_1+s_2}+4 e^{3 s_1+2 s_2}) s_3+45 e^{s_1}+90 e^{s_1+s_2}-33 e^{2 (s_1+s_2)}-66 e^{2 s_1+s_2}+21 e^{3 s_1+2 s_2}-57) s_2-8 e^{s_1} (-1+e^{s_2}) s_3 ((-11+17 e^{s_1+s_2}) s_3-6 e^{s_1+s_2}+6)) s_1^2+((-47+149 e^{s_1}+82 e^{s_1+s_2}-11 e^{2 (s_1+s_2)}-214 e^{2 s_1+s_2}+41 e^{3 s_1+2 s_2}) s_2^3-4 (4 (1+2 e^{s_1}) (-1+e^{s_1+s_2}){}^2+(10-11 e^{s_1}-25 e^{s_1+s_2}+9 e^{2 (s_1+s_2)}-15 e^{2 s_1+s_2}+32 e^{3 s_1+2 s_2}) s_3) s_2^2-s_3 (2 (-49+13 e^{s_1}+122 e^{s_1+s_2}-73 e^{2 (s_1+s_2)}-50 e^{2 s_1+s_2}+37 e^{3 s_1+2 s_2})+(-7+149 e^{s_1}-62 e^{s_1+s_2}+93 e^{2 (s_1+s_2)}-342 e^{2 s_1+s_2}+169 e^{3 s_1+2 s_2}) s_3) s_2-4 e^{s_1} (-1+e^{s_2}) s_3^2 ((-11+17 e^{s_1+s_2}) s_3-12 (-1+e^{s_1+s_2}))) s_1+s_2 (s_2+s_3) ((-67+101 e^{s_1}+106 e^{s_1+s_2}-15 e^{2 (s_1+s_2)}-150 e^{2 s_1+s_2}+25 e^{3 s_1+2 s_2}) s_2^2+(2 (49-61 e^{s_1}-74 e^{s_1+s_2}+25 e^{2 (s_1+s_2)}+98 e^{2 s_1+s_2}-37 e^{3 s_1+2 s_2})+(7-17 e^{s_1}-70 e^{s_1+s_2}+111 e^{2 (s_1+s_2)}+138 e^{2 s_1+s_2}-169 e^{3 s_1+2 s_2}) s_3) s_2-2 s_3 ((-37+59 e^{s_1}+88 e^{s_1+s_2}-63 e^{2 (s_1+s_2)}-144 e^{2 s_1+s_2}+97 e^{3 s_1+2 s_2}) s_3-98 e^{s_1}-172 e^{s_1+s_2}+98 e^{2 (s_1+s_2)}+220 e^{2 s_1+s_2}-122 e^{3 s_1+2 s_2}+74))), 
$
\[
 K_{15, 18}(s_1, s_2, s_3)=\frac{-K_{15,17}^{\text{num}}(s_1, s_2, s_3)}{2 \left(e^{s_1}-1\right) \left(e^{s_1+s_2}-1\right){}^2 \left(e^{\frac{1}{2} \left(s_1+s_2+s_3\right)}+1\right){}^2 s_1 s_2 \left(s_1+s_2\right) s_3 \left(s_2+s_3\right) \left(s_1+s_2+s_3\right)}. 
\]
}

\smallskip

Using the above expressions, we have  
\[
K_{15}(s_1, s_2, s_3) 
= 
\frac{K_{15}^{\text{num}}(s_1, s_2, s_3)}{K_{15}^{\text{den}}(s_1, s_2, s_3)}, 
\]
where 
\[
K_{15}^{\text{den}}(s_1, s_2, s_3) =K_{8}^{\text{den}}(s_1, s_2, s_3),  
\]
which is given by \eqref{K8den}, and $K_{15}^{\text{num}}$ 
is a polynomial in $s_1, s_2, s_3, e^{s_1/2}, e^{s_2/2}, e^{s_3/2}$. 
The points 
$(i, j, m)$ and $(n, p, q)$ such that 
$s_1^i s_2^j s_3^m  e^{n s_1/2} e^{p s_2/2} e^{q s_3/2}$ appears in the 
expression of $K_{15}^{\text{num}}(s_1, s_2, s_3)$ are plotted in Figure \ref{K8spowers} 
and Figure \ref{K8Etospowers}.

\smallskip

\subsection{The four variable functions $K_{17}, \dots, K_{20}$}
\label{ExplicitFourVarsSec}

The functions of four variables $K_{17}, K_{18},$ $K_{19}, K_{20}$ 
appearing in \eqref{a_4expression} have 
very lengthy expressions. These functions are of the form 
\[
K_j(s_1,s_2, s_3, s_4)=\frac{K_j^{\text{num}}(s_1,s_2, s_3, s_4)}{K_j^{\text{den}}(s_1,s_2, s_3, s_4)}, \qquad j=17, 18, 19, 20, 
\]
where each $K_j^{\text{num}}(s_1,s_2, s_3, s_4)$ is a polynomial 
in $s_1, s_2, s_3, s_4$, $e^{s_1/2}, e^{s_1/2},$ $e^{s_3/2}, e^{s_4/2}$, 
and 
\begin{equation} \label{K17den}
K_{17}^{\text{den}}(s_1,s_2, s_3, s_4)=K_{18}^{\text{den}}(s_1,s_2, s_3, s_4)=K_{19}^{\text{den}}(s_1,s_2, s_3, s_4)=
\end{equation}
\[
K_{20}^{\text{den}}(s_1,s_2, s_3, s_4)=\left(e^{s_1}-1\right) \left(e^{s_2}-1\right) \left(e^{s_1+s_2}-1\right){}^2 \left(e^{s_3}-1\right) \left(e^{s_2+s_3}-1\right){}^2
\times
\]
\[
 \left(e^{s_1+s_2+s_3}-1\right){}^3 \left(e^{s_4}-1\right) \left(e^{s_3+s_4}-1\right){}^2 \left(e^{s_2+s_3+s_4}-1\right){}^3 \left(e^{s_1+s_2+s_3+s_4}-1\right){}^4 \times
\]
\[ s_1 s_2 \left(s_1+s_2\right) s_3 \left(s_2+s_3\right) \left(s_1+s_2+s_3\right) s_4 \left(s_3+s_4\right) \left(s_2+s_3+s_4\right) \left(s_1+s_2+s_3+s_4\right). 
\]
In fact, it turns out that 
\[
K_{18}(s_1, s_2, s_3, s_4)= K_{19}(s_1, s_2, s_3, s_4). 
\]

\smallskip

It is made clear in Section \ref{a_4Sec}, using the functional 
relations stated in Theorem \ref{FuncRelationsThm}, that the functions $K_{17}, K_{18},$ $K_{19}, K_{20}$ 
can be constructed from the one, two and three variable 
functions $K_1,$ $\dots,$ $K_{16}$ presented in Section \ref{ExplicitFormulasSec} 
and earlier in this Appendix, the functions 
$G_1, \dots, G_4$ given by \eqref{ExplicitBabyFunctions}, and the following three variable 
functions:  
\[
k_j(s_1, s_2, s_3)= K_j(s_1, s_2, s_3, -s_1-s_2-s_3), \qquad j=17, 18, 19, 20, 
\]
which were in fact introduced already by \eqref{littlekfunctions}. 
 Therefore, in the rest of this  appendix we present the latter functions explicitly. 

\subsubsection{The function $k_{17}$} We have 
\[
k_{17}(s_1, s_2, s_3)= 
K_{17}(s_1, s_2, s_3, -s_1-s_2-s_3)
=\sum_{i=1}^{30} k_{17, i}(s_1, s_2, s_3), 
\]
where 

{\tiny
\[
k_{17, 1}(s_1, s_2, s_3)\left(e^{s_1}-1\right){}^4 \left(e^{s_2}-1\right) \left(e^{s_1+s_2}-1\right){}^4 s_1^2 \left(s_1+s_2\right){}^2 \left(s_1+s_2+s_3\right){}^2=
\]
$
16 \pi  e^{2 s_1} (2 e^{3 s_2}-8 e^{3 (s_1+s_2)}-39 e^{2 s_1+s_2}-40 e^{s_1+2 s_2}+10 e^{4 s_1+2 s_2}+5 e^{2 s_1+4 s_2}+16 e^{5 s_1+4 s_2}+12 e^{4 s_1+5 s_2}-12) s_2 (s_2+s_3),
$
\[
k_{17, 2}(s_1, s_2, s_3)= \frac{64 \pi  e^{2 s_1} \left(2 e^{3 s_2}-2 e^{4 s_1+2 s_2}+7 e^{4 s_1+5 s_2}-7\right) s_2 \left(s_2+s_3\right)}{5 \left(e^{s_1}-1\right){}^4 \left(e^{s_2}-1\right) \left(e^{s_1+s_2}-1\right){}^4 s_1 \left(s_1+s_2\right){}^2 \left(s_1+s_2+s_3\right){}^2}, 
\]
\[
k_{17, 3}(s_1, s_2, s_3)\frac{3}{16} \left(e^{s_1}-1\right){}^4 \left(e^{s_2}-1\right) \left(e^{s_1+s_2}-1\right){}^4 s_1^2 \left(s_1+s_2\right){}^2 \left(s_1+s_2+s_3\right){}^2= 
\]
$
\pi  (-(-4 e^{s_1}-28 e^{3 s_1}-5 e^{4 s_1}-e^{s_2}-10 e^{2 (s_1+s_2)}+20 e^{3 (s_1+s_2)}+4 e^{5 (s_1+s_2)}-20 e^{2 s_1+s_2}-116 e^{3 s_1+s_2}+20 e^{5 s_1+s_2}+4 e^{s_1+2 s_2}+104 e^{4 s_1+2 s_2}-188 e^{5 s_1+2 s_2}-200 e^{4 s_1+3 s_2}+142 e^{6 s_1+3 s_2}+20 e^{7 s_1+3 s_2}+4 e^{3 s_1+4 s_2}+140 e^{5 s_1+4 s_2}-76 e^{6 s_1+4 s_2}-5 e^{8 s_1+4 s_2}-e^{4 s_1+5 s_2}+28 e^{7 s_1+5 s_2}+5 e^{8 s_1+5 s_2}+1)) s_2 (s_2+s_3), 
$
\[
k_{17, 4}(s_1, s_2, s_3)\frac{3}{16} \left(e^{s_1}-1\right){}^4 \left(e^{s_2}-1\right) \left(e^{s_1+s_2}-1\right){}^4 s_1 \left(s_1+s_2\right){}^2 s_3 \left(s_2+s_3\right) \left(s_1+s_2+s_3\right){}^2= 
\]
$
\pi  (-11 e^{s_1}-29 e^{3 s_1}+11 e^{4 s_1}-2 e^{s_2}+2 e^{s_1+s_2}+82 e^{3 (s_1+s_2)}-53 e^{4 (s_1+s_2)}-88 e^{3 s_1+s_2}+106 e^{4 s_1+s_2}-136 e^{4 s_1+3 s_2}+26 e^{6 s_1+3 s_2}-26 e^{7 s_1+3 s_2}+11 e^{3 s_1+4 s_2}-83 e^{6 s_1+4 s_2}+26 e^{7 s_1+4 s_2}+2) s_2^3, 
$
\[
k_{17, 5}(s_1, s_2, s_3)\frac{1}{16} \left(e^{s_1}-1\right){}^4 \left(e^{s_2}-1\right) \left(e^{s_1+s_2}-1\right){}^4 s_1 \left(s_1+s_2\right){}^2 s_3 \left(s_2+s_3\right) \left(s_1+s_2+s_3\right){}^2= 
\]
$
\pi  e^{s_1} s_2^3 (2 e^{2 s_1+4 s_2} s_2 (e^{s_1}-1){}^5+9 e^{s_1}+3 e^{2 s_2}+8 e^{s_1+s_2}+8 e^{2 (s_1+s_2)}+33 e^{4 (s_1+s_2)}-14 e^{4 s_1+s_2}-11 e^{s_1+2 s_2}+24 e^{3 s_1+2 s_2}-43 e^{4 s_1+2 s_2}+19 e^{5 s_1+2 s_2}-6 e^{s_1+3 s_2}+24 e^{4 s_1+3 s_2})
$, 
\[
k_{17, 6}(s_1, s_2, s_3)\frac{3}{32} \left(e^{s_1}-1\right){}^4 \left(e^{s_2}-1\right) \left(e^{s_1+s_2}-1\right){}^4 \left(s_1+s_2\right){}^2 s_3 \left(s_2+s_3\right) \left(s_1+s_2+s_3\right){}^2= 
\]
$
-\frac{1}{2} \pi  ((19 e^{s_1}+37 e^{3 s_1}-13 e^{4 s_1}+4 e^{s_2}-10 e^{s_1+s_2}-50 e^{3 (s_1+s_2)}+73 e^{4 (s_1+s_2)}+80 e^{3 s_1+s_2}-104 e^{4 s_1+s_2}+68 e^{4 s_1+3 s_2}-64 e^{6 s_1+3 s_2}+34 e^{7 s_1+3 s_2}-19 e^{3 s_1+4 s_2}+43 e^{6 s_1+4 s_2}+14 e^{7 s_1+4 s_2}-4) s_1^2+(49 e^{s_1}+103 e^{3 s_1}-37 e^{4 s_1}+10 e^{s_2}-22 e^{s_1+s_2}-182 e^{3 (s_1+s_2)}+199 e^{4 (s_1+s_2)}+248 e^{3 s_1+s_2}-314 e^{4 s_1+s_2}+272 e^{4 s_1+3 s_2}-154 e^{6 s_1+3 s_2}+94 e^{7 s_1+3 s_2}-49 e^{3 s_1+4 s_2}+169 e^{6 s_1+4 s_2}+2 e^{7 s_1+4 s_2}-10) s_2 s_1-(-41 e^{s_1}-95 e^{3 s_1}+35 e^{4 s_1}-8 e^{s_2}+14 e^{s_1+s_2}+214 e^{3 (s_1+s_2)}-179 e^{4 (s_1+s_2)}-256 e^{3 s_1+s_2}+316 e^{4 s_1+s_2}-340 e^{4 s_1+3 s_2}+116 e^{6 s_1+3 s_2}-86 e^{7 s_1+3 s_2}+41 e^{3 s_1+4 s_2}-209 e^{6 s_1+4 s_2}+38 e^{7 s_1+4 s_2}+8) s_2^2),
$
\[
k_{17, 7}(s_1, s_2, s_3)\frac{1}{32} \left(e^{s_1}-1\right){}^4 \left(e^{s_2}-1\right) \left(e^{s_1+s_2}-1\right){}^4 \left(s_1+s_2\right){}^2 s_3 \left(s_2+s_3\right) \left(s_1+s_2+s_3\right){}^2= 
\]
$
-\frac{1}{2} e^{s_1} \pi  (2 e^{s_1} (-1+e^{s_1}) (-1+e^{s_2}) (-e^{s_1}-2 e^{2 s_2}-e^{s_1+s_2}-2 e^{2 (s_1+s_2)}+6 e^{3 (s_1+s_2)}+4 e^{2 s_1+s_2}+5 e^{s_1+2 s_2}-4 e^{3 s_1+2 s_2}+3 e^{s_1+3 s_2}-8 e^{2 s_1+3 s_2}) s_1^3+(-2 e^{s_1} (-1+e^{s_1}) (-3 e^{s_1}-6 e^{2 s_2}+6 e^{3 s_2}-18 e^{2 (s_1+s_2)}+30 e^{3 (s_1+s_2)}-4 e^{4 (s_1+s_2)}+12 e^{2 s_1+s_2}+18 e^{s_1+2 s_2}-12 e^{3 s_1+2 s_2}-6 e^{s_1+3 s_2}-18 e^{2 s_1+3 s_2}-8 e^{s_1+4 s_2}+20 e^{2 s_1+4 s_2}-12 e^{3 s_1+4 s_2}+e^{5 s_1+4 s_2}) s_2-13 e^{s_1}-3 e^{2 s_2}-4 e^{s_1+s_2}-16 e^{2 (s_1+s_2)}-33 e^{4 (s_1+s_2)}+24 e^{5 (s_1+s_2)}+14 e^{4 s_1+s_2}+13 e^{s_1+2 s_2}-12 e^{3 s_1+2 s_2}+35 e^{4 s_1+2 s_2}-17 e^{5 s_1+2 s_2}+4 e^{s_1+3 s_2}-4 e^{7 s_1+4 s_2}+4 e^{3 s_1+5 s_2}-16 e^{4 s_1+5 s_2}-16 e^{6 s_1+5 s_2}+4 e^{7 s_1+5 s_2}) s_1^2-s_2 (6 e^{s_1} (-1+e^{s_1}) (-e^{s_1}-2 e^{2 s_2}+2 e^{3 s_2}-6 e^{2 (s_1+s_2)}+10 e^{3 (s_1+s_2)}-4 e^{4 (s_1+s_2)}+4 e^{2 s_1+s_2}+6 e^{s_1+2 s_2}-4 e^{3 s_1+2 s_2}-2 e^{s_1+3 s_2}-6 e^{2 s_1+3 s_2}-2 e^{s_1+4 s_2}+4 e^{2 s_1+4 s_2}+e^{5 s_1+4 s_2}) s_2+35 e^{s_1}+9 e^{2 s_2}+16 e^{s_1+s_2}+40 e^{2 (s_1+s_2)}+99 e^{4 (s_1+s_2)}-48 e^{5 (s_1+s_2)}-42 e^{4 s_1+s_2}-37 e^{s_1+2 s_2}+48 e^{3 s_1+2 s_2}-113 e^{4 s_1+2 s_2}+53 e^{5 s_1+2 s_2}-14 e^{s_1+3 s_2}+24 e^{4 s_1+3 s_2}+8 e^{7 s_1+4 s_2}-8 e^{3 s_1+5 s_2}+32 e^{4 s_1+5 s_2}+32 e^{6 s_1+5 s_2}-8 e^{7 s_1+5 s_2}) s_1-s_2^2 (2 e^{s_1} (-1+e^{s_1}) (-e^{s_1}-2 e^{2 s_2}+2 e^{3 s_2}-6 e^{2 (s_1+s_2)}+10 e^{3 (s_1+s_2)}-12 e^{4 (s_1+s_2)}+4 e^{2 s_1+s_2}+6 e^{s_1+2 s_2}-4 e^{3 s_1+2 s_2}-2 e^{s_1+3 s_2}-6 e^{2 s_1+3 s_2}-4 e^{2 s_1+4 s_2}+12 e^{3 s_1+4 s_2}+3 e^{5 s_1+4 s_2}) s_2+31 e^{s_1}+9 e^{2 s_2}+20 e^{s_1+s_2}+32 e^{2 (s_1+s_2)}+99 e^{4 (s_1+s_2)}-24 e^{5 (s_1+s_2)}-42 e^{4 s_1+s_2}-35 e^{s_1+2 s_2}+60 e^{3 s_1+2 s_2}-121 e^{4 s_1+2 s_2}+55 e^{5 s_1+2 s_2}-16 e^{s_1+3 s_2}+48 e^{4 s_1+3 s_2}+4 e^{7 s_1+4 s_2}-4 e^{3 s_1+5 s_2}+16 e^{4 s_1+5 s_2}+16 e^{6 s_1+5 s_2}-4 e^{7 s_1+5 s_2})), 
$
\[
k_{17, 8}(s_1, s_2, s_3)\frac{3}{32} \left(e^{s_1}-1\right){}^4 \left(e^{s_2}-1\right) \left(e^{s_1+s_2}-1\right){}^4 \left(s_1+s_2\right){}^2 \left(s_2+s_3\right) \left(s_1+s_2+s_3\right){}^2= 
\]
$
\frac{1}{2} \pi  (4 e^{2 s_1+s_2} (8+74 e^{s_1}+13 e^{s_2}+98 e^{2 (s_1+s_2)}-131 e^{3 (s_1+s_2)}-98 e^{2 s_1+s_2}-88 e^{3 s_1+s_2}+10 e^{s_1+2 s_2}+89 e^{4 s_1+2 s_2}+25 e^{4 s_1+3 s_2}) s_2^2-e^{s_1} (55-55 e^{3 s_1}-13 e^{2 s_2}-176 e^{s_1+s_2}-256 e^{3 (s_1+s_2)}-79 e^{4 (s_1+s_2)}+182 e^{4 s_1+s_2}+149 e^{s_1+2 s_2}-128 e^{3 s_1+2 s_2}-155 e^{4 s_1+2 s_2}+94 e^{6 s_1+3 s_2}-8 e^{7 s_1+4 s_2}+40 e^{3 s_1+5 s_2}-164 e^{4 s_1+5 s_2}+20 e^{6 s_1+5 s_2}+8 e^{7 s_1+5 s_2}) s_2-2 s_3 (2 e^{2 s_1+s_2} (-4-40 e^{s_1}-5 e^{s_2}-46 e^{2 (s_1+s_2)}+40 e^{3 (s_1+s_2)}+70 e^{2 s_1+s_2}+50 e^{3 s_1+s_2}-2 e^{s_1+2 s_2}-37 e^{4 s_1+2 s_2}+34 e^{4 s_1+3 s_2}) s_3+14 e^{s_1}-26 e^{3 s_1}-14 e^{4 s_1}+e^{s_2}-14 e^{s_1+s_2}+31 e^{2 (s_1+s_2)}-104 e^{3 (s_1+s_2)}+206 e^{4 (s_1+s_2)}-118 e^{5 (s_1+s_2)}-79 e^{2 s_1+s_2}-94 e^{3 s_1+s_2}+146 e^{4 s_1+s_2}+40 e^{5 s_1+s_2}-244 e^{5 s_1+2 s_2}-344 e^{4 s_1+3 s_2}+131 e^{6 s_1+3 s_2}+20 e^{7 s_1+3 s_2}-52 e^{3 s_1+4 s_2}+46 e^{5 s_1+4 s_2}-179 e^{6 s_1+4 s_2}-14 e^{7 s_1+4 s_2}-7 e^{8 s_1+4 s_2}+7 e^{8 s_1+5 s_2}-1)+2 s_1 (-2 e^{2 s_1+s_2} (-16-94 e^{s_1}-23 e^{s_2}-154 e^{2 (s_1+s_2)}+25 e^{3 (s_1+s_2)}+58 e^{2 s_1+s_2}+83 e^{3 s_1+s_2}-38 e^{s_1+2 s_2}-127 e^{4 s_1+2 s_2}+67 e^{4 s_1+3 s_2}) s_2-17 e^{s_1}-19 e^{3 s_1}+17 e^{4 s_1}-e^{s_2}+44 e^{3 (s_1+s_2)}+4 e^{3 s_1+s_2}-7 e^{s_1+2 s_2}+4 e^{4 s_1+2 s_2}+140 e^{4 s_1+3 s_2}+e^{6 s_1+3 s_2}-32 e^{7 s_1+3 s_2}+31 e^{3 s_1+4 s_2}+74 e^{6 s_1+4 s_2}-26 e^{4 s_1+5 s_2}+56 e^{7 s_1+5 s_2}-2 e^{2 s_1+s_2} (-16-100 e^{s_1}-17 e^{s_2}-142 e^{2 (s_1+s_2)}+25 e^{3 (s_1+s_2)}+118 e^{2 s_1+s_2}+92 e^{3 s_1+s_2}-26 e^{s_1+2 s_2}-97 e^{4 s_1+2 s_2}+121 e^{4 s_1+3 s_2}) s_3+1)), 
$
\[
k_{17, 9}(s_1, s_2, s_3)\frac{3}{32} \left(e^{s_1}-1\right){}^4 \left(e^{s_1+s_2}-1\right){}^4 s_2 \left(s_1+s_2\right){}^2 \left(s_2+s_3\right) \left(s_1+s_2+s_3\right){}^2= 
\]
$
\frac{1}{2} \pi  e^{s_1} ((-7 e^{3 s_1}-7 e^{s_2}-37 e^{3 (s_1+s_2)}-5 e^{3 s_1+s_2}+27 e^{3 s_1+2 s_2}+10 e^{6 s_1+3 s_2}-8 e^{3 s_1+4 s_2}+20 e^{6 s_1+4 s_2}+7) s_1^2+2 (-e^{3 s_1}-e^{s_2}-85 e^{3 (s_1+s_2)}+43 e^{3 s_1+s_2}+27 e^{3 s_1+2 s_2}+4 e^{6 s_1+3 s_2}-14 e^{3 s_1+4 s_2}+26 e^{6 s_1+4 s_2}+1) s_3^2), 
$
\[
k_{17, 10}(s_1, s_2, s_3)\frac{1}{32} \left(e^{s_1}-1\right){}^4 \left(e^{s_2}-1\right) \left(e^{s_1+s_2}-1\right){}^4 \left(s_1+s_2\right){}^2 \left(s_2+s_3\right) \left(s_1+s_2+s_3\right){}^2= 
\]
$
\frac{1}{2} \pi  (2 e^{2 s_1} (-8+2 e^{s_1}-4 e^{2 s_1}+8 e^{s_2}+10 e^{2 s_2}-10 e^{3 s_2}+36 e^{s_1+s_2}-4 e^{2 (s_1+s_2)}-24 e^{3 (s_1+s_2)}-46 e^{4 (s_1+s_2)}-4 e^{5 (s_1+s_2)}-10 e^{2 s_1+s_2}+16 e^{3 s_1+s_2}-72 e^{s_1+2 s_2}-20 e^{3 s_1+2 s_2}-14 e^{4 s_1+2 s_2}+20 e^{s_1+3 s_2}+68 e^{2 s_1+3 s_2}+50 e^{4 s_1+3 s_2}-4 e^{5 s_1+3 s_2}+13 e^{s_1+4 s_2}-47 e^{2 s_1+4 s_2}+26 e^{3 s_1+4 s_2}+3 e^{5 s_1+4 s_2}+e^{6 s_1+4 s_2}+2 e^{2 s_1+5 s_2}-8 e^{3 s_1+5 s_2}+20 e^{4 s_1+5 s_2}) s_1^2-2 e^{s_1} (e^{2 s_1+s_2} (-6 e^{s_1}+104 e^{s_2}-8 e^{2 (s_1+s_2)}+39 e^{s_1+3 s_2}+18 e^{4 s_1+3 s_2}) s_2-6 e^{s_1}-8 e^{s_2}-5 e^{s_1+s_2}+20 e^{2 (s_1+s_2)}-15 e^{4 (s_1+s_2)}+41 e^{5 (s_1+s_2)}-2 e^{3 s_1+s_2}+16 e^{4 s_1+s_2}+12 e^{s_1+2 s_2}-17 e^{4 s_1+2 s_2}-16 e^{5 s_1+2 s_2}-e^{s_1+3 s_2}+52 e^{4 s_1+3 s_2}+47 e^{3 s_1+4 s_2}+8 e^{6 s_1+4 s_2}-5 e^{7 s_1+4 s_2}-36 e^{4 s_1+5 s_2}+5 e^{7 s_1+5 s_2}-e^{2 s_1+s_2} (24 e^{s_1}-88 e^{s_2}+64 e^{2 (s_1+s_2)}-29 e^{s_1+3 s_2}+5 e^{4 s_1+3 s_2}) s_3) s_1-2 e^{3 s_1+s_2} (-26 e^{s_1}+56 e^{s_2}-56 e^{2 (s_1+s_2)}-19 e^{s_1+3 s_2}+45 e^{4 s_1+3 s_2}) s_2^2-2 e^{2 s_1} s_3 (2 e^{s_1+s_2} (-10 e^{s_1}+12 e^{s_2}-24 e^{2 (s_1+s_2)}-2 e^{s_1+3 s_2}+3 e^{4 s_1+3 s_2}) s_3+7 e^{3 s_2}-8 e^{2 (s_1+s_2)}+92 e^{3 (s_1+s_2)}-2 e^{5 (s_1+s_2)}+92 e^{s_1+2 s_2}-13 e^{4 s_1+2 s_2}+10 e^{2 s_1+5 s_2}+29 e^{4 s_1+5 s_2}+9)+s_2 (-2 e^{2 s_1+s_2} (-8-76 e^{s_1}-50 e^{2 s_1}-10 e^{s_2}+72 e^{s_1+s_2}-92 e^{2 (s_1+s_2)}+98 e^{3 (s_1+s_2)}+124 e^{2 s_1+s_2}+90 e^{3 s_1+s_2}-4 e^{s_1+2 s_2}-120 e^{3 s_1+2 s_2}-74 e^{4 s_1+2 s_2}-19 e^{2 s_1+3 s_2}+30 e^{4 s_1+3 s_2}+36 e^{5 s_1+3 s_2}) s_3+9 e^{2 s_1}-11 e^{3 s_1}-2 e^{s_2}+14 e^{s_1+s_2}+114 e^{3 (s_1+s_2)}-135 e^{4 (s_1+s_2)}-12 e^{3 s_1+s_2}+2 e^{4 s_1+s_2}-120 e^{3 s_1+2 s_2}+71 e^{6 s_1+2 s_2}-18 e^{2 s_1+3 s_2}-72 e^{5 s_1+3 s_2}-78 e^{6 s_1+3 s_2}+29 e^{3 s_1+4 s_2}+39 e^{6 s_1+4 s_2}+38 e^{7 s_1+4 s_2}-32 e^{6 s_1+5 s_2}+2)), 
$
\[
k_{17, 11}(s_1, s_2, s_3)\frac{5}{32} \left(e^{s_1}-1\right){}^4 \left(e^{s_2}-1\right) \left(e^{s_1+s_2}-1\right){}^4 \left(s_1+s_2\right){}^2 \left(s_2+s_3\right) \left(s_1+s_2+s_3\right){}^2= 
\]
$
\pi  (2 e^{2 s_1} (-11 e^{3 s_2}-19 e^{4 s_1+2 s_2}+54 e^{4 s_1+5 s_2}-24) s_2^2+(8 e^{s_1}-72 e^{2 s_1}-67 e^{3 s_1}-7 e^{4 s_1}+2 e^{s_2}-88 e^{5 (s_1+s_2)}+28 e^{5 s_1+s_2}-8 e^{s_1+2 s_2}-12 e^{6 s_1+2 s_2}-18 e^{2 s_1+3 s_2}-32 e^{7 s_1+3 s_2}-13 e^{3 s_1+4 s_2}+38 e^{8 s_1+4 s_2}+22 e^{4 s_1+5 s_2}+192 e^{6 s_1+5 s_2}+12 e^{7 s_1+5 s_2}+2 e^{8 s_1+5 s_2}-2) s_3 s_2+6 e^{2 s_1} (-e^{3 s_2}+e^{4 s_1+2 s_2}+14 e^{4 s_1+5 s_2}-4) s_3^2+2 e^{2 s_1} s_1 ((-33 e^{3 s_2}-42 e^{4 s_1+2 s_2}+92 e^{4 s_1+5 s_2}-32) s_2+(-23 e^{3 s_2}-7 e^{4 s_1+2 s_2}+122 e^{4 s_1+5 s_2}-32) s_3)), 
$
\[
k_{17, 12}(s_1, s_2, s_3)\frac{15}{32} \left(e^{s_1}-1\right){}^4 \left(e^{s_2}-1\right) \left(e^{s_1+s_2}-1\right){}^4 \left(s_1+s_2\right){}^2 \left(s_2+s_3\right) \left(s_1+s_2+s_3\right){}^2= 
\]
$
\pi  ((16 e^{s_1}-104 e^{3 s_1}-34 e^{4 s_1}+4 e^{s_2}-136 e^{5 (s_1+s_2)}+136 e^{5 s_1+s_2}-16 e^{s_1+2 s_2}-44 e^{7 s_1+3 s_2}-31 e^{3 s_1+4 s_2}+131 e^{8 s_1+4 s_2}+34 e^{4 s_1+5 s_2}+44 e^{7 s_1+5 s_2}+4 e^{8 s_1+5 s_2}-4) s_2^2+2 (4 e^{s_1}-41 e^{3 s_1}-e^{4 s_1}+e^{s_2}-64 e^{5 (s_1+s_2)}+4 e^{5 s_1+s_2}-4 e^{s_1+2 s_2}-26 e^{7 s_1+3 s_2}-4 e^{3 s_1+4 s_2}+14 e^{8 s_1+4 s_2}+16 e^{4 s_1+5 s_2}-4 e^{7 s_1+5 s_2}+e^{8 s_1+5 s_2}-1) s_3^2+s_1 ((8 e^{s_1}-37 e^{3 s_1}-77 e^{4 s_1}+2 e^{s_2}-248 e^{5 (s_1+s_2)}+308 e^{5 s_1+s_2}-8 e^{s_1+2 s_2}-112 e^{7 s_1+3 s_2}+187 e^{3 s_1+4 s_2}+88 e^{8 s_1+4 s_2}+62 e^{4 s_1+5 s_2}-68 e^{7 s_1+5 s_2}+2 e^{8 s_1+5 s_2}-2) s_2+(8 e^{s_1}-112 e^{3 s_1}-32 e^{4 s_1}+2 e^{s_2}-368 e^{5 (s_1+s_2)}+128 e^{5 s_1+s_2}-8 e^{s_1+2 s_2}-172 e^{7 s_1+3 s_2}+127 e^{3 s_1+4 s_2}+43 e^{8 s_1+4 s_2}+92 e^{4 s_1+5 s_2}-128 e^{7 s_1+5 s_2}+2 e^{8 s_1+5 s_2}-2) s_3)), 
$
\[
k_{17, 13}(s_1, s_2, s_3)\frac{1}{32} \left(e^{s_1}-1\right){}^4 \left(e^{s_1+s_2}-1\right){}^4 s_2 \left(s_1+s_2\right){}^2 \left(s_2+s_3\right) \left(s_1+s_2+s_3\right){}^2= 
\]
$
-\frac{1}{2} e^{s_1} \pi  (-2 e^{s_1} (-1+e^{s_2}) (-2+e^{s_1}-e^{2 s_1}-2 e^{s_2}+10 e^{s_1+s_2}-11 e^{2 (s_1+s_2)}-6 e^{2 s_1+s_2}+4 e^{3 s_1+s_2}+3 e^{s_1+2 s_2}+6 e^{3 s_1+2 s_2}-4 e^{4 s_1+2 s_2}+2 e^{4 s_1+3 s_2}) s_1^3+e^{s_1} (2 (-4-e^{s_1}-e^{2 s_1}+4 e^{2 s_2}+19 e^{s_1+s_2}-15 e^{2 (s_1+s_2)}-2 e^{3 (s_1+s_2)}-16 e^{4 (s_1+s_2)}+e^{2 s_1+s_2}+4 e^{3 s_1+s_2}-13 e^{s_1+2 s_2}-10 e^{3 s_1+2 s_2}-2 e^{4 s_1+2 s_2}-5 e^{s_1+3 s_2}+17 e^{2 s_1+3 s_2}+18 e^{4 s_1+3 s_2}-4 e^{5 s_1+3 s_2}-2 e^{2 s_1+4 s_2}+8 e^{3 s_1+4 s_2}+4 e^{5 s_1+4 s_2}) s_3-3 e^{s_1}+e^{s_2}-4 e^{2 s_2}+e^{s_1+s_2}-9 e^{3 (s_1+s_2)}+14 e^{4 (s_1+s_2)}-6 e^{3 s_1+s_2}+e^{s_1+2 s_2}+7 e^{3 s_1+2 s_2}+5 e^{4 s_1+2 s_2}-e^{s_1+3 s_2}+e^{4 s_1+3 s_2}-12 e^{3 s_1+4 s_2}+2 e^{6 s_1+4 s_2}+3) s_1^2+s_3 (3 (-1+e^{s_1}) (1+2 e^{s_1}+e^{2 s_1}-e^{s_2}-2 e^{s_1+s_2}+7 e^{2 (s_1+s_2)}-18 e^{3 (s_1+s_2)}+12 e^{4 (s_1+s_2)}-11 e^{2 s_1+s_2}-2 e^{3 s_1+s_2}+16 e^{3 s_1+2 s_2}+e^{4 s_1+2 s_2}+5 e^{2 s_1+3 s_2}-e^{4 s_1+3 s_2}-2 e^{5 s_1+3 s_2}-4 e^{3 s_1+4 s_2}-6 e^{5 s_1+4 s_2}+2 e^{6 s_1+4 s_2})+4 e^{s_1} (-1-e^{s_1}+e^{2 s_2}+5 e^{s_1+s_2}-5 e^{2 (s_1+s_2)}+2 e^{3 (s_1+s_2)}-7 e^{4 (s_1+s_2)}+3 e^{2 s_1+s_2}-3 e^{s_1+2 s_2}-6 e^{3 s_1+2 s_2}+e^{4 s_1+2 s_2}-e^{s_1+3 s_2}+3 e^{2 s_1+3 s_2}+6 e^{4 s_1+3 s_2}-2 e^{5 s_1+3 s_2}-e^{2 s_1+4 s_2}+4 e^{3 s_1+4 s_2}+2 e^{5 s_1+4 s_2}) s_3) s_1+2 e^{s_1} (-3+3 e^{s_1}+e^{s_2}+2 e^{2 s_2}+13 e^{s_1+s_2}-21 e^{3 (s_1+s_2)}+20 e^{4 (s_1+s_2)}-11 e^{s_1+2 s_2}+19 e^{3 s_1+2 s_2}-e^{4 s_1+2 s_2}-7 e^{s_1+3 s_2}+e^{4 s_1+3 s_2}-18 e^{3 s_1+4 s_2}+2 e^{6 s_1+4 s_2}) s_3^2), 
$
\[
k_{17, 14}(s_1, s_2, s_3)\frac{3}{32} \left(e^{s_1}-1\right){}^4 \left(e^{s_2}-1\right) \left(e^{s_1+s_2}-1\right){}^4 s_1 \left(s_1+s_2\right){}^2 \left(s_2+s_3\right) \left(s_1+s_2+s_3\right){}^2= 
\]
$
\frac{1}{2} \pi  (4 e^{2 s_1+s_2} (1+22 e^{s_1}+2 e^{s_2}+22 e^{2 (s_1+s_2)}-76 e^{3 (s_1+s_2)}-46 e^{2 s_1+s_2}-32 e^{3 s_1+s_2}-4 e^{s_1+2 s_2}+22 e^{4 s_1+2 s_2}+35 e^{4 s_1+3 s_2}) s_2^3+e^{s_1+s_2} (4 e^{s_1} (2+44 e^{s_1}+4 e^{s_2}+44 e^{2 (s_1+s_2)}-107 e^{3 (s_1+s_2)}-92 e^{2 s_1+s_2}-64 e^{3 s_1+s_2}-8 e^{s_1+2 s_2}+44 e^{4 s_1+2 s_2}+25 e^{4 s_1+3 s_2}) s_3+160 e^{s_1}-125 e^{3 s_1}-124 e^{4 s_1}+16 e^{s_2}-82 e^{s_1+s_2}-137 e^{3 (s_1+s_2)}-20 e^{4 (s_1+s_2)}-16 e^{3 s_1+s_2}+280 e^{4 s_1+s_2}-160 e^{4 s_1+3 s_2}+200 e^{6 s_1+3 s_2}-11 e^{7 s_1+3 s_2}+5 e^{3 s_1+4 s_2}-140 e^{6 s_1+4 s_2}+11 e^{7 s_1+4 s_2}+8) s_2^2+e^{s_1} s_3 (4 e^{s_1+s_2} (1+22 e^{s_1}+2 e^{s_2}+22 e^{2 (s_1+s_2)}-46 e^{3 (s_1+s_2)}-46 e^{2 s_1+s_2}-32 e^{3 s_1+s_2}-4 e^{s_1+2 s_2}+22 e^{4 s_1+2 s_2}+5 e^{4 s_1+3 s_2}) s_3+29 e^{3 s_1}+5 e^{2 s_2}+196 e^{s_1+s_2}+656 e^{3 (s_1+s_2)}-469 e^{4 (s_1+s_2)}-106 e^{4 s_1+s_2}-49 e^{s_1+2 s_2}-176 e^{3 s_1+2 s_2}+715 e^{4 s_1+2 s_2}-26 e^{6 s_1+3 s_2}-26 e^{7 s_1+4 s_2}+10 e^{3 s_1+5 s_2}-44 e^{4 s_1+5 s_2}-244 e^{6 s_1+5 s_2}+26 e^{7 s_1+5 s_2}-11) s_2-(1-2 e^{s_1}-46 e^{3 s_1}-7 e^{4 s_1}-e^{s_2}-40 e^{3 (s_1+s_2)}-152 e^{3 s_1+s_2}+2 e^{s_1+2 s_2}+88 e^{4 s_1+2 s_2}-280 e^{4 s_1+3 s_2}+274 e^{6 s_1+3 s_2}-8 e^{7 s_1+3 s_2}-2 e^{3 s_1+4 s_2}-142 e^{6 s_1+4 s_2}-5 e^{4 s_1+5 s_2}+104 e^{7 s_1+5 s_2}) s_3^2), 
$
\[
k_{17, 15}(s_1, s_2, s_3)\frac{1}{32} \left(e^{s_1}-1\right){}^4 \left(e^{s_2}-1\right) \left(e^{s_1+s_2}-1\right){}^4 s_1 \left(s_1+s_2\right){}^2 \left(s_2+s_3\right) \left(s_1+s_2+s_3\right){}^2= 
\]
$
\frac{1}{2} \pi  (-4 e^{3 s_1+s_2} (-7 e^{s_1}+4 e^{s_2}-16 e^{2 (s_1+s_2)}-11 e^{s_1+3 s_2}+12 e^{4 s_1+3 s_2}) s_2^3+(-2 e^{3 s_1+s_2} (-28 e^{s_1}+16 e^{s_2}-64 e^{2 (s_1+s_2)}-29 e^{s_1+3 s_2}+33 e^{4 s_1+3 s_2}) s_3-8 e^{s_1}-12 e^{2 s_1}+8 e^{3 s_1}+11 e^{4 s_1}-e^{s_2}+76 e^{3 (s_1+s_2)}+28 e^{3 s_1+s_2}-120 e^{3 s_1+2 s_2}+54 e^{6 s_1+2 s_2}-14 e^{2 s_1+3 s_2}+80 e^{4 s_1+3 s_2}+8 e^{5 s_1+3 s_2}-130 e^{6 s_1+3 s_2}-20 e^{7 s_1+3 s_2}+8 e^{3 s_1+4 s_2}+28 e^{6 s_1+4 s_2}+48 e^{6 s_1+5 s_2}+1) s_2^2+e^{s_1} s_3 (-4 e^{2 s_1+s_2} (-7 e^{s_1}+4 e^{s_2}-16 e^{2 (s_1+s_2)}-6 e^{s_1+3 s_2}+7 e^{4 s_1+3 s_2}) s_3-39 e^{s_1}+33 e^{2 s_1}+2 e^{s_2}-208 e^{2 (s_1+s_2)}+84 e^{5 (s_1+s_2)}+108 e^{2 s_1+s_2}-140 e^{3 s_1+s_2}+43 e^{5 s_1+2 s_2}-10 e^{s_1+3 s_2}+62 e^{2 s_1+3 s_2}-32 e^{4 s_1+3 s_2}-230 e^{5 s_1+3 s_2}+5 e^{2 s_1+4 s_2}-33 e^{3 s_1+4 s_2}+103 e^{5 s_1+4 s_2}+90 e^{6 s_1+4 s_2}) s_2+e^{2 s_1} (-18+20 e^{s_2}-2 e^{3 s_2}-16 e^{3 (s_1+s_2)}-63 e^{2 s_1+s_2}-8 e^{3 s_1+s_2}-80 e^{s_1+2 s_2}+102 e^{3 s_1+2 s_2}+8 e^{4 s_1+2 s_2}-5 e^{2 s_1+4 s_2}-70 e^{3 s_1+4 s_2}+32 e^{5 s_1+4 s_2}-5 e^{6 s_1+4 s_2}-8 e^{3 s_1+5 s_2}+36 e^{4 s_1+5 s_2}+5 e^{6 s_1+5 s_2}) s_3^2), 
$
\[
k_{17, 16}(s_1, s_2, s_3)\frac{15}{32} \left(e^{s_1}-1\right){}^4 \left(e^{s_2}-1\right) \left(e^{s_1+s_2}-1\right){}^4 s_1 \left(s_1+s_2\right){}^2 \left(s_2+s_3\right) \left(s_1+s_2+s_3\right){}^2= 
\]
$
\pi  s_2 (2 (4 e^{s_1}-26 e^{3 s_1}-e^{4 s_1}+e^{s_2}-4 e^{5 (s_1+s_2)}+4 e^{5 s_1+s_2}-4 e^{s_1+2 s_2}+4 e^{7 s_1+3 s_2}-34 e^{3 s_1+4 s_2}+29 e^{8 s_1+4 s_2}+e^{4 s_1+5 s_2}+26 e^{7 s_1+5 s_2}+e^{8 s_1+5 s_2}-1) s_2^2+(16 e^{s_1}-104 e^{3 s_1}-4 e^{4 s_1}+4 e^{s_2}-16 e^{5 (s_1+s_2)}+16 e^{5 s_1+s_2}-16 e^{s_1+2 s_2}+16 e^{7 s_1+3 s_2}-91 e^{3 s_1+4 s_2}+71 e^{8 s_1+4 s_2}+4 e^{4 s_1+5 s_2}+104 e^{7 s_1+5 s_2}+4 e^{8 s_1+5 s_2}-4) s_3 s_2+2 (4 e^{s_1}-26 e^{3 s_1}-e^{4 s_1}+e^{s_2}-4 e^{5 (s_1+s_2)}+4 e^{5 s_1+s_2}-4 e^{s_1+2 s_2}+4 e^{7 s_1+3 s_2}-19 e^{3 s_1+4 s_2}+14 e^{8 s_1+4 s_2}+e^{4 s_1+5 s_2}+26 e^{7 s_1+5 s_2}+e^{8 s_1+5 s_2}-1) s_3^2), 
$
\[
k_{17, 17}(s_1, s_2, s_3)= \frac{4 \pi  \left(s_1^2-s_3 s_1-s_2^2+s_3^2\right)}{\left(e^{\frac{1}{2} \left(s_1+s_2+s_3\right)}-1\right){}^4 s_1 \left(s_1+s_2\right) s_3 \left(s_2+s_3\right) \left(s_1+s_2+s_3\right)}, 
\]
\[
k_{17, 18}(s_1, s_2, s_3)= \frac{4 \pi  \left(s_1^2-s_3 s_1-s_2^2+s_3^2\right)}{\left(e^{\frac{1}{2} \left(s_1+s_2+s_3\right)}+1\right){}^4 s_1 \left(s_1+s_2\right) s_3 \left(s_2+s_3\right) \left(s_1+s_2+s_3\right)}, 
\]
\[
k_{17, 19}(s_1, s_2, s_3)= \frac{8 \pi  e^{2 s_1} \left(s_2-s_3\right)}{\left(e^{s_1}-1\right){}^2 \left(e^{\frac{1}{2} \left(s_2+s_3\right)}-1\right){}^2 s_1 s_2 s_3 \left(s_1+s_2+s_3\right)}, 
\]
\[
k_{17, 20}(s_1, s_2, s_3)= \frac{8 \pi  e^{2 s_1} \left(s_2-s_3\right)}{\left(e^{s_1}-1\right){}^2 \left(e^{\frac{1}{2} \left(s_2+s_3\right)}+1\right){}^2 s_1 s_2 s_3 \left(s_1+s_2+s_3\right)}, 
\]
\[
k_{17, 21}(s_1, s_2, s_3)\left(e^{s_1}-1\right){}^3 \left(e^{s_2}-1\right) \left(e^{\frac{1}{2} \left(s_2+s_3\right)}-1\right) s_1 s_2 s_3 \left(s_2+s_3\right) \left(s_1+s_2+s_3\right)= 
\]
$
-8 \pi  e^{2 s_1} ((-e^{s_1}-e^{s_2}+3 e^{s_1+s_2}-1) s_2^2+2 (e^{s_1}-1) ((4 e^{s_2}-2) s_3+e^{s_2}-1) s_2+s_3 ((-3 e^{s_1}-7 e^{s_2}+5 e^{s_1+s_2}+5) s_3-2 (e^{s_1}-1) (e^{s_2}-1))),
$
\[
k_{17, 22}(s_1, s_2, s_3)\left(e^{s_1}-1\right) \left(e^{s_1+s_2}-1\right) \left(e^{\frac{1}{2} \left(s_1+s_2+s_3\right)}+1\right){}^3 s_1 s_2 \left(s_1+s_2\right) s_3 \left(s_2+s_3\right) \left(s_1+s_2+s_3\right){}^2= 
\]
$
-4 \pi  (((-3 e^{s_1}-3 e^{s_1+s_2}+2 e^{2 s_1+s_2}+4) s_2+e^{s_1} (e^{s_2}-1) s_3) s_1^3+((-3 e^{s_1}-5 e^{s_1+s_2}+2 e^{2 s_1+s_2}+6) s_2^2+((-3 e^{s_1}+e^{s_1+s_2}+2) s_3-2 (e^{s_1}-1) (e^{s_1+s_2}-1)) s_2+2 e^{s_1} (e^{s_2}-1) s_3^2) s_1^2-(e^{s_1} (e^{s_2}+2 e^{s_1+s_2}-3) s_2^3-(-e^{s_1}-3 e^{s_1+s_2}+2 e^{2 s_1+s_2}+2) s_3 s_2^2-s_3 (2 (e^{s_1}-1) (e^{s_1+s_2}-1)+(-5 e^{s_1}-e^{s_1+s_2}+4 e^{2 s_1+s_2}+2) s_3) s_2-e^{s_1} (e^{s_2}-1) s_3^3) s_1-s_2 (s_2+s_3) ((-3 e^{s_1}-e^{s_1+s_2}+2 e^{2 s_1+s_2}+2) s_2^2-2 (e^{s_1}-1) ((2 e^{s_1+s_2}-1) s_3+e^{s_1+s_2}-1) s_2-s_3 ((-5 e^{s_1}-5 e^{s_1+s_2}+6 e^{2 s_1+s_2}+4) s_3-2 (e^{s_1}-1) (e^{s_1+s_2}-1)))), 
$
\[
k_{17, 23}(s_1, s_2, s_3)\left(e^{s_1}-1\right){}^3 \left(e^{s_2}-1\right) \left(e^{\frac{1}{2} \left(s_2+s_3\right)}+1\right) s_1 s_2 s_3 \left(s_2+s_3\right) \left(s_1+s_2+s_3\right)= 
\]
$
8 \pi  e^{2 s_1} ((-e^{s_1}-e^{s_2}+3 e^{s_1+s_2}-1) s_2^2+2 (e^{s_1}-1) ((4 e^{s_2}-2) s_3+e^{s_2}-1) s_2+s_3 ((-3 e^{s_1}-7 e^{s_2}+5 e^{s_1+s_2}+5) s_3-2 (e^{s_1}-1) (e^{s_2}-1))), 
$
\[
k_{17, 24}(s_1, s_2, s_3) = \frac{-k_{17, 22}^{\text{num}}(s_1, s_2, s_3)}{\left(e^{s_1}-1\right) \left(e^{s_1+s_2}-1\right) \left(e^{\frac{1}{2} \left(s_1+s_2+s_3\right)}-1\right){}^3 s_1 s_2 \left(s_1+s_2\right) s_3 \left(s_2+s_3\right) \left(s_1+s_2+s_3\right){}^2},
\]
\[
k_{17, 25}(s_1, s_2, s_3)\left(e^{s_1}-1\right) \left(e^{s_1+s_2}-1\right){}^2 \left(e^{\frac{1}{2} \left(s_1+s_2+s_3\right)}-1\right){}^2 s_1 s_2 \left(s_1+s_2\right) s_3 \left(s_2+s_3\right) \left(s_1+s_2+s_3\right){}^2= 
\]
$
2 \pi  (((-11+10 e^{s_1}+13 e^{s_1+s_2}+2 e^{2 (s_1+s_2)}-15 e^{2 s_1+s_2}+e^{3 s_1+2 s_2}) s_2+e^{s_1} (-1+e^{s_2}) (-9+13 e^{s_1+s_2}) s_3) s_1^3+((-21+18 e^{s_1}+25 e^{s_1+s_2}-31 e^{2 s_1+s_2}+9 e^{3 s_1+2 s_2}) s_2^2+((-10+35 e^{s_1}-15 e^{s_1+s_2}+37 e^{2 (s_1+s_2)}-55 e^{2 s_1+s_2}+8 e^{3 s_1+2 s_2}) s_3-2 (-7+5 e^{s_1}+12 e^{s_1+s_2}-5 e^{2 (s_1+s_2)}-8 e^{2 s_1+s_2}+3 e^{3 s_1+2 s_2})) s_2+2 e^{s_1} (-1+e^{s_2}) s_3 ((-9+13 e^{s_1+s_2}) s_3-2 e^{s_1+s_2}+2)) s_1^2+((-9+6 e^{s_1}+11 e^{s_1+s_2}-6 e^{2 (s_1+s_2)}-17 e^{2 s_1+s_2}+15 e^{3 s_1+2 s_2}) s_2^3+(8 (-1+e^{s_1+s_2}){}^2+(-19+42 e^{s_1}+15 e^{s_1+s_2}-89 e^{2 s_1+s_2}+51 e^{3 s_1+2 s_2}) s_3) s_2^2+s_3 (2 (-3+e^{s_1}+8 e^{s_1+s_2}-5 e^{2 (s_1+s_2)}-4 e^{2 s_1+s_2}+3 e^{3 s_1+2 s_2})+(-10+45 e^{s_1}-5 e^{s_1+s_2}+19 e^{2 (s_1+s_2)}-85 e^{2 s_1+s_2}+36 e^{3 s_1+2 s_2}) s_3) s_2+e^{s_1} (-1+e^{s_2}) s_3^2 ((-9+13 e^{s_1+s_2}) s_3-4 e^{s_1+s_2}+4)) s_1+s_2 (s_2+s_3) ((1-2 e^{s_1}-e^{s_1+s_2}-4 e^{2 (s_1+s_2)}-e^{2 s_1+s_2}+7 e^{3 s_1+2 s_2}) s_2^2+2 ((-5+9 e^{s_1}+11 e^{s_1+s_2}-10 e^{2 (s_1+s_2)}-23 e^{2 s_1+s_2}+18 e^{3 s_1+2 s_2}) s_3+5 e^{s_1}+4 e^{s_1+s_2}-e^{2 (s_1+s_2)}-8 e^{2 s_1+s_2}+3 e^{3 s_1+2 s_2}-3) s_2+s_3 (2 (7-9 e^{s_1}-16 e^{s_1+s_2}+9 e^{2 (s_1+s_2)}+20 e^{2 s_1+s_2}-11 e^{3 s_1+2 s_2})+(-11+20 e^{s_1}+23 e^{s_1+s_2}-16 e^{2 (s_1+s_2)}-45 e^{2 s_1+s_2}+29 e^{3 s_1+2 s_2}) s_3))), 
$
\[
k_{17, 26}(s_1, s_2, s_3)= \frac{k_{17, 25}^{\text{num}}(s_1, s_2, s_3)}{\left(e^{s_1}-1\right) \left(e^{s_1+s_2}-1\right){}^2 \left(e^{\frac{1}{2} \left(s_1+s_2+s_3\right)}+1\right){}^2 s_1 s_2 \left(s_1+s_2\right) s_3 \left(s_2+s_3\right) \left(s_1+s_2+s_3\right){}^2}, 
\]
\[
k_{17, 27}(s_1, s_2, s_3)3 \left(e^{s_1}-1\right){}^3 \left(e^{s_1+s_2}-1\right){}^3 \left(e^{\frac{1}{2} \left(s_1+s_2+s_3\right)}+1\right) s_1 s_2 \left(s_1+s_2\right) s_3 \left(s_2+s_3\right) \left(s_1+s_2+s_3\right){}^2= 
\]
$
2 \pi  (3 ((-5+16 e^{s_1}-17 e^{2 s_1}+14 e^{3 s_1}+8 e^{s_1+s_2}+11 e^{2 (s_1+s_2)}-22 e^{3 (s_1+s_2)}-23 e^{2 s_1+s_2}+30 e^{3 s_1+s_2}-39 e^{4 s_1+s_2}-30 e^{3 s_1+2 s_2}+11 e^{4 s_1+2 s_2}+32 e^{5 s_1+2 s_2}+61 e^{4 s_1+3 s_2}-48 e^{5 s_1+3 s_2}+e^{6 s_1+3 s_2}) s_2-e^{s_1} (-7+30 e^{s_1}-15 e^{2 s_1}+7 e^{s_2}-4 e^{s_1+s_2}+57 e^{2 (s_1+s_2)}-70 e^{3 (s_1+s_2)}-69 e^{2 s_1+s_2}+42 e^{3 s_1+s_2}-26 e^{s_1+2 s_2}+28 e^{3 s_1+2 s_2}-35 e^{4 s_1+2 s_2}+27 e^{2 s_1+3 s_2}+35 e^{4 s_1+3 s_2}) s_3) s_1^3-(3 (11-36 e^{s_1}+39 e^{2 s_1}-38 e^{3 s_1}-18 e^{s_1+s_2}-25 e^{2 (s_1+s_2)}+40 e^{3 (s_1+s_2)}+51 e^{2 s_1+s_2}-72 e^{3 s_1+s_2}+111 e^{4 s_1+s_2}+66 e^{3 s_1+2 s_2}-9 e^{4 s_1+2 s_2}-104 e^{5 s_1+2 s_2}-105 e^{4 s_1+3 s_2}+66 e^{5 s_1+3 s_2}+23 e^{6 s_1+3 s_2}) s_2^2+(2 (-23+23 e^{s_1}+45 e^{s_1+s_2}-9 e^{2 (s_1+s_2)}-13 e^{3 (s_1+s_2)}-57 e^{2 s_1+s_2}+33 e^{3 s_1+2 s_2}+e^{4 s_1+3 s_2}) (-1+e^{s_1}){}^2+3 (6-41 e^{s_1}+112 e^{2 s_1}-69 e^{3 s_1}+11 e^{s_1+s_2}-92 e^{2 (s_1+s_2)}+99 e^{3 (s_1+s_2)}+16 e^{2 s_1+s_2}-249 e^{3 s_1+s_2}+198 e^{4 s_1+s_2}+207 e^{3 s_1+2 s_2}+86 e^{4 s_1+2 s_2}-177 e^{5 s_1+2 s_2}-254 e^{4 s_1+3 s_2}+123 e^{5 s_1+3 s_2}+24 e^{6 s_1+3 s_2}) s_3) s_2+6 e^{s_1} (-1+e^{s_2}) s_3 ((7-30 e^{s_1}+15 e^{2 s_1}-26 e^{s_1+s_2}+27 e^{2 (s_1+s_2)}+84 e^{2 s_1+s_2}-42 e^{3 s_1+s_2}-70 e^{3 s_1+2 s_2}+35 e^{4 s_1+2 s_2}) s_3-4 (-1+e^{s_1}){}^2 (2-5 e^{s_1+s_2}+3 e^{2 (s_1+s_2)}))) s_1^2+(-3 (7-24 e^{s_1}+27 e^{2 s_1}-34 e^{3 s_1}-12 e^{s_1+s_2}-17 e^{2 (s_1+s_2)}+14 e^{3 (s_1+s_2)}+33 e^{2 s_1+s_2}-54 e^{3 s_1+s_2}+105 e^{4 s_1+s_2}+42 e^{3 s_1+2 s_2}+15 e^{4 s_1+2 s_2}-112 e^{5 s_1+2 s_2}-27 e^{4 s_1+3 s_2}-12 e^{5 s_1+3 s_2}+49 e^{6 s_1+3 s_2}) s_2^3-3 ((13-64 e^{s_1}+137 e^{2 s_1}-94 e^{3 s_1}-14 e^{s_1+s_2}-63 e^{2 (s_1+s_2)}+56 e^{3 (s_1+s_2)}+89 e^{2 s_1+s_2}-336 e^{3 s_1+s_2}+285 e^{4 s_1+s_2}+114 e^{3 s_1+2 s_2}+217 e^{4 s_1+2 s_2}-292 e^{5 s_1+2 s_2}-115 e^{4 s_1+3 s_2}-42 e^{5 s_1+3 s_2}+109 e^{6 s_1+3 s_2}) s_3-16 (-1+e^{s_1}){}^2 (-1+e^{s_1+s_2}){}^2 (1-e^{s_1}+e^{2 s_1+s_2})) s_2^2+s_3 (2 (-1+e^{s_1}){}^2 (1-49 e^{s_1}+45 e^{s_1+s_2}-105 e^{2 (s_1+s_2)}+59 e^{3 (s_1+s_2)}+135 e^{2 s_1+s_2}-111 e^{3 s_1+2 s_2}+25 e^{4 s_1+3 s_2})-3 (6-47 e^{s_1}+140 e^{2 s_1}-75 e^{3 s_1}+5 e^{s_1+s_2}-72 e^{2 (s_1+s_2)}+69 e^{3 (s_1+s_2)}+52 e^{2 s_1+s_2}-351 e^{3 s_1+s_2}+222 e^{4 s_1+s_2}+129 e^{3 s_1+2 s_2}+230 e^{4 s_1+2 s_2}-215 e^{5 s_1+2 s_2}-158 e^{4 s_1+3 s_2}+5 e^{5 s_1+3 s_2}+60 e^{6 s_1+3 s_2}) s_3) s_2-3 e^{s_1} (-1+e^{s_2}) s_3^2 ((7-30 e^{s_1}+15 e^{2 s_1}-26 e^{s_1+s_2}+27 e^{2 (s_1+s_2)}+84 e^{2 s_1+s_2}-42 e^{3 s_1+s_2}-70 e^{3 s_1+2 s_2}+35 e^{4 s_1+2 s_2}) s_3-8 (-1+e^{s_1}){}^2 (2-5 e^{s_1+s_2}+3 e^{2 (s_1+s_2)}))) s_1-s_2 (s_2+s_3) (3 (1-4 e^{s_1}+5 e^{2 s_1}-10 e^{3 s_1}-2 e^{s_1+s_2}-3 e^{2 (s_1+s_2)}-4 e^{3 (s_1+s_2)}+5 e^{2 s_1+s_2}-12 e^{3 s_1+s_2}+33 e^{4 s_1+s_2}+6 e^{3 s_1+2 s_2}+13 e^{4 s_1+2 s_2}-40 e^{5 s_1+2 s_2}+17 e^{4 s_1+3 s_2}-30 e^{5 s_1+3 s_2}+25 e^{6 s_1+3 s_2}) s_2^2+2 (-1+e^{s_1}) (3 (-3+10 e^{s_1}-15 e^{2 s_1}+8 e^{s_1+s_2}-3 e^{2 (s_1+s_2)}+6 e^{3 (s_1+s_2)}-24 e^{2 s_1+s_2}+48 e^{3 s_1+s_2}+18 e^{3 s_1+2 s_2}-55 e^{4 s_1+2 s_2}-20 e^{4 s_1+3 s_2}+30 e^{5 s_1+3 s_2}) s_3-(-1+e^{s_1}) (1-e^{s_1}-3 e^{s_1+s_2}+15 e^{2 (s_1+s_2)}-13 e^{3 (s_1+s_2)}+15 e^{2 s_1+s_2}-39 e^{3 s_1+2 s_2}+25 e^{4 s_1+3 s_2})) s_2+s_3 (3 (5-22 e^{s_1}+45 e^{2 s_1}-20 e^{3 s_1}-14 e^{s_1+s_2}+9 e^{2 (s_1+s_2)}-8 e^{3 (s_1+s_2)}+59 e^{2 s_1+s_2}-132 e^{3 s_1+s_2}+63 e^{4 s_1+s_2}-48 e^{3 s_1+2 s_2}+133 e^{4 s_1+2 s_2}-70 e^{5 s_1+2 s_2}+35 e^{4 s_1+3 s_2}-70 e^{5 s_1+3 s_2}+35 e^{6 s_1+3 s_2}) s_3-2 (-1+e^{s_1}){}^2 (23-47 e^{s_1}-69 e^{s_1+s_2}+81 e^{2 (s_1+s_2)}-35 e^{3 (s_1+s_2)}+153 e^{2 s_1+s_2}-177 e^{3 s_1+2 s_2}+71 e^{4 s_1+3 s_2})))), 
$
\[
k_{17, 28}(s_1, s_2, s_3)= \frac{k_{17, 27}^{\text{num}}(s_1, s_2, s_3)}{3 \left(e^{s_1}-1\right){}^3 \left(e^{s_1+s_2}-1\right){}^3 \left(e^{\frac{1}{2} \left(s_1+s_2+s_3\right)}-1\right) s_1 s_2 \left(s_1+s_2\right) s_3 \left(s_2+s_3\right) \left(s_1+s_2+s_3\right){}^2}, 
\]
\[
k_{17, 29}(s_1, s_2, s_3)=-\frac{16 \pi  e^{2 s_1+3 s_2} \left(\left(e^{s_1} \left(2 e^{s_2}-1\right)-1\right) s_1+\left(e^{s_1}-1\right) s_2\right)}{\left(e^{s_2}-1\right) \left(e^{s_1+s_2}-1\right){}^3 \left(e^{\frac{s_3}{2}}+1\right) s_1 s_2 \left(s_1+s_2\right) \left(s_1+s_2+s_3\right)}, 
\]
\[
k_{17, 30}(s_1, s_2, s_3)= \frac{16 \pi  e^{2 s_1+3 s_2} \left(\left(e^{s_1} \left(2 e^{s_2}-1\right)-1\right) s_1+\left(e^{s_1}-1\right) s_2\right)}{\left(e^{s_2}-1\right) \left(e^{s_1+s_2}-1\right){}^3 \left(e^{\frac{s_3}{2}}-1\right) s_1 s_2 \left(s_1+s_2\right) \left(s_1+s_2+s_3\right)}.
\]
}

\subsubsection{The functions $k_{18}$ and $k_{19}$} 
As we mentioned earlier, the functions $K_{18}$ and $K_{19}$ 
appearing in \eqref{a_4expression} are identical. Therefore, 
the derived three variable functions $k_{18}$ and $k_{19}$ match 
precisely. For the function $k_{18}$ we have: 
\[
k_{18}(s_1, s_2, s_3)= 
K_{18}(s_1, s_2, s_3, -s_1-s_2-s_3)
=\sum_{i=1}^{30} k_{18, i}(s_1, s_2, s_3), 
\]
where  
{\tiny 
\[
k_{18, 1}(s_1, s_2, s_3) \left(e^{s_1}-1\right){}^4 \left(e^{s_2}-1\right) \left(e^{s_1+s_2}-1\right){}^4 s_1^2 \left(s_1+s_2\right){}^2 \left(s_1+s_2+s_3\right){}^2 =
\]
$
16 \pi  e^{2 s_1} (2 e^{3 s_2}-39 e^{2 s_1+s_2}-40 e^{s_1+2 s_2}+10 e^{4 s_1+2 s_2}-8 e^{3 s_1+3 s_2}+5 e^{2 s_1+4 s_2}+16 e^{5 s_1+4 s_2}+12 e^{4 s_1+5 s_2}-12) s_2 (s_2+s_3),
$
\[
k_{18, 2}(s_1, s_2, s_3)3 \left(e^{s_1}-1\right){}^4 \left(e^{s_2}-1\right) \left(e^{s_1+s_2}-1\right){}^4 s_1^2 \left(s_1+s_2\right){}^2 \left(s_1+s_2+s_3\right){}^2=
\]
$
-16 \pi  (-4 e^{s_1}-28 e^{3 s_1}-5 e^{4 s_1}-e^{s_2}-20 e^{2 s_1+s_2}-116 e^{3 s_1+s_2}+20 e^{5 s_1+s_2}+4 e^{s_1+2 s_2}-10 e^{2 s_1+2 s_2}+104 e^{4 s_1+2 s_2}-188 e^{5 s_1+2 s_2}+20 e^{3 s_1+3 s_2}-200 e^{4 s_1+3 s_2}+142 e^{6 s_1+3 s_2}+20 e^{7 s_1+3 s_2}+4 e^{3 s_1+4 s_2}+140 e^{5 s_1+4 s_2}-76 e^{6 s_1+4 s_2}-5 e^{8 s_1+4 s_2}-e^{4 s_1+5 s_2}+4 e^{5 s_1+5 s_2}+28 e^{7 s_1+5 s_2}+5 e^{8 s_1+5 s_2}+1) s_2 (s_2+s_3), 
$
\[
k_{18, 3}(s_1, s_2, s_3)3 \left(e^{s_1}-1\right){}^4 \left(e^{s_2}-1\right) \left(e^{s_1+s_2}-1\right){}^4 s_1 \left(s_1+s_2\right){}^2 s_3 \left(s_2+s_3\right) \left(s_1+s_2+s_3\right){}^2=
\]
$
-16 \pi  (-25 e^{s_1}-31 e^{3 s_1}+7 e^{4 s_1}-4 e^{s_2}+4 e^{s_1+s_2}-140 e^{3 s_1+s_2}+104 e^{4 s_1+s_2}+104 e^{3 s_1+3 s_2}-140 e^{4 s_1+3 s_2}+4 e^{6 s_1+3 s_2}-4 e^{7 s_1+3 s_2}+7 e^{3 s_1+4 s_2}-31 e^{4 s_1+4 s_2}-25 e^{6 s_1+4 s_2}+4 e^{7 s_1+4 s_2}+4) s_2^3,
$
\[
k_{18, 4}(s_1, s_2, s_3)\left(e^{s_1}-1\right){}^4 \left(e^{s_2}-1\right) \left(e^{s_1+s_2}-1\right){}^4 s_1 \left(s_1+s_2\right){}^2 s_3 \left(s_2+s_3\right) \left(s_1+s_2+s_3\right){}^2=
\]
$
-16 \pi  e^{s_1} s_2^3 (2 e^{s_1+3 s_2} (e^{s_1+s_2}-2) s_2 (e^{s_1}-1){}^5+15 e^{s_1}+7 e^{2 s_2}+20 e^{s_1+s_2}-8 e^{4 s_1+s_2}-27 e^{s_1+2 s_2}+20 e^{2 s_1+2 s_2}+20 e^{3 s_1+2 s_2}-27 e^{4 s_1+2 s_2}+7 e^{5 s_1+2 s_2}-8 e^{s_1+3 s_2}+20 e^{4 s_1+3 s_2}+15 e^{4 s_1+4 s_2}),
$
\[
k_{18, 5}(s_1, s_2, s_3)3 \left(e^{s_1}-1\right){}^4 \left(e^{s_2}-1\right) \left(e^{s_1+s_2}-1\right){}^4 \left(s_1+s_2\right){}^2 s_3 \left(s_2+s_3\right) \left(s_1+s_2+s_3\right){}^2=
\]
$
16 \pi  ((-7 e^{s_1}-e^{3 s_1}+e^{4 s_1}-4 e^{s_2}-14 e^{s_1+s_2}-80 e^{3 s_1+s_2}+56 e^{4 s_1+s_2}+74 e^{3 s_1+3 s_2}-92 e^{4 s_1+3 s_2}+4 e^{6 s_1+3 s_2}-10 e^{7 s_1+3 s_2}+7 e^{3 s_1+4 s_2}-25 e^{4 s_1+4 s_2}-31 e^{6 s_1+4 s_2}+10 e^{7 s_1+4 s_2}+4) s_1^2+(11 e^{s_1}+29 e^{3 s_1}-5 e^{4 s_1}-4 e^{s_2}-32 e^{s_1+s_2}-20 e^{3 s_1+s_2}+8 e^{4 s_1+s_2}+44 e^{3 s_1+3 s_2}-44 e^{4 s_1+3 s_2}+4 e^{6 s_1+3 s_2}-16 e^{7 s_1+3 s_2}+7 e^{3 s_1+4 s_2}-19 e^{4 s_1+4 s_2}-37 e^{6 s_1+4 s_2}+16 e^{7 s_1+4 s_2}+4) s_2 s_1+(43 e^{s_1}+61 e^{3 s_1}-13 e^{4 s_1}+4 e^{s_2}-22 e^{s_1+s_2}+200 e^{3 s_1+s_2}-152 e^{4 s_1+s_2}-134 e^{3 s_1+3 s_2}+188 e^{4 s_1+3 s_2}-4 e^{6 s_1+3 s_2}-2 e^{7 s_1+3 s_2}-7 e^{3 s_1+4 s_2}+37 e^{4 s_1+4 s_2}+19 e^{6 s_1+4 s_2}+2 e^{7 s_1+4 s_2}-4) s_2^2), 
$
\[
k_{18, 6}(s_1, s_2, s_3)\left(e^{s_1}-1\right){}^4 \left(e^{s_2}-1\right) \left(e^{s_1+s_2}-1\right){}^4 \left(s_1+s_2\right){}^2 s_3 \left(s_2+s_3\right) \left(s_1+s_2+s_3\right){}^2=
\]
$
-16 e^{s_1} \pi  (2 e^{s_1} (-1+e^{s_1}) (-1+e^{s_2}) (-2+e^{s_1}-2 e^{s_2}+2 e^{2 s_2}+9 e^{s_1+s_2}-4 e^{2 s_1+s_2}-3 e^{s_1+2 s_2}-4 e^{2 s_1+2 s_2}+2 e^{3 s_1+2 s_2}-e^{s_1+3 s_2}+2 e^{2 s_1+3 s_2}) s_1^3+(2 e^{s_1} (-1+e^{s_1}) (6-3 e^{s_1}-12 e^{2 s_2}+4 e^{3 s_2}-24 e^{s_1+s_2}+12 e^{2 s_1+s_2}+36 e^{s_1+2 s_2}-6 e^{3 s_1+2 s_2}+2 e^{s_1+3 s_2}-30 e^{2 s_1+3 s_2}+14 e^{3 s_1+3 s_2}-2 e^{4 s_1+3 s_2}-2 e^{s_1+4 s_2}+2 e^{2 s_1+4 s_2}+6 e^{3 s_1+4 s_2}-4 e^{4 s_1+4 s_2}+e^{5 s_1+4 s_2}) s_2-e^{s_1}-7 e^{2 s_2}-20 e^{s_1+s_2}+6 e^{4 s_1+s_2}+13 e^{s_1+2 s_2}-20 e^{3 s_1+2 s_2}+23 e^{4 s_1+2 s_2}-9 e^{5 s_1+2 s_2}+8 e^{s_1+3 s_2}-16 e^{4 s_1+3 s_2}-13 e^{4 s_1+4 s_2}) s_1^2+s_2 (6 e^{s_1} (-1+e^{s_1}) (2-e^{s_1}-4 e^{2 s_2}-8 e^{s_1+s_2}+4 e^{2 s_1+s_2}+12 e^{s_1+2 s_2}-2 e^{3 s_1+2 s_2}+6 e^{s_1+3 s_2}-18 e^{2 s_1+3 s_2}+10 e^{3 s_1+3 s_2}-2 e^{4 s_1+3 s_2}-2 e^{2 s_1+4 s_2}+6 e^{3 s_1+4 s_2}-4 e^{4 s_1+4 s_2}+e^{5 s_1+4 s_2}) s_2+13 e^{s_1}-7 e^{2 s_2}-20 e^{s_1+s_2}+4 e^{4 s_1+s_2}-e^{s_1+2 s_2}+20 e^{2 s_1+2 s_2}-20 e^{3 s_1+2 s_2}+19 e^{4 s_1+2 s_2}-11 e^{5 s_1+2 s_2}+8 e^{s_1+3 s_2}-12 e^{4 s_1+3 s_2}-11 e^{4 s_1+4 s_2}) s_1+s_2^2 (2 e^{s_1} (-1+e^{s_1}) (2-e^{s_1}-4 e^{2 s_2}-4 e^{3 s_2}-8 e^{s_1+s_2}+4 e^{2 s_1+s_2}+12 e^{s_1+2 s_2}-2 e^{3 s_1+2 s_2}+22 e^{s_1+3 s_2}-42 e^{2 s_1+3 s_2}+26 e^{3 s_1+3 s_2}-6 e^{4 s_1+3 s_2}+2 e^{s_1+4 s_2}-10 e^{2 s_1+4 s_2}+18 e^{3 s_1+4 s_2}-12 e^{4 s_1+4 s_2}+3 e^{5 s_1+4 s_2}) s_2+29 e^{s_1}+7 e^{2 s_2}+20 e^{s_1+s_2}-10 e^{4 s_1+s_2}-41 e^{s_1+2 s_2}+40 e^{2 s_1+2 s_2}+20 e^{3 s_1+2 s_2}-31 e^{4 s_1+2 s_2}+5 e^{5 s_1+2 s_2}-8 e^{s_1+3 s_2}+24 e^{4 s_1+3 s_2}+17 e^{4 s_1+4 s_2})), 
$
\[
k_{18, 7}(s_1, s_2, s_3)5 \left(e^{s_1}-1\right){}^4 \left(e^{s_2}-1\right) \left(e^{s_1+s_2}-1\right){}^4 s_1 \left(s_1+s_2\right){}^2 \left(s_2+s_3\right) \left(s_1+s_2+s_3\right){}^2=
\]
$
64 \pi  e^{2 s_1} s_2 ((-13 e^{3 s_2}-2 e^{4 s_1+2 s_2}+7 e^{4 s_1+5 s_2}-7) s_2^2+(-11 e^{3 s_2}-4 e^{4 s_1+2 s_2}+14 e^{4 s_1+5 s_2}-14) s_3 s_2+(-3 e^{3 s_2}-2 e^{4 s_1+2 s_2}+7 e^{4 s_1+5 s_2}-7) s_3^2),
$
\[
k_{18, 8}(s_1, s_2, s_3)3 \left(e^{s_1}-1\right){}^4 \left(e^{s_2}-1\right) \left(e^{s_1+s_2}-1\right){}^4 \left(s_1+s_2\right){}^2 \left(s_2+s_3\right) \left(s_1+s_2+s_3\right){}^2=
\]
$
-16 \pi  (4 e^{2 s_1+s_2} (-8-35 e^{s_1}+14 e^{s_2}+194 e^{2 s_1+s_2}+82 e^{3 s_1+s_2}-76 e^{s_1+2 s_2}+136 e^{2 s_1+2 s_2}+52 e^{4 s_1+2 s_2}-13 e^{3 s_1+3 s_2}+140 e^{4 s_1+3 s_2}) s_2^2+(14-97 e^{s_1}-215 e^{3 s_1}+13 e^{4 s_1}-14 e^{s_2}-676 e^{3 s_1+s_2}+61 e^{s_1+2 s_2}+380 e^{4 s_1+2 s_2}+256 e^{3 s_1+3 s_2}-908 e^{4 s_1+3 s_2}+512 e^{6 s_1+3 s_2}+20 e^{7 s_1+3 s_2}+47 e^{3 s_1+4 s_2}-329 e^{6 s_1+4 s_2}-10 e^{4 s_1+5 s_2}+136 e^{7 s_1+5 s_2}) s_2+2 e^{s_1} s_3 ((-1+e^{s_2}) (23+7 e^{3 s_1}+10 e^{s_2}-190 e^{3 s_1+s_2}-360 e^{3 s_1+2 s_2}-38 e^{3 s_1+3 s_2}-28 e^{6 s_1+3 s_2}-4 e^{3 s_1+4 s_2}+67 e^{6 s_1+4 s_2})+2 e^{s_1+s_2} (-4-25 e^{s_1}+4 e^{s_2}+94 e^{2 s_1+s_2}+44 e^{3 s_1+s_2}-2 e^{s_1+2 s_2}-16 e^{2 s_1+2 s_2}-22 e^{4 s_1+2 s_2}+28 e^{3 s_1+3 s_2}+37 e^{4 s_1+3 s_2}) s_3)+2 s_1 (2 e^{2 s_1+s_2} (-16-4 e^{s_1}+40 e^{s_2}+286 e^{2 s_1+s_2}+71 e^{3 s_1+s_2}-23 e^{s_1+2 s_2}+56 e^{2 s_1+2 s_2}+38 e^{4 s_1+2 s_2}+31 e^{3 s_1+3 s_2}+118 e^{4 s_1+3 s_2}) s_2-22 e^{s_1}-62 e^{3 s_1}+7 e^{4 s_1}-2 e^{s_2}-97 e^{3 s_1+s_2}+e^{s_1+2 s_2}+86 e^{4 s_1+2 s_2}+22 e^{3 s_1+3 s_2}-158 e^{4 s_1+3 s_2}+173 e^{6 s_1+3 s_2}-16 e^{7 s_1+3 s_2}+23 e^{3 s_1+4 s_2}-113 e^{6 s_1+4 s_2}-7 e^{4 s_1+5 s_2}+55 e^{7 s_1+5 s_2}+2 e^{2 s_1+s_2} (-16-31 e^{s_1}+28 e^{s_2}+262 e^{2 s_1+s_2}+74 e^{3 s_1+s_2}+13 e^{s_1+2 s_2}-58 e^{2 s_1+2 s_2}-31 e^{4 s_1+2 s_2}+67 e^{3 s_1+3 s_2}+82 e^{4 s_1+3 s_2}) s_3+2)), 
$
\[
k_{18, 9}(s_1, s_2, s_3)3 \left(e^{s_1}-1\right){}^4 \left(e^{s_1+s_2}-1\right){}^4 s_2 \left(s_1+s_2\right){}^2 \left(s_2+s_3\right) \left(s_1+s_2+s_3\right){}^2=
\]
$
-16 \pi  e^{s_1} ((-5 e^{3 s_1}-5 e^{s_2}-55 e^{3 s_1+s_2}-63 e^{3 s_1+2 s_2}-23 e^{3 s_1+3 s_2}+14 e^{6 s_1+3 s_2}-4 e^{3 s_1+4 s_2}+28 e^{6 s_1+4 s_2}+5) s_1^2+2 (4 e^{3 s_1}-2 e^{s_2}-e^{3 s_1+s_2}-45 e^{3 s_1+2 s_2}-17 e^{3 s_1+3 s_2}-7 e^{6 s_1+3 s_2}-e^{3 s_1+4 s_2}+13 e^{6 s_1+4 s_2}+2) s_3^2), 
$
\[
k_{18, 10}(s_1, s_2, s_3)\left(e^{s_1}-1\right){}^4 \left(e^{s_2}-1\right) \left(e^{s_1+s_2}-1\right){}^4 \left(s_1+s_2\right){}^2 \left(s_2+s_3\right) \left(s_1+s_2+s_3\right){}^2=
\]
$
16 \pi  (-2 e^{2 s_1} (-2+16 e^{s_1}-4 e^{2 s_1}-8 e^{s_2}+20 e^{2 s_2}-8 e^{3 s_2}+6 e^{s_1+s_2}-64 e^{2 s_1+s_2}+16 e^{3 s_1+s_2}-48 e^{s_1+2 s_2}+116 e^{2 s_1+2 s_2}+16 e^{3 s_1+2 s_2}-4 e^{4 s_1+2 s_2}+12 e^{s_1+3 s_2}-16 e^{2 s_1+3 s_2}-92 e^{3 s_1+3 s_2}+8 e^{4 s_1+3 s_2}-4 e^{5 s_1+3 s_2}+3 e^{s_1+4 s_2}-7 e^{2 s_1+4 s_2}+30 e^{3 s_1+4 s_2}+24 e^{4 s_1+4 s_2}-e^{5 s_1+4 s_2}+e^{6 s_1+4 s_2}-8 e^{4 s_1+5 s_2}-2 e^{5 s_1+5 s_2}) s_1^2+2 e^{s_1} (e^{2 s_1+s_2} (106 e^{s_1}+64 e^{s_2}+228 e^{2 s_1+2 s_2}-3 e^{s_1+3 s_2}+8 e^{4 s_1+3 s_2}) s_2-25 e^{s_1}-7 e^{s_2}+18 e^{s_1+s_2}-44 e^{3 s_1+s_2}+2 e^{s_1+2 s_2}-38 e^{2 s_1+2 s_2}+65 e^{4 s_1+2 s_2}+5 e^{s_1+3 s_2}+2 e^{4 s_1+3 s_2}+20 e^{3 s_1+4 s_2}-60 e^{4 s_1+4 s_2}+13 e^{6 s_1+4 s_2}-3 e^{7 s_1+4 s_2}-7 e^{4 s_1+5 s_2}+20 e^{5 s_1+5 s_2}+3 e^{7 s_1+5 s_2}-e^{2 s_1+s_2} (-86 e^{s_1}-32 e^{s_2}-152 e^{2 s_1+2 s_2}-3 e^{s_1+3 s_2}+11 e^{4 s_1+3 s_2}) s_3) s_1+2 e^{3 s_1+s_2} (68 e^{s_1}+16 e^{s_2}+244 e^{2 s_1+2 s_2}-15 e^{s_1+3 s_2}+13 e^{4 s_1+3 s_2}) s_2^2-2 s_3 (2 e^{4 s_1+s_2} (-15-e^{3 s_2}-34 e^{s_1+2 s_2}+4 e^{3 s_1+3 s_2}) s_3+32 e^{2 s_1}-23 e^{3 s_1}-e^{s_2}-18 e^{2 s_1+s_2}-67 e^{3 s_1+s_2}+16 e^{5 s_1+s_2}-7 e^{2 s_1+2 s_2}+74 e^{3 s_1+2 s_2}-104 e^{5 s_1+2 s_2}-23 e^{6 s_1+2 s_2}-7 e^{2 s_1+3 s_2}+10 e^{3 s_1+3 s_2}+2 e^{5 s_1+3 s_2}+93 e^{6 s_1+3 s_2}+6 e^{3 s_1+4 s_2}+83 e^{5 s_1+4 s_2}-46 e^{6 s_1+4 s_2}+3 e^{5 s_1+5 s_2}-24 e^{6 s_1+5 s_2}+1)+e^{s_1} s_2 (-2 e^{s_1+s_2} (-8-48 e^{s_1}-86 e^{2 s_1}+8 e^{s_2}+180 e^{2 s_1+s_2}+82 e^{3 s_1+s_2}-26 e^{s_1+2 s_2}+16 e^{2 s_1+2 s_2}-244 e^{3 s_1+2 s_2}-12 e^{4 s_1+2 s_2}+e^{2 s_1+3 s_2}+38 e^{3 s_1+3 s_2}+88 e^{4 s_1+3 s_2}+10 e^{5 s_1+3 s_2}) s_3-95 e^{s_1}-12 e^{s_2}-4 e^{s_1+s_2}-214 e^{3 s_1+s_2}+71 e^{s_1+2 s_2}-196 e^{2 s_1+2 s_2}+259 e^{4 s_1+2 s_2}+13 e^{5 s_1+2 s_2}+28 e^{s_1+3 s_2}-68 e^{4 s_1+3 s_2}+39 e^{3 s_1+4 s_2}-183 e^{4 s_1+4 s_2}+52 e^{6 s_1+4 s_2}-2 e^{7 s_1+4 s_2}-8 e^{4 s_1+5 s_2}+48 e^{5 s_1+5 s_2}+2 e^{7 s_1+5 s_2})), 
$
\[
k_{18, 11}(s_1, s_2, s_3)5 \left(e^{s_1}-1\right){}^4 \left(e^{s_2}-1\right) \left(e^{s_1+s_2}-1\right){}^4 \left(s_1+s_2\right){}^2 \left(s_2+s_3\right) \left(s_1+s_2+s_3\right){}^2=
\]
$
32 \pi  (2 e^{2 s_1} (-16 e^{3 s_2}-4 e^{4 s_1+2 s_2}+24 e^{4 s_1+5 s_2}-9) s_2^2+(8 e^{s_1}-42 e^{2 s_1}-107 e^{3 s_1}+3 e^{4 s_1}+2 e^{s_2}-12 e^{5 s_1+s_2}-8 e^{s_1+2 s_2}-42 e^{6 s_1+2 s_2}-8 e^{2 s_1+3 s_2}+78 e^{7 s_1+3 s_2}-3 e^{3 s_1+4 s_2}-12 e^{8 s_1+4 s_2}+2 e^{4 s_1+5 s_2}-8 e^{5 s_1+5 s_2}+72 e^{6 s_1+5 s_2}+72 e^{7 s_1+5 s_2}+2 e^{8 s_1+5 s_2}-2) s_3 s_2+2 e^{2 s_1} (2 e^{3 s_2}-12 e^{4 s_1+2 s_2}+12 e^{4 s_1+5 s_2}-7) s_3^2+2 e^{2 s_1} s_1 ((12 e^{3 s_2}+3 e^{4 s_1+2 s_2}+32 e^{4 s_1+5 s_2}+3) s_2+(17 e^{3 s_2}-22 e^{4 s_1+2 s_2}+32 e^{4 s_1+5 s_2}-7) s_3)),
$
\[
k_{18, 12}(s_1, s_2, s_3)15 \left(e^{s_1}-1\right){}^4 \left(e^{s_2}-1\right) \left(e^{s_1+s_2}-1\right){}^4 \left(s_1+s_2\right){}^2 \left(s_2+s_3\right) \left(s_1+s_2+s_3\right){}^2=
\]
$
32 \pi  ((16 e^{s_1}-254 e^{3 s_1}+26 e^{4 s_1}+4 e^{s_2}-104 e^{5 s_1+s_2}-16 e^{s_1+2 s_2}+256 e^{7 s_1+3 s_2}+29 e^{3 s_1+4 s_2}-79 e^{8 s_1+4 s_2}+4 e^{4 s_1+5 s_2}-16 e^{5 s_1+5 s_2}+134 e^{7 s_1+5 s_2}+4 e^{8 s_1+5 s_2}-4) s_2^2+2 (4 e^{s_1}-56 e^{3 s_1}-e^{4 s_1}+e^{s_2}+4 e^{5 s_1+s_2}-4 e^{s_1+2 s_2}+34 e^{7 s_1+3 s_2}-4 e^{3 s_1+4 s_2}-e^{8 s_1+4 s_2}+e^{4 s_1+5 s_2}-4 e^{5 s_1+5 s_2}+41 e^{7 s_1+5 s_2}+e^{8 s_1+5 s_2}-1) s_3^2+s_1 ((8 e^{s_1}-397 e^{3 s_1}+73 e^{4 s_1}+2 e^{s_2}-292 e^{5 s_1+s_2}-8 e^{s_1+2 s_2}+218 e^{7 s_1+3 s_2}-23 e^{3 s_1+4 s_2}-62 e^{8 s_1+4 s_2}+2 e^{4 s_1+5 s_2}-8 e^{5 s_1+5 s_2}+112 e^{7 s_1+5 s_2}+2 e^{8 s_1+5 s_2}-2) s_2+(8 e^{s_1}-322 e^{3 s_1}+28 e^{4 s_1}+2 e^{s_2}-112 e^{5 s_1+s_2}-8 e^{s_1+2 s_2}+158 e^{7 s_1+3 s_2}-23 e^{3 s_1+4 s_2}-17 e^{8 s_1+4 s_2}+2 e^{4 s_1+5 s_2}-8 e^{5 s_1+5 s_2}+142 e^{7 s_1+5 s_2}+2 e^{8 s_1+5 s_2}-2) s_3)),
$
\[
k_{18, 13}(s_1, s_2, s_3)\left(e^{s_1}-1\right){}^4 \left(e^{s_1+s_2}-1\right){}^4 s_2 \left(s_1+s_2\right){}^2 \left(s_2+s_3\right) \left(s_1+s_2+s_3\right){}^2=
\]
$
16 e^{s_1} \pi  (2 e^{s_1} (-1+e^{s_2}) (-3 e^{s_1}+e^{2 s_1}+2 e^{s_2}-6 e^{s_1+s_2}+14 e^{2 s_1+s_2}-4 e^{3 s_1+s_2}-e^{s_1+2 s_2}+3 e^{2 s_1+2 s_2}-10 e^{3 s_1+2 s_2}+2 e^{4 s_1+2 s_2}+2 e^{4 s_1+3 s_2}) s_1^3+e^{s_1} (2 (7 e^{s_1}-e^{2 s_1}-4 e^{s_2}+4 e^{2 s_2}+5 e^{s_1+s_2}-29 e^{2 s_1+s_2}+4 e^{3 s_1+s_2}-11 e^{s_1+2 s_2}+27 e^{2 s_1+2 s_2}+14 e^{3 s_1+2 s_2}+2 e^{4 s_1+2 s_2}-e^{s_1+3 s_2}+3 e^{2 s_1+3 s_2}-18 e^{3 s_1+3 s_2}-6 e^{4 s_1+3 s_2}-2 e^{5 s_1+3 s_2}+4 e^{4 s_1+4 s_2}+2 e^{5 s_1+4 s_2}) s_3-9 e^{s_1}-5 e^{s_2}-13 e^{s_1+s_2}-2 e^{3 s_1+s_2}+3 e^{s_1+2 s_2}-27 e^{3 s_1+2 s_2}+3 e^{4 s_1+2 s_2}-3 e^{s_1+3 s_2}-27 e^{3 s_1+3 s_2}+27 e^{4 s_1+3 s_2}-4 e^{3 s_1+4 s_2}+10 e^{4 s_1+4 s_2}+2 e^{6 s_1+4 s_2}+9) s_1^2+s_3 ((-1+e^{s_1}) (3-14 e^{s_1}-e^{2 s_1}-3 e^{s_2}+8 e^{s_1+s_2}+31 e^{2 s_1+s_2}+12 e^{3 s_1+s_2}-2 e^{s_1+2 s_2}-3 e^{2 s_1+2 s_2}-54 e^{3 s_1+2 s_2}-13 e^{4 s_1+2 s_2}+7 e^{2 s_1+3 s_2}-12 e^{3 s_1+3 s_2}+53 e^{4 s_1+3 s_2}-2 e^{3 s_1+4 s_2}+4 e^{4 s_1+4 s_2}-16 e^{5 s_1+4 s_2}+2 e^{6 s_1+4 s_2})+4 e^{s_1} (2 e^{s_1}-e^{s_2}+e^{2 s_2}+e^{s_1+s_2}-8 e^{2 s_1+s_2}-3 e^{s_1+2 s_2}+8 e^{2 s_1+2 s_2}+4 e^{3 s_1+2 s_2}+2 e^{4 s_1+2 s_2}-4 e^{3 s_1+3 s_2}-3 e^{4 s_1+3 s_2}-e^{5 s_1+3 s_2}+e^{4 s_1+4 s_2}+e^{5 s_1+4 s_2}) s_3) s_1+2 e^{s_1} (4-2 e^{s_1}-3 e^{s_2}+e^{2 s_2}-5 e^{s_1+s_2}+7 e^{3 s_1+s_2}-e^{s_1+2 s_2}-7 e^{3 s_1+2 s_2}-8 e^{4 s_1+2 s_2}-2 e^{s_1+3 s_2}-19 e^{3 s_1+3 s_2}+13 e^{4 s_1+3 s_2}-e^{3 s_1+4 s_2}+5 e^{4 s_1+4 s_2}) s_3^2),
$
\[
k_{18, 14}(s_1, s_2, s_3)3 \left(e^{s_1}-1\right){}^4 \left(e^{s_2}-1\right) \left(e^{s_1+s_2}-1\right){}^4 s_1 \left(s_1+s_2\right){}^2 \left(s_2+s_3\right) \left(s_1+s_2+s_3\right){}^2=
\]
$
-16 \pi  (4 e^{2 s_1+s_2} (-1-22 e^{s_1}-2 e^{s_2}+46 e^{2 s_1+s_2}+32 e^{3 s_1+s_2}-41 e^{s_1+2 s_2}+68 e^{2 s_1+2 s_2}+23 e^{4 s_1+2 s_2}-14 e^{3 s_1+3 s_2}+55 e^{4 s_1+3 s_2}) s_2^3+(4 e^{2 s_1+s_2} (-2-44 e^{s_1}-4 e^{s_2}+92 e^{2 s_1+s_2}+64 e^{3 s_1+s_2}-37 e^{s_1+2 s_2}+46 e^{2 s_1+2 s_2}+e^{4 s_1+2 s_2}+17 e^{3 s_1+3 s_2}+65 e^{4 s_1+3 s_2}) s_3-62 e^{s_1}-146 e^{3 s_1}-e^{4 s_1}-11 e^{s_2}+4 e^{s_1+s_2}+58 e^{2 s_1+s_2}-610 e^{3 s_1+s_2}+545 e^{4 s_1+s_2}+14 e^{5 s_1+s_2}+58 e^{s_1+2 s_2}-196 e^{2 s_1+2 s_2}+320 e^{4 s_1+2 s_2}-650 e^{5 s_1+2 s_2}+214 e^{3 s_1+3 s_2}-832 e^{4 s_1+3 s_2}+380 e^{6 s_1+3 s_2}+58 e^{7 s_1+3 s_2}+14 e^{3 s_1+4 s_2}-31 e^{4 s_1+4 s_2}+398 e^{5 s_1+4 s_2}-230 e^{6 s_1+4 s_2}-140 e^{7 s_1+4 s_2}-11 e^{8 s_1+4 s_2}-e^{4 s_1+5 s_2}-2 e^{5 s_1+5 s_2}+82 e^{7 s_1+5 s_2}+11 e^{8 s_1+5 s_2}+11) s_2^2+s_3 (4 e^{2 s_1+s_2} (-1-22 e^{s_1}-2 e^{s_2}+46 e^{2 s_1+s_2}+32 e^{3 s_1+s_2}-11 e^{s_1+2 s_2}+8 e^{2 s_1+2 s_2}-7 e^{4 s_1+2 s_2}+16 e^{3 s_1+3 s_2}+25 e^{4 s_1+3 s_2}) s_3-53 e^{s_1}-163 e^{3 s_1}-19 e^{4 s_1}-10 e^{s_2}-692 e^{3 s_1+s_2}+53 e^{s_1+2 s_2}+388 e^{4 s_1+2 s_2}+140 e^{3 s_1+3 s_2}-1012 e^{4 s_1+3 s_2}+580 e^{6 s_1+3 s_2}+112 e^{7 s_1+3 s_2}+7 e^{3 s_1+4 s_2}-277 e^{6 s_1+4 s_2}-2 e^{4 s_1+5 s_2}+128 e^{7 s_1+5 s_2}+10) s_2+e^{s_1} (-16-11 e^{3 s_1}+16 e^{2 s_2}-10 e^{s_1+s_2}+46 e^{4 s_1+s_2}-50 e^{s_1+2 s_2}+128 e^{3 s_1+2 s_2}-256 e^{4 s_1+2 s_2}-320 e^{3 s_1+3 s_2}+50 e^{6 s_1+3 s_2}+160 e^{4 s_1+4 s_2}-7 e^{7 s_1+4 s_2}-e^{3 s_1+5 s_2}+2 e^{4 s_1+5 s_2}+46 e^{6 s_1+5 s_2}+7 e^{7 s_1+5 s_2}) s_3^2), 
$
\[
k_{18, 15}(s_1, s_2, s_3)\left(e^{s_1}-1\right){}^4 \left(e^{s_2}-1\right) \left(e^{s_1+s_2}-1\right){}^4 s_1 \left(s_1+s_2\right){}^2 \left(s_2+s_3\right) \left(s_1+s_2+s_3\right){}^2=
\]
$
-16 \pi  (-4 e^{3 s_1+s_2} (7 e^{s_1}-4 e^{s_2}+46 e^{2 s_1+2 s_2}-4 e^{s_1+3 s_2}+3 e^{4 s_1+3 s_2}) s_2^3+2 e^{2 s_1} (e^{s_1+s_2} (-28 e^{s_1}+16 e^{s_2}-124 e^{2 s_1+2 s_2}+e^{s_1+3 s_2}+3 e^{4 s_1+3 s_2}) s_3-10 e^{3 s_2}+88 e^{s_1+2 s_2}-10 e^{4 s_1+2 s_2}+40 e^{3 s_1+3 s_2}-15 e^{4 s_1+5 s_2}+33) s_2^2+e^{2 s_1} s_3 (4 e^{s_1+s_2} (-7 e^{s_1}+4 e^{s_2}-26 e^{2 s_1+2 s_2}-e^{s_1+3 s_2}+2 e^{4 s_1+3 s_2}) s_3-4 e^{s_2}-55 e^{2 s_2}-16 e^{3 s_2}+210 e^{2 s_1+s_2}+28 e^{3 s_1+s_2}+236 e^{s_1+2 s_2}-275 e^{3 s_1+2 s_2}-53 e^{4 s_1+2 s_2}+76 e^{3 s_1+3 s_2}+5 e^{2 s_1+4 s_2}+171 e^{3 s_1+4 s_2}-80 e^{5 s_1+4 s_2}-6 e^{6 s_1+4 s_2}-48 e^{4 s_1+5 s_2}+6 e^{6 s_1+5 s_2}+75) s_2-(-24 e^{2 s_1}+16 e^{3 s_1}+e^{s_2}+74 e^{3 s_1+s_2}-63 e^{4 s_1+s_2}-80 e^{3 s_1+2 s_2}+26 e^{6 s_1+2 s_2}+4 e^{2 s_1+3 s_2}-10 e^{3 s_1+3 s_2}-16 e^{5 s_1+3 s_2}-68 e^{6 s_1+3 s_2}-5 e^{4 s_1+4 s_2}+24 e^{6 s_1+4 s_2}+32 e^{7 s_1+4 s_2}+18 e^{6 s_1+5 s_2}-1) s_3^2),
$
\[
k_{18, 16}(s_1, s_2, s_3)15 \left(e^{s_1}-1\right){}^4 \left(e^{s_2}-1\right) \left(e^{s_1+s_2}-1\right){}^4 s_1 \left(s_1+s_2\right){}^2 \left(s_2+s_3\right) \left(s_1+s_2+s_3\right){}^2=
\]
$
32 \pi  s_2 (2 (4 e^{s_1}-26 e^{3 s_1}-e^{4 s_1}+e^{s_2}+4 e^{5 s_1+s_2}-4 e^{s_1+2 s_2}+49 e^{7 s_1+3 s_2}+11 e^{3 s_1+4 s_2}-16 e^{8 s_1+4 s_2}+e^{4 s_1+5 s_2}-4 e^{5 s_1+5 s_2}+26 e^{7 s_1+5 s_2}+e^{8 s_1+5 s_2}-1) s_2^2+(16 e^{s_1}-104 e^{3 s_1}-4 e^{4 s_1}+4 e^{s_2}+16 e^{5 s_1+s_2}-16 e^{s_1+2 s_2}+106 e^{7 s_1+3 s_2}-e^{3 s_1+4 s_2}-19 e^{8 s_1+4 s_2}+4 e^{4 s_1+5 s_2}-16 e^{5 s_1+5 s_2}+104 e^{7 s_1+5 s_2}+4 e^{8 s_1+5 s_2}-4) s_3 s_2+2 (4 e^{s_1}-26 e^{3 s_1}-e^{4 s_1}+e^{s_2}+4 e^{5 s_1+s_2}-4 e^{s_1+2 s_2}+19 e^{7 s_1+3 s_2}-4 e^{3 s_1+4 s_2}-e^{8 s_1+4 s_2}+e^{4 s_1+5 s_2}-4 e^{5 s_1+5 s_2}+26 e^{7 s_1+5 s_2}+e^{8 s_1+5 s_2}-1) s_3^2),
$
\[
k_{18, 17}(s_1, s_2, s_3)=\frac{4 \pi  \left(s_1^2-s_3 s_1-s_2^2+s_3^2\right)}{\left(e^{\frac{1}{2} \left(s_1+s_2+s_3\right)}-1\right){}^4 s_1 \left(s_1+s_2\right) s_3 \left(s_2+s_3\right) \left(s_1+s_2+s_3\right)}, 
\]
\[
k_{18, 18}(s_1, s_2, s_3)=\frac{4 \pi  \left(s_1^2-s_3 s_1-s_2^2+s_3^2\right)}{\left(e^{\frac{1}{2} \left(s_1+s_2+s_3\right)}+1\right){}^4 s_1 \left(s_1+s_2\right) s_3 \left(s_2+s_3\right) \left(s_1+s_2+s_3\right)}, 
\]
\[
k_{18, 19}(s_1, s_2, s_3)=\frac{8 \pi  e^{2 s_1} \left(s_2-s_3\right)}{\left(e^{s_1}-1\right){}^2 \left(e^{\frac{1}{2} \left(s_2+s_3\right)}-1\right){}^2 s_1 s_2 s_3 \left(s_1+s_2+s_3\right)}, 
\]
\[
k_{18, 20}(s_1, s_2, s_3)=\frac{8 \pi  e^{2 s_1} \left(s_2-s_3\right)}{\left(e^{s_1}-1\right){}^2 \left(e^{\frac{1}{2} \left(s_2+s_3\right)}+1\right){}^2 s_1 s_2 s_3 \left(s_1+s_2+s_3\right)}, 
\]
\[
k_{18, 21}(s_1, s_2, s_3)\left(e^{s_1}-1\right){}^3 \left(e^{s_2}-1\right) \left(e^{\frac{1}{2} \left(s_2+s_3\right)}-1\right) s_1 s_2 s_3 \left(s_2+s_3\right) \left(s_1+s_2+s_3\right)=
\]
$
8 \pi  e^{2 s_1} ((-3 e^{s_1}-3 e^{s_2}+e^{s_1+s_2}+5) s_2^2-2 (e^{s_1}-1) (2 s_3+e^{s_2}-1) s_2-s_3 ((e^{s_1}-3 e^{s_2}+e^{s_1+s_2}+1) s_3-2 (e^{s_1}-1) (e^{s_2}-1))),
$
\[
k_{18, 22}(s_1, s_2, s_3)\left(e^{s_1}-1\right) \left(e^{s_1+s_2}-1\right) \left(e^{\frac{1}{2} \left(s_1+s_2+s_3\right)}+1\right){}^3 s_1 s_2 \left(s_1+s_2\right) s_3 \left(s_2+s_3\right) \left(s_1+s_2+s_3\right){}^2=
\]
$
-4 \pi  (((-3 e^{s_1}-3 e^{s_1+s_2}+2 e^{2 s_1+s_2}+4) s_2+e^{s_1} (e^{s_2}-1) s_3) s_1^3-((e^{s_1}+3 e^{s_1+s_2}-4) s_2^2+(2 (e^{s_1}-1) (e^{s_1+s_2}-1)+e^{s_1} (-3 e^{s_2}+2 e^{s_1+s_2}+1) s_3) s_2-2 e^{s_1} (e^{s_2}-1) s_3^2) s_1^2-((-7 e^{s_1}-3 e^{s_1+s_2}+6 e^{2 s_1+s_2}+4) s_2^3+(-7 e^{s_1}-5 e^{s_1+s_2}+6 e^{2 s_1+s_2}+6) s_3 s_2^2-s_3 (2 (e^{s_1}-1) (e^{s_1+s_2}-1)+(-e^{s_1}+3 e^{s_1+s_2}-2) s_3) s_2-e^{s_1} (e^{s_2}-1) s_3^3) s_1-s_2 (s_2+s_3) ((-5 e^{s_1}-3 e^{s_1+s_2}+4 e^{2 s_1+s_2}+4) s_2^2-2 (e^{s_1}-1) (s_3+e^{s_1+s_2}-1) s_2-s_3 ((-3 e^{s_1}-3 e^{s_1+s_2}+4 e^{2 s_1+s_2}+2) s_3-2 (e^{s_1}-1) (e^{s_1+s_2}-1)))), 
$
\[
k_{18, 23}(s_1, s_2, s_3)=\frac{-k_{18, 21}^{\text{num}}(s_1, s_2, s_3)}{\left(e^{s_1}-1\right){}^3 \left(e^{s_2}-1\right) \left(e^{\frac{1}{2} \left(s_2+s_3\right)}+1\right) s_1 s_2 s_3 \left(s_2+s_3\right) \left(s_1+s_2+s_3\right)},
\]
\[
k_{18, 24}(s_1, s_2, s_3)=\frac{-k_{18, 22}^{\text{num}}(s_1, s_2, s_3)}{\left(e^{s_1}-1\right) \left(e^{s_1+s_2}-1\right) \left(e^{\frac{1}{2} \left(s_1+s_2+s_3\right)}-1\right){}^3 s_1 s_2 \left(s_1+s_2\right) s_3 \left(s_2+s_3\right) \left(s_1+s_2+s_3\right){}^2}, 
\]
\[
k_{18, 25}(s_1, s_2, s_3)\left(e^{s_1}-1\right) \left(e^{s_1+s_2}-1\right){}^2 \left(e^{\frac{1}{2} \left(s_1+s_2+s_3\right)}-1\right){}^2 s_1 s_2 \left(s_1+s_2\right) s_3 \left(s_2+s_3\right) \left(s_1+s_2+s_3\right){}^2=
\]
$
2 \pi  (((-11+2 e^{s_1}+21 e^{s_1+s_2}-7 e^{2 s_1+s_2}-6 e^{2 s_1+2 s_2}+e^{3 s_1+2 s_2}) s_2+e^{s_1} (-1+e^{s_2}) (-1+5 e^{s_1+s_2}) s_3) s_1^3-((11+16 e^{s_1}-21 e^{s_1+s_2}-21 e^{2 s_1+s_2}+6 e^{2 s_1+2 s_2}+9 e^{3 s_1+2 s_2}) s_2^2+(2 (-7+5 e^{s_1}+12 e^{s_1+s_2}-8 e^{2 s_1+s_2}-5 e^{2 s_1+2 s_2}+3 e^{3 s_1+2 s_2})+e^{s_1} (15+3 e^{s_2}-13 e^{s_1+s_2}-15 e^{s_1+2 s_2}+10 e^{2 s_1+2 s_2}) s_3) s_2-2 e^{s_1} (-1+e^{s_2}) s_3 ((-1+5 e^{s_1+s_2}) s_3-2 e^{s_1+s_2}+2)) s_1^2+((11-38 e^{s_1}-21 e^{s_1+s_2}+63 e^{2 s_1+s_2}+6 e^{2 s_1+2 s_2}-21 e^{3 s_1+2 s_2}) s_2^3+(8 e^{s_1} (-1+e^{s_1+s_2}){}^2+(21-54 e^{s_1}-41 e^{s_1+s_2}+79 e^{2 s_1+s_2}+16 e^{2 s_1+2 s_2}-21 e^{3 s_1+2 s_2}) s_3) s_2^2+s_3 (2 (-7+5 e^{s_1}+16 e^{s_1+s_2}-12 e^{2 s_1+s_2}-9 e^{2 s_1+2 s_2}+7 e^{3 s_1+2 s_2})+(10-15 e^{s_1}-21 e^{s_1+s_2}+11 e^{2 s_1+s_2}+15 e^{2 s_1+2 s_2}) s_3) s_2+e^{s_1} (-1+e^{s_2}) s_3^2 ((-1+5 e^{s_1+s_2}) s_3-4 e^{s_1+s_2}+4)) s_1-s_2 (s_2+s_3) ((-11+20 e^{s_1}+21 e^{s_1+s_2}-35 e^{2 s_1+s_2}-6 e^{2 s_1+2 s_2}+11 e^{3 s_1+2 s_2}) s_2^2-2 ((5-9 e^{s_1}-9 e^{s_1+s_2}+13 e^{2 s_1+s_2}) s_3+9 e^{s_1}+12 e^{s_1+s_2}-16 e^{2 s_1+s_2}-5 e^{2 s_1+2 s_2}+7 e^{3 s_1+2 s_2}-7) s_2-s_3 ((-1+2 e^{s_1}+3 e^{s_1+s_2}-9 e^{2 s_1+s_2}-6 e^{2 s_1+2 s_2}+11 e^{3 s_1+2 s_2}) s_3-2 (-3+5 e^{s_1}+8 e^{s_1+s_2}-12 e^{2 s_1+s_2}-5 e^{2 s_1+2 s_2}+7 e^{3 s_1+2 s_2})))), 
$
\[
k_{18, 26}(s_1, s_2, s_3)=\frac{k_{18, 25}^{\text{num}}(s_1, s_2, s_3)}{\left(e^{s_1}-1\right) \left(e^{s_1+s_2}-1\right){}^2 \left(e^{\frac{1}{2} \left(s_1+s_2+s_3\right)}+1\right){}^2 s_1 s_2 \left(s_1+s_2\right) s_3 \left(s_2+s_3\right) \left(s_1+s_2+s_3\right){}^2}
\]
\[
k_{18, 27}(s_1, s_2, s_3)3 \left(e^{s_1}-1\right){}^3 \left(e^{s_1+s_2}-1\right){}^3 \left(e^{\frac{1}{2} \left(s_1+s_2+s_3\right)}+1\right) s_1 s_2 \left(s_1+s_2\right) s_3 \left(s_2+s_3\right) \left(s_1+s_2+s_3\right){}^2=
\]
$
2 \pi  (3 ((-5+8 e^{s_1}+15 e^{2 s_1}-10 e^{3 s_1}+16 e^{s_1+s_2}-23 e^{2 s_1+s_2}-42 e^{3 s_1+s_2}+25 e^{4 s_1+s_2}-21 e^{2 s_1+2 s_2}+42 e^{3 s_1+2 s_2}+11 e^{4 s_1+2 s_2}-8 e^{5 s_1+2 s_2}+2 e^{3 s_1+3 s_2}-3 e^{4 s_1+3 s_2}-8 e^{5 s_1+3 s_2}+e^{6 s_1+3 s_2}) s_2+e^{s_1} (-1+2 e^{s_1}-9 e^{2 s_1}+e^{s_2}+4 e^{s_1+s_2}-3 e^{2 s_1+s_2}+22 e^{3 s_1+s_2}-6 e^{s_1+2 s_2}+15 e^{2 s_1+2 s_2}-28 e^{3 s_1+2 s_2}-5 e^{4 s_1+2 s_2}-3 e^{2 s_1+3 s_2}+6 e^{3 s_1+3 s_2}+5 e^{4 s_1+3 s_2}) s_3) s_1^3+(3 (-5-6 e^{s_1}+75 e^{2 s_1}-40 e^{3 s_1}+16 e^{s_1+s_2}+27 e^{2 s_1+s_2}-222 e^{3 s_1+s_2}+107 e^{4 s_1+s_2}-21 e^{2 s_1+2 s_2}+159 e^{4 s_1+2 s_2}-66 e^{5 s_1+2 s_2}+2 e^{3 s_1+3 s_2}+3 e^{4 s_1+3 s_2}-36 e^{5 s_1+3 s_2}+7 e^{6 s_1+3 s_2}) s_2^2+(3 e^{s_1} (-17+66 e^{s_1}-57 e^{2 s_1}+3 e^{s_2}+62 e^{s_1+s_2}-189 e^{2 s_1+s_2}+148 e^{3 s_1+s_2}-18 e^{s_1+2 s_2}+3 e^{2 s_1+2 s_2}+64 e^{3 s_1+2 s_2}-73 e^{4 s_1+2 s_2}-9 e^{2 s_1+3 s_2}+24 e^{3 s_1+3 s_2}-13 e^{4 s_1+3 s_2}+6 e^{5 s_1+3 s_2}) s_3-2 (-1+e^{s_1}){}^2 (-23-e^{s_1}+69 e^{s_1+s_2}-9 e^{2 s_1+s_2}-57 e^{2 s_1+2 s_2}+9 e^{3 s_1+2 s_2}+11 e^{3 s_1+3 s_2}+e^{4 s_1+3 s_2})) s_2+6 e^{s_1} (-1+e^{s_2}) s_3 (4 e^{s_1+s_2} (-1+e^{s_1+s_2}) (-1+e^{s_1}){}^2+(1-2 e^{s_1}+9 e^{2 s_1}-6 e^{s_1+s_2}+12 e^{2 s_1+s_2}-22 e^{3 s_1+s_2}-3 e^{2 s_1+2 s_2}+6 e^{3 s_1+2 s_2}+5 e^{4 s_1+2 s_2}) s_3)) s_1^2+(3 (5-36 e^{s_1}+105 e^{2 s_1}-50 e^{3 s_1}-16 e^{s_1+s_2}+123 e^{2 s_1+s_2}-318 e^{3 s_1+s_2}+139 e^{4 s_1+s_2}+21 e^{2 s_1+2 s_2}-126 e^{3 s_1+2 s_2}+285 e^{4 s_1+2 s_2}-108 e^{5 s_1+2 s_2}-2 e^{3 s_1+3 s_2}+15 e^{4 s_1+3 s_2}-48 e^{5 s_1+3 s_2}+11 e^{6 s_1+3 s_2}) s_2^3+3 ((11-64 e^{s_1}+159 e^{2 s_1}-98 e^{3 s_1}-34 e^{s_1+s_2}+223 e^{2 s_1+s_2}-480 e^{3 s_1+s_2}+267 e^{4 s_1+s_2}+39 e^{2 s_1+2 s_2}-210 e^{3 s_1+2 s_2}+383 e^{4 s_1+2 s_2}-188 e^{5 s_1+2 s_2}-8 e^{3 s_1+3 s_2}+27 e^{4 s_1+3 s_2}-38 e^{5 s_1+3 s_2}+11 e^{6 s_1+3 s_2}) s_3-16 e^{s_1} (-1+e^{s_1}){}^2 (-2+e^{s_1+s_2}) (-1+e^{s_1+s_2}){}^2) s_2^2+s_3 (3 (6-29 e^{s_1}+56 e^{2 s_1}-57 e^{3 s_1}-17 e^{s_1+s_2}+104 e^{2 s_1+s_2}-165 e^{3 s_1+s_2}+150 e^{4 s_1+s_2}+12 e^{2 s_1+2 s_2}-69 e^{3 s_1+2 s_2}+70 e^{4 s_1+2 s_2}-85 e^{5 s_1+2 s_2}-9 e^{3 s_1+3 s_2}+18 e^{4 s_1+3 s_2}+15 e^{5 s_1+3 s_2}) s_3-2 (-1+e^{s_1}){}^2 (23-47 e^{s_1}-69 e^{s_1+s_2}+105 e^{2 s_1+s_2}+81 e^{2 s_1+2 s_2}-81 e^{3 s_1+2 s_2}-35 e^{3 s_1+3 s_2}+23 e^{4 s_1+3 s_2})) s_2+3 e^{s_1} (-1+e^{s_2}) s_3^2 (8 e^{s_1+s_2} (-1+e^{s_1+s_2}) (-1+e^{s_1}){}^2+(1-2 e^{s_1}+9 e^{2 s_1}-6 e^{s_1+s_2}+12 e^{2 s_1+s_2}-22 e^{3 s_1+s_2}-3 e^{2 s_1+2 s_2}+6 e^{3 s_1+2 s_2}+5 e^{4 s_1+2 s_2}) s_3)) s_1+s_2 (s_2+s_3) (3 (5-22 e^{s_1}+45 e^{2 s_1}-20 e^{3 s_1}-16 e^{s_1+s_2}+73 e^{2 s_1+s_2}-138 e^{3 s_1+s_2}+57 e^{4 s_1+s_2}+21 e^{2 s_1+2 s_2}-84 e^{3 s_1+2 s_2}+137 e^{4 s_1+2 s_2}-50 e^{5 s_1+2 s_2}-2 e^{3 s_1+3 s_2}+9 e^{4 s_1+3 s_2}-20 e^{5 s_1+3 s_2}+5 e^{6 s_1+3 s_2}) s_2^2-2 (-1+e^{s_1}) ((-1+e^{s_1}) (23-47 e^{s_1}-69 e^{s_1+s_2}+129 e^{2 s_1+s_2}+57 e^{2 s_1+2 s_2}-105 e^{3 s_1+2 s_2}-11 e^{3 s_1+3 s_2}+23 e^{4 s_1+3 s_2})+3 (3-10 e^{s_1}+15 e^{2 s_1}-10 e^{s_1+s_2}+36 e^{2 s_1+s_2}-42 e^{3 s_1+s_2}+15 e^{2 s_1+2 s_2}-42 e^{3 s_1+2 s_2}+35 e^{4 s_1+2 s_2}) s_3) s_2+s_3 (2 (-1+e^{s_1}){}^2 (-1+e^{s_1}+3 e^{s_1+s_2}+9 e^{2 s_1+s_2}+9 e^{2 s_1+2 s_2}-33 e^{3 s_1+2 s_2}-11 e^{3 s_1+3 s_2}+23 e^{4 s_1+3 s_2})-3 (-1+4 e^{s_1}-5 e^{2 s_1}+10 e^{3 s_1}+4 e^{s_1+s_2}-19 e^{2 s_1+s_2}+18 e^{3 s_1+s_2}-27 e^{4 s_1+s_2}-9 e^{2 s_1+2 s_2}+30 e^{3 s_1+2 s_2}-17 e^{4 s_1+2 s_2}+20 e^{5 s_1+2 s_2}-2 e^{3 s_1+3 s_2}+9 e^{4 s_1+3 s_2}-20 e^{5 s_1+3 s_2}+5 e^{6 s_1+3 s_2}) s_3))), 
$
\[
k_{18, 28}(s_1, s_2, s_3)=\frac{-k_{18, 27}^{\text{num}}(s_1, s_2, s_3)}{3 \left(e^{s_1}-1\right){}^3 \left(e^{s_1+s_2}-1\right){}^3 \left(e^{\frac{1}{2} \left(s_1+s_2+s_3\right)}-1\right) s_1 s_2 \left(s_1+s_2\right) s_3 \left(s_2+s_3\right) \left(s_1+s_2+s_3\right){}^2},
\]
\[
k_{18, 29}(s_1, s_2, s_3)=\frac{16 \pi  e^{2 \left(s_1+s_2\right)} \left(2 s_1-e^{s_2} \left(s_1-s_2+e^{s_1} \left(s_1+s_2\right)\right)\right)}{\left(e^{s_2}-1\right) \left(e^{s_1+s_2}-1\right){}^3 \left(e^{\frac{s_3}{2}}+1\right) s_1 s_2 \left(s_1+s_2\right) \left(s_1+s_2+s_3\right)}, 
\]
\[
k_{18, 30}(s_1, s_2, s_3)=\frac{16 \pi  e^{2 \left(s_1+s_2\right)} \left(e^{s_2} \left(s_1-s_2+e^{s_1} \left(s_1+s_2\right)\right)-2 s_1\right)}{\left(e^{s_2}-1\right) \left(e^{s_1+s_2}-1\right){}^3 \left(e^{\frac{s_3}{2}}-1\right) s_1 s_2 \left(s_1+s_2\right) \left(s_1+s_2+s_3\right)}. 
\]
}

\subsubsection{The function $k_{20}$} We have 
\[
k_{20}(s_1, s_2, s_3)= 
K_{20}(s_1, s_2, s_3, -s_1-s_2-s_3)
=\sum_{i=1}^{24} k_{20, i}(s_1, s_2, s_3), 
\]
where 
{\tiny 
\[
k_{20, 1}(s_1, s_2, s_3)=-\frac{16 \pi  \left(-3 e^{s_1}+33 e^{2 s_1}+5 e^{3 s_1}+1\right) s_2 \left(s_2+s_3\right)}{\left(e^{s_1}-1\right){}^3 s_1^2 \left(s_1+s_2\right){}^2 \left(s_1+s_2+s_3\right){}^2}, 
\]
\[
k_{20, 2}(s_1, s_2, s_3)\left(e^{s_1}-1\right){}^2 \left(e^{s_2}-1\right) \left(e^{s_1+s_2}-1\right){}^4 s_1 \left(s_1+s_2\right){}^2 s_3 \left(s_2+s_3\right) \left(s_1+s_2+s_3\right){}^2=
\]
$
-16 \pi  s_2^3 (2 e^{2 s_1+3 s_2} (e^{s_1+s_2}-4) s_2 (e^{s_1}-1){}^3-9 e^{s_1}+e^{2 s_1}-2 e^{s_2}-2 e^{s_1+s_2}+30 e^{2 s_1+s_2}-2 e^{3 s_1+s_2}+11 e^{s_1+2 s_2}-21 e^{2 s_1+2 s_2}-21 e^{3 s_1+2 s_2}-5 e^{4 s_1+2 s_2}-10 e^{2 s_1+3 s_2}+22 e^{3 s_1+3 s_2}+6 e^{4 s_1+3 s_2}+6 e^{5 s_1+3 s_2}+e^{3 s_1+4 s_2}-e^{4 s_1+4 s_2}-6 e^{5 s_1+4 s_2}+2), 
$
\[
k_{20, 3}(s_1, s_2, s_3)\left(e^{s_1}-1\right){}^3 \left(e^{s_2}-1\right) \left(e^{s_1+s_2}-1\right){}^4 \left(s_1+s_2\right){}^2 s_3 \left(s_2+s_3\right) \left(s_1+s_2+s_3\right){}^2=
\]
$
-16 \pi  (2 e^{2 s_1} (-1+e^{s_2}) (-4+e^{s_1}-4 e^{s_2}+2 e^{2 s_2}+17 e^{s_1+s_2}-4 e^{2 s_1+s_2}-e^{s_1+2 s_2}-10 e^{2 s_1+2 s_2}+e^{s_1+3 s_2}-4 e^{2 s_1+3 s_2}+6 e^{3 s_1+3 s_2}) s_1^3+((-1+e^{s_1}) (-4+3 e^{s_1}-5 e^{2 s_1}+4 e^{s_2}+14 e^{s_1+s_2}-20 e^{2 s_1+s_2}+26 e^{3 s_1+s_2}-17 e^{s_1+2 s_2}+5 e^{2 s_1+2 s_2}+11 e^{3 s_1+2 s_2}-35 e^{4 s_1+2 s_2}+20 e^{2 s_1+3 s_2}-26 e^{3 s_1+3 s_2}+12 e^{4 s_1+3 s_2}+18 e^{5 s_1+3 s_2}-11 e^{3 s_1+4 s_2}+19 e^{4 s_1+4 s_2}-10 e^{5 s_1+4 s_2}-4 e^{6 s_1+4 s_2}+4 e^{4 s_1+5 s_2}-8 e^{5 s_1+5 s_2}+4 e^{6 s_1+5 s_2})+2 e^{2 s_1} (12-3 e^{s_1}-18 e^{2 s_2}+2 e^{3 s_2}-48 e^{s_1+s_2}+12 e^{2 s_1+s_2}+54 e^{s_1+2 s_2}+18 e^{2 s_1+2 s_2}+10 e^{s_1+3 s_2}-42 e^{2 s_1+3 s_2}-2 e^{3 s_1+3 s_2}-4 e^{4 s_1+3 s_2}+4 e^{s_1+4 s_2}-16 e^{2 s_1+4 s_2}+24 e^{3 s_1+4 s_2}-4 e^{4 s_1+4 s_2}+e^{5 s_1+4 s_2}) s_2) s_1^2+(-1+e^{s_1+s_2}) s_2 ((-1+e^{s_1}) (6+3 e^{s_1}+9 e^{2 s_1}-6 e^{s_2}-20 e^{s_1+s_2}+13 e^{2 s_1+s_2}-41 e^{3 s_1+s_2}+17 e^{s_1+2 s_2}-9 e^{2 s_1+2 s_2}+12 e^{3 s_1+2 s_2}+34 e^{4 s_1+2 s_2}-13 e^{2 s_1+3 s_2}+21 e^{3 s_1+3 s_2}-18 e^{4 s_1+3 s_2}-8 e^{5 s_1+3 s_2}+8 e^{3 s_1+4 s_2}-16 e^{4 s_1+4 s_2}+8 e^{5 s_1+4 s_2})+6 e^{2 s_1} (-4+e^{s_1}+6 e^{2 s_2}+2 e^{3 s_2}+12 e^{s_1+s_2}-3 e^{2 s_1+s_2}-18 e^{s_1+2 s_2}+6 e^{2 s_1+2 s_2}-3 e^{3 s_1+2 s_2}-8 e^{s_1+3 s_2}+12 e^{2 s_1+3 s_2}-4 e^{3 s_1+3 s_2}+e^{4 s_1+3 s_2}) s_2) s_1+e^{s_1} s_2^2 ((-1+e^{s_1}) (-1+e^{s_2}) (15+3 e^{s_1}+5 e^{s_2}-37 e^{s_1+s_2}-22 e^{2 s_1+s_2}+9 e^{2 s_1+2 s_2}+45 e^{3 s_1+2 s_2}-9 e^{2 s_1+3 s_2}+21 e^{3 s_1+3 s_2}-30 e^{4 s_1+3 s_2}+4 e^{3 s_1+4 s_2}-8 e^{4 s_1+4 s_2}+4 e^{5 s_1+4 s_2})+2 e^{s_1} (4-e^{s_1}-6 e^{2 s_2}-10 e^{3 s_2}-16 e^{s_1+s_2}+4 e^{2 s_1+s_2}+18 e^{s_1+2 s_2}+6 e^{2 s_1+2 s_2}+46 e^{s_1+3 s_2}-78 e^{2 s_1+3 s_2}+42 e^{3 s_1+3 s_2}-12 e^{4 s_1+3 s_2}+4 e^{s_1+4 s_2}-16 e^{2 s_1+4 s_2}+24 e^{3 s_1+4 s_2}-12 e^{4 s_1+4 s_2}+3 e^{5 s_1+4 s_2}) s_2)),
$
\[
k_{20, 4}(s_1, s_2, s_3)3 \left(e^{s_1}-1\right){}^4 \left(e^{s_2}-1\right) \left(e^{s_1+s_2}-1\right){}^4 \left(s_1+s_2\right){}^2 \left(s_2+s_3\right) \left(s_1+s_2+s_3\right){}^2=
\]
$
16 \pi  ((139 e^{s_1}+397 e^{3 s_1}+29 e^{4 s_1}+22 e^{s_2}+152 e^{2 s_1+s_2}+1316 e^{3 s_1+s_2}-182 e^{5 s_1+s_2}-109 e^{s_1+2 s_2}+277 e^{2 s_1+2 s_2}-632 e^{4 s_1+2 s_2}+1709 e^{5 s_1+2 s_2}-170 e^{3 s_1+3 s_2}+2072 e^{4 s_1+3 s_2}-1258 e^{6 s_1+3 s_2}-134 e^{7 s_1+3 s_2}-7 e^{3 s_1+4 s_2}-1019 e^{5 s_1+4 s_2}+775 e^{6 s_1+4 s_2}-4 e^{8 s_1+4 s_2}-20 e^{4 s_1+5 s_2}+116 e^{5 s_1+5 s_2}-292 e^{7 s_1+5 s_2}+4 e^{8 s_1+5 s_2}-22) s_2-2 (-32 e^{s_1}-164 e^{3 s_1}-28 e^{4 s_1}-5 e^{s_2}-187 e^{2 s_1+s_2}-496 e^{3 s_1+s_2}+136 e^{5 s_1+s_2}+20 e^{s_1+2 s_2}-11 e^{2 s_1+2 s_2}+316 e^{4 s_1+2 s_2}-868 e^{5 s_1+2 s_2}-44 e^{3 s_1+3 s_2}-988 e^{4 s_1+3 s_2}+689 e^{6 s_1+3 s_2}+76 e^{7 s_1+3 s_2}-16 e^{3 s_1+4 s_2}+544 e^{5 s_1+4 s_2}-455 e^{6 s_1+4 s_2}-7 e^{8 s_1+4 s_2}+22 e^{4 s_1+5 s_2}-100 e^{5 s_1+5 s_2}+128 e^{7 s_1+5 s_2}+7 e^{8 s_1+5 s_2}+5) s_3),
$
\[
k_{20, 5}(s_1, s_2, s_3)\left(e^{s_1}-1\right){}^4 \left(e^{s_2}-1\right) \left(e^{s_1+s_2}-1\right){}^4 \left(s_1+s_2\right){}^2 \left(s_2+s_3\right) \left(s_1+s_2+s_3\right){}^2=
\]
$
-16 \pi  (2 e^{2 s_1} (4+30 e^{s_1}-4 e^{2 s_1}-24 e^{s_2}+30 e^{2 s_2}-6 e^{3 s_2}-24 e^{s_1+s_2}-118 e^{2 s_1+s_2}+16 e^{3 s_1+s_2}-24 e^{s_1+2 s_2}+236 e^{2 s_1+2 s_2}+52 e^{3 s_1+2 s_2}+6 e^{4 s_1+2 s_2}+4 e^{s_1+3 s_2}-100 e^{2 s_1+3 s_2}-160 e^{3 s_1+3 s_2}-34 e^{4 s_1+3 s_2}-4 e^{5 s_1+3 s_2}-7 e^{s_1+4 s_2}+33 e^{2 s_1+4 s_2}+34 e^{3 s_1+4 s_2}+94 e^{4 s_1+4 s_2}-5 e^{5 s_1+4 s_2}+e^{6 s_1+4 s_2}-2 e^{2 s_1+5 s_2}+8 e^{3 s_1+5 s_2}-36 e^{4 s_1+5 s_2}) s_1^2-2 ((-1+e^{s_1}) (1-8 e^{s_1}+36 e^{2 s_1}+e^{3 s_1}-e^{s_2}+5 e^{s_1+s_2}-36 e^{2 s_1+s_2}-102 e^{3 s_1+s_2}-16 e^{4 s_1+s_2}+3 e^{s_1+2 s_2}+11 e^{2 s_1+2 s_2}+107 e^{3 s_1+2 s_2}+163 e^{4 s_1+2 s_2}+16 e^{5 s_1+2 s_2}-11 e^{2 s_1+3 s_2}-11 e^{3 s_1+3 s_2}-163 e^{4 s_1+3 s_2}-115 e^{5 s_1+3 s_2}+5 e^{3 s_1+4 s_2}+12 e^{4 s_1+4 s_2}+117 e^{5 s_1+4 s_2}+17 e^{6 s_1+4 s_2}-e^{7 s_1+4 s_2}+4 e^{4 s_1+5 s_2}-18 e^{5 s_1+5 s_2}-17 e^{6 s_1+5 s_2}+e^{7 s_1+5 s_2})-e^{2 s_1+s_2} (-32-68 e^{s_1}-218 e^{2 s_1}+38 e^{s_2}-24 e^{s_1+s_2}+420 e^{2 s_1+s_2}+150 e^{3 s_1+s_2}-56 e^{s_1+2 s_2}-28 e^{2 s_1+2 s_2}-464 e^{3 s_1+2 s_2}-34 e^{4 s_1+2 s_2}+45 e^{2 s_1+3 s_2}+58 e^{3 s_1+3 s_2}+202 e^{4 s_1+3 s_2}+2 e^{5 s_1+3 s_2}) s_2-e^{2 s_1+s_2} (-32-108 e^{s_1}-196 e^{2 s_1}+26 e^{s_2}+24 e^{s_1+s_2}+428 e^{2 s_1+s_2}+160 e^{3 s_1+s_2}-172 e^{2 s_1+2 s_2}-368 e^{3 s_1+2 s_2}-106 e^{4 s_1+2 s_2}+23 e^{2 s_1+3 s_2}+106 e^{3 s_1+3 s_2}+190 e^{4 s_1+3 s_2}+17 e^{5 s_1+3 s_2}) s_3) s_1+e^{s_1} (2 e^{s_1+s_2} (-16-96 e^{s_1}-162 e^{2 s_1}+10 e^{s_2}+24 e^{s_1+s_2}+324 e^{2 s_1+s_2}+168 e^{3 s_1+s_2}-108 e^{s_1+2 s_2}+116 e^{2 s_1+2 s_2}-544 e^{3 s_1+2 s_2}+10 e^{4 s_1+2 s_2}+11 e^{2 s_1+3 s_2}+70 e^{3 s_1+3 s_2}+170 e^{4 s_1+3 s_2}+19 e^{5 s_1+3 s_2}) s_2^2+(2 e^{s_1+s_2} (-24-172 e^{s_1}-222 e^{2 s_1}+6 e^{s_2}+72 e^{s_1+s_2}+484 e^{2 s_1+s_2}+254 e^{3 s_1+s_2}-56 e^{s_1+2 s_2}-60 e^{2 s_1+2 s_2}-608 e^{3 s_1+2 s_2}-98 e^{4 s_1+2 s_2}-17 e^{2 s_1+3 s_2}+174 e^{3 s_1+3 s_2}+206 e^{4 s_1+3 s_2}+56 e^{5 s_1+3 s_2}) s_3+181 e^{s_1}+10 e^{s_2}+426 e^{3 s_1+s_2}+512 e^{2 s_1+2 s_2}-97 e^{5 s_1+2 s_2}-38 e^{s_1+3 s_2}+208 e^{4 s_1+3 s_2}+57 e^{3 s_1+4 s_2}-142 e^{6 s_1+4 s_2}-64 e^{5 s_1+5 s_2}) s_2+2 s_3 (2 e^{s_1+s_2} (-4-30 e^{s_1}-40 e^{2 s_1}+e^{s_2}+12 e^{s_1+s_2}+86 e^{2 s_1+s_2}+46 e^{3 s_1+s_2}-2 e^{s_1+2 s_2}-26 e^{2 s_1+2 s_2}-92 e^{3 s_1+2 s_2}-27 e^{4 s_1+2 s_2}-4 e^{2 s_1+3 s_2}+32 e^{3 s_1+3 s_2}+36 e^{4 s_1+3 s_2}+11 e^{5 s_1+3 s_2}) s_3+73 e^{s_1}+4 e^{s_2}+180 e^{3 s_1+s_2}+240 e^{2 s_1+2 s_2}-59 e^{5 s_1+2 s_2}-7 e^{s_1+3 s_2}+96 e^{4 s_1+3 s_2}+46 e^{3 s_1+4 s_2}-68 e^{6 s_1+4 s_2}-19 e^{5 s_1+5 s_2}))), 
$
\[
k_{20, 6}(s_1, s_2, s_3)3 \left(e^{s_1}-1\right){}^4 \left(e^{s_2}-1\right) \left(e^{s_1+s_2}-1\right){}^4 s_1 \left(s_1+s_2\right){}^2 \left(s_2+s_3\right) \left(s_1+s_2+s_3\right){}^2=
\]
$
-16 \pi  ((19-100 e^{s_1}-316 e^{3 s_1}-35 e^{4 s_1}-19 e^{s_2}-44 e^{2 s_1+s_2}-1304 e^{3 s_1+s_2}+152 e^{5 s_1+s_2}+100 e^{s_1+2 s_2}-310 e^{2 s_1+2 s_2}+656 e^{4 s_1+2 s_2}-1580 e^{5 s_1+2 s_2}+200 e^{3 s_1+3 s_2}-1904 e^{4 s_1+3 s_2}+1150 e^{6 s_1+3 s_2}+176 e^{7 s_1+3 s_2}+4 e^{3 s_1+4 s_2}+956 e^{5 s_1+4 s_2}-544 e^{6 s_1+4 s_2}-11 e^{8 s_1+4 s_2}-7 e^{4 s_1+5 s_2}+16 e^{5 s_1+5 s_2}+304 e^{7 s_1+5 s_2}+11 e^{8 s_1+5 s_2}) s_2^2+(20-95 e^{s_1}-425 e^{3 s_1}-67 e^{4 s_1}-20 e^{s_2}-220 e^{2 s_1+s_2}-1708 e^{3 s_1+s_2}+274 e^{5 s_1+s_2}+101 e^{s_1+2 s_2}-281 e^{2 s_1+2 s_2}+952 e^{4 s_1+2 s_2}-2365 e^{5 s_1+2 s_2}+94 e^{3 s_1+3 s_2}-2680 e^{4 s_1+3 s_2}+1850 e^{6 s_1+3 s_2}+250 e^{7 s_1+3 s_2}-e^{3 s_1+4 s_2}+1495 e^{5 s_1+4 s_2}-863 e^{6 s_1+4 s_2}-10 e^{8 s_1+4 s_2}-14 e^{4 s_1+5 s_2}+44 e^{5 s_1+5 s_2}+500 e^{7 s_1+5 s_2}+10 e^{8 s_1+5 s_2}) s_3 s_2-(-7+34 e^{s_1}+142 e^{3 s_1}+29 e^{4 s_1}+7 e^{s_2}+80 e^{2 s_1+s_2}+596 e^{3 s_1+s_2}-116 e^{5 s_1+s_2}-34 e^{s_1+2 s_2}+100 e^{2 s_1+2 s_2}-344 e^{4 s_1+2 s_2}+818 e^{5 s_1+2 s_2}-20 e^{3 s_1+3 s_2}+920 e^{4 s_1+3 s_2}-682 e^{6 s_1+3 s_2}-92 e^{7 s_1+3 s_2}+2 e^{3 s_1+4 s_2}-530 e^{5 s_1+4 s_2}+286 e^{6 s_1+4 s_2}-e^{8 s_1+4 s_2}+7 e^{4 s_1+5 s_2}-28 e^{5 s_1+5 s_2}-196 e^{7 s_1+5 s_2}+e^{8 s_1+5 s_2}) s_3^2), 
$
\[
k_{20, 7}(s_1, s_2, s_3)5 \left(e^{s_1}-1\right){}^4 \left(e^{s_2}-1\right) \left(e^{s_1+s_2}-1\right){}^4 \left(s_1+s_2\right){}^2 \left(s_2+s_3\right) \left(s_1+s_2+s_3\right){}^2=
\]
$
32 \pi  ((-4+16 e^{s_1}-84 e^{2 s_1}-204 e^{3 s_1}+6 e^{4 s_1}+4 e^{s_2}-24 e^{5 s_1+s_2}-16 e^{s_1+2 s_2}-54 e^{6 s_1+2 s_2}-86 e^{2 s_1+3 s_2}+156 e^{7 s_1+3 s_2}+9 e^{3 s_1+4 s_2}-9 e^{8 s_1+4 s_2}+14 e^{4 s_1+5 s_2}-56 e^{5 s_1+5 s_2}+204 e^{6 s_1+5 s_2}+104 e^{7 s_1+5 s_2}+4 e^{8 s_1+5 s_2}) s_2^2+(-6+24 e^{s_1}-156 e^{2 s_1}-281 e^{3 s_1}-e^{4 s_1}+6 e^{s_2}+4 e^{5 s_1+s_2}-24 e^{s_1+2 s_2}-96 e^{6 s_1+2 s_2}-34 e^{2 s_1+3 s_2}+124 e^{7 s_1+3 s_2}-19 e^{3 s_1+4 s_2}+14 e^{8 s_1+4 s_2}+26 e^{4 s_1+5 s_2}-104 e^{5 s_1+5 s_2}+336 e^{6 s_1+5 s_2}+156 e^{7 s_1+5 s_2}+6 e^{8 s_1+5 s_2}) s_3 s_2+2 (-1+4 e^{s_1}-26 e^{2 s_1}-51 e^{3 s_1}-e^{4 s_1}+e^{s_2}+4 e^{5 s_1+s_2}-4 e^{s_1+2 s_2}-21 e^{6 s_1+2 s_2}+e^{2 s_1+3 s_2}+14 e^{7 s_1+3 s_2}-4 e^{3 s_1+4 s_2}+4 e^{8 s_1+4 s_2}+6 e^{4 s_1+5 s_2}-24 e^{5 s_1+5 s_2}+66 e^{6 s_1+5 s_2}+26 e^{7 s_1+5 s_2}+e^{8 s_1+5 s_2}) s_3^2+s_1 ((-2+8 e^{s_1}-52 e^{2 s_1}-277 e^{3 s_1}+23 e^{4 s_1}+2 e^{s_2}-92 e^{5 s_1+s_2}-8 e^{s_1+2 s_2}-72 e^{6 s_1+2 s_2}-18 e^{2 s_1+3 s_2}+108 e^{7 s_1+3 s_2}+47 e^{3 s_1+4 s_2}-12 e^{8 s_1+4 s_2}+22 e^{4 s_1+5 s_2}-88 e^{5 s_1+5 s_2}+312 e^{6 s_1+5 s_2}+52 e^{7 s_1+5 s_2}+2 e^{8 s_1+5 s_2}) s_2+(-2+8 e^{s_1}-92 e^{2 s_1}-252 e^{3 s_1}+8 e^{4 s_1}+2 e^{s_2}-32 e^{5 s_1+s_2}-8 e^{s_1+2 s_2}-102 e^{6 s_1+2 s_2}+22 e^{2 s_1+3 s_2}+48 e^{7 s_1+3 s_2}+27 e^{3 s_1+4 s_2}+3 e^{8 s_1+4 s_2}+32 e^{4 s_1+5 s_2}-128 e^{5 s_1+5 s_2}+372 e^{6 s_1+5 s_2}+52 e^{7 s_1+5 s_2}+2 e^{8 s_1+5 s_2}) s_3)),
$
\[
k_{20, 8}(s_1, s_2, s_3)\left(e^{s_1}-1\right){}^4 \left(e^{s_1+s_2}-1\right){}^4 s_2 \left(s_1+s_2\right){}^2 \left(s_2+s_3\right) \left(s_1+s_2+s_3\right){}^2=
\]
$
16 e^{s_1} \pi  (2 e^{s_1} (-1+e^{s_2}) (-2-5 e^{s_1}+e^{2 s_1}+2 e^{s_2}-2 e^{s_1+s_2}+22 e^{2 s_1+s_2}-4 e^{3 s_1+s_2}+e^{s_1+2 s_2}-5 e^{2 s_1+2 s_2}-14 e^{3 s_1+2 s_2}+6 e^{4 s_1+3 s_2}) s_1^3+((-1+e^{s_1}) (1-14 e^{s_1}+e^{2 s_1}-e^{s_2}+10 e^{s_1+s_2}+37 e^{2 s_1+s_2}+2 e^{3 s_1+s_2}-4 e^{s_1+2 s_2}-9 e^{2 s_1+2 s_2}-60 e^{3 s_1+2 s_2}+e^{4 s_1+2 s_2}+5 e^{2 s_1+3 s_2}+2 e^{3 s_1+3 s_2}+47 e^{4 s_1+3 s_2}-6 e^{5 s_1+3 s_2}-4 e^{4 s_1+4 s_2}-10 e^{5 s_1+4 s_2}+2 e^{6 s_1+4 s_2})+2 e^{s_1} (-1+e^{s_2}) (-4-15 e^{s_1}+e^{2 s_1}+4 e^{s_2}-6 e^{s_1+s_2}+60 e^{2 s_1+s_2}-4 e^{3 s_1+s_2}+3 e^{s_1+2 s_2}-9 e^{2 s_1+2 s_2}-42 e^{3 s_1+2 s_2}-6 e^{4 s_1+2 s_2}+2 e^{2 s_1+3 s_2}-8 e^{3 s_1+3 s_2}+24 e^{4 s_1+3 s_2}) s_3) s_1^2+s_3 (4 e^{s_1} (-1+e^{s_2}) (-1-5 e^{s_1}+e^{s_2}-2 e^{s_1+s_2}+19 e^{2 s_1+s_2}+e^{s_1+2 s_2}-2 e^{2 s_1+2 s_2}-14 e^{3 s_1+2 s_2}-3 e^{4 s_1+2 s_2}+e^{2 s_1+3 s_2}-4 e^{3 s_1+3 s_2}+9 e^{4 s_1+3 s_2}) s_3-(-1+e^{s_1}) (-3+34 e^{s_1}+5 e^{2 s_1}+3 e^{s_2}-22 e^{s_1+s_2}-95 e^{2 s_1+s_2}-30 e^{3 s_1+s_2}+4 e^{s_1+2 s_2}+27 e^{2 s_1+2 s_2}+156 e^{3 s_1+2 s_2}+29 e^{4 s_1+2 s_2}+e^{2 s_1+3 s_2}-30 e^{3 s_1+3 s_2}-109 e^{4 s_1+3 s_2}-6 e^{5 s_1+3 s_2}-8 e^{3 s_1+4 s_2}+28 e^{4 s_1+4 s_2}+14 e^{5 s_1+4 s_2}+2 e^{6 s_1+4 s_2})) s_1-2 (-1+e^{s_1}) (-1+10 e^{s_1}+3 e^{2 s_1}+e^{s_2}-6 e^{s_1+s_2}-29 e^{2 s_1+s_2}-14 e^{3 s_1+s_2}+9 e^{2 s_1+2 s_2}+48 e^{3 s_1+2 s_2}+15 e^{4 s_1+2 s_2}+3 e^{2 s_1+3 s_2}-14 e^{3 s_1+3 s_2}-31 e^{4 s_1+3 s_2}-6 e^{5 s_1+3 s_2}-4 e^{3 s_1+4 s_2}+12 e^{4 s_1+4 s_2}+2 e^{5 s_1+4 s_2}+2 e^{6 s_1+4 s_2}) s_3^2),
$
\[
k_{20, 9}(s_1, s_2, s_3)\left(e^{s_1}-1\right){}^4 \left(e^{s_2}-1\right) \left(e^{s_1+s_2}-1\right){}^4 s_1 \left(s_1+s_2\right){}^2 \left(s_2+s_3\right) \left(s_1+s_2+s_3\right){}^2=
\]
$
-16 e^{s_1} \pi  (4 e^{s_1+s_2} (-1-22 e^{s_1}-21 e^{2 s_1}-2 e^{s_2}+12 e^{s_1+s_2}+46 e^{2 s_1+s_2}+32 e^{3 s_1+s_2}-26 e^{s_1+2 s_2}+38 e^{2 s_1+2 s_2}-108 e^{3 s_1+2 s_2}+8 e^{4 s_1+2 s_2}-3 e^{2 s_1+3 s_2}+16 e^{3 s_1+3 s_2}+25 e^{4 s_1+3 s_2}+6 e^{5 s_1+3 s_2}) s_2^3+e^{s_1} (2 e^{s_2} (-4-88 e^{s_1}-84 e^{2 s_1}-8 e^{s_2}+48 e^{s_1+s_2}+184 e^{2 s_1+s_2}+128 e^{3 s_1+s_2}-44 e^{s_1+2 s_2}+32 e^{2 s_1+2 s_2}-312 e^{3 s_1+2 s_2}-28 e^{4 s_1+2 s_2}-27 e^{2 s_1+3 s_2}+94 e^{3 s_1+3 s_2}+70 e^{4 s_1+3 s_2}+39 e^{5 s_1+3 s_2}) s_3-26 e^{3 s_2}+405 e^{2 s_1+s_2}+472 e^{s_1+2 s_2}-94 e^{4 s_1+2 s_2}+152 e^{3 s_1+3 s_2}+25 e^{2 s_1+4 s_2}-160 e^{5 s_1+4 s_2}-108 e^{4 s_1+5 s_2}+144) s_2^2+s_3 (4 e^{s_1+s_2} (-1-22 e^{s_1}-21 e^{2 s_1}-2 e^{s_2}+12 e^{s_1+s_2}+46 e^{2 s_1+s_2}+32 e^{3 s_1+s_2}-6 e^{s_1+2 s_2}-2 e^{2 s_1+2 s_2}-68 e^{3 s_1+2 s_2}-12 e^{4 s_1+2 s_2}-8 e^{2 s_1+3 s_2}+26 e^{3 s_1+3 s_2}+15 e^{4 s_1+3 s_2}+11 e^{5 s_1+3 s_2}) s_3+189 e^{s_1}-2 e^{s_2}+560 e^{3 s_1+s_2}+680 e^{2 s_1+2 s_2}-149 e^{5 s_1+2 s_2}-22 e^{s_1+3 s_2}+184 e^{4 s_1+3 s_2}+43 e^{3 s_1+4 s_2}-250 e^{6 s_1+4 s_2}-180 e^{5 s_1+5 s_2}) s_2-3 e^{s_1} (-22+2 e^{3 s_2}-63 e^{2 s_1+s_2}-80 e^{s_1+2 s_2}+20 e^{4 s_1+2 s_2}-16 e^{3 s_1+3 s_2}-5 e^{2 s_1+4 s_2}+32 e^{5 s_1+4 s_2}+24 e^{4 s_1+5 s_2}) s_3^2),
$
\[
k_{20, 10}(s_1, s_2, s_3)5 \left(e^{s_1}-1\right){}^4 \left(e^{s_2}-1\right) \left(e^{s_1+s_2}-1\right){}^4 s_1 \left(s_1+s_2\right){}^2 \left(s_2+s_3\right) \left(s_1+s_2+s_3\right){}^2=
\]
$
32 \pi  s_2 (2 (4 e^{s_1}-21 e^{2 s_1}-26 e^{3 s_1}-e^{4 s_1}+e^{s_2}+4 e^{5 s_1+s_2}-4 e^{s_1+2 s_2}-6 e^{6 s_1+2 s_2}-24 e^{2 s_1+3 s_2}+34 e^{7 s_1+3 s_2}-4 e^{3 s_1+4 s_2}-e^{8 s_1+4 s_2}+e^{4 s_1+5 s_2}-4 e^{5 s_1+5 s_2}+21 e^{6 s_1+5 s_2}+26 e^{7 s_1+5 s_2}+e^{8 s_1+5 s_2}-1) s_2^2+(16 e^{s_1}-84 e^{2 s_1}-104 e^{3 s_1}-4 e^{4 s_1}+4 e^{s_2}+16 e^{5 s_1+s_2}-16 e^{s_1+2 s_2}-24 e^{6 s_1+2 s_2}-36 e^{2 s_1+3 s_2}+76 e^{7 s_1+3 s_2}-31 e^{3 s_1+4 s_2}+11 e^{8 s_1+4 s_2}+4 e^{4 s_1+5 s_2}-16 e^{5 s_1+5 s_2}+84 e^{6 s_1+5 s_2}+104 e^{7 s_1+5 s_2}+4 e^{8 s_1+5 s_2}-4) s_3 s_2+2 (4 e^{s_1}-21 e^{2 s_1}-26 e^{3 s_1}-e^{4 s_1}+e^{s_2}+4 e^{5 s_1+s_2}-4 e^{s_1+2 s_2}-6 e^{6 s_1+2 s_2}-4 e^{2 s_1+3 s_2}+14 e^{7 s_1+3 s_2}-9 e^{3 s_1+4 s_2}+4 e^{8 s_1+4 s_2}+e^{4 s_1+5 s_2}-4 e^{5 s_1+5 s_2}+21 e^{6 s_1+5 s_2}+26 e^{7 s_1+5 s_2}+e^{8 s_1+5 s_2}-1) s_3^2),
$
\[
k_{20, 11}(s_1, s_2, s_3)=\frac{12 \pi  \left(s_1^2-s_3 s_1-s_2^2+s_3^2\right)}{\left(e^{\frac{1}{2} \left(s_1+s_2+s_3\right)}-1\right){}^4 s_1 \left(s_1+s_2\right) s_3 \left(s_2+s_3\right) \left(s_1+s_2+s_3\right)}, 
\]
\[
k_{20, 12}(s_1, s_2, s_3)=\frac{12 \pi  \left(s_1^2-s_3 s_1-s_2^2+s_3^2\right)}{\left(e^{\frac{1}{2} \left(s_1+s_2+s_3\right)}+1\right){}^4 s_1 \left(s_1+s_2\right) s_3 \left(s_2+s_3\right) \left(s_1+s_2+s_3\right)},
\]
\[
k_{20, 13}(s_1, s_2, s_3)=\frac{24 \pi  e^{2 s_1} \left(s_2-s_3\right)}{\left(e^{s_1}-1\right){}^2 \left(e^{\frac{1}{2} \left(s_2+s_3\right)}-1\right){}^2 s_1 s_2 s_3 \left(s_1+s_2+s_3\right)}, 
\]
\[
k_{20, 14}(s_1, s_2, s_3)=\frac{24 \pi  e^{2 s_1} \left(s_2-s_3\right)}{\left(e^{s_1}-1\right){}^2 \left(e^{\frac{1}{2} \left(s_2+s_3\right)}+1\right){}^2 s_1 s_2 s_3 \left(s_1+s_2+s_3\right)}, 
\]
\[
k_{20, 15}(s_1, s_2, s_3)\left(e^{s_1}-1\right) \left(e^{s_1+s_2}-1\right) \left(e^{\frac{1}{2} \left(s_1+s_2+s_3\right)}+1\right){}^3 s_1 s_2 \left(s_1+s_2\right) s_3 \left(s_2+s_3\right) \left(s_1+s_2+s_3\right){}^2=
\]
$
-4 \pi  (3 ((-3 e^{s_1}-3 e^{s_1+s_2}+2 e^{2 s_1+s_2}+4) s_2+e^{s_1} (e^{s_2}-1) s_3) s_1^3+((-5 e^{s_1}-11 e^{s_1+s_2}+2 e^{2 s_1+s_2}+14) s_2^2+((-5 e^{s_1}+7 e^{s_1+s_2}-4 e^{2 s_1+s_2}+2) s_3-6 (e^{s_1}-1) (e^{s_1+s_2}-1)) s_2+6 e^{s_1} (e^{s_2}-1) s_3^2) s_1^2+((17 e^{s_1}+5 e^{s_1+s_2}-14 e^{2 s_1+s_2}-8) s_2^3-(-13 e^{s_1}-7 e^{s_1+s_2}+10 e^{2 s_1+s_2}+10) s_3 s_2^2+s_3 (6 (e^{s_1}-1) (e^{s_1+s_2}-1)+(-7 e^{s_1}+5 e^{s_1+s_2}+4 e^{2 s_1+s_2}-2) s_3) s_2+3 e^{s_1} (e^{s_2}-1) s_3^3) s_1-s_2 (s_2+s_3) ((-13 e^{s_1}-7 e^{s_1+s_2}+10 e^{2 s_1+s_2}+10) s_2^2-2 (e^{s_1}-1) (2 e^{s_1+s_2} s_3+s_3+3 e^{s_1+s_2}-3) s_2-s_3 ((-11 e^{s_1}-11 e^{s_1+s_2}+14 e^{2 s_1+s_2}+8) s_3-6 (e^{s_1}-1) (e^{s_1+s_2}-1)))),
$
\[
k_{20, 16}(s_1, s_2, s_3)\left(e^{s_1}-1\right){}^3 \left(e^{s_2}-1\right) \left(e^{\frac{1}{2} \left(s_2+s_3\right)}-1\right) s_1 s_2 s_3 \left(s_2+s_3\right) \left(s_1+s_2+s_3\right)=
\]
$
-8 \pi  e^{2 s_1} ((5 e^{s_1}+5 e^{s_2}+e^{s_1+s_2}-11) s_2^2+2 (e^{s_1}-1) (3 (e^{s_2}-1)+(4 e^{s_2}+2) s_3) s_2+s_3 ((-e^{s_1}-13 e^{s_2}+7 e^{s_1+s_2}+7) s_3-6 (e^{s_1}-1) (e^{s_2}-1))),
$
\[
k_{20, 17}(s_1, s_2, s_3)=\frac{-k_{20, 16}^{\text{num}}(s_1, s_2, s_3)}{\left(e^{s_1}-1\right){}^3 \left(e^{s_2}-1\right) \left(e^{\frac{1}{2} \left(s_2+s_3\right)}+1\right) s_1 s_2 s_3 \left(s_2+s_3\right) \left(s_1+s_2+s_3\right)},
\]
\[
k_{20, 18}(s_1, s_2, s_3)\left(e^{s_1}-1\right) \left(e^{s_1+s_2}-1\right) \left(e^{\frac{1}{2} \left(s_1+s_2+s_3\right)}-1\right){}^3 s_1 s_2 \left(s_1+s_2\right) s_3 \left(s_2+s_3\right) \left(s_1+s_2+s_3\right){}^2=
\]
$
4 \pi  (3 ((-3 e^{s_1}-3 e^{s_1+s_2}+2 e^{2 s_1+s_2}+4) s_2+e^{s_1} (e^{s_2}-1) s_3) s_1^3+((-5 e^{s_1}-11 e^{s_1+s_2}+2 e^{2 s_1+s_2}+14) s_2^2+((-5 e^{s_1}+7 e^{s_1+s_2}-4 e^{2 s_1+s_2}+2) s_3-6 (e^{s_1}-1) (e^{s_1+s_2}-1)) s_2+6 e^{s_1} (e^{s_2}-1) s_3^2) s_1^2+((17 e^{s_1}+5 e^{s_1+s_2}-14 e^{2 s_1+s_2}-8) s_2^3-(-13 e^{s_1}-7 e^{s_1+s_2}+10 e^{2 s_1+s_2}+10) s_3 s_2^2+s_3 (6 (e^{s_1}-1) (e^{s_1+s_2}-1)+(-7 e^{s_1}+5 e^{s_1+s_2}+4 e^{2 s_1+s_2}-2) s_3) s_2+3 e^{s_1} (e^{s_2}-1) s_3^3) s_1-s_2 (s_2+s_3) ((-13 e^{s_1}-7 e^{s_1+s_2}+10 e^{2 s_1+s_2}+10) s_2^2-2 (e^{s_1}-1) (2 e^{s_1+s_2} s_3+s_3+3 e^{s_1+s_2}-3) s_2-s_3 ((-11 e^{s_1}-11 e^{s_1+s_2}+14 e^{2 s_1+s_2}+8) s_3-6 (e^{s_1}-1) (e^{s_1+s_2}-1)))), 
$
\[
k_{20, 19}(s_1, s_2, s_3)\left(e^{s_1}-1\right) \left(e^{s_1+s_2}-1\right){}^2 \left(e^{\frac{1}{2} \left(s_1+s_2+s_3\right)}-1\right){}^2 s_1 s_2 \left(s_1+s_2\right) s_3 \left(s_2+s_3\right) \left(s_1+s_2+s_3\right){}^2=
\]
$
2 \pi  (((-33+14 e^{s_1}+55 e^{s_1+s_2}-29 e^{2 s_1+s_2}-10 e^{2 s_1+2 s_2}+3 e^{3 s_1+2 s_2}) s_2+e^{s_1} (-1+e^{s_2}) (-11+23 e^{s_1+s_2}) s_3) s_1^3-((43+14 e^{s_1}-67 e^{s_1+s_2}-11 e^{2 s_1+s_2}+12 e^{2 s_1+2 s_2}+9 e^{3 s_1+2 s_2}) s_2^2+(6 (-7+5 e^{s_1}+12 e^{s_1+s_2}-8 e^{2 s_1+s_2}-5 e^{2 s_1+2 s_2}+3 e^{3 s_1+2 s_2})+(10-5 e^{s_1}+21 e^{s_1+s_2}+29 e^{2 s_1+s_2}-67 e^{2 s_1+2 s_2}+12 e^{3 s_1+2 s_2}) s_3) s_2-2 e^{s_1} (-1+e^{s_2}) s_3 ((-11+23 e^{s_1+s_2}) s_3-6 e^{s_1+s_2}+6)) s_1^2+((13-70 e^{s_1}-31 e^{s_1+s_2}+109 e^{2 s_1+s_2}+6 e^{2 s_1+2 s_2}-27 e^{3 s_1+2 s_2}) s_2^3+(8 (1+2 e^{s_1}) (-1+e^{s_1+s_2}){}^2+(23-66 e^{s_1}-67 e^{s_1+s_2}+69 e^{2 s_1+s_2}+32 e^{2 s_1+2 s_2}+9 e^{3 s_1+2 s_2}) s_3) s_2^2+s_3 ((10+15 e^{s_1}-47 e^{s_1+s_2}-63 e^{2 s_1+s_2}+49 e^{2 s_1+2 s_2}+36 e^{3 s_1+2 s_2}) s_3+22 e^{s_1}+80 e^{s_1+s_2}-56 e^{2 s_1+s_2}-46 e^{2 s_1+2 s_2}+34 e^{3 s_1+2 s_2}-34) s_2+e^{s_1} (-1+e^{s_2}) s_3^2 ((-11+23 e^{s_1+s_2}) s_3-12 (-1+e^{s_1+s_2}))) s_1-s_2 (s_2+s_3) ((-23+42 e^{s_1}+43 e^{s_1+s_2}-69 e^{2 s_1+s_2}-8 e^{2 s_1+2 s_2}+15 e^{3 s_1+2 s_2}) s_2^2-2 ((5-9 e^{s_1}-7 e^{s_1+s_2}+3 e^{2 s_1+s_2}-10 e^{2 s_1+2 s_2}+18 e^{3 s_1+2 s_2}) s_3+23 e^{s_1}+28 e^{s_1+s_2}-40 e^{2 s_1+s_2}-11 e^{2 s_1+2 s_2}+17 e^{3 s_1+2 s_2}-17) s_2-s_3 ((-13+24 e^{s_1}+29 e^{s_1+s_2}-63 e^{2 s_1+s_2}-28 e^{2 s_1+2 s_2}+51 e^{3 s_1+2 s_2}) s_3-38 e^{s_1}-64 e^{s_1+s_2}+88 e^{2 s_1+s_2}+38 e^{2 s_1+2 s_2}-50 e^{3 s_1+2 s_2}+26))),
$
\[
k_{20, 20}(s_1, s_2, s_3)=\frac{k_{20, 19}^{\text{num}}(s_1, s_2, s_3)}{\left(e^{s_1}-1\right) \left(e^{s_1+s_2}-1\right){}^2 \left(e^{\frac{1}{2} \left(s_1+s_2+s_3\right)}+1\right){}^2 s_1 s_2 \left(s_1+s_2\right) s_3 \left(s_2+s_3\right) \left(s_1+s_2+s_3\right){}^2},
\]
\[
k_{20, 21}(s_1, s_2, s_3)\left(e^{s_1}-1\right){}^3 \left(e^{s_1+s_2}-1\right){}^3 \left(e^{\frac{1}{2} \left(s_1+s_2+s_3\right)}+1\right) s_1 s_2 \left(s_1+s_2\right) s_3 \left(s_2+s_3\right) \left(s_1+s_2+s_3\right){}^2=
\]
$
2 \pi  (((-15+32 e^{s_1}+13 e^{2 s_1}-6 e^{3 s_1}+40 e^{s_1+s_2}-69 e^{2 s_1+s_2}-54 e^{3 s_1+s_2}+11 e^{4 s_1+s_2}-31 e^{2 s_1+2 s_2}+54 e^{3 s_1+2 s_2}+33 e^{4 s_1+2 s_2}+16 e^{5 s_1+2 s_2}-18 e^{3 s_1+3 s_2}+55 e^{4 s_1+3 s_2}-64 e^{5 s_1+3 s_2}+3 e^{6 s_1+3 s_2}) s_2-e^{s_1} (-5+26 e^{s_1}+3 e^{2 s_1}+5 e^{s_2}-12 e^{s_1+s_2}-63 e^{2 s_1+s_2}-2 e^{3 s_1+s_2}-14 e^{s_1+2 s_2}+27 e^{2 s_1+2 s_2}+84 e^{3 s_1+2 s_2}-25 e^{4 s_1+2 s_2}+33 e^{2 s_1+3 s_2}-82 e^{3 s_1+3 s_2}+25 e^{4 s_1+3 s_2}) s_3) s_1^3-((21-24 e^{s_1}-111 e^{2 s_1}+42 e^{3 s_1}-50 e^{s_1+s_2}-3 e^{2 s_1+s_2}+372 e^{3 s_1+s_2}-103 e^{4 s_1+s_2}+17 e^{2 s_1+2 s_2}+66 e^{3 s_1+2 s_2}-327 e^{4 s_1+2 s_2}+28 e^{5 s_1+2 s_2}+36 e^{3 s_1+3 s_2}-111 e^{4 s_1+3 s_2}+138 e^{5 s_1+3 s_2}+9 e^{6 s_1+3 s_2}) s_2^2+(2 (-23+7 e^{s_1}+61 e^{s_1+s_2}-25 e^{2 s_1+s_2}-41 e^{2 s_1+2 s_2}+17 e^{3 s_1+2 s_2}+3 e^{3 s_1+3 s_2}+e^{4 s_1+3 s_2}) (-1+e^{s_1}){}^2+(6-7 e^{s_1}-20 e^{2 s_1}+45 e^{3 s_1}+5 e^{s_1+s_2}-108 e^{2 s_1+s_2}+129 e^{3 s_1+s_2}-98 e^{4 s_1+s_2}-56 e^{2 s_1+2 s_2}+201 e^{3 s_1+2 s_2}-42 e^{4 s_1+2 s_2}-31 e^{5 s_1+2 s_2}+117 e^{3 s_1+3 s_2}-302 e^{4 s_1+3 s_2}+149 e^{5 s_1+3 s_2}+12 e^{6 s_1+3 s_2}) s_3) s_2+2 e^{s_1} (-1+e^{s_2}) s_3 ((5-26 e^{s_1}-3 e^{2 s_1}-14 e^{s_1+s_2}+60 e^{2 s_1+s_2}+2 e^{3 s_1+s_2}+33 e^{2 s_1+2 s_2}-82 e^{3 s_1+2 s_2}+25 e^{4 s_1+2 s_2}) s_3-4 (-1+e^{s_1}){}^2 (2-7 e^{s_1+s_2}+5 e^{2 s_1+2 s_2}))) s_1^2+(-(-3+48 e^{s_1}-183 e^{2 s_1}+66 e^{3 s_1}+20 e^{s_1+s_2}-213 e^{2 s_1+s_2}+582 e^{3 s_1+s_2}-173 e^{4 s_1+s_2}-59 e^{2 s_1+2 s_2}+294 e^{3 s_1+2 s_2}-555 e^{4 s_1+2 s_2}+104 e^{5 s_1+2 s_2}+18 e^{3 s_1+3 s_2}-57 e^{4 s_1+3 s_2}+84 e^{5 s_1+3 s_2}+27 e^{6 s_1+3 s_2}) s_2^3+(-16 (-1+e^{s_1}){}^2 (-1-3 e^{s_1}+e^{2 s_1+s_2}) (-1+e^{s_1+s_2}){}^2-(-9+64 e^{s_1}-181 e^{2 s_1}+102 e^{3 s_1}+54 e^{s_1+s_2}-357 e^{2 s_1+s_2}+624 e^{3 s_1+s_2}-249 e^{4 s_1+s_2}-141 e^{2 s_1+2 s_2}+534 e^{3 s_1+2 s_2}-549 e^{4 s_1+2 s_2}+84 e^{5 s_1+2 s_2}+72 e^{3 s_1+3 s_2}-169 e^{4 s_1+3 s_2}+34 e^{5 s_1+3 s_2}+87 e^{6 s_1+3 s_2}) s_3) s_2^2+s_3 ((6-11 e^{s_1}-28 e^{2 s_1}-39 e^{3 s_1}-39 e^{s_1+s_2}+156 e^{2 s_1+s_2}+21 e^{3 s_1+s_2}+78 e^{4 s_1+s_2}+96 e^{2 s_1+2 s_2}-267 e^{3 s_1+2 s_2}-90 e^{4 s_1+2 s_2}+45 e^{5 s_1+2 s_2}-87 e^{3 s_1+3 s_2}+194 e^{4 s_1+3 s_2}+25 e^{5 s_1+3 s_2}-60 e^{6 s_1+3 s_2}) s_3-2 (-1+e^{s_1}){}^2 (15-15 e^{s_1}-61 e^{s_1+s_2}+25 e^{2 s_1+s_2}+89 e^{2 s_1+2 s_2}-17 e^{3 s_1+2 s_2}-43 e^{3 s_1+3 s_2}+7 e^{4 s_1+3 s_2})) s_2-e^{s_1} (-1+e^{s_2}) s_3^2 ((5-26 e^{s_1}-3 e^{2 s_1}-14 e^{s_1+s_2}+60 e^{2 s_1+s_2}+2 e^{3 s_1+s_2}+33 e^{2 s_1+2 s_2}-82 e^{3 s_1+2 s_2}+25 e^{4 s_1+2 s_2}) s_3-8 (-1+e^{s_1}){}^2 (2-7 e^{s_1+s_2}+5 e^{2 s_1+2 s_2}))) s_1-s_2 (s_2+s_3) ((-9+40 e^{s_1}-85 e^{2 s_1}+30 e^{3 s_1}+30 e^{s_1+s_2}-141 e^{2 s_1+s_2}+264 e^{3 s_1+s_2}-81 e^{4 s_1+s_2}-45 e^{2 s_1+2 s_2}+174 e^{3 s_1+2 s_2}-261 e^{4 s_1+2 s_2}+60 e^{5 s_1+2 s_2}-e^{4 s_1+3 s_2}+10 e^{5 s_1+3 s_2}+15 e^{6 s_1+3 s_2}) s_2^2+2 (-1+e^{s_1}) ((-1+e^{s_1}) (15-31 e^{s_1}-45 e^{s_1+s_2}+81 e^{2 s_1+s_2}+33 e^{2 s_1+2 s_2}-57 e^{3 s_1+2 s_2}-3 e^{3 s_1+3 s_2}+7 e^{4 s_1+3 s_2})+(3-10 e^{s_1}+15 e^{2 s_1}-12 e^{s_1+s_2}+48 e^{2 s_1+s_2}-36 e^{3 s_1+s_2}+27 e^{2 s_1+2 s_2}-66 e^{3 s_1+2 s_2}+15 e^{4 s_1+2 s_2}+6 e^{3 s_1+3 s_2}-20 e^{4 s_1+3 s_2}+30 e^{5 s_1+3 s_2}) s_3) s_2+s_3 ((3-14 e^{s_1}+35 e^{2 s_1}-6 e^{s_1+s_2}+21 e^{2 s_1+s_2}-96 e^{3 s_1+s_2}+9 e^{4 s_1+s_2}-9 e^{2 s_1+2 s_2}+12 e^{3 s_1+2 s_2}+99 e^{4 s_1+2 s_2}-30 e^{5 s_1+2 s_2}-12 e^{3 s_1+3 s_2}+53 e^{4 s_1+3 s_2}-110 e^{5 s_1+3 s_2}+45 e^{6 s_1+3 s_2}) s_3-2 (-1+e^{s_1}){}^2 (7-15 e^{s_1}-21 e^{s_1+s_2}+57 e^{2 s_1+s_2}+33 e^{2 s_1+2 s_2}-81 e^{3 s_1+2 s_2}-19 e^{3 s_1+3 s_2}+39 e^{4 s_1+3 s_2})))),
$
\[
k_{20, 22}(s_1, s_2, s_3)=\frac{-k_{20, 21}^{\text{num}}(s_1, s_2, s_3)}{\left(e^{s_1}-1\right){}^3 \left(e^{s_1+s_2}-1\right){}^3 \left(e^{\frac{1}{2} \left(s_1+s_2+s_3\right)}-1\right) s_1 s_2 \left(s_1+s_2\right) s_3 \left(s_2+s_3\right) \left(s_1+s_2+s_3\right){}^2},
\]
\[
k_{20, 23}(s_1, s_2, s_3)=-\frac{16 \pi  e^{2 \left(s_1+s_2\right)} \left(\left(e^{s_2} \left(e^{s_1} \left(2 e^{s_2}+1\right)+1\right)-4\right) s_1+3 e^{s_2} \left(e^{s_1}-1\right) s_2\right)}{\left(e^{s_2}-1\right) \left(e^{s_1+s_2}-1\right){}^3 \left(e^{\frac{s_3}{2}}+1\right) s_1 s_2 \left(s_1+s_2\right) \left(s_1+s_2+s_3\right)}, 
\]
\[
k_{20, 24}(s_1, s_2, s_3)=\frac{16 \pi  e^{2 \left(s_1+s_2\right)} \left(\left(e^{s_2} \left(e^{s_1} \left(2 e^{s_2}+1\right)+1\right)-4\right) s_1+3 e^{s_2} \left(e^{s_1}-1\right) s_2\right)}{\left(e^{s_2}-1\right) \left(e^{s_1+s_2}-1\right){}^3 \left(e^{\frac{s_3}{2}}-1\right) s_1 s_2 \left(s_1+s_2\right) \left(s_1+s_2+s_3\right)}. 
\]
}

\section*{Acknowledgments}

F.F. is indebted to Eric Schost for very helpful and 
generous conversations on computer algebra programming, 
which played a crucial role for handling the heavy calculations 
of the present paper. He also thanks the Institut des Hautes \'Etudes 
Scientifiques (I.H.E.S.), in particular Francois Bachelier in the I.T.  
department of the institute, for the excellent environment and 
facilities that were used for carrying out this work partially during 
stays in the Fall of 2013 and the Summer of 2015.

\end{document}